\renewcommand{\thesection}{\thechapter .\arabic{section}}
\renewcommand{\thefigure}{\arabic{chapter}.\arabic{section}.\arabic{figure}}
\newtheorem{theorem}{Theorem}[section]
\newtheorem{prop}[theorem]{Proposition}
\newtheorem{cor}[theorem]{Corollary}
\newtheorem{rmk}[theorem]{Remark}
\newtheorem{lem}[theorem]{Lemma}
\newtheorem{dfn}[theorem]{Definition}
\newtheorem*{dfn*}{Definition}
\newtheorem{conjecture}[theorem]{Conjecture}
\numberwithin{equation}{section}
\def\RR{\mathbb{R}}
\def\Rn{\mathbb{R}^{n}}
\def\Rnuno{\mathbb{R}^{n-1}}
\def\Hnuno{\mathcal{H}^{n-1}}
\def\Hndos{\mathcal{H}^{n-2}}
\def\Hntres{\mathcal{H}^{n-3}}
\DeclareMathOperator{\Img}{Image}
\definecolor{grey}{rgb}{0.75,0.75,0.75}
\definecolor{orange}{rgb}{1.0,0.5,0.5}
\definecolor{brown}{rgb}{0.5,0.25,0.0}
\definecolor{pink}{rgb}{1.0,0.5,0.5}
\definecolor{green}{rgb}{0.0,0.533,0.133}
\definecolor{darkred}{rgb}{0.3,0.0,0.0}
\newcommand{\strong}[1]{\textbf{#1}}
\newcommand{\nin}{\not\in}
\newcommand{\nobracket}{}
\newcommand{\nocomma}{}
\newcommand{\tmdummy}{$\mbox{}$}
\newcommand{\tmop}[1]{\ensuremath{\operatorname{#1}}}
\newcommand{\diff}[1]{ #1}
\newenvironment{itemizedot}{\begin{itemize}

	}
	{ \end{itemize}}
\newenvironment{itemizeminus}{\begin{itemize}

	}
	{\end{itemize}}
\newenvironment{remark}{\medskip\noindent{\strong{Remark.}}}
{}
\newcommand{\Tone}{T_{p_1}M_1 }
\newcommand{\Ttwo}{T_{p_2}M_2 }
\newcommand{\T}{T_p M}
\newcommand{\e}{\exp_p}
\newcommand{\cleaveq}{q=e_1(x_1)=e_2(x_2)}
\def\NC{\mathcal{NC}}
\newcommand{\SAtwo}{\mathcal{SA}_2}
\def\author{Pablo Angulo Ardoy}
\def\title{Cut and conjugate points of the exponential map, with
applications}
\date{Fecha}
\renewcommand{\tocsection}[3]
	{{\indentlabel{\@ifnotempty{#2}
	{\ignorespaces\llap{#1 #2.\,\,}}}}#3}
 \renewcommand{\tocchapter}[3]
 	{{}\hskip-14pt{\indentlabel{\@ifnotempty{#2}
 	{\ignorespaces{#1 #2.\,\,}}}}#3}
\begin{document}

\input{portada} 
\setcounter{page}{0} 

\frontmatter 

\pagestyle{fancy} 

\tableofcontents

\chapter*{Introducción}

\markboth{\sc INTRODUCCIÓN}{\sc INTRODUCCIÓN}

En esta tesis estudiamos las singularidades de la aplicación exponencial en
variedades Riemannianas y Finslerianas, y el objeto conocido en inglés como
\emph{cut locus}, \emph{ridge}, \emph{medial axis} o \emph{skeleton}, de los
cuales sólo el último término suele traducirse al castellano.
En primer lugar mejoramos los resultados existentes sobre las singularidades de
la aplicación exponencial y la estructura del cut locus, y después aplicamos
estos resultados a los problemas de frontera para ecuaciones de Hamilton-Jacobi
y a la conjectura de Ambrose.

El cut locus es un objeto de interés para muchas disciplinas: geometría
diferencial, teoría de control óptimo, teoría de transporte óptimo,
procesamiento de imágenes, estadística y una herramienta útil en algunas
demostraciones de resultados en otras disciplinas en las que el cut locus en sí
no es un objeto de interés directo.

Durante la primera fase recogimos resultados sobre la estructura del cut locus
provenientes de muchas de estas disciplinas, encontrando resultados
duplicados, y mucho desconocimiento en cada área del trabajo que sobre este
objeto se hacía desde las otras disciplinas.
Cuando aportamos nuestros propios resultados, tuvimos que elegir una notación
que no podía ser compatible con toda la literatura existente.

Nuestros resultados sobre estructura en los capítulos \ref{Chapter: structure} y
\ref{Chapter: balanced} generalizan resultados bien conocidos y demostrados
muchas veces de forma independiente, que describen la estructura del cut locus
excepto por un conjunto de codimensión $2$, lo que es útil para muchas
aplicaciones, pero no para todas, aumentando el conocimiento del cut locus hasta
codimensión $3$.

Estos resultados de estructura son esenciales para nuestras aportaciones a la
teoría de Problemas de Frontera para Ecuaciones de Hamilton-Jacobi en el
capítulo \ref{Chapter: balanced}, donde conectamos la noción de solución de
viscosidad con la solución clásica por características caracterizando el lugar
singular de la primera como un cut locus, o como un \emph{balanced split locus},
noción que identificamos en este trabajo aunque estaba implícito en trabajos
previos.

Creemos que los resultados sobre las singularidades de la aplicación exponencial
del capítulo \ref{Chapter: structure} podrían ser útiles para extender la
demostración de la conjetura de Ambrose que aportamos a todas las métricas
riemannianas.
En~el capítulo \ref{chapter: ambrose}, damos una demostración nueva de la
conjetura de Ambrose que cubre un conjunto genérico de variedades riemannianas,
pero en el capítulo \ref{chapter: beyond}, pergeñamos una estrategia que podría
servir para dar una demostración más general que usa de forma esencial los
resultados de estructura mencionados.

El resto de esta disertación doctoral será en inglés para ser útil a un público
más amplio, esperamos que este hecho no suponga un impedimento al lector
interesado. 
\let\elchapter\thechapter 
\let\laseccion\thesection 
\let\lafigura\thefigure 
\renewcommand{\thechapter}{} 
\renewcommand{\thesection}{\Roman{section}} 
\renewcommand{\thefigure}{\thesection.\arabic{figure}} 

\chapter*{Acknowledgements}

\markboth{\sc ACKNOWLEDGEMENTS}{\sc ACKNOWLEDGEMENTS}

Yanyan Li introduced me to the Ambrose conjecture and sparked my interest in the conjecture during my stay at Rutgers University.
He is a great teacher, mathematician, and human being.

During the preparation of the papers \cite{Nosotros} and \cite{Nosotros2}, I had many helpful conversations with young and senior mathematicians. The list includes Biao Yin, Luc Nguyen, Juan Carlos \'Alvarez Paiva, Ireneo Peral, Yanyan Li and Marco Fontelos.
Finally, the referee of \cite{Nosotros2} was very helpful. Whoever that was, please receive my warmest regards.

I have also talked with many people about the Ambrose conjecture. The list includes Juan Carlos \'Alvarez Paiva, Paolo Piccione, Herman Gluck.

Other resources were more impersonal but equally useful.
Neil Strickland and Ben Wieland answered a question I posted in the algebraic topology list ALGTOP-L.
The Wikipedia helped save a lot of time by providing quick answers to many simple questions.
The site mathoverflow already contained answers to a few questions before we could even word them correctly.
Some anonymous mathematicians scanned, uploaded and shared a big mathematical library. They helped a lot, and they have my respect and my full support.
I hold even more respect for all the mathematicians that released their work directly to the public. The wonderful book of Allen Hatcher was particularly helpful. You even can find one picture from his book in this thesis (with permission of the author).
A special place goes to the Sage community, an open source mathematical software that I've used mainly for teaching, but also to do computations and explore some hypothesis related to this thesis. I'm proud to have been part of that community for several years.

Luis Guijarro was always helpful and respectful as a thesis advisor.
Our friendship has grown during these years and will survive this work.
His patience knows no bounds.

During most of the years I worked on this thesis, I was lucky to work on the Mathematics Department of the Universidad Autónoma de Madrid.
The different universities and research centers at Madrid make it a great place to stay tuned with the latests advances in mathematics, but it is the warmth and fellowship of the people in the department that made those years so pleasant.
In particular, I want to recall my office mates Pedro Caro and Carlos Vinuesa, with whom I had many laughs and interesting mathematical conversations, and Daniel Ortega, who taught me by example that is more satisfying to be useful to the department than to improve the cv. Daniel is also the latex guru of the department, and is responsible for fixing all the badboxes and other stylistic errors in this thesis.

Besides my positions in the department, I was partially supported during the preparation of this work by grants MTM2007-61982 and MTM2008-02686 of the MEC and the MCINN respectively. At the end of this work, I was supported by the Instituto Nacional de Empleo.

\paragraph{Agradecimientos}

Let me switch now to spanish: the rest is more personal.

La casualidad quiso que mi amigo Daniel estuviera en la UAM preparando su tesis cuando entré como profesor ayudante. Dani me abrió muchas puertas, y durante estos años, al igual que siempre, fue una persona dispuesta a escuchar cualquier problema en cualquier área y a proponer soluciones creativas.

María y Clara han sido mis compañeras durante toda mi vida matemática. Con ellas he compartido matemáticas y muchas otras cosas. Creo que me han hecho crecer como persona de un modo tan profundo que listar sus contribuciones a esta tesis en particular sería frívolo. De no haberlas conocido, otra persona habría escrito este trabajo.

Clara y mis dos hijos C\'esar y H\'ector son mi pasión y fuerza vital. Subido a hombros de estos gigantes, afronto el futuro con alegría, incluso con optimismo, aún intuyendo algunos de los momentos difíciles que nos esperan, confiado de que podrán superar todas las dificultades.

Pero quiero dedicar esta tesis a mis padres. Desde niño me he sentido siempre orgulloso de mi madre. Incluso ahora, siento que tengo mucho que aprender de ella para poder ser el padre que mis hijos merecen. Mi padre es ahora el abuelo de mis hijos, y es un orgullo haber contribuido a elevarle a este status. En un hombre con muchas virtudes, su estilo y saber hacer como abuelo se elevan sobre todas las demás. De todo el legado de mis padres, sin duda su impronta sobre sus hijos y sus nietos será la más importante y duradera. Su gran humanidad me acompaña y me da fuerza en los momentos fáciles y difíciles.

\chapter*{Introduction}
\markboth{\sc INTRODUCTION}{\rightmark}

The goal of this thesis is to study the \strong{singularities of the exponential
map} of \emph{Riemannian and Finsler manifolds} (a concept related to
\emph{caustics} and \emph{catastrophes}), and the object known as the
\strong{cut locus} (aka \emph{ridge}, \emph{medial axis} or \emph{skeleton},
with applications to differential geometry, control theory, statistics, image
processing...), to improve existing results about its structure, to look at it
in new ways, and to derive applications to the \strong{Ambrose conjecture} and
the \strong{Hamilton-Jacobi equations}.

\section{Relation between Hamilton-Jacobi equations and Finsler Geometry}

Boundary Value Problems of Hamilton-Jacobi (HJBVP) are intimately relationed to Finsler Geometry. 
In such problems, we look for an unknown function $u:M\rightarrow\RR$ satisfying the following equations:
\begin{eqnarray*}
  H(p,du(p))\;=&1\quad&p\in M\notag\\
  u(p)\;=&g(p)&p\in \partial M\notag
\end{eqnarray*}
where the first equation is a \emph{non-linear first order partial differential equation} and the second equation prescribes the \emph{boundary values} for $u$.

We ask for the following conditions:
\begin{itemize}
 \item $M$ is a smooth compact manifold of dimension $n$ with boundary
 \item $H:T^\ast M\rightarrow RR$ is a smooth function defined on the \emph{cotangent space} to $M$, $H^{-1}(1)\cap T^{\ast}_{p} M$ strictly convex for every $p$
 \item $g:\partial M\rightarrow \RR$ smooth
\end{itemize}

Furthermore, the \emph{boundary data} $g$ and the \emph{equation coefficients} $H$ must satisfy a \strong{compatibility condition}:

\begin{equation*}
 \vert g(y) - g(z)\vert < d(y,z)\qquad \forall y,z \in\partial M
\end{equation*}

where $d$ is the distance on $M$ induced by the following \emph{Finsler metric}:

\begin{equation*}
 \varphi_{p}(v)=\sup\left\lbrace 
\left\langle v,\alpha\right\rangle_{p}\, :\,
\alpha\in T^{\ast}_{p} M, \,H(p,\alpha)=1
\right\rbrace 
\end{equation*} 

The above definition gives a norm in every tangent space $T_p M$.
Indeed, $H$ can be redefined so that $H$ is positively homogeneous of order $1$: $H(p,\lambda \alpha)=\lambda H(p, \alpha)$ for $\lambda>0 $, and the HJBVP is the same.
Then, $H$ is a norm at every cotangent space and $\varphi$ is the dual norm in the tangent space.

A \strong{classical solution} to these equations has been known for a long time, and it admits a geometrical interpretation in terms of Finsler geometry.

First, using the definition of dual form in Finsler geometry (see 
\ref{dual one form}), we define the \emph{characteristic vector field} at points
$p\in \partial M $:
\begin{eqnarray*}
 &\varphi_p(X_p)=1\notag\\
 &\widehat{X_p}\vert_{T(\partial M)}=d g\notag\\
 &X_p\text{ points inwards}
\end{eqnarray*}

The (projected) characteristic curves are the geodesics $M$ with initial point $z\in \partial  M$ and initial speed given by the characteristic vector field.

A local smooth solution $u$ to the HJBVP can be computed near $\partial  M$ following characteristic curves:

\begin{dfn*}
Let $U$ be a neighborhood of $\partial M$ such that every point $q\in U$ belongs to a unique (projected) characteristic contained in $U$ and starting at a point $p\in \partial M$ (the point $p$ is often called the \emph{footpoint} of $q$).

The \strong{solution by characteristics} $u:U\rightarrow \RR$ is defined as follows: if $\gamma:[0,t]\rightarrow  M$ is the unique (projected) characteristic from a point $p\in\partial M $ to $q=\gamma(t)$ that does not intersect $Sing$, then
$$u(q)=g(p)+t$$
\end{dfn*}

In this way, the classical solution can be defined in a neighborhood of $\partial M$, but not in all of $M$.

A different notion of solution appeared later (see \cite{Lions}).
The solution (in the \emph{viscosity} sense) to the above HJBVP is given by the Lax-Oleinik formula:
\begin{equation*}
u(p)=\inf_{q\in \partial M}
\left\lbrace 
   d(p,q)+ g(q)
\right\rbrace
\end{equation*} 
where $d$ is again the distance function induced from the Finsler metric. We defer the definition to \ref{viscosity solution of HJBVP}, because the actual definition of the viscosity solution plays no role in this thesis.
All we need to know is that the viscosity solution is given by the above formula.

Thus, when $g=0$, the solution to the equations is the distance to the boundary.

In theorem \ref{reduce HJ to boundary value 0}, we prove that when $g\neq 0$, the viscosity solution is also a distance function, but to the boundary of a larger manifold $\tilde{M}\supset M $.

\section{Singularities of the exponential map}

The \emph{exponential map} from a point or submanifold in a Finsler manifold is defined in the same way as that of Riemannian manifolds and has similar properties.

Let $M$ be a smooth Finsler manifold, $p$ a point in $M$ and $v\in T_p M$ a tangent vector to $M$ at $p$.
Then the exponential of $v$ is the point $\exp_p(v)=\gamma(1)\in M$, for the unique geodesic $\gamma$ that starts at $p$ and has initial speed vector $v$.
The exponential map from $p$ is a diffeomorphism from a small ball near the origin, but it can develop singularities as we move far away from the origin.

The exponential map from a submanifold $L\subset M$ is defined for (some) vectors of the normal bundle of $L$ in $M$: let $p$ be a point in $L$ and $v\in T_p M$ a vector orthogonal to the subspace $T_p L$, the exponential of $v$ is the point $\exp_L(v)=\gamma(1)\in M$, for the unique geodesic $\gamma$ that starts at $p$ and has initial speed vector $v$.
If the submanifold is the boundary $\partial M$ of a closed manifold $M$, the definition is the same, but the exponential is only defined in the \emph{inner normal bundle}.

This time, the exponential map is a diffeomorphism from a tubular neighborhood of the zero section of the normal bundle of $L$ into a tubular neighborhood of $L$ in $M$ and again, it can develop singularities and self-intersections if we consider larger vectors.

The singularities, however, are not those of an arbitrary smooth map between $n$-dimensional manifolds.
Let us restrict for a moment to the exponential map from a point $p$ in a manifold $M$ without boundary.
For a point $x\in T_p M$ where $\exp_p$ is singular, the \emph{order of conjugacy} of $x$ is the corank of the linear map $d_x\exp_p $.
Along a radial line in $T_p M$, the singularities cannot cluster: if we add the orders of all the singularities along a radial line, in a small neighborhood of a point $x\in T_p M$ of order $k$, the number is always $k$.

It makes sense, thus, to talk about \emph{the $k$-th conjugate point in the direction $x$}, for a point $x$ in the unit ball in $T_p M$.
This is the point $t_0\cdot x $, for $t_0>0$, such that $t_0\cdot x$ is a conjugate point of order $j$ and such that the sum of the orders of all the singularities along the radial segment $t\rightarrow t\cdot x $, for $t<t_0$ is an integer between $k-j$ and $k-1$.

This allows us to define the function $\lambda_k$ that maps $x\in B_1(T_p M) $ to the parameter $t=\lambda_k(x)$ such that $t\cdot x $ is the the $k$-th conjugate point in the directon $x$.
It follows from the above that $\lambda_k$ is continuous.

In \cite{Itoh Tanaka 98}, J. I. Itoh and M. Tanaka proved that for Riemannian manifolds, all the functions $\lambda_k$ are locally Lipschitz continuous.
In theorem \ref{landa es Lipschitz}, we prove that all $\lambda_k$ are locally Lipschitz continuous in Finsler manifolds.
M. Castelpietra and L. Rifford were working on this result simultaneously, and shortly after our proof appeared, they gave a proof that $\lambda_1$ is locally semiconcave, a stronger property that Lipschitz. This property does not hold for $\lambda_k$, for $k>1$.
The three proofs are different.

\section{Structure of the cut locus}\label{section: introduction about structure of the cut locus}

Let $M$ be a Riemannian or Finsler manifold, and let $S$ be a smooth submanifold of any dimension.
$S$ can also be a point, and we also consider $S=\partial M$. It is easy to see that this latter case contains the others, substracting a tubular neighborhood of the submanifold $S$.
The cut locus $Cut_{S}$ of $S$ in $M$ can be defined in several equivalent ways:

\begin{figure}[ht]
 \centering
 \begin{tabular}{cc}
 \includegraphics[width=0.4\textwidth]{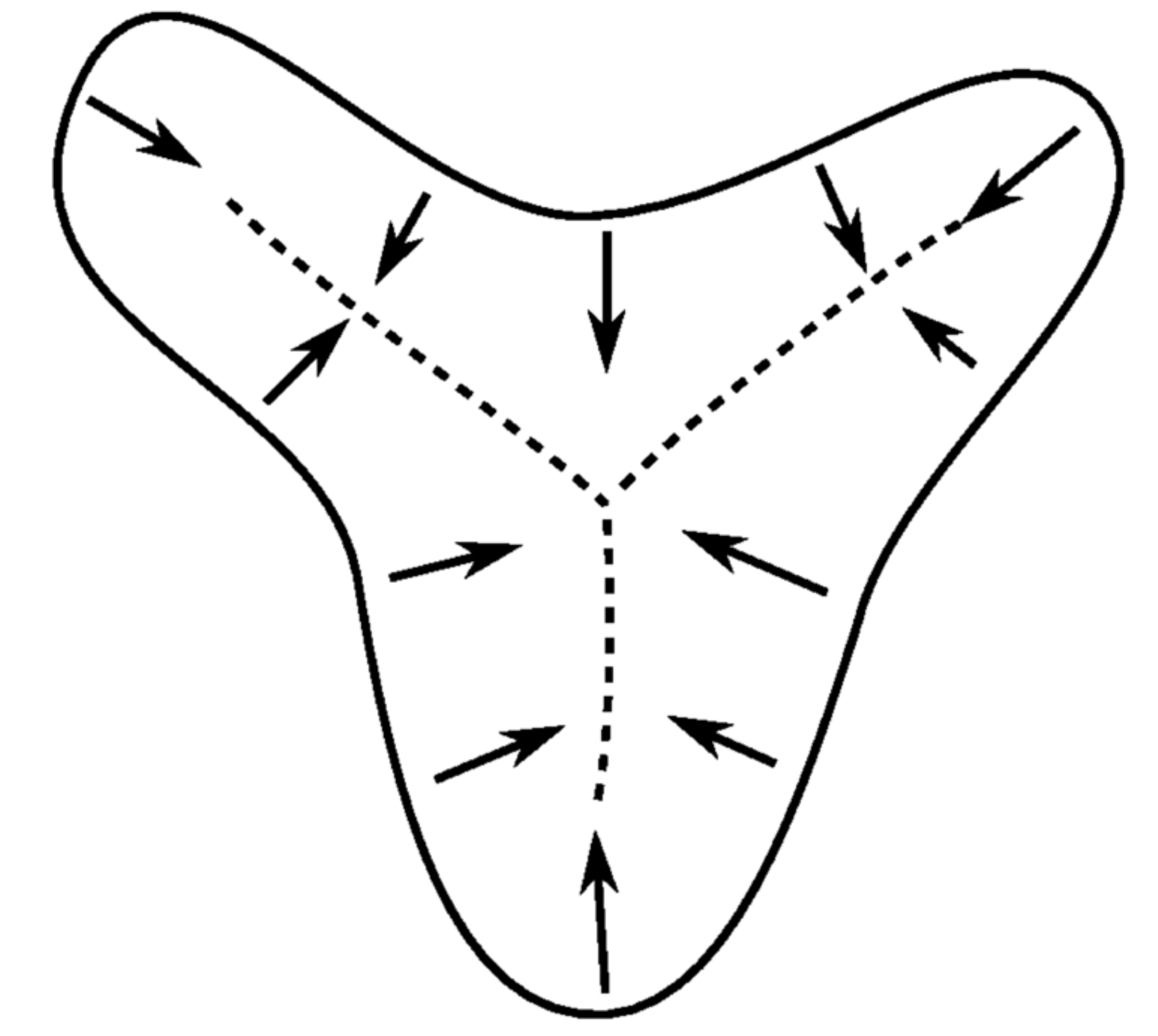}
 \includegraphics[width=0.4\textwidth]{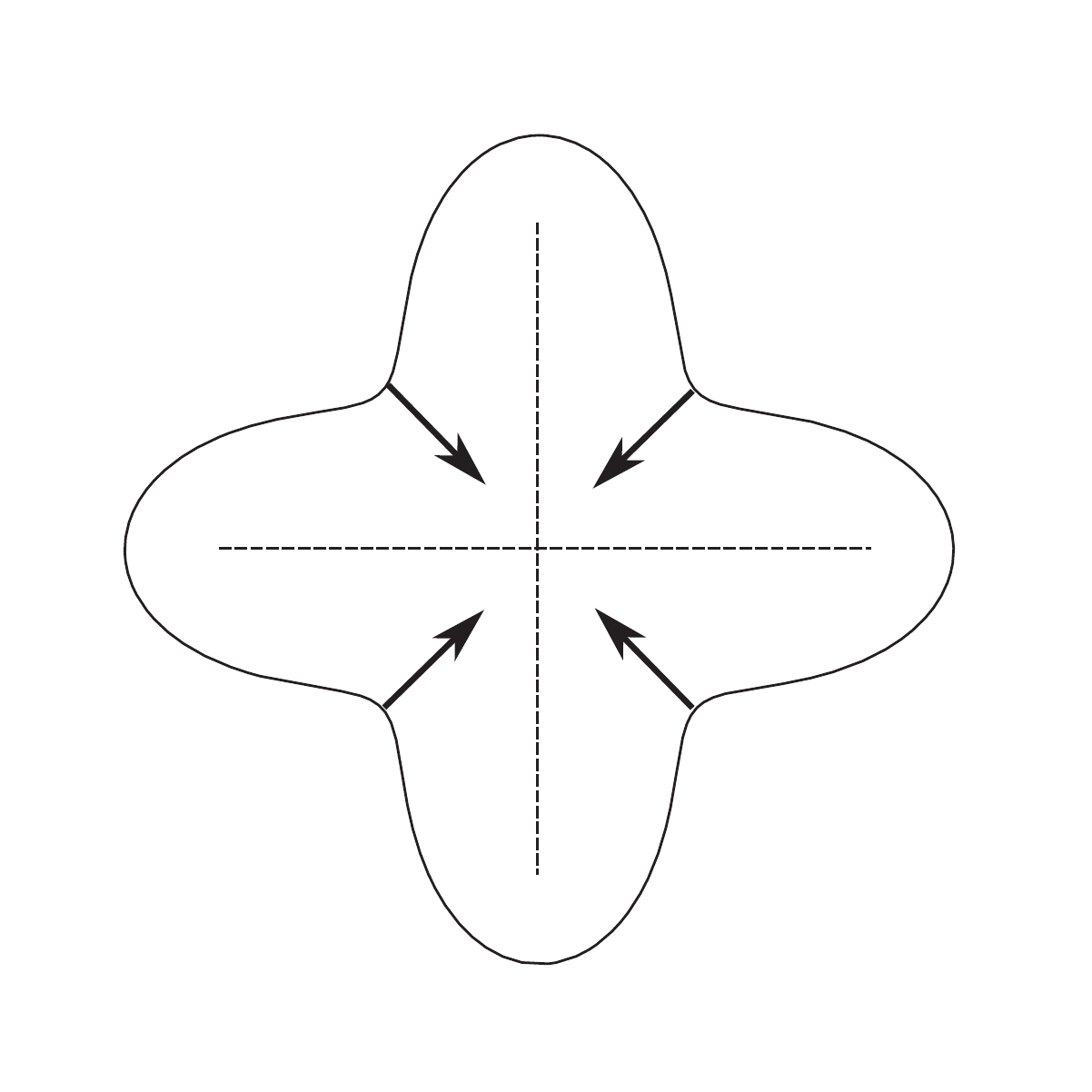}  
 \end{tabular} 
 \caption{Two open sets in $\RR^2$ with their respective cut loci (with respect to the boundary). The second set has analytic boundary, and its cut locus is a subanalytic set.}
 \label{figure: good cut loci}
\end{figure}

\begin{itemize}
 \item Every unit speed geodesic $\gamma$ with initial point in $S$ and initial speed orthogonal to $S$ will minimize the distance between $\gamma(0)$ and $\gamma(t)$, for small $t$. Define $t_{cut}=\sup\{t:d(\gamma(0),\gamma(t))=t \}$. The cut locus is the set of all points $\gamma(t_{cut}(\gamma))$ for all geodesics $\gamma$ starting at $S$ with initial speed orthogonal to $S$.
 \item For a point $p\in M$, let $Q_p$ be the set of points $q\in S$ such that $d(p,q)=d(p,S)$. Then $Cut_{S}$ is the closure of the set of points such that $Q_p$ has more than one element.
 \item The distance function to $S$ is singular exactly when $Q_p$ has more than one element, so we can also define $Cut_{S}$ as the closure of the set where the distance function to $S$ is singular.
 \item $Cut_{S}$ is also the set of points such that either $Q_p$ has more than one element, or $Q_p=\{q\}$, and $q$ is conjugate to $p$ along one geodesic that minimizes the distance between them.
\end{itemize}

These sets have been studied by mathematicians from many different fields:

  \begin{itemize}
    \item $Cut_{\partial M}$ is a deformation retract of $ M$ (and $Cut_S$ is a deformation retract of $ M\setminus S$ ) (obvious).

    \item $Cut_S$ is the union of a ($n - 1$)-dimensional smooth manifold
    consisting of points with two minimizing non-conjugate geodesics and a set of Hausdorff
    dimension at most $n - 2$ (Hebda83,
    Itoh-Tanaka98, Barden-Le97,
    Mantegazza-Menucci03 for the riemannian case).

    \item $Cut_S$ is stratified by the dimension of the
    subdifferential of the function \emph{distance to $S$}: $\partial d_S$ (
    Alberti-Ambrosio-Cannarsa-Etcetera92-94).

    \item It has finite Hausdorff measure $\mathcal{H}^{n - 1}$ (Itoh-Tanaka00 for the riemannian case, Li-Nirenberg05 and Castelpietra-Rifford10 for Finsler manifolds).

    \item If all the data is analytic, $Cut_S$ is a stratified 	\emph{subanalytic set} ( Buchner77).

    \item If we add a generic perturbation to $H$ or $ M$, $Cut_S$ becomes a stratified smooth manifold (Buchner78).
  \end{itemize}

Despite these facts, cut loci can be non-triangulable, even for surfaces of revolution (see \cite{Gluck Singer}).
Also, their combinatorial topology can be complicated: even though $Cut_S$ is
homotopic to $ M\setminus S$, there are metrics in the $3$-dimensional sphere
whose cut locus is a simplicial complex equivalent to the house with two rooms
(see figure \ref{figure: house_with_two_rooms}).

\begin{figure}[ht]
 \centering
 \includegraphics[width=0.8\textwidth]{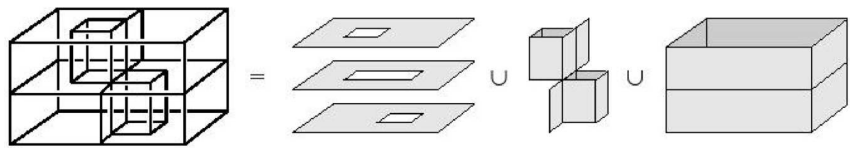}  
 \caption{The house with two rooms}
 \label{figure: house_with_two_rooms}
\end{figure}

We have improved the previous knowledge about \emph{cut loci} and the \emph{singular set} of solutions to static Hamilton-Jacobi equations in the following ways:

\begin{figure}[ht]
 \centering
 \includegraphics[width=0.8\textwidth]{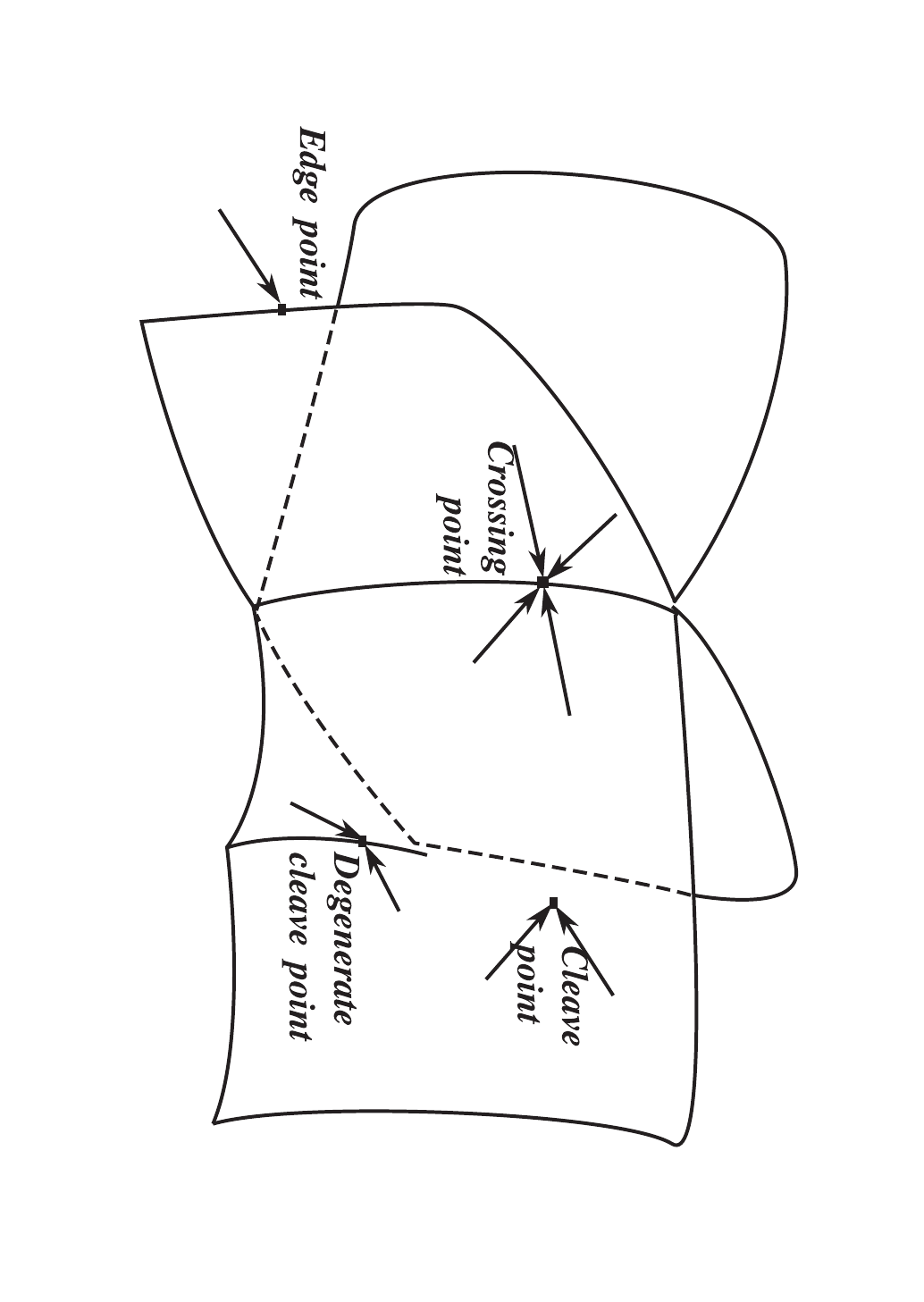}
 
 \caption{Different types of points in the cut locus, except for a set of small Haussdorff dimension}
 \label{figure: complete description of cut locus}
\end{figure}

\begin{itemize}
 \item In theorem \ref{regularity of mu for nontrivial g}, we show that  the \emph{singular set} of a solution to a Hamilton-Jacobi BVP is the cut locus of a Finsler manifold.
 This was only known when the boundary data was identically zero.
 \item In the same chapter, in subsection \ref{section: Balanced}, we also show that cut loci are what we call \emph{balanced split loci}. In general, there are many balanced split loci besides the cut locus, but there is only one such set on a simply connected manifold with connected boundary.
 \item In theorem \ref{complete description}, we prove a local structure theorem for balanced split loci. The theorem states that the cut locus consists only of \emph{cleave points}, \emph{edge points}, \emph{degenerate cleave points}, \emph{crossing points}, and a remainder, of Hausdorff codimension at least 3 (see figure \ref{figure: complete description of cut locus}).
 \item In chapter \ref{Chapter: balanced}, we provide more detailed descriptions of a balanced split loci near the different types of points listed above (see \ref{uniqueness near order 1 points}, \ref{structure of cleave points}, \ref{structure of crossing points1}, \ref{structure of crossing points2} and \ref{structure of crossing points3}).
\end{itemize}

We believe that our description of the cut locus can be useful in other contexts. For instance, brownian motion on manifolds is often studied on the complement of the cut locus from a point, and then the results have to be adapted to take care of the situation when the brownian motion hits the cut locus. As brownian motion almost never hits a set with null $\Hndos$ measure (but will almost surely hit any set with positive $\Hndos$ measure), we think our result can be useful in that field.

\section{Characterization of the cut locus and the balanced split loci}

As we mentioned in the previous section all cut loci, and thus the singular set of HJBVP, are \emph{balanced split loci}. In chapter \ref{Chapter: balanced}, we study and classify all possible balanced split loci. The following list is a summary of theorems \ref{maintheorem0},  \ref{maintheorem1} and  \ref{maintheorem2}:

\noindent\begin{tabular}{l@{\,\,}l@{\,\,}l}
  $M$ is simply connected & $\rightarrow$ & The singular set is the
  unique\\
  {\emph{and}} $\partial M$ connected &  & balanced split locus\\
  &  & \\
  $M$ is simply connected, & $\rightarrow$ & We can add a different
  constant \\
  $\partial M$ is {\emph{{\strong{{\emph{not}}}}}} connected &  & to
  $g$ at each component of $\partial M$ and get\\
  &  & different balanced split loci (see fig \ref{figure: cut locus of two
spheres})\\
  &  & \\
  General case & $\rightarrow$ & Balanced split loci are parametrized by a\\
  &  & neighborhood of $0$ in $H_{n - 1} (M, \mathbb{R})$ (see fig \ref{figure:
cut locus of torus})
\end{tabular}

\begin{figure}[ht]
 \centering
 \includegraphics[width=0.6\textwidth]{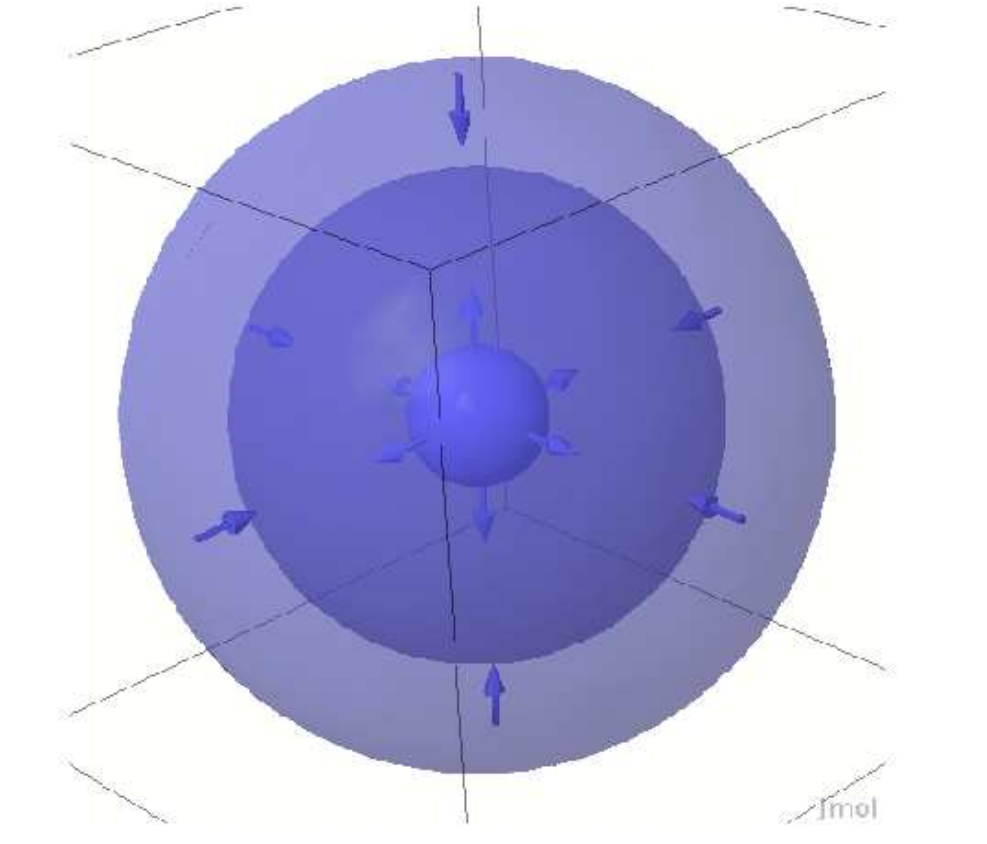}
 \caption{A balanced split locus when $\partial M$ consists of two concentric spheres}
 \label{figure: cut locus of two spheres}
\end{figure}

\begin{figure*}[ht]
 \centering
 \includegraphics[bb= 0 0 175 173,width=0.4\textwidth]{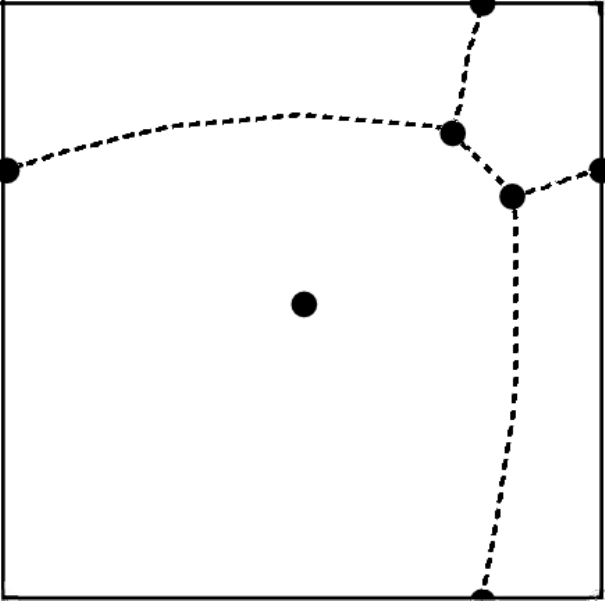}
 \caption{A balanced split locus of a non-simply-connected manifold (a torus with a disc removed)}
 \label{figure: cut locus of torus}
\end{figure*}

\section{The Ambrose conjecture}

Let ($M_{1} ,g_{1}$) and ($M_{2} ,g_{2}$) be two Riemannian manifolds
{\emph{of the same dimension}}, with selected points $p_{1} \in M_{1}$ and
$p_{2} \in M_{2}$. We'll speak about the pointed manifolds ($M_{1} ,p_{1}$)
and ($M_{2} ,p_{2}$). Any linear map $L:T_{p_{1}} M_{1} \rightarrow T_{p_{2}}
M_{2}$ induces the map $\varphi = \exp_{2} \circ L \circ ( \exp_{1} |_{O_{1}}
)^{-1}$, defined in any domain $O_{1} \subset T_{p_{1}} M_{1}$ such that
$e_1|_{O_{1}}$ is injective (for example, if $O_{1}$ is a normal neighborhood of
$p_{1}$).

\begin{figure}[H]
 \centering
 \includegraphics[width=0.8\textwidth]{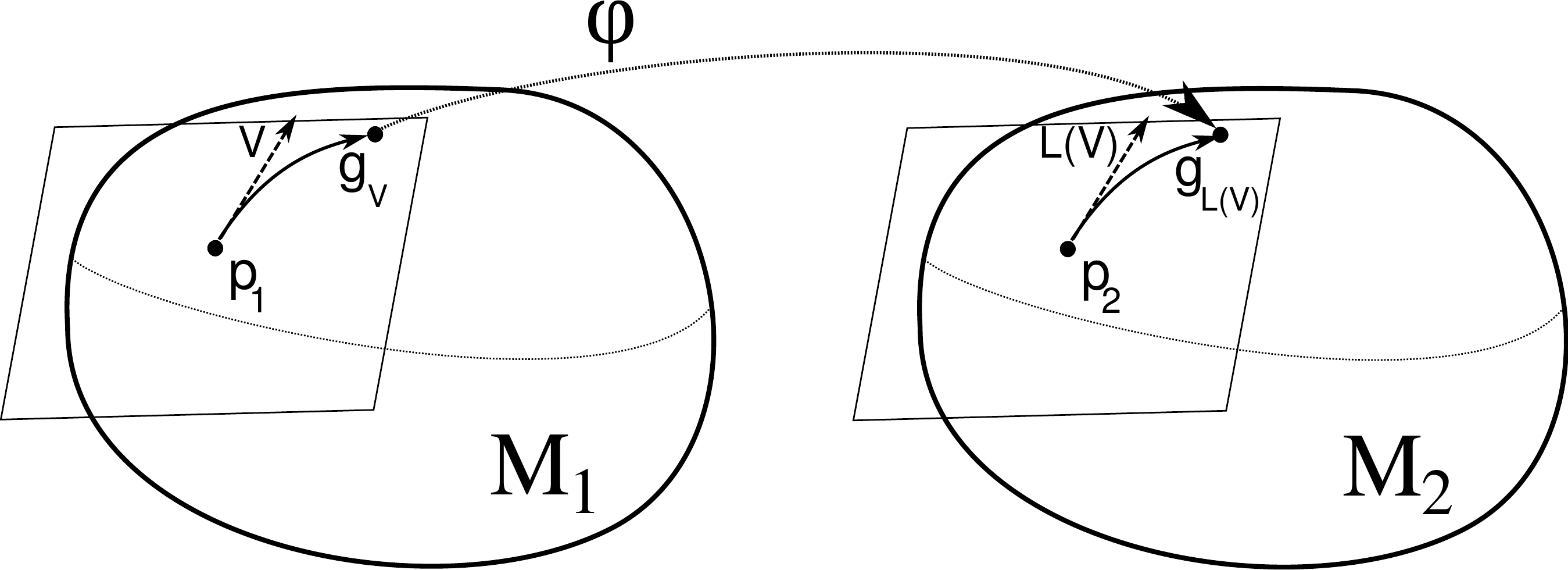}
 
 \caption{The map $\varphi$ induced by the linear map $L$}
 \label{figure: statement of ambrose}
\end{figure}

This idea was introduced by E. Cartan {\cite{Cartan51}}, who proved that under some (strong) hypothesis on the curvature of $M_1$ and $M_2$, this map is a local or even a global isometry. We prefer to rephrase it in the following terms:

\begin{dfn*}
  Let $(M_{1}, p_{1} )$ and $(M_{2}, p_{2} )$ be complete Riemannian manifolds
  of the same dimension with base points, and $L:T_{p_{1}} M_{1} \rightarrow
  T_{p_{2}} M_{2}$ a linear isometry.
  
  Let $\gamma_{1}$ and $\gamma_{2}$ be the geodesics defined in the interval
  $[0,1]$, with $\gamma_{1}$ starting at $p_{1}$ with initial speed vector $x
  \in T_{p_{1}} M_{1}$ and $\gamma_{2}$ starting at $p_{2}$ with initial speed
  $L(x)$.
  
  For any three vectors $v_{1} ,v_{2} , v_{3}$ in $T_{p_{1}} M_{1}$, define:
  \begin{itemizedot}
    \item $R_{1} (x, v_{1} ,v_{2} , v_{3} )$ is the vector of $\Tone$ obtained by
    performing parallel transport of $v_{1} ,v_{2} , v_{3}$ along
    $\gamma_{1}$, computing the Riemann curvature tensor at the point
    $\gamma_{1} (1) \in M_{1}$ acting on those vectors, and then performing
    parallel transport backwards up to the point $p_{1}$.
    
    \item $R_{2} (x, v_{1} ,v_{2} , v_{3} )$ is the vector of $\Tone$ obtained by
    performing parallel transport of $L(v_{1} ),L(v_{2} ), L(v_{3} )$ along
    $\gamma_{2}$, computing the Riemann curvature tensor at the point
    $\gamma_{2} (1) \in M_{2}$ acting on those vectors, then performing
    parallel transport backwards up to the point $p_{2}$, and finally applying
    $L^{-1}$ to get a vector in $T_{p_{1}} M_{1}$.
  \end{itemizedot}
  If $R_{1} (x, v_{1} ,v_{2} , v_{3} )=R_{2} (x, v_{1} ,v_{2} , v_{3} )      \forall
  v_{1} ,v_{2} , v_{3} \in T_{p_{1}} M_{1}$ for any two geodesics $\gamma_{1}$
  and $\gamma_{2}$ as above, we say that \emph{the curvature tensors of $(M_{1}
  ,p_{1} )$ and $(M_{2} ,p_{2} )$ are \strong{$L$-related}}.
\end{dfn*}

The usual way to express that the curvature tensors of $(M_{1}, p_{1} )$ and $(M_{2}, p_{2} )$ are $L$-related is to say that the {\emph{parallel traslation of curvature along radial geodesics}} of $(M_{1}, p_{1} )$ and $(M_{2}, p_{2} )$ coincides.

We say $(M_{1} ,p_{1} )$ and $(M_{2} ,p_{2} )$ are
\strong{$L$-related} iff they have the same dimension and, whenever $\exp_{1}
|_{O_{1}}$ is injective for some domain $O_{1} \subset T_{p_{1}} M_{1}$,
then the map $\varphi = \exp_{2} \circ L \circ ( \exp_{1} |_{O_{1}} )^{-1}$
is an isometric inmersion.

Cartan's theorem states that if the curvature tensors of $(\!M_{1},p_{1} )$
and $(M_{2} ,p_{2} )$ are $L$-related, then $(\!M_{1} ,p_{1}\!)$ and $(\!M_{2}
,p_{2}\!)$ are $L$-related. \mbox{(see lemma 1.35 of~\cite{Cheeger Ebin}).}

In 1956 (see \cite{Ambrose}), W. Ambrose proved a global version of the above
theorem, but with stronger hypothesis.
A \emph{broken geodesic} is the concatenation of a finite amount of geodesic segments.

The theorem of Ambrose states that if the parallel traslation of curvature along \emph{broken geodesics} on $M_{1}$ and $M_{2}$ coincide,
then there is a global isometry $\varphi:M_1\rightarrow M_2$ whose differential at $p_1$ is $L$.
It is simple to prove that $\varphi$ can be constructed as above.
It is enough if the hypothesis holds for broken geodesics with only one ``{\emph{elbow}}'' (the reader can find more details in \cite{Cheeger Ebin}).

However, he conjectured that the same hypothesis of the Cartan's lemma should suffice, except for the obvious counterexamples of covering spaces:

The {\strong{Ambrose conjecture}} states that if the curvature tensors of $(M_{1} ,p_{1} )$ and $(M_{2} ,p_{2} )$ are $L$-related, and $M_{1}$ and $M_{2}$ are \emph{simply connected}, there is a \strong{global isometry} $\psi :M_{1} \rightarrow M_{2}$ such that $\psi \circ \exp_{1} = \exp_{2} \circ L$.

Ambrose himself was able to prove the conjecture if all the data is analytic.
In \cite{Hicks}, in 1959, the conjecture was generalized to parallel transport for affine connections, and in \cite{Blumenthal Hebda}, in 1987, to Cartan connections.
Also in 1987, in the paper \cite{Hebda}, James Hebda proved that the conjecture was true for surfaces that satisfy a certain regularity hypothesis, that he was able to prove true in 1994 in \cite{Hebda94}. J.I. Itoh also proved the regularity hypothesis independently in \cite{Itoh96}.
The latest advance came in 2010, after we had started our research on the Ambrose conjecture, when James Hebda proved in \cite{Hebda10} that the conjecture holds if $M_1$ is a \emph{heterogeneous manifold}. Such manifolds are generic.

In \ref{main theorem ambrose} we provide a new proof that works for surfaces and for a generic class of manifolds in dimension $3$. James Hebda's proof in \cite{Hebda10} is shorter than ours and works for any dimension.
However, his proof does not extend to arbitrary metrics and we think that our proof might, even though we have been unable to complete all the details to this day. Indeed, the proof presented here extends to some manifolds that are not covered by the result of J. Hebda, as this is truly a different approach. 
In chapter \ref{chapter: beyond} we also provide some hints on how our proof might extend to a $3$-dimensional manifold with an arbitrary metric.

We remark that in both our proof and James Hebda's, it is only used that $(M_{1} ,p_{1} )$ and $(M_{2} ,p_{2} )$ are $L$-related, and there is no need to use the original hypothesis that the curvarture tensors are $L$-related.

\section{Summary of results}

\addtolength{\leftmargini}{-12pt} 
\begin{itemize}
 \item 
 Reduction of a HJBVP with $g\neq 0$ to a HJBVP with
$g=0$: theorem \ref{reduce HJ to boundary value 0}, published in
\cite{Nosotros}.
 \item Proof that all $\lambda_k$ are locally Lipschitz continuous in Finsler manifolds: theorem \ref{landa es Lipschitz}, published in \cite{Nosotros}.
 \item Proof that  the \emph{singular set} of a solution to a Hamilton-Jacobi BVP is the cut locus of a different manifold: theorem \ref{regularity of mu for nontrivial g}, published in \cite{Nosotros}.
 \item Proof that cut loci in Finsler manifolds are \emph{balanced split loci}: section \ref{section: Balanced}, published in \cite{Nosotros}.
 \item Local structure theorem for balanced split loci: theorem \ref{complete description}, published in \cite{Nosotros}.
 \item Detailed descriptions of a balanced split loci near the different types of points, except for a set of Hausdorff codimension 3:  \ref{uniqueness near order 1 points}, \ref{structure of cleave points}, \ref{structure of crossing points1}, \ref{structure of crossing points2} and \ref{structure of crossing points3}, published in \cite{Nosotros2}.
 \item Characterization of the cut locus and the balanced split loci: theorems \ref{maintheorem0},  \ref{maintheorem1} and  \ref{maintheorem2}, published in \cite{Nosotros2}.
 \item Proof of the Ambrose conjecture for a metric in $\mathcal{G}_{M}$: theorem \ref{main theorem ambrose}, publication pending.

\item A whole selection of fresh conjectures for the new generations in
chapter~\ref{chapter: beyond}...
\end{itemize}
\addtolength{\leftmargini}{12pt}

\mainmatter

\def\thechapter{\elchapter} 
\def\thesection{\laseccion} 
\def\thefigure{\lafigura} 
\chapter{Preliminaries}\label{Chapter: preliminaries}

\subsection*{Notation} 
We fix the following notation for the rest of the thesis:

\begin{itemize}
 \item A $C^{\infty}$ manifold $M$ with boundary $\partial M$.
 \item A Finsler metric $\varphi$ on $M$.
 \item The distance $d$ induced on $M$ by $\varphi$, and the ``distance to the boundary'' $d_{\partial M}(p)=\inf_{q\in\partial M} d(q,p)$.
 \item The geodesic vector field $\rho$ in $TM$.
 \item The time-$t$ flow of $\rho$: $\Phi_t:TM\rightarrow TM$. For a non-complete manifold, such as a manifold with boundary, these maps are not defined in all of $TM$.
 \item A smooth map $\Gamma:\partial M \rightarrow TM$  that is a section of the projection map $\pi:TM\rightarrow M$ of the tangent to $M$, and such that $\Gamma(p)$ points to the inside of $M$ for every $p\in \partial M$.
\end{itemize}

\section{A little background}
\subsection{Approximate Tangent Cone}
\begin{dfn}\label{vector de x a y}
For a pair of points $p, q\in  M $ such that $q$ belongs to a convex neighborhood of $p$, we define, following \cite{Itoh Tanaka 00},
\begin{equation}
 v_{p}(q)=\dot{\gamma}(0)
\end{equation}
as the speed at $0$ of the unique unit speed minimizing geodesic $\gamma$ from $p$ to $q$.
\end{dfn}

\begin{dfn}\label{approximate tangent cone}
The \emph{approximate tangent cone} to a subset $E\subset M$ at $p$ is:
$$T(E,p)=\left\lbrace
rv:\;v=\lim \frac{v_{p}(p_{n})}{|v_{p}(p_{n})|},
\exists \lbrace p_{n}\rbrace\subset E, p_{n}\rightarrow p, r>0
\right\rbrace$$
and the \emph{approximate tangent space} $Tan(E,p)$ to $E$ at $p$ is the vector space generated by $T(E,p)$.
\end{dfn}

We remark that the definition is independent of the Finsler metric, despite its apparent dependence on the vectors $v_{p}(p_{n}) $.

\subsection{Subdifferentials of semiconcave functions}\label{subsection: semiconcave functions}
Concave (or convex) functions $u$ may not be differentiable, but they are differentiable almost everywhere.
This allows for a simple definition of the subdifferential of a convex or semiconvex function (see \cite{Cannarsa Sinestrari} for different definitions):

\begin{dfn}
The \emph{subdifferential} $\partial u(p)$ of a concave function $u$ at $p$ can be defined as the convex hull of all the one forms that are limits of differentials of $u$ at points $p_n$ where $u$ is differentiable.
\end{dfn}

\begin{dfn}
A function $u:S\rightarrow \RR$ is \strong{semiconcave} if there exists a
nondecreasing upper semicontinuous function $\omega: \RR_+ \rightarrow \RR+$
such that $\lim\limits_{\rho\rightarrow 0} \omega(\rho)=0$ and, for any $x,y\in
\RR$ and $\lambda\in[0,1]$:
$$
\lambda u(x) + (1-\lambda)u(y) - u(\lambda u + (1-\lambda) y) \leq \lambda(1-\lambda)|x-y|\omega(|x-y|)
$$

The function $\omega$ is called the \emph{modulus of semiconcavity}.
\end{dfn}

The concave functions are those for which $\omega$ is zero.
The functions with a \emph{linear modulus} of semiconcavity are exactly those that can be written as the sum of a concave and a smooth function.

It turns out that \emph{viscosity} solutions to HJBVP (to be defined later), and distance functions to the boundary in Finsler geometry, are semiconcave functions.
Those functions share many of the regularity results of concave functions. For example, they are differentiable almost everywhere. Indeed, this statement can be refined: let $u:S\rightarrow \RR$ be a semiconcave function and define the sets
$$\Sigma_k=\{x\in S: \dim(\partial u(p))\geq k\}$$
Then $\Sigma_k$ is countably $\mathcal{H}^{n-k}$ rectifiable: it is contained in the union of countably many $C^1$ hypersurfaces of dimension $n-k$, plus a $\mathcal{H}^{n-k}$-negligible set.

Check \cite{Cannarsa Sinestrari} and \cite{Alberti Ambrosio Cannarsa} for more details.

\subsection{Duality in Finsler Geometry}

\begin{dfn}\label{Finsler orthogonal hyperplane to a vector}
The \strong{orthogonal hyperplane} to a vector $v\in T_{p}M$ is the hyperplane tangent at $v$ to the level set
$$
\left\lbrace v' \in T_{p}M: \varphi(v') = \varphi(v) \right\rbrace
$$

The orthogonal distribution to a vector field is defined pointwise.
\end{dfn}

\begin{remark}
There are two unit vectors with a given hyperplane as orthogonal hyperplane. The first need not to be the opposite of the second unless $H$ is symmetric ($H(-v)=H(v)$).
We thus define two unit normal vectors to a hypersurface (the \emph{inner} normal and \emph{outer} normal).
\end{remark}

\begin{dfn}\label{dual one form}
The \strong{dual one form} to a vector $v\in T_{p}M$ with respect to a Finsler metric $\varphi $ is the unique one form $\omega \in T^{\ast}_{p}M$ such that $\omega(v)=\varphi(v)^{2}$ and $\omega\vert_{H}=0$, where $H$ is the orthogonal hyperplane to $v$.

For a vector field $X$, the dual differential one-form is obtained by applying the above construction at every point. We will often use the notation $\widehat{X}$ for the dual one-form to the vector field $X$.
\end{dfn}

\begin{remark}
In Riemannian geometry, a different scaling is often used: $\omega(v)=1$ instead of $\omega(v)=\varphi(v)^{2}$. We have chosen this definition because it makes the duality map $v\rightarrow \omega $ continuous.
\end{remark}

\begin{remark}
In coordinates, the dual one form $w$ to the vector $v$ is given by:

$$
w_{j}=\varphi(v)\frac{\partial\varphi}{\partial v^{j}}(p,v)
$$

Actually $\varphi$ is 1-homogeneous, so Euler's identity yields:
$$
w_{j}v^{j}
 =\varphi(v)\frac{\partial\varphi}{\partial v^{j}}(p,v)v^{j}
 =\varphi(v)^2
$$
and, for a curve $\gamma (-\varepsilon, \varepsilon)\rightarrow T_{p}M$ such that $\gamma(0)=v$, $\varphi(\gamma(t))=\varphi(v)$ and $\gamma'(0)=z$,
$$
\varphi(v)w_{j}z^{j}
 =\varphi(v)^2\frac{\partial}{\partial t}\vert_{t=0}\varphi (\gamma(t))=0
$$
\end{remark}

\begin{remark}
The hypothesis on $\varphi$ imply that the orthogonal form to a vector is unique, and the correspondence between a vector and its dual one form is one to one, but it is only linear for riemannian metrics.
\end{remark}

\section{Exponential maps of Finsler Manifolds}\label{section: intro to exponential maps}

\begin{dfn}\label{various definitions related to the exponential map}
Let $D(\Phi_t)$ be the domain of the time-$t$ flow of the geodesic vector field in $T M$.
We introduce the sets $V$ and $W$:
\begin{equation}\label{V is ...}
 V=\left\lbrace (t,p): t\geq 0, p\in \partial M, \Gamma(p)\in D(\Phi_t) \right\rbrace \subset \RR\times\partial M
\end{equation}
\begin{equation}\label{W is ...}
 W=\left\lbrace \Phi_t(\Gamma(p)): t\geq 0, p\in \partial M, \Gamma(p)\in D(\Phi_t) \right\rbrace \subset TM
\end{equation}

$V$ and $W$ are diffeomorphic through the map $G(t,p) = \Phi_t(\Gamma(p))$.
We define the \strong{exponential map} $F$ associated to $(M,\Gamma)$ as $F = \pi\circ G:V\rightarrow M$.

The interior of $W$ is locally invariant under $\Phi_{t}$. This is equivalent to saying that $\rho$ is tangent to $W$. The \strong{radial vector} $r=\frac{\partial}{\partial t}$ is mapped to $\rho$ by $G_{\ast}$.
\end{dfn}

\begin{rmk}
The map $G$ is injective in its domain; its inverse can be computed walking a geodesic backwards until we hit the boundary for the first time.
In other words, $W$ is a smooth manifold and $G$ is a diffeomorphism from $V\subset\RR\times\partial M $.
\end{rmk}

\begin{rmk}
The map $G$ can also be written $G(t,p)=(F(p),dF_p(r))$, as follows from the geodesic equations.
\end{rmk}

In the particular case when $\Gamma(p)$ is the inner unit normal vector to $\partial M$, this is the standard ``exponential map from $\partial M$''.
This, in turn, includes the exponential map from a point $p$ in a manifold without boundary, in the following way:

Let $p$ be a point in a manifold without boundary, and remove a ball $B$ of small radius around $p$. The result is a manifold with boundary $\partial M = \partial B$, and the exponential map from $\partial B\subset M\setminus B$ coincides with the exponential map from $p\in M$. 

The same trick works for the exponential from a submanifold of any codimension inside a manifold without boundary, removing a tubular neighborhood around the submanifold.

The motivation for working in the above setting is that it also works for the Hamilton-Jacobi BVP with non-trivial boundary data, and it does not make the proofs more complicated.

\begin{dfn}\label{conjugate}
Let $x=(t,z)\in V$.

We say $x$ is \emph{conjugate} iff $F$ is not a local diffeomorphism at $x$. The \emph{order} of conjugacy is the dimension of the kernel of $dF$.

We say $x$ is \emph{a first conjugate vector} iff no point $(s,z)$, for $s<t$, is conjugate.

We also call the image of the radial vector $d_xF(r_x)$ a \emph{conjugate vector} (for $F$) whenever $x$ is conjugate.
\end{dfn}

In differential geometry, it is more usual to use the term \emph{focal} instead of \emph{conjugate}, when studying the distance function from a hypersurface, but other authors do otherwise (see for instance \cite{Li Nirenberg}). We have decided to use this term because our results about the Ambrose conjecture, that will appeal the most to differential geometers, deal only with the exponential map from a single point in a Riemannian manifold, while the other results appeal more to people working in PDEs, which prefer the term \emph{conjugate}.

\subsection{Regular exponential map}\label{subsection: regular exponential map}
The following proposition states some properties of a Finsler exponential map that correspond approximately to the definition of \emph{regular exponential map} introduced in \cite{Warner}. The second property is the only one that is not standard, but we need it to prove the existence of the special coordinates (and only for that).

\begin{prop}\label{regular exponential map}
The exponential map $F$ has the following properties:
\begin{itemize}
 \item $dF_{x}(r_{x})$ is a non zero vector in $T_{F(x)}M$.
 \item
at every point $x\in V$ there is a basis
$$
\mathcal{B}=\left\lbrace v_{1},..,v_{n} \right\rbrace
$$
of $T_{x}V$ where $v_{1}=r$ and $v_{n-k+1},..,v_{n}$ span $\ker dF_{x}$, and such that:
$$\mathcal{B}'=
\left\lbrace
dF(v_{1}), \dots, dF(v_{n-k}),
\widetilde{d^{2}F(r\sharp v_{n-k+1})},\dots
\widetilde{d^{2}F(r\sharp v_{n})}
\right\rbrace
$$
is a basis of $T_{F(x)}M$, where $\widetilde{d^{2}F(r\sharp v_{j})}$ is a representative of $d^{2}F(r\sharp v_{j})\in T_{F(x)}M/dF(TV_{x}) $, for $n-k+1\leq j\leq n $.

 \item Any point $x\in V$ has a neighborhood $U$ such that for any ray $\gamma $ (an integral curve of $r$), the sum of the dimensions of the kernels of $dF$ at points in $\gamma \cap U$ is constant.
 \item For any two points $x_{1}\neq x_{2}$ in $V$ with $F(x_{1})=F(x_{2})$, $dF_{x_{1}}(r_{x_{1}})\neq dF_{x_{2}}(r_{x_{2}})$
\end{itemize}
\end{prop}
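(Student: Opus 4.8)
The plan is to verify the four listed properties one at a time, since they are essentially independent. For the first property, $dF_x(r_x) \neq 0$: recall from Definition \ref{various definitions related to the exponential map} that $r = \frac{\partial}{\partial t}$ is mapped by $G_\ast$ to the geodesic vector field $\rho$, and that $F = \pi \circ G$. Since $G$ is a diffeomorphism onto $W$ and $\rho$ is nowhere zero on $W$ (a geodesic has nonvanishing speed), $dG_x(r_x)$ is a nonzero vector tangent to $W$; applying $d\pi$ to it gives the speed vector of the geodesic through $F(x)$, which is nonzero because unit-speed geodesics have unit speed. So $dF_x(r_x) = \dot\gamma(t) \neq 0$.

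For the fourth property, suppose $x_1 \neq x_2$ with $F(x_1) = F(x_2) =: q$ but $dF_{x_1}(r_{x_1}) = dF_{x_2}(r_{x_2})$. By the second remark after Definition \ref{various definitions related to the exponential map}, $G(x_i) = (F(x_i), dF_{x_i}(r_{x_i}))$, so these two equalities force $G(x_1) = G(x_2)$; but $G$ is injective, contradicting $x_1 \neq x_2$. Geometrically: two distinct geodesics arriving at $q$ with the same velocity would coincide by uniqueness of geodesics, and tracing them backwards to $\partial M$ gives the same footpoint and parameter.

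The third property is a restatement, for the Finsler exponential map $F$, of the classical ``no clustering of conjugate points along a ray'' fact. I would deduce it from the Lipschitz continuity of the conjugate-point functions $\lambda_k$ (Theorem \ref{landa es Lipschitz}), or argue directly: along a fixed ray $\gamma$, the order of conjugacy is the corank of $dF$ restricted appropriately, and the analogous statement for Jacobi fields (the dimension count of the space of Jacobi fields vanishing at two parameter values is bounded by $n$ and is locally controlled) gives a neighborhood $U$ of $x$ on which the sum of kernel dimensions along each ray $\cap\, U$ equals the order at $x$. Upper semicontinuity of the corank gives the bound in one direction; the lower bound comes from the fact that a conjugacy of order $k$ at $x$ forces, by continuity of the Jacobi field data, exactly that much conjugacy to persist nearby, counted with multiplicity.

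The main obstacle is the second property — the existence of the adapted basis $\mathcal{B}$ with its image $\mathcal{B}'$ a basis of $T_{F(x)}M$. The idea is to choose $v_{n-k+1}, \dots, v_n$ spanning $\ker dF_x$ and $v_1 = r$, and complete to a basis; then $dF(v_1), \dots, dF(v_{n-k})$ are independent in $T_{F(x)}M$ (they span $\operatorname{Image} dF_x$, of dimension $n-k$, provided $r \notin \ker dF_x$, which holds by the first property). It remains to show the second-order terms $d^2F(r \sharp v_j) \bmod \operatorname{Image} dF_x$, for $j = n-k+1, \dots, n$, are linearly independent in the $k$-dimensional quotient $T_{F(x)}M / dF(TV_x)$. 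This is where the Finsler geodesic structure enters: one identifies $d^2 F(r \sharp v_j)$, modulo the image, with the covariant derivative of the Jacobi field associated to $v_j$ — essentially $\nabla_t J_j$ at the conjugate time — and independence of these follows because the Jacobi fields $J_{n-k+1}, \dots, J_n$ vanish to exactly first order at the conjugate point (a nonzero Jacobi field vanishing at a point has nonzero derivative there, and distinct ones have independent derivatives). I would carry this out in a coordinate chart around $x$, writing $F$ in a second-order Taylor expansion and using the geodesic equation to express the mixed second derivative $\partial_t \partial_{v_j} F$ in terms of Jacobi field data; checking that the symmetrization $r \sharp v_j$ picks out precisely this mixed term, and that the Finsler (non-symmetric) connection does not spoil the linear-independence argument, is the delicate point.
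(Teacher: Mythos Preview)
Your arguments for properties 1 and 4 are correct and match the paper's treatment: property 4 is exactly the uniqueness-of-geodesics argument the paper gives, and property 1 is standard (the paper refers to Milnor). The paper, however, does not prove properties 1--3 from scratch; it simply invokes \cite[Theorem 4.5]{Warner}, and then only explains why the second property is a reformulation of Warner's condition (R2). So your approach is more ambitious than the paper's.

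There is one genuine problem. For property 3 you propose to use Theorem \ref{landa es Lipschitz}, but that theorem is proved later and its proof relies on the special coordinates of Section \ref{subsection: special coordinates}, which in turn are constructed \emph{from} this proposition (specifically from property 2). So that route is circular. Your fallback (``or argue directly'') via semicontinuity of the corank and persistence of Jacobi-field zeros is the right idea and is essentially what Warner does, but as written it is only a gesture, not a proof.

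For property 2, your outline is exactly the content of Warner's argument: identify $d^{2}F(r\sharp v_{j})$ modulo $\mathrm{Image}\, dF_{x}$ with the covariant derivative $\nabla_{t}J_{j}$ of the Jacobi field corresponding to $v_{j}$, and use that a nontrivial Jacobi field vanishing at a point has nonzero covariant derivative there, so $J\mapsto \nabla_{t}J$ is injective on the $k$-dimensional space of vanishing Jacobi fields. You correctly flag the Finsler case as the delicate point; the paper sidesteps this by citing Warner, who carries out precisely this computation for sprays (hence for Finsler exponentials). If you want a self-contained proof, you would need to reproduce that part of \cite{Warner}.
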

\paragraph{Proof.} The first three properties follow from the work of Warner \cite[Theorem 4.5]{Warner} for a Finsler exponential map. We emphasize that they are local properties. The last one follows from the uniqueness property for second order ODEs. We remark that the second property implies the last one locally.

Indeed, properties 1 and 3 are found in standard textbooks such as \cite{Milnor}. Let us recall some of the notation in \cite{Warner} and \cite{Ambrose Palais Singer} and show the equivalence of the second property with his condition (R2) on page 577:

\begin{itemize}
 \item Second order tangent vectors at a point in an $n$-dimensional manifold are written in coordinates in the following way ($a_{ij}$ is symmetric):
$$
\sigma(f)=\sum_{i,j}a_{ij}\frac{\partial^{2} f}{\partial x_{i}\partial x_{j}} + \sum_i b_i \frac{\partial f}{\partial x_{i}}
$$
 \item $T^{2}_{x}V$ is the set of second order tangent vectors at the point $x$ in the manifold $V$.
 \item The second order differential of $F:V\rightarrow M$ at $x$ is the map $d^{2}_{x}F:T^{2}_{x}V \rightarrow T^{2}_{x}M $ defined by:
$$
d^{2}_{x}F(\sigma)f=\sigma(f\circ F)
$$
 \item The symmetric product $v\sharp w$ of $v\in T_{x}V$ and $w\in T_{x}V$ is a well defined element of $T^{2}_{x}V/T_{x}V$
 with a representative given by the formula:
$$
(v\sharp w)f=\frac{1}{2}(v( w(f))+w (v(f)))
$$
for arbitrary extensions of $v$ to $w$ to vector fields near $x$.
\item The map $d^{2}_{x}F $ induces the map $d^{2}F:T^{2}V_{x}/TV_{x} \rightarrow T^{2}_{F(x)}M/dF(TV_{x})$ by the standard procedure in linear algebra.
\item For $x\in V$, $v\in T_{x}V$ and $w\in \ker dF_{x}$, $d^{2}F(v\sharp w)$ makes sense as a vector in the space $T_{F(x)}M/dF(TV_{x}) $. For any extension of $v$ and $w$, the vector $d^{2}F(v\sharp w)$ is a first order vector.
\end{itemize}

Thus, our condition is equivalent to property (R2) of Warner:
\begin{quotation}  
At any point $x$ where $\ker dF_{x}\neq 0$, the map 
$$ 
d^{2}F:T^{2}V_{x}/TV_{x}
\rightarrow T^{2}M_{F(x)}/dF(TV_{x})
$$ 
sends $\langle r_{x}\rangle \sharp \ker
dF_{x}$ isomorphically onto  $T_{F(x)}M/dF(TV_{x})$.
\end{quotation}

We recall that $d_{x}F(v)$ can be computed as a Jacobi field when we consider
only the exponential map from a point, but this interpretation is somewhat
diluted when we work with the exponential map from the boundary.

\begin{remark}
 Warner defines a \emph{regular exponential map} as any map from $T_p M$ into $M$ thast satisfies the properties of proposition \ref{regular exponential map}. We do not need to work in that generality, as it does not include any new application.
\end{remark}

\subsection{Special coordinates}\label{subsection: special coordinates}

In order to study the map $F$ more comfortably, we define the \emph{special coordinates}, a pair of coordinates near a conjugate point $z$ of order $k$ and its image $F(z)$ that make $F$ specially simple. They can be defined for any \emph{regular exponential map}.

Let $\mathcal{B}=\lbrace v_{1},\dots,v_{n}\rbrace$ be the basis of $T_{z}V$ indicated in the second part of Proposition \ref{regular exponential map}, and $\mathcal{B}'_{F(z)}$ the corresponding basis at $F(z)\in M$ formed by vectors $d_{z}F(v_{i}), i\leq n-k$, and
$\widetilde{d^{2}_{z} F(v_{1}\sharp v_{n-k+1})},\dots,
\widetilde{d^{2}_{z} F(v_{1}\sharp v_{n})}$.

Make a linear change of coordinates in a neighborhood of $F(z)$ taking $\mathcal{B}'_{F(z)}$ to the canonical basis.
The coordinate functions $F^{i}(x)-F^{i}(z)$ of $F$ for $i\leq n-k$ can be extended to a coordinate system near $z$ with the help of $k$ functions having $v_{n-k+1},\dots,v_{n}$ as their respective gradients at $z$. In these coordinates $F$ looks:
\begin{equation}\label{F near order k in special coords}
 F(x_{1},\dots,x_{n})=(x_{1},\dots,x_{n-k}, F_{z}^{n-k+1}(x),\dots,F_{z}^{n}(x))
\end{equation}
and
\begin{itemize}
\item $\frac{\partial}{\partial x_i} F_j(x^0)$ is $0$ for any $i$ and $j\geq n-k+1$,
\item $\frac{\partial}{\partial x_i} \frac{\partial}{\partial x_1} F_j(x^0)$ is $\delta^i_j$, for $i,j\geq n-k+1$.
\item $\frac{\partial}{\partial x_1} (x^0) = r_{x^0} $
\end{itemize}

\section{The Cut Locus}\label{section: The Cut Locus}

Let $M$ be a Finsler manifold with boundary $\partial M$. We mentioned earlier that the study of the exponential map from a point or submanifold can be reduced to a exponential map of a manifold with boundary $\partial M$.
The same principle applies to the cut locus, so we will consider only the cut locus from the boundary $Cut = Cut_{\partial M}$. It can be defined in several equivalent ways:
\begin{dfn}\label{definition: the cut locus}
Let $M$ be a Finsler manifold with boundary $\partial M$:
\begin{itemize}
 \item For any $p\in \partial M$, let $\gamma_p$ be the unit speed geodesic $\gamma$ with initial point in $\partial M$ and initial speed orthogonal to $\partial M$ (and inner-pointing). $\gamma_p$ minimizes the distance between $\gamma(0)$ and $\gamma(t)$, for small $t$. Define
 $$t_{cut}(p)=\sup\{t:d(\gamma_p(0),\gamma_p(t))=t \}$$
 Then
 $$
 Cut = \{\gamma_p(t_{cut}(p)):p\in\partial M\}
 $$
 \item For a point $p\in M$, let $Q_p$ be the set of points $q\in \partial M$ such that $d(q,p)=d_{\partial M}(p)$. Then $Cut$ is the closure of the set of points such that $Q_p$ has more than one element.
 \item The function $d_{\partial M}$ is singular exactly when $Q_p$ has more than one element, so we can also define $Cut$ as the closure of the set where the distance function to $\partial M$ is singular.
 \item $Cut$ is also the set of points such that either $Q_p$ has more than one element, or $Q_p=\{q\}$, and $(d(q,p),q)\in V$ is conjugate.
\end{itemize}
\end{dfn}

The reader can find the proof of those facts for Riemann manifolds in standard textbooks in Riemannian geometry (see for example chapter 13 in \cite{do Carmo}). The proof for Finsler manifolds can be found in \cite{Li Nirenberg}, for example.
For basic information about the distance function, such as its differentiabilty, the reader can use \cite{Cannarsa Sinestrari}.

Much is known about the set $Cut$:

  \begin{itemize}
    \item $Cut$ is a deformation retract of $ M$ (obvious).

    \item It is the union of a ($n - 1$)-dimensional smooth manifold consisting of points with two minimizing geodesics and a set of Hausdorff
    dimension at most $n - 2$. This easy but important lemma appears to have been proven at least in \cite{Hebda83},  \cite{Itoh Tanaka 98}, \cite{Barden Le} and \cite{Mantegazza Mennucci}, always for the Riemannian case. We give a proof of this result in lemma \ref{theorem: conjugate of order 1} that is also true for Finsler manifolds.

    \item It is stratified by the dimension of the subdifferential of the \emph{distance to the boundary} $\partial d_{\partial M}$. This follows from the properties of semiconcave functions mentioned in \ref{subsection: semiconcave functions}, as $d_{\partial M}$ is semiconcave.
    This result can be found in \cite{Alberti Ambrosio Cannarsa}, and their proof works verbatim for balanced split locus in our \ref{main theorem 4}.

    \item The local homology of the cut locus of $p\in M$ at a point $q$ is related to the set of minimizing geodesics from $p$ to $q$ (see \cite{Ozols} and \cite{Hebda83}).

    \item The cut locus has finite Hausdorff measure $\mathcal{H}^{n - 1}$. This result can be found in \cite{Itoh Tanaka 00} for Riemannian manifolds, and in \cite{Li Nirenberg} for Finsler manifolds. We provide a new proof of this result for Finsler manifolds in \ref{section:rho is Lipschitz}. M. Castelpietra and L. Rifford also gave a proof of this result that appeared shortly after the one we present here.

    \item If all the data is analytic, $Cut$ is a stratified analytic manifold (see \cite{Buchner analytic}). We will not use this result.

    \item If we add a generic perturbation to $H$ or $ M$, \strong{Sing} becomes a stratified smooth manifold. Furthermore, for dimension up to $6$, the cut locus is generically stable, in the sense that adding a small perturbation to the metric, the new cut locus would still be diffeomorphic to the original one. This is a very deep result from M. A. Buchner, a student of J. Matter, and it is beyond the scope of this work to include a proof of his results (see \cite{Buchner} and \cite{Buchner Stability}), but we will make use of them in chapter \ref{chapter: ambrose}.
  \end{itemize}

On the other hand, H. Gluck and D. Singer proved that there are non-triangulable cut loci in \cite{Gluck Singer} and, in \cite{Gluck Singer II}, that there are surfaces of revolution such that the cut locus from any point is non-triangulable.
Another difficulty is mentioned by J. Hebda in \cite{Hebda}: even though the homotopy of $Cut$ is known, and even if the cut locus is a simplicial complex, that simplicial complex may not descend simplicially to one point, and this was a major obstacle in extending his proof of the Ambrose conjecture for surfaces to manifolds of higher dimension.

\section{Hamilton-Jacobi equations and Finsler geometry}

Here we review the relationship between Hamilton-Jacobi equations and Finsler geometry.
The reader can find more details in \cite{Li Nirenberg}, \cite{Lions} and \cite{Cannarsa Sinestrari}.

Here $M$ is a manifold with possibly non-compact boundary.
We are interested on solutions to the system (which we will refer to as a \strong{Hamilton-Jacobi Boundary Value Problem} or \strong{HJBVP} for short):
\begin{eqnarray}
H(p,du(p))\;=&1\quad&p\in  M\label{HJequation}\\
u(p)\;=&g(p)&p\in \partial M\label{HJboundarydata}
\end{eqnarray}

\noindent where  $H:T^{\ast}M\rightarrow \RR$ is a smooth function that is $1$-homogeneous and subadditive for linear combinations of covectors lying over the same point $p$, and $g:\partial M\rightarrow \RR$ is a smooth function that satisfies the compatibility condition:
\begin{equation}\label{compatibility condition}
\left\vert g(p)-g(q)\right\vert < kd(p,q)\quad \forall p, q\in \partial  M
\end{equation}
for some $k<1$.
Here $d$ is the distance induced by the Finsler metric $\varphi$ that is the pointwise dual of the metric in $T^{\ast}M$ given by $H$:
\begin{equation}\label{phi is the dual of H}
 \varphi_{p}(v)=\sup\left\lbrace
\left\langle v,\alpha\right\rangle_{p}\, :\,
\alpha\in T^{\ast}_{p}M, \,H(p,\alpha)=1
\right\rbrace
\end{equation}

\begin{remark}
 As mentioned in the introduction, we can ask that $H^{-1}(1)\cap T^{\ast}_{p} M$ is strictly convex for every $p$ instead of asking that
 $H$ is $1$-homogeneous and subadditive for linear combinations of covectors lying over the same point $p$. The properties are not equivalent for a function $H$, but the equations that we consider are the same, because the only thing we use about $H$ is the $1$-level set. If the sets $H^{-1}(1)\cap T^{\ast}_{p} M$ are convex for every $p$, we can replace $H$ with a new one that is $1$-homogeneous, subadditive for linear combinations of covectors lying over the same point $p$, and has the same $1$-level set.
\end{remark}

\subsection{Characteristics of the HJBVP}\label{subsection: characteristics of the HJBVP}

Using the definition \ref{dual one form} of dual form in Finsler geometry, we can restate the usual equations for the \emph{characteristic vector field} at points $p\in \partial M $:
\begin{eqnarray}\label{equation for the characteristic vector field}
 &\varphi_p(X_p)=1\notag\\
 &\widehat{X_p}\vert_{T(\partial M)}=d g\notag\\
 &X_p\text{ points inwards}
\end{eqnarray}

We define the characteristic vector field as a map $\Gamma:\partial  M \rightarrow T M$, by the formula $\Gamma(p)=X_p$.
The characteristic curves are the integral curves of the geodesic vector field in $T M$ with initial point $\Gamma(z)$ for $z\in \partial  M$.
The projected characteristics are the projection to $ M$ of the characteristics.

A local \emph{classical} solution $u$ to the HJBVP can be computed near $\partial  M$ following characteristic curves:

\begin{dfn}\label{solution by characteristics}
Let $U$ be a neighborhood of $\partial M$ such that every point $q\in U$ belongs to a unique (projected) characteristic contained in $U$ and starting at a point $p\in \partial M$ (the point $p$ is often called the \strong{footpoint} of $q$).

The \strong{solution by characteristics} $u:U\rightarrow \RR$ is defined as follows: if $\gamma:[0,t]\rightarrow  M$ is the unique (projected) characteristic from a point $p\in\partial M $ to $q=\gamma(t)$ that does not intersect $Sing$, then
$$u(q)=g(p)+t$$
\end{dfn}

\subsection{Viscosity solutions of Hamilton-Jacobi equations}\label{section: Singular locus of  Hamilton-Jacobi}

The solution found above using characteristic curves is only defined in a neighborhood of $\partial M$.

There is a different notion of solution to these equations. 
The inspiration came from the following observation: if we add a small viscosity term like $-\varepsilon \Delta$ to \eqref{HJequation}, that equation becomes semilinear elliptic, and admits a global solution. So the idea appeared to add that viscosity term, and later let $\varepsilon$ converge to $0$.
Even though this was the inspiration, it later became clear that it was better to use a different definition, using comparison functions.

\begin{dfn}\label{viscosity solution of HJBVP}
 A function $u:M\rightarrow\RR$ is a \strong{viscosity subsolution} (resp. supersolution) to the HJBVP given by \eqref{HJequation} and \eqref{HJboundarydata} iff for any $\phi\in C^1(M)$ such that $u-\phi$ has a local maximum at $p$ (resp., a local minimum), we have:
 \begin{equation}
 H(p,D\phi(p)) \leq 0 \qquad\text{(resp. } H(p,D\phi(p)) \geq 0 \text{)}
 \end{equation}
 
  It is a \strong{viscosity solution} to the HJBVP iff it is both a subsolution and a supersolution.
\end{dfn}

The reader can find more details in \cite{Lions} or \cite{Cannarsa Sinestrari}.

However, in this thesis we will not be concerned neither with the inspiration that gave them the name ``viscosity solutions'', nor with the actual definition. We only need to know that the \emph{unique} \strong{viscosity solution} is given by the Lax-Oleinik formula (see theorem 5.2 in \cite{Lions}):
\begin{equation} \label{Lax-Oleinik}
u(p)=\inf_{q\in \partial M}
\left\lbrace
   d(p,q)+ g(q)
\right\rbrace
\end{equation}

The viscosity solution can be thought of as a way to extend the classical solution by characteristics to the whole $M$.
When $g=0$, the solution \eqref{Lax-Oleinik} is the \emph{distance to the
boundary}. 

As we mentioned earlier, the viscosity solution to a HJBVP is \emph{a semiconcave function}. It is interesting to remark that a semiconcave function that satifies the equation \ref{HJequation} at the points at which it is differentiable is the viscosity solution to the HJBVP.

\chapter{A new way to look at Cut and Singular Loci}\label{Chapter: IntroBalanced}

\section{The relation between Finsler geometry and Hamilton-Jacobi BVPs}

Let us consider the HJBVP given by \eqref{HJequation} and \eqref{HJboundarydata} when $g=0$.
On the one hand, $\Gamma(p)$ is the inner pointing unit normal to $\partial M$ at $p$.
On the other hand, the viscosity solution $u$ given by \eqref{Lax-Oleinik} is the distance to the boundary.
This has nice consequences: for example, the singular set of $u$ is a cut locus, and we can apply the various structure results about the cut locus mentioned in section \ref{section: The Cut Locus}.

Our intention in this section is to adapt this result to the case $g>0$. If $\partial M$ is compact, a global constant can be added to an arbitrary $g$ so that this is satisfied and $S$ is unchanged.
We still require that $g$ satisfies the compatibility condition \ref{compatibility condition}.

Subject to these conditions, our goal is to show that the Finsler manifold $(M,\varphi)$ can be embedded in a new manifold with boundary $(N,\tilde{\varphi})$ such that $u$ is the restriction of the unique solution $\tilde{u}$ to the problem
\begin{eqnarray}
\tilde{H}(p,d\tilde{u}(p))=1&\quad&p\in N\notag\\
\tilde{u}(p)=0 &&p\in \partial N\notag
\end{eqnarray}
thus reducing to the original problem ($\tilde{H} $ and $\tilde{\varphi}$ are
dual to one another as in \ref{phi is the dual of H}). This allows us to
characterize the singular set of  \eqref{Lax-Oleinik} as a cut locus, which
automatically implies that all the structure results about the cut locus in
section \ref{section: The Cut Locus} apply to the more general case.

\begin{dfn}\label{indicatrix}
The \emph{indicatrix} of a Finsler metric $\varphi$ at the point $p$ is the set
$$I_{p}=\left\lbrace v\in T_{p}M\,: \, \varphi(p,v)=1\right\rbrace  $$
\end{dfn}

\begin{lem}\label{cambia metrica respeta campo}
Let $\varphi_{0}$ and $\varphi_{1}$ be two Finsler metrics in an open set $U$, and let $X$ be a vector field in $U$ such that:
\begin{itemize}
\item The integral curves of $X$ are geodesics for $\varphi_{0}$.
\item $\varphi_{0}(p,X_{p})=\varphi_{1}(p,X_{p})=1$
\item At every $p\in U$, the tangent hyperplanes to the indicatrices of $\varphi_{0}$ and $\varphi_{1}$ in $T_{p}U$ coincide.
\end{itemize}
Then the integral curves of $X$ are also geodesics for $\varphi_{1}$
\end{lem}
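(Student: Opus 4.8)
The plan is to work directly with the geodesic equations and exploit the fact that, in Finsler geometry, the geodesic flow is governed by the Hamiltonian $H$ dual to $\varphi$, and the only data about $\varphi$ that enters is the $1$-level set of $H$ together with the contact structure (tangent hyperplanes) along it. First I would pass to the cotangent picture: let $H_0$ and $H_1$ be the Hamiltonians dual to $\varphi_0$ and $\varphi_1$ as in \eqref{phi is the dual of H}. The hypothesis that the integral curves of $X$ are $\varphi_0$-geodesics means the curve $t\mapsto(\gamma(t),\widehat{X}_{\gamma(t)})$ (the dual one-form of Definition \ref{dual one form} transported along $X$) is an integral curve of the Hamiltonian vector field $\vec{H}_0$ on $T^\ast U$, living on the level set $\{H_0=1\}$. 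The goal is to show the same curve is an integral curve of $\vec{H}_1$.

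The key observation is that along the curve $\alpha(t)=\widehat{X}_{\gamma(t)}$ the two Hamiltonians have the same $1$-level set locally \emph{and} the same tangent hyperplane to that level set — this is exactly what the third hypothesis says, once translated through the duality $v\leftrightarrow\widehat v$ of Definition \ref{dual one form} (the orthogonal hyperplane to $X$ in $T_pU$ being dual to the tangent hyperplane of the indicatrix $\{H_i=1\}\subset T^\ast_pU$ at the covector $\widehat{X}_p$). Hence at each point $\alpha(t)$ the differentials $d H_0$ and $d H_1$ are proportional: $d H_1|_{\alpha(t)} = \lambda(t)\, d H_0|_{\alpha(t)}$ for some positive function $\lambda(t)$, because both annihilate the same tangent hyperplane to the common level set and both are "positive" in the radial direction (using $1$-homogeneity, $dH_i$ evaluated on the Euler/radial vector field equals $H_i=1>0$, which also forces $\lambda(t)=1$; this is where the normalization $\varphi_0(p,X_p)=\varphi_1(p,X_p)=1$ is used). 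Therefore $\vec{H}_1|_{\alpha(t)} = \lambda(t)\,\vec{H}_0|_{\alpha(t)} = \vec{H}_0|_{\alpha(t)}$, so $\alpha$ is, up to the reparametrization that $\lambda\equiv1$ makes trivial, an integral curve of $\vec{H}_1$ as well. Projecting back down with $\pi:T^\ast U\to U$ shows $\gamma$ is a $\varphi_1$-geodesic.

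Concretely, one can also argue in coordinates using the remark after Definition \ref{dual one form}: the geodesic equation for $\varphi_i$ can be written as the Euler--Lagrange (or Hamilton) system for $H_i$, and the coefficients of that system at a point $(p,\alpha)$ with $H_i(p,\alpha)=1$ depend only on $H_i$ and its first derivatives there; since $H_0$ and $H_1$ agree to first order along $\alpha(t)$ (same value $1$, same derivatives up to the scalar $\lambda\equiv1$), the two geodesic ODEs have the same right-hand side along our curve, and uniqueness for second-order ODEs finishes the argument.

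The main obstacle I expect is bookkeeping the duality correctly: one must check carefully that "tangent hyperplanes to the indicatrices in $T_pU$ coincide" really does translate into "$dH_0$ and $dH_1$ are parallel at $\widehat{X}_p$", and that the homogeneity/normalization conditions pin down the proportionality constant to be exactly $1$ rather than merely positive — otherwise one only gets that the curves agree as unparametrized geodesics, which is still enough for the intended application to the cut locus but is a slightly weaker statement than what is claimed. Handling the non-symmetric case of $\varphi$ (so that the indicatrix is not centrally symmetric and the duality $v\mapsto\widehat v$ is only a homeomorphism, not linear) requires a little care but does not change the essential argument, since everything is done pointwise along a single fixed curve.
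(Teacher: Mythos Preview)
Your proposal is correct, and your second ``concrete'' paragraph is essentially the paper's own proof: the paper works directly on the tangent side with the Euler--Lagrange form of the geodesic equation
\[
\varphi_{p}(\alpha(t),\alpha'(t))=\frac{d}{dt}\Bigl(\varphi_{v}(\alpha(t),\alpha'(t))\Bigr),
\]
chooses flow-box coordinates so that $X=\partial/\partial x^{1}$ has constant components, and then observes that hypothesis~2 forces $(\varphi_{0})_{p}=(\varphi_{1})_{p}$ along the curve while hypothesis~3 forces $(\varphi_{0})_{v}=(\varphi_{1})_{v}$ there, so the equation is the same for both metrics. This is exactly your ``the two geodesic ODEs have the same right-hand side along our curve'' argument.

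Your primary (Hamiltonian) route is a genuine alternative and is more conceptual, but be careful with one step: the level sets $\{H_{0}=1\}$ and $\{H_{1}=1\}$ in $T^{\ast}U$ do \emph{not} coincide, not even locally near $\alpha(t)$ --- the hypotheses only match the \emph{fiberwise} indicatrices to first order at $\widehat{X}_{p}$. So ``both annihilate the same tangent hyperplane to the common level set'' is not quite the right justification for $dH_{0}=dH_{1}$. What does work is: the fiber parts $\partial H_{i}/\partial\xi$ agree at $(p,\widehat{X}_{p})$ by the indicatrix hypothesis plus Euler's identity (which also gives $\lambda\equiv 1$, as you note), and then differentiating the identity $H_{i}(p,\widehat{X}_{p})=1$ in $p$ shows the base parts $\partial H_{i}/\partial p$ agree as well. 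With that one extra line your Hamiltonian argument goes through cleanly; the paper's Lagrangian version avoids the detour through duality at the cost of being slightly more coordinate-bound.
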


 \begin{proof}
Let $p$ be a point in $U$. Take bundle coordinates of $T_{p}U$ around $p$ such that $X$ is one of the vertical coordinate vectors. An integral curve $\alpha$ of $X$ satisfies:
$$(\varphi_{0})_{p}(\alpha(t),\alpha'(t))=
  (\varphi_{1})_{p}(\alpha(t),\alpha'(t))=1$$
because of the second hypothesis. The third hypothesis imply:
$$ (\varphi_{0})_{v}(\alpha(t),\alpha'(t)) =
  (\varphi_{1})_{v}(\alpha(t),\alpha'(t))$$
So inspection of the geodesic equation:
\begin{equation}
\varphi_{p}(\alpha(t),\alpha'(t))=
\frac{d}{dt} \left( \varphi_{v}(\alpha(t),\alpha'(t))\right)
\end{equation}
shows that $\alpha$ is a geodesic for $\varphi_{1}$.
\end{proof}

\begin{cor}\label{una Riemann con las geodesicas de una Fisnler}
Let $\varphi$ be a Finsler metric and $X$ a vector field whose integral curves are geodesics. Then there is a Riemannian metric for which those curves are also geodesics.
\end{cor}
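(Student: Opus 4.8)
The plan is to exhibit explicitly a Riemannian metric whose unit sphere (indicatrix) is tangent, at each point of the relevant hypersurface, to the indicatrix of $\varphi$ along $X$, and then invoke Lemma \ref{cambia metrica respeta campo}. First I would normalize: by rescaling $X$ pointwise we may assume $\varphi(p,X_p)=1$ for all $p$, which does not change the integral curves as unparametrized curves (and only reparametrizes them, which is harmless for the notion of geodesic once we allow an affine reparametrization). Next, at each point $p$ let $H_p\subset T_pM$ be the tangent hyperplane to the indicatrix $I_p$ of $\varphi$ at the point $X_p$; this is exactly the orthogonal hyperplane to $X_p$ in the sense of Definition \ref{Finsler orthogonal hyperplane to a vector}. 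The data we must match is thus a single vector $X_p$ together with a transverse hyperplane $H_p$ with $X_p\notin H_p$, smoothly in $p$.

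The key step is to build a smooth Riemannian metric $g$ on $M$ with $g(X_p,X_p)=1$ and $g(X_p,w)=0$ for all $w\in H_p$: then the $g$-indicatrix is the unit sphere $\{v: g(v,v)=1\}$, which passes through $X_p$ and whose tangent hyperplane there is precisely the $g$-orthogonal complement of $X_p$, i.e. $H_p$. With that, all three hypotheses of Lemma \ref{cambia metrica respeta campo} hold for $\varphi_0=\varphi$ and $\varphi_1=g$ (the first hypothesis being the assumption that integral curves of $X$ are $\varphi$-geodesics), and the Corollary follows immediately. To construct $g$, I would work locally and patch with a partition of unity: near each point choose a smooth frame $X=E_1,E_2,\dots,E_n$ such that $E_2,\dots,E_n$ span $H_p$ at every $p$ (possible because $H_p$ varies smoothly and is complementary to $X_p$), and simply declare this frame to be orthonormal; any positive-definite symmetric form doing this on the $H$-part works, so the local pieces are convex-combinable and a partition of unity yields a global Riemannian metric still satisfying $g(X,X)=1$ and $g(X,H)=0$ pointwise (these two conditions are preserved under convex combinations of metrics because they are affine constraints once $X_p$ and $H_p$ are fixed).

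The main obstacle is the smoothness and patching: one must make sure the hyperplane field $p\mapsto H_p$ is genuinely smooth (it is, as the kernel of the smooth one-form $\widehat{X}$ from Definition \ref{dual one form}, after the normalization $\varphi(X)=1$), and that the normalization of $X$ does not destroy smoothness where $X$ might vanish — but $X$ is a nowhere-zero vector field by hypothesis (its integral curves are geodesics, hence regular curves), so $\varphi(p,X_p)>0$ everywhere and the rescaling is smooth. A secondary subtlety is the parametrization convention in the geodesic equation used in Lemma \ref{cambia metrica respeta campo}: the lemma is stated for the particular parametrization in which $\varphi_0$- and $\varphi_1$-length agree with the parameter, and after rescaling $X$ the integral curves of $X$ are exactly unit-speed for both $\varphi$ and $g$, so the lemma applies verbatim and no further reparametrization argument is needed.
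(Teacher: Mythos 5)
Your proof is correct and follows essentially the same route as the paper: both reduce the corollary to Lemma \ref{cambia metrica respeta campo} by producing a Riemannian metric that is unit on $X$ and whose indicatrix is tangent to that of $\varphi$ along $X$. The only difference is that the paper simply writes down the osculating metric $g_{ij}(p)=\frac{\partial^2}{\partial v_i\partial v_j}\varphi(p,X_p)$, whereas you build an equivalent metric by hand from a frame adapted to the hyperplane field $\ker\widehat{X}$ and a partition of unity; your normalization and smoothness remarks are fine.
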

\begin{proof}
The Riemannian metric $g_{ij}(p)=\frac{\partial}{\partial v_{i}v_{j}}\varphi (p,X)$ is related to $\varphi $ as in the preceeding lemma.
\end{proof}

\begin{lem}\label{characterization of Finsler geodesics}
Let $X$ be a non-zero norm-$1$ geodesic vector field in a Finsler manifold and $\omega$ its dual differential one-form. Then the integral curves of $X$ are geodesics if and only if the Lie derivative of $\omega$ in the direction of $X$ vanishes.
\end{lem}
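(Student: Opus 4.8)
The plan is to reduce both sides of the equivalence to the \emph{same} system of pointwise identities, namely the Euler--Lagrange equations displayed in the proof of Lemma~\ref{cambia metrica respeta campo}. Work in a bundle chart $(x^i,v^i)$ on $TM$ near a point $p$, and let $\omega=\widehat{X}$. By the coordinate formula recorded after Definition~\ref{dual one form}, together with $\varphi(p,X_p)\equiv 1$, one has $\omega_i(q)=\varphi(q,X_q)\,\partial\varphi/\partial v^i(q,X_q)=\partial\varphi/\partial v^i(q,X_q)$ for every $q$. Differentiating the identity $\varphi(q,X_q)\equiv 1$ with respect to $x^i$ gives the algebraic relation
\[
\frac{\partial\varphi}{\partial x^i}(p,X_p)=-\frac{\partial\varphi}{\partial v^j}(p,X_p)\,\partial_i X^j(p)=-\omega_j(p)\,\partial_i X^j(p).
\]
This is the only ``structural'' input; the strict convexity hypotheses on $\varphi$ (already in force) are what make the Lagrangian form of the geodesic equation meaningful.

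Next I would compute the Lie derivative in coordinates, $(\mathcal{L}_X\omega)_i=X^j\partial_j\omega_i+\omega_j\,\partial_i X^j$. For the first term, fix $p$ and let $\alpha$ be the integral curve of $X$ with $\alpha(0)=p$; since $\dot\alpha=X\circ\alpha$ we have $\omega_i(\alpha(t))=\partial\varphi/\partial v^i(\alpha(t),\dot\alpha(t))$, hence $X^j\partial_j\omega_i\big|_{p}=\frac{d}{dt}\big|_{0}\big[\partial\varphi/\partial v^i(\alpha(t),\dot\alpha(t))\big]$. Substituting the algebraic relation for the second term yields
\[
(\mathcal{L}_X\omega)_i\big|_{p}=\frac{d}{dt}\Big|_{0}\frac{\partial\varphi}{\partial v^i}(\alpha,\dot\alpha)-\frac{\partial\varphi}{\partial x^i}(\alpha(0),\dot\alpha(0)),
\]
which is precisely the $i$-th component of the Euler--Lagrange expression for $\varphi$ along $\alpha$; because $\varphi(\alpha,\dot\alpha)\equiv 1$, this is exactly the geodesic equation in the form used in Lemma~\ref{cambia metrica respeta campo}.

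Finally, since through every point of the domain of $X$ there passes a (unit-speed) integral curve, the vanishing of $\mathcal{L}_X\widehat{X}$ identically on $M$ is equivalent to every integral curve of $X$ satisfying the geodesic equation, which is the claim. I would also record the coordinate-free restatement obtained from Cartan's formula: as $\iota_X\widehat{X}=\widehat{X}(X)=\varphi(X)^2=1$ is constant, $\mathcal{L}_X\widehat{X}=d(\iota_X\widehat{X})+\iota_X d\widehat{X}=\iota_X d\widehat{X}$, so the condition can equivalently be phrased as $d\widehat{X}(X,\cdot)=0$. There is no genuine obstacle here; the only point requiring care is the bookkeeping between the $x^i$-derivatives and the $v^i$-derivatives of $\varphi$ and the correct application of the chain rule in the velocity slot along $\alpha$, exactly as in the preceding lemma.
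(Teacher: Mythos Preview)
Your proof is correct but takes a different route from the paper's. The paper first invokes Corollary~\ref{una Riemann con las geodesicas de una Fisnler} to replace $\varphi$ by the Riemannian metric $g_{ij}(p)=\partial^2\varphi/\partial v^i\partial v^j(p,X_p)$, observes that the dual one-form $\omega$ to $X$ is the same for both metrics, and then carries out the standard Levi--Civita computation
\[
\mathcal{L}_X\omega(Y)=X\langle X,Y\rangle-\langle X,[X,Y]\rangle=\langle D_XX,Y\rangle+\tfrac12\,Y\langle X,X\rangle,
\]
which vanishes for all $Y$ iff $D_XX=0$ (using $|X|\equiv 1$). You bypass this reduction entirely: working directly in Finsler coordinates, you show that $(\mathcal{L}_X\omega)_i$ coincides pointwise with the $i$-th Euler--Lagrange component of $\varphi$ along the integral curve, which is exactly the geodesic equation displayed in Lemma~\ref{cambia metrica respeta campo}. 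Your argument is more self-contained---it does not need the auxiliary Riemannian metric at all---while the paper's approach trades that directness for a short coordinate-free computation once the Riemannian reduction is in hand. The Cartan-formula restatement $\mathcal{L}_X\widehat{X}=\iota_X\,d\widehat{X}$ at the end is a correct bonus not in the paper.
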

\begin{proof}
The integral curves of $X$ are geodesics for $\varphi$ iff they are geodesics for the Riemannian metric $g_{ij}(p)=\frac{\partial}{\partial v_{i}v_{j}}\varphi (p,X)$, but the dual one-form to $X$ with respect to both metrics is the same one-form $\omega$, and the vanishing of $\mathcal{L}_X\omega$ has nothing to do with the metric.

We have thus reduced the problem to a Riemannian metric, when this result is standard:
$$
\begin{array}{lll}
  \mathcal{L}_{X} ( Y ) & = & X ( w ( Y ) ) - \omega ( [ X,Y ] )\\
  & = & X ( \langle X,Y \rangle ) - \langle X,D_{X} Y-D_{Y} X \rangle\\
  & = & \langle D_{X} X,Y \rangle + \langle X,D_{X} Y \rangle - \langle
  X,D_{X} Y \rangle + \langle X,D_{Y} X \rangle\\
  & = & \langle D_{X} X,Y \rangle + \frac{1}{2} Y \langle X,X \rangle
\end{array}
$$
\end{proof}

\begin{prop}
Let $ M $ be an open  manifold with smooth boundary and a Finsler metric $\varphi$. Let $X$ be a smooth transversal vector field in
$\partial M $ pointing inwards (resp. outwards).
Then $ M $ is contained in a larger open manifold admitting a smooth extension $\tilde{\varphi}$ of $\varphi$
to this open set such that the geodesics starting at points $p\in \partial M $ with initial vectors $X_{p}$
can be continued indefinitely backward (resp. forward) without intersecting each other.
\end{prop}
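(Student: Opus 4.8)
The plan is to realize $N$ by gluing a collar $\partial M\times(-\infty,0]$ to $M$ along $\partial M=\partial M\times\{0\}$, and to extend $\varphi$ across it so that the backward continuations of the distinguished geodesics become the product rays $t\mapsto(p,t)$. After rescaling $X$ by $1/\varphi(X)$ we may assume $\varphi(p,X_p)=1$ (this only rescales each distinguished geodesic by a constant factor, which affects neither being a geodesic nor the intersection pattern). The map $(p,t)\mapsto\exp_p(tX_p)$ is, for $t$ in a one-sided neighbourhood of $0$ of possibly non-uniform width, a diffeomorphism from a subset of $\partial M\times[0,\infty)$ onto a one-sided tubular neighbourhood $U^{+}$ of $\partial M$ in $M$; pulling $\varphi$ back we get a Finsler metric on (a piece of) $\partial M\times[0,\epsilon)$, still called $\varphi$, for which the integral curves of $r:=\partial/\partial t$ are exactly the unit-speed geodesics in question.

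The key computation is to pin down the dual one-form $\widehat{r}$ (Definition \ref{dual one form}) of $r$ for $\varphi$ on $U^{+}$. Since the $r$-curves are $\varphi$-geodesics, Lemma \ref{characterization of Finsler geodesics} gives $\mathcal{L}_r\widehat{r}=0$; combined with $\widehat{r}(r)=\varphi(r)^{2}=1$, in coordinates $(t,x^{1},\dots,x^{n-1})$ this forces $\widehat{r}=dt+\eta$, where $\eta=\sum_i\eta_i(x)\,dx^{i}$ is the pull-back of a $t$-independent one-form on $\partial M$. Moreover, geodesics have constant speed, so $\varphi(t,x,r)\equiv1$ on $U^{+}$. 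Thus on $U^{+}$ the metric $\varphi$ belongs to the class $\mathcal{C}$ of Finsler norms whose indicatrix passes through $r$ with tangent hyperplane $\ker(dt+\eta)$ and with $\varphi(r)=1$; in particular the two defining equalities of $\mathcal{C}$ hold identically for $t\ge0$, hence to infinite order at $t=0$.

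Now attach the collar $\partial M\times(-\infty,0]$, keep $\tilde\varphi=\varphi$ on $M$, and build $\tilde\varphi$ on the collar staying in $\mathcal{C}$. First extend $\varphi$ to some smooth Finsler metric $\psi$ on a two-sided neighbourhood $\partial M\times(-\epsilon_1,\epsilon_1)$ (smooth functions on a manifold with boundary extend smoothly across it; strict convexity survives for $\epsilon_1$ small). Then correct $\psi$ for $t<0$: first divide the norm base-wise by its value on $r$ so that $\tilde\varphi(r)\equiv1$, and then apply the linear shear $\mathrm{Id}+r\otimes\bigl(\widehat{r}-(dt+\eta)\bigr)$ to its indicatrix so that the tangent hyperplane at $r$ becomes $\ker(dt+\eta)$; both corrections are $O(t^{\infty})$ at $t=0$, because $\varphi(r)-1$ and $\widehat{r}-(dt+\eta)$ vanish to infinite order there, so the gluing across $\partial M$ is $C^{\infty}$ and $\tilde\varphi$ remains a genuine Finsler metric for $|t|$ small. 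Finally, set $\tilde\varphi=(dt+\eta)^{2}+h_0$ for $t\le-1$, where $h_0$ is a complete Riemannian metric on $\partial M$ (this metric lies in $\mathcal{C}$ and is complete in the $t$-direction), interpolating inside $\mathcal{C}$ on $[-1,-\delta]$. Then $\widehat{r}\equiv dt+\eta$ on the whole collar, which is $t$-invariant, so $\mathcal{L}_r\widehat{r}=0$ and Lemma \ref{characterization of Finsler geodesics} shows every ray $t\mapsto(p,t)$, $t\le0$, is a $\tilde\varphi$-geodesic. It agrees at $t=0$ with the geodesic from $p$ with initial speed $X_p$, hence is its backward continuation, it is defined for all $t\le0$, and for $p\ne p'$ the rays $\{p\}\times(-\infty,0]$, $\{p'\}\times(-\infty,0]$ are disjoint. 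The ``resp.'' case is obtained by attaching the collar on the outside and reversing $t$.

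The main obstacle is the third step: extending the metric honestly $C^{\infty}$ across $\partial M$ while keeping the orthogonality data ($\varphi(r)=1$ and orthogonal hyperplane $\ker(dt+\eta)$) rigid in $t$. This is exactly where one must use that the distinguished curves are already $\varphi$-geodesics inside $M$: by constant speed and Lemma \ref{characterization of Finsler geodesics}, membership in $\mathcal{C}$ holds identically for $t\ge0$, hence to infinite order at $t=0$, which is what lets the $O(t^{\infty})$ corrections repair an arbitrary smooth extension without touching its jet. Alternatively one can run this through a Riemannian intermediary: by Corollary \ref{una Riemann con las geodesicas de una Fisnler} replace $\varphi$ on $U^{+}$ by a Riemannian metric with the same geodesic rays and the same data on $r$ (automatically of the rigid shape $(dt+\eta)^{2}+\gamma(t,x)$), extend $\gamma$ across $\partial M$, and transfer back to a Finsler metric via Lemma \ref{cambia metrica respeta campo}; the smoothness issue at $t=0$ reappears there in the same form.
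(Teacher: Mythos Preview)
Your argument is correct (up to cosmetic slips such as writing ``$t\le -1$'' where any $-c$ with $0<c<\epsilon_1$ is meant, and the sign of the shear), but the route is genuinely different from the paper's. The paper does \emph{not} attach an infinite collar: it takes an arbitrary smooth extension $\varphi'$ of $\varphi$ to a slightly larger manifold $M_2$, flows the geodesics of $\varphi'$ backwards only for a short time $\varepsilon$ to obtain a finite collar $U_\varepsilon$, and then multiplies the vector field by a cutoff $c(d(p))$ that vanishes near the outer edge. This rescaled field $\tilde X$ has integral curves that reach $\partial U_\varepsilon$ only in infinite parameter. A metric $\varphi''$ on $U_\varepsilon$ is then chosen with $\varphi''(\tilde X)=1$ and with dual one-form the Lie transport of $\widehat X|_{\partial M}$ along $\tilde X$, so that Lemma~\ref{characterization of Finsler geodesics} makes the $\tilde X$-curves $\varphi''$-geodesics of infinite length; finally $\varphi$ and $\varphi''$ are blended by a bump $\rho$. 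Both proofs hinge on the same observation---that what must be controlled is only the pair $(\varphi(r),\widehat r)$, and that this pair is preserved under convex combinations of norms---and both invoke Lemma~\ref{characterization of Finsler geodesics} for the conclusion. Your version trades the cutoff trick for an explicit product collar and a shear correction; it is more transparent about why the extended geodesics never intersect (they are literally product rays), while the paper's version keeps the ambient extension bounded, which is sometimes convenient downstream.
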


\begin{proof}
We will only complete the proof for a compact manifold with boundary $ M $ and inward pointing vector $X$,
as the other cases require only minor modifications.

We start with an arbitrary smooth extension $\varphi'$ of $\varphi$ to a larger open set $ M _{2} \supset  M $. The geodesics with initial speed $X$ can be continued backwards to $ M _{2}$, and
there is a small $\varepsilon$ for which they do not intersect each other for negative values of time before the parameter reaches $-\varepsilon$.

Define

$$
P:\partial  M \times (-\varepsilon,0]\to M_2, \qquad P(q,t):= \alpha_{q}(t)
$$

\noindent where $\alpha_{q}:(-\varepsilon,0]\to M_2$ is the geodesic of $\varphi'$ starting at the point $q\in \partial  M $ with initial vector $X_{q}$.
When $p\in U_{\varepsilon}:=\Img(P)$ there is a unique  value of $t$ such that $p=P(q,t)$ for some $q\in \partial M$. We will denote such $t$ by $d(p)$.
Extend also  the vector $X$ to $U_{\varepsilon}$ as
$X_{p}= \dot{\alpha_{q}}(t)$
where $p=P(q,t)$.

Let $c: (-\varepsilon,0]\rightarrow [0,1]$
be a smooth function such that

\begin{itemize}
\item $c$ is non-decreasing
\item $c(t)=1 \text{ for } -\varepsilon/3 \leq t$
\item $c(t)=0 \text{ for } t\leq -2\varepsilon/3$
\end{itemize}
and finally define
$$\tilde{X}_{p}=c(d(p))X_{p}$$
in the set $U_{\varepsilon}$.

Let $\omega_{0}$ be the dual one form of $\tilde{X}$ with respect to $\varphi$ for points in $\partial M $, and let $\omega$ be the one form in $U_{\varepsilon}$ whose Lie derivative in the direction $\tilde{X}$ is zero and which coincides with $\omega_{0}$ in $\partial M $.
Then we take any metric $\varphi''$ in $U_{\varepsilon}$ (which can be chosen Riemannian) such that $\tilde{X}$ has unit norm and the kernel of $\omega$ is tangent to the indicatrix at $\tilde{X}$.

By lemma \ref{characterization of Finsler geodesics}, the integral curves of $\tilde{X}$ are geodesics for $\varphi''$. Now let $\rho$ be a smooth function in $U_{\varepsilon}\cup  M $ such that $\rho\vert_{ M }=1$, $\rho\vert_{U_{\varepsilon} \setminus U_{\varepsilon/3}}=0$ and $0\leq \rho \leq 1$, and define the metric:
$$\tilde{\varphi}= \rho(p)\varphi(p,v)+ (1-\rho(p))\varphi''(p,v)$$

This metric extends $\varphi$ to the open set $U_{\varepsilon}$ and makes the integral curves of $\tilde{X}$ geodesics.
As the integral curves of $X$ do not intersect for small $t$,
the integral curves of $\tilde{X}$ reach infinite length before they approach $\partial U_{\varepsilon}$ and the last part of the statement follows.
\end{proof}

Application of this proposition to $ M $ and the characteristic, inwards-pointing vector field $v$ yields a new manifold $N$ containing $ M $, and a metric for $N$ that extends $\varphi$ (so we keep the same letter) such that the geodesics departing from $\partial M $ which correspond to the characteristic curves continue indefinitely backwards without intersecting.

This allows the definition, for small $\delta$ of
$$
\tilde{P}:\partial  M \times (-\infty,\delta]\to N, \qquad P(q,t):= \tilde{\alpha}_{q}(t)
$$
where $\tilde{\alpha} $ are the geodesics with initial condition $X$, continued backwards if $t$ is negative. Finally, define $\tilde{u}:U \rightarrow \RR$ by:
\begin{equation}\label{def of utilde}
\tilde{u}(p)=\begin{cases}
    g(q)+t& p=\tilde{P}(q,t), \quad p\in N\setminus  M \\
    u(p)& p\in  M
     \end{cases}
\end{equation}
We notice that both definitions agree in an \emph{inner} neighborhood of $\partial M $, so the function $\tilde{u}$ is a smooth extension of $u$ to $N$.

\begin{theorem}\label{reduce HJ to boundary value 0}
Let $\Lambda=\tilde{u}^{-1}(0)$.
Then the following identity holds in $\left\lbrace \tilde{u}\geq 0 \right\rbrace $:
\begin{equation}
\tilde{u}(p)= d(\Lambda,p)
\end{equation}
\end{theorem}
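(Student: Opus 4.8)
The plan is to show the two inequalities $\tilde{u}(p) \le d(\Lambda,p)$ and $\tilde{u}(p) \ge d(\Lambda,p)$ separately, using the construction of $(N,\tilde\varphi)$ and $\tilde u$ together with the Lax--Oleinik formula \eqref{Lax-Oleinik} for $u$ on $M$. Throughout, $\tilde u$ is the smooth extension of $u$ defined by \eqref{def of utilde}, and $\Lambda = \tilde u^{-1}(0)$. First I would record the relevant structural facts: in the collar region $N\setminus M$, the function $\tilde u$ is exactly the solution by characteristics, $\tilde u(\tilde P(q,t)) = g(q)+t$, so that $\Lambda$ is the image under $\tilde P$ of the graph $\{(q,-g(q)) : q\in\partial M\}$ — a smooth hypersurface in $N$ lying strictly inside $N\setminus M$ (here the compatibility condition \eqref{compatibility condition} and the choice of $\delta$ guarantee $-g(q)$ is in the admissible parameter range and bounded away from $\partial U_\varepsilon$). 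Moreover, by the construction in the proposition preceding the theorem, the curves $t\mapsto \tilde\alpha_q(t)$ are $\tilde\varphi$-geodesics, and they are the characteristics of the extended HJBVP on $N$ with boundary data $0$ on $\Lambda$.

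For the inequality $\tilde u(p) \le d(\Lambda,p)$: given $p$ with $\tilde u(p)\ge 0$, I would exhibit a curve from $\Lambda$ to $p$ of $\tilde\varphi$-length at most $\tilde u(p)$. If $p\in N\setminus M$, then $p=\tilde P(q,t)$ with $t = \tilde u(p) - g(q) \ge -g(q)$, and the geodesic segment $\tilde\alpha_q$ restricted to $[-g(q), t]$ runs from the point $\tilde P(q,-g(q))\in\Lambda$ to $p$ with length exactly $t-(-g(q)) = \tilde u(p)$, since $\tilde\alpha_q$ has unit speed. If $p\in M$, use the Lax--Oleinik formula: for any $\epsilon>0$ pick $q\in\partial M$ with $d(p,q)+g(q) < u(p)+\epsilon$, take a near-minimizing $\varphi$-geodesic in $M$ from $q$ to $p$, and prepend to it the characteristic segment $\tilde\alpha_q|_{[-g(q),0]}$ from $\Lambda$ to $q\in\partial M$; the concatenation has total length $< g(q) + d(p,q) + \epsilon < u(p)+2\epsilon = \tilde u(p)+2\epsilon$. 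Let $\epsilon\to 0$.

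For the reverse inequality $\tilde u(p)\ge d(\Lambda,p)$, the cleanest route is to show that $\tilde u$ is $1$-Lipschitz for $d$ on $\{\tilde u \ge 0\}$ and vanishes on $\Lambda$, whence $\tilde u(p) = \tilde u(p) - \tilde u(\lambda) \le d(\lambda,p)$ for any $\lambda\in\Lambda$ with a minimizing path staying in $\{\tilde u\ge 0\}$; taking the infimum over $\lambda$ gives the claim. The Lipschitz bound itself follows because $\tilde u$ is a viscosity subsolution of $\tilde H(p,d\tilde u)\le 1$ on all of $N$: on $M$ it agrees with $u$, which satisfies this; on the collar it equals the solution by characteristics, which satisfies the equation classically and hence in the viscosity sense; and the two pieces agree to first order on the overlap by the remark following \eqref{def of utilde}, so no spurious supersolution condition is violated along the gluing hypersurface — a subsolution of a Hamilton--Jacobi equation with a convex $1$-homogeneous Hamiltonian is automatically $1$-Lipschitz for the induced distance. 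One subtlety to dispatch: a priori a minimizing $\tilde\varphi$-geodesic from $\Lambda$ to $p$ might dip into $\{\tilde u < 0\}$; I would rule this out by noting that crossing into $\tilde u<0$ and back only lengthens the path (again by the Lipschitz estimate applied on $\{\tilde u \ge 0\}$, or by reflecting the excursion), so the infimum defining $d(\Lambda,p)$ is realized by paths inside $\{\tilde u\ge 0\}$.

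The main obstacle is the gluing argument in the reverse inequality: one must verify carefully that $\tilde u$, manufactured by pasting $u$ on $M$ to the characteristic solution on the collar, is genuinely a viscosity subsolution across the interface $\partial M$ (equivalently, is globally $1$-Lipschitz for $d$ on $N$), since viscosity subsolutions do not in general glue. This works here precisely because both pieces are built from the \emph{same} characteristic vector field $\Gamma$, so $u$ and the characteristic solution have matching gradients on an inner neighborhood of $\partial M$ and the paste is $C^1$ there; making this rigorous — and handling the non-compact directions of $N$ so that $\Lambda$ is closed and the distance $d(\Lambda,p)$ is attained — is where the real work lies.
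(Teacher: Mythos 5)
Your proposal is correct in substance but takes a genuinely different route from the paper. The paper's proof is a three-line appeal to uniqueness of viscosity solutions: the distance function $d(\Lambda,\cdot)$ is characterized as the unique viscosity solution of $H(p,d\tilde u(p))=1$ on $\{\tilde u>0\}$ vanishing on $\Lambda$, and $\tilde u$ satisfies the same characterization --- on $M$ because it coincides with the viscosity solution $u$, and on $N\setminus M$ because it is smooth there and solves the equation classically. You instead re-derive this characterization by hand, proving the two inequalities separately: explicit concatenated paths (characteristic segment from $\Lambda$ to $\partial M$ plus a near-minimizer inside $M$ supplied by Lax--Oleinik) give $d(\Lambda,p)\leq \tilde u(p)$, and the subsolution/$1$-Lipschitz estimate gives $\tilde u(p)\leq d(\Lambda,p)$. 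What you identify as ``where the real work lies'' --- the gluing across $\partial M$ --- is exactly the point the paper disposes of before stating the theorem: both definitions of $\tilde u$ agree on an inner neighborhood of $\partial M$, so $\tilde u$ is smooth across the interface and the viscosity property is purely local; your more elementary route buys self-containedness (no citation of the uniqueness theorem) at the cost of length.

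One bookkeeping error to fix: the two inequalities are announced with swapped labels. Exhibiting a curve from $\Lambda$ to $p$ of length at most $\tilde u(p)$ proves $d(\Lambda,p)\leq \tilde u(p)$, not $\tilde u(p)\leq d(\Lambda,p)$ as your second paragraph claims; dually, the Lipschitz computation $\tilde u(p)-\tilde u(\lambda)\leq d(\lambda,p)$ in your third paragraph proves $\tilde u(p)\leq d(\Lambda,p)$, not the ``reverse inequality $\tilde u(p)\geq d(\Lambda,p)$'' it is headed by. The two arguments together do establish equality, so this is a mislabeling rather than a gap, but as written each paragraph proves the opposite of what it announces.
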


\begin{proof}

$\Lambda$ is smooth because it is contained in $N\setminus M$, where $\tilde{u}$ is smooth and has non-vanishing gradient.

In order to show that $\tilde{u} $ and $d_{\Lambda} $ agree in $U$, we use the uniqueness properties of viscosity solutions.
Let $ N$ be the open set where $\tilde{u}>0$.
The distance function to $\Lambda$ is characterized as the unique viscosity solution to:

\begin{itemize}
\item $\tilde{u}=0$ in $\Lambda$
\item $H(p, d\tilde{u}(p))=1$ in $N$, in the viscosity sense
\end{itemize}

Clearly $\tilde{u}$ satisfies the first condition. It also satisfies the second for points in the set $ M $ because it coincides with $u$, and for points in $ N \setminus  M $ because $\tilde{u}$ is smooth and $H(p, d\tilde{u}(p))=1$ holds in the classical sense there.
\end{proof}

The following fact is well known but we provide a geometric proof.
\begin{cor}\label{du is dual of tangent to characteristics}
 The differential $du$ of the solution by characteristics is Finsler dual to the tangent to the (projected) characteristics.
\end{cor}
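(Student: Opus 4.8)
The plan is to establish the identity $\widehat{X} = du$ pointwise, where $X = \Gamma$ is the characteristic vector field, $\widehat{X}$ is its Finsler dual one-form (Definition \ref{dual one form}), and $u$ is the solution by characteristics of Definition \ref{solution by characteristics} on a neighborhood $U$ of $\partial M$. Since both sides are one-forms on $U$, it suffices to check that they agree when evaluated against a basis of each tangent space $T_q U$, and a natural basis at $q$ is given by the radial direction $X_q$ (tangent to the projected characteristic through $q$) together with $n-1$ vectors tangent to the hypersurface $\{u = u(q)\}$.

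First I would handle the radial direction. By construction $u$ increases at unit rate along each projected characteristic $\gamma$ (which is a unit-speed geodesic with $\varphi(\dot\gamma) = 1$), so $du(X_q) = \frac{d}{dt}\big|_{t} u(\gamma(t)) = 1 = \varphi(X_q)^2$, using $\varphi(X_q) = 1$. On the other hand, by the very definition of the dual one-form, $\widehat{X}_q(X_q) = \varphi(X_q)^2 = 1$. So the two one-forms agree on the radial vector. Second I would handle the directions tangent to the level set $\Sigma_c = \{u = c\}$ through $q$: by definition $du$ annihilates $T_q\Sigma_c$, so I need to show $\widehat{X}_q$ annihilates $T_q\Sigma_c$ as well, i.e. that the orthogonal hyperplane to $X_q$ (in the Finsler sense of Definition \ref{Finsler orthogonal hyperplane to a vector}) is exactly the tangent hyperplane to the level set $\Sigma_c$.

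The heart of the argument — and the step I expect to be the main obstacle — is precisely this geometric claim: the Finsler-orthogonal hyperplane to the velocity $X_{\gamma(t)}$ at each point of a characteristic coincides with the tangent space to the level set $\{u = \text{const}\}$. I would prove this by a transport argument: at $t = 0$ on $\partial M$, the boundary condition $\widehat{X}_p|_{T(\partial M)} = dg$ together with $u|_{\partial M} = g$ forces $\ker \widehat{X}_p \supset \ker(dg - dg) $ to match the tangent to $\{u = g(p)\}$ after accounting for the radial component; more carefully, $du_p$ and $\widehat{X}_p$ agree on $T_p(\partial M)$ (both restrict to $dg$) and on $X_p$ (both give $1$), and since $X_p$ is transversal to $\partial M$ these span $T_p M$, giving $du = \widehat{X}$ along $\partial M$. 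To propagate this along the characteristic I would invoke Lemma \ref{characterization of Finsler geodesics}: because the integral curves of $X$ are geodesics, $\mathcal{L}_X \widehat{X} = 0$, so $\widehat{X}$ is invariant under the geodesic flow. Simultaneously, $du$ is closed and $\mathcal{L}_X du = d(du(X)) = d(1) = 0$, so $du$ is also flow-invariant. Two flow-invariant one-forms that agree on the initial hypersurface $\partial M$ (which is transversal to the flow) must agree on the whole flow-out neighborhood $U$; hence $du = \widehat{X}$ on $U$, which is the assertion of the corollary. The one delicate point requiring care is making sure $U$ is chosen small enough that the projected characteristics foliate it and $u$ is genuinely smooth there, so that "agreeing on the initial transversal and being flow-invariant implies agreeing everywhere" is a legitimate ODE uniqueness statement rather than a formal manipulation.
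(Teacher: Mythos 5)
Your proposal is correct and follows essentially the same route as the paper: agreement of $du$ and $\widehat{X}$ on $\partial M$ from the defining equations of the characteristic vector field, followed by propagation along the flow using $\mathcal{L}_X\widehat{X}=0$ (Lemma \ref{characterization of Finsler geodesics}) and $\mathcal{L}_X(du)=d(du(X))=d(1)=0$. The only cosmetic difference is that the paper first reduces to $g=0$ via Theorem \ref{reduce HJ to boundary value 0}, whereas you handle general $g$ directly at the boundary; your opening decomposition into radial and level-set directions is subsumed by (and not needed once you have) the Lie-derivative argument.
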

\begin{proof}
By the above, we can assume that $g=0$.
Let $U$ be a neighborhood of $\partial M$ where a solution by characteristics $u$ is defined, as in \ref{solution by characteristics}.
Let $q\in U$, $X$ be the tangent to the characteristic that goes through $q$, with footpoint $p$.

The claim can be checked easily if $q\in\partial M$, because both $\tilde{X}$ and $du$ are linear forms and they agree on the hyperplane $T_q\partial M$ and $\Gamma(q)$, by the definition \ref{equation for the characteristic vector field} of the characteristic vector field.

For the rest of points in $U$, we notice that the level curves of $u$ are Lie parallel with respect to $X $, and so $L_X(du)=0$. But \ref{characterization of Finsler geodesics} says that $L_X(\tilde{X})=0$, and thus we have two $1$-forms that agree on $\partial M$ and are parallel with respect to $X$, so they agree everywhere.
\end{proof}

The following theorem is an extension of Theorem 1.1 of \cite{Li Nirenberg}. In this result $\partial M$ may not be compact.

\begin{theorem}\label{regularity of mu for nontrivial g}
Let $S$ be the closure of the singular set of the viscosity solution to the following HJBVP:
\begin{eqnarray*}
H(p,du(p))=1&\quad&p\in M\\
u(p)=g(p)&&p\in \partial M
\end{eqnarray*}
where $g:\partial M\rightarrow \RR$ satisfies the usual compatibility condition \ref{compatibility condition}.

If $\mu$ is the function whose value at $p\in \partial M $ is the distance to $S$ along the unique characteristic departing from $q$, then
\begin{enumerate}
\item $\mu$ is Lipschitz.
\item If in addition $\partial M $ is compact,
then the $(n-1)$-dimensional Hausdorff measure of $S\cap K$ is finite for any compact $K$.
\item In general, $S$ is a Finsler cut locus from the boundary of some Finsler manifold, so all the regularity results for cut loci apply to $S$ (see section \ref{section: The Cut Locus}).
\end{enumerate}
\end{theorem}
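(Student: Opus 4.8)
The plan is to reduce everything to the case $g=0$ using Theorem~\ref{reduce HJ to boundary value 0}, and then invoke (and extend to the non-compact boundary setting) the results of \cite{Li Nirenberg}. First I would apply the construction preceding Theorem~\ref{reduce HJ to boundary value 0}: embed $(M,\varphi)$ in a larger manifold $(N,\tilde\varphi)$, with $\Lambda=\tilde u^{-1}(0)\subset N\setminus M$ a smooth hypersurface, so that $\tilde u = d_\Lambda$ on $\{\tilde u\geq 0\}\supset M$. Since $u = \tilde u|_M$, the singular set of $u$ in $M$ is exactly $(\mathrm{Cut}_\Lambda)\cap M$, where $\mathrm{Cut}_\Lambda$ is the Finsler cut locus of $\Lambda$ in $N$. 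Moreover, the characteristic emanating from $q\in\partial M$ is, after the reduction, precisely the minimizing $\tilde\varphi$-geodesic from a point of $\Lambda$ through $q$ (this is the content of Corollary~\ref{du is dual of tangent to characteristics}: $du$ is dual to the characteristic field, hence characteristics are the geodesics orthogonal to the level sets of $\tilde u=d_\Lambda$). Therefore $\mu(q)$ is the distance, measured along that geodesic, from $q$ to the first cut point of $\Lambda$ along it; equivalently, if $r(q)$ denotes the cut time of the $\tilde\varphi$-unit-speed geodesic normal to $\Lambda$ that passes through $q$ at time $t_0(q)$, then $\mu(q) = r(q) - t_0(q)$, and $t_0$ is a smooth function of $q$. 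This already gives part (3): $S = \mathrm{Cut}_\Lambda \cap \overline M$ (up to the points of $\partial M$ themselves, where $\tilde u$ is smooth), so every structure result of Section~\ref{section: The Cut Locus} applies verbatim.

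For part~(1), Lipschitz continuity of $\mu$, it suffices to show the cut-time function $r$ of $\mathrm{Cut}_\Lambda$, transported back to $\partial M$ via the (smooth, locally bi-Lipschitz) correspondence $q\mapsto$ (footpoint on $\Lambda$, direction), is locally Lipschitz; then $\mu = r - t_0$ is locally Lipschitz as a difference of locally Lipschitz functions. The local Lipschitz continuity of the cut-time function for the exponential map from a hypersurface in a Finsler manifold is exactly Theorem~1.1 of \cite{Li Nirenberg} in the compact case; I would re-run their argument, which is local along each geodesic and uses only (a) the first-conjugate-time function $\lambda_1$ is locally Lipschitz (our Theorem~\ref{landa es Lipschitz}, valid for Finsler manifolds, and local) and (b) the triangle inequality / first variation near a cut point with two minimizers. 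Since all the inputs are local, compactness of $\partial M$ is never actually needed for the Lipschitz bound on a fixed compact subset of the domain, so the statement holds for non-compact $\partial M$ as well.

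For part~(2), when $\partial M$ is compact, finiteness of $\mathcal H^{n-1}(S\cap K)$ for compact $K$ follows from the finiteness of the $\mathcal H^{n-1}$-measure of a Finsler cut locus. One route: use the Lipschitz graph-type description — away from a set of dimension $\le n-2$ (lemma~\ref{theorem: conjugate of order 1}) the cut locus is an $(n-1)$-submanifold, and the map sending a point $q\in\partial M$ to the corresponding cut point $\gamma_q(r(q))$ has bounded generalized Jacobian because $r$ is Lipschitz, so the image has finite $\mathcal H^{n-1}$-measure by the area formula for Lipschitz maps (this is essentially the Itoh--Tanaka \cite{Itoh Tanaka 00} / Li--Nirenberg \cite{Li Nirenberg} argument; our own proof in Section~\ref{section:rho is Lipschitz} also applies). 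Intersecting with a compact $K$ and using compactness of $\partial M$ gives the bound. The main obstacle is making sure the reduction to $g=0$ is done carefully enough that the correspondence between $\partial M$ and the normal bundle of $\Lambda$ is genuinely smooth and proper on compact sets (so that Lipschitz constants and Hausdorff measures transfer back and forth with controlled distortion), and checking that the \cite{Li Nirenberg} estimates, stated for compact hypersurfaces, really are local and hence survive the non-compact hypothesis; everything else is bookkeeping.
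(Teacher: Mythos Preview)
Your proposal is correct and follows essentially the same approach as the paper: reduce to the case $g=0$ via Theorem~\ref{reduce HJ to boundary value 0}, so that $S$ becomes the Finsler cut locus of the hypersurface $\Lambda$, then invoke Theorem~1.1 of \cite{Li Nirenberg} for the Lipschitz property of the cut-time function and deduce the Hausdorff measure bound from it. The paper's own proof is extremely terse (three sentences), and what you have written is a faithful and more detailed unpacking of exactly those three sentences.
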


\begin{proof}
The first part follows immediately from Theorem \ref{reduce HJ to boundary value 0} and Theorem 1.1  in \cite{Li Nirenberg}.
The second is an easy consequence of the first, while the last is contained in the results of this section.
\end{proof}

\begin{remark}
The regularity hypothesis on $M$ can be softened.
In order to apply the results in \cite{Li Nirenberg}, it is enough that $M$ is $C^{2,1}$, which implies that $\Lambda$ is $C^{2,1}$.
\end{remark}

\section{Split locus and balanced split locus} \label{section: definitions, split and balanced}

We study a Hamilton-Jacobi equation given by \eqref{HJequation} and
\eqref{HJboundarydata} in a $C^{\infty}$ compact manifold with boundary $ M$,
with the hypothesis stated there.

Let $Sing$ be the closure of the singular set of the viscosity solution $u$ to aHamilton-Jacobi BVP. $Sing$ has a key property: any point in $M\setminus Sing$ can be joined to $\partial M $ by a unique characteristic curve that does not intersect $Sing$.
A set with this property is said to \emph{split $M$ along characteristics of the HJBVP} or simply to \emph{split} $M$ for short.
Once characteristic curves are known, if we replace $Sing$ by any set $S$ that splits $M$, we can use the value of $u$ that the characteristics carry along with them, to obtain a function, defined in $M\setminus S$ with some resemblance to the viscosity solution (see definition \ref{u associated to S}).

Looking at the cut locus from this new perspective, we wonder what distinguishes the cut locus from all the other sets that split $M$.

\begin{dfn}\label{splits}
For a set $S\subset  M$, let $A(S)\subset V$ be the set of all $x=(t,z)\in V$ such that $F(s,z)\notin S ,\, \forall \, 0\leq s<t$.
We say that a set $S\subset  M$ \emph{splits} $ M$ iff $F$ restricts to a bijection between $A(S)$ and $ M\setminus S$.
\end{dfn}

Whenever $S$ splits $ M$, we can define a vector field $R_{p}$ in $ M\setminus S$ to be $dF_{x}(r_{x})$ for the unique $x$ in $A(S)$ such that $F(x)=p$.
\begin{dfn}\label{the set R_p}

For a point $a\in S$, we define the \emph{limit set} $R_{a}$ as the set of vectors in $T_{a} M$ that are limits of sequences of the vectors $R_{p}$ defined above at points $p\in  M\setminus S$.

\end{dfn}

\begin{rmk}
If $S$ is a cut locus, the set $R_{p}$ is the set of all vectors tangent to the minimizing geodesics from $p$ to $\partial M$.
\end{rmk}

\begin{dfn}\label{the set Q_p}
If $S$ splits $ M$, we also define a set $Q_{p}\subset V$ for $p\in  M$ by
$$
Q_{p}=\left( F\vert_{\overline{A(S)}}\right)^{-1}(p)
$$
\end{dfn}
The following relation holds between the sets $R_{p}$ and $Q_{p}$:
$$
R_{p}=\left\lbrace dF_{x}(r_{x}):x\in Q_{p}
\right\rbrace
$$

\begin{dfn}\label{u associated to S}
If $S$ splits $ M$, we can define a real-valued function $h$ in $ M\setminus S$ by setting:
$$
h(p)=g(z)+t
$$
where $(t,z)$ is the unique point in $A(S)$ with $F(t,z)=p$.
\end{dfn}

If we start with the viscosity solution $u$ to the Hamilton-Jacobi equations, and let $S=Sing$ be the closure of the set where $u$ is not $C^1$, then $S$ splits $ M$. If we follow the above definition involving $A(S)$ to get a new function $h$, then we find $h=u$.

\begin{dfn}\label{split locus}
 A set $S$ that splits $ M$ is a split locus iff
$$
S =\overline{
\left\lbrace
p\in S: \quad \sharp R_{p}\geq 2
\right\rbrace }
$$
\end{dfn}

The role of this condition is to restrict $S$ to its \emph{essential} part. A set that merely splits $ M$ could be too big: actually $ M$ itself splits $ M$.
The following lemma may clarify this condition.

\begin{lem}\label{characterization of split locus}
A set $S$ that splits $ M$ is a split locus if and only if $S$ is closed and it has no proper closed subsets that split $ M$.
\end{lem}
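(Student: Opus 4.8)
The plan is to prove both implications directly from the definitions, the key point being to understand what closed subsets of a splitting set can look like. I will write $h_S$ for the function $h$ of Definition \ref{u associated to S} associated to a splitting set $S$, and $A(S)$, $R_p$, $Q_p$ as above.

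First, suppose $S$ is a split locus in the sense of Definition \ref{split locus}; I must show $S$ is closed and minimal among splitting sets. Closedness is immediate, since $S$ equals the closure of a set. For minimality, suppose $S' \subsetneq S$ is a closed set that also splits $M$, and pick $a \in S \setminus S'$. I would first observe that if $a \notin S'$ and $S'$ splits $M$, then $a$ lies on a unique characteristic from $\partial M$ avoiding $S'$, so there is a unique $x_a \in A(S')$ with $F(x_a) = a$; in particular $R_a$ computed relative to $S'$ is a single vector $dF_{x_a}(r_{x_a})$. The main step is then to show that every point $p \in S$ with $\sharp R_p \geq 2$ (relative to $S$) must actually lie in $S'$: indeed, near such a $p$ there are points of $M \setminus S \subseteq M \setminus S'$ approaching $p$ along two geodesic directions that disagree in the limit; since $S'$ is closed and $p \notin S'$ would force, by the splitting property of $S'$, a single well-defined vector field $R$ near $p$ with a unique limit, we reach a contradiction unless $p \in S'$. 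Hence $\{p \in S : \sharp R_p \geq 2\} \subseteq S'$, and taking closures (using that $S'$ is closed) gives $S \subseteq S'$, contradicting $S' \subsetneq S$. The point requiring care here is that ``$\sharp R_p \geq 2$ relative to $S$'' genuinely produces two distinct limiting geodesic directions at $p$ using only points of $M \setminus S$, and that these points also avoid the smaller $S'$.

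Conversely, suppose $S$ is closed, splits $M$, and has no proper closed subset that splits $M$. Let $S_0 = \overline{\{p \in S : \sharp R_p \geq 2\}}$; I must show $S_0 = S$. Since $S_0 \subseteq \overline{S} = S$ and $S_0$ is closed, by the minimality hypothesis it suffices to show $S_0$ splits $M$, i.e. that $F$ restricts to a bijection between $A(S_0)$ and $M \setminus S_0$. The inclusion $A(S) \subseteq A(S_0)$ is automatic from $S_0 \subseteq S$, and $F$ is injective on $A(S)$; the work is to show that no new identifications or new points appear when enlarging from $A(S)$ to $A(S_0)$, equivalently that $F$ is still injective on $A(S_0)$ and surjects onto $M \setminus S_0$. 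For surjectivity: given $p \in M \setminus S_0$, either $p \in M \setminus S$ and we are done, or $p \in S \setminus S_0$; in the latter case $\sharp R_p \leq 1$, and I would argue — this is the crux — that a point of $S$ with only one limiting geodesic direction and lying off $S_0$ can be ``uncovered'', i.e. is reached by a characteristic from $\partial M$ not meeting $S_0$, because the single characteristic through it can be pushed slightly using that a whole neighborhood minus $S_0$ is foliated by such characteristics with a continuous direction field extending continuously to $p$. Injectivity on $A(S_0)$ follows similarly: a point of $M \setminus S_0$ reached by two distinct characteristics avoiding $S_0$ would have $\sharp R_p \geq 2$, forcing $p \in S_0$, a contradiction.

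I expect the main obstacle to be the surjectivity/uncovering step in the converse: turning the qualitative statement ``$p \in S$ has a unique limiting direction $R_p$'' into the quantitative fact ``the characteristic realizing that direction, continued backward, actually reaches $\partial M$ without reentering $S_0$''. This is where one needs the regular-exponential-map structure (Proposition \ref{regular exponential map}) — in particular the non-clustering of conjugate points along rays and the fact that distinct points of $V$ over the same point of $M$ have distinct radial images — together with a careful limiting argument showing the backward characteristic stays in the region swept by $A(S)$. The remaining implications are comparatively routine manipulations of the definitions of $A(\cdot)$, $R_p$ and closure.
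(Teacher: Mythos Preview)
Your treatment of the ``only if'' direction captures the right idea but is less direct than the paper's argument. You argue via continuity of the vector field $R$ relative to $S'$ near a point $p\notin S'$; but this continuity is not immediate from the splitting property alone --- $F|_{A(S')}$ is only assumed to be a bijection, and near conjugate points its inverse need not be continuous. The paper sidesteps this with a concrete geodesic argument: given $q\in S\setminus S'$ with $\sharp R_q\ge 2$, take the unique characteristic $\gamma_1\subset M\setminus S'$ reaching $q$, push slightly past $q$ along $\gamma_1$ to a point $q_1\notin S$, and use that $S$ splits $M$ to produce a second characteristic $\gamma_2\subset M\setminus S\subset M\setminus S'$ to $q_1$; since $q\in S$ lies on $\gamma_1$ but $\gamma_2$ avoids $S$, the two characteristics are distinct, contradicting that $S'$ splits $M$. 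This avoids any appeal to continuity of $R$.

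For the ``if'' direction, the paper simply declares it trivial, whereas you (reasonably) attempt to show that $S_0=\overline{\{p:\sharp R_p\ge 2\}}$ itself splits $M$. Your instinct that this direction hides a small but real argument is correct, and the paper is being casual. However, your sketch also needs tightening: your injectivity step asserts that two characteristics avoiding $S_0$ and reaching $p$ force $\sharp R_p\ge 2$, but $R_p$ is defined via limits from $M\setminus S$, not $M\setminus S_0$, so this does not follow directly; and the surjectivity/``uncovering'' step, as you yourself note, needs more than the bare fact $\sharp R_p\le 1$. Both gaps are most cleanly closed by the same extend-the-geodesic-past-the-point trick the paper uses in the other direction, rather than by the continuity and regular-exponential-map machinery you propose.
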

\begin{proof}
The ``if'' part is trivial, so we will only prove the other implication.
Assume $S$ is a split locus and let $S'\subset S$ be a closed set splitting  $ M$. 
Let $q\in S\setminus S'$ be a point with $\sharp R_q\geq 2$.
Since $S'$ is closed, there is a neighborhood of $q$ away from $S'$; so, if $\gamma_1$ is a segment of a geodesic in $ M\setminus S'$ joining $\partial M$ with $q$, there is a point $q_1$ in $\gamma_1$ lying beyond $q$.
Furthermore, we can choose the point $q_1$ not lying in $S$, so there is a second geodesic $\gamma_2$ contained in $ M\setminus S\subset  M\setminus S'$ from $\partial M$ to $q_1$.
As $q\in S$, we see $\gamma_2$ is necessarily different from $\gamma_1$, which is a contradiction if $S'$ splits $M$.
Therefore we learn $S'\supset\left\lbrace p\in S: \quad \sharp R_{p}\geq 2\right\rbrace$, so $S=\overline{\left\lbrace p\in S: \quad \sharp R_{p}\geq 2\right\rbrace } \subset S'$.

\end{proof}

Finally, we introduce the following more restrictive condition (see \ref{vector de x a y} for the definition of $v_p(q)$, the \emph{vector from $p$ to $q$}, and \ref{dual one form} for the Finsler dual of a vector).

\begin{dfn}\label{balanced}
We say a split locus $S\subset  M$ is \emph{balanced} for given $ M$, $H$ and $g$ (or simply that it is balanced if there is no risk of confusion) iff  for all $p\in S$,
all sequences $p_{i}\to p$ with $v_{p_{i}}(p)\to v\in T_{p} M$, and any sequence of vectors $X_{i}\in R_{p_{i}}\to X_{\infty}\in R_{p}$,
then
$$
w_{\infty}(v)=\max\left\lbrace w(v),\text{ $w$ is dual to some $R\in R_{p}$} \right\rbrace
$$
where $w_{\infty}$ is the dual of $X_{\infty}$.

\end{dfn}

\begin{figure}[H]
 \centering
 \includegraphics{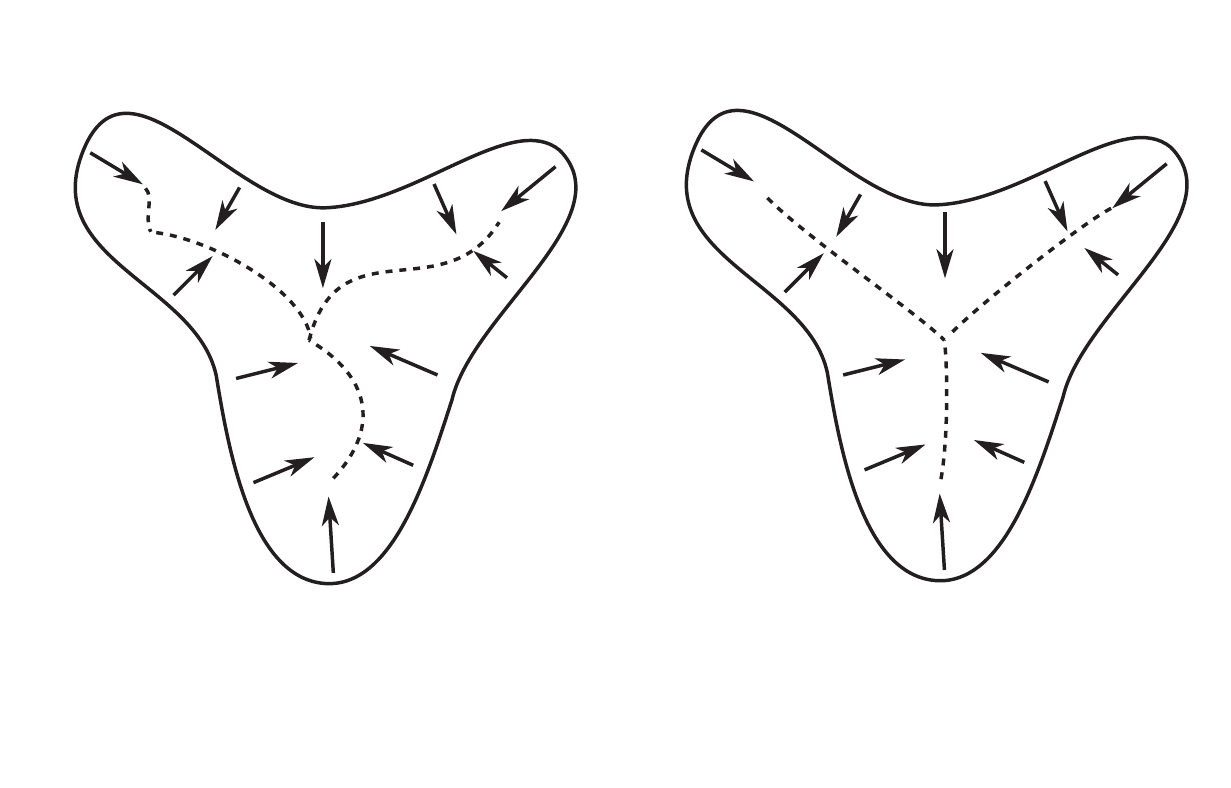}
 
 \caption{An arbitrary split locus and a balanced split locus}
 \label{fig: One split and one balanced}
\end{figure}

\section{Balanced property of the Finsler cut locus}\label{section: Balanced}
In this section we show that the cut locus of a Finsler exponential map is a balanced set.
We provide two proofs, none of which is original.
Although the hypothesis look different, our result \ref{reduce HJ to boundary value 0} show that they are equivalent.

The first proof is the same as in lemma 2.1 in \cite{Itoh Tanaka 00}, but we adapt it so that it also works for Finsler manifolds, where angles are not defined.

\begin{prop}\label{cut locus is balanced}
The cut locus of a Finsler manifold $M$ with boundary
is a balanced split locus.
Moreover, for $p$, $p_{n}$, $v$ and $X_{\infty}$ as in the definition of a balanced split locus, we have
$$
\lim_{n\rightarrow \infty}\dfrac{d(\partial M,p)-d(\partial M,p_{n})}{d(p,p_{n})}=
w_{\infty}(v)
$$
\end{prop}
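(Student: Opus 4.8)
The plan is to reduce everything to the Riemannian first-variation formula by the device introduced in Corollary~\ref{una Riemann con las geodesicas de una Fisnler}. Fix $p\in S=Cut$, a sequence $p_n\to p$ with $v_{p_n}(p)\to v$, and vectors $X_n\in R_{p_n}$ with $X_n\to X_\infty$. By definition of $R_{p_n}$, there is a minimizing geodesic $\gamma_n$ from $\partial M$ to $p_n$ whose initial-backwards speed is (in the limit) $X_n$; equivalently $X_n$ is the speed at $p_n$ of a unit-speed minimizer joining $\partial M$ to $p_n$, pointing \emph{away} from $\partial M$. I want to compare $d(\partial M,p)$ with $d(\partial M,p_n)$ along these minimizers.

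First I would set up the main estimate. Let $q\in M$ be close to $p$ and let $X\in R_q$ be realized by a unit-speed minimizer $\sigma$ from a footpoint $z(q)\in\partial M$ to $q$, so $d(\partial M,q)=\mathrm{length}(\sigma)$. Then for any point $q'$ near $q$,
\[
d(\partial M,q')\le \mathrm{length}(\sigma) + d(q,q'),
\]
and by the Finsler first variation of arclength (applied to the Riemannian metric $g_{ij}=\tfrac{\partial^2}{\partial v_i\partial v_j}\varphi(\cdot,X)$ supplied by Corollary~\ref{una Riemann con las geodesicas de una Fisnler}, for which $\sigma$ is still a geodesic and which has the same dual one-form $\widehat X=w$), the right-hand side expands as $d(\partial M,q) + w(v_q(q')) d(q,q') + o(d(q,q'))$ — more precisely one gets $d(\partial M,q')-d(\partial M,q)\le -w(-v_{q}(q'))\,d(q,q')+o(\cdot)$, which after the sign bookkeeping for the vector $v_{q'}(q)$ pointing \emph{back} toward $q$ gives the upper bound
\[
\limsup_{n}\frac{d(\partial M,p)-d(\partial M,p_n)}{d(p,p_n)}\le w_\infty(v),
\]
where $w_\infty$ is the dual of $X_\infty$; and doing the same comparison for \emph{every} $X\in R_p$ (each realized by a minimizer to $p$) gives $d(\partial M,p)-d(\partial M,p_n)\le w(v)\,d(p,p_n)+o(d(p,p_n))$ for each such $w$, hence
\[
\limsup_{n}\frac{d(\partial M,p)-d(\partial M,p_n)}{d(p,p_n)}\le \max\{w(v): w \text{ dual to some }R\in R_p\}.
\]
For the reverse inequality I would use $X_\infty$ itself: since $X_\infty\in R_p$, there is a minimizer $\tau$ from $\partial M$ to $p$ with terminal speed $X_\infty$; prolonging the minimizers $\gamma_n$ that realize $X_n$ (or rather using that $\gamma_n$ are minimizers to $p_n$ and $p_n\to p$ with $v_{p_n}(p)\to v$) and applying the same first-variation formula in the opposite direction yields
\[
d(\partial M,p_n)\le d(\partial M,p) - w_\infty(v)\,d(p,p_n)+o(d(p,p_n)),
\]
so $\liminf_n \frac{d(\partial M,p)-d(\partial M,p_n)}{d(p,p_n)}\ge w_\infty(v)$. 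Combining the two gives both the limit formula and, since the upper bound was $\max\{w(v)\}$ and the lower bound is $w_\infty(v)$, the balanced equation $w_\infty(v)=\max\{w(v): w\text{ dual to some }R\in R_p\}$.

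The main obstacle is handling the limits carefully in the Finsler (non-symmetric, no angles) setting: one must keep track of the two distinct unit normals/dual forms, make sure the first-variation formula is applied with the correct orientation (vectors pointing away from versus toward $\partial M$), and justify that the minimizers $\gamma_n$ realizing $X_n$ subconverge to a minimizer to $p$ realizing $X_\infty$ so that the Riemannian comparison metric can be chosen uniformly — this is where the continuity of the duality map $X\mapsto w$ (guaranteed by the chosen normalization $w(X)=\varphi(X)^2$) and the local properties of $F$ from Proposition~\ref{regular exponential map} do the work. I expect the cleanest write-up follows \cite{Itoh Tanaka 00} lemma~2.1 essentially verbatim once the angle-free substitute (first variation via the osculating Riemannian metric) is in place.
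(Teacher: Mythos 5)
Your strategy is the same as the paper's (and as lemma 2.1 of \cite{Itoh Tanaka 00}): triangle inequality plus first variation, applied once to minimizers arriving at $p$ and once to the minimizers $\gamma_n$ arriving at $p_n$, with the osculating Riemannian metric standing in for angles. But there is a direction error that invalidates your final deduction. A minimizer from $\partial M$ to $p$ with terminal speed $X\in R_p$ (dual $w$) gives an \emph{upper} bound on $d(\partial M,p_n)$, hence a \emph{lower} bound
$$
d(\partial M,p)-d(\partial M,p_n)\;\ge\; w(v)\,d(p,p_n)-o\bigl(d(p,p_n)\bigr),
$$
i.e. $\liminf\ge w(v)$ for every such $w$, so $\liminf\ge\max\{w(v)\}$. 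You wrote this comparison with ``$\le$'' and concluded $\limsup\le\max\{w(v)\}$; that intermediate inequality is false and the conclusion is in any case weaker than the $\limsup\le w_\infty(v)$ you had already (correctly) obtained from the $\gamma_n$. As a result your closing step --- ``since the upper bound was $\max\{w(v)\}$ and the lower bound is $w_\infty(v)$, the balanced equation follows'' --- is a non sequitur: from $\limsup\le\max$ and $\liminf\ge w_\infty(v)$ one only gets $w_\infty(v)\le\max$, which is trivially true. The correct combination is
$$
\max\{w(v)\}\;\le\;\liminf\;\le\;\limsup\;\le\; w_\infty(v)\;\le\;\max\{w(v)\},
$$
which forces equality throughout and yields both the balanced condition and the stated limit. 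Once the sign is fixed, your argument coincides with the paper's.

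Two smaller points. First, the proposition asserts that the cut locus is a balanced \emph{split locus}; you prove only the balanced inequality and never address that $Cut$ splits $M$ and equals the closure of its points with $\sharp R_p\ge 2$ --- the paper disposes of this by citing the equivalent characterizations in \ref{definition: the cut locus}, and you should too. Second, the $o(\cdot)$ term in your $\limsup$ estimate must be uniform in $n$ because the base point of the variation is $p_n$, which moves; the paper's device is to place comparison points $q$ (resp.\ $q_n$ on $\gamma_n$) inside one fixed convex neighborhood of $p$ and use a single constant $C$ in the quadratic error $Cd(p,p_n)^2$. You flag this issue in your last paragraph but do not resolve it; the explicit truncation to a convex neighborhood is the resolution.
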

\begin{proof}
The cut locus $S$ splits $M$, as follows from the first definition of cut locus in \ref{definition: the cut locus}.

It is also a split locus, as follows from the second definition of cut locus.

Next we show that $S$ is balanced. Take any $Y\in R_{p}$, and let $\gamma$ be the minimizing geodesic segment joining $\partial M$ to $p$ with speed $Y$ at $p$. Take any point $q\in\gamma$ that lies in a convex neighborhood of $p$ and use the triangle inequality to get:
$$
d(\partial M, p)-d(\partial M, p_{n}) \geq d(q, p) - d(q,p_{n})
$$
The first variation formula yields, for a constant $C$:
$$
d(q, p) - d(q,p_{n})\geq w(v_{p_{n}}(p))d(p_{n},p)-Cd(p,p_{n})^{2}
$$
and we get:
$$
\liminf_{n\rightarrow \infty}\dfrac{d(\partial M,p)-d(\partial M,p_{n})}{d(p,p_{n})}\geq
w(X)
$$
for any $w$ that is dual to a vector in $R_{p}$.

Then consider $X_{\infty}$, let $\gamma$ be the minimizing geodesic segment joining $\partial M$ to $p$ with speed $X_{\infty}$ at $p$, and let $\gamma_{n}$ be the minimizing geodesic segment joining $\partial M$ to $p_{n}$ with speed $X_{n}$ at $p_n$. Take
points $q_{n}$ in $\gamma_{n}$ that lie in a fix convex neighborhood of $p$. Again:
$$
d(\partial M, p)-d(\partial M, p_{n}) \leq d(q_{n}, p) - d(q_{n},p_{n})
$$
while the first variation formula yields, for a constant $C$:
$$
d(q_{n}, p) - d(q_{n},p_{n})\leq w(v_{p_{n}}(p))d(p_{n},p) + Cd(p,p_{n})^{2}
$$
and thus:
$$
\limsup_{n\rightarrow \infty}\dfrac{d(\partial M,p)-d(\partial M,p_{n})}{d(p,p_{n})}\leq
w_{\infty}(X)
$$

This proves the claim that $S$ is balanced.
\end{proof}

We give now another proof that relates the balanced condition to the notion of semiconcave functions, which is now common in the study of Hamilton-Jacobi equations.
More precisely, we simply translate theorem 3.3.15 in the book \cite{Cannarsa Sinestrari} to our language to get the following lemma:

\begin{lem}
The closure of the singular set of the viscosity solution to \eqref{HJequation}
and \eqref{HJboundarydata} is a balanced split locus.
\end{lem}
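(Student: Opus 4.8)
The statement to prove is that the closure of the singular set of the viscosity solution to \eqref{HJequation}--\eqref{HJboundarydata} is a balanced split locus. By Theorem \ref{reduce HJ to boundary value 0} we may assume $g=0$, so that the viscosity solution $u$ is precisely the distance function $d_{\partial M}$ (or $d_\Lambda$ in the enlarged manifold); then $Sing$ is the Finsler cut locus, and Proposition \ref{cut locus is balanced} already does the job. However, the point of the lemma is to give an \emph{independent} proof via the theory of semiconcave functions, so that the balanced condition is recognized as a translation of a standard structural fact about semiconcave functions. The strategy is therefore: (i) recall that $u$ is semiconcave (with linear modulus in compact sets), a fact quoted in subsection \ref{subsection: semiconcave functions}; (ii) identify, at a singular point $p$, the subdifferential $\partial u(p)$ with (the convex hull of) the set of dual one-forms $\{\widehat{X}: X\in R_p\}$; (iii) invoke the relevant regularity property of the reachable gradients of a semiconcave function — namely Theorem 3.3.15 of \cite{Cannarsa Sinestrari} — which controls exactly how the differential of $u$ at nearby points $p_i\to p$ converges, and read off the balanced equality.

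First I would set up the dictionary. A split locus for the HJBVP with $g=0$ is the closure of the set where $u=d_{\partial M}$ fails to be $C^1$; that $Sing$ splits $M$ and is a split locus follows as in the proof of Proposition \ref{cut locus is balanced}. Next, for $p\in Sing$, the vectors $X\in R_p$ are limits of the characteristic directions $R_{p_i}=du(p_i)^\sharp$ at points $p_i\to p$ where $u$ is differentiable; dualizing, the one-forms $\widehat X$ are exactly the limits of differentials $du(p_i)$, so by the definition of subdifferential in subsection \ref{subsection: semiconcave functions}, $\partial u(p) = \mathrm{conv}\{\widehat X : X\in R_p\}$. This is the bridge: the ``max over $R_p$'' appearing in Definition \ref{balanced} is a max of a linear functional over a compact set whose convex hull is $\partial u(p)$.

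Then I would quote the semiconcavity result. Theorem 3.3.15 of \cite{Cannarsa Sinestrari} describes, for a semiconcave $u$, the behavior of $Du(p_i)$ as $p_i\to p$ along a direction: if $p_i\to p$ with $(p-p_i)/|p-p_i|\to v$ (here, with $v_{p_i}(p)\to v$ in the Finsler sense, which by the remark after Definition \ref{approximate tangent cone} is metric-independent), and $\zeta_i\in \partial u(p_i)$ with $\zeta_i\to\zeta$, then $\zeta$ is an \emph{exposed} point of $\partial u(p)$ in the direction $v$, i.e. $\langle \zeta, v\rangle = \max\{\langle\eta,v\rangle : \eta\in\partial u(p)\}$. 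Translating: with $X_i\in R_{p_i}$, $X_i\to X_\infty\in R_p$, $w_\infty=\widehat{X_\infty}$, we get $w_\infty(v) = \max\{w(v) : w\in\partial u(p)\} = \max\{w(v): w \text{ dual to some } R\in R_p\}$, since a linear functional attains its max over a convex hull at an extreme point. This is exactly the balanced equality of Definition \ref{balanced}.

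The main obstacle I anticipate is step (ii), the precise identification $\partial u(p) = \mathrm{conv}\{\widehat X: X\in R_p\}$, and more subtly making sure the Finsler (nonlinear) duality $X\mapsto\widehat X$ does not break the argument: the subdifferential theory in \cite{Cannarsa Sinestrari} is phrased for functions on $\mathbb{R}^n$ with the linear dual pairing, whereas here the relevant ``gradient direction'' of a distance function is the dual vector $\widehat X$ under a nonlinear Legendre-type transform. One must check that the reachable-gradient characterization transfers: this is fine because semiconcavity is a coordinate statement and $du(p_i)$ are genuine (linear) differentials; it is only the \emph{geometric meaning} of these covectors as duals of characteristic speeds $X\in R_p$ that uses Finsler duality (Corollary \ref{du is dual of tangent to characteristics}), and that identification is pointwise and causes no trouble. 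The rest is a matter of carefully matching the hypotheses ($p_i\to p$, convergence of difference quotients $v_{p_i}(p)\to v$, convergence $X_i\to X_\infty$) to those of Theorem 3.3.15 and reading off the conclusion.
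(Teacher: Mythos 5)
Your proposal is correct and follows essentially the same route as the paper's own proof: identify the superdifferential $D^+u(p)$ of the semiconcave viscosity solution with the convex hull of the duals of the vectors in $R_p$ (via the fact that $du$ at a point of differentiability is dual to the characteristic speed), and then recognize the balanced condition as precisely the statement of Theorem 3.3.15 in \cite{Cannarsa Sinestrari} on limits of reachable gradients lying in the exposed face, modulo the sign convention coming from using $v_{p_i}(p)$ rather than the vector from $p$ to $p_i$. The only point to make fully explicit is the one you already flag: that $Sing$ is a split locus (the paper likewise defers this to the cut-locus identification).
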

\begin{proof}
Let $u$ be the viscosity solution to \eqref{HJequation} and
\eqref{HJboundarydata},
and let $Sing$ be the closure of its singular set.
We leave to the reader the proof that $Sing$ is a split locus (otherwise, recall it is a cut locus).

It is well known that $u$ is semiconcave (see for example \cite[5.3.7]{Cannarsa Sinestrari}).
The superdifferential $ D^+ u(p) $ of $u$ at $p$ is the convex hull of the set of limits of differentials of $u$ at points where $u$ is $C^1$ (see \cite[3.3.6]{Cannarsa Sinestrari}).
At a point where $u$ is $C^1$, the dual of the speed vector of a characteristic is the differential of $u$.
Thus, the superdifferential at $p$ is the convex hull of the duals to the vectors in $R_p$.
We deduce:
$$
\max\left\lbrace w(v),\text{ $w$ is dual to some $R\in R_{p}$} \right\rbrace =
\max\left\lbrace w(v), w\in D^+ u(p) \right\rbrace
$$
Given $p\in  M$, and $v\in T_p M$, the \emph{exposed face} of $D^+ u(p) $ in the direction $v$ is given by:
$$
D^+ (p,v) = \{ \tilde{w}\in D^+ u(p) : \tilde{w}(v)\leq w(v) \;\forall w \in D^+ u(p) \}
$$
The balanced condition can be rephrased in these terms as:

\begin{quote}
 Let $p_i\rightarrow p\in S$ be a sequence with $v_{p_{i}}(p)\to v\in T_{p} M$, and let $w_i\in D^+ u(p_{i})$ be a sequence converging to $w\in D^+ u(p)$.

 Then $w \in D^+ u(p,-v) $
\end{quote}
which is exactly the statement of theorem \cite[3.3.15]{Cannarsa Sinestrari}, with two minor remarks:
\begin{enumerate}
 \item The condition is restricted to points $p\in S$. At points in $ M\setminus S$, the balanced condition is trivial.
 \item In the balanced condition, we use the vectors $v_{p_i}(p)$ from $p_i$ to $p$, contrary to the reference \cite{Cannarsa Sinestrari}. Thus the minus sign in the statement.
\end{enumerate}

\end{proof}

In the light of this new proof, we can regard the balanced condition as a differential version of the semiconcavity condition. A semiconcave function that is a solution to \eqref{HJequation} is also a viscosity solution (see \cite[5.3.1]{Cannarsa Sinestrari}). We will later study if the solution of \eqref{HJequation} built by characteristics using a balanced split loci is also a viscosity solution.

\chapter{Local structure of cut and singular loci up to codimension 3}
\label{Chapter: structure}

Our main result in this chapter is a local description of the cut locus around any point of the cut locus \emph{except for a set of Hausdorff dimension $n-3$} (see Theorem \ref{complete description}).

Actually, our structure results hold for the more general \emph{balanced split loci} (recall that in \ref{cut locus is balanced} and \ref{regularity of mu for nontrivial g}
we showed that cut loci, and singular sets of solutions to HJ equations, are balanced split loci). Working in this generality complicates some proofs and, in particular, we have to prove some results for \emph{balanced split loci} that are long known to be true for cut loci.
However, we need to actually prove the structure results for balanced split loci for the applications to chapter \ref{Chapter: balanced}, and all the new proofs of old facts are either short or interesting for their own sake.

\section{Statements of results}\label{Section: results}

For the results of this chapter, $M$ is a  $C^{\infty}$ Finsler manifold with \emph{compact} boundary $\partial M$, but $M$ itself need not be compact. $S\subset M$ is a balanced split locus. Recall \ref{the set R_p} for the definition of $R_p$.

\subsection{Results}

Our main result asserts that we can avoid conjugate points of order $2$ and above if we neglect a set of Hausdorff dimension $n-3$:

\begin{theorem}[Conjugate points of order $2$]\label{main theorem 3}
There is a set $N\subset S$ of Hausdorff dimension at most $n-3$ such that for any $p\in S\setminus N$  and $x\in V$ such that $F(x)=p$ and $d_x F(r_{x})\in R_{p}$:
$$
\dim (\ker d_{x}F)\leq 1
$$
\end{theorem}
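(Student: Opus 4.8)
The plan is to reduce the theorem to a countable family of purely local statements and to attack each with the special coordinates of subsection \ref{subsection: special coordinates}. Cover $V$ by countably many such coordinate neighbourhoods and, inside each, split the conjugate locus by the order $k$ of conjugacy; since a countable union of sets of Hausdorff dimension $\le n-3$ still has dimension $\le n-3$, it is enough to fix $k\ge 2$ and to bound, near each of its points, the Hausdorff dimension of
\[
F\bigl(\{x\in V:\ \dim\ker d_xF=k,\ d_xF(r_x)\in R_{F(x)}\}\bigr).
\]
By the last property in Proposition \ref{regular exponential map}, the requirement $F(x)=p$, $d_xF(r_x)\in R_p$ forces $x\in Q_p$, i.e. $x\in\overline{A(S)}$ (Definitions \ref{splits}, \ref{the set Q_p}, \ref{the set R_p}); so the set to be controlled is $F(B)$, where $B=\{x\in\overline{A(S)}:\dim\ker d_xF\ge 2\}$, and we take $N$ to be the countable union of these pieces.

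The first reduction confines $B$ to the first conjugate locus. If $x=(t,z)\in\overline{A(S)}$, then no interior point $(s,z)$ with $0<s<t$ is conjugate: a singularity of $F$ at such a point would let us reach a point just beyond $F(s,z)$ by two distinct geodesic segments issuing from $\partial M$, both of which still avoid the closed set $S$, contradicting the injectivity of $F$ on $A(S)$ in Definition \ref{splits}; a limiting argument carries this to $\overline{A(S)}$. Writing $\lambda_1$ for the first conjugate time, it follows that $B$ lies on $\mathcal{C}_1:=\{(\lambda_1(z),z)\}$, which by Theorem \ref{landa es Lipschitz} is the graph of a locally Lipschitz function over an open subset of $\partial M$; hence $\dim\mathcal{C}_1\le n-1$, and since $F$ cannot raise Hausdorff dimension it suffices to prove
\[
\dim\bigl(\mathcal{C}_1\cap\{\dim\ker dF\ge 2\}\bigr)\le n-3 .
\]

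For the local count I would place the special coordinates \eqref{F near order k in special coords} at a point $x_0\in\mathcal{C}_1$ of order $k\ge2$ and at $F(x_0)$. There $F=(x_1,\dots,x_{n-k},F^{n-k+1},\dots,F^n)$, the Jacobian is block triangular, and $\det d_xF=\det A(x)$ with $A(x)=\bigl(\partial F^i/\partial x_j\bigr)_{i,j\ge n-k+1}$; Proposition \ref{regular exponential map} gives $A(x_0)=0$, $\partial_{x_1}A(x_0)=\mathrm{Id}_k$, and — using the index-form symmetry built into the Jacobi equation — $A$ can be arranged to be symmetric. Expanding, $A(x)=(x_1-t_0)\,\mathrm{Id}_k+L(x_2,\dots,x_n)+O(|x-x_0|^2)$ with $L$ linear and symmetric-matrix valued, so along $\mathcal{C}_1$ the first vanishing of $\det A$ occurs at $x_1-t_0\approx-\mu_{\max}\bigl(L(x')\bigr)$ and the condition $\dim\ker dF\ge 2$ translates into: the top eigenvalue of $L(x')$ has multiplicity $\ge 2$. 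For $L$ in general position this is, by the von Neumann--Wigner codimension count, a codimension-$2$ condition on the $n-1$ variables $x'=(x_2,\dots,x_n)$, which gives exactly $\dim\le n-3$.

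The step I expect to be the real obstacle is the degenerate case in which $L$ itself is too degenerate (for instance scalar, or of low rank) for that count to apply — note that Warner's condition (R2), the second item of Proposition \ref{regular exponential map}, controls only $\partial_{x_1}A(x_0)$ and says nothing about $L$. To handle it I would stratify further by the rank of $L$ and by the higher-order data of the conjugate locus, use the balanced property (Definition \ref{balanced}) to discard the strata that cannot meet $\overline{A(S)}$, and run an induction on the order $k$: at each degeneration the analysis reproduces a configuration of strictly smaller order while staying inside a set of dimension $\le n-3$. Pushing forward by $F$ and taking the countable union over the covering then yields the set $N$ with Hausdorff dimension at most $n-3$, as required.
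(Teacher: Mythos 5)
Your first reduction (points of $\overline{A(S)}$ are first conjugate points, so everything lives on the Lipschitz graph $\mathcal{C}_1$) is sound and matches Lemma \ref{rho es menor que landa1}. But the core of your argument has two genuine gaps. First, the reduction ``it suffices to show $\dim\bigl(\mathcal{C}_1\cap\{\dim\ker dF\ge 2\}\bigr)\le n-3$'' is not a viable strategy: that set can have full dimension $n-1$ in the domain (on the round sphere the entire first conjugate sphere has corank $n-1$, and it is minimizing), yet its \emph{image} is small. The theorem is a statement about the image in $S$, and the paper's proof necessarily exploits the degeneracy of $F$ on these sets --- Morse--Sard--Federer applied to $F$ restricted to suitable hypersurfaces --- rather than the smallness of the sets themselves. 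Second, the von Neumann--Wigner codimension count requires the linear family $x'\mapsto L(x')$ to be transverse to the stratum of symmetric matrices with a repeated top eigenvalue, and no such genericity is available here; you flag this as ``the real obstacle,'' but the proposed repair (stratify by the rank of $L$, induct on the order) is not a proof and does not identify the mechanism that actually closes the degenerate cases.

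What the paper does instead is classify the quadratic part $(q,r)$ of $F$ in the special coordinates at an order-$2$ point into a short list of normal forms and treat each by a different mechanism: types where $q$ or $r$ is (up to sign) a sum of squares, and one subfamily of the ``difference of squares'' type, are shown to be \emph{incompatible with the balanced condition} --- the image of a neighbourhood misses an open region from which one can send a sequence $p_n\to p$ whose incoming direction violates Definition \ref{balanced}; one subfamily is impossible by the no-clustering property (item 3 of Proposition \ref{regular exponential map}); and the genuinely degenerate types (including $q=r=0$) are shown to have image of dimension $\le n-3$ by an explicit rank computation or by Sard--Federer applied to $F|_H$ for a hypersurface $H$ containing the kernel directions. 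Your sketch gestures at using the balanced property but supplies no such mechanism, and without it the statement is simply false for non-minimizing conjugate vectors (the paper's product-of-ellipsoids example gives a codimension-$2$ image of order-$2$ conjugate points). As written, the proposal does not close.
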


Combining this new result with previous ones in the literature, we are able to provide the following description of a cut locus. All the extra results required for the proof of this result will be proved in this chapter.

\begin{theorem}[The cut locus up to $\mathcal{H}$-codimension 3] \label{complete description}
Let $S$ be either the cut locus of a point or submanifold in a Finsler manifold or the closure of the singular locus of a solution of \ref{HJequation} and \ref{HJboundarydata}.
Then $S$ consists of the following types of points :

 \begin{itemize}
 \item\strong{Cleave points:} Points at which $R_{p}$ consists of two non-conjugate vectors. The set of cleave points is a smooth hypersurface;
 \item \strong{Edge points:} Points at which $R_{p}$ consists of exactly one conjugate vector of order 1. This is a set of Hausdorff dimension at most $n-2$;
 \item\strong{Degenerate cleave points:} Points at which $R_{p}$ consists of two vectors, such that one of them is conjugate of order 1, and the other may be non-conjugate or conjugate of order 1. This is a set of Hausdorff dimension at most $n-2$;
 \item\strong{Crossing points:} Points at which $R_{p}$ consists of non-conjugate and conjugate vectors of order 1, and $R^{\ast}_{p} $ spans an affine subspace of dimension $2$. This is a rectifiable set of dimension at most $n-2$;
 \item \strong{Remainder:} A set of Hausdorff dimension at most $n-3$;
\end{itemize}

\end{theorem}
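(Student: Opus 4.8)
The plan is to deduce Theorem~\ref{complete description} by assembling three ingredients, each available or proved in this chapter: Theorem~\ref{main theorem 3}, which discards conjugate directions of order $\geq 2$ outside a set $N$ of Hausdorff dimension $\leq n-3$; the fact (classical for cut loci, and proved here in Lemma~\ref{theorem: conjugate of order 1} for Finsler manifolds and balanced split loci) that the set $C$ of points with two distinct non-conjugate vectors in $R_p$ is a smooth $(n-1)$-dimensional submanifold while $S\setminus C$ has Hausdorff dimension $\leq n-2$; and the stratification by the superdifferential. For the last one I would first recall from Section~\ref{section: Balanced} that for a cut locus or an HJ singular set the superdifferential at $p$ equals the convex hull of $R^{\ast}_{p}$, so that by the rectifiability results of~\ref{subsection: semiconcave functions} (applicable here, or in the balanced setting by~\ref{main theorem 4}) the sets $\Sigma_k:=\{p\in S:\dim\operatorname{conv}(R^{\ast}_{p})\geq k\}$ are countably $\mathcal{H}^{n-k}$-rectifiable. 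In particular each of the five classes in the statement is determined by $R_p$ together with the affine span of $R^{\ast}_{p}$.

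I would then argue by cases on $m(p):=\dim\operatorname{conv}(R^{\ast}_{p})$ for $p\in S$. If $m(p)\geq 3$, then $p\in\Sigma_3$, a set of dimension $\leq n-3$, and $p$ is placed in the \emph{remainder}. If $m(p)=2$, then $p\in\Sigma_2\setminus\Sigma_3$, which is $(n-2)$-rectifiable; if moreover $p\notin N$ every vector of $R_p$ is non-conjugate or conjugate of order $1$ and $p$ is a \emph{crossing point}, while the points of $\Sigma_2\setminus\Sigma_3$ lying in $N$ go to the remainder. If $m(p)=1$, then (the duality map being injective, cf.\ the remark after~\ref{dual one form}) $R^{\ast}_{p}$ lies on a line and the extremes of its convex hull are the duals of two distinct vectors $X_1\neq X_2\in R_p$: if $R_p=\{X_1,X_2\}$ with both non-conjugate, $p$ is a \emph{cleave point}; if $p\notin N$ and some $X_i$ is conjugate, hence of order $1$, then $p$ is an \emph{edge point} when $\sharp R_p=1$ and a \emph{degenerate cleave point} when $\sharp R_p=2$; and the residual case $\sharp R_p\geq 3$ with $m(p)=1$ I would show has dimension $\leq n-3$ (collinearity of three branch-duals being one extra condition over the $(n-2)$-dimensional crossing locus) and assign it to the remainder. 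Finally, if $R_p$ is a single non-conjugate vector then $Q_p$ is a single point by the last property of~\ref{regular exponential map}, $F$ is a local diffeomorphism there, and $p\notin S$; hence every point of $S$ belongs to one of the five classes.

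It remains to check the quantitative claims. The bounds $n-2$ for edge and degenerate cleave points are inherited from $S\setminus C$, and the bound for crossing points from the $(n-2)$-rectifiability of $\Sigma_2\setminus\Sigma_3$; the remainder is the union of $\Sigma_3$, of $N$, and of the collinear triple-point set, hence has dimension $\leq n-3$. That $C$ is a \emph{smooth} hypersurface would follow from the implicit function theorem: near a cleave point there are two local smooth branches $h_1,h_2$ of the function of~\ref{u associated to S}, one per geodesic, with $dh_i(p)$ the Finsler dual of $X_i$; since $X_1\neq X_2$ these differentials are distinct, so $d(h_1-h_2)(p)\neq 0$, and the local uniqueness near order-$1$ points (Lemma~\ref{uniqueness near order 1 points}) identifies $S$ near $p$ with the smooth hypersurface $\{h_1=h_2\}$.

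I expect the real obstacle to be the local analysis near conjugate directions of \emph{order one}: showing that, outside a set of dimension $\leq n-3$, a point carrying such a direction is exactly an edge or degenerate cleave point with the asserted structure, and describing $S$ there as an explicit level set. I would do this in the special coordinates of~\ref{subsection: special coordinates}, where $F(x)=(x_1,\dots,x_{n-1},F^{n}(x))$ with $\partial_{x_i}F^{n}$ vanishing and $\partial_{x_1}\partial_{x_n}F^{n}=1$ at the conjugate point, so that the order-$1$ conjugate locus is locally a graph $x_1=\psi(x_2,\dots,x_n)$ and the shape of $S$ near its image is controlled by the radial behaviour of the function of~\ref{u associated to S}; the balanced condition of Definition~\ref{balanced} is what singles out the sheet of the local picture that actually lies in $S$. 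This is precisely the content of the structure lemmas of this chapter and of Chapter~\ref{Chapter: balanced} (\ref{structure of cleave points}, \ref{structure of crossing points1}, \ref{structure of crossing points2}, \ref{structure of crossing points3}); granting those and Theorem~\ref{main theorem 3}, the proof of Theorem~\ref{complete description} is the bookkeeping just outlined.
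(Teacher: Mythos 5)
Your proposal is correct and follows essentially the same route as the paper: Theorem~\ref{complete description} is obtained there precisely by assembling Theorem~\ref{main theorem 3}, Proposition~\ref{theorem: conjugate of order 1}, Proposition~\ref{cleave points are a manifold} and Proposition~\ref{main theorem 4}, the same four ingredients you combine, with the same bookkeeping on the order of the conjugate vectors and on $\dim \operatorname{co}(R^{\ast}_{p})$. Two small corrections to that bookkeeping: edge points have $\dim \operatorname{co}(R^{\ast}_{p})=0$ (a single conjugate vector), so they belong to your $m(p)=0$ branch rather than $m(p)=1$; and your residual case ``$\sharp R_{p}\geq 3$ with collinear duals'' is in fact empty, since the duals of distinct unit vectors lie on the strictly convex dual indicatrix and three distinct points of a strictly convex hypersurface cannot be collinear, so the heuristic codimension count you propose for it (which would not be rigorous as stated) is not needed.
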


In the next chapter, we will provide more detailed descriptions of how does a balanced split loci looks near each of these different points (see \ref{uniqueness near order 1 points}, \ref{structure of cleave points}, \ref{structure of crossing points1}, \ref{structure of crossing points2} and \ref{structure of crossing points3}).

In the next chapter and also in section \ref{section: proof for a 3-manifold with arbitrary metric} we show applications of this result, but we believe it can also be useful in other contexts. For instance, stochastic processes on manifolds is often studied on the complement of the cut locus from a point, and then the results have to be adapted to take care of the situation when the process hits the cut locus (see \cite{Barden Le}). Brownian motion, for example, almost never hits a set with null $\Hndos$ measure, but will almost surely hit any set with positive $\Hndos$ measure, so we think our result can be useful in that field.

\subsection{Examples}

We provide examples of Riemannian manifolds and exponential maps which illustrate our results.

First, consider a solid ellipsoid with two equal semiaxis and a third larger one. This is a 3D manifold with boundary, and the geodesics  starting at the two points that lie further away from the center have a first conjugate vector of order $2$ while remaining  minimizing up to that point.
This example shows that our bound on the Hausdorff dimension of the points in the cut locus with a minimizing geodesic of order $2$ cannot be improved.

Second, consider the surface of an ellipsoid with three different semiaxis (or any generic surface as in \cite{Buchner}, with metric close to the standard sphere) and an arbitrary point on it. It is known that in the tangent space the set of first conjugate points is a closed curve $C$ bounding the origin, and at most of these points the kernel of the exponential map is transversal to the curve $C$. More explicitely, the set $C^{\ast}$ of points of $C$ where it is not transversal is finite. Consider then the product $M$ of two such ellipsoids. The exponential map onto $M$ has a conjugate point of order $2$ at any point in $(C\setminus C^{\ast})\times (C\setminus C^{\ast})$, and the kernel of the exponential map is transversal to the tangent to $C\times C$. Thus the image of the set of conjugate points of order $2 $ is a smooth manifold of codimension $2$.

This example shows that theorem \ref{main theorem 3} does not hold for the image of all the conjugate points of order $2$, and only holds for the \emph{minimizing} conjugate points.

Finally, recall the construction in \cite{Gluck Singer}, where the authors build a riemannian surface whose cut locus is not triangulable. Their example shows that the set of points with a conjugate minimizing geodesic can have infinite $\Hndos$ measure. A similar construction replacing the circle in their construction with a 3d ball shows that the set of points with a minimizing geodesic conjugate of order $2$ can have infinite $\Hntres$ measure.

\subsection{Relation to previous results in the literature}\label{subsection: Relation to previous results in the literature}

Our structure theorem generalizes a standard result that has been proven several times by mathematicians from different fields (see for example \cite{Barden Le}\footnote{Although there is a mistake in their proof}, \cite{Hebda}, \cite{Mantegazza Mennucci} and \cite{Itoh Tanaka 98}):

\begin{quote}
A cut locus in a Riemannian manifold is the union of a smooth $(n-1)$-dimensional manifold $\mathcal{C}$ and a set of zero $(n-1)$-dimensional Hausdorff measure (actually, a set of Hausdorff dimension at most $n-2$). 
The set $\mathcal{C}$ consists of cleave points, which are joined to the origin or initial submanifold by exactly two minimizing geodesics, both of which are non-conjugate.
\end{quote}

This result follows from \ref{complete description}, since the union of edge, degenerate cleave, and crossing points is a set of Hausdorff dimension at most $n-2$.

The statement quoted above follows from lemmas \ref{theorem: conjugate of order 1}, \ref{cleave points are a manifold} and \ref{main theorem 4} only.
Theorem \ref{main theorem 3} is not necessary if a description is needed only up to codimension $2$.
The proof of the three lemmas is simple and has many features in common with earlier results on the cut locus.

In a previous paper, A. C. Mennucci studied the singular set of solutions to the HJ equations with only $C^k$ regularity. 
Under this hypothesis, the set $S\setminus \mathcal{C}$ may have Hausdorff dimension strictly between $n-1$ and $n-2$ (see \cite{Mennucci}).
We work only in a $C^{\infty}$ setting, and under this stronger condition, the set $S\setminus \mathcal{C}$ has always Haussdorf dimension at most $n-2$.

Our result \ref{complete description} uses the theory of singularities of semi-concave functions that can be found for example in \cite{Alberti Ambrosio Cannarsa}.
Though their result can be applied to a Finsler manifold, we had to give a new proof that applies to balanced split loci instead of just the cut locus.

\section{Conjugate points in a balanced split locus}\label{section: First}

In this section we prove Theorem \ref{main theorem 3}.
Throughout this section, $M$, $r$, $V$ and $F$ are as in section \ref{section: intro to exponential maps}
and $S$ is a balanced split locus as defined in \ref{balanced}.

\begin{dfn}\label{A2}
A singular point $x\in V$ of the map $F$ is an \emph{A2} point if $ker(dF_x)$ has dimension $1$ and is transversal to the tangent to the set of conjugate vectors.
\end{dfn}

\begin{remark} 
Warner shows in \cite{Warner} that the set of conjugate points of order $1$ is a smooth (open) hypersurface inside $V$, and that  for adequate coordinate functions in $V$ and $M$, the exponential has the following normal form around any A2 point,
\end{remark}

\begin{eqnarray}\label{normal form for A2} 
(x_{1},x_{2},\dots , x_{m})\longrightarrow
(x_{1}^{2},x_{2}, \dots, x_{m})
\end{eqnarray}

\begin{prop}\label{no A2 in S}
 For any $p\in M$ and $X\in R_{p}$, the vector $X$ is not of the form $dF_x(r)$ for any A2 point $x$.

\end{prop}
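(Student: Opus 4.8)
The plan is to argue by contradiction: suppose $X = dF_x(r_x) \in R_p$ for some A2 point $x$, with $F(x) = p$. Since $x$ is A2, Warner's normal form \eqref{normal form for A2} holds in suitable coordinates $(x_1,\dots,x_n)$ near $x$ and corresponding coordinates near $p$, in which $F(x_1,\dots,x_n) = (x_1^2, x_2,\dots,x_n)$. Moreover, by proposition \ref{regular exponential map} (and the remark following \ref{A2}), the radial vector $r$ is transversal to the fold, i.e.\ $r_x$ is not tangent to the conjugate hypersurface; after a harmless adjustment of coordinates I would like $r$ to point in the $x_1$-direction (or at least to have nonzero $x_1$-component), so that moving along the geodesic through $x$ corresponds to increasing $x_1$ through $0$. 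The image $F$ then folds the two sheets $\{x_1 > 0\}$ and $\{x_1 < 0\}$ onto the same side $\{y_1 > 0\}$ of the hyperplane $\{y_1 = 0\}$, and $p = F(x)$ lies on that fold hyperplane.

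Next I would extract the geometric contradiction from the balanced condition. The key point is that near $p$, on the "image side" $\{y_1 > 0\}$, every point $q$ has (at least) two preimages under $F$ lying in $A(S)$-like position near $x$ — one with $x_1 > 0$ and one with $x_1 < 0$ — unless $S$ intervenes. Since $X \in R_p$, there is a sequence $p_i \to p$ in $M \setminus S$ with associated vectors $X_i = R_{p_i} \to X$; these $p_i$ must lie on the image side and their chosen preimages approach $x$ along one of the two sheets, say $x_1 > 0$. I would then compute, using the normal form, the one-form $w_i$ dual to $X_i$ and its pairing with the vector $v_{p_i}(p)$ from $p_i$ to $p$. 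Because the fold is quadratic, the "other" sheet $x_1 < 0$ produces a competing vector $X' \in R_p$ (a limit of $R$-vectors from points approached along the other sheet, or from the genuinely split structure) whose dual $w'$ satisfies $w'(v) > w_\infty(v)$ for the relevant limiting direction $v$ — i.e.\ the geodesic arriving via the $x_1 < 0$ sheet is "more minimizing" in the direction $v_{p_i}(p)$ than the one via $x_1 > 0$, or vice versa, so that $X_\infty$ fails to be the maximizer. This contradicts definition \ref{balanced}. Concretely: along the fold, the two branches of $\sqrt{y_1}$ give preimages whose radial images $dF(r)$ differ to first order only in the $x_1$-component, and the first-variation computation (as in \ref{cut locus is balanced}) shows the two candidate "distance-like" functions cross transversally at $p$, so one of them strictly dominates on one side — precisely the situation the balanced inequality forbids.

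The main obstacle I anticipate is handling the interaction between the set $S$ itself and the fold structure: a priori $S$ could pass through $p$ in a way that "hides" one of the two sheets, so that only one preimage near $x$ actually lies in $\overline{A(S)}$. I would deal with this by using that $S$ splits $M$ (definition \ref{splits}): for $q$ on the image side close enough to $p$, the point $q$ is joined to $\partial M$ by a unique characteristic not meeting $S$, and tracking that characteristic backwards near $x$ forces it onto one definite sheet; then the balanced condition, applied with $p_i$ chosen in $M\setminus S$ approaching $p$ from a carefully selected direction (so that $v_{p_i}(p) \to v$ with $v$ chosen to detect the fold), yields that the dual of the limiting $R$-vector must maximize $w(v)$ over $R_p$ — but the quadratic nature of the fold means the radial vector coming off the fold hyperplane is a critical, not maximizing, point of this linear functional, giving the contradiction. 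A secondary technical point is making the coordinate normalization of \eqref{normal form for A2} compatible with the radial direction $r$ and with the Finsler dual map; this is routine given proposition \ref{regular exponential map} and the special coordinates of subsection \ref{subsection: special coordinates}, so I would invoke those rather than redo the ODE analysis.
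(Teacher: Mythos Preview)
Your approach has a genuine gap: comparing the two sheets of the fold near $x$ cannot produce two \emph{distinct} vectors in $R_p$. In the normal form $F(x_1,\dots,x_n)=(x_1^2,x_2,\dots,x_n)$, the two preimages $(\pm\sqrt{y_1},y_2,\dots,y_n)$ of a point on the image side both converge to $x$ as $y_1\to 0$, and since $dF(r)$ is continuous, both branches give the \emph{same} limiting vector $Z=dF_x(r_x)$. So there is no ``competing vector $X'\neq Z$'' coming from the other sheet, and the two ``distance-like functions'' you allude to are tangent at $p$, not transversal. Your final paragraph about $S$ hiding one sheet is therefore addressing a non-problem in a non-working argument.

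The key observation you are missing is that $Z$ is \emph{tangent} to the fold image $H=\{y_1=0\}$: in the normal form $dF_x(\partial_{x_1})=0$ and $dF_x(\partial_{x_i})=\partial_{y_i}$ for $i\ge 2$, so $Z=dF_x(r_x)$ has vanishing $\partial_{y_1}$-component. (Incidentally, this also shows you cannot take $r_x=\partial_{x_1}$, since that would give $Z=0$, contradicting Gauss.) The paper exploits this as follows. On the \emph{non-image} side $B_2=\{y_1<0\}$ no point has a preimage in $U$; hence any sequence $p_n\to p$ inside $B_2$ carries vectors $X_n\in R_{p_n}$ coming from $V\setminus U$, and any subsequential limit $X_\infty\in R_p$ is necessarily $\neq Z$. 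Since $Z$ is tangent to $H$, one can choose such a sequence in $B_2$ with $v_{p_n}(p)\to Z$. Then $\widehat{X_\infty}(Z)<1=\hat Z(Z)$ by strict convexity, while $Z\in R_p$ is itself a candidate in the maximum defining the balanced condition---contradiction. The whole argument lives on the side of $H$ where the local fold has \emph{no} preimages, not on the two-sheeted side.
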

\begin{proof}
The proof is by contradiction. Let $p\in S$ be such that $R_{p}$ contains an A2 vector $Z=dF_{c}(r_{c})$, for $c\in Q_p$.
By the normal form \eqref{normal form for A2}, we see there is a neighborhood
$U$ of $c$ such that no other point in $U$ maps to $p$. 
Furthermore, in a neighborhood $B$ of $p$ the image of the conjugate vectors is a hypersurface $H$ such that all points at one side (call it $B_{1}$) have two preimages of $F\vert_{U}$, all points at the other side $B_{2}$ of $H$ have no preimages, and points at $H$ have one preimage, whose corresponding vector is A2-conjugate.
It follows that $Z$ is isolated in $R_{p}$.

We notice there is a sequence of points $p_{n}\rightarrow p$ in $B_{2}$ with vectors $Y_{n}\in R_{p_{n}}$ such that  $Y_{n}\rightarrow Y\neq X$. Thus $R_{a}$ does not reduce to $Z$.

The vector $Z$ is tangent to $H$, so we can find a sequence of points $p_{n}\in B_{2}$ approaching $p$ such that
$$
\lim_{n\rightarrow \infty}v_{p_{n}}(p)=Z
$$
We can find a subsequence $p_{n_{k}}$ of the $p_{n}$ and vectors $X_{k}\in R_{p_{n_{k}}}$ such that $X_{k}$ converges to some $X_{\infty}\in R_{p}$. By the above, $X_{\infty}$ is different from $Z$, but $\hat{Z}(X)<1= \hat{Z}(Z)$ (where $\hat{Z}$ is the dual form to $Z$), so the balanced property is violated.
\end{proof}

The following is the analogous to Theorem \ref{main theorem 3} for conjugate points of order $1$.

\begin{prop}[Conjugate points of order $1$]\label{theorem: conjugate of order 1}
There is a set $N\subset S$ of Hausdorff dimension $n-2$ such that for all $p\in S\setminus N$, $R_p$ does not contain conjugate vectors.
\end{prop}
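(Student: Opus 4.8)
The plan is to establish the equivalent assertion that the set $N := \{\, p \in S : R_p \text{ contains a conjugate vector}\,\}$ has Hausdorff dimension at most $n-2$; for $p \in S \setminus N$ the set $R_p$ then contains no conjugate vectors, which is what is claimed. First I would fix $p \in N$ and choose $x \in Q_p$ with $d_xF(r_x) \in R_p$ and $x$ conjugate: such $x$ exists by the definition of $R_p$, it satisfies $F(x) = p$, and it is unique by the last property in Proposition \ref{regular exponential map}. I would then split according to whether $\operatorname{corank} d_xF \ge 2$ or $\operatorname{corank} d_xF = 1$ and bound the corresponding subsets of $N$ separately.

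In the first case $x$ lies in $\Sigma_{\ge 2} := \{y \in V : \operatorname{rank} d_yF \le n-2\}$, so $p \in F(\Sigma_{\ge 2})$. Since $F : V \to M$ is a $C^\infty$ map between $n$-manifolds, the quantitative (rank-stratified) form of Sard's theorem — for a smooth map, the image of the locus where the differential has rank at most $r$ has Hausdorff dimension at most $r$ — gives $\dim_{\mathcal H} F(\Sigma_{\ge 2}) \le n-2$. This case is routine and uses nothing about the balanced condition.

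In the second case I would use Warner's theorem (quoted in the remark preceding Proposition \ref{no A2 in S}, from \cite{Warner}): the set $\Sigma_1 := \{y \in V : \operatorname{corank} d_yF = 1\}$ is a smooth hypersurface in $V$, of dimension $n-1$. By Proposition \ref{no A2 in S} the point $x$ is not an A2 point, i.e.\ the line $\ker d_xF$ is contained in $T_x\Sigma_1$; hence the restricted smooth map $F|_{\Sigma_1} : \Sigma_1 \to M$ has rank exactly $n-2$ at $x$, so $x$ lies in $\Sigma_1' := \{y \in \Sigma_1 : \operatorname{rank} d_y(F|_{\Sigma_1}) \le n-2\}$ and $p \in F(\Sigma_1')$. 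Applying the same rank-stratified Sard estimate to $F|_{\Sigma_1} : \Sigma_1^{\,n-1} \to M^{\,n}$ with $r = n-2$ yields $\dim_{\mathcal H} F(\Sigma_1') \le n-2$. Combining the two cases, $N \subset F(\Sigma_{\ge 2}) \cup F(\Sigma_1')$ has Hausdorff dimension at most $n-2$, and $N$ is the desired set.

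I expect the second case to be the only real obstacle. A priori the order-$1$ conjugate vectors occurring in the various $R_p$ could sweep out an $(n-1)$-dimensional piece of the caustic $F(\Sigma_1)$, so some extra input is needed to cut this down by one dimension; that input is exactly Proposition \ref{no A2 in S}, which forces every conjugate vector lying in some $R_p$ to be non-transversal to the conjugate locus — precisely the condition that makes $F$ degenerate in rank along Warner's hypersurface $\Sigma_1$, and hence makes a Sard-type estimate applicable. A secondary point to be careful about is to invoke the quantitative version of Sard's theorem (bounding the Hausdorff dimension of images of rank strata, which are not themselves submanifolds) rather than the classical statement about the measure of critical values.
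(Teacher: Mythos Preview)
Your proof is correct and follows essentially the same route as the paper: split according to the order of conjugacy, use Morse--Sard--Federer for the order $\ge 2$ locus, and for order $1$ invoke Proposition~\ref{no A2 in S} to force the kernel to be tangent to Warner's smooth hypersurface $\Sigma_1$, so that the relevant points are critical for $F|_{\Sigma_1}$ and Morse--Sard--Federer applies again. The only cosmetic difference is that the paper phrases the dichotomy as ``$A_2$ versus non-$A_2$'' within the order-$1$ stratum and disposes of the $A_2$ case first, whereas you fold that step directly into the definition of $\Sigma_1'$; the content is identical.
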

\begin{proof}
The proof is identical to the proof of lemma 2 in \cite{Itoh Tanaka 98} for a cut locus, but we include it here for completeness.
First of all, at the set of conjugate vectors of order $k\geq 2$ we can apply directly the Morse-Sard-Federer theorem (see \cite{Federer}) to show that the image of the set of conjugate cut vectors of order $k\geq 2$ has Hausdorff dimension at most $n-2$.

Let $Q$ be the set of conjugate vectors of order $1$ (recall it is a smooth hypersurface in $V$).
Let $G$ be the set of conjugate vectors such that the kernel of $dF$ is tangent to the conjugate locus. Apply the Morse-Sard-Federer theorem again to the map $F\vert_{Q}$ to show that the image of $G$ has Hausdorff dimension at most $n-2$. Finally, the previous result takes cares of the $A2$ points.
\end{proof}

We now turn to the main result of this paper: we state and prove Theorem \ref{theorem: conjugate of order 2} which has \ref{main theorem 3} as a direct consequence.

\begin{theorem} \label{theorem: conjugate of order 2}
Let $M$, $V$, $F$ and $r$ be as in section \ref{section: intro to exponential
maps}, and let $S$ be a balanced split locus 
\eqref{balanced}.
The set of conjugate points of order $2$ in $V$ decomposes as the union of two subsets $Q_{2}^{1}$ and $Q_{2}^{2}$ such that:
\begin{itemize}
\item No point in $Q_{2}^{1}$ maps under $dF$ to a vector in any of the $R_{a}$ (in set notation: $d F(Q_2^1)\cap \left(\cup R_p\right)=\emptyset$)

\item The image under $F$ of $Q_{2}^{2}$ has Hausdorff dimension at most $n-3$.
\end{itemize}
\end{theorem}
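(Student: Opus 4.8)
The plan is to stratify the conjugate points of order $2$ according to the behavior of the kernel $\ker d_xF$ relative to the conjugate locus, and to treat each stratum with a different tool. Let $\mathcal{C}_2$ denote the set of conjugate vectors of order exactly $2$ in $V$; this is a smooth submanifold of $V$ of codimension $2$ (by Warner's analysis, applied as in the earlier remarks). The first step is to further decompose the smooth locus of points of order $2$ using the second-order differential $d^2F$ from Proposition \ref{regular exponential map}: at an order-$2$ point $x$, the map $d^2F$ sends $\langle r_x\rangle \sharp \ker d_xF$ isomorphically onto $T_{F(x)}M/d_xF(TV_x)$, which is a plane. Inside this plane we can single out the direction $d_xF(r_x)$ together with the quadratic cone produced by $d^2F$ restricted to $\ker d_xF \sharp \ker d_xF$; this data defines, analogously to the A2 normal form \eqref{normal form for A2}, a stratification of $\mathcal{C}_2$ into an open dense part $Q_2^1$ where the local picture of $F$ is ``generic of order $2$'' (the analogue of A2: the $A_3$ or the $D_4$-type normal forms, depending on the rank of the quadratic part) and a closed lower-dimensional remainder.

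On the generic part $Q_2^1$ I would argue, exactly as in Proposition \ref{no A2 in S}, that no vector $d_xF(r_x)$ with $x\in Q_2^1$ can lie in any $R_p$: in the relevant normal form, a small neighborhood $U$ of such $x$ maps to $M$ so that the image of the conjugate locus is a hypersurface $H$ near $p=F(x)$, one side of $H$ has preimages in $U$ and the other side has none (or they collapse), and the conjugate vector $Z=d_xF(r_x)$ is tangent to $H$. Then pick $p_n\to p$ approaching from the side with fewer preimages with $v_{p_n}(p)\to Z$; extract $X_k\in R_{p_{n_k}}$ converging to $X_\infty\in R_p$ with $X_\infty\neq Z$ (using that $R_p$ cannot reduce to $Z$, by the split property and the existence of preimages on the other side); then $\widehat{Z}(X_\infty)<1=\widehat{Z}(Z)$ contradicts the balanced condition \ref{balanced}. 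This gives the first bullet: $dF(Q_2^1)\cap(\cup R_p)=\emptyset$.

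For the remainder $Q_2^2$ — the order-$2$ vectors where the kernel is tangent to the conjugate locus, together with order $\geq 3$ vectors and the non-generic quadratic-degeneracy locus — the strategy is a dimension count via Morse–Sard–Federer (see \cite{Federer}), in the style of Proposition \ref{theorem: conjugate of order 1}. Conjugate vectors of order $\geq 3$ form a set whose image under $F$ has Hausdorff dimension at most $n-3$ by Morse–Sard–Federer applied directly. For the order-$2$ points where $\ker d_xF$ is tangent to $\mathcal{C}_2$: restrict $F$ to the codimension-$2$ manifold $\mathcal{C}_2$; at such points the restricted map $F|_{\mathcal{C}_2}$ drops rank (the tangency forces an extra rank drop), so Morse–Sard–Federer applied to $F|_{\mathcal{C}_2}$ bounds the image by Hausdorff dimension $n-3$. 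The non-generic quadratic locus inside $\mathcal{C}_2$ is similarly a lower-dimensional submanifold whose image is small. Assembling these pieces, $F(Q_2^2)$ has Hausdorff dimension at most $n-3$, giving the second bullet.

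The main obstacle will be making the stratification of $\mathcal{C}_2$ precise and checking that the ``generic'' stratum $Q_2^1$ really does admit a normal form rigid enough to run the balanced-contradiction argument: one must verify that the image of the conjugate locus near $F(x)$ is a hypersurface with the stated one-sided preimage behavior, and that $Z=d_xF(r_x)$ is tangent to it, precisely as in the A2 case. This requires extending Warner's normal-form computations one order higher, or at least extracting the qualitative features (local degree, tangency of $Z$, number of sheets) without a full classification; identifying exactly which higher-order degeneracies must be dumped into $Q_2^2$ — and confirming each such stratum has image of Hausdorff dimension at most $n-3$ — is where the care is needed.
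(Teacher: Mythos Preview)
Your overall strategy --- split the order-$2$ locus into a part handled by a balanced-condition contradiction and a part handled by Morse--Sard--Federer --- matches the paper's, but the way you propose to make the split has a genuine gap.

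The first problem is the claim that $\mathcal{C}_2$ is a smooth codimension-$2$ submanifold. This is not true in general, and the paper's proof actually exploits its failure: working in the special coordinates of \ref{subsection: special coordinates}, the order-$2$ condition is the vanishing of the $2\times 2$ block
\[
\begin{pmatrix}
\partial F^{n-1}/\partial x_{n-1} & \partial F^{n}/\partial x_{n-1}\\
\partial F^{n-1}/\partial x_{n} & \partial F^{n}/\partial x_{n}
\end{pmatrix},
\]
which is four scalar equations. The paper writes $F^{n-1}=x_1x_{n-1}+q(x_{n-1},x_n)+\cdots$ and $F^{n}=x_1x_{n}+r(x_{n-1},x_n)+\cdots$ with $q,r$ quadratic, and classifies points by the pair $(q,r)$. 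For a large class of these (the paper's types 2a and 3) the gradients of the four entries span a $3$-dimensional space, so the order-$2$ locus is locally contained in a codimension-$3$ submanifold of $V$ --- these points go straight into $Q_2^2$ with \emph{no} use of Sard, and your picture of ``restrict $F$ to the codimension-$2$ manifold $\mathcal{C}_2$ and look for rank drops'' never gets off the ground there.

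The second problem is your description of the balanced argument on $Q_2^1$. You model it on the $A_2$ case: image of the conjugate locus is a hypersurface, preimages on one side, none on the other, conjugate vector tangent. For order-$2$ points that picture is not available. What the paper actually does is place into $Q_2^1$ exactly those types (type 1: one of $q,r$ is definite; type 2c: both indefinite with a specific inequality on the coefficients) for which one can exhibit, by an explicit second-order estimate, a sector of $T_{F(z)}M$ that $F(U_z)$ \emph{misses}, and then approach $F(z)$ from that sector with incoming direction $d_zF(r_z)$ to violate balance. This is not an ``open dense generic'' condition on $(q,r)$; types 2a and 2c are both nondegenerate quadratic pairs, yet one lands in $Q_2^2$ and the other in $Q_2^1$. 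A further subtlety you do not see from your outline: one subcase (type 2b) is shown to be \emph{empty} for exponential maps by invoking property~(3) of Proposition~\ref{regular exponential map} (constancy of total multiplicity along rays), not by either of the two mechanisms you list. Finally, for the most degenerate case $q=r=0$ (type 4), the paper does use a Sard argument, but by restricting $F$ to the smooth hypersurface $\{\partial F^{n-1}/\partial x_{n-1}=0\}$, not to $\mathcal{C}_2$.

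In short, the decomposition is not ``generic open stratum versus lower-dimensional remainder''; it is an algebraic case split on the quadratic part $(q,r)$, and each case is dispatched by a different mechanism (missing-sector balanced contradiction, codimension-$3$ gradient count, Warner's property (R3), or Sard on an auxiliary hypersurface). Your final paragraph correctly anticipates that producing the normal forms is where the work lies; what you are missing is that the outcome of that work does not organize itself the way your outline predicts.
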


\begin{proof} 

Let $z$ be a conjugate point of order $2$ and take special coordinates at $U_{z}$ near $z$.
In the special coordinates near $z$ (see \ref{subsection: special coordinates}), $F$ is written:
\begin{equation}
 \label{exp in local coords} 
F(x_{1},\dots,x_{n})=(x_{1},\dots,x_{n-2},F_z^{n-1}(x), F_z^n(x))
\end{equation}
for some functions $F_z^{n-1}$ and $F_z^n$, and $x=(x_{1},\dots,x_{n})$ in a neighborhood $U_{z}$ of $z$ with $F(0,\dots,0)=(0,\dots,0)$.

The Jacobian of $F$ is:
$$JF = \left[ \begin{array}{ccccc}
  1 & \ldots & 0 & \ast & \ast\\
  \vdots & \ddots & \vdots & \vdots & \vdots\\
  0 & \ldots & 1 & \ast & \ast\\
  0 & \ldots & 0 & \frac{\partial F_{z}^{n-1}}{\partial x_{n-1}} &
  \frac{\partial F_{z}^{n}}{\partial x_{n-1}}\\
  0 & \ldots & 0 &  \frac{\partial F_{z}^{n-1}}{\partial x_{n}} &
  \frac{\partial F_{z}^{n}}{\partial x_{n}}
\end{array} \right]  $$

A point $x$ is of order $2$ if and only if the $2\times 2$ submatrix for the $x_{n-1}$ and $x_n$ coordinates (and the corresponding coordinates in $F(U_z)$: $y_{n-1}$ and $y_n$) vanish:
\begin{equation}\label{minor of vars n-1 and n}
\left[ \begin{array}{cc}
  \frac{\partial F_{z}^{n-1}}{\partial x_{n-1}} & \frac{\partial
  F_{z}^{n}}{\partial x_{n-1}}\\
  \frac{\partial F_{z}^{n-1}}{\partial x_{n}} & \frac{\partial
  F_{z}^{n}}{\partial x_{n}}
\end{array} \right]
=0
\end{equation} 
Near a point of order $2$, we write:
$$
F_{z}^{n-1}(x)=x_{1}x_{n-1}+q(x_{n-1},x_{n})+T^{n-1}(x)
$$
$$
F_{z}^{n}(x)=x_{1}x_{n}+r(x_{n-1},x_{n})+T^{n}(x)
$$
where
$ q(x_{n-1},x_{n})$ and $r(x_{n-1},x_{n}) $ are the quadratic terms in $x_{n-1}$ and $x_{n}$ in a Taylor expansion, and
$T$ consists of terms of order $\geq 3$ in $x_{n-1}$ and $x_{n}$, and terms of order $\geq 2$ with at least one $x_{i}, i\leq n-2$.

The nature of the polynomials $q$ and $r$ in the special coordinates at $z$ will determine whether $z$ is in $Q_{2}^{1}$ or in $Q_{2}^{2} $. We have the following possibilities:

\begin{enumerate}
 \item either $q$ or $r$ is a sum of squares of homogeneous linear functions in $x_{n-1}$ and $x_{n}$ (possibly with a global minus sign).
 \item both $q$ and $r$ are products of distinct linear functionals (equivalently, they are difference of squares). Later on, we will split this class further into three types: 2a, 2b and 2c.
 \item one of $q$ and $r$ is zero, the other is not.
 \item both $q$ and $r$ are zero.
\end{enumerate}

We set $Q_{2}^{1}$ to be the points of type 1 and 2c, and $Q_{2}^{2}$ to be the points of type 2a, 3 and 4. Points of type 2b do not appear under the hypothesis of this theorem.

\paragraph{Type 1.}
The proof is similar to Proposition \ref{no A2 in S}. Assume $z=(0,\dots,0)$ is of type 1. If, say, $q$ is a sum of squares, then in the
set $\{x_{1}=a,x_{2}=\dots=x_{n-2}=0\}\cap F(U_z)$, $x_{n-1}$ will reach a minimum value that will be greater than $-Ca^{2}$ for some $C>0$.
We learn there is a sequence $p^{k}=(t^{k},0,\dots,x_{n-2},-(C+1) (t^{k})^{2},0)$, for $t^{k}\nearrow 0$, approaching $(0,\dots,0)$  with incoming speed $(1,0,\dots,0)$ and staying in the interior of the complement of $F(U_z)$ for $k$ large enough.
Pick up any vectors $V_{k}\in R_{p^{k}}$ converging to some $V_{0}$ (passing to a subsequence if necessary). Then $V_{0}$ is different from $(1,0,\dots,0)\in R_{0}$, and 
$$
\widehat{V_{0}}\left( (1,\dots,0)\right) <\widehat{(1,\dots,0)}\left( (1,\dots,0)\right)=1
$$
violating the balanced condition.

\paragraph{Type 2 and 3.}
If a point is of type 2 or 3, we can assume $q\neq 0$. Before we proceed, we change coordinates to simplify the expression of $F$ further. Consider a linear change of coordinates near $x$ that mixes only the $x_{n-1}$ and $x_{n}$ coordinates.
$$
\left( 
\begin{array}{c}
 x'_{n-1}\\
 x'_{n}
\end{array}
\right) 
=
A\cdot 
\left( 
\begin{array}{c}
 x_{n-1}\\
 x_{n}
\end{array}
\right)
$$
followed by the linear change of coordinates near $p$ that mixes only the $y_{n-1}$ and $y_{n}$ coordinates with the inverse of the matrix above:
$$
\left( 
\begin{array}{c}
 y'_{n-1}\\
 y'_{n}
\end{array}
\right) 
=
A^{-1}\cdot 
\left( 
\begin{array}{c}
 y_{n-1}\\
 y_{n}
\end{array}
\right)
$$
Straightforward but tedious calculations show that there is a matrix $A$ such that the map $F$ has the following expression in the coordinates above:
$$
F(x_{1},\dots,x_{n})=(x_{1},\dots,x_{n-2},x_{1}x_{n-1}+(x_{n-1}^{2}-x^{2}_{n}),x_{1}x_{n}+r(x_{n-1},x_{n}))+T
$$
In other words, we can assume $q(x_{n-1},x_{n})=(x_{n-1}^{2}-x^{2}_{n})$.

Fix small values for all $x_{i}$ for $2\leq i\leq n-2$. At the origin, $JF$ is a diagonal matrix with zeros in the positions $(n-1,n-1)$ and $(n,n)$.
We recall that $z$ is conjugate of order $2$ iff the submatrix 
\eqref{minor of vars n-1 and n} vanishes. This submatrix is the sum of
\begin{eqnarray}\label{jacobian of the interesting part}
\begin{bmatrix}
x_{1}+2x_{n-1} & r_{x_{n-1}}\\
 -2x_{n} & x_{1}+r_{x_{n}}
\end{bmatrix}
\end{eqnarray} 
and some terms that either have as a factor one of the $x_{i}$ for $2\leq i\leq n-2$, or are quadratic  in $x_{n-1}$ and $x_{n}$.

We want to show that, near points of type 3 and some points of type 2, all conjugate points of order $2$ are contained in a submanifold of codimension $3$.
The claim will follow if we show that the gradients of the four entries span a $3$-dimensional space at points in $U$. For convenience, write $r(x_{n-1},x_{n})=\alpha x_{n-1}^{2}+\beta x_{n-1}x_{n}+\gamma x_{n}^{2}$. It is sufficient that the matrix with the partial derivatives with respect to $x_{i}$ for $i\in\{1,n-1,n\}$ of the four entries have rank $3$:
$$
A=
\begin{bmatrix}
1 & 2 & 0\\
0 & 0 & -2\\
0 & 2\alpha & \beta\\
1 & \beta & 2\gamma
\end{bmatrix}
$$
The claim holds if all $x_{i}$ are small, for $i\not\in\{1,n-1,n\}$, unless $\alpha=0$ and $\beta=2$. This covers points of type $3$. We say a point of type 2 has type 2a if the rank of the above matrix is $3$. Otherwise, the polynomial $r$ looks:
$$
r(x_{n-1},x_{n})=2 x_{n-1}x_{n}+\gamma x_{n}^{2}=2x_{n}(x_{n-1}+\frac{\gamma}{2}x_{n})
$$
We say a point of type 2 has type 2b if $r$ has the above form and $-1<\frac{\gamma}{2} <1$. We will show that there are integral curves of $r$ arbitrarily close to the one through $z$ without conjugate points near $z$, which contradicts property 3 of exponential maps in Proposition \ref{regular exponential map}.

Take a ray $t\rightarrow \zeta_{x_{n}}(t)$ passing through a point $(0,\dots,0,x_{n}) $.
The determinant of \ref{minor of vars n-1 and n} along the ray is:

$$
\begin{array}{rl}
d(t)&=
 \frac{\partial F_{z}^{n-1}}{\partial x_{n-1}} (\zeta(t))
 \frac{\partial F_{z}^{n}}{\partial x_{n}}  (\zeta(t)) -
 \frac{\partial F_{z}^{n}} {\partial x_{n-1}} (\zeta(t))
 \frac{\partial F_{z}^{n-1}} {\partial x_{n}} (\zeta(t))\\
&=t^{2}+t(4x_{n-1}+2\gamma x_{n})+(4x_{n-1}^{2}+4\gamma x_{n-1}x_{n}+4x_{n}^{2})
+R_{3}(x_{n},t)\\
&= (t+2x_{n-1}+\gamma x_{n})^{2}+(4-\gamma^{2})x_{n}^{2}
+R_{3}(x_{n},t)\\
&\geq c(t^{2}+x_{n}^{2})+R_{3}(x_{n},t)
\end{array}
$$
for a remainder $R_{3}$ of order $3$. Thus there is a $\delta >0$ such that for any $x_{n}\neq 0$ and $\vert t\vert<\delta$, $\vert x_{n}\vert<\delta$, $\zeta_{x_{n}}(t)$ is not a conjugate point.

We have already dealt with points of type 3, 2a and 2b.
Now we turn to the rest of points of type 2 (type 2c). We have either $\frac{\gamma}{2} \geq 1$ or $\frac{\gamma}{2} \leq -1$.
We notice that $x_{n-1}^{2}- x_{n}^{2}\leq 0$ iff $\vert x_{n-1}\vert\leq \vert x_{n}\vert $, but whenever $\vert x_{n-1}\vert\leq \vert x_{n}\vert $, the sign of $r(x_{n-1},x_{n})$ is the sign of $\gamma$. Thus the second order part of $F$ maps $U$ into the complement of points with negative second coordinate and whose third coordinate has the opposite sign of $\gamma$.

A similar argument as the one for type 1 points yields a contradiction with the balanced condition. If, for example, $\gamma \geq 2$, none of the following points 
$$
x^{k}=(t^{k},-(C+1) (t^{k})^{2},-(C+1) (t^{k})^{2},..0,)
$$ 
is in $F(U)$, for $t^{k}\rightarrow 0$. But then we can carry a vector other than $(1,0,\dots,0)$ as we approach $F(x_{0})$.

\paragraph{Type 4.}
Let $z$ be a conjugate point of order $2$.
We show that the image of the points of type 4 inside $U_{z}$ has Hausdorff dimension at most $n-3$. $U_z$ is an open set around an arbitrary point $z$ of order $2$, and thus the result follows.

First, we find that for any point $x$ of type 4, we have $d^2_x F(v\sharp w)=0$ for all $v,w\in \ker d_x F$, making the computation in the special coordinates at $x\in U_z$ (see section \ref{subsection: regular exponential map} for the definition of $d^2 F$).

Then we switch to the special coordinates around $z$. 
In these coordinates, the kernel of $dF$ at $x$ is generated by $\frac{\partial }{\partial x_{n-1}} $ and $\frac{\partial }{\partial x_{n}} $.
Thus $\frac{\partial^2 F_{z}^{n-1}}{\partial x_{i}x_{j}} =0$ for $i,j\geq n-1$ at any point $x\in U_z$ of type 4.

The set of conjugate points of order $2$ is contained in the set
$H\!=\!\lbrace\frac{\partial F_{z}^{n-1}}{\partial x_{n-1}} (x)\!=\!0 \rbrace$.
This set is a smooth hypersurface: the second property in \ref{regular
exponential map} implies that $\frac{\partial^2 F_{z}^{n-1}}{\partial
x_{1}x_{n-1}} \neq 0 $ at points of $H$.
At every conjugate point of type 4, the kernel of $dF$ is contained in the
tangent to $H$. Thus conjugate points of type 4 are conjugate points of the
restriction of $F$ to $H$.
The Morse-Sard-Federer theorem applies, and the image of the set of points of
type 4 has Hausdorff dimension $n-3$.\end{proof}

\begin{proof}[Proof of Theorem \ref{main theorem 3}]
Follows immediately from the above, setting \linebreak[4] $N=F(Q^{2}_{2})$.
\end{proof}

\section{Structure up to codimension 3}\label{section: Second}

This section contains the proof of \ref{complete description}, splitted into several lemmas. All of them are known for cut loci in riemannian manifolds, but we repeat the proof so that it applies to balanced split loci in Finsler manifolds.

\begin{dfn}\label{cleave point}
We say $p\in S$ is a \emph{cleave} point iff $R_{p}$ has two elements $X^{1}$ and $X^{2}$, with $(p,X^{1})=(F(y_{1}),dF_{y_{1}}(r_{y_{1}}))$ and $(p,X^{2})=(F(y_{2}),dF_{y_{2}}(r_{y_{2}}))$, and both $dF_{y_{1}}$ and $dF_{y_{2}}$ are non-singular.

In other words, $p\in S$ is a cleave point iff $R_p$ consists of two non-conjugate vectors.
\end{dfn}

\begin{prop}\label{cleave points are a manifold}
$\mathcal{C}$ is a $(n-1)$-dimensional manifold.
\end{prop}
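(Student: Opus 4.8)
The plan is to show that the set $\mathcal{C}$ of cleave points is locally the zero set of a single smooth function with non-vanishing differential. Fix a cleave point $p_0 \in \mathcal{C}$, so that $R_{p_0} = \{X^1, X^2\}$ with $X^i = d_{y_i}F(r_{y_i})$ and each $d_{y_i}F$ nonsingular. Since $F$ is a local diffeomorphism near each $y_i$, there are neighborhoods $U_i \ni y_i$ in $V$ and $B \ni p_0$ in $M$ with $F|_{U_i}: U_i \to B$ a diffeomorphism. This gives two smooth functions $h_i = g(z) + t$ pushed forward via $(F|_{U_i})^{-1}$, i.e.\ $h_i : B \to \RR$ with $h_i(q)$ the value carried by the characteristic through the unique preimage in $U_i$. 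Each $h_i$ is smooth, and its differential at $q$ is the Finsler dual of $d_{(F|_{U_i})^{-1}(q)}F(r)$, by Corollary \ref{du is dual of tangent to characteristics} and the construction of $h$ in \ref{u associated to S}; in particular $dh_1(p_0) = \widehat{X^1} \neq \widehat{X^2} = dh_2(p_0)$ since $X^1 \neq X^2$ and the Finsler duality map is injective.

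Next I would argue that, after shrinking $B$, $\mathcal{C} \cap B = \{q \in B : h_1(q) = h_2(q)\}$ and that $d(h_1 - h_2)(p_0) \neq 0$. The inequality of differentials is immediate from the previous paragraph: $d(h_1-h_2)(p_0) = \widehat{X^1} - \widehat{X^2} \neq 0$, so $\{h_1 = h_2\}$ is a smooth hypersurface near $p_0$ by the implicit function theorem. For the set-theoretic identification I would use that $S$ is a balanced split locus: points of $B$ with $h_1 < h_2$ (resp.\ $h_1 > h_2$) are reached first, along a non-conjugate minimizing-type characteristic, by the characteristic through $U_1$ (resp.\ $U_2$), hence lie in $A(S)$ and are \emph{not} in $S$; conversely a point with $h_1(q) = h_2(q)$ has both vectors in $R_q$, so $\sharp R_q \geq 2$ and $q \in S$. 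One must also check there is no \emph{third} sheet of $S$ crossing $B$: this follows because a cleave point has exactly two vectors in $R_{p_0}$, each non-conjugate and hence isolated and stable in $R$, so by shrinking $B$ every $q \in S \cap B$ still has exactly these two families of vectors in $R_q$ and no others (upper semicontinuity of $R$ together with the fact that near a non-conjugate vector $F$ is a local diffeomorphism prevents new vectors from appearing). Thus locally $S \cap B = \mathcal{C} \cap B$ and $\mathcal{C}$ is an $(n-1)$-manifold near $p_0$.

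Since $p_0 \in \mathcal{C}$ was arbitrary, $\mathcal{C}$ is a smooth $(n-1)$-dimensional submanifold of $M$.

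\textbf{Main obstacle.} The delicate point is not the implicit function theorem computation but the local structure claim that $S$ coincides with $\{h_1 = h_2\}$ near a cleave point — in particular ruling out extra components of $S$ accumulating at $p_0$ and controlling $R_q$ for $q$ near $p_0$. This requires using the definition of a split locus (\ref{split locus}, \ref{splits}) together with the fact that a non-conjugate vector in $R_{p_0}$ is, via the local diffeomorphism property of $F$, ``stable'': it persists as an isolated element of $R_q$ for nearby $q \in S$, and conversely $R_q$ cannot acquire a third non-conjugate vector without $R_{p_0}$ already having one (by continuity of $F$ and compactness of the relevant pieces of $\overline{A(S)}$). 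I expect this semicontinuity-of-$R_p$ argument, rather than any hard analysis, to be the step requiring the most care, and it is where the earlier results about $F$ being a regular exponential map (Proposition \ref{regular exponential map}) and the definitions of $A(S)$, $Q_p$ and $R_p$ get used in an essential way.
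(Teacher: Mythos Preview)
Your approach has a genuine gap at the set-theoretic identification $\mathcal{C}\cap B=\{h_1=h_2\}$. The proposition is stated (and proved in the paper) for an arbitrary \emph{balanced split locus} $S$, not only for the cut locus. For a general balanced split locus the two functions $h_1,h_2$ need \emph{not} agree on $S$: the balanced condition only forces $\widehat{X^1}-\widehat{X^2}=d(h_1-h_2)$ to vanish on tangent vectors to $S$, so $h_1-h_2$ is locally constant along $\mathcal{C}$, but that constant may be nonzero. This is exactly what the paper exploits in Chapter~\ref{Chapter: balanced} to produce balanced split loci that are \emph{not} cut loci (Theorems~\ref{maintheorem1}, \ref{maintheorem2}). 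Your justification ``points with $h_1<h_2$ are reached first \dots\ hence lie in $A(S)$'' implicitly assumes the cut locus characterization (smallest value wins), which is precisely what fails for general balanced split loci.

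The paper's proof avoids this trap by never invoking the values $h_i$. Instead it uses the balanced condition directly to show that the approximate tangent cone to $S$ at any cleave point lies in the smooth hyperplane distribution $D=\ker(\widehat{X^1}-\widehat{X^2})$, and then argues---again purely from the balanced property---that along each integral curve of $X^1$ there is at most one point of $S$, that the set of such points is open and closed in the transversal slice, and hence that $S$ is locally the graph of a function whose tangent is $D$. Your argument can be salvaged by replacing $\{h_1=h_2\}$ with $\{h_1-h_2=c\}$ for $c=(h_1-h_2)(p_0)$, but then you still owe the argument that $S\cap B$ is contained in (and fills out) this single level set; proving that essentially forces you back onto the paper's route through the balanced condition.
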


\begin{proof}
Let $p=F(y_{1})=F(y_{2})$ be a cleave point, with $R_{p}\!=\!\{dF_{y_{1}}(r),
dF_{y_{2}}(r)\}$.
We can find a small neighborhood $U$ of $p$ so that the following conditions are satisfied:
\begin{enumerate}
 \item $U$ is the diffeomorphic image of neighborhoods $U_{1}$ and $U_{2}$ of the points $y_{1}$ and $y_{2}$.
Thus, the two smooth vector fields $X^{1}_{q}=dF\vert_{U_{1}}(r)$ and $X^{2}_{q}=dF\vert_{U_{2}}(r)$ are defined in points $q\in U$.
 \item At all points $q\in U$, $R_{q}\subset \lbrace X^{1}_{q},X^{2}_{q}\rbrace$. Other vectors must be images of the vector $r$ at points not in $U_{1}$ or $U_{2}$, and if they accumulate near $p$ we could find a subsequence converging to a vector that is neither $X_{1}$ nor $X_{2}$.
 We reduce $U$ if necessary to achieve the property.
 \item Let $H$ be an hypersurface in $U_{1}$ passing through $y_{1}$ and transversal to $X_{1}$, and let $\tilde{H} =F(H)$. We define local coordinates $p=(z, t)$ in $U$, where $z\in \tilde{H}$ and $t\in \RR$ are the unique values for which $p$ is obtained by following the integral curve of $X^{1}$ that starts at $x$ for time $t$.
$U$ is a cube in these coordinates.

\end{enumerate}

We will show that $S$ is a  graph in the coordinates $(z,t)$.
Let $A_{i}$ be the set of points $q$ for which $R_{q}$ contains $X^{i}_{q}$, for $i=1,2$. 
By the hypothesis, $S=A_{1}\cap A_{2}$.

Every tangent vector $v$ to $S$ at $q\in S$ (in the sense of \ref{approximate tangent cone}), satisfies the following property (where $\hat{X}$ is the dual covector to a vector $X\in TM$.):
$$
\hat{X}^{i}(v)=
\max_{Y\in R_{p}}
\hat{Y}(v)
$$
which in this case amounts to $\hat{X}^{1}(v)=\hat{X}^{2}(v)$, or
$$
v\in \ker(\hat{X}^{1}-\hat{X}^{2})
$$

We can define in $U$ the smooth distribution $D=\ker(\hat{X}^{1}-\hat{X}^{2})$. 
$S$ is a closed set whose approximate tangent space is contained in $D$.

We first claim that for all $z$, there is at most one time $t_{0}$ such that $(z,t_{0})$ is in $S$. 
If $(z,t)$ is in $A_{1}$, $R_{(z,t)}$ contains $X^{1}$ and, unless $(z,s)$ is contained in $A_{1}$ for $s$ in an interval $(t-\varepsilon,t)$, we can find a sequence $(z_{n},t_{n})$ converging to $(z,t)$ with $t_{n}\nearrow t$ and carrying vectors $X^{2}$. The incoming vector is $X^{1}$, but
$$
\tilde{X^2}(X^1)<\tilde{X^1}(X^1)=1
$$
which contradicts the balanced property.
Analogously, if $R_{(z,t)}$ contains $X^{2}$ there is an interval $(t,t+\varepsilon)$ such that $(z,s)$ is contained in $A_{2}$ for all $s$ in the interval. Otherwise there is a sequence $(z_{n},t_{n})$ converging to $(z,t)$ with $t_{n}\searrow t$ and carrying vectors $X^{1}$. The incoming vector is $-X^{1}$, but
$$
-1=\tilde{X^1}(-X^1)<\tilde{X^2}(-X^1)
$$
which is again a contradiction. The claim follows easily.

We show next that the set of $p$ for which there is a $t$ with $(z,t)\in S$ is open and closed in $\Gamma$, and thus $S$ is the graph of a function $h$ over $\Gamma$.
Take $(z,t)\in U\cap S$ 
and choose a cone $D_{\varepsilon}$ around $D_{p}$. We can assume the cone intersects $\partial U$ only in the $z$ boundary. There must be a point in $S$ of the form $(z',t')$ inside the cone for all $z'$ sufficiently close to $z$: otherwise there is either a sequence $(z_n,t_n)$ approaching $(z,t)$ with $t_n>h_+(z)$ ($h$ being the upper graph of the cone $D_{\varepsilon}$) and carrying vectors $X^{1}$ or a similar sequence with $t_n<h_-(z)$ and carrying vectors $X^{2}$.
Both options violate the balanced condition. 
Closedness follows trivially from the definition of $S$.

Define $t=h(z)$ whenever $(z,t)\in S$. The tangent to the graph of $h$ is given by $D$ at every point, thus $S$ is smooth and indeed an integral maximal submanifold of $D$.
\end{proof}

\begin{remark}
It follows from the proof above that there cannot be any balanced split locus unless $D$ is integrable. This is not strange, as the sister notion of cut locus does not make sense if $D$ is not integrable. 
\end{remark}

We recall that the orthogonal distribution to a geodesic vector field is parallel for that vector field, so the distribution is integrable at one point of the geodesic if and only if it is integrable at any other point. In particular, if the vector field leaves a hypersurface orthogonally (which is the case for a cut locus) the distribution $D$ (which is the difference of the orthogonal distributions to two geodesic vector fields) is integrable. It also follows from \ref{regularity of mu for nontrivial g} that the characteristic vector field in a Hamilton-Jacobi problem has an integrable orthogonal distribution.

\begin{remark}
In the next chapter we study whether a balanced split locus is actually a cut locus. The proof of the above lemma showed there is a unique sheet of cleave points near a given point in a balanced split loci.
\end{remark}

\begin{prop}\label{main theorem 4}
The set of points $p\in S$ where $co\, (R^{\ast}_{p})$ has dimension $k$ is $(n-k)$-rectifiable.
\end{prop}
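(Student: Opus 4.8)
The plan is to stratify $S$ according to the dimension $k$ of the convex hull $co(R^*_p)$ and show that the stratum $S_k = \{p \in S : \dim co(R^*_p) = k\}$ is $(n-k)$-rectifiable. The key observation is that by Theorem \ref{main theorem 3} (and its order-$1$ companion \ref{theorem: conjugate of order 1}), outside a set $N$ of Hausdorff dimension at most $n-3$ — which is automatically $(n-k)$-rectifiable for $k \leq 3$ and can be absorbed into the remainder for larger $k$ — every vector in $R_p$ is non-conjugate or conjugate of order exactly $1$. So I would first dispose of $N$ and work on $S \setminus N$.

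The cleanest route is to identify $S_k$ (up to the negligible set) with a level set of the superdifferential of a semiconcave function, and then invoke the rectifiability theory for singular sets of semiconcave functions quoted in subsection \ref{subsection: semiconcave functions} and in \cite{Alberti Ambrosio Cannarsa}. Concretely: locally near a point of $S$ one builds the function $h$ associated to the split locus (Definition \ref{u associated to S}); by the results of Section \ref{section: Balanced} this is (locally) semiconcave, and its superdifferential $D^+h(p)$ is exactly $co(R^*_p)$ (the convex hull of the duals of the vectors in $R_p$, as spelled out in the second proof of Proposition \ref{cut locus is balanced}). Then the set where $\dim D^+h(p) \geq k$ is precisely the set $\Sigma_k$ of subsection \ref{subsection: semiconcave functions}, which is countably $\mathcal{H}^{n-k}$-rectifiable: contained in countably many $C^1$ hypersurfaces of dimension $n-k$ plus an $\mathcal{H}^{n-k}$-null set. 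Intersecting the strata gives that $\{p : \dim co(R^*_p) = k\}$ is $(n-k)$-rectifiable.

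The one subtlety is that the function $h$ is only defined on $M \setminus S$, not on $S$ itself, whereas the semiconcave machinery wants a function defined on a full neighborhood; in the cut-locus case this is handled because $d_{\partial M}$ (or the viscosity solution $u$) is a genuine semiconcave function on all of $M$ and $co(R^*_p) = D^+u(p)$. For a general balanced split locus one does not have such a globally defined $u$ a priori. I would handle this exactly as the authors handle the analogous issue elsewhere: either (a) restrict attention locally, away from conjugate points, where one can build a $C^{1,1}$ (hence semiconcave) extension by flowing characteristics from both sides and cutting off — using that near a cleave point $S$ is already a smooth hypersurface by Proposition \ref{cleave points are a manifold}, and more generally that $R_p$ consists of finitely many order-$\le 1$ vectors; or (b) prove directly that the approximate tangent cone estimate $\hat X(v) = \max_{Y \in R_p}\hat Y(v)$ for $v \in T(S,p)$ forces $Tan(S,p)$ to lie in the kernel of all differences $\hat X - \hat Y$ with $X,Y \in R_p$, hence $\dim Tan(S,p) \leq n - \dim co(R^*_p) = n-k$, and then quote the standard criterion (contained in \cite{Alberti Ambrosio Cannarsa}, \cite{Federer}) that a set with approximate tangent space of dimension $\leq n-k$ everywhere, lying in a single $C^{1,1}$ solution sheet structure, is $(n-k)$-rectifiable. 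Route (b) is more self-contained and parallels the local argument already used in the proof of \ref{cleave points are a manifold}, so that is what I would write up.

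The main obstacle is precisely this last point — transferring the rectifiability theory for singular sets of semiconcave functions, which is stated for an honest function on an open set, to the abstract balanced split locus where the "function" $h$ lives only off $S$. I expect the proof to spend most of its effort showing that near any point of $S \setminus N$ one has enough structure (finiteness of $R_p$ up to order $1$, plus the balanced inequality controlling the tangent cone) to either construct a local semiconcave representative or to run the Federer-type rectifiability argument by hand. Once that is in place, the stratification and the bound $\dim Tan(S,p) \leq n-k$ are routine.
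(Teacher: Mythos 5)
Your route (b) is the paper's actual argument: the balanced condition is used to pin the approximate tangent directions of the stratum inside an $(n-k)$-dimensional subspace (namely the normal $(co\,R^{\ast}_{p})^{\perp}$), and rectifiability then follows from Theorem 3.1 of \cite{Alberti Ambrosio Cannarsa}; no semiconcave extension of $h$ across $S$ is attempted, so the difficulty you worry about in route (a) simply does not arise. Two points in your sketch are glossed over and are exactly where the paper spends its effort. First, the balanced condition does not give $\hat{X}(v)=\max_{Y\in R_p}\hat{Y}(v)$ for \emph{every} $X\in R_p$ and every tangent direction $v$; it only constrains those $X_\infty\in R_p$ arising as limits of selections $X_i\in R_{p_i}$ along the approaching sequence. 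So one must \emph{extract}, by a diagonal argument, $k$ well-separated covectors in $R_p$ that are all such limits, conclude they all lie in the exposed face of $co\,R^{\ast}_{p}$ in the direction $\eta=\lim v_{p_i}(p)$, and only then use $\dim co\,R^{\ast}_{p}=k$ to force $\eta\in(co\,R^{\ast}_{p})^{\perp}$. Second, for that extraction to produce covectors spread out by a definite amount in the limit, one needs a uniform nondegeneracy along the sequence: the paper decomposes the stratum into the countably many sets $\Sigma^k_\delta$ of points where $co\,R^{\ast}_{p}$ contains a $k$-ball of radius $\geq\delta$, proves the tangent-direction bound for each $\Sigma^k_\delta$, and takes the countable union. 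Without this decomposition the limit convex hulls could collapse and the constraint on $\eta$ would be weaker than an $(n-k)$-plane. With those two repairs your proof closes.
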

\begin{proof}
Throughout the proof, let $\hat{X}$ be the dual covector to the vector $X\in TM$.

Let $p_{n}$ be a sequence of points such that $co\,(R^{\ast}_{p_{n}})$ contains a $k$-dimensional ball of radius greater than $\delta$. Suppose they converge to a point $p$ and $v_{p_{n}}(p)$ converges to a vector $\eta$.

We take a neighborhood $U$ of $p$ and fix product coordinates in $\pi^{-1}(U)$ of the form $U\times \Rn$.
Then, we extract a subsequence of $p_{n}$ and vectors $X_{n}^{1}\in R_{p_{n}}$ such that $X_{n}^{1}$ converge to a vector $X^{1}$ in $R_{p}$. Outside a ball of radius $c\delta$ at $\hat{X}_{n}^{1}$, where $c$ is a fixed constant and $n>>0$, there must be vectors in $R_{p_{n}}$, and we can extract a subsequence of $p_{n}$ and vectors $X_{n}^{2}$ converging to a vector $X^{2}$ such that $\hat{X}^{2}$ is at a distance at least $c\delta$ of $\hat{X}^{1}$. Iteration of this process yields a converging sequence $p_{n}$ and $k$ vectors 
$$
X_{n}^{1}, .. ,X_{n}^{k}\in R_{p_{n}}
$$
converging to vectors
$$
X^{1}, .. ,X^{k}\in R_{p}
$$
such that the distance between $\hat{X}^{k}$ and the linear span of $\hat{X}^{1},..\hat{X}^{k-1}$ is at least $c\delta$, so that $coV^{\ast}_{p}$ contains a $k$-dimensional ball of radius at least $c'\delta$.

The balanced property implies that the $\hat{X}^{j}$ evaluate to the same value at $\eta$, which is also the maximum value of the $\hat{Z}(\eta)$ for a vector $Z$ in $R_{p}$. In other words, the convex hull of the $\hat{X}^{j}$ belong to the face of $R^{\ast}_{p}$ that is exposed by $\eta$. If $co\,R_{p}^{\ast}$ is $k$-dimensional, $\eta$ belongs to 
$$
\begin{array}{rl}
 \left( co\,R_{p}^{\ast}\right)^{\perp} =&
\left\lbrace 
    v\in T_{p} M \::\quad  \langle w,v\rangle \text{ is constant for }w\in co\,R_{p}^{\ast}
\right\rbrace \\[2ex]
=&
\left\lbrace 
    v\in T_{p} M \::\quad  \langle \hat{X},v\rangle \text{ is constant for }X\in R_{p}
\right\rbrace
\end{array} 
$$
which is a $n-k$ dimensional subspace.

Let $\Sigma^{k}_{\delta}$ be the set of points 
$p\in S$ for which $co\,R_{p}^{\ast}$ is $k$-dimensional and contains a $k$-dimensional ball of radius greater than or equal to $\delta$.
We have shown that all tangent directions to $\Sigma^{k}_{\delta}$ at a point $p$ are contained in a $n-k$ dimensional subspace. We can apply theorem 3.1 in \cite{Alberti Ambrosio Cannarsa} to deduce $\Sigma^{k}_{\delta}$ is $n-k$ rectifiable, so their union for all $\delta>0$ is rectifiable too.

\end{proof}

\chapter{Balanced split sets and Hamilton-Jacobi equations}
\label{Chapter: balanced}

\section{Introduction}

In this chapter we consider the Hamilton-Jacobi boundary value problem 
\eqref{HJequation} and \eqref{HJboundarydata} in a compact set $ M$.

A local \emph{classical} solution can be computed near $\partial  M$ following \emph{characteristic} curves as in section \ref{subsection: characteristics of the HJBVP}.

A unique \emph{viscosity solution} is given by the Lax-Oleinik formula 
\eqref{Lax-Oleinik}.

The viscosity solution can be thought of as a way to extend the classical solution to the whole $ M$.

Recall from section \ref{section: Balanced} that the singular set $Sing$ is a \emph{balanced split locus}.
This notion was inspired originally by the paper \cite{Itoh Tanaka 00}, but is also related to the notion of \emph{semiconcave} functions that is now common in the study of Hamilton-Jacobi equations (see section \ref{section: Balanced}). 
Our goal in this chapter is to determine whether there is a unique balanced split locus. In the cases when this is not true, we also give an interpretation of the multiple balanced split loci.

\subsection{Outline} 

In section \ref{section: balanced, statement} we state our results, give examples, and comment on possible extensions. Section \ref{section: balanced preliminaries} gathers some of the results from the literature we will need, and includes a few new lemmas that we use later. Section \ref{section:rho is Lipschitz} contains our proof that the \emph{distance to a balanced split locus} and \emph{distance to the $k$-th conjugate point} are Lipschitz. Section \ref{section:proof of the main theorems} contains the proof of the main theorems, modulo a result that is proved in section \ref{section: proof that partial T=0}. This last section also features detailed descriptions of a balanced split set at each of the points in the classification in theorem \ref{complete description}.

\section{Statement of results.}\label{section: balanced, statement}

\subsection{Results}

For \emph{fixed} $ M$, $H$ and $g$ satisfying the conditions stated earlier,
there is always at least one balanced split locus, namely the singular set of
the solution of 
\eqref{HJequation} and \eqref{HJboundarydata}.
In general, there might be more than one balanced split loci, depending on the topology of $ M$. 

Our first theorem covers a situation where there is uniqueness.

\begin{theorem}\label{maintheorem0}
Assume $ M$ is simply connected and $\partial  M$ is connected.

Then there is a unique balanced split locus, which is the singular locus of the
solution of 
\eqref{HJequation} and \eqref{HJboundarydata}.
\end{theorem}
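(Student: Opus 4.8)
The strategy is to show that, under the hypotheses that $M$ is simply connected with connected boundary, any balanced split locus $S$ must coincide with $Sing$, the singular set of the viscosity solution $u$. The natural device is to build a function out of $S$ and show it equals $u$. Recall from Definition \ref{u associated to S} that a splitting set $S$ carries a function $h$ on $M \setminus S$ given by $h(p) = g(z) + t$ where $(t,z) \in A(S)$ is the unique point with $F(t,z) = p$. First I would show that $h$ extends continuously to all of $M$: this is where the balanced condition does the work, since near a point $p \in S$ the balanced property forces the various branches $g(z_i)+t_i$ arriving along the different vectors of $R_p$ to agree in the limit (the balanced equality $w_\infty(v) = \max\{w(v)\}$ controls the one-sided derivatives and pins down a common value). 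So $h$ is a well-defined continuous function on $M$, smooth off $S$, satisfying $h = g$ on $\partial M$ and $H(p, dh(p)) = 1$ wherever $h$ is differentiable.

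\textbf{Comparing $h$ with $u$.} Next I would argue that $h$ is a viscosity solution of the HJBVP, hence equals $u$ by uniqueness of viscosity solutions (the Lax--Oleinik formula \eqref{Lax-Oleinik}). The subsolution inequality should come for free from the fact that $h$ is built from characteristics and is locally the infimum-type object; the delicate half is the supersolution property at points of $S$, which again should follow from the balanced condition exactly as in the second proof of Proposition \ref{cut locus is balanced}, where the balanced condition was identified with Cannarsa--Sinestrari's \cite[3.3.15]{Cannarsa Sinestrari}. Indeed, a semiconcave function solving \eqref{HJequation} at its points of differentiability is automatically the viscosity solution (\cite[5.3.1]{Cannarsa Sinestrari}), so it would suffice to show $h$ is semiconcave --- and the balanced condition is precisely the differential shadow of semiconcavity, as remarked after the second proof in Section \ref{section: Balanced}. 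Here is where simple connectivity enters: without it, $h$ might fail to be single-valued (monodromy around noncontractible loops), so one needs $\pi_1(M) = 1$ to even define $h$ globally, and connectedness of $\partial M$ to prevent the constant-shifting ambiguity described in the table preceding Theorem \ref{maintheorem0}.

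\textbf{From $h = u$ to $S = Sing$.} Once $h = u$, the set where $h$ is singular is exactly $Sing$. Since $h$ is smooth on $M \setminus S$, we get $Sing \subseteq S$. For the reverse inclusion, $S$ is a \emph{split locus}, so by Definition \ref{split locus} $S = \overline{\{p \in S : \#R_p \geq 2\}}$; but at a point with $\#R_p \geq 2$ two distinct characteristics carry (generically) distinct values or distinct differentials, so $h = u$ is singular there, giving $\{p \in S : \#R_p \geq 2\} \subseteq Sing$ and hence $S \subseteq \overline{Sing} = Sing$. (One must handle the case $\#R_p = 1$ with a conjugate vector separately, but such points lie in the closure of genuine multiplicity points or are themselves singular points of $u$ by the last clause of Definition \ref{definition: the cut locus}.) Combining the two inclusions gives $S = Sing$.

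\textbf{Main obstacle.} The hard part will be the global construction and continuity of $h$, specifically proving that the branch values $g(z) + t$ glue to a \emph{continuous} (indeed semiconcave) function across $S$ using only the balanced condition --- this is the technical heart, and it is presumably where the machinery of Chapter \ref{Chapter: structure} (the local structure of $S$ up to codimension $3$, Theorem \ref{complete description}) gets used, stratum by stratum, together with a covering-space argument invoking $\pi_1(M)=1$ to rule out multivaluedness. Establishing semiconcavity of $h$ from scratch, rather than borrowing it from $u$, is the step I would expect to require the most care.
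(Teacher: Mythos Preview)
The core gap is in your first step. The balanced condition is a \emph{first-derivative} condition: at a cleave point it says $\widehat{X}_1(v)=\widehat{X}_2(v)$ for every $v$ tangent to $S$, i.e.\ $dh_1=dh_2$ along the cleave hypersurface, so $h_1-h_2$ is \emph{locally constant} on each connected component of the cleave set. Nothing in the balanced condition forces that constant to be zero---your sentence ``the balanced equality \ldots\ pins down a common value'' conflates agreement of differentials with agreement of values. Making all these constants vanish is a genuinely global problem, and your ``covering-space argument invoking $\pi_1(M)=1$ to rule out multivaluedness'' does not address it: $h$ is already single-valued on $M\setminus S$; the obstruction to gluing is encoded not in $\pi_1$ directly but in $H_{n-1}(M)$ (dually $H^1(M,\partial M)$). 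Likewise, proving $h$ semiconcave presupposes knowing $h$ is continuous, which is exactly the unresolved point.

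The paper's route is quite different. It packages the jumps $h_1-h_2$ into an $(n-1)$-current $T$ supported on the cleave set (equation \eqref{definition of T}). The bulk of the work (Section \ref{section: proof that partial T=0}, using the structure theorem \ref{complete description} stratum by stratum at edge, degenerate-cleave, and crossing points) shows $\partial T=0$. Simple connectedness and connectedness of $\partial M$ then enter through the long exact sequence \eqref{long exact sequence} and Lefschetz duality $H_{n-1}(M,\partial M)\cong H^1(M)=0$, forcing $H_{n-1}(M)=0$. Hence $T=\partial P$ for a top-dimensional flat current $P$ whose density is locally constant off $S$ and therefore identically zero; this gives $T=0$, i.e.\ $h_1=h_2$ across every cleave component. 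The conclusion $S=Sing$ is then drawn not via semiconcavity but from Lemma \ref{una unica solucion por caracteristicas continua en un conjunto grande}, which only needs $\rho_S$ continuous (Lemma \ref{rho is lipschitz}) and $h$ continuous off an $\mathcal{H}^{n-1}$-null set.
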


The next theorem removes the assumption that $\partial  M$ is connected, and uniqueness goes away:

\begin{theorem}\label{maintheorem1}
Assume $ M$ is simply connected and $\partial  M$ has several connected components.
Let $S\subset  M$ be a balanced split locus.

Then $S$ is the singular locus of the solution of 
\eqref{HJequation} and \eqref{HJboundarydata} with boundary data $g+a$ where
the function $a$ is constant at each connected component of $\partial M$.
\end{theorem}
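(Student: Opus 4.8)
The plan is to reduce the non-simply-connected boundary case to the uniqueness statement of Theorem \ref{maintheorem0} by "filling in" the topology coming from the disconnected boundary. First I would fix a balanced split locus $S\subset M$ and consider the associated function $h$ on $M\setminus S$ built from characteristics as in Definition \ref{u associated to S}, together with the vector field $R_p$ and the sets $Q_p$, $R_p$ from Definitions \ref{the set R_p} and \ref{the set Q_p}. Since $M$ is simply connected, the characteristic flow from $\partial M$ gives a well-defined "time" and "footpoint" on $M\setminus S$; the only obstruction to $h$ being single-valued globally is that two characteristics reaching nearby points may emanate from different connected components of $\partial M$, where the constants $g$ may differ. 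The key observation is that, because $M$ is simply connected, one can still define consistently on all of $M\setminus S$ the \emph{relative} value of $h$ along continuous paths, and the failure of global single-valuedness is measured by a locally constant function whose jumps occur only when crossing from the "domain of influence" of one boundary component to another.

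Next I would make this precise by the standard doubling/capping construction already used in this chapter: attach to each connected component $\Sigma_i$ of $\partial M$ a collar and continue the characteristic geodesics backward without intersection, exactly as in the Proposition preceding Theorem \ref{reduce HJ to boundary value 0}, obtaining a larger manifold. On this enlarged object, the boundary data can be taken to be a single constant on the new boundary; equivalently, replacing $g$ by $g+a$ with $a$ constant on each $\Sigma_i$ amounts to starting the characteristics from $\partial M$ with a shifted arrival time. I would then check two things: (1) shifting $g$ by such a locally constant $a$ does not change the geometry of the characteristics (they satisfy the same ODE \eqref{equation for the characteristic vector field}, since $dg$ is unchanged on each component), so the candidate singular set is genuinely a balanced split locus for $(M,H,g+a)$; and (2) conversely, given $S$ balanced for $(M,H,g)$, the function $h$ has a well-defined global "multivaluedness defect" that, by simple connectedness, is pulled back from a locally constant function on the index set of boundary components — this defines the required $a$.

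The heart of the argument, and the step I expect to be the main obstacle, is showing that once the constants $a_i$ are chosen to match $h$ on one characteristic reaching each component's domain of influence, the resulting globally-defined function $\tilde h = h - a\circ(\text{footpoint})$ \emph{is} the viscosity solution of \eqref{HJequation}–\eqref{HJboundarydata} with data $g+a$. For this I would invoke the remark at the end of section \ref{section: Balanced}: a semiconcave function satisfying \eqref{HJequation} wherever it is differentiable is the viscosity solution (see \cite[5.3.1]{Cannarsa Sinestrari}). Thus it suffices to prove $\tilde h$ is semiconcave across $S$, and here is precisely where the balanced condition of Definition \ref{balanced} is used: the balanced inequality $w_\infty(v)=\max\{w(v):w\text{ dual to some }R\in R_p\}$ is exactly the infinitesimal form of the super-differential inequality that characterizes semiconcavity (this equivalence was already exploited in the second proof of Proposition \ref{cut locus is balanced}), and it holds at every $p\in S$ by hypothesis on $S$, while on $M\setminus S$ the function $\tilde h$ is smooth. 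The only delicate point is local finiteness / uniqueness of the footpoint assignment near $S$: one must rule out that infinitely many "sheets" with incompatible constants accumulate at a point of $S$, which follows from the structure results of Chapter \ref{Chapter: structure} (Theorem \ref{complete description}) bounding the relevant bad set, together with the simple connectivity of $M$ forcing the locally constant defect to be globally constant on each component's influence region. Once semiconcavity of $\tilde h$ is established, uniqueness of the viscosity solution (Theorem \ref{maintheorem0} applied after the capping, or directly \eqref{Lax-Oleinik}) identifies $S$ with the singular locus of that solution, completing the proof.
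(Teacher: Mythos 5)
There is a genuine gap at what you yourself identify as ``the heart of the argument'': the global consistency of the constants $a_i$. The balanced condition does give you, as you say, that along each connected component of the cleave locus the jump $h_1-h_2$ is constant (this is exactly how the paper opens section \ref{section:proof of the main theorems}: $\widehat{X}_1(v)=\widehat{X}_2(v)$ for $v$ tangent to $S$, and $\widehat{X}_i=dh_i$). But to define a single function $a$ on the set of boundary components you need much more: (i) the jumps must be consistent around every codimension-$2$ stratum of $S$ (edge points, degenerate cleave points and crossing points), so that continuation of $h$ along a small loop encircling such a stratum has trivial monodromy; and (ii) the jumps between the influence regions of $\Gamma_i$ and $\Gamma_j$ must agree on \emph{every} cleave component separating them, not just on one. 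Neither follows from simple connectedness alone, and the ``domains of influence'' need not even be connected. Point (i) is precisely the content of $\partial T=0$, which the paper proves in the long section \ref{section: proof that partial T=0} using the detailed structure lemmas \ref{uniqueness near order 1 points} and \ref{structure of crossing points1}--\ref{structure of crossing points3}; citing Theorem \ref{complete description} only to ``bound the bad set'' is not enough, because a Hausdorff-dimension bound does not kill monodromy around a codimension-$2$ set. Point (ii) is a genuinely homological statement: the closed current $T$ represents a class in $H_{n-1}(M)$, and one needs the long exact sequence plus Lefschetz duality ($H_{n-1}(M,\partial M)\cong H^1(M)=0$) to conclude that this class is a combination $\sum a_i[\Gamma_i]$ of boundary components and nothing else. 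The torus-with-a-disc-removed example shows that with connected boundary but nontrivial $H_1(M)$ the ``defect'' can be nonzero in ways not attributable to boundary components, so some homological input is unavoidable.

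Two smaller remarks. First, the capping construction of Theorem \ref{reduce HJ to boundary value 0} is not the mechanism the paper uses here; it handles nonzero $g$, not disconnected boundary, and does not by itself produce the constants $a_i$. Second, your final semiconcavity step is both harder than needed and not clearly justified: the balanced condition is a \emph{consequence} of semiconcavity (Proposition \ref{cut locus is balanced}, second proof), and the converse implication ``balanced $+$ continuous $\Rightarrow$ semiconcave'' is not established anywhere in the paper. Once the corrected $h$ is known to be continuous off an $\mathcal{H}^{n-1}$-null set, Lemma \ref{una unica solucion por caracteristicas continua en un conjunto grande} (together with Lemma \ref{rho is lipschitz}) already identifies $S$ with the singular locus; that is the route the paper takes, and you should use it in place of the semiconcavity detour.
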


The above theorem describes precisely all the balanced split loci in a situation where there is non-uniqueness. If $ M$ is not simply connected, the balanced split loci are more complicated to describe. We provide a somewhat involved procedure using the universal cover of the manifold. However, the final answer is very natural in the light of the examples.

\begin{theorem}
There exists a bijection between balanced split loci for given $ M $, $H$ and $g$ and an open subset of the homology space $H^1( M, \partial M)$ containing zero.
\end{theorem}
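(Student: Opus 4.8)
The plan is to reduce to the simply connected situation by passing to the universal cover, as announced in the introduction. Let $\pi:\widetilde M\to M$ be the universal covering, $G=\pi_{1}(M)$ its deck group, and pull the data back to $\widetilde H=\pi^{\ast}H$, $\widetilde g=g\circ\pi$. Because being a balanced split locus is a purely local property (Definition \ref{balanced}) and $\pi$ is a local isometry, the map $S\mapsto\pi^{-1}(S)$ will be a bijection between balanced split loci of $(M,H,g)$ and $G$-invariant balanced split loci of $(\widetilde M,\widetilde H,\widetilde g)$, with inverse given by descent. So the first step is to classify the balanced split loci of $\widetilde M$. Here $\widetilde M$ is simply connected but has, in general, infinitely many boundary components $\{\widetilde C_{\alpha}\}_{\alpha\in\mathcal I}$, $\mathcal I=\pi_{0}(\partial\widetilde M)$, and I would prove an appropriate extension of Theorems \ref{maintheorem0} and \ref{maintheorem1} asserting that every balanced split locus of $\widetilde M$ is the closure of the singular set of
\[
\widetilde u_{a}(p)=\inf_{\alpha\in\mathcal I}\ \inf_{q\in\widetilde C_{\alpha}}\bigl(d(p,q)+\widetilde g(q)+a_{\alpha}\bigr)
\]
for a family $a=(a_{\alpha})_{\alpha\in\mathcal I}$ subject to an admissibility condition (the infimum must be finite and locally attained, and $\widetilde g+a$ must satisfy \ref{compatibility condition}), and moreover that two admissible families yielding the same singular locus differ by a global additive constant.

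The second step is to translate $G$-invariance into cohomology. Since $\widetilde g$ is $G$-invariant, a change of variable in the infimum gives $\widetilde u_{a}\circ\gamma=\widetilde u_{\gamma\cdot a}$ for $\gamma\in G$, where $G$ acts on families through its action on $\mathcal I$. By the injectivity just stated, the singular locus of $\widetilde u_{a}$ is $G$-invariant exactly when $\gamma\cdot a-a$ is a constant $c(\gamma)\in\RR$ for every $\gamma$. The cocycle identity then forces $c\colon G\to\RR$ to be a group homomorphism, and evaluating on $\gamma$ in the stabiliser of a boundary component — i.e. in the image of $\pi_{1}$ of a component of $\partial M$ — forces $c$ to vanish there, so $c$ represents a class in $\ker\bigl(j^{\ast}\colon H^{1}(M;\RR)\to H^{1}(\partial M;\RR)\bigr)$. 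Conversely such a $c$, together with one real base value on each of the finitely many components of $\partial M$, reconstructs an admissible $a$. Quotienting by the global constant and using the exact sequence of the pair,
\[
H^{0}(M)\to H^{0}(\partial M)\to H^{1}(M,\partial M)\to H^{1}(M)\xrightarrow{j^{\ast}}H^{1}(\partial M),
\]
I would identify the set of admissible families modulo constants with $H^{1}(M,\partial M;\RR)$: both are extensions of $\ker j^{\ast}$ by $H^{0}(\partial M)/\operatorname{im}H^{0}(M)$. The class attached to $S$ can moreover be described directly on $M$: writing $h_{S}$ for the function built from $S$ along characteristics (Definition \ref{u associated to S}), which under the above turns out to extend to a Lipschitz function on $M$, and $u$ for the viscosity solution of \ref{HJequation}--\ref{HJboundarydata}, the invariant is the class of $d(h_{S}-u)$ in $H^{1}(M,\partial M;\RR)$ — a genuine relative cocycle since $h_{S}-u$ is locally constant on $\partial M$, and trivial precisely when $h_{S}-u$ is globally constant, i.e. when $S=Sing$.

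The third step is to pin down the image. The admissible family $a\equiv 0$ reproduces $\pi^{-1}(Sing)$, so the zero class is hit. Admissibility is an open condition on $a$, hence on the associated cohomology class, and it holds at $0$; combined with the injectivity carried over from the first step this yields the asserted bijection between balanced split loci and an open neighbourhood of $0$ in $H^{1}(M,\partial M)$. One should also check the examples (the annulus, the torus with a disc removed, figures \ref{figure: cut locus of two spheres}--\ref{figure: cut locus of torus}) against this description as a sanity test.

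I expect the main obstacle to be the first step: Theorem \ref{maintheorem1} is stated for a \emph{compact} $M$ with finitely many boundary components, whereas $\widetilde M$ is non-compact with (typically) infinitely many, so the Lax--Oleinik infimum must be handled carefully and the uniqueness/classification argument rerun. I would attack this by exhausting $\widetilde M$ by $G$-translates of a compact fundamental domain, applying the compact results on large compact submanifolds-with-boundary, and passing to the limit, using the local structure theorem \ref{complete description} and the Lipschitz estimates of the earlier chapters to ensure that limits of balanced split loci remain balanced split loci and do not collapse. The injectivity clause on $\widetilde M$ (distinct admissible $a$ modulo constants give distinct loci) is the other delicate point; it should follow from the fact that each boundary component genuinely participates in $\widetilde u_{a}$ in a neighbourhood of itself, so perturbing a single $a_{\alpha}$ perturbs the singular locus somewhere.
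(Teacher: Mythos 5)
Your overall architecture coincides with the paper's: the statement is proved there as an immediate consequence of Theorem \ref{maintheorem2}, whose proof also passes to the universal cover, encodes a balanced split locus by an equivariant, compatible assignment of constants to the components of $\partial\widetilde{M}$, identifies these (modulo a global constant) with $\mathrm{Hom}(\pi_1(M,\partial M),\RR)\cong H^1(M,\partial M)$, and obtains the open image from the openness of the compatibility condition \eqref{compatibility condition}. Your cohomological bookkeeping via the exact sequence of the pair is equivalent to the paper's path-based formulas, and the injectivity of $a\mapsto S[a]$ is asserted in the paper with no more detail than you give.

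The genuine gap is in your first step, which is where you place the entire burden of surjectivity. The paper never classifies all balanced split loci of the non-compact $\widetilde{M}$; it does not need an ``extension of Theorem \ref{maintheorem1}'' to infinitely many boundary components. Instead, given a balanced split locus $S\subset M$, it constructs the flat $(n-1)$-current $T_S$ of \eqref{definition of T}, supported on the cleave locus and weighted by the jump $h_1-h_2$, proves $\partial T_S=0$ on the \emph{compact} $M$ (this is the technical core of the chapter and uses the structure results \ref{complete description} and the local analysis near cleave, edge and crossing points), lifts the already-closed current to $\widetilde{M}$, and reads the coefficients $a_\alpha$ off its homology class before concluding $S=S[a]$ via Lemma \ref{una unica solucion por caracteristicas continua en un conjunto grande}. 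Your substitute for this --- exhausting $\widetilde{M}$ by translates of a fundamental domain, applying the compact theorems to the pieces, and passing to the limit --- does not work as described: the restriction of a balanced split locus of $\widetilde{M}$ to a compact piece $K_n$ is not a balanced split locus of $K_n$, because the sets $R_p$ and the splitting condition of Definition \ref{balanced} are defined relative to characteristics emanating from $\partial\widetilde{M}$, not from the artificial boundary created by truncation; supplying the correct data on that artificial boundary would require already knowing the function $\widetilde{u}_a$ you are trying to construct. Moreover, the assertion that limits of balanced split loci are balanced split loci is neither proved in the paper nor obvious. Without the closed current $T_S$ (or an equivalent device for extracting the coefficients $a_\alpha$ from $S$ itself), the surjectivity half of the bijection is unproven. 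Your parenthetical remark that the invariant is the class of $d(h_S-u)$ is the right idea, but making it a genuine relative cocycle is exactly the content of $\partial T_S=0$, which you nowhere establish.
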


In fact, this theorem follows immediately from the next, where we construct such bijection:

\begin{theorem}\label{maintheorem2}
Let $\widetilde{ M}$ be the universal cover of $ M$, and lift both $H$ and $g$ to $\widetilde{ M} $.

Let $a:[\partial \widetilde{ M}]\rightarrow \RR $ be an assignment of a constant
to each connected component of $\partial \widetilde{ M} $ that is equivariant
for the action of the automorphism group of the covering and such that
$\widetilde{g}(z)+a(z)$ satisfies the compatibility condition 
\eqref{compatibility condition} in $\widetilde{ M} $. Then 
the singular locus $\widetilde{S}$ of the solution $\widetilde{u} $ to:

$$
\widetilde{H}(x,d\widetilde{u}(x))=1 \quad x\in \widetilde{ M}
$$
$$
\widetilde{u}(x)=\widetilde{g}(x)+a(z)  \quad x\in \partial\widetilde{ M}
$$
\noindent is invariant by the automorphism group of the covering, and its quotient is a set $S$ that is a balanced split locus for $ M $, $H$ and $g$. Furthermore:

\begin{enumerate}
 \item The procedure above yields a bijection between balanced split loci for given $ M $, $H$ and $g$ and \emph{equivariant compatible} functions 
$a:[\partial \widetilde{ M}]\rightarrow \RR $.
 \item Among the set of equivariant functions $a:[\partial \widetilde{ M}]\rightarrow \RR $ (that can be identified naturally with $H^1( M, \partial M)$), those compatible correspond to an open subset of $H^1( M, \partial M)$ that contains $0$.
\end{enumerate}

\end{theorem}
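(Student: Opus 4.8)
The plan is to reduce the statement to the simply connected case of Theorem~\ref{maintheorem1} by passing to the universal cover $\pi\colon\widetilde M\to M$ and keeping track of the deck group; write $\widetilde H,\widetilde g,\widetilde\varphi$ for the lifts of the data (deck-invariant, being pullbacks), and recall from Section~\ref{section: Balanced} that on a manifold of this type the singular set of the viscosity solution of \eqref{HJequation}--\eqref{HJboundarydata} is a balanced split locus. \emph{Step 1 (lifting and descending split loci).} First I would show that $S\mapsto\pi^{-1}(S)$ is a bijection between balanced split loci for $(M,H,g)$ and deck-invariant balanced split loci for $(\widetilde M,\widetilde H,\widetilde g)$. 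Deck-invariance of $\widetilde S:=\pi^{-1}(S)$ is automatic. That $\widetilde S$ splits $\widetilde M$ in the sense of Definition~\ref{splits} follows from covering-space theory: characteristics up- and downstairs correspond, the identity $\pi^{-1}(S)=\widetilde S$ forces $A(\widetilde S)$ to be the full preimage of $A(S)$ under $\mathrm{id}_{\RR}\times\pi$, and hence $\widetilde F|_{A(\widetilde S)}$ is the pullback of the bijection $F|_{A(S)}$, so it is again bijective onto $\widetilde M\setminus\widetilde S$. Using uniqueness of the characteristic through a point of $\widetilde M\setminus\widetilde S$ one obtains $d\pi(\widetilde R_{\tilde q})=R_{\pi\tilde q}$, and passing to limits $d\pi(\widetilde R_{\tilde p})=R_{\pi\tilde p}$ for all $\tilde p$ (with $\pi$ a local isometry, and similarly for the vectors $v_{\tilde p}(\cdot)$). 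Since the balanced condition of Definition~\ref{balanced} and the minimality condition of Definition~\ref{split locus} are phrased solely through the sets $R_{\bullet}$ and through sequences converging to a point, they transfer between $p$ and any lift $\tilde p$; so $S$ is a balanced split locus iff $\widetilde S$ is, and the converse direction is the same argument read backwards.

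\emph{Step 2 (Theorem~\ref{maintheorem1} on $\widetilde M$; part (1)).} As $\widetilde M$ is simply connected, Theorem~\ref{maintheorem1} — which I will need for a non-compact simply connected manifold with possibly infinitely many boundary components, its proof being local enough to permit this — identifies balanced split loci for $(\widetilde M,\widetilde H,\widetilde g)$ with assignments $a\colon[\partial\widetilde M]\to\RR$ of a constant to each boundary component with $\widetilde g+a$ compatible, taken modulo a global additive constant (which merely shifts the solution and fixes its singular set). This correspondence is deck-equivariant: for a deck transformation $\sigma$, $u\circ\sigma$ solves \eqref{HJequation}--\eqref{HJboundarydata} with data $\widetilde g+\sigma^{*}a$ whenever $u$ does so with data $\widetilde g+a$ (using $\widetilde g\circ\sigma=\widetilde g$ and the $\sigma$-invariance of $\widetilde H$), so the split locus associated to $\sigma^{*}a$ is the $\sigma^{-1}$-image of the one associated to $a$. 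Hence $\widetilde S$ is deck-invariant iff $\sigma^{*}a-a$ is a constant $c(\sigma)$ for every $\sigma$; then $c\colon\pi_1(M)\to\RR$ is a homomorphism which vanishes on the image of $\pi_1$ of any boundary component, so $c\in\ker\big(H^{1}(M;\RR)\to H^{1}(\partial M;\RR)\big)$. Calling such an $a$ \emph{equivariant} (and normalising the global constant) recovers the statement, and composing with the bijection of Step~1 gives part~(1).

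\emph{Step 3 (part (2): cohomology and openness).} The space of equivariant $a$ modulo constants is finite dimensional: sending $a$ to the pair consisting of $c$ and of the values of $a$ on one component over each component of $\partial M$ exhibits it, via the long exact sequence of $(M,\partial M)$ and $H^{1}(M;\RR)=\mathrm{Hom}(\pi_1(M),\RR)$, as the extension of $\ker\big(H^{1}(M;\RR)\to H^{1}(\partial M;\RR)\big)$ by $\widetilde{H}^{0}(\partial M;\RR)$, i.e.\ as $H^{1}(M,\partial M;\RR)$ — the natural identification in the statement. It remains to see that the compatible $a$ form an open set containing $0$. That $a=0$ is compatible is clear, since $\pi$ only increases distances, so $\widetilde g$ inherits \eqref{compatibility condition} from $g$. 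For openness — the one genuinely analytic point — one uses: (i) distinct components of $\partial\widetilde M$ lie at distance $\ge\delta_0>0$ (a shorter joining path would project into a small ball of $M$ and thus connect a component of $\partial\widetilde M$ to itself); (ii) for compatible $\widetilde g+a_0$ the deck-invariant ratio $|(\widetilde g+a_0)(y)-(\widetilde g+a_0)(z)|/d(y,z)$ is bounded by some $k<1$; (iii) an equivariant perturbation $\delta a$ is constant on each component, so $\delta a(y)-\delta a(z)=0$ when $y,z$ share a component, while otherwise the homomorphism part of $\delta a$ is Lipschitz for the word metric of $\pi_1(M)$, giving $|\delta a(y)-\delta a(z)|\le C(\delta a)\,(d(y,z)+\delta_0)$ with $C(\delta a)\to0$. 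Combining these, $\widetilde g+a_0+\delta a$ stays compatible once $\delta a$ is small, which proves openness.

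\emph{Expected main obstacle.} I expect the delicate parts to be (a) making Step~1 rigorous, since ``splits'' is a global property of the characteristic map whose transfer across the cover needs honest covering-space arguments (and care with the non-compactness of $\widetilde M$ and $\partial\widetilde M$ throughout), and (b) the openness estimate in Step~3, where the linear growth along the cover of the $H^{1}(M)$-component of $a$ must be dominated by the distance between far-apart boundary components. Granting Theorem~\ref{maintheorem1}, Step~2 and the cohomological identification are essentially formal.
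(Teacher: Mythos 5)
Your proposal is correct and follows essentially the same route as the paper: lift to the universal cover, show that balanced split loci downstairs correspond to deck-invariant ones upstairs, run the current/homology argument of the simply connected case on $\widetilde M$ to extract the equivariant function $a$, and identify equivariant compatible $a$ (mod constants) with an open neighbourhood of $0$ in $H^1(M,\partial M)$. The only substantive difference is that you supply an actual openness estimate in Step 3, whereas the paper merely asserts that the compatible classes form an open set.
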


\paragraph{Remark.}
The space $H^1( M, \partial M)$ is dual to $ H_{n-1}( M)$ by Lefschetz theorem. The proof of the above theorems rely on the construction from $S$ of a $(n-1)$-dimensional current $T_S$ that is shown to be closed and thus represents a cohomology class in $H_{n-1}( M)$. The proof of the above theorem also shows that the map sending $S$ to the homology class of $T_S$ is a bijection from the set of balanced split loci onto a subset of $H_{n-1}( M)$.

In order to prove these theorems we will make heavy use of some \emph{structure results} for balanced split loci. To begin with, we start with the results from the previous chapter, specifically theorem \ref{complete description}. In the last section, we prove \emph{new structure results} in order to improve the description of balanced split loci near each of these types of points (see \ref{uniqueness near order 1 points}, \ref{structure of cleave points}, \ref{structure of crossing points1}, \ref{structure of crossing points2} and \ref{structure of crossing points3}).

We also study some very important functions for the study of the cut locus. Recall the global coordinates in $V$ given by $z\in \partial  M $ and $t\in \RR$.
Let $\lambda_j(z)$ be the value of $t$ at which the geodesic $s\rightarrow \Phi (s,z)$ has its $j$-th conjugate point (counting multiplicities), or $\infty$ if there is no such point.
Let $\rho_S:\partial  M \rightarrow \RR$ be the minimum $t$ such that $F(t,z)\in S$.

\begin{lem}\label{landa es Lipschitz} 
All functions $\lambda_j: \partial M \rightarrow \RR $ are Lipschitz continuous.
\end{lem}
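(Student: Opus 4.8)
The plan is to reduce the statement to a local claim: fix a point $z_0\in\partial M$ with $\lambda_j(z_0)=t_0<\infty$ (the case $\lambda_j(z_0)=\infty$ needs no estimate, since then $\lambda_j\equiv\infty$ on a neighborhood by the local constancy of conjugate multiplicities along rays, property 3 in Proposition \ref{regular exponential map}), and work in the special coordinates of subsection \ref{subsection: special coordinates} centered at the point $x_0=(t_0,z_0)\in V$, which is a conjugate point of some order $k\ge 1$. In those coordinates $F$ has the normal form \eqref{F near order k in special coords}, and the set of conjugate points of $F$ of order $\ge 1$ near $x_0$ is cut out by the vanishing of the $k\times k$ minor
$$
D(x)=\det\left(\frac{\partial F_z^{i}}{\partial x_j}(x)\right)_{\,n-k+1\le i,j\le n},
$$
and, more to the point, the \emph{sum of multiplicities} of conjugate points along the ray through a nearby point is governed by the order of vanishing of the one-variable function $t\mapsto D$ restricted to that ray. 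The key algebraic input from the special coordinates is the third bullet after \eqref{F near order k in special coords}: $\frac{\partial}{\partial x_1}$ maps to $r$, and the mixed second derivatives $\frac{\partial^2 F_j}{\partial x_1\partial x_i}=\delta^i_j$ for $i,j\ge n-k+1$. This means that along the radial direction the Hessian of each $F_j$ in the kernel variables is nondegenerate, so $D$, viewed as a function of $(t,\text{kernel variables})$ with the other coordinates as parameters, has a \emph{Weierstrass-type} behavior: it factors (after the Malgrange--Weierstrass preparation theorem, or by a direct elementary argument using the explicit Hessian) as a unit times a monic polynomial of degree $k$ in $t$ whose coefficients are smooth functions of the remaining coordinates, vanishing appropriately at $x_0$.

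With that factorization in hand, $\lambda_j(z)$ for $z$ near $z_0$ is, up to a smooth change of the $t$-variable, the $j$-th real root (counted with multiplicity, in increasing order) of a monic degree-$k$ polynomial
$$
P_z(t)=t^k+c_1(z)t^{k-1}+\dots+c_k(z)
$$
with $c_i$ smooth (hence Lipschitz) in $z$. Now I invoke the classical fact that the roots of a monic polynomial, listed in increasing order of real part or — here, since all the relevant roots are real by the geometry — in increasing order on $\RR$, depend in a locally Lipschitz (indeed Hölder-$1/k$, but Lipschitz suffices once combined with the structure) manner on the coefficients; more precisely, for \emph{real} roots ordered on the line, the ordered-root map is $1$-Lipschitz in the coefficients after the standard reduction, and one gets the genuine Lipschitz bound by comparing $P_z$ with the model polynomial at $z_0$. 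Composing with the smooth (hence Lipschitz on compact sets) coordinate change $t\mapsto$ the original arclength parameter, and using compactness of $\partial M$ to pass from these local Lipschitz estimates to a global one, yields that $\lambda_j$ is locally Lipschitz, hence Lipschitz on the compact manifold $\partial M$.

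The main obstacle — and the step I would spend the most care on — is establishing the Weierstrass factorization of $D$ with \emph{uniform} control, i.e.\ showing that the degree-$k$-in-$t$ monic structure persists on a full neighborhood of $x_0$ in $V$ and that the root-ordering used to define $\lambda_j$ matches the algebraic root-ordering of $P_z$. This is exactly where property 3 of Proposition \ref{regular exponential map} (constancy of the total kernel dimension along rays in a neighborhood) is essential: it guarantees that near $x_0$ every nearby ray meets the conjugate locus in points whose multiplicities sum to exactly $k$, so $P_z$ has all $k$ of its roots real and accounted for, with no roots escaping to infinity or to the complex domain as $z$ varies. Once this bookkeeping is pinned down, the rest is the standard Lipschitz-dependence-of-roots argument together with a compactness/partition argument over $\partial M$. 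This is the Finsler analogue of Itoh--Tanaka's Riemannian argument in \cite{Itoh Tanaka 98}, and the only genuinely new point is that the special coordinates of subsection \ref{subsection: special coordinates} (built from Warner's condition (R2)) replace the Jacobi-field computation that is available only for the exponential from a point.
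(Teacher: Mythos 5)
Your overall architecture matches the paper's proof almost exactly: special coordinates at a conjugate point of order $k$, the conjugate locus cut out by the $k\times k$ minor, and the Malgrange preparation theorem producing a monic degree-$k$ polynomial in the radial variable whose coefficients are smooth in the transversal coordinates. The gap is in the final step. The ``classical fact'' you invoke --- that the ordered real roots of a monic hyperbolic polynomial depend Lipschitz-continuously (even $1$-Lipschitz) on its coefficients --- is false: for $P_c(t)=t^2-c$ with $c\geq 0$ the ordered roots are $\pm\sqrt{c}$, which are only H\"older-$1/2$ in $c$. In general one only gets H\"older-$1/k$, and upgrading this to Lipschitz for hyperbolic polynomials with smooth coefficients is a genuinely hard theorem (Bronshtein), not a standard reduction. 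Your parenthetical ``Lipschitz suffices once combined with the structure'' and ``vanishing appropriately at $x_0$'' point at the right repair but never carry it out, and as written the proof rests on a false lemma.

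The paper closes this gap with an elementary estimate that you should make explicit. From the special coordinates one gets $D^{\alpha}d(0)=0$ for all $|\alpha|<k$ together with $\frac{\partial^k}{\partial x_1^k}d=1$, and feeding this into the prepared form $q\cdot d=x_1^k+x_1^{k-1}l_1(x_2,\dots,x_n)+\dots+l_k(x_2,\dots,x_n)$ forces each coefficient to vanish to order $i$ at the origin: $D^{\alpha}l_i(0)=0$ for $|\alpha|<i$, hence $|l_i|\leq \bar{C}\max\{|x_2|,\dots,|x_n|\}^{i}$. At a conjugate point one then has $|x_1|^k\leq \sum_i |x_1|^{k-i}|l_i|$, which directly yields the cone estimate $|x_1|\leq C\max\{|x_2|,\dots,|x_n|\}$ with a constant controlled by finitely many derivatives of $F$; this cone condition at every conjugate point is exactly the uniform Lipschitz bound on the block $\lambda_j,\dots,\lambda_{j+k}$. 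No root-versus-coefficient continuity theorem is needed. Your identification of the role of property 3 of Proposition \ref{regular exponential map} (no roots escaping, all $k$ roots real and accounted for) is correct and is indeed the bookkeeping the paper relies on implicitly.
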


\begin{lem}\label{rho is lipschitz}
The function $\rho_{S}:\partial  M \rightarrow \RR$ is Lipschitz continuous if $S$ is balanced.
\end{lem}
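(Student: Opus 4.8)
The plan is to show that, near any boundary point $z_0$, the geodesic $\gamma_{z_0}(t)=F(t,z_0)$ cannot meet $S$ much later than the neighbouring geodesics do — a one–sided estimate $\rho_S(z)\le\rho_S(z_0)+C_{z_0}\,d(z,z_0)$ — and then recover the full Lipschitz bound by interchanging the roles of the two points. Write $t_0=\rho_S(z_0)$, $p_0=F(t_0,z_0)\in S$ and $X_0=dF_{(t_0,z_0)}(r)$; since $(t_0,z_0)\in\overline{A(S)}$ lies over $p_0$, we have $X_0\in R_{p_0}$ (see \ref{the set R_p} and the identity following \ref{the set Q_p}). Two preliminary remarks: since $S$ is closed, $\rho_S$ is lower semicontinuous; and no conjugate point can lie in the interior of $A(S)$, because $F|_{A(S)}$ is injective by \ref{splits} while a regular exponential map is never locally injective on a neighbourhood of a conjugate point (a property of \ref{regular exponential map}; compare \ref{no A2 in S} for the fold case). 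Since $\{\,t<\rho_S(z)\,\}$ is contained in the interior of $A(S)$ by lower semicontinuity, it follows that $\rho_S(z)\le\lambda_1(z)$ for every $z$.

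Fix $z_0$ and distinguish two cases. If $t_0=\lambda_1(z_0)$, then $\rho_S\le\lambda_1$ together with the local Lipschitz continuity of $\lambda_1$ (Lemma \ref{landa es Lipschitz}) gives $\rho_S(z)\le\lambda_1(z)\le\lambda_1(z_0)+L\,d(z,z_0)=t_0+L\,d(z,z_0)$ near $z_0$, which is the desired one–sided bound. If instead $t_0<\lambda_1(z_0)$, then $(t_0,z_0)$ is a regular point of $F$, so $X_0$ is a non‑conjugate vector of $R_{p_0}$. I claim then $\#R_{p_0}\ge 2$. Indeed, suppose $R_{p_0}=\{X_0\}$. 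Because $G=(F,dF(r))$ is a diffeomorphism of $V$ onto $W$ (remark after \ref{various definitions related to the exponential map}) and $F$ is a local diffeomorphism at $(t_0,z_0)$, hence injective on some neighbourhood $O$ of $(t_0,z_0)$, any sequence $p_k\to p_0$ with $\#R_{p_k}\ge 2$ would yield, for large $k$, two distinct points of $\overline{A(S)}$ over $p_k$ both lying in $O$, contradicting injectivity of $F|_O$; thus $p_0\notin\overline{\{\,\#R_p\ge 2\,\}}=S$, a contradiction. So we may pick $X_0'\in R_{p_0}$ with $X_0'\ne X_0$.

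The heart of the proof is a balanced–condition argument excluding failure of the one–sided bound in this second case. Suppose there were $z_n\to z_0$ with $(\rho_S(z_n)-t_0)/d(z_n,z_0)\to\infty$. Choose $\varepsilon_n\to 0$ with $\varepsilon_n/d(z_n,z_0)\to\infty$ and $t_0+\varepsilon_n<\rho_S(z_n)$, for instance $\varepsilon_n=\min\{\sqrt{d(z_n,z_0)},\ \tfrac{1}{2}(\rho_S(z_n)-t_0)\}$, and set $q_n=F(t_0+\varepsilon_n,z_n)$. Then $(t_0+\varepsilon_n,z_n)\in A(S)$, so $q_n\in M\setminus S$ and $X_n:=dF_{(t_0+\varepsilon_n,z_n)}(r)\in R_{q_n}$; by continuous dependence of geodesics on initial data $q_n\to p_0$ and $X_n\to X_0$, and since in a chart $q_n-p_0=\varepsilon_n X_0+o(\varepsilon_n)$, the vectors $v_{q_n}(p_0)$ (in the sense of \ref{vector de x a y}) converge to the $\varphi$–unit vector $v$ proportional to $-X_0$. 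Applying the balanced property of $S$ (\ref{balanced}) with $p=p_0$, $p_i=q_n$, $X_i=X_n$, $X_\infty=X_0$ yields $\widehat{X_0}(v)=\max\{\,w(v):w\text{ is dual to some vector of }R_{p_0}\,\}$. But $\widehat{X_0}(v)$ is a negative multiple of $\widehat{X_0}(X_0)=\varphi(X_0)^2=1$, while the right–hand side is at least $\widehat{X_0'}(v)$, the same negative multiple of $\widehat{X_0'}(X_0)$; as $X_0'\ne X_0$ we have $\widehat{X_0'}(X_0)<\varphi(X_0')\varphi(X_0)=1$ (the same strict inequality used in the proof of \ref{cleave points are a manifold}), so the right–hand side is strictly larger than $\widehat{X_0}(v)$ — a contradiction. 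Hence $\rho_S(z)\le t_0+C_{z_0}\,d(z,z_0)$ on a neighbourhood of $z_0$.

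Finally, applying this one–sided estimate with centre and variable point exchanged gives $|\rho_S(z)-\rho_S(z')|\le C\,d(z,z')$ for $z,z'$ close enough, and a covering argument over the compact manifold $\partial M$ promotes this to (global) Lipschitz continuity of $\rho_S$. The step I expect to be the most delicate is the preliminary inequality $\rho_S\le\lambda_1$ — equivalently, the fact that a regular exponential map is not locally injective near a conjugate point, which is what forces the first meeting point of $S$ to occur at or before the first conjugate point and makes Lemma \ref{landa es Lipschitz} usable in the conjugate case. Once this and the structural facts of \ref{regular exponential map}, \ref{no A2 in S} and Lemma \ref{landa es Lipschitz} are available, the remainder is a soft limiting argument driven by the balanced condition, in the same spirit as \ref{cut locus is balanced} and \ref{cleave points are a manifold}.
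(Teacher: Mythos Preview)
Your case split and the balanced--condition contradiction are correct \emph{pointwise}: for each fixed $z_0$ you do obtain some $C_{z_0}<\infty$ with $\rho_S(z)\le\rho_S(z_0)+C_{z_0}\,d(z,z_0)$ on a neighbourhood of $z_0$. The gap is in the last paragraph. ``Exchanging roles'' and then covering $\partial M$ only yield Lipschitz continuity if the constants $C_{z}$ are \emph{locally uniformly bounded}, and your argument does not establish this. In Case~2 your constant is driven by the strict inequality $\widehat{X_0'}(X_0)<1$; as $z_0$ approaches a point where $\rho_S=\lambda_1$, the second vector $X_0'\in R_{p_0}$ may collapse onto $X_0$, so $1-\widehat{X_0'}(X_0)\to 0$ and the contradiction delivers $C_{z_0}\to\infty$. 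A one--dimensional caricature of the failure: $f(x)=-\sqrt{|x|}$ satisfies your one--sided bound at every point (with $C_0=0$ and $C_{x_0}\asymp|x_0|^{-1/2}$ for $x_0\neq 0$), yet is not Lipschitz near~$0$.

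This is precisely the difficulty the paper isolates in Lemma~\ref{rho es Lipschitz en rho < lambda pero cerca de un conjugado}: near a point with $\rho_S(z_0)=\lambda_1(z_0)$ one must manufacture a \emph{uniform} Lipschitz constant valid for all nearby $z$ with $\rho_S(z)<\lambda_1(z)$. The paper's proof of that lemma is substantially harder than a soft balanced argument: working in special coordinates at the order--$k$ conjugate point it compares $dF_x$ with $dF_y$ for two preimages $x,y$ lying in the \emph{same} coordinate patch, and bounds $\|F_x^\ast(\widehat{X}-\widehat{Y})\|$ by a constant times $\|x-y\|^2$ via the almost--orthogonal decomposition of $\ker dF$ along nearby conjugate points described in \S\ref{subsection: lagrangian}. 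Only once this quantitative estimate is in hand does the final assembly (your Case~1 together with the uniform version of Case~2, pieced together along a segment from $z^1$ to $z^2$) go through. Your proof recovers Lemma~\ref{rho es menor que landa1} and the qualitative content of Lemma~\ref{rho es Lipschitz en rho lejos de lambda}, but is missing the idea behind Lemma~\ref{rho es Lipschitz en rho < lambda pero cerca de un conjugado}.
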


Both results were proven in \cite{Itoh Tanaka 00} for Riemannian manifolds, and the second one was given in \cite{Li Nirenberg}. 
Thus, our results are not new for a cut locus, but the proof is different from the previous ones and may be of interest.
We have recently known of another proof that $\rho$ and $\lambda_1$ are Lipschitz (\cite{Castelpietra Rifford}).

\subsection{Examples}

Take as $ M$ any ring in a euclidean $n$-space bounded by two concentric spheres. Solve the Hamilton-Jacobi equations with $H(x,p)=\vert p\vert$ and $g=0$. The solution is the distance to the spheres, and the cut locus is the sphere concentric to the other two and equidistant from each of them. However, any sphere concentric to the other two and lying between them is a balanced split set, so there is a one parameter family of split balanced sets. When $n>2$, this situation is a typical application of \ref{maintheorem1}. In the $n=2$ case, there is also only one free parameter, which is in accord with \ref{maintheorem2}, as the rank of the $H_1$ homology space of the ring is one.

For a more interesting example, we study balanced split sets with respect to a point in a euclidean torus.
We take as a model the unit square in the euclidean plane, centered at the origin, with its borders identified.
It is equivalent to study the distance with respect to a point in this euclidean torus, or the solution to Hamilton-Jacobi equations with respect to a small disc centered at the origin with the Hamiltonian $H(p)=\vert p\vert$ and $g=0$.

\begin{figure}[H]
 \centering
 \includegraphics[width=0.5\textwidth]{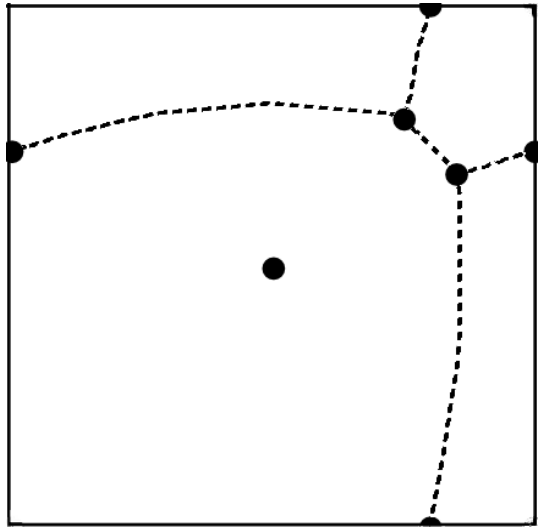}
 \caption{Balanced split set in a torus}
 \label{fig: Balanced split set in a euclidean torus}
\end{figure}

A branch of cleave points (see \ref{complete description}) must keep constant the difference of the distances from either sides (recall the proof of prop \ref{cleave points are a manifold}, or read the beginning of section \ref{section:proof of the main theorems}). Moving to the covering plane of the torus, we see they must be segments of hyperbolas. A balanced split locus is the union of the cleave segments and a few triple or quadruple points. The set of all balanced split loci is a $2$-parameter family, as predicted by our theorem \ref{maintheorem2}.

\section{Preliminaries}\label{section: balanced preliminaries}

\subsection{Lagrangian submanifolds of $T^\ast  M$}\label{subsection: lagrangian}
\begin{dfn}\label{dfn: Lagrangian submanifold}
 The \strong{canonical symplectic form} in $T^\ast  M$ is given in canonical coordinates by
 $$
 \sum_i d p_i\wedge d q_i
 $$
 A submanifold $L\subset T^\ast  M$ is \strong{Lagrangian} iff the restriction of the canonical symplectic form to $L$ vanishes.
\end{dfn}

Let $D$ be the duality homeomorphism between $T  M$ and $T^\ast  M$ induced by the Finsler metric as in definition \ref{dual one form} ($D$ is actually a $C^{\infty}$ diffeomorphism away from the zero section). We define a map:
\begin{equation}\label{definition of the embedding of V into TastOmega} 
\Delta(t,z) = D(\Phi_t(\Gamma(z)))
\end{equation} 
and a subset of $T^\ast M$:
\begin{equation} \label{definition of Theta} 
\Theta = \Delta(V) = D(W)
\end{equation} 
where $\Phi_t$ is the geodesic flow in $T M$. This is a smooth $n$-submanifold of $T^\ast  M$ with boundary.

It is a standard fact that, for a smooth function $u: M\rightarrow \RR$, the graph of its differential $du$ is a Lagrangian submanifold of $T^\ast  M$, for the canonical symplectic structure in $T^\ast  M$.
The subset of $\Theta$ corresponding to small $t$  is the graph of the differential of the solution by characteristics $u$ to the HJ equations.
Indeed, all of $\Theta$ is a lagrangian submanifold of $T^\ast  M$ when $\Gamma$  comes from an exponential map. As we have seen, this covers HJBVPs as well.

We can also carry over the geodesic vector field from $T  M$ into $T^\ast  M$ (outside the zero sections). This vector field in $T^\ast  M$ is tangent to $\Theta$. Then, as we follow an integral curve $\gamma(t)$ within $\Theta$, the tangent space to $\Theta$ describes a curve $\lambda(t)$ in the bundle $G$ of lagrangian subspaces of $T^\ast  M$.  It is a standard fact that the vector subspace $\lambda(t)\subset T^\ast_{\gamma(t)}  M $ intersects the vertical subspace of $T^\ast_{\gamma(t)} M$ in a non-trivial subspace for a discrete set of times. We will review this fact, in elementary terms, and prove a lemma that will be important for the proof of lemma \ref{rho is lipschitz}.

Let  $ \eta(t)$ be an integral curve of $r$ with $x_0=\eta(0)$ a conjugate point of order $k$. In special coordinates near $x_0$, for $t$ close to $0$, the differential of $F$ along $\eta$ has the form:
$$
dF(\eta(t))=
\begin{pmatrix}
I_{n-k} & 0 \\ \ast & \ast
\end{pmatrix}
=\begin{pmatrix}
I_{n-k} & 0 \\ 0 & 0
\end{pmatrix}
+t\begin{pmatrix}
0 & 0 \\ 0 & I_k
\end{pmatrix}
+\begin{pmatrix}
0 & 0 \\ \ast & E(t)
\end{pmatrix}
$$
where $\vert E\vert <\varepsilon$, with $E=0$ if $\gamma(0)=x_0$.

Let $w\in\ker dF(\eta(t_1))$ and $v\in\ker dF(\eta(t_2))$ be unit vectors in the kernel of $dF$ for $t_1<t_2 $ close to $0$. 
It follows that both $v$ and $w$ are spanned by the last $k$ coordinates.
We then find:
$$
0 = w \cdot dF(\eta(t_2))\cdot v-v \cdot dF(\eta(t_1))\cdot w = 
(t_2-t_1)w\cdot v
+ w(E(t_2)-E(t_1))v
$$
and it follows (for some $t_1<t^\ast<t_2 $):
$$
(t_2-t_1)w\cdot v < 2 \varepsilon |w| |v| (t_2-t_1)
$$
and so:
\begin{equation}\label{almost orthogonal basis} 
 w\cdot v < 2\varepsilon
\end{equation} 
This also shows that the set of $t$'s such that $dF(\eta(t))$ is singular is discrete.

Say the point $x_0=(z_0,t_0)$ is the $j$-th conjugate point along the integral curve of $r$ through $x_0$ from $z_0$, and recall that it is of order $k$ as conjugate point. As $z$ moves towards $z_0$, all functions $\lambda_j(z),\dots,\lambda_{j+k}(z)$ converge to $t_0 $. Let $z_i$ be a sequence of points converging to $z_0$ such that the integral curve through $z_i$ meets its $k$ conjugate points near $z_0$ at $M$ linear subspaces (e.g. $\lambda_j(z_i)=\dots=\lambda_{j+k_1}(z_i)$; $\lambda_{j+k_1+1}(z_i)=\dots=\lambda_{j+k_2}(z_i)$; ...; $\lambda_{j+k_{M-1}+1}(z_i)=\dots=\lambda_{j+k_M}(z_i)$). we get the following result (see also lemma 1.1 in \cite{Itoh Tanaka 00}):
\begin{lem}
The subspaces $\ker d_{(\lambda_{j+k_l}(z_i),z_i)}F$ for $l=1,\dots, M$ converge to orthogonal subspaces of $\ker d_{(\lambda_{j}(z_0),z_0)}F $, for the standard inner product in the special coordinates at the point $(\lambda_{j}(z_0),z_0) $.
\end{lem}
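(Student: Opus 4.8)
The plan is to work throughout in the special coordinates around $x_0=(\lambda_j(z_0),z_0)$ of Section~\ref{subsection: special coordinates}, and to upgrade the pointwise estimate \eqref{almost orthogonal basis} to a statement about limits of kernels. Recall that in those coordinates $x_0$ sits at the origin, $\ker d_{x_0}F$ is the coordinate plane $P=\{x_1=\dots=x_{n-k}=0\}$, and along any ray near the central ray the first $n-k$ rows of $dF$ are $(I_{n-k}\mid 0)$; hence every vector in the kernel of $dF$ at a point close to $x_0$ lies in $P$. Writing $t_l(z_i):=\lambda_{j+k_l}(z_i)$ for the $l$-th distinct conjugate parameter along the integral curve of $r$ through $z_i$, and $K_l^i:=\ker d_{(t_l(z_i),z_i)}F$, we thus have $K_l^i\subseteq P$ for $i$ large; let $d_l$ be its dimension (independent of $i$, since the grouping pattern $k_1<\dots<k_M$ is fixed by the choice of the sequence), so $d_1+\dots+d_M=k$. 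Since $z_i\to z_0$ and $\lambda_j(z_i),\dots,\lambda_{j+k}(z_i)\to t_0$, the points $(t_l(z_i),z_i)$ converge to $x_0$, so their special coordinates tend to $0$ and for $i$ large they all lie inside a fixed chart around $x_0$.

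Next I would record the quantitative input. For fixed large $i$ and $l<l'$, the conjugate points $(t_l(z_i),z_i)$ and $(t_{l'}(z_i),z_i)$ lie on the \emph{same} ray $\eta_i$, with $t_l(z_i)<t_{l'}(z_i)$ both close to $0$; applying \eqref{almost orthogonal basis} on $\eta_i$ gives, for unit vectors $w\in K_l^i$ and $v\in K_{l'}^i$,
$$
|w\cdot v|\le 2\varepsilon_i,
$$
where $\varepsilon_i$ bounds the remainder $E$ in the expansion of $dF(\eta_i(t))$ over the relevant range of $t$ inside the chart. The decisive point is that $E\equiv 0$ on the central ray and $E$ depends continuously on the ray, so $\varepsilon_i\to 0$ as $\eta_i$ converges to the central ray, i.e.\ as $i\to\infty$.

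Then I would pass to limits. By compactness of the Grassmannian of $d_l$-planes we may extract a subsequence along which each $K_l^i$ converges to a subspace $K_l^\infty$; choosing orthonormal bases of the $K_l^i$ and taking limits shows $\dim K_l^\infty=d_l$, and since $d_{(t_l(z_i),z_i)}F\to d_{x_0}F$, any limit of kernel vectors is again a kernel vector, so $K_l^\infty\subseteq\ker d_{x_0}F=P$. For $l\neq l'$ and unit $u\in K_l^\infty$, $u'\in K_{l'}^\infty$, write $u=\lim u_i$, $u'=\lim u_i'$ with $u_i\in K_l^i$, $u_i'\in K_{l'}^i$ unit; the estimate above yields $|u\cdot u'|=\lim|u_i\cdot u_i'|\le\lim 2\varepsilon_i=0$. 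Hence the $K_l^\infty$ are pairwise orthogonal, and since $\sum_l d_l=k=\dim\ker d_{x_0}F$ they constitute an orthogonal direct sum decomposition of $\ker d_{x_0}F$, which is exactly the claim. (The full sequence need not converge, since such orthogonal decompositions are not unique when $M\ge 2$; subsequential convergence to orthogonal limits is what is used in the proof of Lemma~\ref{rho is lipschitz}.)

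The only genuinely delicate step is $\varepsilon_i\to 0$: it rests on the fact, built into the construction of the special coordinates, that the remainder $E(t)$ vanishes identically along the central ray and varies continuously with the ray, combined with the elementary observation that $(t_l(z_i),z_i)\to x_0$ so that all relevant conjugate points eventually sit inside one fixed chart. Everything else is bookkeeping with \eqref{almost orthogonal basis} and compactness of Grassmannians.
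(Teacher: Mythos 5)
Your proposal is correct and follows essentially the same route as the paper, which states the lemma as a direct consequence of the estimate \eqref{almost orthogonal basis}: apply that pairwise near-orthogonality bound to the distinct conjugate parameters on the ray through $z_i$, observe that the error $\varepsilon_i\to 0$ because $E\equiv 0$ on the central ray, and pass to (subsequential) limits. You have merely made explicit the bookkeeping (Grassmannian compactness, the dimension count $\sum_l d_l=k$) that the paper leaves implicit.
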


\subsection{A useful lemma}

\begin{lem}\label{the set and the cone}
Let $U$ be an open set in $\Rn$, $A\subset U$ a proper open set,
$C^+\subset\RR^n$ an open cone, $V\subset U$ an arbitrary open set and
$\varepsilon>0$ such that at any point $q\in \partial A\cap V$, we have
$(q+C^+)\cap (q+B_{\varepsilon}) \subset A$.

Then $\partial A\cap V$ is a Lipschitz hypersurface. Moreover, for any vector
$X\in C^+$, take coordinates so that $X=\frac{\partial}{\partial x_{1}}$. Then
$\partial A\cap V$ is a graph $S=\lbrace
(h(x_{2},..,x_{n}),x_{2},$ $..,x_{n})\rbrace$ for a Lipschitz function $h$.
\end{lem}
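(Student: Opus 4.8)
The plan is to extract a round sub-cone of $C^+$ around $X$, turn the truncated-cone hypothesis into a uniform two-sided Lipschitz estimate between boundary points of $A$, and then read off the graph structure.

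First I would choose coordinates so that $X=\tfrac{\partial}{\partial x_1}$, and write $v=(v_1,v')$ with $v'=(v_2,\dots,v_n)$; let $\pi(v_1,v')=v'$ be the projection onto $X^{\perp}$. Since $C^+$ is an open cone containing $X$, there is $\theta\in(0,\tfrac{\pi}{2})$ such that, with $L:=\cot\theta$, the round cone $C_\theta:=\{v\neq 0:\ v_1> L\,|v'|\}$ is contained in $C^+$. A direct check gives $C_\theta\cup(-C_\theta)=\{v\neq 0:\ |v_1|>L\,|v'|\}$, so its complement among nonzero vectors is $\{v\neq 0:\ |v_1|\le L|v'|\}$.

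The heart of the argument is a ``no two boundary points in a double cone'' observation: if $q_1,q_2\in\partial A\cap V$ and $0<|q_1-q_2|<\varepsilon$, then $q_1-q_2\notin C_\theta\cup(-C_\theta)$. Indeed, if $q_2-q_1\in C_\theta\subset C^+$, then (using $|q_2-q_1|<\varepsilon$) the hypothesis applied at $q_1$ gives $q_2=q_1+(q_2-q_1)\in(q_1+C^+)\cap(q_1+B_\varepsilon)\subset A$; but $A$ is open, hence $A\cap\partial A=\emptyset$, contradicting $q_2\in\partial A$. The case $q_1-q_2\in C_\theta$ is the same with $q_1,q_2$ interchanged. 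Combining this with the description of $C_\theta\cup(-C_\theta)$ above (and the trivial case $q_1=q_2$) yields the estimate
\[
|(q_1)_1-(q_2)_1|\ \le\ L\,|\pi(q_1)-\pi(q_2)|\qquad\text{for all }q_1,q_2\in\partial A\cap V\ \text{with }|q_1-q_2|<\varepsilon .
\]

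From this the conclusion follows. Specializing to $\pi(q_1)=\pi(q_2)$ shows that two distinct points of $\partial A\cap V$ on a common line parallel to $X$ must be at distance $\ge\varepsilon$; consequently, after shrinking $V$ to diameter $<\varepsilon$ (harmless, since ``Lipschitz hypersurface'' is a local property), each line $\pi^{-1}(c)$ meets $\partial A\cap V$ in at most one point. Hence $\partial A\cap V$ is the graph $\{(h(x'),x'):x'\in\pi(\partial A\cap V)\}$ of a function $h$ on the projection of $\partial A\cap V$, and the displayed inequality says exactly that $h$ is $L$-Lipschitz. Thus $\partial A\cap V$ is a Lipschitz hypersurface, locally --- and on pieces of diameter $\le\varepsilon$, globally --- a Lipschitz graph over $X^{\perp}$. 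There is no real obstacle here; the only points needing care are using the $\varepsilon$-truncation correctly (so that the point $q_2$ in the contradiction step genuinely lies in $q_1+B_\varepsilon$) and the purely bookkeeping distinction between the local Lipschitz-graph statement, valid for any $V$, and the global one, which requires $\operatorname{diam}V\le\varepsilon$.
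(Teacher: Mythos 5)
Your argument is correct and yields the same quantitative conclusion as the paper, but by a genuinely different and in one respect cleaner route. The paper works line by line: it first asserts a one-sided structure along each line parallel to $X$ (the points $p+tX$ with $0<t<\varepsilon$ lie in $A$, those with $-\varepsilon<t<0$ lie in $U\setminus A$), deduces that each line meets $\partial A\cap V$ at most once, then argues that every nearby line must also meet $\partial A$, and finally reads off the Lipschitz bound from the cone inclusion. You instead compare two boundary points directly: extracting a round subcone $C_\theta\subset C^{+}$ and noting that the difference of two points of $\partial A\cap V$ at distance $<\varepsilon$ cannot lie in $C_\theta\cup(-C_\theta)$ gives the two-sided estimate $|(q_1)_1-(q_2)_1|\le L\,|q_1'-q_2'|$ in one stroke, from which both the at-most-one-point-per-line statement and the Lipschitz constant follow. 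This buys you something: you only ever invoke the hypothesis at genuine boundary points, whereas the paper's justification of its one-sided claim (``there cannot be a point $p+tX\in A$ for $-\varepsilon<t<0$ because \dots'') applies the cone condition at a point of $A$ rather than of $\partial A$ and needs an extra first-exit-point argument to be made airtight.

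The one item you do not address is the surjectivity of the projection: the paper further argues that every line over a base point close to $\pi(q_1)$ contains a point of $\partial A$, so that the domain of $h$ is a full open set and $\partial A\cap V$ is an honest hypersurface rather than merely a Lipschitz graph over the (possibly small) set $\pi(\partial A\cap V)$. Be aware that this is a weaker conclusion than the statement literally promises. It is, however, the defensible reading: the stronger claim fails in general (take $A=U\setminus\{0\}$, for which the hypotheses hold vacuously and $\partial A\cap V=\{0\}$), the paper's own treatment of the case where the nearby line lies entirely inside $A$ produces no actual contradiction, and the places where the lemma is applied either already know the relevant function is everywhere defined (so only the Lipschitz estimate is needed) or establish nonemptiness of the nearby fibers separately. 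So I would accept your proof, with a sentence acknowledging that the graph is over $\pi(\partial A\cap V)$ rather than over a whole transversal hyperplane.
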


\begin{proof}
Choose the vector $X\in C^+$ and coordinate system in the statement. Assume $X$ has norm $1$, so that $q+t X\in q+B_t$ for small positive $t$. Take any point $p\in \partial A\cap V$. We claim that all points $p+t\frac{\partial}{\partial x_{1}}$ for $0<t<\varepsilon$ belong to $A$, and all points $p+t\frac{\partial}{\partial x_{1}}$ for $-\varepsilon<t<0$ belongs to $U\setminus A$.
Indeed, there cannot be a point $p+t\frac{\partial}{\partial x_{1}}\in A$ for $-\varepsilon<t<0$ because the set $(p+ t\frac{\partial}{\partial x_{1}})+ (C^+\cap B_{\varepsilon})$ would contain an open neighborhood of $p$, which contains points not in $A$.
In particular, there is at most one point of $\partial A\cap V$ in each line with direction vector $\frac{\partial}{\partial x_{1}}$.

Take two points $q_{1},q_{2}\in \Rnuno$ sufficiently close and consider the lines $L_{1}=\lbrace (t,q_{1}), t\in \RR \rbrace$ and $L_{2}=\lbrace (t,q_{2}), t\in \RR\rbrace$. Assume there is a $t_{1}$ such that $(t_{1},q_{1})$ belongs to $\partial A$.
If there is no point of $\partial A$ in $L_{2}$ then either all points of $L_{2}$ belong to $A$ or they belong to $U\setminus A$.
Both of these options lead to a contradiction if $((t_{1},q_{1})+C^+)\cap ((t_{1},q_{1})+ B_{\varepsilon})\cap L_{2} \neq \emptyset$ (this condition is equivalent to $K\vert q_{1}-q_{2}\vert <\varepsilon$ for a constant $K$ that depends on $C^+$ and the choice of $X\in C^+$ and the coordinate system).

Thus there is a point $(t_{2},q_{2})\in \partial A$. 
For the constant $K$ above and $t\geq t_{1}+K\vert q_{1}-q_{2}\vert$, the point $(t,q_{2})$ lies in the set $(t_{1},q_{1})+C^+$,
so we have
$$
t_{2}<t_{1}+K\vert q_{1}-q_{2}\vert
$$
The points $q_{1}$ and $q_{2}$ are arbitrary, and the lemma follows.
\end{proof}

\begin{remark}
 We are working in a paper about some limitations of the technique of 3d printing known as fused desposition modeling. The above lemma is used to prove that all current 3d printers using this technique will print pieces that are specially fragile in some directions.
\end{remark}

\subsection{Some generalities on HJ equations.}

\begin{lem}
 For fixed $ M$ and $H$, two functions $g,g':\partial M\rightarrow \RR$ have the same characteristic vector field in $\partial M$ iff $g'$ can be obtained from $g$ by addition of a constant at each connected component of $\partial M$.
\end{lem}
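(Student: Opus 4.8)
For fixed $M$ and $H$, two smooth functions $g, g' : \partial M \to \RR$ induce the same characteristic vector field on $\partial M$ if and only if $g'$ differs from $g$ by a constant on each connected component of $\partial M$.

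**The plan.** The argument is essentially a decoding of the defining equations \eqref{equation for the characteristic vector field} for the characteristic vector field. Recall that $X_p = \Gamma(p)$ is characterized by three conditions: $\varphi_p(X_p) = 1$; the dual one-form $\widehat{X_p}$ restricted to $T_p(\partial M)$ equals $dg_p$; and $X_p$ points inwards. First I would prove the "if" direction, which is immediate: if $g' = g + c$ with $c$ locally constant on $\partial M$, then $dg' = dg$ as one-forms on $\partial M$ (the differential kills locally constant functions), so $g$ and $g'$ impose literally the same three conditions on the characteristic vector, hence give the same $\Gamma$.

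For the "only if" direction, suppose $g$ and $g'$ have the same characteristic vector field $X$. Then at every $p \in \partial M$ the dual one-forms satisfy $\widehat{X_p}\vert_{T_p(\partial M)} = dg_p$ and $\widehat{X_p}\vert_{T_p(\partial M)} = dg'_p$, so $d(g - g')_p = 0$ for every $p \in \partial M$. Thus $g - g'$ is a smooth function on $\partial M$ with vanishing differential everywhere, which forces it to be locally constant, i.e. constant on each connected component of $\partial M$. This is the whole content; the key point to make explicit is just that the second equation in \eqref{equation for the characteristic vector field} reads off precisely the restriction of $\widehat{X_p}$ to the boundary tangent space, so the boundary data $g$ enters the construction of $\Gamma$ only through $dg$.

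**Main obstacle.** There is essentially no obstacle: the statement is a direct unwinding of the definitions, and the only thing to be careful about is the logical structure of the equivalence and the (standard) fact that a function on a manifold with identically zero differential is constant on connected components. One might add a remark that the "inwards pointing" and unit-norm conditions play no role in the comparison, since they depend only on $M$, $H$ and the hyperplane $T_p(\partial M)$, not on $g$; the dependence on $g$ is entirely captured by the middle equation.
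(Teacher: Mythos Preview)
Your proof is correct and follows exactly the same approach as the paper: both reduce the equivalence to the observation, immediate from the defining equations \eqref{equation for the characteristic vector field}, that $g$ and $g'$ produce the same characteristic vector field if and only if $dg = dg'$ on $\partial M$. The paper compresses this into a single sentence, while you spell out both implications and the passage from $d(g-g')=0$ to ``locally constant''; the content is identical.
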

\begin{proof} It follows from 
\eqref{equation for the characteristic vector field} that $g$ and $g'$ have the
same characteristic vector field at all points if and only $d g=d g'$ at all
points.
\end{proof}

For our next definition, observe that given $ M$, $H$ and $g$, we can define a map $\tilde{u}: V\rightarrow\RR $ by 
$\tilde{u} (t,z)=t+g(z)$.
\begin{dfn}\label{made of characteristics} 
We say that a function $u:  M\rightarrow\RR$ is \emph{made from characteristics} iff $u\vert_{\partial M}=g $ and $u$ can be written as $u(p)=\tilde{u}\circ s$ for a (not necessarily continuous) section $s$ of $F:V\rightarrow  M $.

\end{dfn}

\paragraph{Remark.} In the paper \cite{Mennucci}, the same idea is expressed in different terms: all characteristics are used to build a multi-valued solution, and then some criterion is used to select a one-valued solution. The criterion used there is to select the characteristic with the minimum value of $\tilde{u}$.

\begin{lem}\label{una unica solucion continua por caracteristicas}
The viscosity solution to \eqref{HJequation} and \eqref{HJboundarydata} is the
unique continuous function that is made from characteristics.
\end{lem}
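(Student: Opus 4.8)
The viscosity solution to \eqref{HJequation} and \eqref{HJboundarydata} is the unique continuous function that is made from characteristics.

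\textbf{Proof proposal.}

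The plan is to prove two things: (i) the viscosity solution $u$ given by the Lax--Oleinik formula \eqref{Lax-Oleinik} is continuous and made from characteristics, and (ii) any continuous function made from characteristics must coincide with $u$. For (i), continuity of $u$ is standard (it is an infimum of a family of $1$-Lipschitz functions plus bounded data, hence Lipschitz); the content is that $u$ is made from characteristics in the sense of \ref{made of characteristics}. I would argue this pointwise: fix $p\in M$ and let $q\in\partial M$ realize the infimum in \eqref{Lax-Oleinik}, so $u(p)=d(p,q)+g(q)$. Let $\gamma$ be a minimizing unit-speed geodesic from $q$ to $p$, of length $t=d(p,q)$. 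The optimality of $q$ forces $\dot\gamma(0)$ to satisfy the characteristic equations \eqref{equation for the characteristic vector field} at $q$: indeed, for $z$ near $q$ in $\partial M$ the function $z\mapsto d(p,z)+g(z)$ has a minimum at $q$, and differentiating along $\partial M$ gives $\widehat{\dot\gamma(0)}|_{T\partial M}=dg$, while $\varphi(\dot\gamma(0))=1$ and $\dot\gamma(0)$ points inward since $\gamma$ enters $M$. Hence $\dot\gamma(0)=\Gamma(q)$, so $(t,q)\in V$ and $F(t,q)=p$, and along $\gamma$ the geodesic stays minimizing, which gives $u(F(s,q))=g(q)+s$ for $0\le s\le t$; in particular $\tilde u(t,q)=g(q)+t=u(p)$. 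Choosing such a $(t,q)$ for each $p$ defines the desired (a priori non-continuous) section $s$ of $F$ with $u=\tilde u\circ s$, and $u|_{\partial M}=g$ is immediate from \eqref{Lax-Oleinik} and the compatibility condition \eqref{compatibility condition}. This proves $u$ is made from characteristics.

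For (ii), let $v:M\to\RR$ be any continuous function made from characteristics, so $v|_{\partial M}=g$ and $v(p)=\tilde u(s(p))$ for some section $s$ of $F$. First I would show $v\le u$: given $p$, write $s(p)=(t,q)$, so $v(p)=g(q)+t$ and $F(t,q)=p$, meaning $p=\gamma_q(t)$ where $\gamma_q$ is the geodesic with initial data $\Gamma(q)$; since $\varphi(\Gamma(q))=1$ this curve has length $t$, so $d(p,q)\le t$ and therefore $u(p)\le d(p,q)+g(q)\le t+g(q)=v(p)$. The reverse inequality $v\ge u$ is where continuity is used and is the crux of the argument. I would prove the set $A=\{p\in M : v(p)=u(p)\}$ is open and closed and nonempty in $M$; it contains a neighborhood of $\partial M$ because there $v$ and $u$ both equal the classical solution by characteristics (near $\partial M$ the characteristic section is unique and continuous), and it is closed since $v$ and $u$ are continuous. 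For openness: suppose $p\in A$ with $s(p)=(t,q)$, and let $\gamma$ be the characteristic segment from $q$ to $p$; the equality $u(p)=v(p)=g(q)+t$ together with $u(F(s,q))\ge g(q)+s$ (from the first inequality applied along $\gamma$, as $\gamma$ is a characteristic) and $u(F(s,q))\le g(q)+s$ (triangle inequality, since $u(p)=g(q)+t$ and $d(p,F(s,q))\le t-s$) forces $u(F(s,q))=g(q)+s$ for all $0\le s\le t$, i.e. $\gamma$ is a $u$-calibrated curve. A standard fact about HJ viscosity solutions is then that $u$ is smooth and its characteristic section is locally unique and continuous along the interior of such a calibrated curve, hence also near $p$; since any section $s$ of $F$ producing a continuous $v$ must agree with that unique continuous section near $p$, we get $v=u$ on a neighborhood of $p$.

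\textbf{Main obstacle.} The delicate point is the openness step: turning ``$v$ is continuous and made from characteristics'' into ``the section $s$ is forced to be the unique continuous characteristic section near $p$.'' This requires knowing that through a point $p$ not on $Sing$ there is a unique characteristic not meeting $Sing$ (the splitting property of $Sing$ recalled in section \ref{section: definitions, split and balanced}), combined with an argument that a \emph{continuous} $v$ cannot jump between different characteristic sheets; the jump would violate continuity precisely because the two candidate values of $\tilde u$ differ across the singular set. I would handle this by showing that the set where $v$ is locally given by a continuous section is open, and that on it $v$ solves \eqref{HJequation} classically, so $v$ is a continuous function solving the HJ equation off a closed set and matching $g$ on $\partial M$; uniqueness of the viscosity solution (Lax--Oleinik, cited after \ref{viscosity solution of HJBVP}) then closes the argument, using the remark at the end of section \ref{section: Singular locus of Hamilton-Jacobi} that a semiconcave, or here merely continuous-and-made-from-characteristics, solution of the equation is the viscosity solution. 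Care must be taken that $v$, built from a possibly discontinuous section, is not assumed a priori semiconcave — continuity plus the local structure of $F$ away from conjugate points is what rescues the argument.
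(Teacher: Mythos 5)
Your overall structure is sound: the inequality you actually derive (despite labelling it ``$v\le u$'') is $u\le v$, which is correct and easy, and the real content of the lemma is the reverse inequality, i.e.\ that continuity forbids the section $s$ from ever jumping to a non-minimizing characteristic. But the step where you establish this has a genuine gap. The ``standard fact'' you invoke --- that $u$ is smooth and its characteristic section is locally unique and continuous along a calibrated curve, \emph{hence also near $p$} --- fails precisely when $p$ lies in the closure $Sing$ of the singular set, so your set $A=\{u=v\}$ is not obviously open there; and even at $p\notin Sing$, the assertion that ``any section $s$ producing a continuous $v$ must agree with the unique continuous section near $p$'' is exactly what has to be proved: nothing a priori prevents $s(p')$ from being a faraway preimage $(s',y')$ of $p'$ whose value $s'+g(y')$ happens to approach $u(p)$. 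The missing ingredient is a compactness argument: if $v(p_n)\neq u(p_n)$ for $p_n\to p$ with $p\notin Sing$ and $p$ before the first conjugate point, then $s(p_n)=(s_n,y_n)$ lies outside a fixed neighborhood $O$ on which $F$ is a diffeomorphism; continuity of $v$ bounds the $s_n$, so a subsequence converges to another preimage $(s_\infty,y_\infty)$ of $p$, and since $p\notin Sing$ that preimage satisfies $\tilde{u}(s_\infty,y_\infty)>u(p)$, contradicting $v(p_n)\to v(p)=u(p)$. This is exactly how the paper argues, organizing it along each ray $t\mapsto F(t,z)$ via $t^\ast_z=\sup\{t:\,v=u \text{ on } F([0,t),z)\}$ and showing that continuity forces $t^\ast_z\geq\rho_{Sing}(z)$, which covers $M$; your open--closed argument would also close up if you run it in $M\setminus Sing$ and then extend to all of $M$ by continuity, since $Sing$ has empty interior.

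Your fallback route is not valid: a continuous function equal to $g$ on $\partial M$ and solving $H(p,du)=1$ off a closed set need not be the viscosity solution (already on an interval with $g=0$, a zig-zag with an extra interior ``V'' is a continuous a.e.\ solution with the right boundary values that is not the Lax--Oleinik function); uniqueness requires semiconcavity or an equivalent one-sided condition, which $v$ is not assumed to satisfy, as you yourself note. The hypothesis ``made from characteristics'' is what rules such functions out, and it does so only through the compactness argument above, not through viscosity-solution uniqueness.
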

\begin{proof} 
Let $h$ be a function made from characteristics, and $u$ be the viscosity
solution given by formula 
\eqref{Lax-Oleinik}. Let $Sing$ be the closure of the singular set of $u$.

Take a point $z\in \partial  M$. 
Define:
$$t^\ast_z=\sup
\left\{  
t\geq 0:\;
h(F(\tau,z))=u(F(\tau,z))\;
\forall 0\leq \tau<t
\right\}
$$

Let $p=F(t^\ast_z,z)$. Assume for simplicity that $h(p)=u(p)$.

\strong{Claim}: $t^\ast_z< \rho_{Sing}(z)$ implies $h$ is discontinuous at $F(t^\ast_z,z)$.

\strong{Proof of the claim}: 
Assume that $t^\ast_z< \rho_{Sing}(z)$ and $h$ is continuous at $F(t^\ast_z,z)$ for some $z\in \partial M $.

As $t^\ast_z< \rho_{Sing}(z) \leq\lambda_1(z) $, there is an open neighborhood $O$ of $(t^\ast_z,z) $ such that $F|_{O}$ is a diffeomorphism onto a neighborhood of $p=F(t^\ast_z,z)$.

By hypothesis, there is a sequence $t_n\rightarrow t^\ast_z$ and $p_n=F(t_n,z)$ such that $h(p_n)\neq u(p_n)$.
As $h$ is built from characteristics using a section $s$, we have $h(p_n)=\tilde{u}(s(p_n))=\tilde{u}((s_n,y_n))=s_n+g(y_n)$, for $(s_n,y_n)\neq (t_n,z)$.

For $n$ big enough, the point $(s_n,y_n) $ does not belong to $O$, as $ (t_n,z)$ is the only preimage of $p_n$ in $O$.
As $h(p_n)\rightarrow h(p) $, and $\partial M$ is compact, we deduce the $s_n$ are bounded. 
We can take a subsequence of $(s_n,y_n) $ converging to $(s_\infty,y_\infty)\not\in O $. So we have $p=F(t^\ast_z,z)=F(s_\infty,y_\infty)$.
If $p\not\in Sing $, we deduce that $\lim_{n\rightarrow \infty} h(p_n)=\tilde{u}(s_\infty,y_\infty) > h(p)=u(p)=\tilde{u}(t^\ast_z,z)$, so $h$ is discontinuous at $p$.

Using the claim, we conclude the proof:
if $h$ is continuous, then $ \rho_{Sing}(z)\leq t^\ast_z $ for all $z\in \partial M $, and $u=h$, as any point in $ M$ can be expressed as $F(t,z)$ for some $z$, and some $t\leq\rho_{Sing}(z) $.

\end{proof}

\pagebreak[3]

We will need later the following version of the same principle:
\begin{lem}\label{una unica solucion por caracteristicas continua en un conjunto grande}
Let $S$ be a split locus, and $h$ be the function associated to $S$ as in definition \ref{u associated to S}.
If $\rho_S$ is continuous, and $h$ can be extended to $ M$ so that it is continuous except for a set of null $\mathcal{H}^{n-1}$ measure, then $S=Sing$.
\end{lem}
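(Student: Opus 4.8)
The plan is to reduce this statement to the previous lemma \ref{una unica solucion continua por caracteristicas} by showing that the hypotheses force $S$ to contain $Sing$ and vice versa; equivalently, one argues that if $S$ and $Sing$ differ, then the function $h$ associated to $S$ cannot be continuous outside a set of null $\mathcal{H}^{n-1}$ measure. First I would recall the setup: $Sing$ is the closure of the singular set of the viscosity solution $u$, and $u$ itself is a function made from characteristics (it equals $h_{Sing}$, the function associated to the split locus $Sing$). Both $S$ and $Sing$ split $M$, so each induces a section of $F: V \rightarrow M$ and a function ($h$ and $u$ respectively) defined off the corresponding set.

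The key step is an adaptation of the \emph{Claim} in the proof of \ref{una unica solucion continua por caracteristicas}. For $z \in \partial M$, define $t^\ast_z = \sup\{ t \geq 0 : h(F(\tau,z)) = u(F(\tau,z)) \ \forall\, 0 \leq \tau < t \}$, exactly as before. I would first check that $h = u$ on a whole inner neighborhood of $\partial M$ (both agree with the classical solution by characteristics there), so $t^\ast_z > 0$. Then I claim: if $t^\ast_z < \rho_S(z)$ and $t^\ast_z < \rho_{Sing}(z)$, then $h$ is discontinuous at $p = F(t^\ast_z, z)$. The argument is the same as in the quoted proof: since $t^\ast_z < \min(\rho_S(z), \rho_{Sing}(z)) \le \lambda_1(z)$, there is a neighborhood $O$ of $(t^\ast_z, z)$ on which $F$ is a diffeomorphism and which avoids $S$; a sequence $t_n \to t^\ast_z$ with $h(F(t_n,z)) \neq u(F(t_n,z))$ produces preimages $(s_n, y_n) \notin O$ under the section defining $h$, and using continuity of $\rho_S$ (to control things near $p$, ensuring $p \notin S$ if $t^\ast_z<\rho_S(z)$ strictly) together with compactness of $\partial M$, one extracts a limit $(s_\infty, y_\infty) \notin O$ with $F(s_\infty, y_\infty) = p$; this gives $\lim h(F(t_n,z)) = s_\infty + g(y_\infty) \neq t^\ast_z + g(z)$, a jump. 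So on the set $\{ t^\ast_z < \min(\rho_S(z), \rho_{Sing}(z))\}$ the function $h$ (extended however) is discontinuous along the characteristic past $t^\ast_z$.

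Next I would package this into a measure estimate. If $h = u$ fails on a set of positive $\mathcal{H}^{n-1}$ measure, I want to conclude $h$ is discontinuous on a set of positive $\mathcal{H}^{n-1}$ measure, contradicting the hypothesis. The point is that $\{ p : h(p) \neq u(p) \}$ is a union of characteristic segments; by Fubini in the coordinates $(t,z)$ (using that $F$ is a local diffeomorphism away from conjugate points, and that $Sing$, $S$ and the conjugate locus all have measure zero in the relevant sense — here I invoke \ref{landa es Lipschitz}, \ref{rho is lipschitz} and the structure results, e.g. \ref{theorem: conjugate of order 1}), positivity of the measure of this set forces a positive-measure set of directions $z$ for which $t^\ast_z < \rho_S(z)$, hence (since $h=u$ up to $\rho_{Sing}$ would give $t^\ast_z \ge \rho_{Sing}(z)$; if instead $t^\ast_z < \rho_{Sing}(z)$ we are already done) we in fact get $\rho_{Sing}(z) \le t^\ast_z < \rho_S(z)$ on a positive measure set, i.e. $S$ is strictly ``beyond'' $Sing$. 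But then just past $Sing$, along those characteristics, the function $h$ is still single-valued and smooth while $u$ has already become singular — so $h \neq u$ there — and one shows $h$ must jump where $S$ finally appears; this jump set has positive $\mathcal{H}^{n-1}$ measure because $\rho_S$ is Lipschitz (hence the graph $\{F(\rho_S(z),z)\}$ carries positive $\mathcal{H}^{n-1}$ measure over a positive-measure set of $z$). Conversely, if $Sing \not\subset S$ — i.e. $S$ misses part of the cut locus — then along such characteristics $u$ jumps but $h$ is built from a section of $F$ over $M \setminus S$; the presence of two distinct minimizing characteristics reaching cut points not in $S$ forces $h$ to be discontinuous there, again on a positive $\mathcal{H}^{n-1}$ set. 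Either way we contradict the hypothesis, so $S = Sing$.

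The main obstacle I anticipate is the Fubini/measure-theoretic bookkeeping in the last paragraph: turning ``$h \neq u$ on a positive-measure set'' into ``$h$ discontinuous on a positive-measure set'' cleanly requires care about the coordinate change $F$ near conjugate points and about the fact that the discontinuity locus of $h$ is essentially a graph over $\partial M$ of a Lipschitz function ($\rho_S$ or the relevant branch). I would lean on \ref{rho is lipschitz} and the coarea/area formula to guarantee that a Lipschitz graph over a positive-measure subset of $\partial M$ has positive $\mathcal{H}^{n-1}$ measure, and on the structure theorem \ref{complete description} to ensure the exceptional sets (conjugate locus, higher-codimension strata) are $\mathcal{H}^{n-1}$-null and so cannot absorb the discontinuity.
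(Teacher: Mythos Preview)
Your overall strategy---use the Claim from the proof of Lemma~\ref{una unica solucion continua por caracteristicas} to produce discontinuities of $h$, then contradict the hypothesis---is the same as the paper's. But you over-engineer the argument and leave a real gap.

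First, the paper uses the Claim \emph{verbatim}: if $t^\ast_z<\rho_{Sing}(z)$ then $h$ is discontinuous at $F(t^\ast_z,z)$. There is no need to add the extra condition $t^\ast_z<\rho_S(z)$; the Claim already works because $F$ is a diffeomorphism on $A(Sing)$, not $A(S)$. The paper then defines $Y_0=\{z:\ h\neq u\text{ somewhere before }\rho_{Sing}(z)\}$ and observes $Y_0\subset Y=\{z:\ h\text{ discontinuous somewhere before }\rho_{Sing}(z)\}$. Since $F|_{A(Sing)}$ is a diffeomorphism onto $M\setminus Sing$, the hypothesis (discontinuity set of $h$ is $\mathcal{H}^{n-1}$-null) directly gives $\mathcal{H}^{n-1}(Y)=0$, hence $\partial M\setminus Y_0$ is dense. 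No Fubini, no coarea, no structure theorem is invoked here.

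Second, and this is where your argument has a gap: the paper proves only the inclusion $S\subset Sing$, by a short contradiction. If $p\in S\setminus Sing$, write $p=F(t^\ast,z^\ast)$ with $\rho_S(z^\ast)\le t^\ast<\rho_{Sing}(z^\ast)$; continuity of $\rho_S$ (not Lipschitz---only continuity is assumed) gives $\rho_S<\rho_{Sing}$ on an open set of $z$'s, which must meet the dense set $\partial M\setminus Y_0$. For such a $z$ and any $t\in(\rho_S(z),\rho_{Sing}(z))$ one checks $h(F(t,z))>u(F(t,z))$, forcing $z\in Y_0$, a contradiction. The reverse inclusion $Sing\subset S$ is \emph{not} argued directly at all: once $S\subset Sing$, Lemma~\ref{characterization of split locus} (a split locus has no proper closed subset that splits $M$) immediately yields $S=Sing$.

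Your attempt to handle $Sing\not\subset S$ separately (``the presence of two distinct minimizing characteristics reaching cut points not in $S$ forces $h$ to be discontinuous there'') is exactly the step the paper avoids, and your sketch of it is not a proof: you would need to control where the section defining $h$ lives and produce a definite jump, which is not automatic from the setup. Invoking Lemma~\ref{characterization of split locus} removes this difficulty entirely.
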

\begin{proof} 
Define
$$Y_0=\left\{
z\in\partial M:\; h(F(t,z))\neq u(F(t,z))\;\text{ for some } t\in[0,\rho_{Sing}(z))
\right\} $$
By the claim in the previous lemma, $Y_0$ is contained in:
$$Y=\left\{
z\in\partial M:\; h  \text{ discontinuous at } F(t,z)\; \text{ for some }  t\in[0,\rho_{Sing}(z))
\right\} $$

Let $A=A(Sing)$ be the set in definition \ref{splits}. 
The map $F$ restricts to a diffeomorphism from $A$ onto $ M\setminus Sing $.
The set $Y$ can be expressed as:
$$
Y=\pi_2\circ (F\vert_A)^{-1}\left(\{p\in M\setminus Sing:\; h \text{ discontinuous at } p \})\right)
$$
and thus by the hypothesis has null $\Hnuno$ measure.
Therefore, $\partial M\setminus Y_0 $ is dense in $\partial M$.

We claim now that $S\subset Sing $.
To see this, let $p\in S\setminus Sing $. Then $p=F(t^\ast,z^\ast)$ for a unique $(t^\ast,z^\ast) \in A $.
It follows $\rho_S(z^\ast)\leq t^\ast<\rho_{Sing}(z^\ast)$.
As $\rho_S$ is continuous, $\rho_S(z)<\rho_{Sing}(z) $ holds for all $z$ in a neighborhood of $z^\ast $ in $\partial M $ and, in particular, for some $z\in \partial M\setminus Y_0$.
This is a contradiction because, for $\rho_{S}(z)<t<\rho_{Sing}(z) $, $h(F(t,z))=\tilde{u}(t',z') $ for $(t',z')\neq (t,z) $, and $t<\rho_{Sing}(z) $ implies 
$h(F(t,z))=\tilde{u}(t',z')> \tilde{u}(t,z)=u(F(t,z))$, forcing $z\in  Y_0$.

We deduce $S=Sing$ using lemma \ref{characterization of split locus} and the fact that $Sing$ is a split locus. 
\end{proof}

\section{$\rho_S$ is Lipschitz}\label{section:rho is Lipschitz} 

In this section we study the functions $\rho_S$ and $\lambda_j$ defined earlier.
The fact that $\rho_S$ is Lipschitz will be of great importance later.
The definitions and the general approach in this section follow \cite{Itoh
Tanaka 00}, but our proofs are shorter, provide no precise quantitative bounds,
use no constructions from Riemannian or Finsler geometry, 
and work for Finsler manifolds, thus providing a new and shorter proof for the
main result in \cite{Li Nirenberg}. The proof that $\lambda_j$ are Lipschitz
functions was new for Finsler manifolds when we published the first version of
the preprint of this paper. Since then, the paper \cite{Castelpietra Rifford}
has appeared which shows that $\lambda_1$ is actually semi-concave.

\begin{proof}[Proof of \ref{landa es Lipschitz}]
It is immediate to see that the functions $\lambda_j $ are continuous, since
this is 
property (R3) of Warner (see  \cite[pp. 577-578 and Theorem 4.5]{Warner}).

Near a conjugate point $x^0$ of order $k$, we can take special coordinates as in section \ref{subsection: special coordinates}:
$$
F(x_1,\dots,x_n)=(x_1,\dots,x_{n-k},F_{n-k+1},\dots,F_n)
$$
Conjugate points near $x$ are the solutions of
$$
d(x_1,\dots,x_n)=det(dF)=\sum_{\sigma}(-1)^\sigma \frac{\partial F_{\sigma(n-k+1)}}{\partial x_{n-k+1}}\dots \frac{\partial F_{\sigma(n)}}{\partial x_n}=0
$$
From the properties of the special coordinates, we deduce that:
\begin{equation}\label{d alpha d = 0 for some alphas} 
D^\alpha d(0)=0\qquad
\forall \vert \alpha\vert<k
\end{equation} 
and 
$$
\frac{\partial^k}{\partial x_1^k}d=1
$$

We can use the preparation theorem of Malgrange (see \cite{Golubitsky Guillemin}) to find real valued functions $q$ and $l_i$ in an open neighborhood $U$ of $x$ such that $d(x)\neq 0$ and:
$$
q(x_1,\dots,x_n)d(x_1,\dots,x_n)=x_1^k+x_1^{k-1}l_1(x_2,\dots,x_n)+\dots+l_k(x_2,\dots,x_n)
$$
and we deduce from \eqref{d alpha d = 0 for some alphas} that
\begin{equation}\label{the lower derivatives of l_i vanish}
 D^\alpha l_i(0)=0\qquad
\forall \vert \alpha\vert<i
\end{equation} 
which implies
\begin{equation}\label{bounds for l_i}
|l_i(x_2,\dots, x_n)| < \bar{C} \max\{\vert x_2\vert, \dots, \vert x_n\vert\}^{i}
\end{equation} 

At any conjugate point $(x_1,\dots,x_n)$, we have $q(x)=0$, so:

\begin{equation*}
 -x_1^k=x_1^{k-1}l_1(x_2,\dots,x_n)+\dots+l_k(x_2,\dots,x_n)\\ 
\end{equation*}
\noindent and therefore
\begin{equation*}
|x_1|^k<|x_1|^{k-1}|l_1|+\dots+|l_k|
\end{equation*}

Combining this and \eqref{bounds for l_i}, we get an inequality for $\vert
x_1\vert $ at any conjugate point $(x_1,\dots,x_n)$, where the constant $C$
ultimately depends on bounds for the first few derivatives of $F$:
\begin{equation}
 \vert x_1\vert^k<C\max\{\vert x_1\vert, \dots, \vert x_n\vert\}^{k-1}\max\{\vert x_2\vert, \dots, \vert x_n\vert\}
\end{equation}

If
$\vert x_1\vert>\max\{\vert x_2\vert, \dots, \vert x_n\vert\}$,
then
$\vert x_1\vert^k<C\vert x_1\vert^{k-1}\max\{\vert x_2\vert, \dots, \vert x_n\vert\}$.
If the opposite holds, then
$\vert x_1\vert^k<C\max\{\vert x_2\vert, \dots, \vert x_n\vert\}^k$.
So we get:
$$
\vert x_1\vert<\max\{C,1\}\max\{\vert x_2\vert, \dots, \vert x_n\vert\}
$$
This is the statement that all conjugate points near $x$ lie in a cone of fixed width containing the hyperplane $x_1=0$. Thus all functions $\lambda_j$ to $\lambda_{j+k}$ are Lipschitz at $(x_2\dots,x_n)$ with a constant independent of $x$.
\end{proof}

\paragraph{Remark.} A proof of lemma \ref{landa es Lipschitz} in the language of section \ref{subsection: lagrangian} seems possible: let $\Lambda( M)$ be the bundle of Lagrangian submanifolds of the symplectic linear spaces $T^{\ast}_p  M $ and let $\Sigma( M)$ be the union of the Maslov cycles within each $\Lambda_p( M)$.
Define $\lambda:V\rightarrow \Lambda( M)$ where $\lambda(x)$ is the tangent to $\Theta $ at $D(\Phi(x)) $ (recall \eqref{definition of Theta}).
The graphs of the functions $\lambda_k$ are the preimage of the Maslov cycle $\Sigma( M)$. The geodesic vector field (transported to $T^{\ast}  M $), is transversal to the Maslov cycle.
Showing that the angle (in an arbitrary metric) between this vector field and the Maslov cycle at points of intersection can be bounded from below is equivalent to showing that the $\lambda_k$ are Lipschitz.

\begin{lem}\label{rho es menor que landa1} 
 For any split locus $S$ and point $y\in \partial M $, there are no conjugate points in the curve $t\rightarrow \exp(ty) $ for $t<\rho_{S}(y) $.
In other words, $\rho_S\leq \lambda_1 $.
\end{lem}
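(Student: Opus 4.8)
The plan is to argue by contradiction using the defining property of a split locus together with the local structure of $F$ near a first conjugate point. Suppose $y\in\partial M$ and there is a conjugate point $x_0=(t_0,y)$ with $t_0<\rho_S(y)$; choose $t_0$ minimal, so that $x_0$ is in fact a \emph{first} conjugate vector in the sense of Definition \ref{conjugate}. Let $p_0=F(x_0)$. Since $t_0<\rho_S(y)$, the whole segment $\{F(s,y):0\le s\le t_0\}$ lies in $M\setminus S$ except possibly at its endpoint, and by definition of $\rho_S$ and the splitting property, the point $x_0$ lies in $A(S)$ (or is a limit of points of $A(S)$ along the ray), so $F$ restricted to $A(S)$ must be injective near $x_0$. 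The key point is that a first conjugate point obstructs this: I will show $F$ is not locally injective in any neighborhood of $x_0$ on the side where the ray through $x_0$ continues, which contradicts injectivity of $F|_{A(S)}$.

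First I would reduce to the generic case and then handle the rest by a dimension/covering argument. If $x_0$ is an $A2$ point (Definition \ref{A2}), the normal form \eqref{normal form for A2} shows that in suitable coordinates $F(x_1,x_2,\dots,x_n)=(x_1^2,x_2,\dots,x_n)$, so every point just ``inside'' the fold has two preimages arbitrarily close to $x_0$; since $x_0$ is a first conjugate vector the ray $t\mapsto(t,y)$ for $t<t_0$ has no conjugate points, hence lies in the region where $F$ is a local diffeomorphism, and one of the two fold-preimages lies on that ray while the other does not. Both preimages belong to $A(S)$ for $t<\rho_S(y)$ (nothing on those sub-rays meets $S$), contradicting that $F|_{A(S)}$ is a bijection onto $M\setminus S$. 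For a conjugate point of order $k\ge 1$ that is not $A2$, I would instead invoke Proposition \ref{regular exponential map}: along the ray through $x_0$, the sum of kernel dimensions of $dF$ is locally constant, and using the special coordinates \eqref{F near order k in special coords} one sees that a first conjugate point still produces, for parameter values slightly beyond $t_0$, points with at least two preimages of $F$ clustering at $x_0$ — again one on the radial segment where $F$ is a diffeomorphism and one off it. This is exactly the classical statement that ``beyond the first conjugate point a geodesic no longer minimizes,'' rephrased for the abstract splitting $A(S)$.

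Alternatively, and perhaps more cleanly, I would deduce this directly from the balanced structure results already proved: by Theorem \ref{main theorem 3} (resp.\ its order-$1$ companion, Proposition \ref{theorem: conjugate of order 1}), the set of $p\in S$ for which $R_p$ contains a conjugate vector has Hausdorff dimension $\le n-2$, and Proposition \ref{no A2 in S} rules out $A2$ vectors in any $R_p$ entirely; but these results describe conjugate vectors \emph{at} points of $S$, not strictly before $\rho_S$, so a short separate argument is still needed to transfer the obstruction. The cleanest route is: pick the minimal conjugate time $t_0$ on the ray; if $t_0<\rho_S(y)$ then $F(t_0,y)\notin S$, so $(t_0,y)\in A(S)$, so $F$ is injective on a neighborhood of $(t_0,y)$ intersected with $A(S)$; but a neighborhood of $(t_0,y)$ of the form $(t_0-\delta,t_0+\delta)\times(\text{nbhd of }y)$ is entirely contained in $A(S)$ once $\delta$ is small (since $\rho_S$ is lower semicontinuous, or using that $S$ is closed and disjoint from the compact radial segment), and on that neighborhood $F$ fails to be injective because $x_0$ is a genuine singular point of $F$ of corank $\ge1$ — a smooth map with a rank drop along a hypersurface of conjugate points cannot be injective on any open set meeting that hypersurface, by the normal forms above. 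This contradiction proves $\rho_S\le\lambda_1$.

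The main obstacle I anticipate is making the phrase ``$F$ fails to be injective on any open neighborhood of a first conjugate point'' precise and uniform across all conjugacy orders $k$ and all degeneracy types, without quoting a minimality-of-geodesics theorem (which for a general split locus $S$, as opposed to the cut locus, is not available a priori). For $k=1$ this is the fold/Whitney picture and is unproblematic; for $k\ge2$ and the degenerate cases of Theorem \ref{theorem: conjugate of order 2} one must check that the quadratic data $q,r$ in the special coordinates always force a self-overlap of $F$ on at least one side, which is essentially the content of the case analysis (Types 1--4) already carried out in the proof of Theorem \ref{theorem: conjugate of order 2}; I would reuse that analysis verbatim, noting that in every case either two preimages appear (contradicting injectivity on $A(S)$) or a whole nearby ray is free of conjugate points (contradicting minimality of $t_0$ as the first conjugate time, after perturbing $y$).
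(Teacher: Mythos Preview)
Your approach is essentially the same as the paper's: assume $\lambda_1(y)<\rho_S(y)$, use that $F$ fails to be injective on every neighborhood of the first conjugate point, and derive a contradiction with the bijection $F|_{A(S)}\to M\setminus S$. Your ``cleanest route'' paragraph is exactly the argument, and your observation that a full neighborhood of $(t_0,y)$ lies in $A(S)$ (via closedness of $S$, hence lower semicontinuity of $\rho_S$) makes the contradiction immediate; the paper instead phrases the same contradiction by producing points of $S$ accumulating on the ray below $\rho_S(y)$.

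The one place you overwork is the non-injectivity step. You try to establish it case-by-case through the $A_2$ fold normal form, then Proposition~\ref{regular exponential map} for higher order, and finally suggest recycling the Type~1--4 analysis from Theorem~\ref{theorem: conjugate of order 2}. None of this is needed: the paper simply invokes \cite[Theorem 3.4]{Warner}, which states precisely that the exponential map is not injective on any neighborhood of a first conjugate point, for all orders $k\ge 1$ and without any genericity assumption. This dissolves the ``main obstacle'' you anticipate. Your second alternative, via Proposition~\ref{no A2 in S} and the structure theorems, is --- as you correctly note --- about conjugate vectors \emph{in} $R_p$ for $p\in S$, not about points strictly before $S$, and moreover those results assume $S$ is \emph{balanced}, whereas the present lemma is stated for an arbitrary split locus; so that route is indeed a dead end here.
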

\begin{proof}
Assume there is $x$ with $\rho_S(x)-\varepsilon>\lambda_1(x) $.
By \cite[3.4]{Warner}, the map $F$ is not injective in any neighborhood of $(x,\lambda_1(x))$. There are points $F(x_n,t_n)$ of $S$ with $x_n\rightarrow x$ and $t_n<\rho_S(x)-\varepsilon$ (otherwise $S$ does not split $ M $). Taking limits, we see $F(x,t)$ is in $S$ for some $t<\rho_S(x)-\varepsilon$, which contradicts the definition of $\rho_S(x)$.
\end{proof}

From now on and for the rest of the paper, $S$ will always be a balanced split locus:
\begin{lem}\label{rho es Lipschitz en rho lejos de lambda} 
Let $E\subset \partial M $ be an open subset whose closure is compact and has a neighborhood where $\rho <\lambda_1 $.
Then $\rho_S $ is Lipschitz in $E$.
\end{lem}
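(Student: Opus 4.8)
The assertion is local, so the plan is to fix $z_0\in\overline E$, set $t_0=\rho_S(z_0)$ and $q_0=F(t_0,z_0)$, show that $\rho_S$ is Lipschitz on a neighbourhood of $z_0$ with a constant independent of $z_0$, and then deduce the lemma by covering $\overline E$ with finitely many such neighbourhoods and chaining along short curves. By Lemma \ref{rho es menor que landa1} and the hypothesis, $t_0<\lambda_1(z_0)$, so there is a neighbourhood $O$ of $(t_0,z_0)$ in $V$ on which $F$ is a diffeomorphism onto an open set $W=F(O)\subset M$, which I identify with an open subset of $\RR^n$. Shrinking, I take $O$ of the form $E_0\times(t_0-2\tau,t_0+2\tau)$ in the coordinates $(z,t)$, and by continuity of $\lambda_1$ (Lemma \ref{landa es Lipschitz}) I assume $\rho_S<\lambda_1$ on a neighbourhood of $\overline{E_0}$. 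A compactness argument along geodesics (each arc $\{F(s,z_0):0\le s\le t_0-\epsilon\}$ is compact and disjoint from the closed set $S$, hence at positive distance from it, and the same holds for nearby $z$) shows that $\rho_S$ is finite and lower semicontinuous near $z_0$; consequently $A:=F(\{(t,z)\in O:\ t<\rho_S(z)\})$ is an open, proper subset of $W$, the map $F$ is injective there (two distinct preimages of a point would give $\#R_p\ge 2$, impossible off $S$ because $S$ splits $M$), and $\partial A\cap W\subset S$ reduces, after shrinking $O$, to the graph $\{F(\rho_S(z),z):z\in E_0\}$.

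\textbf{The crux: a cone condition.} I claim there is an open cone $C^+\subset\RR^n$ about the backward radial direction $-d_{q_0}F(r)$ and an $\varepsilon>0$ such that $(q+C^+)\cap B_\varepsilon(q)\subset A$ for every $q\in S$ in a neighbourhood $W'$ of $q_0$. Two preliminary observations feed into this. First, every such $q$ has $\#R_q\ge 2$: the radial vector $dF(r)$ at the preimage of $q$ in $O$ is non-conjugate (since $\rho_S<\lambda_1$), and a second element of $R_q$ exists by the usual limiting argument together with the last property listed in Proposition \ref{regular exponential map}. Second, that same property together with the absence of conjugate points forces the ``angle'' between $dF(r)$ and any other element of $R_q$ to be bounded away from $0$, uniformly for $q$ near $q_0$; otherwise a limit argument would exhibit two distinct preimages of one point with equal radial images, contradicting that property. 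Given these, the cone condition is proved by contradiction and compactness, in the spirit of the proof of Proposition \ref{cut locus is balanced}: a failure produces $q_n\in S$, $q_n\to q_*\in S$, and $v_n$ with $v_n/|v_n|\to -d_{q_*}F(r)$, $|v_n|\to0$, and $q_n+v_n\notin A$; since $q_n+v_n$ lies just behind $q_n$ in a radial-like direction and outside $A$, it carries a vector of $R_{q_n+v_n}$ whose geodesic from $\partial M$ realizes or beats the radial route, and in the limit this yields at $q_*$ a covector dual to some element of $R_{q_*}$ that evaluates on $d_{q_*}F(r)$ to a value $<1$ (strict convexity of the indicatrix), while the balanced identity at $q_*$ forces that value to be $1$ --- the contradiction. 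It is essential that $C^+$ be an open cone and not merely the radial ray: approaching $q_*$ along its own geodesic makes both sides of the balanced identity equal $1$ and gives no information, so the direction of approach must have a transversal component, and the lower angle bound is precisely what gives the cone a uniform aperture.

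\textbf{Conclusion and uniformity.} With the cone condition established, Lemma \ref{the set and the cone} applies to $A$ and $C^+$ on $W'$: the set $\partial A\cap W'=S\cap W'$ is a Lipschitz graph over the coordinates transverse to any $X\in C^+$, with Lipschitz constant depending only on $C^+$. Transporting this graph through the diffeomorphism $F|_O$ (bi-Lipschitz on compact subsets), it becomes the graph $t=\rho_S(z)$ over a neighbourhood of $z_0$ in $\partial M$; hence $\rho_S$ is finite, continuous and Lipschitz near $z_0$. Finally, over the compact set $\overline E$ one has a uniform bound $\lambda_1-\rho_S\ge\delta_0>0$ and uniform bounds on the first derivatives of $F$ and of $\varphi$, so $C^+$, $\varepsilon$ and the resulting Lipschitz constant can be chosen uniformly in $z_0$; a standard chaining argument along short curves in $E$ then gives that $\rho_S$ is Lipschitz on $E$.

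\textbf{Main obstacle.} The difficulty is concentrated in the cone condition: converting the balanced identity at $q_*$ into a strict, contradictory inequality forces one to approach $q_*$ with a nonzero transversal component, and controlling the aperture of the cone uniformly requires the lower bound on the angle between the radial route and the competing routes in $R_q$, which in turn rests on the absence of conjugate points below $\rho_S$ and on the injectivity-of-radial-images property of the regular exponential map.
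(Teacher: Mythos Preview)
Your proof is correct and follows essentially the same approach as the paper. Both arguments rest on the same three ingredients: (i) a uniform lower bound on the separation between the local radial vector $dF_x(r)$ and any other element of $R_q$ for $q\in S$ near $q_0$ (the paper obtains this from the embedding $G=(F,dF(r))$ of $V$ into $TM$, you obtain it by a limit argument invoking the last property in Proposition~\ref{regular exponential map}; these are the same fact), (ii) the balanced condition to turn that separation into a cone constraint, and (iii) Lemma~\ref{the set and the cone} to conclude Lipschitzness. The only stylistic difference is that the paper argues directly that every approximate tangent direction $v$ to $S$ at $q$ satisfies $\widehat{dF_x(r)}(v)=\widehat{dF_y(r)}(v)$ for some separated $y$, hence $\widehat{dF_x(r)}(v)<1-\varepsilon$, whereas you establish the cone hypothesis of Lemma~\ref{the set and the cone} by contradiction.

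One small remark on your ``main obstacle'' paragraph: the phrase ``approaching $q_*$ along its own geodesic \dots\ gives no information'' is a bit misleading. Approaching exactly along the backward radial ray is not uninformative but \emph{vacuous}: those points lie in $A$ by construction, so no failure sequence can live there. The genuine content is that once the incoming direction $v$ lies in the closure of $-C^+$ (hence close to $dF(r)$), the balanced identity forces $\widehat{Y_\infty}(v)\ge\widehat{dF_x(r)}(v)$, while the uniform separation $|Y_\infty-dF_x(r)|\ge m$ and strict convexity of the indicatrix force the reverse strict inequality --- that is the contradiction, and it works for any $v$ in the closed cone, including $v=dF(r)$ itself. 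The role of the angle bound is exactly to make the aperture of $C^+$ (and hence the eventual Lipschitz constant) uniform over $\overline E$, which you state correctly.
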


\begin{proof}
The map $G(x)$ defined in \ref{various definitions related to the exponential map}, written as $(F(x),dF_x(r)) $ is an embedding of $V$ into $TM$. There is a constant $c$ such that for $x,y\in V$:
\begin{equation}\label{(F,dF(r)) is an embedding} 
\vert F(x)-F(y)\vert+\vert dF_x(r)-dF_y(r)\vert \geq
c\min\{\vert x-y\vert,1\}
\end{equation} 

Recall the exponential map is a local diffeomorphism before the first conjugate point.
Points $p=F((z,\rho(z))) $ for $z\in E$ have a set $R_p$ consisting of the vector $dF_{(z,\rho(z))}(r) $, and vectors coming from $V\setminus E$. 
Choose one such point $p$, and a neighborhood $U$ of $p$.
The above inequality shows that there is a constant $m$ such that:
$$
\vert dF_x(r)-dF_y(r)\vert \geq m
$$
for $x=(z,\rho(z))$ with $z\in E$ and $y =(w,\rho(w))\in Q_p$ with $w\in V\setminus E$. 
By the balanced condition \ref{balanced}, any unit vector $v$ tangent to $S$ satisfies $\widehat{dF_x(r)}(v)= \widehat{dF_y(r)}(v)$ for some such $y$ and so:
$$
\widehat{dF_x(r)}(v)<1-\varepsilon
$$
Thus for any vector $w$ tangent to $E$ both vectors $(w,d\rho_-(w))$ and
$(w,d\rho_+(w))$ lie in a cone of fixed amplitude around the kernel of $
\widehat{dF_x(r)} $ (the hyperplane tangent to the indicatrix at $x$).
Application of lemma \ref{the set and the cone} shows that $\rho$ is Lipschitz.
\end{proof}

\begin{lem}\label{rho es Lipschitz en rho < lambda pero cerca de un conjugado} 
Let $z_0\in \partial M $ be a point such that $\rho(z_0)=\lambda_1(z_0) $.
Then there is a neighborhood $E$ of $z_0$ and a constant $C$ such that for all $z$ in $E$ with $\rho(z)<\lambda_1(z)$, $\rho$ is Lipschitz near $z$ with Lipschitz constant $C$.

\end{lem}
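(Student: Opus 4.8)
The plan is to combine the two previous lemmas with the Lipschitz control of $\lambda_1$ near a conjugate point. Since $\rho(z_0)=\lambda_1(z_0)$, the point $x_0 = (z_0, \lambda_1(z_0))$ is a first conjugate vector; let $k$ be its order of conjugacy. First I would fix special coordinates at $x_0$ as in Section \ref{subsection: special coordinates} and recall from the proof of \ref{landa es Lipschitz} that all conjugate points near $x_0$ lie in a cone of fixed width around the hyperplane $\{x_1 = \text{const}\}$; in particular $\lambda_1$ is Lipschitz near $z_0$ with some constant $C_1$. Simultaneously, because $F$ is an embedding when composed with $dF(r)$ (inequality \eqref{(F,dF(r)) is an embedding}), and because $x_0$ is the \emph{first} conjugate point, the vectors in $R_{p}$ for $p = F(z,\rho(z))$ with $z$ near $z_0$ and $\rho(z) < \lambda_1(z)$ split into the ``radial'' vector $dF_{(z,\rho(z))}(r)$ coming from $z$ itself, together with a family of vectors coming from points $(w,\rho(w)) \in Q_p$ with $w$ bounded away from $z$ (the only way a second preimage could approach $z$ would be through a conjugate point, which is excluded since $\rho(z) < \lambda_1(z)$, combined with the local injectivity of $F$ before the first conjugate point).

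The key step, then, is to reproduce the cone argument of Lemma \ref{rho es Lipschitz en rho lejos de lambda} uniformly near $z_0$. For $p=F(z,\rho(z))$ with $\rho(z)<\lambda_1(z)$, pick a second preimage $(w,\rho(w))\in Q_p$; by \eqref{(F,dF(r)) is an embedding} there is a uniform lower bound $|dF_{(z,\rho(z))}(r)-dF_{(w,\rho(w))}(r)|\geq m > 0$ as long as $(z,\rho(z))$ and $(w,\rho(w))$ stay a definite distance apart, which holds on a small enough neighborhood $E$ of $z_0$ (here one uses that $\rho$ is continuous — already known — so both stay close to $x_0$ only if $|z-w|$ stays bounded below, a point that needs a short compactness/continuity argument). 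The balanced condition \ref{balanced} then forces every unit tangent vector $v$ to $S$ at $p$ to satisfy $\widehat{dF_{(z,\rho(z))}(r)}(v) = \widehat{dF_{(w,\rho(w))}(r)}(v)$ for some such $w$, hence $\widehat{dF_{(z,\rho(z))}(r)}(v) < 1 - \varepsilon$ for a fixed $\varepsilon > 0$. Consequently the graph of $\rho$ has all its approximate tangent directions inside a fixed cone $C^+$ around the hyperplane $\ker \widehat{dF_x(r)}$, uniformly over $z\in E$ with $\rho(z) < \lambda_1(z)$.

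Finally I would apply Lemma \ref{the set and the cone}: on the open set where $\rho < \lambda_1$, the region ``below'' the graph of $\rho$ (in the coordinate along $X \in C^+$) is open, its boundary is exactly the graph of $\rho$, and the cone condition $(q + C^+) \cap (q + B_\varepsilon) \subset A$ holds at every boundary point by the previous paragraph; the lemma yields that $\rho$ restricted to $\{z \in E : \rho(z) < \lambda_1(z)\}$ is locally the graph of a Lipschitz function with a constant $C$ depending only on the cone $C^+$, and $C^+$ was chosen uniformly near $z_0$. I expect the main obstacle to be making rigorous the claim that second preimages $(w,\rho(w))$ stay bounded away from $(z,\rho(z))$ uniformly for $z \in E$: one must rule out a sequence $z_i \to z_0$ with second preimages $w_i$ also converging to $z_0$, which would require a conjugate point in the limit — this is where the hypothesis $\rho(z_0) = \lambda_1(z_0)$ and the local form of $F$ near a first conjugate vector (via the normal forms / Warner's results \cite{Warner}) must be used carefully, rather than the cleaner situation $\rho < \lambda_1$ handled in Lemma \ref{rho es Lipschitz en rho lejos de lambda}.
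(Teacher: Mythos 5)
Your plan reproduces the cone argument of Lemma \ref{rho es Lipschitz en rho lejos de lambda}, but it hinges on a separation claim that is false, and the point where it fails is exactly the content of the lemma. You claim that second preimages $(w,\rho(w))\in Q_p$ stay a definite distance away from $(z,\rho(z))$, arguing that a sequence $z_i\to z_0$ whose second preimages also converge to $x_0=(z_0,\lambda_1(z_0))$ ``would require a conjugate point in the limit'' and can therefore be ruled out. But $x_0$ \emph{is} a conjugate point --- that is the hypothesis $\rho(z_0)=\lambda_1(z_0)$ --- and by Warner's result (cited in Lemma \ref{rho es menor que landa1}), $F$ is not injective in any neighborhood of a first conjugate vector. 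So for $z$ near $z_0$ with $\rho(z)<\lambda_1(z)$, the point $p=F(z,\rho(z))$ typically has a second preimage $y\in Q_p$ with $\Vert x-y\Vert\to 0$ as $z\to z_0$. Then $\vert dF_x(r)-dF_y(r)\vert\to 0$ by \eqref{(F,dF(r)) is an embedding} read in the other direction, the lower bound $m$ of the previous lemma degenerates, and no uniform $\varepsilon$ with $\widehat{dF_x(r)}(v)<1-\varepsilon$ comes out of that argument. Your own last paragraph correctly identifies this as ``the main obstacle,'' but the proposed resolution (ruling the scenario out) cannot work.

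The paper's proof is devoted entirely to this degenerate regime. Writing $X=dF_x(r)$, $Y=dF_y(r)$ and $\alpha=\widehat{X}-\widehat{Y}$, it does not seek a uniform angular separation between $X$ and $Y$; instead it uses strict convexity of the indicatrix to get $\widehat{Y}(X)<1-\varepsilon_2\Vert X-Y\Vert^2$ and then proves the matching second-order bound $\Vert F_x^\ast\alpha\Vert\leq C_1\Vert x-y\Vert^2$, so that the \emph{ratio} $F_x^\ast\alpha(r)/\Vert F_x^\ast\alpha\Vert$ stays bounded below even as $\Vert x-y\Vert\to 0$; this is what feeds a cone of fixed aperture into Lemma \ref{the set and the cone}. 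The hard step is the lower bound $\Vert dF^{-1}(X-Y)\Vert>\varepsilon_3/\Vert x-y\Vert$, which requires decomposing $x-y$ along the almost-orthogonal kernels $w_i$ at the nearby conjugate points $(\lambda_i(z),z)$ and the inequality $\sum\vert b_i\vert d_i<C_4\Vert x-y\Vert^2$. None of this quantitative machinery appears in your plan, so the proposal as written has a genuine gap rather than an alternative route.
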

\begin{proof}
 Let $O$ be a compact neighborhood of $(z_0,\lambda_1(z_0)) $ where special coordinates apply. 
Let $x=(z,\rho(z))\in O$ be such that $\rho(z)<\lambda_1(z) $. In particular, $d_x F$ is non-singular.
We can apply the previous lemma and find $\rho $ is Lipschitz near $z$. We just need to estimate the Lipschitz constant uniformly. Vectors in $R_{F(x)}$ that are of the form $dF_{y}(r)$ for $y\in V\setminus O$, are separated from $dF_x(r)$ as in the previous lemma and pose no trouble, but now there might be other vectors $dF_{y}(r)$ for $y\in O$.

Fix the metric $\langle\cdot \rangle$ in $O$ whose matrix in special coordinates is the identity.
Any tangent vector to $S$ satisfies $\widehat{dF_x(r)}(v)=\widehat{dF_y(r)}(v)$, for some $y\in Q_{F(x)} $.
A uniform Lipschitz constant for $\rho$ is found if we bound from below the angle in the metric $\langle\cdot \rangle$ between $r$ and 
$d_x F^{-1}(v)$ for any vector $v$ with this property. 
This is easy to do if $y\not\in O$, so fix a point $y\in O$ with $F(x)=F(y)$, and let $X=dF_{x}(r)$, $Y=dF_{y}(r)$ and $\alpha=\widehat{X}- \widehat{Y} $. We need to bound from below the angle between $r_x$ and the hyperplane $\ker F^\ast\alpha $.

This is equivalent to proving that there is $\varepsilon_1>0 $ independent of $x$ such that:
$$
\dfrac{F_{x}^\ast \alpha (r)}{\Vert F_{x}^\ast \alpha\Vert }>\varepsilon_1
$$
which is equivalent to:
$$
\widehat{X}(X)-\widehat{Y}(X)<\varepsilon_1 \Vert F_{x}^\ast \alpha\Vert
$$

$$
\widehat{Y}(X)<1-\varepsilon_1 \Vert F_{x}^\ast \alpha\Vert
$$
in the norm $\Vert\cdot \Vert$ associated to $\langle\cdot \rangle$.

Notice first that $X$ and $Y$ belong to the indicatrix at $F(x)=F(y)$, which is
strictly convex. 
By this and \eqref{(F,dF(r)) is an embedding}, we see that for some
$\varepsilon_2>0$:
$$
\widehat{Y}(X)<1-\varepsilon_2 \Vert X-Y\Vert ^2<1-c\varepsilon_2 \Vert x-y\Vert ^2
$$
So it is sufficient to show that for some $C_1$ independent of $x$:
\begin{equation}\label{F ast alpha: goal}
\Vert F_x^\ast \alpha\Vert <
C_1\Vert x-y\Vert ^2
\end{equation}

Using a Taylor expansion of $\frac{\partial \varphi}{\partial x_j} $ in the second entry, we see the form $ F_x^\ast \alpha $ can be written in coordinates (with implicit summation over repeated indices):
\begin{equation}\label{expasion of F ast alpha}
 \begin{array}{rcl}
  F_x^\ast \alpha&=&
  \left(
  \frac{\partial \varphi}{\partial x_j}(p,X)-\frac{\partial \varphi}{\partial x_j}(p,Y)
  \right)
  \frac{\partial F_j}{\partial x_l}\\
  &=&
  \frac{\partial^2 \varphi}{\partial x_i x_j}(p,X)
  \left(
  X_i - Y_i
  \right)
  \frac{\partial F_j}{\partial x_l}
  +O(\Vert X-Y\Vert )^2\\
  &=&
  \frac{\partial^2 \varphi}{\partial x_i x_j}(p,X)
  \left(
  X_i - Y_i
  \right)
  \frac{\partial F_j}{\partial x_l}
  +O(\Vert x-y\Vert )^2
 \end{array}
\end{equation}
Define the bilinear map $g(p,X)$ with coordinates $\frac{\partial^ 2\varphi}{\partial x_i \partial x_j}(p,X)$.
It is sufficient to prove that for some $C_2$ independent of $x$:

$$
\Vert
g_{i,j}(p,X)
  \left(
  X_i - Y_i
  \right)
  \frac{\partial F_j}{\partial x_l}
 \Vert\leq C_2\Vert x-y\Vert ^2
$$

This is equivalent to showing that for every vector $v\in T V$:
$$
\Vert
g_{i,j}(p,X)
  \left(
  X_i - Y_i
  \right)
  \frac{\partial F_j}{\partial x_l}v_l
 \Vert =
 \Vert g(p,X)(X-Y,dF(v))\Vert
 \leq C_2\Vert x-y\Vert ^2\Vert v\Vert
$$
We can of course restrict to vectors $v$ of norm $1$.
The maximum norm is achieved when $dF(v)$ is proportional to $X-Y$.
The map $d_x F$ is invertible, so for the vector $v_0=\frac{dF^{-1}(X-Y)}{\Vert dF^{-1}(X-Y) \Vert}$, we have:
$$
\sup_{\Vert v\Vert = 1} \Vert g(p,X) (X-Y, dF(v))\Vert = \Vert g(p,X) (X-Y, dF(v_0))\Vert
$$

Thus by \eqref{expasion of F ast alpha} and the convexity of $\varphi$ we have:
\begin{align*} 
\Vert F_{(z,\rho(z))}^\ast \alpha\Vert \,& <
C_3 \frac{\Vert X-Y\Vert^2}{\Vert dF^{-1}(X-Y) \Vert}+O(\Vert x-y\Vert)^2
\\
& <  
C_4 \frac{\Vert x-y\Vert^2}{\Vert dF^{-1}(X-Y) \Vert}+O(\Vert x-y\Vert)^2
\end{align*}
for constants $C_3$ and $C_4$, and it is enough to show there is $\varepsilon_3$ independent of $x$ and $y$ such that:
\begin{equation}
\Vert dF^{-1}(X-Y) \Vert>\varepsilon_3
\end{equation}
Let $G(x)= d_xF(r) $. We have:
$$
X-Y=G(x)-G(y)=dG_x(x-y)+O(\Vert x-y\Vert)
$$
so in order to prove \eqref{F ast alpha: goal} it is enough to show the following:
$$
\Vert dF^{-1}dG_x(x-y)\Vert >\varepsilon_4
$$
for $\varepsilon_4 $ independent of $x $ and $y$.

Assume that $(\rho(z_0),z_0)$ is conjugate of order $k$, so that $\rho(z_0)=\lambda_1(z_0)=\dots=\lambda_k(z_0)$. Thanks to Lemma \ref{landa es Lipschitz} and reducing to a smaller $O$, we can assume that $a_1=(\lambda_1(z),z)$ to $a_k=(\lambda_k(z),z)$ all lie within $O$ (some of them may coincide). Let $d_i = \lambda_i(z)-\rho(z)$ be the distance from $x$ to the $a_i$. At each of the $a_i$ there is a vector $w_i\in \ker d_{a_i}F$ such that all the $w_i$ span a $k$-dimensional subspace. Recall from section \ref{subsection: lagrangian} that we can choose $w_i$ forming an almost orthonormal subset for the above metric, in the sense that $\left\langle w_i,w_j\right\rangle=\delta_{i,j}+\varepsilon_{i,j} $ for $\varepsilon_{i,j}<<1$.

The kernel of $d_y F$ is contained in $K=\langle \dfrac{\partial}{\partial x_{n-k+1}},\dots,\dfrac{\partial}{\partial x_{n}}\rangle $ for all $y\in O$, and thus $K=\langle w_1,\dots,w_k\rangle$.
Write $w_i=\sum_{j\geq n-k+1} w_i^j \dfrac{\partial }{\partial x^j}$.
Then we have $\dfrac{\partial }{\partial x^1}\dfrac{\partial }{\partial w_i}F(a)=z_i+R_i(a)$, for $z_i=\sum w_i^k \dfrac{\partial }{\partial y^k}$, $\|R_i(a)\|<\varepsilon$ and $a\in O$.
We deduce $\dfrac{\partial }{\partial w_i}F(x)=\dfrac{\partial }{\partial w_i}F(a_i)+d_i (z_i+v_i)=d_i (z_i+v_i)$ for $\|v_i\|<\varepsilon$.

By the form of the special coordinates, $x-y\in K$. Let $x-y=\sum b_iw_i$. 
Since $|w_i|$ is almost $1$, there is an index $i_0$ such that $|b_{i_0}|>\frac{1}{2n}\|x-y\|$. 
We have the identity:
$$
0=F(y)-F(x)=d_x F(y-x)+O(\|x-y\|^2)
= \sum b_i d_i (z_i+ v_i ) + O(\|x-y\|^2)
$$
Multiplying the above by $\pm z_j$,
we deduce $d_j| b_j |= -\sum |b_i| d_j(\varepsilon_{i,j}+v_i z_j) + O(\|x-y\|^2)$, which leads to 
\begin{equation}\label{desigualdad cerca de un punto conjugado} 
 \sum |b_i| d_i<C_4\|x-y\|^2
\end{equation} 

At the point $x$, the image by $d_{x}F $ of the unit ball $B_x V$ in $T_x V$ is contained in a neighborhood of $Im (d_{a_{i}}F)$ of radius $2d_{i}$. 
We use the identity
$$
\Vert dF^{-1}dG_x(\frac{ x-y}{\Vert x-y\Vert} )\Vert^{-1} =
\sup\{ t: tdG_x(\frac{ x-y}{\Vert x-y\Vert} ) \in d_x F(B_x V) \}
$$
We can assume the distance between the vectors $dG_{x}(\frac{x-y}{\Vert x-y\Vert})$ and $\sum \frac{ b_i}{\Vert x-y\Vert}z_i$ is smaller than $\frac{1}{4n} $. 
In particular, looking at the $i_0$ coordinate chosen above,
we see that the vector $dG_{x}(\frac{ x-y}{\Vert x-y\Vert})$ needs to be rescaled by a number no bigger than $8nd_{i_0} $ in order to fit within the image of the unit ball.
In other words, the sup above is smaller than $8nd_{i_0} $.

$$
\Vert dF^{-1}dG_x(\frac{ x-y}{\Vert x-y\Vert} )\Vert 
>\frac{1}{8n d_{i_0}}
>\frac{ |b_{i_0}|}{8nC_4\Vert x-y\Vert^2 }
>\frac{\varepsilon_4}{\Vert x-y\Vert }
$$
for $\varepsilon_4=\frac{1}{16n^2C_4}>0$, which is the desired inequality.
\end{proof}

\begin{proof}[Proof of Lemma \ref{rho is lipschitz}]
We prove that $\rho$ is Lipschitz close to a point $z^0$. Let $E$ be a neighborhood of $z^0$ such that $\lambda_1 $ has Lipschitz constant $L$, and $\rho $ has Lipschitz constant $K $ 
for all $z\in E$  such that $\rho(z)<\lambda(z)$. 
Let $z^1,z^2\in E$ be such that $\rho(z^1)<\rho(z^2)$.

If $\rho (z^1)=\lambda_1(z^1)$ we can compute
$$
\vert \rho (z^2)- \rho (z^1)\vert=\rho (z^2)- \rho (z^1)<\lambda(z^2)-\lambda(z^1)<L\vert z^2-z^1\vert
$$
where $L$ is a Lipschitz constant $L$ for $\lambda $ in $U$.

Otherwise take a linear path with unit speed $\xi:[0,t]\rightarrow \partial  M$ from $z^1$ to $z^2$ and let $a$ be the supremum of all $s$ such that $\rho(\xi(s))<\lambda(\xi(s)) $. Then
$$
\vert \rho (z^2)- \rho (z^1)\vert<
\vert \rho (z^2)- \rho (\xi(a))\vert+\vert \rho (\xi(a))- \rho (z^1)\vert
$$

\noindent The second term can be bound:
$$
\vert \rho (\xi(a))- \rho (z^1)\vert<K a
$$

\noindent If $\rho (z^2)\geq \rho (\xi(a)) $, we can bound the first term as
$$
\vert \rho (z^2)- \rho (\xi(a))\vert=\rho (z^2)- \rho (\xi(a))<
\lambda(z_2)-\lambda(\xi(a))<L\vert t-a\vert
$$
while if  $\rho (z^2)< \rho (\xi(a)) $, we have
$$
\vert \rho (z^2)- \rho (z^1)\vert<\vert \rho (\xi(a))- \rho (z^1)\vert
$$
so in all cases, the following holds:
$$
\vert \rho (z^2)- \rho (z^1)\vert<\max\{L,K\}t<\max\{L,K\}\vert z^2 - z^1\vert
$$
 
\end{proof}

\section{Proof of the main theorems.}\label{section:proof of the main theorems}
Take the function $h$ associated to $S$ as in definition \ref{u associated to S}. At a cleave point $x$ there are two geodesics arriving from $\partial M$; each one yields a value of $h$ by evaluation of $\tilde{u}$.
The \emph{balanced} condition implies that $\widehat{X}_1(v)=\widehat{X}_2(v)$ for the speed vectors $X_1$ and $X_2$ of the characteristics reaching $x$ and any vector $v$ tangent to $S$.
But $\widehat{X}$ is $dh$, so the difference of the values of $h$ from either side is constant in every connected component of the cleave locus.

We define an $(n-1)$-current $T$ in this way: Fix an orientation $\mathcal{O} $ in $ M$. For every smooth $(n-1)$ differential form $\phi$, restrict it to the set of cleave points $\mathcal{C}$ (including degenerate cleave points). In every component $\mathcal{C}_{j}$ of $\mathcal{C}$ compute the following integrals

\begin{equation}\label{}
\int_{\mathcal{C}_{j,i}}h_{i}\phi \qquad i=1,2
\end{equation} 
where $\mathcal{C}_{j,i}$ is the component $\mathcal{C}_{j}$ with the orientation induced by $\mathcal{O} $ and the incoming vector $V_{i}$, and $h_{i}$ for $i=1,2$ are 
the limit values of $h$ from each side of $\mathcal{C}_{j}$.

We define the current $T(\phi)$ to be the sum:

\begin{equation}\label{definition of T}
T(\phi)=\sum_{j}\int_{\mathcal{C}_{j,1}}h_{1}\phi
+\int_{\mathcal{C}_{j,2}}h_{2}\phi
=\sum_{j}\int_{\mathcal{C}_{j,1}}(h_{1}-h_{2})\phi
\end{equation} 

The function $h$ is bounded and the $\Hnuno$ measure of $\mathcal{C} $ is finite (thanks to lemma \ref{rho is lipschitz}) so that $T$ is a real flat current that represents integrals of test functions against the difference between the values of $h$ from both sides.

If $T=0$, we can apply lemma \ref{una unica solucion por caracteristicas continua en un conjunto grande} and find $u=h$.

We will prove later that the boundary of $T$ as a current is zero.
Assume for the moment that $\partial T=0 $. It defines an element of the homology space $H_{n-1}( M)$ of dimension $n-1$ with real coefficients. We can study this space using the long exact sequence of homology with real coefficients for the pair $ ( M,\partial M)$:
\begin{eqnarray}\label{long exact sequence} 
0\rightarrow H_{n}( M)\rightarrow H_{n}( M,\partial M)\rightarrow\notag\\
H_{n-1}(\partial M)\rightarrow H_{n-1}( M)\rightarrow H_{n-1}( M,\partial M)\rightarrow \dots
\end{eqnarray}

\subsection{Proof of Theorem \ref{maintheorem0}.}
We prove that under the hypothesis of \ref{maintheorem0}, the space $H_{n-1}( M)$ is zero, and then we deduce that $T=0$.

As $ M$ is open, $H_{n}( M)\approx 0$. As $ M$ is simply connected, it is orientable, so we can apply Lefschetz duality with real coefficients (\cite[3.43]{Hatcher}) which implies:
$$H_{n}( M,\partial M)\approx H^{0}( M)$$
and  
$$H_{n-1}( M,\partial M)\approx H^{1}( M)=0$$

As $\partial M$ is connected, we deduce $H_{n-1}( M)$ has rank $0$, and $T=\partial P$ for some $n$-dimensional flat current $P$. The flat top-dimensional current $P$ can be represented by a density $f\in L^{n}( M)$ (see \cite[p 376, 4.1.18]{Federer}):

\begin{equation}
P( M)=\int _{ M}f M,,\qquad  M\in \Lambda^{n}( M)
\end{equation} 

We deduce from \eqref{definition of T} that the restriction of $P$ to any open
set disjoint with $S$ is closed, so $f$ is a constant in such open set.
It follows that the constant is zero because the boundary of $P$ for a constant non-zero function is a current supported on $\partial M$.

\subsection{Proof of Theorem \ref{maintheorem1}.}
Assume now that $\partial M$ has $k$ connected components $\Gamma_{i}$. We look
at \eqref{long exact sequence}, and recall the map $H_{n-1}(\partial
M)\rightarrow H_{n-1}( M)$ is induced by inclusion.
We know by Poincar\'e duality that $H_{n-1}(\partial M)$ is isomorphic to the linear combinations of the fundamental classes of the connected components of $\partial M$ with real coefficients.
We deduce that $H_{n-1}( M)$ is generated by 
the fundamental classes of the connected components of $\partial M$, and that it
is isomorphic to the quotient of all linear combinations by the subspace of
those linear combinations with equal coefficients.
Let
$$
R=\sum a_{i}\left[ \Gamma_{i}\right] 
$$
be the cycle to which $T$ is homologous (the orientation of $\Gamma_{i} $ is such that, together with the inwards pointing vector, yields the ambient orientation).

If we define $a(x)=a_{i}$, $\forall x\in \Gamma_{i}$, solve the HJ equations with boundary data $g-a$ and compute the current $\widehat{T} $ corresponding to that data, we see that $\widehat{T} = T - j_{\sharp}R$, where $j$ is the retraction $j$ of $ M$ onto $S$ that fixes points of $S$ and follows characteristics otherwise. The homology class of $\widehat{T}$ is zero, and we can prove $\widehat{T}=0 $ as before. It follows that $S$ is the singular set to the solution of the Hamilton-Jacobi equations with boundary data $g-a $.

\subsection{Proof of Theorem \ref{maintheorem2}.}
For this result we cannot simply use the sequence 
\eqref{long exact sequence}. We first give a procedure for obtaining balanced
split loci in $ M$ other than the cut locus. 

A function $a:[\partial \widetilde{ M}]\rightarrow \RR $ that assigns a real number to each connected component of $\partial \widetilde{ M} $ is \emph{equivariant} iff for any automorphism of the cover $\varphi$ there is a real number $c(\varphi)$ such that $a\circ \varphi =a+c(\varphi)$.

A function $a:[\partial \widetilde{ M}]\rightarrow \RR $ is \emph{compatible}
iff $\widetilde{g}-a$ satisfies the compatibility condition 
(\eqref{compatibility condition}.

An equivariant function $a$ yields a group homomorphism from $\pi_1( M, \partial M)$ into $\RR$ in this way:
\begin{equation}\label{from a function a to an element of Hom(pi,R)} 
\sigma\rightarrow a(\widetilde{\sigma}(1))-a(\widetilde{\sigma}(0)) 
\end{equation} 
where $\sigma:[0,1]\rightarrow  M $ is a path with endpoints in $\partial M $  and  $\widetilde{\sigma}$ is any lift to $\widetilde{ M} $ . The result is independent of the lift because $a$ is equivariant. 
On the other hand, choosing an arbitrary component $[\Gamma_0]$ of $\partial M$ and a constant $a_0=a([\Gamma])$, the formula:
\begin{equation}\label{from an element of Hom(pi,R) to a function} 
[\Gamma]\rightarrow a([\Gamma_0])+l(\pi\circ\tilde{\sigma}),\text{ for any path }\tilde{\sigma}\text{ with }\tilde{\sigma}(0)\in \Gamma_0,\sigma(1)\in \Gamma
\end{equation} 
assigns an equivariant function $a$ to an element $l$ of $Hom(\pi_1( M, \partial M), \RR)\sim H^{1}( M, \partial M)  $.

Up to addition of a global constant, these two maps are inverse of one another, so there is a one-to-one correspondence between elements of 
$H^{1}( M, \partial M) $
and equivariant functions $a$ (with $a+c$ identified with $a$ for any constant $c$).
The compatible equivariant functions up to addition of a global constant can be identified with an open subset of $H^{1}( M, \partial M) $ that contains the zero cohomology class.

Let $\widetilde{ M} $ be the universal cover of $ M $.
We can lift the Hamiltonian $H$ to a function $\widetilde{H} $ defined on $T^\ast\widetilde{ M}$ and the function $g$ to a function $\tilde{g} $ defined on $\partial \widetilde{ M}$.
The preimage of a balanced split locus for $ M $, $H$ and $g$ is a balanced split locus for $\widetilde{ M} $, $\widetilde{H} $ and $\tilde{g} $ that is invariant by the automorphism group of the cover, and conversely, a balanced split locus $\widetilde{S}$ in $\widetilde{ M} $ that is invariant by the automorphism group of the cover descends to a balanced split locus on $ M $.

Any function $a$ that is both equivariant and compatible can be used to solve the Hamilton-Jacobi problem $\widetilde{H}(p,du(p))=1$ in $\widetilde{ M} $ and $u(p)=\widetilde{g}(p)-a(p)$. 
If $\pi_1( M)$ is not finite, $\widetilde{ M}$ will not be compact, but this is not a problem (see remark 5.5 in page 125 of \cite{Lions}).
The singular set is a balanced split locus that is invariant under the action of $\pi_1( M) $ and hence it yields a balanced split locus in $ M $. 
We write $S[a]$ for this set.
It is not hard to see that the map $a\rightarrow S[a]$ is injective.

Conversely, a balanced split locus in $ M $ lifts to a balanced split locus
$\widetilde{S}$ in~$\widetilde{M}$. 
The reader may check that the current $T_{\widetilde{S}}$ is the lift of $T_S$,
and in particular it is closed. 
As in the proof of Theorem \ref{maintheorem1}, we have $H^1(\widetilde{ M})=0$, and we deduce
$$
T_{\widetilde{S}} = \sum_j a_j[\Lambda_j] +\partial P
$$
where $\Lambda_j$ are the connected components of $\partial \widetilde{ M}$.

This class is the lift of the class of $T\in H_{n-1}( M)$ and thus it is
invariant under the action of the group of automorphisms of the cover. 
Equivalently, the map defined in 
\eqref{from a function a to an element of Hom(pi,R)} is a homomorphism.
Thus $a$ is equivariant.
Similar arguments as before show that $S=S[a]$.

Thus the map $a\rightarrow S[a] $ is also surjective, which completes the proof that there is a bijection between equivariant compatible functions $a:[\partial \widetilde{ M}]\rightarrow \RR $ and balanced split loci.

\section{Proof that $\partial T=0$}\label{section: proof that partial T=0}
It is enough to show that $\partial T=0$ at all points of $ M$ except for a set of zero $(n-2)$-dimensional Hausdorff measure. This is clear for points not in $S$.
Due to the structure result \ref{complete description}, we need to show the same at cleave points (including degenerate ones), edge points and crossing points.
Along the proof, we will learn more about the structure of $S$ near those kinds of points.

Throughout this section, we assume $n=dim( M)>2$. This is only to simplify notation, but the case $n=2$ is covered too. We shall comment on the necessary changes to cover the case $n=2$, but do not bother with the trivial case $n=1$.

\subsection{Conjugate points of order $1$.}

We now take a closer look at points of $A(S)$ that are also conjugate points of order $1$. Fortunately, because of \ref{complete description} we do not need to deal with higher order conjugate points.
In a neighborhood $O$ of a point $x^{0}$ of order $1$, in the special coordinates of section \ref{subsection: special coordinates}, we have $x^0=0$ and $F$ looks like:
\begin{eqnarray}\label{local form for order 1}
F(x_{1},x_{2},\dots,x_{n}) =
(x_{1},x_{2},\dots,F_n(x_{1},\dots,x_{n}))
\end{eqnarray}

Let $\widetilde{S}$ be the boundary of $A(S)$, but without the points $(0,z)$ for $z\in\partial M $.
It follows from \ref{rho is lipschitz} that $\widetilde{S}$ is a Lipschitz graph on coordinates given by the vector field $r$ and $n-1$ transversal coordinates. 
It is not hard to see
that it is also a Lipschitz graph $x_{1}=\tilde{t}(x_{2},\dots,x_{n})$ in the above coordinates $x_{i}$, possibly after restricting to a smaller open set.

Because of  Lemma \ref{rho es menor que landa1}, we know $x^0$ is a first conjugate point, so we can assume that $O$ is a coordinate cube $\prod (-\varepsilon_i,\varepsilon_i)$, and that $F$ is a diffeomorphism when restricted to $\{x_1=s\} $ for $s<-\varepsilon_1/2$.

\begin{dfn}\label{univocal} 
 A set $O\subset V$ is \emph{univocal} iff for any $ p\in  M$ and $x^{1},x^{2}\in Q_{p}\cap O$ we have $\tilde{u}(x^{1})=\tilde{u}(x^{2})$.
\end{dfn}

\begin{remark}
The most simple case of univocal set is a set $O$ such that $F\vert O$ is injective. 
\end{remark}

\begin{lem}\label{uniqueness near order 1 points} 
 Let $x^{0}\in V$ be a conjugate point of order 1. Then $x^{0}$ has an univocal neighborhood.
\end{lem}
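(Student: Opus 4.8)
I would prove Lemma \ref{uniqueness near order 1 points} by exploiting the normal form \eqref{local form for order 1} at a first conjugate point of order $1$ together with the Lipschitz graph description of $\widetilde S$ and the balanced condition. Write $F$ in the special coordinates so that $F(x_1,\dots,x_n)=(x_1,\dots,x_{n-1},F_n(x))$ on a coordinate cube $O=\prod(-\varepsilon_i,\varepsilon_i)$, with $x^0=0$, and (using Lemma \ref{rho es menor que landa1}) arrange that $F$ restricted to $\{x_1=s\}$ is a diffeomorphism for $s<-\varepsilon_1/2$. The key structural fact from the special coordinates is that along a ray $r=\partial/\partial x_1$ the function $F_n$ behaves like $x_1 x_n$ to leading order: more precisely $\partial F_n/\partial x_n$ vanishes at $x^0$ and $\partial^2 F_n/(\partial x_1\partial x_n)=1$ there, so on each slice $\{x_2=a_2,\dots,x_{n-1}=a_{n-1}\}$ the map $(x_1,x_n)\mapsto(x_1,F_n)$ is, up to higher order terms, the fold $(x_1,x_n)\mapsto(x_1,\tfrac12 x_1 x_n+\dots)$. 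Hence for fixed transversal coordinates and fixed $x_1=s>0$ there are at most two preimages, while for $x_1=s<0$ there is exactly one; the fold curve (the conjugate set) is a Lipschitz graph $x_1=\tilde t(x_2,\dots,x_n)$ with $\tilde t(0)=0$, lying inside $O$.

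**Main steps.** First, I would make precise the two-preimage picture: there is a smaller cube $O'\subset O$ and a hypersurface $H=F(\{x\in O': dF_x \text{ singular}\})$ in $F(O')$ such that points of $F(O')$ on one side of $H$ have exactly two $F|_{O'}$-preimages, points on the other side have none, and points of $H$ have one. Second, I would invoke that $\widetilde S\cap O'$ is a Lipschitz graph $x_1=\tilde t(x_2,\dots,x_n)$ (this is already established just before the lemma from \ref{rho is lipschitz}), and — crucially — that this graph lies on the ``two-preimage'' side, i.e.\ $\tilde t(x_2,\dots,x_n)\ge 0$ relative to the fold; otherwise $\widetilde S$ would meet the region where $F|_{O'}$ is injective with no second preimage, contradicting that $S$ is a \emph{split} locus (through such a point only one characteristic passes and it would have to continue, so the point could not be in $\widetilde S$ — this is exactly the kind of argument used in Lemma \ref{rho es menor que landa1}). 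Third, and this is the heart, I would show that the two $F|_{O'}$-branches carry the \emph{same} value of $\tilde u$ along $\widetilde S$. Let $x^1=(t_1,z,\dots)$ and $x^2=(t_2,z',\dots)$ be the two preimages of a point $p=F(x^1)=F(x^2)$ on $\widetilde S\cap O'$, coming from the two sheets of the fold, with incoming vectors $X_1=dF_{x^1}(r)$, $X_2=dF_{x^2}(r)\in R_p$. As $p$ varies over $\widetilde S\cap O'$, the balanced condition \ref{balanced} gives $\widehat X_1(v)=\widehat X_2(v)$ for every $v$ tangent to $\widetilde S$ at $p$; since $\widehat X_i = d h$ on each sheet and $\widetilde S\cap O'$ is connected (after shrinking $O'$), the difference $\tilde u(x^1)-\tilde u(x^2)$ of the two characteristic values is constant on $\widetilde S\cap O'$. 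At the conjugate point $x^0$ itself, the two sheets collide: $x^1\to x^0$ and $x^2\to x^0$, and by continuity (the characteristic value $\tilde u(t,z)=t+g(z)$ is continuous, and both preimages converge to $x^0$) the constant difference is $\tilde u(x^0)-\tilde u(x^0)=0$. Therefore $\tilde u(x^1)=\tilde u(x^2)$ for \emph{every} pair in $Q_p\cap O'$ arising from the two fold-sheets; any other element of $Q_p\cap O'$ would have to be a third $F|_{O'}$-preimage of $p$, which does not exist for $O'$ small. This shows $O'$ is univocal, so $x^0$ has an univocal neighborhood.

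**The main obstacle.** The delicate point is the limiting argument at $x^0$ — making sure the two fold-branches really converge to the \emph{same} point $x^0$ and that the difference of $\tilde u$ is genuinely continuous up to $x^0$. One has to rule out the possibility that as $p\to F(x^0)$ along $\widetilde S$, one of the two preimages escapes toward $\partial O'$ rather than toward $x^0$; this is controlled by the fold normal form (the two preimages of a point near $H$ lie near the conjugate locus and hence near $x^0$) together with the Lipschitz bound on $\widetilde S$. A secondary subtlety is the bookkeeping of which side of the fold $\widetilde S$ lies on and that $\widetilde S$ does not also contain the fold image $H$ itself in a bad way; but since $x^0$ is a \emph{first} conjugate point and $\rho_S\le\lambda_1$, the region $x_1\le 0$ of $O'$ is free of conjugate points and $F$ is injective there, which pins down the geometry. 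Once these are in place the rest is the balanced-condition/connectedness argument, which is routine.
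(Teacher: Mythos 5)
Your proposal is not correct as written: its central structural claim fails for general order-$1$ conjugate points. You assert that in the special coordinates the slice map $(x_1,x_n)\mapsto(x_1,F_n)$ is essentially a two-sheeted fold, so that points on one side of the conjugate image have at most two $F|_{O'}$-preimages and points on the other side exactly one, and you then use this to exclude a ``third preimage'' and to make the two branches collide at $x^0$. An order-$1$ conjugate point need not be a fold: already for a cusp ($F_n=x_1x_n\pm x_n^3$) every neighborhood of $x^0$ contains points with three preimages in the slice, and the signs of your one-preimage/two-preimage regions come out wrong. Worse, Proposition \ref{no A2 in S} shows that fold ($A_2$) vectors never occur in $R_p$ for a balanced split locus, so the conjugate vectors that actually appear in $Q_p$ are precisely the non-fold ones — the case your picture excludes. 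The paper's proof never assumes a bound on the number of preimages; instead it controls the set of points with more than two incoming vectors via Proposition \ref{main theorem 4} (it has dimension $0$ in the slice) and controls vectors coming from outside $O_1$ via the separation estimates \eqref{separation between O and V minus O_1} and \eqref{vectors from O_1 are mostly vertical}, a step you also omit but which is needed before the balanced condition can be applied to the two local branches at all.

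The second gap is the mechanism by which you conclude the difference vanishes. The balanced condition gives $\widehat{X}_1(v)=\widehat{X}_2(v)$ only where $R_q$ consists of exactly those two vectors, i.e.\ on the (possibly degenerate) cleave set; inside the slice this set is an open subset of the curve $\sigma$, interrupted by conjugate parameters and by multi-preimage points, so it need not be connected, and a continuous function that is locally constant off a closed null set need not be constant. Connectedness of $\widetilde S\cap O'$ does not repair this. The paper's argument replaces ``constancy plus continuity at the collision'' by the identity $\tilde u(x^1)-\tilde u(x^2)=\int_\sigma d\tilde u$ of \eqref{u dere- u zurda es una integral}, then shows the conjugate set contributes nothing because the first-variation/Gauss computation gives $d\tilde u(c)=0$ for $c\in\ker dF$, discards the multi-preimage set by a Sard--Federer measure argument, and cancels the remaining cleave contributions in pairs of parameter intervals $A_i$, $B_i$ traversing the same cleave arc in opposite directions. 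That integral-and-pairing machinery is the actual content of the lemma and is absent from your proposal; without it, neither the constancy of the difference nor the claim that the relevant cleave component limits onto $x^0$ from both branches is established.
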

\begin{proof}
Let $O_1$ and $U_1$ be neighborhoods of $x^0$ and $F(x^0)$ where the special
coordinates 
\eqref{local form for order 1} hold;
let $x_{i}$ be the coordinates in $O_1$ and $y_{i}$ be those in $U_1$.

Choose smaller $U\subset U_1$ and $O\subset F^{-1}(U)\cap O_1$ so that
we can assume that if a point $x'\in V\setminus O_1$ maps to a point in $U$, then for the vector $Z=dF_{x'}(r)$ we have
\begin{equation}\label{separation between O and V minus O_1} 
\hat{Z}(\frac{\partial }{\partial y_{1}}) <
\hat{X}(\frac{\partial }{\partial y_{1}})
\end{equation} 
for any $X=dF_{x}(r)$ with $x\in O$ and also
\begin{equation}\label{vectors from O_1 are mostly vertical} 
\hat{Y}(\frac{\partial }{\partial y_{1}}) > 1-k
\end{equation} 
for some $k>0$ sufficiently small and all $Y=dF_{y'}(r)$ for $y'\in O_1$.

Take $x^{1},x^{2}\in Q_{q}\cap O$ for $q\in U$.
The hypothesis $x^{1},x^{2}\in Q_{q}$ implies $q=F(x^{1})=F(x^{2})$, and so $x^{1}_{j}=x^{2}_{j} $ follows for all $j<n$. Let us write $a_{j}=x^{1}_{j}=x^{2}_{j} $ for  $j<n$, $s^{1}=x^{1}_{n}$ and $s^{2}=x^{2}_{n}$.
Fix $a_{2},\dots,a_{n-1}$ and consider the set
$$
H_{a}=
\left\lbrace 
x\in O: x_{i}=a_{i};i=2,\dots, n-1
\right\rbrace
$$
Its image by $F$ is a subset of a plane in the $y_i$ coordinates: 
$$
L_{a}=\left\lbrace y\in U: y_{i}=a_{i}, i=2,\dots, n-1\right\rbrace
$$
Points of $O_1$ not in $H_{a}$ map to other planes.
If $n=2$, we keep the same notation, but the meaning is that $H_a=O$ and $L_a=V$.

There is $\varepsilon>0$ such that for $t<-\varepsilon/2$, the line $\{x_1=t\}\cap H_a $ maps diffeomorphically to $\{y_1=t\}\cap L_a$.

Due to the comments at the beginning of this section, $\widetilde{S}$ is given as a Lipschitz graph $x_{1}=\tilde{t}(x_{2},\dots,x_{n})$.
The identity $a_{1}=\tilde{t}(a_{2},\dots,a_{n-1},s^{i})$ holds for $i=1,2$ because $x^{1},x^{2}\in Q_{q}$. We define a curve $\sigma:[s^{1},s^{2}]\rightarrow \widetilde{S}$ by $\sigma(s)=(\tilde{t}(a_{2},\dots,a_{n-1},s),a_{2},\dots,a_{n-1},s) $. The image of $\sigma$ by $F$ stays in $S$, describing a closed loop based at $q$; we will establish the lemma by examining the variation of $\tilde{u}$ along $\sigma$.

For $i=1,2$, let $\eta^i:(-\varepsilon_i,a_1]\rightarrow H_a$ given by $\eta^i(t)=(t,a_2,\dots,a_{n-1},s^i)$ be the segments parallel to the $x_{1}$ direction that end at $x^{i}$, defined from the first point in the segment that is in $O$.
We can assume that the intersection of $O$ with any line parallel to $\frac{\partial}{\partial x_1}$ is connected, and that the intersection of $U$ with any line parallel to $\frac{\partial}{\partial y_1}$ is connected too.
We can also assume $\varepsilon_i<\varepsilon$.

Let $D$ be the closed subset of $H_{a}$ delimited by the Lipschitz curves
$\eta^1$, $\eta^2$ and $\sigma$, and let $E$ be the closed subset of $L_{a}$
delimited by the image of $\eta^1$ and~$\eta^2$. 

We claim $D$ is mapped onto $E$.
First, no point in $int(D) $ can map to the image of the two lines, cause this
contradicts either $\rho\leq \lambda_{1}$, or the fact that
$\rho(a_{2},\dots,a_{n-1},s^{i})$ is the first time that the line parallel to
the $x_{1}$ direction hits~$\widetilde{S}$, for either $i=1$ or $i=2$.
We deduce $D$ is mapped into $E$.

Now assume $G=E\setminus F(D)$ is nonempty, and contains a point
$p\!=\!(p_1,...,p_n)$. 
If $Q_p$ contains a point $x\in O_1\setminus F(D)$, following the curve
$t\rightarrow (t,x_2,\dots,x_n)$ backwards from $x=(x_1,x_2,\dots,x_n)$, we must
hit either a point in the image of $\eta^i\vert_{(-\varepsilon_1,a_1)}$ (which
is a contradiction with the fact that both $(t,\dots,x_n) $ for $t<x_1$ and
$(t,a_2,\dots,a_{n-1},s^i) $ for $t<a_1$ are in $A(S)$; see definition
\ref{splits}), or the point $q$ (which contradicts  
\eqref{vectors from O_1 are mostly vertical}). Thus for any point $p\in G$, we
have $Q_p\subset V\setminus O_1$.  

Now take a point $p\in \partial G$, and
pick up a sequence approaching it from within $G$ and contained in a line with speed vector $\frac{\partial }{\partial y_{1}} $. By the above, the set $Q$ for points in this sequence is contained in $V\setminus O_1$. We can take a subsequence carrying a convergent sequence of vectors, 
 and thus $R_{p}$ has a vector of the form $dF_{x^{\ast}}(r) $ for $x^{\ast}\in D\subset O$.
This violates the balanced condition, because of 
\eqref{separation between O and V minus O_1}.
This implies $\partial G=\emptyset$, thus $G=\emptyset$ because $E$ is connected
and $F(D)\neq\emptyset $.

Finally, we claim there are no vectors coming from $V\setminus O_1$ in $R_p$ for
$p\in int(E)$. 
The argument is as above, but we now approach a point with a vector from
$V\setminus O_1$ in $R_p$ within $E=F(D)$ and with speed $-\frac{\partial
}{\partial y_{1}}$. The approaching sequence may be chosen so that it carries a
convergent sequence of vectors from $F(D)$, and again 
\eqref{separation between O and V minus O_1} gives a contradiction with the
balanced condition.

We now compute:
\begin{equation}\label{u dere- u zurda es una integral} 
 \tilde{u}(x^{1})-\tilde{u}(x^{2})
=\int_{s_{1}}^{s_{2}} \frac{d(\tilde{u}\circ \sigma)}{ds}
=\int_{\sigma} d\tilde{u}
\end{equation}

The curve $F\circ\sigma$ runs through points of $S$. If $F(\sigma(s))$ is a cleave point, then $F\circ\sigma$ is a smooth curve near $s$. We show that cleave points are the only contributors to the above integral. 
If a point is not cleave, either it is the image of a conjugate vector, or has more than $2$ incoming geodesics. As $F\circ\sigma$ maps into $int(E)$, all vectors in $R_{F(\sigma(s))}$ come from $O$.

Let $N$ be the set of $s$ such that $\sigma(s)$ is conjugate. We notice that $\sigma(s)$ is not an A2 point for $s\in N $. This is proposition \ref{no A2 in S}, and is a standard result for cut loci in Riemannian manifolds. This means that at those points the kernel of $dF$ is contained in the tangent to $\widetilde{S}$. The intersection of $\widetilde{S}$ with the plane $H_a$ is the image of the curve $\sigma $. Thus, for $s\in N $ the tangent to the curve $\lambda_1$ is the kernel of $d_{\sigma(s)}F$.
If $\sigma$ is differentiable at a point $s$ we deduce, thanks to \ref{rho es menor que landa1}, that the tangent to the curve $\lambda_1$ is the kernel of $d_{\sigma(s)}F$.

We now use a \emph{variation of length} argument to get a variant of the Finsler Gauss lemma. Let $c=(l,w) $ be a tangent vector to $V\subset \RR\times\partial M $ at the point $x=(t,z)$, and assume $d_xF(c)=0$.
We show that this implies $d\tilde{u}(c)=0$.
Let $\gamma_s$ be a variation through geodesics with initial point in $z(s)\in \partial M $ and the characteristic vector field at $z(s)$ as the initial speed vector, such that $\frac{\partial}{\partial s}z(s)=w$, and with total length $t+sl$.
By the first variation formula and the equation for the characteristic vector field at $\partial M$, the variation of the length of the curve $\gamma_s$ is $\frac{\partial 	\varphi}{\partial v}(p,d_x F(r_x)) \cdot d_x F(c) - \frac{\partial \varphi}{\partial v}(p,d_z F(r)) \cdot w = -dg(w) $, and by the definition of $\gamma_s$, it is also $l$. 
We deduce $l= -dg(w)$, and thus $d\tilde{u}(c)=l+dg(w)=0$. 

It follows that $d_{\sigma(s)}F(\sigma'(s))=0$ at points $s\in N$ where $\sigma$ is differentiable.
As $\sigma$ is Lipschitz, the set of $s$ where it is not differentiable has measure $0$, and we deduce:
$$
\int_{N}  d\tilde{u}(\sigma') = 0
$$

$N$ is contained in the set of points where $d(F\circ \sigma)$ vanishes. Thus, by the Sard-Federer theorem, the image of $N$ has Hausdorff dimension $0$.

Let $\Sigma_{2}$ be the set of points in $L_{a}$ with more than $2$ incoming geodesics. From the proof of \ref{main theorem 4}, we see that the tangent to $\Sigma_{2}$ has dimension $0$ and thus $\Sigma_{2}$ has Hausdorff dimension $0$.

As $F$ is non-singular at points in $[s_1,s_2]\setminus N$, the set of $s$ in $[s_1,s_2]\setminus N$ mapping to a point in $\Sigma_{2}\cup N$ has measure zero.

Altogether, we see that the integral 
\eqref{u dere- u zurda es una integral} can be restricted to the set $C$ of $s$
mapping to a cleave point. $C$ is an open set and thus can be expressed as the
disjoint union of a countable amount of intervals. Let $A_{1}$ be one of those
intervals. It is mapped by $F\circ\sigma$ diffeomorphically onto a smooth curve
$c_0$ of cleave points contained in $L_{a}$. Points of the form
$(t,a_{2},\dots,a_{n-1},s) $ for $t<\tilde{t}(a_{2},\dots,a_{n-1},s)$ map
through $F$ to a half open ball in $E$. There must be points of $D$ mapping to
the other side of $c_0$.
Because of all the above, $c_0$ is also the image of other points in
$[s_{1},s_{2}]$. As $c$ is made of cleave points, it must be the image of
another component of $C$, which we call $B_{1}$, also mapping diffeomorphically
onto $c_0$. Choose a new component $A_{2}$, which is matched to another
component $B_{2}$, different from the above, and so on, till the $A_{i}$ and
$B_{i}$ are all the components of $C$.

We can write the integral on $B_{i}$ as an integral on $A_{i}$ (we add a minus
sign, because the curve is traversed in opposite directions):
$$
\int_{A_{i}} d\tilde{u}(\sigma')+
\int_{B_{i}} d\tilde{u}(\sigma')=
\int_{A_{i}} du_{l}((F\circ\sigma)') - du_{r}((F\circ\sigma)')
$$
where $u_{l} $ and $u_{r} $ are the values of $u$ computed from both sides, evaluated at points in $U$. 
The balanced condition implies
$(F\circ\sigma)'\in \ker(d\tilde{u}_{l} - d\tilde{u}_{r}) $,
and thus the above integral vanishes.
The integral \eqref{u dere- u zurda es una integral} is absolutely convergent
by Lemma \ref{rho is lipschitz}, and the proof follows.

\end{proof}

\begin{remark}
The above proof took some inspiration from \cite[5.2]{Hebda}. The reader may be interested in James Hebda's \emph{tree-like curves}. 
\end{remark}

\subsection{Structure of S near cleave and crossing points}

In this section we prove some more results about the structure of a balanced split locus near degenerate cleave and crossing points. Besides their importance for proving that $\partial T=0$, we believe they are interesting in their own sake.

\begin{lem}\label{structure of cleave points} 
Let $p\in S$ be a (possibly degenerate) \emph{cleave} point, and let $Q_p=\{x^1,x^2\}$.

There are disjoint univocal neighborhoods $O_1$ and $O_2$ of $x^1$ and $x^2$, and a neighborhood $U$ of $p$ such that for any $q\in U$, $Q_q$ is contained in $O_1\cup O_2$.

Furthermore, if we define:
$$
A_i=\left\lbrace q\in U \text{ such that } Q_q\cap O_i\neq \emptyset  \right\rbrace 
$$
for $i=1,2$, then $A_1\cap A_2$ is the graph of a Lipschitz function, for adequate coordinates in $U$.
\end{lem}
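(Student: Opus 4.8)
The plan is to reduce the statement to the cone Lemma \ref{the set and the cone} by establishing a one‑sided cone condition, the condition itself being produced by the same \emph{balanced}–condition argument used in Proposition \ref{cleave points are a manifold}. Two preliminary reductions are needed first: obtaining the univocal neighborhoods with the help of Lemma \ref{uniqueness near order 1 points}, and localizing $U$ by a properness argument resting on the Lipschitz continuity of $\rho_S$ (Lemma \ref{rho is lipschitz}).

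First I would fix the neighborhoods. Since $p$ is a cleave point, $R_p=\{X^1,X^2\}$ with $X^1\ne X^2$, and by Theorem \ref{complete description} each $x^i\in Q_p$ is either non‑conjugate or conjugate of order $1$. When $x^i$ is non‑conjugate I take $O_i$ small enough that $F|_{O_i}$ is a diffeomorphism, hence univocal by the remark following Definition \ref{univocal}; when $x^i$ is conjugate of order $1$ I take for $O_i$ the univocal neighborhood supplied by Lemma \ref{uniqueness near order 1 points}. Shrinking, I may assume $O_1\cap O_2=\varnothing$ and that each $\overline{O_i}$ is still univocal. Next, because $\partial M$ is compact and $\rho_S$ is Lipschitz, hence locally bounded, the map $F|_{\overline{A(S)}}$ is proper over a neighborhood of $p$: if $q_k\to p$ and $y_k=(t_k,z_k)\in Q_{q_k}$, then $z_k$ lies in a compact set and $0\le t_k\le\rho_S(z_k)$ is bounded, so a subsequence of $y_k$ converges to a point of $\overline{A(S)}$ lying over $p$, i.e.\ to $x^1$ or $x^2$. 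Hence there is a neighborhood $U$ of $p$ with $Q_q\subset O_1\cup O_2$ for every $q\in U$, which is the first assertion. With $A_i$ as in the statement this gives $A_1\cup A_2=U$, while $A_1\cap A_2\subset S$ because a point of $M\setminus S$ has a single preimage in $A(S)$ (Definition \ref{splits}), which cannot meet both disjoint sets; the same properness argument shows $A_1$ and $A_2$ are relatively closed in $U$, and that a limit of vectors arising from $O_i$ along a sequence converging to $q$ is again a vector of $R_q$ arising from $O_i$.

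Then I would set up coordinates and the cone. As $X^1\ne X^2$ are unit vectors, strict convexity of the indicatrix gives $\widehat{X^2}(X^1)<1=\widehat{X^1}(X^1)$ and $\widehat{X^1}(X^2)<1$, so $X^1\notin\ker(\widehat{X^1}-\widehat{X^2})$. I choose coordinates $(y_1,\dots,y_n)$ on $U$ with $\partial/\partial y_1$ close to $X^1$, and shrink $O_1,O_2,U$ so that, by continuity of $x\mapsto dF_x(r)$ and of the duality map, every vector of $R_q$ arising from a preimage in $O_i$ is as close to $X^i$ as desired, uniformly in $q\in U$. Then there are an open cone $C^+$ of directions around $-X^1$ and a $\delta>0$ such that $\widehat{Y}(w)>\widehat{Z}(w)+\delta$ for every unit $w\in-C^+$, every $Y$ arising from $O_1$ and every $Z$ arising from $O_2$ — this plays the role of the separation inequalities \eqref{separation between O and V minus O_1}. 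Now I run the argument of Proposition \ref{cleave points are a manifold}: if some $q\in A_2$ were approached by points $q_k\in U\setminus A_2$ with $q_k\in q+C^-$ (where $C^-=-C^+$, a cone around $X^1$), so that $v_{q_k}(q)$ converges into $-C^-$, then choosing $Y_k\in R_{q_k}$ arising from $O_1$ and passing to a limit $Y_\infty\in R_q$ arising from $O_1$, the balanced condition \ref{balanced} would force $\widehat{Y_\infty}(w)=\max_{R\in R_q}\widehat{R}(w)\ge\widehat{Z}(w)$ for a vector $Z\in R_q$ arising from $O_2$; since $w\in-C^-$ lies around $-X^1$, this contradicts the displayed inequality. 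Upgrading from sequences to points along segments (as in the proof of Lemma \ref{uniqueness near order 1 points}), this gives the cone condition $(q+C^-)\cap(q+B_\varepsilon)\subset A_2$ for all $q\in A_2$, with $\varepsilon>0$ uniform in $q$. In particular $A_2=\overline{\mathrm{int}(A_2)}$ and $\partial A_2\cap U=A_1\cap A_2$ (for the inclusion $A_1\cap A_2\subset\partial A_2$ one exhibits points of $U\setminus A_2$ arbitrarily close to $q\in A_1$ in the $C^+$ direction, by following the $r$‑flow backwards from a preimage of $q$ in $O_1$, which stays in $A(S)$). Applying Lemma \ref{the set and the cone} with $A=\mathrm{int}(A_2)$, cone $C^-$ and $V=U$ then shows $A_1\cap A_2=\partial A_2\cap U$ is the graph of a Lipschitz function $y_1=h(y_2,\dots,y_n)$ in these coordinates, which is the second assertion.

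I expect the main obstacle to be this last step: making the one‑sided cone/closedness property hold along \emph{all} directions of the cone rather than a single line, upgrading it from a statement about approaching sequences to a statement about whole cones, and keeping the constants $\delta$ and $\varepsilon$ uniform in $q$. This is essentially the subtlety already handled in the proof of Lemma \ref{uniqueness near order 1 points}; it relies on the exclusion of $A2$ points (Proposition \ref{no A2 in S}), on the properness established in the first step to guarantee that the limiting vectors genuinely lie in $R_q$, and on the fact, used throughout, that $X^1\ne X^2$ forces the separation inequality to persist on an open cone of directions.
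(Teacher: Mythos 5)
Your proof is correct and follows essentially the same route as the paper: univocal neighborhoods from Lemma \ref{uniqueness near order 1 points} plus a compactness/properness argument for the first assertion, then a uniform separation of the dual covectors coming from $O_1$ and $O_2$, the balanced condition applied along rays to get a one-sided cone invariance, and finally Lemma \ref{the set and the cone} to conclude that $A_1\cap A_2$ is a Lipschitz graph. The only differences are cosmetic (you fix the cone axis at $\pm X^1$ via strict convexity of the indicatrix, whereas the paper separates the two covector sets by an abstract hyperplane direction $Z_0$, and you feed $\mathrm{int}(A_2)$ rather than $A_1\setminus A_2$ into the cone lemma).
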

\begin{proof}
The points $x^1$ and $x^2$ are at most of first order, so we can take univocal neighborhoods $O_{1}$ and $O_{2}$ of $x^1$ and $x^2$.
By definition of $Q_p$ and the compactness of $ M$, we can achieve the first property, reducing $U$ if necessary.

We know $\widehat{d_{x^1}F(r)}$ is different from $\widehat{d_{x^2}F(r)}$.
For fixed arbitrary coordinates in $U$, we can assume that $\{\widehat{d_{x}F(r)} \text{ for } x \in O_1\}$ can be separated by a hyperplane from $\{\widehat{d_{x}F(r)} \text{ for } x \in O_2\}$, after reducing $U$, $O_1$, $O_2$ if necessary.
Therefore, there is a vector $Z_0\in T_p M$ and a number $\delta>0$ such that
\begin{equation}\label{separation between A_1 and A_2} 
\widehat{d_{x}F(r)}(Z) < \widehat{d_{x'}F(r)}(Z)+\delta\qquad \forall\, x\in O_1, \, x'\in O_2
\end{equation} 
for any unit vector $Z$ in a neighborhood $G$ of $Z_0$. Let $C^+=\{tZ:t>0,Z\in G\} $ be a one-sided cone containing $Z$. We write $q+C^+$ for the cone displaced to have a vertex in $q$.

Choose $q\in A_1\cap A_2$, and $Z\in G $. Let $\mathcal{R}=\{q'\in U:q'=q+tZ,t>0\} $ be a ray contained in $(q+C^+)\cap U$.
We claim $\mathcal{R}\subset A_1 \setminus A_2 $.

For two points $q_1=q+t_1 Z,q_2=q+t_2 Z\in \mathcal{R} $, we say $q_1<q_2$ if and only $t_1<t_2$.
If $\mathcal{R}\cap A_2\neq\emptyset $, let $q_0$ be the infimum of all points $p>0$ in $\mathcal{R}\cap A_2$, for the above order in $\mathcal{R}$.
If $q_0 \in A_1$ (whether $q_0=q$ or not), we can approach $q_0$ with a sequence
of points $q_n=F(x_n)>q_0$ carrying vectors $d_{x_n}F(r)$ with $x_n\in O_2$. The
limit point of this sequence is $q_0$, and the limit vector is $d_{x}F(r)$ for
some $x\in O_2$, but the incoming vector is in $-G$, which contradicts the
balanced condition by 
\eqref{separation between A_1 and A_2}.

If $q_0\in A_2\setminus A_1$, then approaching $q_0$ with points $q<q_n=F(x_n)<q_0$, we get a new contradiction with the balanced property. The only possibility is  $\mathcal{R}\subset A_1\setminus A_2$. As the vector $Z$ is arbitrary, we have indeed $(q+C^+)\cap U\subset A_1\setminus A_2$.

Fix coordinates in $U$, and let $\varepsilon=\frac{1}{2}dist(p,\partial U)$.
Let $B_\varepsilon$ be the ball of radius $\varepsilon $ centered at $p$.
By the above, the hypothesis of lemma \ref{the set and the cone} are satisfied, for $A=A_1\setminus A_2$, the cone $C^+$, the number $\varepsilon$, and $V=B_\varepsilon$.
Thus, we learn from lemma \ref{the set and the cone} that  $A_1\cap A_2\cap B_\varepsilon$ is the graph of a Lipschitz function along the direction $Z_0$ from any hyperplane transversal to $Z_0$.

\end{proof}

The following three lemmas contain more detailed information about the structure of a balanced split locus near a crossing point. The following is stated for the case $n>2$, but it holds too if $n=2$, though then $L$ reduces to a single point $\{a\}$.

\begin{dfn}
 The \emph{normal} to a subset $X\subset T_p^{\ast}   M$ is the set of vectors $Z$ in $T_p  M$ such that $ M(Z)$ is the same number for all $ M \in X$.
\end{dfn}

\begin{lem}\label{structure of crossing points1} 
Let $p\in S$ be a crossing point. Let $B\subset T_p^{\ast}   M$ be the affine plane spanned by $R_p^\ast $. Let $L$ be the normal to $B$, which by hypothesis is a linear space of dimension $n-2$, and let $C$ be a (double-sided) cone of small amplitude around $L$.

There are disjoint univocal open sets $O_1,\dots,O_N\subset V$ and an open neighborhood $U$ of $p$
such that $Q_q\subset \cup_i O_i$ for all $q$ in $U$.

Furthermore, define sets $A_i$ as in lemma \ref{structure of cleave points}, and call $\mathcal{S}=\cup_{i,j}A_i\cap A_j$ the \emph{essential part} of $S$.
Define $\Sigma= \cup_{i,j,k}A_i\cap A_j\cap A_k$ and let $\mathcal{C}= \mathcal{S}\setminus \Sigma$.
\begin{enumerate}
\item[(1)] At every $q\in \Sigma$, there is $\varepsilon>0$ such that $\Sigma\cap (q+ B_\varepsilon)\subset q+C$.
\item[(2)] $\Sigma$ itself is contained in $p+C$.
\end{enumerate}
\end{lem}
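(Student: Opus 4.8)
The plan is to proceed much as in the proof of Lemma \ref{structure of cleave points}, but keeping track of \emph{all} the vectors in $R_p$ simultaneously and exploiting that, at a crossing point, the covectors $R_p^\ast$ span only a $2$-dimensional affine plane $B$. First I would use Theorem \ref{complete description}: since $p$ is a crossing point, $R_p$ consists of finitely many vectors (non-conjugate ones and conjugate ones of order $1$), say $R_p=\{X^1,\dots,X^N\}$ with $Q_p=\{x^1,\dots,x^N\}$. Each $x^i$ is at most a first-order conjugate point, so by Lemma \ref{uniqueness near order 1 points} each has a univocal neighborhood $O_i$; shrinking, we may take the $O_i$ pairwise disjoint. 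The existence of a neighborhood $U$ of $p$ with $Q_q\subset\bigcup_i O_i$ for all $q\in U$ follows from the compactness of $\bar{M}$ and the usual limiting argument: if not, a sequence $q_n\to p$ would carry vectors $d_{y_n}F(r)$ with $y_n\notin\bigcup O_i$, and a convergent subsequence would produce a vector in $R_p$ outside $\{X^1,\dots,X^N\}$, a contradiction. This sets up the sets $A_i$, $\mathcal S$, $\Sigma$, $\mathcal C$ exactly as in the statement.

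Next I would establish (1). Fix $q\in\Sigma$, so $q\in A_i\cap A_j\cap A_k$ for three distinct indices; the covectors $\widehat{d_{x}F(r)}$ for $x$ ranging over $O_i\cup O_j\cup O_k$ are, after shrinking, close to three covectors $\widehat{X^i},\widehat{X^j},\widehat{X^k}$ which are \emph{affinely independent} in $T_p^\ast M$ (their affine span is all of $B$, which is $2$-dimensional, and three points spanning a $2$-plane are affinely independent). The key geometric fact is then: if $Z$ is a unit vector \emph{not} in a small cone $C$ around $L=B^\perp$, then $Z$ is not constant on $\{\widehat{X^i},\widehat{X^j},\widehat{X^k}\}$, so one of the three — say $\widehat{X^k}$ — strictly minimizes $Z$ among the three (up to a margin $\delta>0$). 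I would then run the balanced-condition argument along the ray $q+tZ$: by Lemma \ref{the set and the cone} applied to each pair, the set $A_k$ locally cannot extend in the direction $Z$ past $q$ while $A_i$ and $A_j$ do, because approaching a point of $A_k$ lying ``above'' $q$ along $\pm Z$ produces an incoming vector in $\pm G$ on which $\widehat{X^k}$ is not maximal, violating the balanced property \ref{balanced}. Hence no point of $q+C^+$ (for $C^+$ a one-sided cone around such a $Z$) lies in $A_i\cap A_j\cap A_k$, and therefore $\Sigma\cap(q+B_\varepsilon)\subset q+C$ for small $\varepsilon$; this is precisely the hypothesis feeding Lemma \ref{the set and the cone}, giving the local cone containment.

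For (2), I would bootstrap from (1): part (1) says $\Sigma$ has its approximate tangent cone at every point contained in the fixed cone $C$ (whose axis $L$ and small amplitude are determined at $p$ and vary continuously), so after shrinking $U$ the direction set of the Lipschitz object $\Sigma$ stays inside a slightly larger cone around $L$ uniformly; integrating a curve in $\Sigma$ from any point back towards $p$ then keeps it within $p+C$. Concretely, taking $Z\notin C$, the argument of (1) actually shows $(p+C^+)\cap U\cap\Sigma=\emptyset$ for every one-sided cone $C^+$ around such $Z$, so $\Sigma\subset p+C$ globally on $U$. The main obstacle I anticipate is bookkeeping the uniformity: one must shrink $O_1,\dots,O_N$ and $U$ so that the separation inequality \eqref{separation between A_1 and A_2}-type estimates hold \emph{simultaneously} for every triple $\{i,j,k\}$ with a \emph{common} cone $C$, using that affine independence of $\widehat{X^i},\widehat{X^j},\widehat{X^k}$ is an open condition and there are finitely many triples; once that is arranged, each individual application of the balanced condition and of Lemma \ref{the set and the cone} is routine. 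A secondary subtlety is handling $n=2$, where $L=\{0\}$ reduces to a point and $\Sigma$ near $p$ is in fact just the isolated point $p$; the same argument degenerates correctly since then $C$ is a neighborhood basis shrinking to $p$.
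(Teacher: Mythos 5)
There is one genuine gap at the very start of your argument: you assert that, because $p$ is a crossing point, $R_p$ consists of \emph{finitely many} vectors $X^1,\dots,X^N$ with $Q_p=\{x^1,\dots,x^N\}$, and you then build one univocal neighborhood per element of $Q_p$. The definition of a crossing point in Theorem \ref{complete description} only says that $R_p$ consists of non-conjugate and order-$1$ conjugate vectors and that $R_p^\ast$ spans a $2$-dimensional affine plane; it does not say $R_p$ is finite, and $Q_p$ is merely a compact subset of $V$ (non-conjugate elements of $Q_p$ are isolated in the fiber, but order-$1$ conjugate ones need not be). The paper's proof is structured precisely to avoid this assumption: it observes that $R_p^\ast$ lies on the curve $\gamma=B\cap\{\text{unit covectors}\}$, covers the compact set $R_p^\ast\subset\gamma$ by the traces $\Delta(\mathcal{O}_x)\cap\gamma$ of univocal neighborhoods, extracts a finite refinement into \emph{disjoint intervals} $I_1,\dots,I_N$, and defines $O_i=\Delta^{-1}\bigl(\tilde I_i+B(M_0,\varepsilon_2)\bigr)$. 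The integer $N$ in the statement (and in the later count $\bar v\le N-2$ in the corollary) is the number of these intervals, not the cardinality of $R_p$. Your argument can be repaired by adopting this covering, but as written the construction of the $O_i$ does not go through.

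Apart from that, your mechanism for (1) and (2) is essentially the paper's, phrased through explicit cones and the balanced condition rather than through the approximate tangent cone: the paper simply notes that $\mathrm{Tan}(\Sigma,q)$ is contained in the normal to $R_q^\ast$ (Proposition \ref{main theorem 4} or the balanced property), that a normal to a set meeting three disjoint intervals near $\gamma$ must be close to $L$, and then gets (1) by a sequential compactness argument and (2) by applying (1) at $p$ itself and shrinking $U$. Your uniformity concern (a single cone $C$ working for all triples) is real but is exactly what choosing $\varepsilon_1,\varepsilon_2$ small in the interval construction buys, since three points lying in three disjoint arcs of the strictly convex curve $\gamma$ have affine span uniformly close to $B$.
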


The next lemma describes the intersection of $\mathcal{S}$ with $2$-planes transversal to $L$.

\begin{lem}\label{structure of crossing points2} 
Let $p\in S$ be a crossing point as above. Let $P\subset  T_p M$ be a $2$-plane intersecting $C$ only at the origin, and let $P_a=P+a$ be a $2$-plane parallel to $P$ for $a\in L $.
\begin{enumerate}
\item If $\vert a\vert<\varepsilon_1$, the intersection of $\mathcal{S} $, the plane $P_a$, and $U$ is a connected Lipschitz tree.
\item The intersection of $\mathcal{S} $, the plane $P_a$, and the annulus of inner radius $c\cdot\vert a\vert $ and outer radius $\varepsilon_2 $:
$$
A(c\:\vert a\vert,\varepsilon_2)=\{q\in U: c\:\vert a\vert<\vert q\vert<\varepsilon_2\}
$$
is the union of $N$ Lipschitz arcs separating the sets $A_i$.
\end{enumerate}
\end{lem}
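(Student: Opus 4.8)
The plan is to leverage the groundwork established in Lemma \ref{structure of crossing points1}: we already have disjoint univocal open sets $O_1,\dots,O_N$, a neighborhood $U$, and the sets $A_i$ with $Q_q\subset\cup_i O_i$ for all $q\in U$, together with the cone control $\Sigma\cap(q+B_\varepsilon)\subset q+C$ at points of $\Sigma$ and the global containment $\Sigma\subset p+C$. The whole point of choosing the $2$-plane $P$ to meet $C$ only at the origin is that intersecting with $P_a$ will see the triple points $\Sigma$ only in a small ball around $a$ (because $\Sigma\subset p+C$ and $P_a\cap C$ is bounded, shrinking with $|a|$), while outside that ball the intersection $\mathcal{S}\cap P_a$ consists purely of cleave-type arcs coming from pairs $A_i\cap A_j$.

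First I would establish part (2), the description on the annulus $A(c|a|,\varepsilon_2)$. On this annulus $P_a$ avoids $p+C$, so no triple points occur: every point of $\mathcal{S}\cap P_a$ lies in exactly one $A_i\cap A_j$. For each such pair, Lemma \ref{structure of cleave points} (applied with the ambient coordinates restricted to $P_a$) tells us that $A_i\cap A_j$ is locally a Lipschitz graph; since the separating hyperplane between $\{\widehat{d_xF(r)}:x\in O_i\}$ and $\{\widehat{d_xF(r)}:x\in O_j\}$ gives a cone transversal to $P$ by the choice of $P$, the graph is a genuine Lipschitz arc inside $P_a$. The sets $A_i\cap P_a$ partition a punctured neighborhood (minus $\mathcal{S}$) into open regions, and the arcs $A_i\cap A_j\cap P_a$ are exactly the common boundaries; a counting/connectedness argument — the boundary of each region $A_i$ in the annulus must be nonempty and the regions are cyclically arranged around the hole — shows there are exactly $N$ such arcs, each separating two adjacent $A_i$'s. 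The reasoning here mirrors the "graph over $\Gamma$" argument in the proof of Proposition \ref{cleave points are a manifold}, using the balanced condition to rule out an arc terminating in the interior of the annulus.

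Next, part (1): that $\mathcal{S}\cap P_a\cap U$ is a connected Lipschitz tree for $|a|<\varepsilon_1$. Inside the small ball $P_a\cap(p+C)$ of radius $\sim c|a|$ there may be triple (or higher) points $\Sigma$, but by Lemma \ref{structure of crossing points1}(1) these are again cone-controlled, hence isolated-in-a-Lipschitz-sense: locally finitely many arcs meeting at each. Gluing the $N$ arcs from the annulus to whatever happens in the central ball, I would argue: (a) $\mathcal{S}\cap P_a\cap U$ is closed and locally a finite union of Lipschitz arcs, so it is a finite graph; (b) it is connected, because $U\setminus\mathcal{S}$ has exactly $N$ components (one per $A_i$) and removing a disconnected separating set would create too many components, contradicting that the $A_i$ fit together around the hole; (c) it has no cycles — a cycle would bound a region of $P_a$ mapping under $F$ into some single $O_i$, forcing that region to lie in a single $A_i$ with no boundary inside, contradicting $U\setminus\mathcal{S}$ having the $A_i$ as its components. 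Hence it is a tree. I would handle $n=2$ as the degenerate case where $L=\{a\}$, $P$ is the whole tangent plane, and the statement reduces to the local picture at a genuine crossing point in a surface.

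The main obstacle I anticipate is part (1)(c), the absence of cycles, and more generally the global topological bookkeeping that ties together the $N$ annular arcs and the central cluster into a single tree without over- or under-counting. The cone estimates from Lemma \ref{structure of crossing points1} give good \emph{local} control, but translating "the complement has exactly $N$ components, cyclically arranged" into "the separating set is a tree" requires a clean planar-graph argument inside $P_a$; one has to be careful that the univocal sets $O_i$ and the balanced condition genuinely forbid an arc from curling back, and that no spurious components of $U\setminus\mathcal S$ appear near $\partial U$. This is where I would spend the most care, probably by first proving a clean statement that $U\setminus\mathcal{S}$ has exactly the components $A_1,\dots,A_N$ (each connected) and then deducing the tree structure as a formal consequence.
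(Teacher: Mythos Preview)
Your overall strategy is close to the paper's, but the paper's argument hinges on one technical device you do not invoke, and without it both your no-cycle step and your ``exactly $N$ arcs, cyclically arranged'' step remain soft.

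The device is this: in the proof of Lemma~\ref{structure of crossing points1} the univocal sets $O_i$ were built from intervals $I_i$ along the curve $\gamma\subset B$, and precisely because of this, the separating directions $Z_i$ (satisfying \eqref{separation between A_i and the others}) can be taken to lie in the $2$-plane $P$ itself. One then has one-sided cones $C_i^+$ with nontrivial intersection with $P$, and the containment $(q+C_i^+)\cap U\subset A_i$ whenever $q\in A_i$. This single fact drives everything: it makes each $\partial A_i$ a Lipschitz graph transversal to $P$, it forces $\Sigma\cap P_a$ to be isolated points, and it gives the no-cycle argument directly. Namely, if $\mathcal S\cap P_a$ enclosed a bounded region, any interior point $q$ lies in some $A_k$, and the ray $q+tZ_k$ (which stays in $P_a$) is contained in $A_k$ yet must exit the bounded region through $\mathcal S$ --- contradiction. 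Your proposed no-cycle argument (``a cycle would bound a region mapping under $F$ into some single $O_i$\ldots contradicting $U\setminus\mathcal S$ having the $A_i$ as its components'') does not close: nothing a priori prevents $A_k\cap P_a$ from having a bounded component, and ``mapping into a single $O_i$'' is not what is at stake.

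Similarly, for part~(2) the paper does not rely on an abstract counting of components. With the $Z_i\in P$ in hand, the rays $p+tZ_i$ cut $P_a$ (or the annulus) into sectors $P^i_a$, and a direct cone argument shows $P^i_a\subset A_i\cup A_{i+1}$, after which Lemma~\ref{structure of cleave points} produces exactly one Lipschitz arc of $A_i\cap A_{i+1}$ in each sector, running from inner to outer boundary. Your plan to deduce the cyclic arrangement from ``boundary of each region nonempty'' is the step you correctly flag as delicate; the paper bypasses it entirely by having the sectors from the start.
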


\begin{figure}[ht]
 \centering
 \includegraphics[width=.9\textwidth]
 {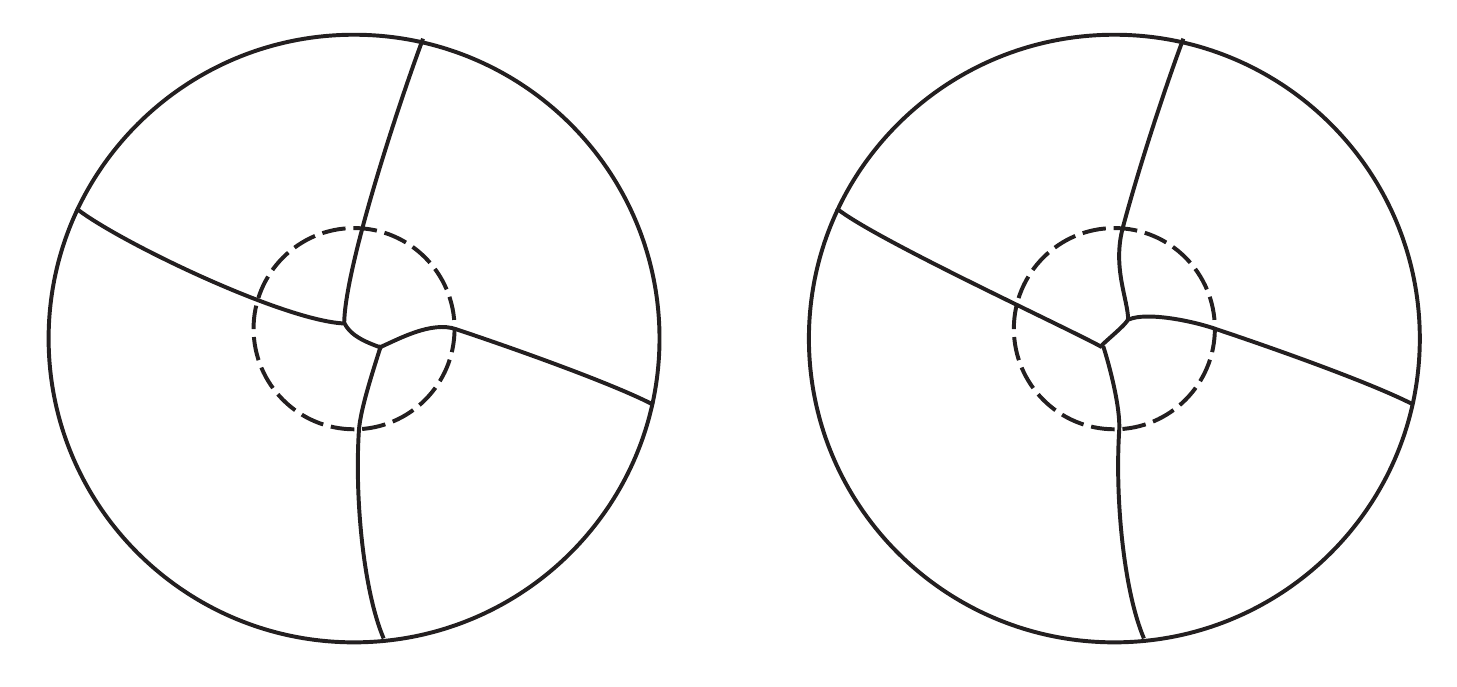}
 
 \caption{Two possible intersections of a plane $P_a$ with $\mathcal{S}$}
 \label{fig:intersections of plane with S near crossing}
\end{figure}

\begin{remark}
We cannot say much about what happens inside $P_a\cap B(P,c\:\vert a\vert) $. The segments in $ P_a\cap A(c\:\vert a\vert,\varepsilon_2)$ must meet together, yielding a connected tree, but this can happen in several different ways (see figure \ref{fig:intersections of plane with S near crossing}). 
\end{remark}

Finally, we can describe the connected components of $\mathcal{C}=\mathcal{S}\setminus \Sigma$ within $U$:
\begin{lem}\label{structure of crossing points3} 
Under the same hypothesis, for every $i=1,\dots,N$ there is a coordinate system in $U$ such that:
\begin{itemize}
\item The set $\partial A_i$ is the graph of a Lipschitz function $h_i$, its domain delimited by two Lipschitz functions $f_l$ and $f_r$, for $L^\ast \subset L$:
$$
\partial A_i = \left\lbrace (a,t,h_i(t)), a\in L^\ast, f_{l}(a)<t<f_r(a) \right\rbrace
$$
\item A connected component $\mathcal{C}_0$ of $\mathcal{C}$ contained in $\partial A_i$ admits the following expression, for Lipschitz functions $f_1$ and $f_2$, for $L_0 \subset L$:
$$
\mathcal{C}_0 = \left\lbrace (a,t,h_i(t)), a\in L_0, f_1(a)<t<f_2(a) \right\rbrace
$$
\end{itemize}
\end{lem}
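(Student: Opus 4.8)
The plan is to build on the two preceding lemmas (\ref{structure of crossing points1} and \ref{structure of crossing points2}) and reduce the statement to a repeated application of the cleave-point analysis of Lemma \ref{structure of cleave points}. The geometric picture is this: near a crossing point $p$, the ``normal'' space $L$ to the affine plane $B$ spanned by $R_p^\ast$ has dimension $n-2$, and by Lemma \ref{structure of crossing points1} the essential part $\mathcal{S}$ fibers (up to controlled distortion) over a neighborhood in $L$, with each $2$-dimensional slice $P_a\cap\mathcal{S}$ being a Lipschitz tree whose far annular part consists of $N$ arcs separating the sets $A_i$. The pieces of $\mathcal{C}=\mathcal{S}\setminus\Sigma$ contained in a fixed $\partial A_i$ are exactly the ``edges'' of these trees that correspond to the arc labelled $i$; the claim is that each such edge, swept out as $a$ ranges over its domain, is a Lipschitz graph in suitable coordinates.

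First I would fix $i$ and choose coordinates adapted to $A_i$: since $p$ is a crossing point, for $q$ near $p$ with $Q_q\cap O_i\neq\emptyset$ the vector $\widehat{d_xF(r)}$ with $x\in O_i$ is separated from the other families $\widehat{d_{x'}F(r)}$, $x'\in O_j$, $j\neq i$, by a hyperplane; this gives, exactly as in the proof of \ref{structure of cleave points}, a one-sided cone $C^+_i$ (roughly the ``direction pointing into $A_i$'') such that $(q+C^+_i)\cap U\subset A_i$ at points $q\in\partial A_i$. Pick a vector $Z_i\in C^+_i$, write $Z_i=\partial/\partial x_n$ in coordinates where the last $n-1$ coordinates split as $(a,t)$ with $a\in L$ and $t$ the remaining transversal coordinate. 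Lemma \ref{the set and the cone}, applied with $A=A_i$, the cone $C^+_i$, and $V$ a small ball, then yields that $\partial A_i\cap U$ is the graph $x_n=h_i(a,t)$ of a Lipschitz function $h_i$. This establishes the first bullet, provided one also checks that the domain of $h_i$ is of the stated form: that it is an open set in the $(a,t)$-coordinates delimited by two Lipschitz functions $f_l(a)<t<f_r(a)$. For this I would use Lemma \ref{structure of crossing points2}(2): in each slice $P_a$ the set $A_i$ meets the annulus in a region bounded by exactly two of the $N$ separating arcs; those two arcs are the ``left'' and ``right'' boundary of the $i$-th sector, and the monotonicity/cone argument shows their position depends Lipschitz-continuously on $a$, giving $f_l,f_r$.

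Next, for the second bullet, I would note that $\mathcal{C}_0$, being a connected component of $\mathcal{S}\setminus\Sigma$ inside $\partial A_i$, is an open subset (in the subspace topology of $\partial A_i$) of the graph of $h_i$, and is bounded by parts of $\Sigma$. Transcribing to the $(a,t)$-coordinates via $h_i$, $\mathcal{C}_0$ becomes an open connected subset of the $(a,t)$-parameter domain; one must show it has the form $\{f_1(a)<t<f_2(a),\ a\in L_0\}$ for Lipschitz $f_1,f_2$ and open $L_0\subset L$. Here the key input is that $\Sigma$ is itself Lipschitz-controlled: by \ref{structure of crossing points1}(1) every point of $\Sigma$ has a cone condition, so the two boundary curves of $\mathcal{C}_0$ in each slice $P_a$ (which lie in $\Sigma$ and in $\partial A_i$, i.e.\ are endpoints of the $i$-th arc) again depend Lipschitz-continuously on $a$; connectedness forces the slice $\mathcal{C}_0\cap P_a$ to be a single interval $(f_1(a),f_2(a))$, and $L_0$ is the set of $a$ for which this interval is nonempty. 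The identification of $\mathcal{S}$ with its image under the fibration over $L$ (the bi-Lipschitz product structure from \ref{structure of crossing points1} and \ref{structure of crossing points2}) is what makes ``graph over $(a,t)$'' and ``interval in $t$ for each $a$'' rigorous.

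The main obstacle I expect is the bookkeeping needed to promote the slice-by-slice descriptions of Lemmas \ref{structure of crossing points1}--\ref{structure of crossing points2} into a \emph{uniform} Lipschitz graph over the $(n-2)$-dimensional parameter $a$: one must ensure that the cone $C^+_i$ and the separating hyperplane can be chosen independently of $a$ (this is where one shrinks $U$, using the strict convexity of the indicatrices and continuity of $x\mapsto \widehat{d_xF(r)}$ on the compact sets $\overline{O_j}$), and that the endpoints of the $i$-th arc in $P_a\cap\mathcal{S}$ vary Lipschitz-continuously --- which really is another appeal to Lemma \ref{the set and the cone}, now applied to the codimension-$2$ set $\Sigma$ within $\mathcal{S}$, using the cone condition \ref{structure of crossing points1}(1). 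Once the coordinates are pinned down uniformly, the actual Lipschitz estimates are exactly those already established in \ref{structure of cleave points}, so no genuinely new estimate is required; the work is organizational. The degenerate case $n=2$ is immediate since then $L=\{a\}$ is a point, $L^\ast=L_0=\{a\}$, and the statement reduces to $\partial A_i$ and $\mathcal{C}_0$ being Lipschitz arcs, which is Lemma \ref{structure of crossing points2}(2).
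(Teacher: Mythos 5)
Your proposal is correct and follows essentially the same route as the paper: the separating hyperplane/cone $C^+_i$ from the previous lemmas gives $\partial A_i$ as a Lipschitz graph along $Z_i$ over coordinates $L\times W$, the slice description from Lemma \ref{structure of crossing points2} makes each fixed-$a$ domain a connected interval, and the cone condition (1) of Lemma \ref{structure of crossing points1} on $\Sigma$ yields the Lipschitz dependence of the delimiting functions $f_l,f_r$ (and likewise $f_1,f_2$ for $\mathcal{C}_0$). The paper's own proof is just a terser version of this same organization.
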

\begin{cor}
$\mathcal{H}^{n-2}(\Sigma)<\infty$.
\end{cor}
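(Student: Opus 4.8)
The plan is to derive the finiteness of $\mathcal{H}^{n-2}(\Sigma)$ from the local structure description in Lemma \ref{structure of crossing points1} together with the rectifiability statement of Proposition \ref{main theorem 4} and the Lipschitz bound on $\rho_S$ (Lemma \ref{rho is lipschitz}). Recall that $\Sigma$ is, by definition, the set of points $q\in S$ with $\sharp R_q\geq 3$ (equivalently, $q\in A_i\cap A_j\cap A_k$ for three distinct indices); more intrinsically, $\Sigma$ consists of the crossing points and the higher-order remainder where $\mathrm{co}(R^\ast_p)$ has dimension at least $2$. By Theorem \ref{complete description}, the part of $S$ where $\mathrm{co}(R^\ast_p)$ has dimension $\geq 2$ but which is not a crossing point has Hausdorff dimension at most $n-3$, hence $\mathcal{H}^{n-2}$-measure zero; so it suffices to bound the $\mathcal{H}^{n-2}$ measure of the set of crossing points, and in fact of $\Sigma$ restricted to a neighborhood of each crossing point, since the crossing points are covered by countably many such neighborhoods $U$ (second countability of $M$).

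First I would fix a crossing point $p$ and the neighborhood $U$, the cone $C$ around the $(n-2)$-plane $L=\mathrm{Tan}$-normal to $B$, and the sets $A_i$, $\Sigma=\cup_{i,j,k}A_i\cap A_j\cap A_k$ as in Lemma \ref{structure of crossing points1}. The key geometric input is part (1) of that lemma: at every $q\in\Sigma$ there is $\varepsilon>0$ with $\Sigma\cap(q+B_\varepsilon)\subset q+C$. This says that $\Sigma\cap U$ is, in the sense of \cite[Theorem 3.1]{Alberti Ambrosio Cannarsa}, a set all of whose approximate tangent directions at every point lie in the fixed $(n-2)$-dimensional subspace $L$ — just as in the proof of Proposition \ref{main theorem 4}. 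Applying that theorem (or directly Lemma \ref{the set and the cone} componentwise, projecting onto $L$ along a complementary $2$-plane $P$) shows $\Sigma\cap U$ is $(n-2)$-rectifiable and, more precisely, is contained in the graph of a Lipschitz map from (a subset of) $L$ to the complementary $2$-plane. A Lipschitz graph over a bounded subset of $\mathbb{R}^{n-2}$ has finite $\mathcal{H}^{n-2}$ measure, so $\mathcal{H}^{n-2}(\Sigma\cap U)<\infty$ for each such $U$.

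To globalize, I would cover the set of crossing points by countably many neighborhoods $U_m$ of the above type; this alone only gives $\sigma$-finiteness, so a little care is needed. The clean way is to combine the argument with Proposition \ref{main theorem 4}: the crossing points are exactly (a subset of) the set where $\mathrm{co}(R^\ast_p)$ is $2$-dimensional, which by \ref{main theorem 4} is $(n-2)$-rectifiable, being a countable union of sets $\Sigma^2_\delta$ each lying in finitely many Lipschitz hypersurfaces of dimension $n-2$. On $\Sigma^2_\delta$ one has the uniform cone/width control coming from $\rho_S$ being Lipschitz (Lemma \ref{rho is lipschitz}) and from the separation estimate \eqref{separation between A_1 and A_2}–type bound with a $\delta$-uniform constant; this upgrades rectifiability to a uniform Lipschitz-graph bound over a bounded domain, hence $\mathcal{H}^{n-2}(\Sigma^2_\delta\cap K)<\infty$ for compact $K$, and since $\partial M$ is compact $M$ itself may be taken compact (or exhausted by compacts, with the finiteness on each piece and a diameter bound forcing a global bound). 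The main obstacle I expect is precisely this passage from "locally finite / $\sigma$-finite $\mathcal{H}^{n-2}$" to "globally finite": one must extract a uniform Lipschitz constant and a uniform bound on the size of the $L$-domain, which is where the compactness of $\partial M$ and the uniformity of the constants in Lemmas \ref{landa es Lipschitz}, \ref{rho is lipschitz} and \ref{structure of crossing points1} are genuinely used, rather than just local finiteness. Once that uniformity is in hand, summing the finitely many $\delta=1/j$ pieces (or noting the measures are monotone in $\delta$ and bounded) gives $\mathcal{H}^{n-2}(\Sigma)<\infty$.
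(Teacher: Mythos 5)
There is a genuine gap. Your argument rests on upgrading the cone condition of Lemma \ref{structure of crossing points1}(1) to the statement that $\Sigma\cap U$ is contained in \emph{a single} Lipschitz graph over a bounded subset of $L$. But part (1) of that lemma is pointwise with a non-uniform $\varepsilon(q)$: it controls $\Sigma$ only inside $q+B_{\varepsilon(q)}$, and this does not give the pairwise condition $q'-q\in C$ for all $q,q'\in\Sigma$ (two isolated points of $\Sigma$ in "vertical" position satisfy the local condition vacuously). What the pointwise condition does give, via \cite[Thm.~3.1]{Alberti Ambrosio Cannarsa} exactly as in Proposition \ref{main theorem 4}, is that $\Sigma$ is \emph{countably} $(n-2)$-rectifiable, i.e.\ covered by countably many Lipschitz graphs over $L$. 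That yields only $\sigma$-finiteness of $\mathcal{H}^{n-2}(\Sigma)$, and you acknowledge this; but your proposed fix does not close the gap. A uniform Lipschitz constant for the graphs (from $\rho_S$ Lipschitz, or from a $\delta$-uniform separation) still leaves you with countably many graphs over the same bounded domain in $L$, and their number is exactly the multiplicity $\mathcal{H}^0(\Sigma\cap P_a)$ of the projection onto $L$ — which is the quantity that must be bounded and which no estimate in your argument controls. Likewise, "summing the $\delta=1/j$ pieces" fails because the sets $\Sigma^2_{1/j}$ increase to $\Sigma^2$ and a monotone union of finite-measure sets need not have finite measure.

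The missing idea is precisely a \emph{uniform bound on} $\sharp(\Sigma\cap P_a)$, and it is not metric but combinatorial. The paper first applies the general area–coarea formula \cite[3.2.22]{Federer} to the projection $f:U\to L$ parallel to $P$ (whose approximate Jacobian on $\Sigma$ is bounded below by the cone condition), reducing the claim to bounding $\mathcal{H}^0(\Sigma\cap P_a)$ uniformly in $a$. Then it uses Lemma \ref{structure of crossing points2}: $\mathcal{S}\cap P_a\cap U$ is a connected, simply connected Lipschitz graph (a tree) with $N$ degree-one vertices on $\partial U$ and interior vertices (the points of $\Sigma\cap P_a$) of degree at least $3$; combining $e-(N+\bar v)=-1$ with the handshaking inequality $2e\ge N+3\bar v$ gives $\bar v\le N-2$. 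That bound, valid for every $a$, is what converts $\sigma$-finiteness into finiteness. (Note also that the corollary concerns the local set $\Sigma$ inside the fixed neighborhood $U$ of Lemma \ref{structure of crossing points1}, so the globalization you worry about in the last paragraph is not actually required.)
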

\begin{proof}[Proof of corollary]
We apply the \emph{general area-coarea formula} (see \cite[3.2.22]{Federer}), with $W=\Sigma$, $Z=L$, and $f$ the projection from $U$ onto $L$ parallel to $P$, and $m=\mu=\nu=n-2$, to learn:
$$
\int_{\Sigma}ap\: Jf d\mathcal{H}^{n-2}
=\int_L \mathcal{H}^0(f^{-1}(\{z\})) d\mathcal{H}^{n-2}(z)
=\int_L \mathcal{H}^0(\Sigma\cap P_a) d\mathcal{H}^{n-2}(a)
$$
$ap\: Jf|_{\Sigma}$ is bounded from below, so
if we can bound $\mathcal{H}^0(\Sigma\cap P_a) $ uniformly, we get a bound for $\mathcal{H}^{n-2}(\Sigma) $.

The set $\mathcal{C} \cap P_a \cap U$ is a simplicial complex of dimension $1$, and a standard result in homology theory states that the number of edges minus the number of vertices is the same as the difference between the homology numbers of the complex: $h^1-h^0 $. The graph is connected and simply connected, so this last number is $-1$. The vertices of $\mathcal{C} \cap P_a \cap U $ consist of $N$ vertices of degree $1$ lying at $\partial U$ and the interior vertices having degree at least $3$. 
The \emph{handshaking lemma} states that the sum of the degrees of the vertices of a graph is twice the number of edges, so we get the inequality $2e\geq N+3 \bar{v} $ for the number $e$ of edges and the number $\bar{v}$ of interior vertices. Adding this to the previous equality $e-(N+\bar{v})=-1 $, we get $\bar{v}\leq N-2 $.
We have thus bounded $\bar{v}= \mathcal{H}^0(\Sigma\cap P_a)$ with a bound valid for all $a$.

\end{proof}

\begin{proof}[Proof of \ref{structure of crossing points1}]
This lemma can be proven in a way similar to \ref{structure of cleave points}, but we will take some extra steps to help us with the proof of the other lemmas.

First, recall the map $\Delta$ defined in 
\eqref{definition of the embedding of V into TastOmega}.
Each point $x$ in $\Delta^{-1}(R_p^{\ast})$ has a univocal neighborhood $\mathcal{O}_x $. 
Recall $R_p^{\ast}$ consists only of covectors of norm $1$. Let $\gamma $ be the curve obtained as intersection of $B$ and the covectors of norm $1$.
Instead of taking the neighborhoods $\mathcal{O}_x $ right away, which would be sufficient for this lemma, we cover $R_p^{\ast}$ with open sets of the form $\Delta(\mathcal{O}_x) \cap \gamma$.

By standard results in topology, we can extract a finite refinement of the covering of $R_p^{\ast}\subset\gamma$ by the sets $\Delta(\mathcal{O}_x)\cap \gamma$ consisting of disjoint non-empty intervals $I_1,\dots,I_N$.
Let $\tilde{I}_i$ be the set of points $tx$ for $t\in (1-\varepsilon_1,1+\varepsilon_1)$ and $x\in I_i$, and choose a linear space $M_0$ of dimension $n-2$ transversal to $B$. Define the sets of our covering:
$$
O_i = \Delta^{-1}(\tilde{I}_i + B(M_0,\varepsilon_2))
$$
for the ball of radius $\varepsilon_2$ in $M_0$ ($\varepsilon_1 $ and $\varepsilon_2 $ are arbitrary, and small).

We can assume that $Q_q\subset \cup_i O_i$ for all $q$ in $U$ by reducing $U$ and the $O_i$ further if necessary, hence we only need to prove the two extra properties to conclude the theorem. 

The approximate tangent to $\Sigma$ at a point $q\in \Sigma\cap U$ is contained in the normal to $R_q^{\ast}$ (recall the definition \ref{approximate tangent cone} of approximate tangent cone, and use proposition \ref{main theorem 4}, or merely use the balanced property).
If $R_q^{\ast}$ is contained in a sufficiently small neighborhood of $\gamma$ and contains points from at least three different $I_i$, its normal must be close to $L$.
Thus if we chose $\varepsilon_1$ and $\varepsilon_2$ small enough, the approximate tangent to $\Sigma\cap U$ at a point $q\in \Sigma$ is contained in $C$.
If property (1) did not hold for any $\varepsilon$ at a point $q$, we could find a sequence of points converging to $q$ whose directions from $q$ would remain outside $C$,  violating the above property.

Finally, the second property holds if we replace $U$ by $U\cap B_\varepsilon$, for the number $\varepsilon$ that appears when we apply property (1) to $p$.
\end{proof}

\begin{figure}[ht]
 \centering
 \includegraphics[width=.9\textwidth]{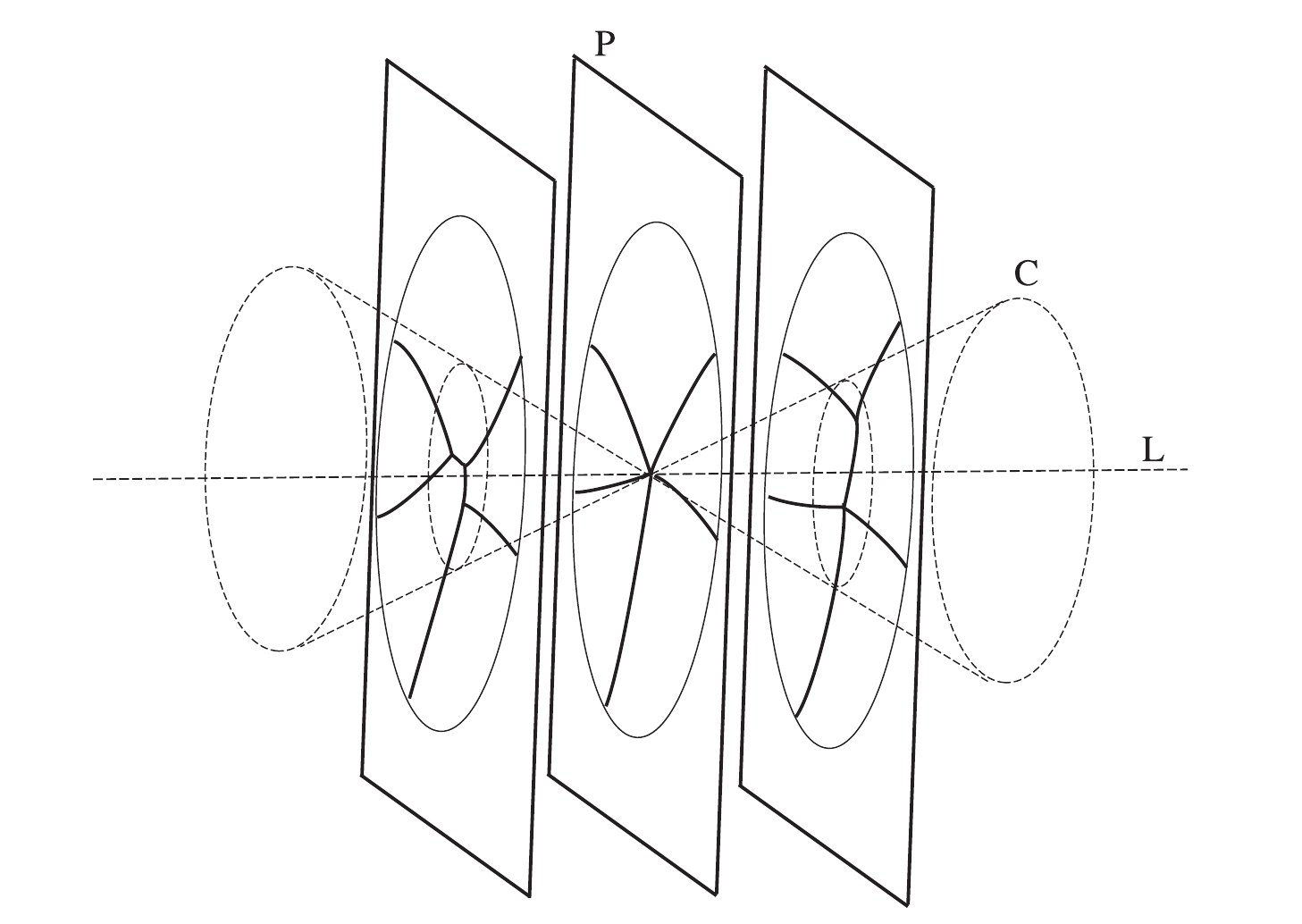}
 
 \caption{$\mathcal{S}$ near a crossing point} 
\label{fig: S near a crossing point}
\end{figure}

\begin{proof}[Proof of \ref{structure of crossing points2}]
Just like in \ref{structure of cleave points}, we can assume that each set
$\{\widehat{d_{x}F(r)}$ for $x \in O_i\}$ can be separated from the
others by a hyperplane (e.g., a direction~$Z_i$), such that:
\begin{equation}\label{separation between A_i and the others} 
\widehat{d_{x}F(r)}(Z) < \widehat{d_{x'}F(r)}(Z)+\delta\qquad \forall \, x\in O_i,\, x'\in O_j,\, i\neq j
\end{equation} 
for some $\delta>0$ and any unit vector $Z$ in a neighborhood $G_i$ of $Z_i$.
Thanks to the care we took in the proof of the previous lemma, we can assume all $Z_i$ belong to the plane $P$ in the statement of this lemma: indeed the intervals $I_i$ can be separated by vectors in any plane transversal to $L$, and the sets $\Delta(O_i)$ are contained in neighborhoods of the $I_i$.

Define the one-sided cones $C^+_i=\{tZ:t>0,Z\in G_i\}$.
The above implies that the intersection of each $C_i^+$ with $P$ is a nontrivial cone in $P$ that consists of rays from the vertex. 

By the same arguments in \ref{structure of cleave points}, we can be sure that whenever $q\in A_i $, then $(q+C^+_i )\cap U\subset A_i$. 
This implies that $\partial A_i$ is the graph of a Lipschitz function along the direction $Z_i$ from any hyperplane transversal to $Z_i$. 
We notice $\partial A_i$ is (Lipschitz) transversal to $P$, so for any $a\in L$, $\partial A_i\cap P$ is a Lipschitz curve.
As the cone $C$ is transversal to $P$, and the tangent to $\Sigma$ is contained in $C$, we see $\Sigma\cap P_a$ consists of isolated points.

Thus $\mathcal{S} \cap P_a$ is a Lipschitz graph and $\Sigma\cap P_a$ is the set
of its vertices. 
If~it were not a tree, there would be a bounded open subset of $P_a \cap
U\setminus\mathcal{S}$ with boundary contained in $\mathcal{S}$. An interior
point $q$ belongs to some $A_k$. Then the cone $q+C^+_k$ is contained in $A_k$,
but on the other hand its intersection with $P_a$ contains a ray that must
necessarily intersect $\mathcal{S}$, which is a contradiction.

We notice $P_a\cap (p+C^+_i)\subset A_i$.
This set is a cone in $P_a$ (e.g. a circular sector) with vertex at most a distance $c_1\vert a\vert $ from $p+a$, where $c_1>0$ depends on the amplitude of the different $C_i$.

If $a=0$, the $N$ segments departing from $p$ with speeds $Z_i$ belong to each $A_i$ respectively.
Let us assume that the intervals $I_i$ appearing in the last proof are met in the usual order $I_1,I_2\dots,I_N$ when we run along $\gamma$ following a particular orientation, and call $P^i$ the region delimited by the rays from $p$ with speeds $Z_i$ and $Z_{i+1}$ (read $Z_1$ instead of $Z_{N+1}$).
If there is a point $q\in P^i\cap A_k\cap B(\varepsilon_2)$ for sufficiently small $\varepsilon_2$, then $(q+C^+_k)\cap U$ would intersect either $p+C^+_i$ or $p+C^+_{i+1}$, and yield a contradiction if $k$ is not $i$ or $i+1$.
Thus $P^i\subset A_i\cup A_{i+1}$. Clearly there must be some point $q$ in $P_i\cap A_i\cap A_{i+1}$, to which we can apply lemma \ref{structure of cleave points}. 
$A_i\cap A_{i+1}$ is a Lipschitz curve near $q$ transversal to $Z_i$ (and to $Z_{i+1}$), and it cannot turn back.
The curve does not meet $\Sigma$, and it cannot intersect the rays from $p$ with speeds $Z_i$ and $Z_{i+1}$, so it must continue up to $p$ itself.
For any $q\in A_i\cap A_{i+1}$, the cone $q+C^+_i$ is contained in $A_i$, and the cone $q+C^+_{i+1}$ is contained in $A_{i+1}$. This implies there cannot be any other branch of $A_i\cap A_{i+1}$ inside $P_i$.

This is all we need to describe $\mathcal{S}\cap P \cap B(\varepsilon_2)$: it consists of $N$ Lipschitz segments starting at $p$ and finishing in $P \cap \partial B(\varepsilon_2) $. The only multiple point is $p$.

For small positive $\vert a\vert$, we know by condition (2) of the previous lemma that $P_a\cap \Sigma\subset C\cap P_a= B(c_2\vert a\vert) \cap P_a$ for some $c_2>0$.
Similarly as above, define regions $P_a^i\subset P_a \cap A(c\vert a\vert,\varepsilon_2)$ delimited by the rays from $a$ with directions $Z_i$ and $Z_{i+1}$, and the boundary of the ring $A(c\vert a\vert,\varepsilon_2)$, for a constant $c>\max(c_1,c_2)$.
Take $c$ big enough so that  for any $q\in P_a^i $ and any $k\neq i,i+1$ , $q+C^+_k \cap U\cap P_a$ intersects either $p+C^+_i$ or $p+C^+_{i+1} $.
The same argument as above shows that $A_i\cap A_{i+1} \subset P_a^i \subset A_i\cup A_{i+1} $.
We conclude there must be a Lipschitz curve of points of $A_i\cap A_{i+1}$, which starts in the inner boundary of $A(c\vert a\vert,\varepsilon_2)$, and ends up in the outer boundary.

\end{proof}

\begin{figure}[ht]
 \centering
 \includegraphics[width=.9\textwidth]{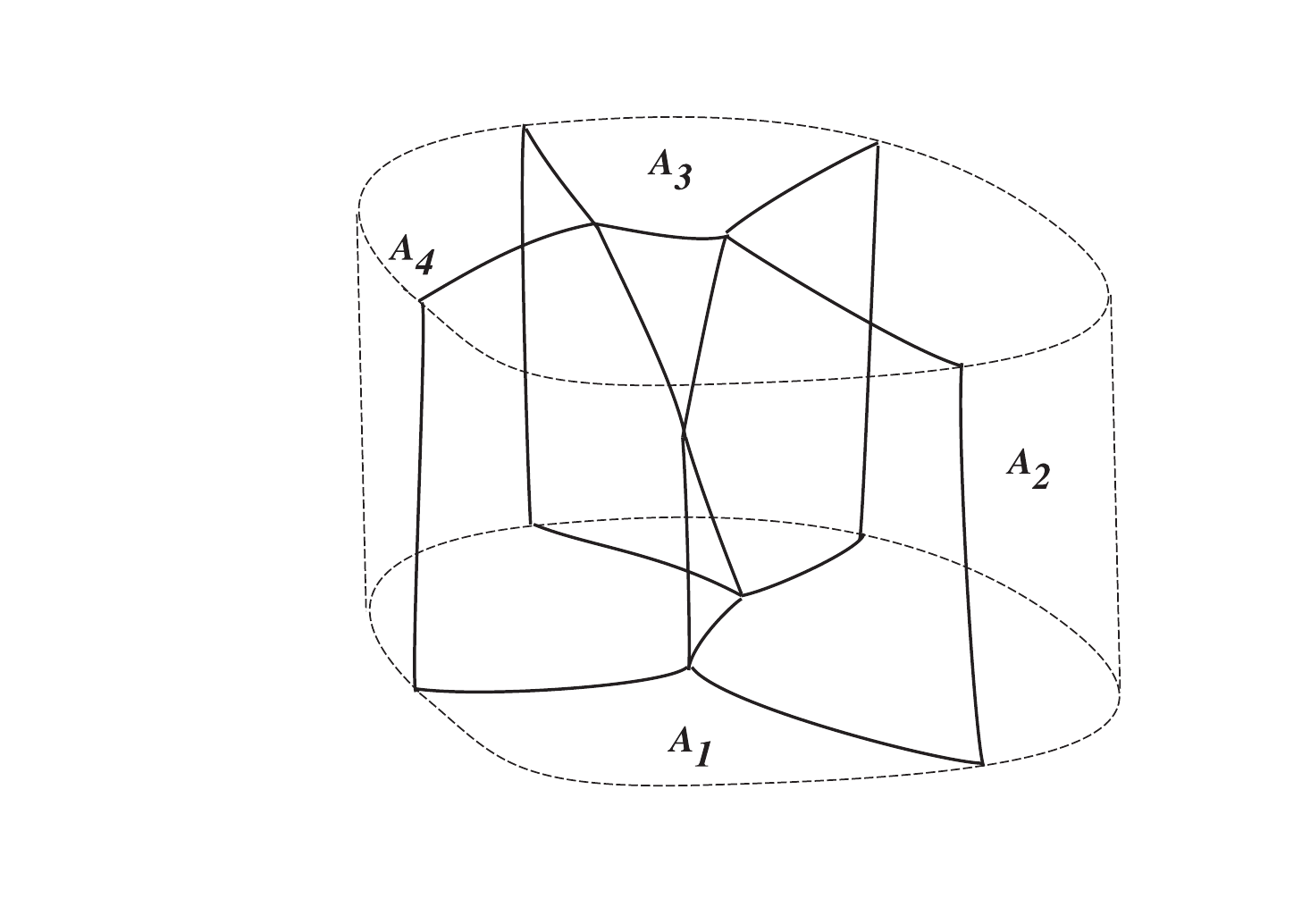}
 
 \caption{A neighborhood of a crossing point (this view is rotated with respect to figure \ref{fig: S near a crossing point})}
 \label{fig: neighborhood of a crossing point}
\end{figure}

\begin{proof}[Proof of \ref{structure of crossing points3}]
First we assume $U$ has a product form $U=L^\ast\times P^\ast$ for open discs $L^\ast\subset L$ and $P^\ast=B(P,\varepsilon_2)\subset P$.

Recall $\partial A_i$ is the graph of a Lipschitz function along the direction $Z_i$ from any hyperplane transversal to $Z_i$.
Let $H_i=L+W$ be one such hyperplane that contains the subspace $L$ and the vector line $W\subset P$, and construct coordinates $L\times W\times <Z_i>$.
It follows from the previous lemma that $\partial A_i\cap P^\ast_a$ is a connected Lipschitz curve.
In these coordinates $\partial A_i$ is the graph of a Lipschitz function $h_i$.
Its domain, for fixed $a$, is a connected interval, delimited by two functions $f_l:L^\ast\rightarrow W$ and $f_r:L^\ast\rightarrow W$. Condition (1) of lemma \ref{structure of crossing points1} assures they are Lipschitz.

A connected component $\mathcal{C}_0$ of $\mathcal{C}$ is contained in only one $A_i\cap A_j$.
We can express it in the coordinates defined above for $\partial A_i$.
The intersection of $\mathcal{C}_0$ with each plane $P_a$ is either empty or a connected Lipschitz curve. The second part follows as before.
\end{proof}

\subsection{Conclusion}

Using lemma \ref{uniqueness near order 1 points}, we show without much effort that $\partial T$ vanishes near edge points.
Using the structure results from the previous section, we show also that it vanishes at cleave points (including degenerate ones) and crossing points.

\begin{prop}
 Let $p\in S$ be an \emph{edge} point. 
Then the boundary of $T$ vanishes near $p$.
\end{prop}
\begin{proof}
Let $p$ be an edge point with $Q_p=\{x\}$.
Let $O$ be a univocal neighborhood of $x$.
It follows by a contradiction argument that there is an open neighborhood $U$ of $p$ such that $Q_q\subset O$ for all $q\in U $.
Recall the definition of $T$:
$$
T(\phi)=\sum_{j}\int_{\mathcal{C}_{j,1}}(h_{1}-h_{2})\phi
$$
For any cleave point $q\in U$ with $Q_q=\{x_1,x_2\}$, $h_i(q)=\tilde{u}(x_i)$.
By the above, both $x_1$ and $x_2$ are in $O$.
As $O$ is univocal, we see $h_1=h_2$ at $q$. The integrand of $T$ vanishes near $p$, and thus $\partial T =0$.
\end{proof}

\begin{prop}
Let $p\in S$ be a (possibly degenerate) \emph{cleave} point. 
Then $\partial T$ vanishes near $p$.
\end{prop}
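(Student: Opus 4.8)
The plan is to reduce, by Lemma \ref{structure of cleave points}, to a situation where $S$ is a single Lipschitz sheet inside a small neighbourhood of $p$, and then to recognise $T$ there as a constant multiple of the integration current over that sheet. First I would apply Lemma \ref{structure of cleave points} to the cleave point $p$: with $Q_p=\{x^{1},x^{2}\}$ it produces disjoint \emph{univocal} neighbourhoods $O_{1},O_{2}$ of $x^{1},x^{2}$ and a neighbourhood $U$ of $p$ (which I take to be a coordinate ball) with $Q_{q}\subset O_{1}\cup O_{2}$ for all $q\in U$; writing $A_{i}=\{q\in U:Q_{q}\cap O_{i}\neq\emptyset\}$, the proof of that lemma gives that $U_{1}:=A_{1}\setminus A_{2}$ and $U_{2}:=A_{2}\setminus A_{1}$ are open, and that $\mathcal{C}_{0}:=A_{1}\cap A_{2}\cap U$ is a Lipschitz hypersurface equal to the topological boundary of $U_{1}$ in $U$, so that $U=U_{1}\sqcup\mathcal{C}_{0}\sqcup U_{2}$ and $\mathcal{C}_{0}$ is a Lipschitz hypersurface \emph{without boundary inside} $U$. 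I would also recall the fact established at the beginning of Section \ref{section:proof of the main theorems}: on each connected component of the cleave locus the difference $h_{1}-h_{2}$ of the two one-sided values of $h$ is constant, because the balanced condition forces $\widehat{X}_{1}(v)=\widehat{X}_{2}(v)$ for $v$ tangent to $S$, and $\widehat{X}_{i}=dh_{i}$.

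Second, I would identify $T$ on test forms $\phi$ supported in $U$. Only cleave points of $U$ contribute to $T(\phi)=\sum_{j}\int_{\mathcal{C}_{j,1}}(h_{1}-h_{2})\phi$. If such a point $q$ has $Q_{q}$ contained in a single $O_{i}$, then univocality of $O_{i}$ gives equal values of $\tilde{u}$ on the two geodesics reaching $q$, i.e. $h_{1}(q)=h_{2}(q)$, so $q$ contributes nothing; hence the integrand is supported, within $U$, on the cleave points whose $Q_{q}$ meets both $O_{1}$ and $O_{2}$, and that set lies in $A_{1}\cap A_{2}=\mathcal{C}_{0}$. By Theorem \ref{complete description} the non-cleave points of $S$ form a set of Hausdorff dimension at most $n-2$, so this set is $\mathcal{H}^{n-1}$-equal to $\mathcal{C}_{0}$ and, being $\mathcal{C}_{0}$ minus a set of codimension $2$, it is connected; by the constancy statement above, $h_{1}-h_{2}$ equals a single constant $c$ on it. Thus $T(\phi)=c\int_{\mathcal{C}_{0}}\phi$ for every $\phi$ supported in $U$, with $\mathcal{C}_{0}$ oriented by the ambient orientation together with the incoming characteristic vector on the $U_{1}$ side.

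Finally, to conclude that $\partial T=0$ near $p$, let $\psi$ be a test $(n-2)$-form supported in $U$. Then
\[
\partial T(\psi)=T(d\psi)=c\int_{\mathcal{C}_{0}}d\psi=0,
\]
since $\mathcal{C}_{0}$ is a Lipschitz hypersurface with no boundary inside $U$ (it is the common topological boundary of the two open sets $U_{1},U_{2}$ with $U_{1}\cup\mathcal{C}_{0}\cup U_{2}=U$); concretely, $U_{1}$ has locally finite perimeter in $U$ because $\partial U_{1}\cap U=\mathcal{C}_{0}$ is a Lipschitz graph, the integration current over $\mathcal{C}_{0}$ is $\pm\,\partial$ of the $n$-current $\omega\mapsto\int_{U_{1}}\omega$, and $\partial\circ\partial=0$ (equivalently, one may invoke Stokes' theorem on the boundaryless Lipschitz manifold $\mathcal{C}_{0}$). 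As $\psi$ is arbitrary, $\partial T$ vanishes on $U$.

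The step I expect to require the most care is the bookkeeping in the middle paragraph: making the labelling of $h_{1}$ versus $h_{2}$ and the orientation of $\mathcal{C}_{0}$ globally coherent across $U$, and checking carefully that the restriction of $T$ to $U$ is genuinely $c$ times the integration current over $\mathcal{C}_{0}$ (i.e. that the cleave locus meets $U$ in $\mathcal{C}_{0}$ up to an $\mathcal{H}^{n-1}$-null set and that $\mathcal{C}_{0}$ minus that set is connected). The case $n=2$ needs no change: $\mathcal{C}_{0}$ is then a Lipschitz arc with no endpoints inside $U$, and the same computation gives $\partial T(\psi)=c\int_{\mathcal{C}_{0}}d\psi=0$.
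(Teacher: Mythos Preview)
Your overall setup --- localising to $U$ via Lemma~\ref{structure of cleave points}, identifying $T$ on test forms supported in $U$ with integration over $\mathcal{C}_0=A_1\cap A_2$ weighted by $h_1-h_2$, and discarding cleave components lying inside a single $A_i$ by univocality of $O_i$ --- is exactly what the paper does. The divergence is in how you finish.

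Your constancy argument has a genuine gap, and it is precisely the one you flagged as delicate. You assert that the cleave locus inside $\mathcal{C}_0$ is connected because the removed (non-cleave) set has ``codimension $2$''. But Theorem~\ref{complete description} only gives that the non-cleave points have Hausdorff dimension at most $n-2$; since $\mathcal{C}_0$ is $(n-1)$-dimensional, that is codimension $\geq 1$ in $\mathcal{C}_0$, not $\geq 2$, and a codimension-$1$ set can separate. Concretely, at a \emph{degenerate} cleave point $p$ the degenerate cleave points in $\mathcal{C}_0$ can form an $(n-2)$-dimensional set disconnecting the ordinary cleave points (already for $n=2$ a single degenerate cleave point cuts the Lipschitz arc $\mathcal{C}_0$ in two). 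Nor can you recover the single constant $c$ by continuity: ``continuous and locally constant on a dense open subset'' does not force constancy, as a Cantor-staircase construction on a dimension-$0$ Cantor set shows.

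The paper avoids the issue by not asserting constancy. It writes $\partial T(\sigma)=\int_{\mathcal{C}_0}(h_1-h_2)\,d\sigma$ and integrates by parts (a Stokes theorem for Lipschitz functions on the Lipschitz hypersurface $\mathcal{C}_0$, which is boundaryless in $U$) to obtain $\int_{\mathcal{C}_0} d(h_1-h_2)\wedge\sigma$; then the balanced condition at the $\mathcal{H}^{n-1}$-full-measure set of non-degenerate cleave points gives $\widehat{X}^1(v)=\widehat{X}^2(v)$ for every $v$ tangent to $\mathcal{C}_0$, i.e.\ $d(h_1-h_2)|_{T\mathcal{C}_0}=0$ a.e., and the integrand vanishes. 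That computation is what actually does the work; once it is in hand it also \emph{implies} constancy of $h_1-h_2$ on the connected sheet $\mathcal{C}_0$ (via Lipschitz regularity), after which your closing step $\partial[\mathcal{C}_0]=0$ in $U$ becomes redundant rather than the crux.
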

\begin{proof}

Use the sets $U$, $A_1$ and $A_2$ of lemma \ref{structure of cleave points}.

Whenever $\phi$ is a $n-1$ differential form with support contained in $U$, we can compute:
$$
T(\phi)=\int_{A_1\cap A_2}(h_1-h_2)\phi
$$
The components of cleave points inside either $A_1$ or $A_2$ do not contribute to the integral, for the same reasons as in the previous lemma. Recall the definition of $\partial T$, for a differential $n-2$ form $\sigma$:
$$
\partial T(\sigma)=T(d\sigma)=
\int_{A_1\cap A_2}(h_1-h_2)d\sigma
$$
We can apply a version of Stokes theorem that allows for Lipschitz functions. We will provide references for this later:

$$
T(d\sigma)=\int_{A_1\cap A_2}d(h_1-h_2)\sigma
$$

The balanced condition imposes that for any vector $v$ tangent to $A_1\cap A_2$ at a non-degenerate cleave point $q$ with $Q_q=\{x_1,x_2\}$.
$$
\hat{X}^1(v)=\hat{X}^2(v)
$$
for the incoming vectors $X^i=d_{x_i}F(r)$. Recall that $\Hnuno$-almost all points are cleave, and $dh_i$ is dual to the incoming vector $X^i$, so $T(d\sigma)=0 $.
\end{proof}

\begin{prop}
Let $p\in S$ be a \emph{crossing} point. 
Then the boundary of the current $T$ (defined in \ref{definition of T}) vanishes near $p$.
\end{prop}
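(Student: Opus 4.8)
The plan is to reduce the crossing-point case to a combination of the cleave-point and edge-point arguments, using the detailed local picture supplied by Lemmas \ref{structure of crossing points1}, \ref{structure of crossing points2} and \ref{structure of crossing points3}. First I would fix the univocal neighborhoods $O_1,\dots,O_N$ of the points of $Q_p$ and a neighborhood $U$ of $p$ with $Q_q\subset\bigcup_i O_i$ for all $q\in U$, as in \ref{structure of crossing points1}, together with the separating directions $Z_i$ and one-sided cones $C_i^+$ from the proof of \ref{structure of crossing points2}. Then, for a test $(n-2)$-form $\sigma$ supported in $U$, I would write $T(d\sigma)=\sum_{i<j}\int_{A_i\cap A_j}(h_i-h_j)\,d\sigma$, where $h_i$ is the (well-defined, by univocality) value of $\tilde u$ carried by the geodesics through $O_i$. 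The components of cleave points contained entirely in a single $A_i$ contribute nothing, exactly as in the edge-point proof, since there $h$ is single-valued.

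Next I would apply Stokes' theorem for Lipschitz chains — the same Lipschitz version invoked in the cleave-point proposition — on each piece $A_i\cap A_j$. The key structural input is Lemma \ref{structure of crossing points3}: each $\overline{A_i\cap A_j}=\partial A_i\cap\partial A_j$-portion of $\mathcal C$ is, in suitable coordinates, a Lipschitz graph $\{(a,t,h_i(t)): a\in L_0,\ f_1(a)<t<f_2(a)\}$ over a product domain in $L$, so it is a Lipschitz manifold-with-boundary to which Stokes applies, and the boundary pieces are either contained in $\partial U$ (where $\sigma$ vanishes) or land in the lower-dimensional set $\Sigma=\bigcup_{i,j,k}A_i\cap A_j\cap A_k$. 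By the Corollary, $\mathcal H^{n-2}(\Sigma)<\infty$, and since $h_i-h_j$ is bounded, the contribution of $\Sigma$ to the boundary integral is controlled; more precisely, around each triple point the several sheets $A_i\cap A_j$ meeting there contribute boundary terms that must cancel, because the $h_i$ are continuous and single-valued on each univocal $O_i$, so the jumps $h_i-h_j$ satisfy the cocycle relation $(h_i-h_j)+(h_j-h_k)+(h_k-h_i)=0$ along $\Sigma$. After these cancellations, I would be left with $T(d\sigma)=\sum_{i<j}\int_{A_i\cap A_j}d(h_i-h_j)\wedge\sigma$, and the balanced condition forces $\widehat{X^i}(v)=\widehat{X^j}(v)$ for every $v$ tangent to $A_i\cap A_j$ at a (generic) cleave point, i.e. $d(h_i-h_j)$ annihilates the tangent space of $A_i\cap A_j$ $\mathcal H^{n-1}$-almost everywhere; hence each integral vanishes and $\partial T=0$ near $p$.

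The main obstacle I expect is the bookkeeping at the triple points $\Sigma$: one must verify that the Lipschitz-Stokes boundary contributions of the various sheets $A_i\cap A_j$ incident to a given stratum of $\Sigma$ genuinely telescope to zero rather than merely being finite. This needs the precise combinatorial adjacency of the sheets described in \ref{structure of crossing points2} (the cyclic order $I_1,\dots,I_N$ around $\gamma$, with $P_a^i\subset A_i\cup A_{i+1}$), the cocycle identity for the $h_i$, and a consistent choice of orientations on the $A_i\cap A_j$ induced by a fixed ambient orientation $\mathcal O$. Once the orientations and adjacencies are matched, the cancellation is formal; making it rigorous with Lipschitz regularity (rather than smoothness) is where one must be careful, but the finiteness $\mathcal H^{n-2}(\Sigma)<\infty$ together with boundedness of $h$ guarantees all integrals are absolutely convergent, so no further regularity beyond what \ref{structure of crossing points1}--\ref{structure of crossing points3} provide is required.
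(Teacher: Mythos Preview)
Your proposal is correct and follows essentially the same approach as the paper: apply a Lipschitz Stokes theorem to each cleave component, kill the interior term $\int d(h_i-h_j)\wedge\sigma$ via the balanced condition, and cancel the boundary contributions along $\Sigma$ using what you call the cocycle relation $(h_i-h_j)+(h_j-h_k)+(h_k-h_i)=0$, which is exactly the pairwise cancellation the paper performs at each crossing point of $\Sigma\setminus\Sigma_T$.

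The paper is only slightly more explicit about two technical points you subsume under ``bookkeeping'': it separates out (i) the set $\Sigma_T$ of \emph{higher-order} points where $R_q^\ast$ spans an affine space of dimension $>2$, discarding it because it has Hausdorff dimension at most $n-3$, and (ii) a ``singular boundary'' where the component $\mathcal{C}_0$ pinches ($f_1(a)=f_2(a)$) but $R_q^\ast$ is still planar, arguing via Federer [4.5.5, 4.5.15] that these points carry no normal and hence do not contribute to the Stokes boundary term. Your blanket statement that the boundary lands in $\partial U\cup\Sigma$ is morally right but needs this refinement to be airtight; otherwise the argument is the same.
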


\begin{proof}

We use lemma \ref{structure of crossing points3} to describe the structure of connected components of $\mathcal{C}$ near $p$.
Let $\Sigma_T$, the set of \emph{higher order points}, be the set of those points such that $R_q^{\ast}$ spans an affine subspace of $T^{\ast}_q  M$ of dimension greater than $2$.

Take any connected component $\mathcal{C}_0$ of $\mathcal{C}$ contained in $\partial A_i$.
$\partial \mathcal{C}_0 $ decomposes into several parts: 

\begin{itemize}
 \item The regular boundary, consisting of two parts $D_1$ and $D_2$:
$$D_1\!=\!\{(a_1,\dots,a_{n-2},f_1(a),h_i(f_1(a))), \forall a\in L^\ast \text{
such that } f_l(a)<f_1(a)<f_2(a)\}
$$ 
$$
D_2\!=\!\{(a_1,\dots,a_{n-2},f_2(a),h_i(f_2(a))), \forall a\in L^\ast \text{
such that } f_1(a)<f_2(a)<f_r(a)\}
$$

\item The points of higher order, or $\partial \mathcal{C}_0 \cap \Sigma_T$.

\item The singular boundary, or those points $q=(a_1,\dots,a_{n-2},f_1(a),h_i(f_1(a))) $ where $f_1(a)=f_2(a)$ and $R_q$ is contained in an affine plane.

\item A subset of $\partial U$.

\end{itemize}

Using a version of Stokes theorem that allows for Lipschitz functions, we see that
$$
\int_{\mathcal{C}_0} v d\sigma= \int_{\mathcal{C}_0} d(v \sigma)-\int_{\mathcal{C}_0} (dv) \sigma=
\int_{D_1} v \sigma - \int_{D_2} v \sigma-\int_{\mathcal{C}_0} (dv) \sigma
$$
for any function $v$ and $n-2$ form $\sigma$ with compact support inside $U$.
Indeed, the last coordinate of the parametrization of $\mathcal{C}_0 $ is given by a Lipschitz function, so we can rewrite the integral as one over a subset of $L\times W$, and only Gauss-Green theorem is needed.
We can apply the version in \cite[4.5.5]{Federer}, whose only hypothesis is that the current $\Hnuno\lfloor \partial \mathcal{C}_0 $ must be representable by integration.
Using \cite[4.5.15]{Federer} we find that it is indeed, because its support is contained in a rectifiable set.
Here we are assuming that $D_1$ is oriented as the boundary of $\mathcal{C}_0 $, while $D_2$ is oriented in the opposite way, to match the orientation of $D_1$.

Notice we have discarded several parts of $\partial \mathcal{C}_0$:
\begin{itemize}
 \item A subset of $\partial \mathcal{C}_0$ inside $\partial U$ does not contribute to the integral because $supp(\sigma)\subset\subset U $.
 \item $\partial \mathcal{C}_0 \cap \Sigma_T$ does no contribute because it has Hausdorff dimension at most $n-3$.
 \item The singular boundary does not contribute either, because the normal to $\widetilde{\mathcal{C}_0}$ at a point of the singular boundary does not exist (see \cite[4.5.5]{Federer}).
\end{itemize}

We now prove that $\partial T=0$.

For a form $\sigma $ of dimension $n-2$ and compact support inside $U$:
\begin{align*}
T(d\sigma)& =\sum_{i}\int_{\mathcal{C}_i}(h_l-h_r)d\sigma\\
&=
\sum_{i}\int_{\mathcal{C}_i}d(h_l-h_r)\sigma+\sum_{i}\left(\int_{D_{i,1}}(h_l-h_r)\sigma-\int_{D_{i,2}}(h_l-h_r)\sigma\right)
\end{align*}
where $D_{i,1}$ and $D_{i,2}$ are the two parts of the regular boundary of $\mathcal{C}_i $.

The first summand is zero and the remaining terms can be reordered (the sum is
absolutely convergent because $h$ is bounded and $\Hndos(\Sigma)$ is finite):
$$
\sum_{i}\left(\int_{D_{i,1}}(h_l-h_r)\sigma-\int_{D_{i,2}}(h_l-h_r)\sigma\right)=
\int_{\Sigma\setminus\Sigma_T} \sum_{(i,j)\in I(q)}(h_{i,j,l}-h_{i,j,r})\sigma dq
$$
where every point $q\in \Sigma\setminus\Sigma_T$ has a set $I(q)$ consisting of
those $i$ and $j=1,2$ such that $q$ is in the boundary part $D_j$ of the
component $\mathcal{C}_i$.
The integrand at point $q$ is then:
$$
\sigma\sum_{(i,j)\in I(q)}(h_{i,j,l}-h_{i,j,r})
$$
where $h_{i,j,l}$ is the value of $\tilde{u}(x)$ coming from the side $l$ of component $\mathcal{C}_i$ and boundary part $D_j$.

By the structure lemma \ref{complete description}, we can restrict the integral to crossing points.
Let $O_1,\dots,O_N$ be the disjoint univocal sets that appear when we apply \ref{structure of crossing points1} to $p$.
For a crossing point $q$, $I(q)$ is in correspondence with the set of indices $k$ such that $O_k\cap Q_p \neq \emptyset $. Indeed, the intersection of $\mathcal{S} $ with the plane $P_a$ containing $q$ is a Lipschitz tree, and $q$ is a vertex, and belongs to the regular boundary of the components that intersect $P_a$ in an edge. 
The $h_{i,j,l}$ in the sum appear in pairs: one is the value from the left coming from one component $\mathcal{C}_i$ and the value from the right of another component $\mathcal{C}_{i'}$.
Each one comes from a different side, so they carry opposite signs, and they cancel.
The integrand at $q$ vanishes altogether, so $\partial T =0$.

\end{proof}

\chapter{A new proof of the Ambrose conjecture for generic $3$-manifolds}\label{chapter: ambrose}

We give a proof of the {\emph{Ambrose conjecture}}, a global version of the Cartan local lemma.
The proof is given only for a generic class of Riemannian manifolds of dimension $3$.
In 2010, J. Hebda gave a proof in \cite{Hebda10} of the Ambrose conjecture for a (different) generic class of Riemanian manifolds of any dimension.
His proof is also much shorter.
However, his proof does not extend to arbitrary metrics and we think that our proof might, even though we have been unable to do so to this day. Indeed, the proof presented here extends to some manifolds that are not covered by the result of J. Hebda, so this is truly a different approach.

Finally, some of the techniques presented here, such as the \emph{conjugate descending flow}, or the \emph{linking curves} might be useful for other problems, as commented in the chapter \ref{chapter: beyond}.

\section{Introduction}

\subsection{Cartan lemma}

Let ($M_{1} ,g_{1}$) and ($M_{2} ,g_{2}$) be two Riemannian manifolds
{\emph{of the same dimension}}, with selected points $p_{1} \in M_{1}$ and
$p_{2} \in M_{2}$. 
We will speak about the pointed manifolds ($M_{1} ,p_{1}$)
and ($M_{2} ,p_{2}$). Any linear map $L:T_{p_{1}} M_{1} \rightarrow T_{p_{2}}
M_{2}$ induces the map $\varphi = \exp_{2} \circ L \circ ( \exp_{1} |_{O_{1}}
)^{-1}$, defined in any domain $O_{1} \subset T_{p_{1}} M_{1}$ such that
$ e_{1}|_{O_{1}}$ is injective (tipically, $O_{1}$ is a normal neighborhood of
$p_{1}$).

A classical theorem of E. Cartan {\cite{Cartan51}} identifies a situation
where this map is an isometry. The following is both a reformulation and a
slight generalization:

\begin{dfn}
  Let $(M_{1}, p_{1} )$ and $(M_{2}, p_{2} )$ be complete Riemannian manifolds
  of the same dimension with base points, and $L:T_{p_{1}} M_{1} \rightarrow
  T_{p_{2}} M_{2}$ a linear map.
  
  Let $\gamma_{1}$ and $\gamma_{2}$ be the geodesics defined in the interval
  $[0,1]$, with $\gamma_{1}$ starting at $p_{1}$ with initial speed vectors $x
  \in T_{p_{1}} M_{1}$ and $\gamma_{2}$ starting at $p_{2}$ with initial speed
  $L(x)$.
  
  For any three vectors $v_{1} ,v_{2} , v_{3}$ in $T_{p_{1}} M_{1}$, define:
  \begin{itemizedot}
    \item $R_{1} (v_{1} ,v_{2} , v_{3} )$ is the vector of $\Tone$ obtained by
    performing parallel transport of $v_{1} ,v_{2} , v_{3}$ along
    $\gamma_{1}$, computing the Riemann curvature tensor at the point
    $\gamma_{1} (1) \in M_{1}$ acting on those vectors, and then performing
    parallel transport backwards into the point $p_{1}$.
    
    \item $R_{2} (v_{1} ,v_{2} , v_{3} )$ is the vector of $\Tone$ obtained by
    performing parallel transport of $L(v_{1} ),L(v_{2} ), L(v_{3} )$ along
    $\gamma_{2}$, computing the Riemann curvature tensor at the point
    $\gamma_{2} (1) \in M_{2}$ acting on those vectors, then performing
    parallel transport backwards into the point $p_{2}$, and finally applying
    $L^{-1}$ to get a vector in $T_{p_{1}} M_{1}$.
  \end{itemizedot}
  If $R_{1} (v_{1} ,v_{2} , v_{3} )=R_{2} (v_{1} ,v_{2} , v_{3} )      \forall
  v_{1} ,v_{2} , v_{3} \in T_{p_{1}} M_{1}$ for any two geodesics $\gamma_{1}$
  and $\gamma_{2}$ as above, we say that the curvature tensors of $(M_{1}
  ,p_{1} )$ and $(M_{2} ,p_{2} )$ are $L$-related.
\end{dfn}

The usual way to express that $M_{1}$ and $M_{2}$ are $L$-related is to say
that the {\emph{parallel traslation of curvature along geodesics}} on $M_{1}$
and $M_{2}$ coincides.
\begin{dfn}

  \label{L-related}We say $(M_{1} ,p_{1} )$ and $(M_{2} ,p_{2} )$ are
  $L$-related iff they have the same dimension and, whenever $\exp_{1}
  |_{O_{1}}$ is injective for some domain $O_{1} \subset T_{p_{1}} M_{1}$,
  then the map $\varphi = \exp_{2} \circ L \circ ( \exp_{1} |_{O_{1}} )^{-1}$
  is an isometric inmersion.
\end{dfn}

\begin{theorem}
  \label{Cartan's theorem}If the curvature tensors of $(M_{1} ,p_{1} )$ and
  $(M_{2} ,p_{2} )$ are $L$-related, then $(M_{1} ,p_{1} )$ and $(M_{2} ,p_{2}
  )$ are $L$-related.
\end{theorem}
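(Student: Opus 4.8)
The plan is to reduce everything to a comparison of Jacobi fields along corresponding radial geodesics, exactly as in the classical argument (cf.\ lemma 1.35 of \cite{Cheeger Ebin}); here, as in the statement of Cartan's lemma, $L$ is a linear isometry. Fix a domain $O_{1}\subset T_{p_{1}}M_{1}$ on which $\exp_{1}$ is injective; since such a domain lies in a normal neighborhood we may also take $\exp_{1}|_{O_{1}}$ to be a diffeomorphism onto its image, so that $\varphi=\exp_{2}\circ L\circ(\exp_{1}|_{O_{1}})^{-1}$ is a well-defined smooth map. Given $x\in O_{1}$, set $\gamma_{1}(t)=\exp_{1}(tx)$, $\gamma_{2}(t)=\exp_{2}(tL(x))$, and let $P_{i}(t)$ be parallel transport along $\gamma_{i}$ from $\gamma_{i}(0)$ to $\gamma_{i}(t)$. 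For $v\in T_{p_{1}}M_{1}$ let $J_{1}^{v}$ be the Jacobi field along $\gamma_{1}$ with $J_{1}^{v}(0)=0$, $\nabla_{t}J_{1}^{v}(0)=v$, and $J_{2}^{v}$ the Jacobi field along $\gamma_{2}$ with $J_{2}^{v}(0)=0$, $\nabla_{t}J_{2}^{v}(0)=L(v)$. Using the standard Jacobi-field description of $d\exp$, one has $d(\exp_{1})_{x}(v)=J_{1}^{v}(1)$ and $d(\exp_{2})_{L(x)}(L(v))=J_{2}^{v}(1)$, so at $q=\exp_{1}(x)$ the differential satisfies $d\varphi_{q}\bigl(J_{1}^{v}(1)\bigr)=J_{2}^{v}(1)$; and as $v$ ranges over $T_{p_{1}}M_{1}$ the vectors $J_{1}^{v}(1)=d(\exp_{1})_{x}(v)$ fill $T_{q}M_{1}$, since $\exp_{1}$ is nonsingular on $O_{1}$.

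The central step is to show the hypothesis forces $J_{2}^{v}(t)=P_{2}(t)\,L\,P_{1}(t)^{-1}J_{1}^{v}(t)$ for all $t$. Put $u_{1}(t)=P_{1}(t)^{-1}J_{1}^{v}(t)$ and $u_{2}(t)=L^{-1}P_{2}(t)^{-1}J_{2}^{v}(t)$, both curves in the fixed space $T_{p_{1}}M_{1}$. Conjugating the Jacobi equation $\nabla_{t}^{2}J+R(J,\dot\gamma)\dot\gamma=0$ by parallel transport turns it into the genuine linear second-order ODE $u_{1}''(t)+\mathcal{R}_{1}(t)u_{1}(t)=0$ and $u_{2}''(t)+\bigl(L^{-1}\mathcal{R}_{2}(t)L\bigr)u_{2}(t)=0$, where $\mathcal{R}_{i}(t)w$ means: transport $w$ (resp.\ $Lw$) along $\gamma_{i}$ to time $t$, apply $R^{M_{i}}(\,\cdot\,,\dot\gamma_{i}(t))\dot\gamma_{i}(t)$, and transport back to $\gamma_{i}(0)$. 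Applying the definition of $L$-relatedness of the curvature tensors to the vector $tx$ in place of $x$ (so that the "time-$1$" geodesic of that definition is $\gamma_{1}|_{[0,t]}$ reparametrized) gives exactly $\mathcal{R}_{1}(t)=L^{-1}\mathcal{R}_{2}(t)L$ as endomorphisms of $T_{p_{1}}M_{1}$. Hence $u_{1}$ and $u_{2}$ solve the same linear ODE with the same data $u_{i}(0)=0$, $u_{i}'(0)=v$, so $u_{1}\equiv u_{2}$ by uniqueness, which is the claimed identity.

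To finish, since $P_{1}(1)$, $P_{2}(1)$ and $L$ are linear isometries, the identity $J_{2}^{v}(1)=P_{2}(1)\,L\,P_{1}(1)^{-1}J_{1}^{v}(1)$ yields, for all $v,w\in T_{p_{1}}M_{1}$,
\[
\langle J_{2}^{v}(1),J_{2}^{w}(1)\rangle_{M_{2}}=\langle L\,P_{1}(1)^{-1}J_{1}^{v}(1),\,L\,P_{1}(1)^{-1}J_{1}^{w}(1)\rangle=\langle J_{1}^{v}(1),J_{1}^{w}(1)\rangle_{M_{1}}.
\]
Because $v\mapsto J_{1}^{v}(1)$ identifies $T_{p_{1}}M_{1}$ with $T_{q}M_{1}$ and $d\varphi_{q}$ sends $J_{1}^{v}(1)$ to $J_{2}^{v}(1)$, this says $d\varphi_{q}$ preserves the inner product; thus $\varphi$ is an isometric immersion on $\exp_{1}(O_{1})$, and since $O_{1}$ was an arbitrary domain of injectivity, $(M_{1},p_{1})$ and $(M_{2},p_{2})$ are $L$-related in the sense of Definition \ref{L-related}.

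I expect the step needing the most care to be the bookkeeping in the second paragraph: verifying that conjugation by parallel transport really converts $\nabla_{t}^{2}$ into an honest $t$-dependent linear ODE on the single vector space $T_{p_{1}}M_{1}$, and that the curvature term reorganizes as $\mathcal{R}_{1}(t)=L^{-1}\mathcal{R}_{2}(t)L$ with the correct choice of radial argument. There is no genuine global difficulty here — which is exactly why Cartan's theorem is purely local, and why the work of the later chapters is needed to push this infinitesimal matching of metrics across the cut locus in the Ambrose conjecture.
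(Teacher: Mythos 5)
Your argument is correct and is precisely the classical Jacobi-field comparison that the paper invokes by citing Lemma 1.35 of \cite{Cheeger Ebin}: conjugate the Jacobi equation by parallel transport, use $L$-relatedness of the curvature to see that both sides solve the same linear ODE with the same initial data, and conclude that $d\varphi$ carries $J_1^v(1)$ to $J_2^v(1)$ isometrically. So you have simply written out the proof the paper outsources; there is no difference in approach.
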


\begin{proof}
  Lemma 1.35 of \cite{Cheeger Ebin}.
\end{proof}

In 1956 (see \cite{Ambrose}), W. Ambrose proved a global version of the above
theorem, but with stronger hypothesis: if the parallel traslation of curvature along \emph{broken geodesics} on $M_{1}$ and $M_{2}$ coincide,
then there is a global isometry $\varphi:M_1\rightarrow M_2$ whose differential at $p_1$ is $L$.
It is simple to prove that $\varphi$ can be constructed as above.
Ambrose himself showed that is enough if the hypothesis holds for broken geodesics with only one ``{\emph{elbow}}''.
The reader can find more details in the standard reference \cite{Cheeger Ebin}.

However, he conjectured that the same hypothesis should suffice, except for
the obvious counterexample of covering spaces:

\subsection{Ambrose Conjecture}

The {\strong{Ambrose conjecture}} states that if the curvature tensor of $(M_1,p_1)$ and $(M_2,p_2)$ are $L$-related, and if furthermore $M_{1}$ and $M_{2}$ are simply connected, there is an isometry $\psi :M_{1} \rightarrow M_{2}$ such that $\psi \circ \exp_{1} = \exp_{2} \circ L$.

\begin{dfn}
  A Riemannian covering is a local isometry that is also a covering map.
\end{dfn}

\begin{conjecture}[\textsc{Ambrose Conjecture}]\label{ambrose conjecture}
  Let ($M_{1} ,p_{1}$) and ($M_{2} ,p_{2}$) be two $L$-related pointed Riemannian manifolds.
  
  Then there is a Riemannian manifold ($M,p$) (the
  {\strong{synthesis}} of ($M_{1} ,p_{1}$) and $(M_{2} ,p_{2} )$), linear
  isometries $L_{i} :T_{p} M_{s} \rightarrow T_{p_{i}} M_{i}$, for $i=1,2$,
  and Riemannian coverings $\pi_{i} :M_{s} \rightarrow M_{i}$ for $i=1,2$ such
  that $\pi_{i} \circ \exp_{p} = \exp_{p_{i}} \circ L_{i}$ and $L \circ L_{1}
  =L_{2}$.
  
  $$
  \xymatrix{ & T_pM  \ar@//[dd]_{\e} \ar[dl]^{L_1} \ar[dr]_{L_2}\\
T_{p_1}M_1 \ar@//[dd]^{\exp_{p_1}} \ar[rr]^{\quad L}& & T_{p_2}M_2
\ar@//[dd]^{\exp_{p_2}} \\   & M \ar[dl]^{\pi_1} \ar[dr]_{\pi_2} & \\
  M_1 &  & M_2}
  $$

  In particular, if $M_{1}$ and $M_{2}$ are simply-connected, the maps
  $\pi_{i}$ are isometries, and $\pi_{2} \circ \pi_{1}^{-1} :(M_{1} ,p_{1} )
  \rightarrow (M_{2} ,p_{2} )$ is an isometry (``{\emph{the}}'' isometry)
  whose tangent at $p_{1}$ is $L$.
\end{conjecture}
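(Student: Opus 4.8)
The plan is to build the synthesis $(M,p)$ by continuing, along the geodesics issuing from $p_1$, the isometric immersion that Cartan's theorem (Theorem \ref{Cartan's theorem}, invoked through Definition \ref{L-related}) produces on each normal neighbourhood, and to prove that the only obstruction to globalizing this continuation vanishes. One works in $T_{p_1}M_1$ and lets $\mathcal{D}_1\subset T_{p_1}M_1$ be the set of $x$ for which $t\mapsto\exp_1(tx)$ minimizes on $[0,1]$; then $\exp_1(\mathcal{D}_1)=M_1$ and $\exp_1$ restricts to a diffeomorphism from $\mathrm{int}\,\mathcal{D}_1$ onto $M_1\setminus Cut_{p_1}$, so $\varphi_0:=\exp_2\circ L\circ(\exp_1|_{\mathrm{int}\,\mathcal{D}_1})^{-1}$ is a well-defined isometric immersion of $M_1\setminus Cut_{p_1}$ into $M_2$. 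The whole content of the conjecture is then to extend $\varphi_0$ across $Cut_{p_1}$ in a single-valued way: the quotient of (the universal cover of) $M_1$ by the identifications this extension forces is the candidate $M$, equipped with the obvious maps $\pi_i:M\to M_i$ and $L_i:T_pM\to T_{p_i}M_i$; completeness upgrades the $\pi_i$ to Riemannian coverings and $L\circ L_1=L_2$ holds by construction, and if $M_1,M_2$ are simply connected the $\pi_i$ are isometries and $\pi_2\circ\pi_1^{-1}$ is the isometry sought, with differential $L$ at $p_1$.

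To perform the extension I would stratify $Cut_{p_1}$ by the local structure theorem \ref{complete description}: cleave points, edge points, degenerate cleave points, crossing points, and a remainder $N$ of Hausdorff codimension $\geq 3$. Across the smooth hypersurface $\mathcal{C}$ of cleave points a point $q$ has exactly two preimages $x^1,x^2\in\mathcal{D}_1$ with $|x^1|=|x^2|=d(p_1,q)$; along the two radial geodesics $t\mapsto\exp_1(tx^i)$ the $L$-related curvature data coincide, so the two Cartan immersions agree to first order at $q$ --- one verifies this by a first-variation argument of the same flavour as the Finsler Gauss-lemma computation inside the proof of Lemma \ref{uniqueness near order 1 points} --- whence $\varphi_0$ extends $C^1$, hence smoothly, across $\mathcal{C}$, and the extension is still an isometric immersion. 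The strata of codimension $2$ (edge, degenerate cleave, crossing) are then handled by continuity: on a punctured neighbourhood of such a point the extension is already defined on a dense open set, and one glues with the aid of the normal forms for conjugate vectors of order $1$ from Proposition \ref{regular exponential map} and the special coordinates of Section \ref{subsection: special coordinates}. Finally the remainder $N$ is removable, since a locally Lipschitz isometric immersion extends across a closed set of codimension $\geq 3$.

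The genuine obstacle is the \emph{consistency} of these local gluings: one must show that the identification prescribed along the cleave hypersurfaces has trivial holonomy around the strata of codimension $\geq 2$, so that the extended map --- and hence each $\pi_i$ --- is really single-valued rather than multivalued. This is exactly where naive analytic continuation fails, because along arcs where the cut locus is tangent to the first conjugate locus the two sheets of $\mathcal{C}$ degenerate to tangential contact and one cannot pass straight through. The tool I would use is a \emph{conjugate descending flow}, which moves a cut point along the first conjugate locus while transporting the attached minimizing directions, together with \emph{linking curves} joining the two sheets; this reduces the triviality of the holonomy to the vanishing of monodromy along loops in the complement of the codimension-$3$ set $N$, which is simply connected whenever $M_1$ is. For surfaces the cut locus is a graph and the step is classical (Hebda, Itoh); for a generic metric in dimension $3$ one may additionally invoke Buchner's genericity and stability results (\cite{Buchner}, \cite{Buchner Stability}) to bring $Cut_{p_1}$ to a standard stratified form, which is the route of Chapter \ref{chapter: ambrose}. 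Making the conjugate descending flow behave well through the degenerate-cleave and crossing strata for an \emph{arbitrary} metric --- ultimately a transversality and stratification question about how the first conjugate locus sits inside the cut locus --- is the step that remains open, and is the programme sketched in Chapter \ref{chapter: beyond}.
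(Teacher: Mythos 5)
The statement you are addressing is a \emph{conjecture}: the paper itself only proves it for metrics in the generic class $\mathcal{G}_{M}$ (Theorem \ref{main theorem ambrose}), and you rightly concede at the end that the general case remains open. The problem is that your sketch also contains a genuine gap well before that final admitted step. At a cleave point $q=\exp_1(x^1)=\exp_1(x^2)$ you assert that ``the two Cartan immersions agree to first order at $q$'' by ``a first-variation argument,'' and from this you extend $\varphi_0$ across $\mathcal{C}$. This is precisely the non-trivial content of the conjecture and it does \emph{not} follow from the $L$-relatedness of the curvature along the two radial geodesics taken separately. Cartan's lemma controls each branch $\exp_2\circ L\circ(\exp_1|_{O_i})^{-1}$ individually, but gives no a priori reason why $\exp_2(L(x^1))=\exp_2(L(x^2))$. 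Establishing that identity is what Hebda's tree-formed curves (and, in this paper, the linking curves built from conjugate descending curves and their retorts) are for: one must join $x^1$ to $x^2$ by a path in the tangent space whose image under $\exp_1$ is fully tree-formed, and then transport the identity through the affine development. Without such a curve your ``extension across $\mathcal{C}$'' is simply undefined, and the subsequent codimension-$2$ and codimension-$3$ steps have nothing to glue.

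There is also a structural mismatch with the paper's actual strategy that matters here. You stratify the \emph{cut locus} and try to push $\varphi_0$ through it; but the paper explicitly explains (in the subsection on difficulties in dimension $\geq 3$) that this route can fail: there exist $3$-manifolds whose cut locus contains no conjugate points, so a fully tree-formed curve in the tangent cut locus joining $x^1$ to $x^2$ would force a terminal vertex that is both conjugate and minimizing --- a contradiction. For this reason the paper abandons the cut locus entirely and instead works in $V_1$ (vectors up to the first conjugate point), defines \emph{unequivocal} points and the \emph{linked} relation, and constructs the synthesis $M$ as the quotient $V_1/\!\leftrightsquigarrow$, with the covering property obtained afterwards from a distance-decreasing estimate and completeness. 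The linking curves live in the first conjugate locus, not in the cut locus, and their existence is exactly what requires the genericity hypothesis (the classification into $A_2$, $A_3$, $A_4$, $D_4^{\pm}$ singularities and the transient-pair argument). So the obstruction you locate in the ``holonomy around codimension-$2$ strata'' is not where the real difficulty lies; it lies one step earlier, in producing the identification at a single cleave point at all.
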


The main result of this chapter is:

\begin{theorem}
  \label{main theorem ambrose}
  The Ambrose Conjecture \ref{ambrose conjecture} holds if the metric of $M_1$ belongs to the generic class of metrics $\mathcal{G}_{M}$, as defined in \ref{the set of generic metrics}.
\end{theorem}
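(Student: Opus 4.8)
The plan is to build the synthesis of Conjecture \ref{ambrose conjecture} by extending the Cartan map $\varphi = \exp_2 \circ L \circ (\exp_1|_{O_1})^{-1}$ past the cut locus of $M_1$. By Theorem \ref{Cartan's theorem}, the hypothesis that the curvature tensors are $L$-related implies that $(M_1,p_1)$ and $(M_2,p_2)$ are $L$-related in the sense of \ref{L-related}; hence $\varphi$ is a local isometric immersion on \emph{every} injective domain $O_1 \subset T_{p_1}M_1$, and the only real problem is to glue these locally defined maps into a single globally defined map. After the standard reduction — passing to universal covers, exactly as in the cut-locus chapters, so that it suffices to treat the simply connected case and then descend — the goal becomes: construct a well-defined map $\psi : M_1 \to M_2$ with $\psi\circ\exp_1 = \exp_2\circ L$. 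Such a $\psi$ is automatically a local isometry by Cartan, a covering map since $M_1$ is complete, and therefore an isometry once $M_2$ is simply connected, with $d\psi_{p_1}=L$, which is the assertion of the theorem.

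To construct $\psi$ I would set up a continuation procedure along paths. Given a path $c$ in $M_1$ from $p_1$ to a point $q$, chosen to meet the cut locus in a controlled way, one lifts $c$ through the exponential map of $M_1$, transports the lift by $L$, and projects by $\exp_2$, obtaining a candidate value $\psi_c(q)\in M_2$. One must show $\psi_c(q)$ is independent of $c$. Since $M_1$ is simply connected any two such paths are homotopic, so the issue reduces to triviality of the monodromy of the construction around small loops; away from $Cut_{p_1}$ this is immediate from Cartan, so the entire content is concentrated at the cut locus.

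Here I would invoke the structure theory of Chapter \ref{Chapter: structure}, in particular Theorem \ref{complete description}, together with Buchner's genericity and stability results, to pin down the class $\mathcal{G}_M$ (see \ref{the set of generic metrics}): for a metric in $\mathcal{G}_M$ the cut locus of $p_1$ is a stable stratified manifold whose points are cleave, edge, degenerate cleave or crossing points, except for a remainder of codimension $3$ — which, in dimension $3$, is a finite set of points. The key new tool is the \emph{conjugate descending flow}: a partially defined flow on $Cut_{p_1}$ that pushes a conjugate cut point toward strata of lower conjugacy order and ultimately toward $p_1$, realizing the cut locus as a set that can be "swept down" to the base point. Combining this flow with the \emph{linking curves} — distinguished curves joining the two local sheets that meet along the top stratum — I would decompose any small loop meeting $Cut_{p_1}$ into elementary pieces crossing cleave, edge and crossing points, and on each piece check triviality of the monodromy from the local normal forms (the cleave hypersurface of Proposition \ref{cleave points are a manifold}, the fold normal form \eqref{normal form for A2} at edge points, and the structure of crossing points from \ref{structure of crossing points1}, \ref{structure of crossing points2} and \ref{structure of crossing points3}). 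Chaining these pieces kills the monodromy around an arbitrary contractible loop, so $\psi$ is well defined, and the rest is routine.

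The main obstacle will be precisely the difficulty Hebda singled out: the cut locus, though homotopy equivalent to $M_1\setminus\{p_1\}$ and (generically) a finite stratified complex, need not admit an obvious collapse onto $p_1$, so the sweeping-down is not formal. Making the conjugate descending flow well-behaved — controlling what happens when a flow line runs into a higher-order or crossing stratum, and verifying that the linking curves close up consistently there — is the heart of the argument and the reason for restricting both to dimension $3$ (so the uncontrolled remainder is $0$-dimensional) and to the generic class $\mathcal{G}_M$ (so that Buchner's stability and the local normal forms apply). Once well-definedness of $\psi$ is in hand, local isometry, the covering property, and finally the isometry/synthesis conclusion follow exactly as in the classical Ambrose argument, proving Conjecture \ref{ambrose conjecture} for metrics in $\mathcal{G}_M$.
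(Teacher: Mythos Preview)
Your proposal has the right keywords but misplaces where the argument actually happens, and this is a genuine gap rather than a matter of taste.

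First, the class $\mathcal{G}_M$ in \ref{the set of generic metrics} is \emph{not} defined via the cut-locus stratification of Theorem \ref{complete description}; it is defined through Klok's classification of the Lagrangian singularities of $\exp_{p_1}$ into types $A_2$, $A_3$, $A_4$, $D_4^{\pm}$, plus the Buchner transversality of their images. The paper never invokes Theorem \ref{complete description} in the proof of \ref{main theorem ambrose}. In particular, your sentence ``the fold normal form \eqref{normal form for A2} at edge points'' conflates two unrelated things: $A_2$ folds are singularities of $\exp_{p_1}$ in $T_{p_1}M_1$, whereas edge points of Theorem \ref{complete description} live in the cut locus in $M_1$, and by Proposition \ref{no A2 in S} no $A_2$ vector can occur in $R_p$.

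Second, and more seriously, the conjugate descending flow and the linking curves do \emph{not} live on $\tmop{Cut}_{p_1}\subset M_1$; they live on the conjugate locus $\mathcal{C}\subset \partial V_1\subset T_{p_1}M_1$. The whole point of the chapter is that working on the tangent cut locus fails in dimension $\geq 3$ (this is exactly the obstruction discussed right before Section \ref{section: generic metric}: there is no natural tree-formed curve joining the two preimages of a cleave point). The paper's remedy is to abandon the injectivity domain $O_{p_1}$ altogether and work on the larger set $V_1$; the synthesis is built as the quotient $V_1/\!\leftrightsquigarrow$ (Proposition \ref{synthesis: statement}), not by continuation along paths in $M_1$. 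Linking curves are concatenations of ACDCs and their retorts inside $V_1$, whose image under $e_1$ is tree-formed (Proposition \ref{main properties of linking curves}); the algorithm of Figure \ref{figure: algorithm for linking curves} and the transient-pair arguments (Theorems \ref{typical sings have transient neighborhoods for easy manifolds} and \ref{typical sings have transient neighborhoods}) are what guarantee these curves exist and terminate. Your monodromy-around-loops scheme, as written, runs straight into the obstacle the paper explicitly says cannot be overcome on the cut locus; you would need to relocate the entire construction to $V_1$ and reproduce the unequivocal/linked dichotomy (Theorem \ref{claim: hypothesis of the synthesis theorem}) to make it work, at which point you are doing the paper's proof.
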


\begin{remark}
  The {\emph{synthesis}} manifold that we build is a \emph{least common Riemannian covering} (see \ref{universal property of the synthesis}).
\end{remark}

\subsection{Existing results}

Ambrose was able to prove the conjecture if all the data is analytic.
In \cite{Hicks}, in 1959, the conjecture was generalized to parallel transport for affine connections, and in \cite{Blumenthal Hebda}, in 1987, to Cartan connections.
Also in 1987, in the paper \cite{Hebda}, James Hebda proved that the conjecture was true for surfaces that satisfy a certain regularity hypothesis, that he was able to prove true in 1994 in \cite{Hebda94}. J.I. Itoh also proved the regularity hypothesis independently in \cite{Itoh96}.
The latest advance came in 2010, after we had started our research on the Ambrose conjecture, when James Hebda proved in \cite{Hebda10} that the conjecture holds if $M_1$ is a \emph{heterogeneous manifold}. Such manifolds are generic.

\section{Notation and preliminaries}\label{Ambrose notation}

$M$ is an arbitrary Riemannian manifold, $p$ a point of $M$, $(M_{1} ,p_{1} )$
and $(M_{2} ,p_{2} )$ are two Riemannian manifolds that are $L$-related.

Throughout this chapter, $ e_{1}$ stands for $\exp_{p_{1}}$ and $ e_{2}$ for
$\exp_{p_{2}} \circ L$.

$T_{p} M$ has the Riemannian manifold structure induced by the scalar product
$g_{p}$.
We denote by $R(v)=|v|$ the norm in $\T$. Using this name will be useful when we use non-linear coordinates in $\T$.
The radial vector field at $v \in \T$ is the vector $\partial_{r}=\frac{\partial}{\partial r}=\frac{v}{|v|}$.
Finally, we also define:
$$
B_{R_{0}} =\{x \in \T :|x|<R_{0} \}
$$
$$
B_{R_{0}} (y)=\{x \in \T :|x-y|<R_{0} \}
$$

The proof of the Ambrose conjecture for surfaces given by James Hebda in
{\cite{Hebda}} relies on properties of $\tmop{Cut}_{p}$, the {\strong{cut
locus}} of $M$ with respect to $p$. Let us
define also the {\emph{injectivity set}} $O_{p} \subset \T$, consisting of
those vectors $x$ in $\T$ such that $d( \e (tx),p)=t$ for all 
$0 \leqslant t\leqslant 1$, and let $\tmop{TCut}_{p} = \partial O_{p}$ be the {\emph{tangent cut locus}}.
It is a well known fact that $\tmop{TCut}_{p}$ maps onto $\tmop{Cut}_{p}$ by $\e$.

In our proof, we will need to use a set bigger than the injectivity set, defined as follows. Recall the functions $\lambda_{k} :S_{p_{1}} M_{1} \rightarrow \mathbb{R}$ as the parameter $t_{\ast}$ for which $t \cdot x$ is the $k$-th conjugate point along $t \rightarrow tx$ (counting multiplicities. We proved in \ref{landa es Lipschitz} that these functions are Lipschitz.
We define $V_{1}$ as the set of tangent vectors such that $|x| \leqslant \lambda_{1} (x/|x|$), a set with Lipschitz boundary.
Indeed, in \cite{Castelpietra Rifford}, it was shown that $\lambda_1$ is semiconcave.
It is well known that $O_{p} \subset V_{1}$.

Let $\tmop{AC}_{p} (X)$ be the space of absolutely continuous curves in the
manifold $M$ starting at $p$, with the topology defined as in {\cite{Hebda}}.
We will also use the affine developement $\tmop{Dev}_{p} : \tmop{AC}_{p} (M)
\rightarrow \tmop{AC}_{0} (T_{p} M)$ defined in that reference, or in the standard reference \cite{Kobayashi Nomizu}.

Finally, we introduce tree-formed curves, following James Hebda (\cite{Hebda}). 
The model for a tree-formed curve $u:[0,1] \rightarrow M$ is an absolutely continuous curve that factors through a finite topological tree $\Gamma$.
In other words, $u= \bar{u} \circ T$ for the quotient map $T:[0,1]\rightarrow \Gamma$ with $T(0)=T(1)$. 
The concept is similar to the {\emph{tree-like paths}} of the theory of rough paths. 
J. Hebda uses a more general definition, allowing for an arbitrary quotient map $T:[0,1] \rightarrow \Gamma$, and an absolutely continuous curve $u$ such that:
\[ \int_{t_{1}}^{t_{2}} \varphi (s)(u' (s))ds=0 \]
for any continuous $1$-form $\varphi$ along $u$ $( \varphi (s) \in
T^{\ast}_{u(s)} M)$ that factors through $\Gamma$ $(T (s_{1} ) =T (s_{2} )$
implies $\varphi (s_{1} ) = \varphi (s_{2} ) )$, and $t_{1}$, $t_{2}$ such
that $T(t_{1} )=T(t_{2} )$. Thus if $\Gamma =[0,1]$ and $T$ is the identity,
the definition is empty, and we will rather use the definition saying
that a certain curve $u$ is tree-formed with respect to an identication map
with $T(t_{1} )=T(t_{2} )$ as a rigorous way to say that $u_{|[t_{1} ,t_{2}
]}$ is a tree-like path. In the most common case, $T(0)=T(1)$, and we say the
curve is {\emph{fully tree-formed}}.

\subsection{The approach of James Hebda using tree-formed
curves}\label{subsection: Hebda's approach}

In this section we give a sketch of the paper {\cite{Hebda}}. The reader can find more details in that paper.

Theorem \ref{Cartan's theorem} shows that $\varphi = \exp_{2} \circ L \circ (
\exp_{1} |_{U_{p_1}} )^{-1}$ is an isommetric immersion from $U_{p_{1}} =M_{1}
\setminus \tmop{Cut}_{p_{1}}$ into $M_{2}$. The starting idea is to prove that
whenever a point in $\tmop{Cut}_{p_{1}}$ is reached by two geodesics
$\gamma_{1}$ and $\gamma_{2}$, meaning that $ e_{1} ( \gamma_{1}' (0))=  e_{1} (
\gamma_{2}' (0))$, then $ e_{2} ( \gamma_{1}' (0))=  e_{2} (
\gamma_{2}' (0))$. Then the formula $\varphi (p)=e_{2} (x)$, for any $x \in
(O_{p} \cup \tmop{TCut}_{p} ) \cap e_{1}^{-1} (p)$ gives a well-defined map
$\varphi :M_{1} \rightarrow M_{2}$ that is an isometry at least on
$U_{p_{1}}$.

As we know from \ref{cleave points are a manifold},
the cut locus looks specially simple at the {\emph{cleave points}}, for which there are exactly
two minimizing geodesics from $p$, and both are non-conjugate. Near a cleave
point, the cut locus is a smooth hypersurface. The rest of the cut locus is
more complicated, but we know that $\Hnuno ( \tmop{Cut} \setminus
\tmop{Cleave} )=0$ and, indeed, that $\tmop{Cut} \setminus \tmop{Cleave}$ has
Hausdorff dimension at most $n-2$, for a smooth Riemannian manifold.

An isometric inmersion from $M_{1} \setminus A$ into a complete manifold, with
\linebreak[4]
$\Hnuno (A)=0$, can be extended to an isometric inmersion from $M_{1}$. Thus,
it only remains to show that, for a cleave point $\cleaveq$, we have $e_{2}(x_{1} )=  e_{2} (x_{2} )$.

The way to do this is to find for each cleave point $q$ as above, a sequence
$Y_{j}$ of curves in $\Tone$ such that $Y_{j} (t) \in \tmop{int} (O_{p_{1}} )$
for all $j$ and $t$, 
$Y_j(0)= e_{1}((1-1/j)x_1)$, $Y_j(1)= e_{1}((1-1/j)x_2)$,
and $Y_{j}$ converges to a curve $Y$ in 
$\tmop{TCut}_{p_{1}}$ (in the metric space $\tmop{AC} (M$) of absolutely
continuous curves) such that $Y(0)=x_{1}$, $Y(1)=x_{2}$, and $ e_{1} \circ
Y:[0,1] \rightarrow M_{1}$ is {\emph{fully tree-formed}}.

Consider the curve $u= \gamma_{x_{1}} \ast (  e_{1} \circ Y)$, the
concatenation of the geodesic with initial speed $x_{1}$ with the curve $ e_{1}
\circ Y$, defined in the interval $[0,l_{1} +l_{2} ]$, where $l_{i}$ is the
length of each of these two segments. If $Y$ is absolutely continuous, this is
an absolutely continuous curve in $\Tone$, and so admits an {\emph{affine
developement}} from $p_{1}$. Composing with $L$ we get a curve in $\Ttwo$, and
the inverse affine developement from $p_{2}$ yields a curve $v$ in $M_{2}$.

J. Hebda proves that the affine developement and the inverse affine
developement of a tree-formed curve that factors through $\Gamma$ is also
tree-formed and factors through $\Gamma$. From $ e_{1} (x_{1} )=  e_{1} (x_{2}
)$ we learn $u(1/2)=u(1)$, so that $u$ factors through some $\Gamma$ with
$T(1/2)=T(1)$, and this shows $v(1/2)=v(1)$.

We also know that $ e_{2} \circ (  e_{1} |_{O_{p_{1}}} )^{-1}$ is an isometric
immersion from $U_{p_{1}}$ into $M_{2}$, and thus the curves 
$\gamma_{(1-1/j)x_{1}} \ast \left(  e_{1} \circ Y_{j}\right)$
map isometrically to 
$\tilde{\gamma}_{(1-1/j)x_{1}} \ast \left(  e_{2} \circ Y_{j}\right)$,
where $\tilde{\gamma}_x$ is the geodesic in $M_2$ with initial speed $L(x)$.
The affine developement
conmutes with an isometry, and we learn that $v_{j} =  e_{2} \circ (  e_{1}
|_{O_{p_{1}}} )^{-1} \circ u_{j}$, so that $ e_{2} (x_{1} )= \lim_{j
\rightarrow \infty}  e_{2} (Y_{j} (0))= \lim_{j \rightarrow \infty} v_{j}
(1/2)=v(1/2)$ and similarly, $v(1)=  e_{2} (x_{2} )$.

The way to find the curves $Y_{j}$ works only in dimension $2$. Let $S_{p_{1}}
M_{1}$ be the set of unit vectors in $\Tone$ parametrized with a coordinate
$\theta$, and define $\rho :S_{p_{1}} M_{1} \rightarrow \mathbb{R}$ as the
first cut point along the ray $t \rightarrow tv$ for $t>0$ (and $\rho ( \theta
)= \infty$ if there is no cut point). Given a cleave point $\cleaveq$, with
$x_{i} =( \rho ( \theta_{i} ), \theta_{i}$), then $\rho$ is finite in at least
one the two arcs in $S_{p_{1}} M_{1}$ that join $\theta_{1}$ and
$\theta_{2}$, which we write $[ \theta_{1} , \theta_{2} ]$. Then the curve $Y(
\theta )=( \rho ( \theta ), \theta$) defined in $[ \theta_{1} , \theta_{2} ]$,
together with the curves $Y_{j} ( \theta )=((1- \frac{1}{j} ) \rho ( \theta ),
\theta )$, satisfies the previous hypothesis.

It is important that $Y$ be absolutely continuous, which follows once it is
proved that $\rho$ is. This was shown independently in {\cite{Hebda94}} and
{\cite{Itoh96}}, and later generalized to arbitrary dimension in
{\cite{Itoh Tanaka 00}}.

\subsection{Difficulties to extend the proof to dimension higher than $2$}

In dimension higher than $2$, there is no natural choice for such a curve $Y$.
Indeed, a manifold can be built for which this technique does not work, roughly
as follows:

Using the techniques in {\cite{MR0221434}}, we can build a three dimensional
manifold $M$ whose cut locus with respect to a point does not contain
conjugate points (in other words, any minimizing geodesic segment is
non-conjugate). Let $\cleaveq$ be a cleave point and $Y$ be a path joining
$x_{1}$ and $x_{2}$ within the tangent cut locus. Assume for simplicity that
the path consists only of cleave points and isolated non-cleave points (this
is {\emph{generic}} in a certain sense, as we will see later).

If $ e_{1} \circ Y$ is fully tree-formed, then it has one terminal vertex
$q_{0} =  e_{1} (x^{0} )$. We~can approach this vertex with a sequence of
cleave points $q^{j} =  e_{1} (x^{j}_{1} )=  e_{1} (x^{j}_{2} )$ such that
$x_{1}^{j} \rightarrow x^{0}$ and $x_{2}^{j} \rightarrow x^{0}$. But then
$x^{0}$ is conjugate and minimizing, contrary to the hypothesis.

\section{Generic exponential maps}\label{section: generic
metric}

A generic perturbation of a Riemannian metric greatly simplifies the types of
singularities that can be found on the exponential map
({\cite{Weinstein}},{\cite{Klok}}) or the cut locus with respect to any point
({\cite{Buchner Stability}}). In {\cite{Weinstein}}, A. Weinstein showed that for a
generic metric, the set of conjugate points in the tangent space near a
singularity of order $k$ is given by the equations:
\[ \text{$\left|\begin{array}{cccc}
     x_{1} & x_{2} & \ldots & x_{k}\\
     x_{2} & x_{k+1} & \ldots & x_{2k-1}\\
     \vdots &  &  & \vdots\\
     x_{k} & x_{2k-1} & \ldots & x_{\frac{k(k+1)}{2}}
   \end{array}\right| =0$} \]
where $x_{1} , \ldots x_{n}$ are coordinates in $\Tone$, and $k(k+1)/2 \leqslant n$.
This is called a conical singularity.

In {\cite{Buchner Stability}}, M. Buchner studied the energy functional on curves
starting at $p_{1}$ and the endpoint fixed at a different point of the
manifold, as a family of functions parametrized by the endpoint. He proved a
multitransversality statement about this family of functions that we will
comment on later, and then used this information to provide a description of
the cut locus of a generic metric.

It is well known that a exponential map only has lagrangian singularities. 
In~{\cite{Klok}}, Fopke Klok showed that the generic singularities of the
exponential maps are the generic singularities of lagrangian maps. These
singularities are, in turn, described by means of the generalized phase
functions of the singularities. This is the approach more useful to our purposes.

\subsection{Generalized phase functions}

A {\emph{generalized phase function}} is a map $F:U \times \RR^{k}
\rightarrow \RR$ such that $D_{q} F= \left( \frac{\partial F}{\partial
q_{1}} , \ldots , \frac{\partial F}{\partial q_{k}} \right) :U \times
\RR^{k} \rightarrow \RR^{k}$ is transverse to $\{0\} \in
\RR^{k}$. We will use a result that relates generalized phase
functions defined at $U \times \RR^{k}$ and Lagrangian subspaces of
$T^{\ast} U$:

\begin{prop}
  If $L \subset T^{\ast} U$ is a Lagrangian submanifold and $p \in L$, it is locally given as the graph of $\phi |_{C} :C \rightarrow T^{\ast} U$, where $C=(D_{ q} F)^{-1} (0)$ and $\phi (x,q)=(x,D_{x} F(x,q) )$, for some
  generalized phase function $F$.
  
  Furthermore, we can assume:
  \begin{itemize}
    \item $k= \tmop{corank} (L,p)$
    
    \item $F(0,0)=0$
    
    \item $0 \in \RR^{k}$ is a critical point of $F(0, \cdot
    ):\RR^{k} \rightarrow \RR$
    
    \item $\frac{\partial^2 F}{\partial q_{i} \partial q_{j}} =0$ for all $i$ and $j$ in $1,\dots,k$
  \end{itemize}
\end{prop}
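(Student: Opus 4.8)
The plan is to produce $F$ by the classical generating-family (Morse family) construction for a Lagrangian germ, followed by the reduction to a minimal family, as in Hörmander's book on Fourier integral operators or in Arnold--Gusein-Zade--Varchenko. First I would fix Darboux coordinates $(x,\xi)$ on $T^{\ast}U$ near $p$ with $\pi(p)$ at the origin of $U$. The projection $\pi|_{L}:L\to U$ has some corank $k$ at $p$, and by the standard mixed-coordinate lemma there is a partition $\{1,\dots,n\}=I\sqcup J$ with $|J|=k$ such that $(x_{I},\xi_{J})$ restrict to coordinates on $L$; hence $L$ is locally the graph of the partial differential of a generating function $S=S(x_{I},\xi_{J})$, in the sense $\xi_{I}=\partial S/\partial x_{I}$, $x_{J}=-\partial S/\partial \xi_{J}$. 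Taking the partial Legendre transform in the $J$-variables, set $q:=\xi_{J}\in\mathbb{R}^{k}$ and define $F(x,q)=S(x_{I},q)+\langle q,x_{J}\rangle$. A direct computation gives $D_{q}F=\partial S/\partial q + x_{J}$, whose differential in $(x,q)$ contains the block $\partial_{x_{J}}(D_{q}F)=I_{k}$, so $D_{q}F$ is a submersion and in particular transverse to $0$; thus $F$ is a generalized phase function, $C=(D_{q}F)^{-1}(0)$ is an $n$-manifold, and $\phi|_{C}$ parametrizes $L$ near $p$ since on $C$ one has $\partial F/\partial x_{I}=\xi_{I}$, $\partial F/\partial x_{J}=q=\xi_{J}$.

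Next I would install the three ``pointwise'' normalizations. Replacing $F$ by $F-F(0,0)$ changes neither $D_{x}F$ nor $D_{q}F$, so the generated Lagrangian is unaffected and $F(0,0)=0$. Translating the auxiliary variable $q$ so that the fiber coordinate of $C$ over $p$ becomes $q=0$ is an affine fiber-preserving change that again does not alter the germ; since $p$ lies over the origin and $x_{J}=-\partial S/\partial q$ on $L$, this forces $D_{q}F(0,0)=0$, which is exactly the statement that $0\in\mathbb{R}^{k}$ is a critical point of $F(0,\cdot)$.

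Then comes the reduction to the minimal family, giving $k=\operatorname{corank}(L,p)$ and $\partial^{2}F/\partial q_{i}\partial q_{j}(0,0)=0$. If the fiber Hessian $\partial^{2}F/\partial q^{2}(0,0)$ has rank $r>0$, I would apply the parametric splitting lemma (Morse lemma with the parameters $x$ and an $x$-dependent critical point): after a fiber-preserving smooth change $q\mapsto(\tilde q',\tilde q'')$ with $\tilde q''\in\mathbb{R}^{r}$, and possibly subtracting a function of $x$, $F$ takes the form $F=\pm|\tilde q''|^{2}+G(x,\tilde q')$ with $G$ a new generalized phase function in $k-r$ fiber variables generating the same Lagrangian germ. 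Iterating until the fiber Hessian at $(0,0)$ vanishes yields a family whose number of fiber variables is the minimal one, which by the standard count equals $\operatorname{corank}(\pi|_{L},p)$; the earlier normalizations survive, or are re-imposed by the same trivial modifications.

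The delicate point, and the one I would lean on the literature for, is the bookkeeping of \emph{admissible equivalences}: adding a function of $x$, stabilizing by a nondegenerate quadratic form in extra fiber variables, and composing with fiber-preserving diffeomorphisms all preserve both the generalized-phase-function property (transversality of $D_{q}F$ to $0$) and the germ of the generated Lagrangian at $p$. Granting these statements (Arnold--Gusein-Zade--Varchenko; Hörmander), the remaining work is only to check that after the reduction all four bulleted conditions hold simultaneously, which is the elementary coordinate bookkeeping described above; this, rather than any single hard estimate, is where the care is needed.
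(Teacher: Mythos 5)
Your argument is correct, but it is worth noting that the paper does not actually prove this proposition: its ``proof'' consists entirely of a pointer to Klok (Proposition 1.2.4 and the remarks after Proposition 1.2.6), so any honest derivation is by construction a different route. What you supply is the standard H\"ormander/Arnold--Gusein-Zade--Varchenko generating-family argument: mixed coordinates $(x_I,\xi_J)$ on $L$, a generating function $S(x_I,\xi_J)$, and the partial Legendre transform $F(x,q)=S(x_I,q)+\langle q,x_J\rangle$, whose fiber derivative has the identity block $\partial_{x_J}(D_qF)=I_k$ and is therefore a submersion. This buys a self-contained proof where the paper buys brevity. One simplification you missed: your third paragraph (the parametric splitting lemma) is superfluous. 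Since you already chose $|J|=k=\operatorname{corank}(\pi|_L,p)$, the vanishing of the fiber Hessian at the base point is automatic rather than something to be arranged afterwards: on $L$ one has $dx_J=-S_{\xi_Jx_I}\,dx_I-S_{\xi_J\xi_J}\,d\xi_J$, so $\ker d\pi|_L$ has dimension $|J|-\operatorname{rank}\bigl(S_{\xi_J\xi_J}(p)\bigr)$, and equality of this with $|J|$ forces $S_{\xi_J\xi_J}(p)=0$, i.e.\ $\partial^2F/\partial q_i\partial q_j(0,0)=0$. Likewise $D_qF(0,q_0)=S_q(0,q_0)+x_J(p)=0$ follows directly from the generating relation $x_J=-S_{\xi_J}$ at $p$, so the second and third bullets come for free after the harmless translations you describe. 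The reduction lemma would only be needed if one started from a non-minimal family, which your construction never does.
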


\begin{proof}
  This is found in section 1 of \cite{Klok}, specifically in proposition 1.2.4 and the comments in page 320 after proposition 1.2.6.
\end{proof}

Given a germ of generalized phase function $F:\RR^{n} \times
\RR^{k} \rightarrow \RR$, the lagrangian map is built in this
way: $D_{q} F$ is transverse to $\{0\}$, and we can assume the last $k$
$x$-coordinates are such that the derivative of $D_{q} F$ in those coordinates
is an invertible matrix. Let us split the $x$ coordinates in $(y,z) \in
\RR^{n-k} \times \RR^{k}$. Our hypothesis is that $D_{qz} F$
is invertible.

The implicit equations $D_{q} F=0$ defines functions $f_{j} :\RR^{n-k}
\times \RR^{k} \rightarrow \RR$ such that, locally near $0$,
$D_{qz} F(y,f(y,q),q)=0$.

\begin{dfn}
 A \emph{Lagrangian map} $\lambda:L\rightarrow M$ is the composition of a Lagrangian immersion $i:L\rightarrow T^\ast M$ with the projection $\pi:T^\ast  M\rightarrow M$ (a Lagrangian immersion is an immersion such that the image of sufficiently small open sets are Lagrangian submanifolds).
\end{dfn}
\begin{dfn}
 Two Lagrangian maps $\lambda_j=:L_j\rightarrow M_j$, with corresponding immersions $i_j:L\rightarrow T^\ast M$, $j=1,2$, are \emph{Lagrangian equivalent} iff there are diffeomorphisms $\sigma:L_1\rightarrow L_2$, $\nu:M_1\rightarrow M_2$  and $\tau:T^\ast M_1\rightarrow T^\ast M_2$ such that the following diagram conmutes:
$$
\xymatrix{ L_1 \ar[d]^{\sigma} \ar[r]^{i_1} & T^\ast M_1 \ar[d]^{\tau} \ar[r]^{\pi_1} & M_1 \ar[d]^{\nu}\\
	   L_2 		       \ar[r]^{i_2} & T^\ast M_2 	       \ar[r]^{\pi_2} & M_2
}
$$
and $\tau$ preserves the symplectic structure.
\end{dfn}

Lagrangian equivalence corresponds to equivalence of generalized phase functions (this is proposition 1.2.6 in \cite{Klok}). Two generalized phase functions are equivalent iff we can get one from the other composing three operations:
\begin{enumerate}
 \item Add a function $g(x)$ to $F$. This has no effect on the functions
$f_{j}$.

\item Pick up a diffeomorphism $G:\RR^{n} \rightarrow \RR^{n}$,
and replace $F(x,q)$ by $F(G(x),q)$. If the map $G$ has the special form
$G(x)=G(y,z)=(g(y),h(z))$, the effect is to replace the map $(y,q) \rightarrow
(y,f(y,q))$ by $(y,q) \rightarrow (y,h^{-1} (f(g(y),q)))$.

\item Pick up a map $H:\RR^{n} \times \RR^{k} \rightarrow
\RR^{k}$ such that $D_{q} H$ is invertible, and replace $F(x,q)$ by
$F(x,H(x,q))$. If the map $H$ does not depend on the $z$ variables, the effect
is to replace the map $(y,q) \rightarrow (y,f(y,q))$ by $(y,q) \rightarrow
(y,f(y,H(y,q)))$
\end{enumerate}

\subsection{The singularities of a generic exponential map}

Using theorem 1.4.1 in {\cite{Klok}}, we get the following result: fix a smooth
manifold $M$, a point $p \in M$. For a residual set of metrics in $M$ the
exponential map $T_{p} M \rightarrow M$ is nonsingular except at a set
$\tmop{Sing}$, which is a smooth stratified manifold with the following strata
(we describe the different singularities in some detail below):
\begin{itemizedot}
  \item A stratum of codimension $1$ consisting of {\emph{folds}}, or
  lagrangian singularities of type $A_{2}$.
  
  \item A stratum of codimension $2$ consisting of {\emph{cusps}}, or
  lagrangian singularities of type $A_{3}$.
  
  \item Strata of codimension $3$ consisting of lagrangian singularities of
  types $A_{4}$ (swallowtail), $D_{4}^{-}$ (elliptical umbilic) and
  $D_{4}^{+}$ (hyperbolic umbilic).
  
  \item We do not need to worry about the rest, which consists of strata of
  codimension at least $4$.
\end{itemizedot}

\begin{dfn}
  We define the sets $\mathcal{A}_{2}$, $\mathcal{A}_{3}$, etc as the set of all points of $V_{1}$ that have a singularity of type $A_{2}$, $A_{3}$, etc. 
  We also define $\mathcal{C}$ as the set of conjugate (singular) points and $\NC$ as the set of non-conjugate (non-singular) points.
\end{dfn}

Thus, $\tmop{Sing}$ is a smooth hypersurface of $\T$ near a conjugate point of
order~$1$ (including $A_{2}$, $A_{3}$ and $A_{4}$ points), and is
diffeomorphic to the product of a cone in $\mathbb{R}^{3}$ with a cube near a
conjugate point of order $2$ (including $D_{4}^{\pm}$). The $A_{2}$ points are
characterized as those for which the kernel of the differential of the
exponential map is a vector line transversal to the tangent plane to
$\tmop{Sing}$.

Furthermore, the image by $\e$ of each stratum of canonical singularities is
also smooth. There might be strata of high codimension that are not uniform,
in the sense that the exponential map at some points in those strata may not
have the same type of singularity (in other words, the singularities are
{\emph{non-determinate}}).
This only happens in some strata of codimension at least $5$, and is not a problem for our arguments.

There are also other generic property that interests us: the image of the
different strata intersect ``transversally'':

Take two different points $x_{1} ,x_{2} \in \T$ mapping to the same point of
$M$, and assume $x_{1}$ and $x_{2}$ lie in $\mathcal{A}_{2} \cup
\mathcal{A}_{3} \cup \mathcal{A}_{4} \cup \mathcal{D}_{4}$. Then the points
$x_{1}$ and $x_{2}$ have neighborhoods $U_{1} ,U_{2}$ such that $\e (U_{1}
\cap \mathcal{C} )$ and $\e (U_{2} \cap \mathcal{C} )$ are transversal (each
pair of strata intersect transversally).

This follows from proposition 1 in page 215 of {\cite{Buchner Stability}}, with $p=2$,
so that $_{2} j_{2}^{k} H( \alpha )$ is transversal to the orbit in
$\mathbb{R}^{2} \times [J^{k}_{ 0} (n,1)]^{2}$ where the first jet is of type
$\mathcal{T}_{1}$ and the second one is of type $\mathcal{T}_{2}$. Even though
that proposition is stated for manifolds of dimension less or equal than $5$,
the proof covers our statement for any dimension, because we only need
transversality to a few particular orbits of low codimension.

For any singularity in the above list, we can choose coordinates near $x$ and
$\e (x)$ so that $\e$ is expressed by standard formulas. For example, the
formulas near an $A_{3}$ point are $(x_{1} , \ldots ,x_{n-1} ,x_{n} )
\rightarrow (x_{1}^{3} \pm x_{1} x_{2} ,x_{2} , \ldots ,x_{n} )$.

The coordinates that we will use are derived using generalized phase functions
(see {\cite{Klok}} for example). We list the generalized phase functions and
the corresponding coordinates for the exponential function that derives from
it for the singularities $A_{2}$, $A_{3}$, $A_{4}$ and $D_{4}^{\pm}$:
\addtolength{\leftmargini}{-10pt} 
\begin{itemizedot}\itemsep=10pt 
\small 
  \item $A_{2}$:\begin{tabular}{l}
    $F(x_{1} , \tilde{x}_{1} ,x_{2} ,x_{3} , \ldots ,x_{n} )= \frac{1}{3}
    x_{1}^{3} - \tilde{x}_{1} x_{1}$\\[5pt] 
    $\e : (x_{1} ,x_{2} ,x_{3} , \ldots ,x_{n} ) \rightarrow (x_{1}^{2} ,x_{2}
    ,x_{3} , \ldots ,x_{n} )$
  \end{tabular}
  
  \item $A_{3}$:\begin{tabular}{l}
    $F(x_{1} , \tilde{x}_{1} ,x_{2} ,x_{3} , \ldots ,x_{n} )= \frac{1}{4}
    x_{1}^{4} \pm \frac{1}{2} x_{2} x_{1}^{2} - \widetilde{x_{1}} x_{1}$
    \\[5pt] 
    $\e : (x_{1} ,x_{2} ,x_{3} , \ldots ,x_{n} ) \rightarrow (x_{1}^{3} \pm x_{1}
    x_{2} ,x_{2} ,x_{3} , \ldots ,x_{n} )$
  \end{tabular}

  \item $A_{4}$:\begin{tabular}{l}
    $F(x_{1} , \tilde{x}_{1} ,x_{2} ,x_{3} , \ldots ,x_{n} )= \frac{1}{5}
    x_{1}^{5} + \frac{1}{3} x_{2} x_{1}^{3} + \frac{1}{2} x_{3} x_{1}^{2} -
    \widetilde{x_{1}} x_{1}$\\[5pt] 
    $\e : (x_{1} ,x_{2} ,x_{3} , \ldots ,x_{n} ) \rightarrow (x_{1}^{4}
    +x^{2}_{1} x_{2} +x_{1} x_{3} ,x_{2} ,x_{3} , \ldots ,x_{n} )$
  \end{tabular}
   
  \item $D_{4}^{-}$:\begin{tabular}{l}
    $F(x_{1} ,x_{2} , \tilde{x}_{1} , \widetilde{x_{2}} ,x_{3} , \ldots ,x_{n}
    )= \frac{1}{6} x_{1}^{3} - \frac{1}{2} x_{1} x_{2}^{2} +x_{3} (
    \frac{1}{2} x_{1}^{2} + \frac{1}{2} x_{2}^{2} )- \widetilde{x_{1}} x_{1} -
    \widetilde{x_{2}} x_{2}$\\[5pt] 
    $\e\!:\!(x_{1} ,x_{2} ,x_{3}, \ldots ,x_{n} ) \rightarrow (
    \frac{1}{2} x_{1}^{2} - \frac{1}{2} x_{2}^{2} +x_{1} x_{3} ,-x_{1} x_{2}
    +x_{2} x_{3} ,x_{3}, \ldots ,x_{n} )$
  \end{tabular}
  
  \item $D_{4}^{+}$:\begin{tabular}{l}
    $F(x_{1} ,x_{2} , \tilde{x}_{1} , \widetilde{x_{2}} ,x_{3} , \ldots ,x_{n}
    )= \frac{1}{6} x_{1}^{3} + \frac{1}{6} x_{2}^{3} +x_{1} x_{2} x_{3} -
    \widetilde{x_{1}} x_{1} - \widetilde{x_{2}} x_{2}$\\[5pt] 
    $\e :(x_{1} ,x_{2} ,x_{3} ,x_{4} , \ldots ,x_{n} ) \rightarrow (
    \frac{1}{2} x_{1}^{2} +x_{2} x_{3} , \frac{1}{2} x_{2}^{2} +x_{1} x_{3}
    ,x_{3} ,x_{4} , \ldots ,x_{n} )$
  \end{tabular}
\end{itemizedot}
\addtolength{\leftmargini}{10pt} 

\begin{dfn}
 The above expression is the \strong{canonical form} of the exponential map at the singularity.
 The canonical form is only defined for the singularities in the above list.

We call \strong{adapted coordinates} any set of coordinates for which the
expression of the exponential map is canonical.
\end{dfn}

\begin{dfn}
 Let $U$ be a neighborhood of adapted coordinates near a conjugate point $x$.
 The \strong{lousy metric} on $U$ is the metric whose matrix in adapted coordinates is the identity.
\end{dfn}

\begin{remark}
  We call this metric lousy because it does not have any geometric meaning, and it depends on the particular choice of adapted coordinates.
 However, it is useful for doing analysis.
\end{remark}

However, while the adapted coordinates make the exponential map simple, radial
geodesics from $p$ are no longer straight lines, and the spheres of constant
radius in $\T$ are also distorted. We do not know of any result that gives an
explicit canonical formula for the exponential map and also keeps radial
geodesics in $\T$ simple. The results of section \ref{section: CDCs in
adapted coords} suggest that this might be possible to some extent, but the
classification that might derive from it must be finer than the one above. We
will find examples showing that the radial vector can be placed in different,
non-equivalent positions.

For example, near an $A_{3}$ point, $\mathcal{C}$ is given by $3x_{1}^{2}
=x_{2}$. The radial vector $r=(r_{1} , \ldots ,r_{n} )$ at $(0, \ldots ,0)$ is
transversal to $\mathcal{C}$, and thus must have $r_{2} \neq 0$. There are two
possibilities:
\begin{itemizedot}
  \item A point is $A_3(I)$ iff $r_{2} >0$.
  
  \item A point is $A_3(II)$ iff $r_{2} <0$.
\end{itemizedot}
Even though the exponential map has the same expression in both cases (for
adequate coordinates), they differ for example in the following:

Let $x \in \mathcal{A}_{3} \cap V_{1}$ (a first conjugate point), and let $U$
be a neighborhood of $x$ of adapted coordinates. Then $\e (V_{1} \cap U$) is
a neighborhood of $\e (x$) iff $x$ is $A_3(I)$. A proof for this fact will be
trivial after section \ref{section: A3 are inequivocal}.

In fact, the above can be used as a characterization (for points in
$\mathcal{A}_{3} \cap V_{1}$) that shows that the definition is independent of
the adapted coordinates chosen. We remark that in a neighborhood of an
$A_3(I)$ point, there are no $A_3(II)$ points, and viceversa.

We will get back to this distinction later, and we will also make a similar
distinction with $D_{4}^{+}$ points.

\begin{remark}
  Sometimes singularities of real functions of type $A_{3}$ are subdivided
  into $A_{3}^{+}$ and $A_{3}^{-}$ points. A canonical form for an
  $A_{3}^{\pm}$ singularity is 
  $$F^{\pm} (x_{1} , \tilde{x}_{1} ,x_{2} ,x_{3} ,
  \ldots ,x_{n} )= \pm \frac{1}{4} x_{1}^{4} - \frac{1}{2} x_{2} x_{1}^{2} -
  \widetilde{x_{1}} x_{1}$$
  When $F^{\pm}$ are generalized phase functions,
  each subtype gives equivalent singularities. However, in the work of
  Buchner, the same singularities appear, now as the energy function in a
  finite dimensional approximation to the space of paths with fixed endpoints.
  In this second context, it is not equivalent if a geodesic is a local
  minimum, or a maximum, of the energy functional, and it would make sense to
  use the distinction between $A_{3}^{+}$ and $A_{3}^{-}$, rather than the
  similar-but-not-the-same distinction between $A_3(I)$ and $A_3(II)$.
  
  This can also serve as an illustration that the classification of singularities of the exponential map by F. Klok and M. Buchner is not equivalent, even though the final result is indeed quite similar. In the classification of F. Klok, the $A_3$ singularities are not divided into the two subclasses $A_3^+$ and $A_3^-$.
\end{remark}

\begin{dfn}\label{the set of generic metrics}
We define $\mathcal{G}_{M}$ as the set of Riemannian metrics for the smooth manifold $M$ such that the singular set of $\e$ is stratifed by singularities of types $A_{2}$, $A_{3}$, $A_{4}$ and $D_{4}^{\pm}$ with the codimensions listed above, plus strata of different types with codimension at least $4$, and such that the images of any two strata intersect transversally as stated above. 
\end{dfn}

Thanks to the work of M. Buchner and F. Klok, we know that this set is open
and dense in the set of all Riemannian metrics for $M$.

\section{Proof of the conjecture for generic $3$-manifold}

\subsection{Main idea}

For any point $x\in V_1$, the Cartan lemma provides an isometry from a
neighborhood of $ e_{1} (x)$ to one of $ e_{2} (x)$. We cannot use this fact to
get an isommetric immersion into $M_{2}$ from a set much bigger than $M_{1}
\setminus \tmop{Cut}_{p_{1}}$, but we can try to collect local mappings to
build a covering space, as stated in the main theorem \ref{main theorem ambrose}.

If $ e_{1}$ has no singularities, we can pull the metric from $M_{1}$ onto
$T_{p_{1}} M_{1}$ and the desired Riemannian coverings are $ e_{1}$ and
$ e_{2}$. In the presence of singularities, the idea is to build the synthesis
as a quotient of a subset of $V_{1}$ that
identifies pairs of points with the same image by both $ e_{1}$ and $ e_{2}$.

As mentioned above, as well as NC points for $ e_{1}$, there are points of
$\Tone$ with singularities for $ e_{1}$ of types $A_{2}$, $A_{3}$, $A_{4}$,
$D_{4}^{+}$ and $D_{4}^{-}$. The $A_{3}$ points are further divided into
$A_3(I)$ and $A_3(II)$ points.

Our way to deal with a singularity $x$ of type $A_3(I)$ is to show that it is
{\emph{unequivocal}}, which means that it can play the same role in the
quotient as a non-singular point.

\begin{dfn}
  \label{definition:inequivocal}We say that an open set $O \subset \Tone$ is
  {\strong{unequivocal}} iff $ e_{1} (O \cap V_{1} )$ is open, $ e_{2} (O \cap
  V_{1} )$ is open and there is an isometry $\varphi :  e_{1} (O \cap V_{1} )
  \rightarrow  e_{2} (O \cap V_{1} )$ such that $\varphi \circ  e_{1} |_{O \cap
  V_{1}} \nobracket =  e_{2} |_{O \cap V_{1}} \nobracket$, for any pointed manifold $(M_2,p_2)$ that is $L$-related to $(M_1,p_1)$.
  
  We say $x \in V_{1}$ is {\strong{unequivocal}} if it has a neighbourhood
  base consisting of unequivocal sets.
\end{dfn}

Regarding a singularity $x$ of a different type, we will show that there is a
{\strong{linking curve}} between $x$ and an unequivocal point $y$ of smaller
radius. A linking curve between $x$ and $y$ is a curve $\alpha :[0,t_{0} ]
\rightarrow T_{p_{1}} M_{1}$ such that $\alpha (0)=x$, $\alpha (t_{0} )=y$,
$ e_{1} \circ \alpha$ is fully tree-formed and $\tmop{Im} ( \alpha )$ is
contained in $V_{1}$. It also satisfies some technical restrictions that we
will present later. Linking curves play the role of the curve $Y$ in the proof
of the conjecture for surfaces by J. Hebda: we will see that if there is a
linking curve between two points, they are {\emph{linked}}.

\diff{
\begin{dfn}
  Two points $x, y \in \Tone$ are {\strong{linked}} ($x\leftrightsquigarrow y$) iff either $x=y$, or:
  $$ e_{1} (x)=  e_{1}(y), e_{2} (x)=  e_{2} (y)$$
  and there are neighborhoods $U$ of $x$ and $V$ of $y$ such that
  $$\forall z\in U, w\in V:\quad  e_{1}(z)= e_{1}(w) \Rightarrow  e_{2}(z)= e_{2}(w)$$
  for any pointed manifold $(M_2,p_2)$ that is $L$-related to $(M_1,p_1)$.
\end{dfn}

\begin{remark}
Note that with the above definition of \emph{linked}, it may not be an equivalence relation (depending on $M_1$), but in fact, the relation is transitive under some conditions that hold in our setting:
\end{remark}
}\diff{
\begin{lem}
 Let $x,y,z,v \in\Tone$.
 \begin{enumerate}
  \item If $x\leftrightsquigarrow y$, $z\leftrightsquigarrow y$ and $y$ in unequivocal, then $x\leftrightsquigarrow z$.
  \item Assume that $M_1$ is a generic manifold.
  
  If $\tilde{x} \leftrightsquigarrow  x\leftrightsquigarrow y \leftrightsquigarrow \tilde{y}$ and $\tilde{x}$ and $\tilde{y}$ are unequivocal, then $\tilde{x}\leftrightsquigarrow \tilde{y}$.
 \end{enumerate}
\end{lem}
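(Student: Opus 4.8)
The plan is to reduce both statements to a single ``bridging'' principle: if $a\leftrightsquigarrow b$ and $b\leftrightsquigarrow c$, then $a\leftrightsquigarrow c$, \emph{provided} that near at least one of the three points the map $e_1$, restricted to $V_1$, fills a genuine neighbourhood in a way compatible with the two links. Unwinding the definitions, one uses that $\leftrightsquigarrow$ is reflexive and symmetric, that $x\leftrightsquigarrow y$ with $x\neq y$ forces $e_1(x)=e_1(y)$ and $e_2(x)=e_2(y)$, and that the remaining content is a germ-level statement: on suitable neighbourhoods $U\ni x$, $W\ni y$, the implication $e_1(a)=e_1(b)\Rightarrow e_2(a)=e_2(b)$ holds for $a\in U$, $b\in W$. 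One also records that if $O$ is unequivocal with associated isometry $\varphi_O$ (definition \ref{definition:inequivocal}), then $e_1(O\cap V_1)$ is open and $e_2=\varphi_O\circ e_1$ on $O\cap V_1$; thus around an unequivocal point the image of $e_1|_{V_1}$ is an open neighbourhood of the image point, and the branch of $e_2\circ e_1^{-1}$ through it is single valued, equal to $\varphi_O$.

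For part (1): if $x=z$ there is nothing to prove, and otherwise $e_1(x)=e_1(y)=e_1(z)$, $e_2(x)=e_2(y)=e_2(z)$ follow from the two links, so only the germ-level implication remains. The plan is to pick a small unequivocal neighbourhood $O$ of $y$ with isometry $\varphi=\varphi_O$ defined on the open set $e_1(O\cap V_1)\ni e_1(x)$; then, using $x\leftrightsquigarrow y$ (after intersecting its witness neighbourhood of $y$ with $O$), one obtains a neighbourhood $U_x$ of $x$ on which $e_2=\varphi\circ e_1$, because for $a\in U_x$ close enough to $x$ continuity and openness of $e_1(O\cap V_1)$ yield $c\in O\cap V_1$ with $e_1(c)=e_1(a)$, and the link gives $e_2(a)=e_2(c)=\varphi(e_1(c))=\varphi(e_1(a))$. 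The same applied to $z\leftrightsquigarrow y$ gives $U_z\ni z$ with $e_2=\varphi\circ e_1$, and then for $a\in U_x$, $b\in U_z$ with $e_1(a)=e_1(b)$ one reads off $e_2(a)=\varphi(e_1(a))=\varphi(e_1(b))=e_2(b)$, i.e.\ $x\leftrightsquigarrow z$. No genericity is needed; the only care, handled by shrinking $U_x$, $U_z$, is to keep the common value in the domain of $\varphi$, which is exactly where the openness clause of ``unequivocal'' is used.

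For part (2), neither $x$ nor $y$ is assumed unequivocal, so part (1) does not apply directly; the plan is to prove a one-step version --- \emph{if $a\leftrightsquigarrow b\leftrightsquigarrow c$, $a$ is unequivocal and $M_1\in\mathcal{G}_M$ (see \ref{the set of generic metrics}), then $a\leftrightsquigarrow c$} --- and then apply it twice: first to $\tilde x\leftrightsquigarrow x\leftrightsquigarrow y$ to get $\tilde x\leftrightsquigarrow y$, and then to $\tilde x\leftrightsquigarrow y\leftrightsquigarrow\tilde y$ to get $\tilde x\leftrightsquigarrow\tilde y$, using each time that $\tilde x$ is unequivocal. To prove the one-step version one transports, as in part (1), the isometry $\varphi$ coming from $a$ along $a\leftrightsquigarrow b$ to pin down the germ of $e_2$ through $b$ on the part of a neighbourhood of $b$ that $e_1$ near $a$ reaches, and then along $b\leftrightsquigarrow c$ to a neighbourhood of $c$. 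Genericity enters to control the image of $e_1$ near the middle point $b$: if $b$ is singular it is an $A_2$, $A_3$, $A_4$ or $D_4^{\pm}$ point (or lies in a stratum of codimension $\ge 4$), the image $e_1(V_1\cap U_b)$ is then one of a short list of model sets --- full neighbourhood, half-space, or standard cone --- and images at distinct preimages are transverse (from \cite{Klok}, \cite{Buchner Stability}), so a finite case check shows the relevant images overlap on an open set containing $e_1(b)$, which suffices to carry the transport through.

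The genuinely delicate point, and the reason genericity is needed in part (2) but not part (1), is the matching of the \emph{images} of $e_1$ around a non-unequivocal intermediate point: $\leftrightsquigarrow$ only constrains pairs $(a,b)$ with $e_1(a)=e_1(b)$, so transitivity can fail unless the local image of $e_1$ at the middle point actually contains the common values supplied by the two ends. At an unequivocal point this is automatic (the image is open by definition); at a general conjugate point it can be a proper subset --- a half-neighbourhood at a fold, say --- and one must invoke the classification and transversality of generic singularities to guarantee the overlap. Everything else is bookkeeping with nested neighbourhoods.
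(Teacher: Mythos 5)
Your part (1) is correct and is essentially the paper's own argument: every common value of $e_1$ is mediated through a point near $y$, and the only role of unequivocality is that $e_1$ of a small unequivocal neighbourhood of $y$ is \emph{open}, so that pulling back that image still yields genuine neighbourhoods of $x$ and $z$. Whether you then conclude via the isometry $\varphi$ or by chaining the two link implications through the mediator is immaterial.

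Part (2) has a genuine gap. Your one-step lemma ($a\leftrightsquigarrow b\leftrightsquigarrow c$ with $a$ unequivocal implies $a\leftrightsquigarrow c$) is precisely the configuration where mediation breaks down: the middle point $b$ is not unequivocal, so $e_1$ of a small neighbourhood of $b$ need not contain a neighbourhood of $q=e_1(b)$, and a pair $(z,w)$ with $z$ near $a$, $w$ near $c$ and common value $q'$ outside $e_1(\mathrm{nbhd}(b))$ cannot be handled at all. Your appeal to genericity to get ``overlap on an open set containing $e_1(b)$'' is not what transversality delivers: if $b$ and $c$ are, say, fold points with transversal half-space images, the overlap is a quarter-neighbourhood of $q$ --- nonempty interior, but not a neighbourhood of $q$. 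In particular your first application, $\tilde x\leftrightsquigarrow x\leftrightsquigarrow y\Rightarrow\tilde x\leftrightsquigarrow y$, is not established and the intermediate relation $\tilde x\leftrightsquigarrow y$ may simply fail, since $y$ is not unequivocal and common values near $e_1(y)$ that miss the image of $e_1$ near $x$ are unreachable. The paper avoids any intermediate link: it uses that \emph{both} ends $\tilde x$ and $\tilde y$ are unequivocal, so $e_2=\varphi_{\tilde x}\circ e_1$ and $e_2=\varphi_{\tilde y}\circ e_1$ hold on full neighbourhoods; genericity is invoked only to produce a set with \emph{nonempty interior} inside the intersection of all eight relevant images, on which the chain of three links forces $\varphi_{\tilde x}=\varphi_{\tilde y}$; rigidity of isometries then extends this equality to the whole connected component containing $q$, and $\tilde x\leftrightsquigarrow\tilde y$ follows for \emph{all} pairs, with no need for the common value to be reachable from near $x$ or $y$. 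To repair your argument, replace the two-fold iteration of the one-step lemma by this single global comparison of the two terminal isometries.
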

}\diff{
\begin{proof}
 Let $(M_2,p_2)$ be a pointed manifold that is $L$-related to $(M_1,p_1)$.
 
 The hypothesis of the first part imply that:
 \begin{itemize}
  \item $\exists U^x, V^y$: $\forall z\in U^x, w \in V^y,  e_{1}(z)= e_{1}(w) \Rightarrow  e_{2}(z)= e_{2}(w) $
  \item $\exists U^y, V^z$: $\forall z\in U^y, w \in V^z,  e_{1}(z)= e_{1}(w) \Rightarrow  e_{2}(z)= e_{2}(w) $
  \item $\exists W^y$: $ e_{1}(W^y)$ is an open neighborhood of $ e_{1}(x)= e_{1}(y)= e_{1}(z)$
 \end{itemize}
 
 Then we take open sets $A= e_{1}^{-1}( e_{1}(W^y\cap V^y \cap U^y ))\cap U^x$ of $x$ and $B= e_{1}^{-1}( e_{1}(W^y\cap V^y \cap U^y ))\cap V^z$ of $z$.
 
 Suppose there are $z\in A,w \in B$ such that $ e_{1}(z)= e_{1}(w)$. Then $ e_{1}(z)\in  e_{1}(W^y\cap V^y \cap U^y)$, so that there is some $v\in W^y \cap V^y \cap U^y$ such that $ e_{1}(v)= e_{1}(z)= e_{1}(w)$, and it follows that $ e_{2}(z)= e_{2}(w)$.
  
 The hypothesis for the second part, in turn, imply that $e_{1}(x) =   e_{1}(y) =   e_{1}( \tilde{x}) =   e_{1}( \tilde{y})$ (we call this point $q$), and:
\begin{itemize}
  \item $\exists U^{\tilde{x}}, V^x$: $\forall z \in U^{\tilde{x}}, w \in V^z,
   e_{1}(z) =   e_{1}(w) \Rightarrow   e_{2}(z) =   e_{2}(w)$
  
  \item $\exists U^x, V^y$: $\forall z \in U^x, w \in V^y,   e_{1}(z) =   e_{1}(w)
  \Rightarrow   e_{2}(z) =   e_{2}(w)$
  
  \item $\exists U^y, V^{\tilde{y}}$: $\forall z \in U^y, w \in V^{\tilde{y}},
   e_{1}(z) =   e_{1}(w) \Rightarrow   e_{2}(z) =   e_{2}(w)$
  
  \item $\exists W^{\tilde{x}}$: $ e_1(W^{\tilde{x}})$ is an open neighborhood of
  $q$ and there is an isometry
  $\varphi_{\tilde{x}} :   e_{1}( W^{\tilde{x}}) \rightarrow   e_{2}( W^{\tilde{x}})$ such
  that $\varphi \circ e_1 = e_2$ on $W^{\tilde{x}}$.
  
  \item $\exists W^{\tilde{y}}$: $ e_1(W^{\tilde{y}})$ is an open neighborhood of
  $q$ and there is an isometry
  $\varphi_{\tilde{y}} :   e_{1}( W^{\tilde{y}}) \rightarrow   e_{2}( W^{\tilde{y}})$ such
  that $\varphi \circ  e_{1} =  e_{2}$ on $W^{\tilde{y}}$.
\end{itemize}

The genericity hypothesis also imply that $ e_1(U^x) \cap  e_1(V^y) \cap  e_1(U^{\tilde{x}}) \cap e_1(V^x) \cap  e_1(U^y) \cap  e_1(V^{\tilde{y}}) \cap  e_1(W^{\tilde{x}}) \cap e_1(W^{\tilde{y}})$ is a set with non empty interior. 
Indeed, for a generic metric, the image by $e_1$ of a neighborhood of a singular point is a stratifed manifold with non-empty interior, and bounded by hypersurfaces. 
The image of two such neighborhoods are two transversal stratified manifolds with at least one point in common, and thus they must share some $0$-cell with $q$ in its boundary, so that the intersection of this cell with the image of the open sets $e_1(U^{\tilde{x}} \cap  W^{\tilde{x}})$ and $e_1(V^{\tilde{y}}\cap W^{\tilde{y}})$ also has non-empy interior.

Any point $q$ in this set can be expressed as $q =  e_1( x_0) =  e_1( x_1) =  e_1( y_1) =  e_1( y_0)$ for $x_0 \in
W^{\tilde{x}} \cap U^{\tilde{x}}$, $x_1 \in U^x\cap V^x$, $y_1 \in V^y\cap U^y$, $y_0 \in
W^{\tilde{y}} \cap U^{\tilde{y}}$, so it follows that $ e_2( x_0) =  e_2( x_1) =  e_2( y_1) =  e_2( y_0)$, but $ ( x_0) =
\varphi_{\tilde{x}} \left(  ( x_0) \right)$ and $ ( y_0) = \varphi_{\tilde{y}}
\left(  ( y_0) \right)$ so $\varphi_{\tilde{x}}$ and $\varphi_{\tilde{y}}$ are
isometries that agree on an open set, so they must agree at least in the connected component of
$e_1(W^{\tilde{x}}) \cap e_1(W^{\tilde{y}})$ that contains $q$.

Thus $\tilde{x}$ and $\tilde{y}$ are linked, as we can check by defining $U=\left( e_1 |_{W^{\tilde{x}}} \nobracket \right)^{-
1} ( W^{\tilde{x}} \cap W^{\tilde{y}})$ and $V = \left( e_1 |_{W^{\tilde{y}}}
\nobracket \right)^{- 1} ( W^{\tilde{x}} \cap W^{\tilde{y}})$.

\end{proof}

\begin{cor}
 Let $M_1$ be a generic manifold such that every point is linked to an unequivocal point.
 
 Then the linked relation is transitive.
\end{cor}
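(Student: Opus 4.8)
The plan is to derive the corollary purely by bookkeeping from the two parts of the preceding lemma, together with the observation that $\leftrightsquigarrow$ is symmetric and reflexive. Symmetry is immediate from the definition of \emph{linked}: the conditions $e_{1}(x)=e_{1}(y)$, $e_{2}(x)=e_{2}(y)$ and the implication on neighbourhoods $U$ of $x$ and $V$ of $y$ are unchanged under interchanging the roles of $x$ and $y$; reflexivity is built into the definition via the clause $x=y$. So it only remains to establish transitivity.

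Suppose $x\leftrightsquigarrow y$ and $y\leftrightsquigarrow z$; we must show $x\leftrightsquigarrow z$. By the standing hypothesis that every point of $\Tone$ is linked to an unequivocal point, choose unequivocal points $\tilde{x},\tilde{y},\tilde{z}$ with $x\leftrightsquigarrow\tilde{x}$, $y\leftrightsquigarrow\tilde{y}$ and $z\leftrightsquigarrow\tilde{z}$. Using symmetry we then have the chains
$$
\tilde{x}\leftrightsquigarrow x\leftrightsquigarrow y\leftrightsquigarrow\tilde{y},\qquad
\tilde{y}\leftrightsquigarrow y\leftrightsquigarrow z\leftrightsquigarrow\tilde{z},
$$
whose endpoints are unequivocal. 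Since $M_1$ is generic, part (2) of the lemma applies to each chain and yields $\tilde{x}\leftrightsquigarrow\tilde{y}$ and $\tilde{y}\leftrightsquigarrow\tilde{z}$.

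Now $\tilde{y}$ is unequivocal and $\tilde{x}\leftrightsquigarrow\tilde{y}\leftrightsquigarrow\tilde{z}$, so part (1) of the lemma gives $\tilde{x}\leftrightsquigarrow\tilde{z}$. It remains to descend from $\tilde{x},\tilde{z}$ back to $x,z$. Since $\tilde{x}$ is unequivocal and $x\leftrightsquigarrow\tilde{x}\leftrightsquigarrow\tilde{z}$, part (1) gives $x\leftrightsquigarrow\tilde{z}$; and since $\tilde{z}$ is unequivocal and $x\leftrightsquigarrow\tilde{z}\leftrightsquigarrow z$, a final application of part (1) gives $x\leftrightsquigarrow z$. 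This proves transitivity, and combined with reflexivity and symmetry it shows $\leftrightsquigarrow$ is an equivalence relation on $\Tone$, which is exactly what is needed to form the quotient defining the synthesis.

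\textbf{Main obstacle.} There is no substantial analytic difficulty here — the corollary is a formal consequence of the lemma. The only thing requiring care is verifying, at each of the five invocations above, that the precise hypothesis of the part being used is met: that the correct point in each three-term chain is the \emph{unequivocal} one (for part (1)), and that genericity of $M_1$ is available (for part (2)); the genericity hypothesis enters only through the two applications of part (2), while the assumption that every point is linked to an unequivocal point is what makes the points $\tilde{x},\tilde{y},\tilde{z}$ available in the first place.
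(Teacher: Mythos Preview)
Your proof is correct and follows essentially the same approach as the paper: pick unequivocal companions $\tilde{x},\tilde{y},\tilde{z}$, use part~(2) of the lemma twice to get $\tilde{x}\leftrightsquigarrow\tilde{y}\leftrightsquigarrow\tilde{z}$, then use part~(1) three times to descend to $x\leftrightsquigarrow z$. The only difference is the order of the three applications of part~(1)---the paper goes $x\leftrightsquigarrow\tilde{y}$ (via $\tilde{x}$), then $x\leftrightsquigarrow\tilde{z}$ (via $\tilde{y}$), then $x\leftrightsquigarrow z$ (via $\tilde{z}$), whereas you first close up $\tilde{x}\leftrightsquigarrow\tilde{z}$ and then descend---but this is an inessential reordering of the same bookkeeping.
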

\begin{proof}
 Let $x,y,z\in\Tone$ be such that $x\leftrightsquigarrow y \leftrightsquigarrow z$. 
 
 Then there are unequivocal points $\tilde{x},\tilde{y},\tilde{z}\in\Tone$ such that $x\leftrightsquigarrow \tilde{x}$, $y\leftrightsquigarrow \tilde{y}$ and $z\leftrightsquigarrow \tilde{z}$.
 
 By the second part of the above proof, we learn that $\tilde{x}\leftrightsquigarrow\tilde{y}\leftrightsquigarrow\tilde{z}$.

 Then, by the first part of the above proof, we learn that $x\leftrightsquigarrow\tilde{y}$, then that $x\leftrightsquigarrow\tilde{z}$, and finally $x\leftrightsquigarrow z$.

  $$
  \xymatrix{  
  x \ar@{<~>}[d]\ar@{<~>}[r]   & y \ar@{<~>}[d]\ar@{<~>}[r]  & z \ar@{<~>}[d] \\
  \tilde{x} \ar@{<~>}[r]       & \tilde{y} \ar@{<~>}[r]      &\tilde{z}
  }
  $$
\end{proof}
}
In the next section, we build the synthesis manifold $M_{s}$ as a quotient
space of a subset of $V_{1} \subset \Tone$, identifying linked points. Define
$\mathcal{I} =( \NC \cup \mathcal{A}_{3}(I)  ) \cap V_{1}$ and
$\mathcal{J}=(\mathcal{A}_{2} \cup \mathcal{A}_{3}(II) \cup \mathcal{A}_{4}   \cup
\mathcal{D}_{4}^{\pm} ) \cap V_{1}$. The following claim is all we need to use
the results in the next section:

\begin{theorem}
  \label{claim: hypothesis of the synthesis theorem}Points in $\mathcal{I}$
  are unequivocal, and any point in $\mathcal{J}$ is linked to a point in
  $\mathcal{I}$.
\end{theorem}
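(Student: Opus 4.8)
The proof of Theorem \ref{claim: hypothesis of the synthesis theorem} splits into two parts, corresponding to the two claims: first, that every point of $\mathcal{I}=(\NC\cup\mathcal{A}_3(I))\cap V_1$ is unequivocal; second, that every point of $\mathcal{J}=(\mathcal{A}_2\cup\mathcal{A}_3(II)\cup\mathcal{A}_4\cup\mathcal{D}_4^\pm)\cap V_1$ is linked to a point of $\mathcal{I}$. For the first part, the $\NC$ case is essentially the content of Cartan's theorem \ref{Cartan's theorem}: at a non-conjugate point $x$, $ e_1$ is a local diffeomorphism near $x$, $ e_2$ is a local diffeomorphism near $x$ (same differential rank, since the curvature tensors are $L$-related), and $\varphi= e_2\circ( e_1|_O)^{-1}$ is the local isometry furnished by Cartan; this works for \emph{any} $L$-related $(M_2,p_2)$, which is exactly the uniformity required by Definition \ref{definition:inequivocal}. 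The real work is the $\mathcal{A}_3(I)$ case. Here I would use the adapted coordinates for an $A_3$ singularity listed in section \ref{section: generic metric}, in which $ e_1(x_1,\dots,x_n)=(x_1^3\pm x_1x_2,x_2,\dots,x_n)$, together with the defining property of $A_3(I)$ points (the radial coordinate $r_2>0$, equivalently $ e_1(V_1\cap U)$ is a neighborhood of $ e_1(x)$). The key point is that near an $A_3(I)$ first conjugate point, although $ e_1|_{U}$ is two-to-one on part of its image, it is \emph{injective on $V_1\cap U$}: the second sheet lies beyond the first conjugate locus, hence outside $V_1$. I expect this to be the content of a lemma proved in the (unseen) section ``$A_3$ are inequivocal'' that the excerpt refers to; granting it, $ e_1|_{V_1\cap U}$ is a homeomorphism onto an open set, and the same holds for $ e_2$ by the $L$-relatedness applied to the whole chain of Cartan isometries along the stratum, which patch together (the set of such isometries is locally constant on the connected non-conjugate locus). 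Then $\varphi= e_2\circ( e_1|_{V_1\cap U})^{-1}$ is the desired isometry, uniform in $(M_2,p_2)$.

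For the second part, the strategy is exactly the ``linking curve'' mechanism described in the Main idea subsection, mimicking Hebda's curve $Y$ from the surface case. For each singularity type in $\mathcal{J}$ I would exhibit, using the canonical/adapted coordinate formulas in section \ref{section: generic metric}, an explicit curve $\alpha:[0,t_0]\to\Tone$ with $\alpha(0)=x$, $\alpha(t_0)=y\in\mathcal{I}$, $\operatorname{Im}(\alpha)\subset V_1$, and $ e_1\circ\alpha$ \emph{fully tree-formed}. The construction is type-by-type: for an $A_2$ fold point, the fibre $ e_1^{-1}( e_1(x))$ near $x$ within $V_1$ is a curve transverse to the conjugate locus, so one follows it toward decreasing radius until one exits the fold stratum into $\NC$ or into an $A_3(I)$ point; for $A_3(II)$, $A_4$ and $D_4^\pm$ points one uses the explicit normal forms to write down a curve in the conjugate locus $\mathcal C$ staying in $V_1$ and terminating at a generic (hence $\NC$ or $A_3(I)$) point, arranging that the image $ e_1\circ\alpha$ retraces itself as a tree. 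The tree-formedness is where the normal forms are essential: for instance near $A_4$, $ e_1(x_1,\dots)=(x_1^4+x_1^2x_2+x_1x_3,x_2,\dots)$ is invariant under the involution matching the two $x_1$-roots of the quartic for suitable $(x_2,x_3)$, and a curve running out along one branch and back along the image is tree-formed because the two branches map to the same arc in $M_1$. I would then invoke the (forthcoming) lemma that a linking curve between $x$ and $y$ implies $x\leftrightsquigarrow y$ — this is the rigorous replacement for Hebda's affine-development argument sketched in \ref{subsection: Hebda's approach}, using that $ e_2\circ( e_1|_{O_{p_1}})^{-1}$ is an isometric immersion on $U_{p_1}$ and that affine development commutes with isometries and preserves tree-formedness — to conclude $x$ is linked to $y\in\mathcal{I}$.

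The main obstacle, and the step I would spend the most care on, is the $D_4^\pm$ (umbilic) case of the linking-curve construction, and within it the $D_4^+$ hyperbolic umbilic, which the text warns must be further subdivided analogously to $A_3(I)/A_3(II)$. Unlike the $A_k$ singularities, the kernel of $d e_1$ at a $D_4$ point is two-dimensional, so the local geometry of the conjugate set is a cone $\times$ cube rather than a hypersurface, and it is not immediately clear that there is a curve inside $\mathcal C\cap V_1$ from $x$ to a generic point whose image is tree-formed; one has to check, using the canonical formula $ e_1:(x_1,x_2,x_3,\dots)\mapsto(\tfrac12 x_1^2+x_2x_3,\tfrac12 x_2^2+x_1x_3,x_3,\dots)$, that the three sheets meeting along the cone identify correctly and that the relevant branch of the cone remains inside $V_1$ (this is where the $D_4^+$ subdivision enters, exactly as $r_2\gtrless 0$ did for $A_3$). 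A secondary technical point is verifying that the linking curves can be taken with image in $V_1$ rather than merely in the closure — this uses the Lipschitz (indeed semiconcave, by \cite{Castelpietra Rifford}) regularity of $\lambda_1$ recorded in Lemma \ref{landa es Lipschitz} to control the boundary $\partial V_1$. Finally one should record that the transitivity corollary proved just above applies, so that the chain ``point of $\mathcal J$ $\leftrightsquigarrow$ point of $\mathcal I$'' composes coherently when building the synthesis in the next section.
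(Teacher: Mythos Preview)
Your proposal has two genuine gaps, one in each half of the claim.

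\textbf{The $A_3(I)$ unequivocal argument.} Your key claim --- that $e_1|_{V_1\cap U}$ is \emph{injective} near an $A_3(I)$ point because ``the second sheet lies beyond the first conjugate locus, hence outside $V_1$'' --- is false. In adapted coordinates $e_1(x_1,x_2,x_3)=(x_1^3-x_1x_2,x_2,x_3)$, the conjugate locus is $x_2=3x_1^2$, and the two preimages of a point on the half-surface $\{x_1=0,x_2\ge0\}$ are $(x_1,x_1^2,x_3)$ and $(-x_1,x_1^2,x_3)$; both satisfy $x_2=x_1^2<3x_1^2$, so both lie on the pre-conjugate side and hence \emph{both} belong to $V_1$ when the point is $A_3(I)$. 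The characterization you cite says only that $e_1(V_1\cap U)$ is open, not that $e_1$ is injective there. The paper's argument is precisely to handle this two-to-one behavior: it uses the curve $t\mapsto(t,t^2,x_3)$ joining the two preimages, observes that $e_1$ maps it to a tree-formed curve, and then applies Hebda's affine-development machinery to conclude $e_2(x_1,x_1^2,x_3)=e_2(-x_1,x_1^2,x_3)$, so that the formula $\varphi(e_1(a))=e_2(a)$ is well-defined despite non-injectivity.

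\textbf{The linking construction.} Your type-by-type plan underestimates the difficulty and is wrong in places. For an $A_2$ point $x$, the fibre $e_1^{-1}(e_1(x))$ near $x$ is just $\{x\}$ (the normal form is $(x_1^2,x_2,\dots,x_n)$), so ``following the fibre toward decreasing radius'' makes no sense. The paper instead introduces \emph{conjugate descending curves}: integral curves within $\mathcal{C}$ of the one-dimensional distribution $D=(\ker dF\oplus\langle r\rangle)\cap T\mathcal{C}$, along which the radius strictly decreases while $e_1$ restricts to an immersion. One follows such a CDC until it reaches an $A_3$ point, where a retort (a distinct curve with the same $e_1$-image, run backwards) can be started. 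The essential obstacle --- entirely absent from your sketch --- is that this retort may hit $\mathcal{C}$ again before completing, forcing one to start a new CDC, then a new retort, and so on. The paper formalizes this as an algorithm producing \emph{linking curves} (concatenations of ACDCs and retorts with a cancellation structure), and the substantive work is proving termination: each singularity type admits a \emph{positive transient pair} $(S,O)$ guaranteeing that any aspirant curve entering $S$ exits $O$ with a definite radius drop and never returns. This, together with a careful analysis of the CDC flow near $A_4$ and $D_4^\pm$ points (where the behavior is qualitatively different and the $D_4^+$ case splits into two subtypes according to the sign of $r_3^0$), is what actually proves the $\mathcal{J}$ claim. Your proposal of explicit one-shot curves from the normal forms does not survive the first hit of a retort on $\mathcal{C}$.
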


\pagebreak[3]

We will actually prove the theorem in a simpler situation first:

\begin{dfn}\label{definition: easy manifold}
  A manifold $M$ is {\emph{easy from $p$}} iff the exponential map from $p$
  only has singularities of type $A_{2}$ and $A_{3}$.
\end{dfn}

\begin{theorem}
  \label{claim: hypothesis of the synthesis theorem for easy manifolds}
  In an easy manifold, points in $\mathcal{I} = \NC \cup \mathcal{A}_{3}(I) $ are unequivocal, and any point in $\mathcal{J}= \mathcal{A}_{2} \cup \mathcal{A}_{3}(II) $ is linked to a point in $\mathcal{I}$.
\end{theorem}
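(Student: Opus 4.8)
The plan is to check the two assertions of the theorem type by type, working throughout in the adapted coordinates of Section~\ref{section: generic metric}, where $e_1$ has its canonical expression, and using Lemma~\ref{landa es Lipschitz}: $V_1$ is the closed region bounded by the first conjugate locus $\partial V_1$, so $\mathrm{int}(V_1)\subset\NC$ and $e_1$ is a local diffeomorphism there. Recall also that for any $L$-related $(M_2,p_2)$, Cartan's theorem~\ref{Cartan's theorem} makes $\varphi_O:=e_2\circ(e_1|_O)^{-1}$ an isometric immersion — hence, dimensions being equal, a local diffeomorphism — on every small open $O\subset\mathrm{int}(V_1)$; it follows that both $e_1$ and $e_2$ are orientation-preserving local diffeomorphisms on the connected, star-shaped set $\mathrm{int}(V_1)$ (each is the identity to first order at $0$, and the orientation sign is locally constant). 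For $\NC$ points this already concludes: if $x\in\NC\cap V_1$ then $x\in\mathrm{int}(V_1)$, any small enough $O\ni x$ has $O\cap V_1=O$ and $e_1|_O$ a diffeomorphism onto an open set, so $e_2(O)=\varphi_O(e_1(O))$ is open and $\varphi_O$ is the isometry required by Definition~\ref{definition:inequivocal}; thus $x$ is unequivocal, for any $L$-related $(M_2,p_2)$.

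The substance is the $\mathcal A_3(I)$ case. Fix $x\in\mathcal A_3(I)\cap V_1$ and a neighbourhood $U$ of adapted coordinates in which $e_1(x_1,\dots,x_n)=(x_1^3-x_1x_2,x_2,\dots,x_n)$; then $\det d e_1=3x_1^2-x_2$ exactly, so $\mathcal C\cap U=\{x_2=3x_1^2\}$, and the condition $r_2>0$ defining $A_3(I)$ places $V_1$ on the side $x_2\le 3x_1^2$, so that $\partial V_1\cap U=\{x_2=3x_1^2\}$ after shrinking $U$. Analysing the fibre cubic $t^3-y_2t-y_1$ one sees that over $y_2\le 0$ it has a single root, which lies in $V_1$, while over $y_2>0$ exactly the two roots outside the critical interval $(-\sqrt{y_2/3},\sqrt{y_2/3})$ lie in $V_1$ (the middle one never does). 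Hence $W:=e_1(V_1\cap U)$ is an open neighbourhood of $e_1(x)$ — this is the characterisation of $A_3(I)$ announced in Section~\ref{section: generic metric} — $e_1|_{V_1\cap U}$ has fibres of at most two points, and the set $\Delta\subset W$ of two-point fibres is a contractible ``cusp'' region carrying two smooth local inverse sheets $\sigma_+,\sigma_-:\Delta\to\NC\cap\mathrm{int}(V_1)$. By the previous paragraph the maps $\varphi_\pm:=e_2\circ\sigma_\pm$ are orientation-preserving local isometries of $\Delta$. It is enough to prove $\varphi_+=\varphi_-$ on $\Delta$: then $\varphi_0:=e_2\circ(e_1|_{V_1\cap U})^{-1}$ is a single-valued continuous map on $W$, agreeing with the Cartan local isometries on the dense open non-conjugate part, hence a local isometry on $W$, with $e_2(V_1\cap U)=\varphi_0(W)$ open, so $x$ is unequivocal.

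To prove $\varphi_+=\varphi_-$ I would first treat the hypersurface slice $H:=\Delta\cap\{y_1=0\}$, over which the two preimages of a point $q$ with coordinates $(0,y_2,x_3^0,\dots,x_n^0)$ are $(\pm\sqrt{y_2},y_2,x_3^0,\dots,x_n^0)$. The arc $s\mapsto(s,s^2,x_3^0,\dots,x_n^0)$, $s\in[-\sqrt{y_2},\sqrt{y_2}]$, is a \textbf{linking curve} between them: it lies in $V_1$, meeting $\partial V_1$ only at the fibre's $A_3$-point $s=0$; its image $s\mapsto(0,s^2,x_3^0,\dots,x_n^0)$ retraces itself, so it is fully tree-formed; and it is approximated inside $\mathrm{int}(V_1)$ by $s\mapsto(s,s^2-\delta,x_3^0,\dots,x_n^0)$. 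Feeding this into James Hebda's affine-development argument recalled in Section~\ref{subsection: Hebda's approach} — $e_2\circ(e_1|_{\mathrm{int}\,V_1})^{-1}$ being a local isometry, development commutes with it along the approximants, and the limit being tree-formed forces equality of its endpoints under development — yields $\varphi_+=\varphi_-$ on $H$. Since $\varphi_\pm$ are orientation-preserving and agree on the hypersurface $H$, they agree to first order at each point of $H$, and two local isometries of the connected manifold $\Delta$ agreeing to first order at one point coincide; hence $\varphi_+=\varphi_-$ on $\Delta$, as required.

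For $x\in(\mathcal A_2\cup\mathcal A_3(II))\cap V_1$ the claim is that $x\leftrightsquigarrow y$ for some $y\in\mathcal I=\NC\cup\mathcal A_3(I)$ with $|y|<|x|$. Using the fold normal form (for $\mathcal A_2$) or the cusp normal form with $r_2<0$ (for $\mathcal A_3(II)$), in which $V_1$ is only a half-neighbourhood of $e_1(x)$, I would produce a path $\alpha:[0,t_0]\to V_1$ from $x$ to such a $y$ with $e_1(y)=e_1(x)$ and $e_1\circ\alpha$ fully tree-formed: this is the role of the \emph{conjugate descending flow}, which slides the conjugate vector $x$ along strictly decreasing radii inside $V_1$ until it leaves the conjugate set or reaches an $A_3(I)$ point, the returning branch of $\alpha$ being read off the normal form so that $e_1\circ\alpha$ retraces a curve; Hebda's development argument then gives $e_2(x)=e_2(y)$ and the local-implication clause in the definition of being \emph{linked} follows from the normal forms. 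The main obstacle is precisely this last step for the $\mathcal J$-points: constructing the conjugate descending flow, showing that its orbit from $x$ stays in $V_1$, terminates at a point $y$ that genuinely lies in $\mathcal I$ and carries $e_1(y)=e_1(x)$, and produces an $e_1$-image that is tree-formed; together with the (more routine but still delicate) verification, near each of the singularity types occurring on an easy manifold, that the local conjugate set coincides with $\partial V_1$ and that the fibre structure of $e_1|_{V_1\cap U}$ is exactly the one described above.
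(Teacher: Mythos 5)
Your treatment of the first half of the theorem (unequivocality of $\NC$ and $\mathcal{A}_{3}(I)$ points) is essentially the paper's own argument: the cusp arc $s\mapsto(s,s^{2},x_{3}^{0},\dots,x_{n}^{0})$ whose $e_{1}$-image retraces itself, Hebda's affine-development argument applied to approximants lying in the interior of $V_{1}$, and the extension from the slice by first-order agreement of orientation-preserving local isometries. That part is sound.

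The second half is where the real content of the theorem lies, and your proposal does not prove it; you acknowledge this yourself, but the gap is larger than a routine verification. Two concrete points. First, for an $A_{2}$ point $x$ the fold normal form $(x_{1},\dots,x_{n})\mapsto(x_{1}^{2},x_{2},\dots,x_{n})$ shows that $x$ has \emph{no} other preimage of $e_{1}(x)$ near $x$, so the ``returning branch'' of the prospective linking curve cannot be read off the normal form at $x$ at all. The retort can only be started at the far end of the conjugate descending curve, at an $A_{3}$ point, via the $A_{3}$ join ($t\mapsto(-2t,3t^{2},x_{3}(t))$ replying to $t\mapsto(t,3t^{2},x_{3}(t))$), and it then runs through an entirely different region of $\Tone$; in particular the endpoint $y$ with $e_{1}(y)=e_{1}(x)$ is not where the CDC terminates but where the retort terminates. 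Second --- and this is the crux --- the retort is a lift of $e_{1}\circ(\text{CDC})$ through non-conjugate points, and it is generally interrupted when it meets the conjugate locus again (a point of $\SAtwo$). At such a hit one must insert a further CDC together with its own retort (the paper's \emph{standard T}) before resuming, and a priori this recursion need not terminate. The paper's termination proof combines the unbeatable/slack estimates (each CDC--retort pair strictly decreases the radius by a definite amount), the genericity adjustments (GACDCs avoiding a prescribed finite set and meeting $\e(\mathcal{A}_{2})$ transversally, so that retorts only ever encounter $A_{2}$ obstacles), and the transient-pair compactness argument ruling out a finite supremal radius $R_{0}$ up to which the algorithm succeeds. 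None of this machinery appears in your proposal, so the assertion that every point of $\mathcal{J}$ is linked to a point of $\mathcal{I}$ remains unproved.
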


\subsection{Synthesis}\label{subsection: synthesis}
In this section, $A$ is an arbitrary topological space, $X_{1}$, $X_{2}$ are
Riemannian manifolds, and $e_{1} :A \rightarrow X_{1}$, $e_{2} :A \rightarrow
X_{2}$ are arbitrary continuous maps. The concepts of {\emph{unequivocal
point}} and {\emph{linked pair of points}} make sense in this slightly more general setting with the obvious changes.

\diff{
\begin{prop}
  \label{synthesis: statement}Let $A$ be a topological space, $X_{1}$, $X_{2}$
  Riemannian manifolds, $e_{1} :A \rightarrow X_{1}$, $e_{2} :A \rightarrow
  X_{2}$ be continuous maps such that $\leftrightsquigarrow$ is an \emph{equivalence relation} and 
  the following property holds:
  
  For every $x \in A$, there is some $y \in A$ such that:
  \begin{itemizedot}
    \item $x$ is linked to $y$
    
    \item $y$ in unequivocal.
  \end{itemizedot}
  Then there is a Riemannian manifold $X$ (the {\emph{synthesis}} of
  $X_{1}$ and $X_{2}$), a continuous map $e:A \rightarrow X$ and local
  isometries $\pi_{1} :X \rightarrow X_{1}$ and $\pi_{2} :X \rightarrow
  X_{2}$, such that $e_{i} = \pi_{i} \circ e$, for $i=1,2$.
  
  $$
  \xymatrix{   & A \ar@//[ddl]_{e_1} \ar@//[ddr]^{e_2} \ar@//[d]^{e} \\
  & X \ar[dl]^{\pi_1} \ar[dr]_{\pi_2} &  \\ X_1 &  & X_2}
  $$
\end{prop}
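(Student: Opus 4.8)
The plan is to realise $X$ as the quotient space $A/\!\!\leftrightsquigarrow$ and to manufacture its Riemannian structure entirely out of the unequivocal sets. First I would equip $X=A/\!\!\leftrightsquigarrow$ with the quotient topology and let $e\colon A\to X$ be the projection; this is legitimate because $\leftrightsquigarrow$ is assumed to be an equivalence relation. Since $x\leftrightsquigarrow y$ forces $e_1(x)=e_1(y)$ and $e_2(x)=e_2(y)$, each $e_i$ factors as $e_i=\pi_i\circ e$ for a unique continuous map $\pi_i\colon X\to X_i$, by the universal property of the quotient. What then remains is to upgrade $X$ to a Riemannian manifold in such a way that the maps $\pi_i$ become local isometries.

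For the atlas, observe that by hypothesis every point of $A$ is linked to an unequivocal point, so the sets $e(O)$, with $O$ ranging over unequivocal neighbourhoods of unequivocal points, cover $X$. Fix such an $O$, with its isometry $\varphi\colon e_1(O)\to e_2(O)$ satisfying $\varphi\circ e_1|_O=e_2|_O$. I would verify two things. First, $e(O)$ is open in $X$; equivalently, its $\leftrightsquigarrow$-saturation is open in $A$, and this is the step where the neighbourhood clause in the definition of \emph{linked} (and, in the concrete application, the openness of the $e_1$-image of an unequivocal set) is used. Second, $\pi_1$ restricts to a homeomorphism of $e(O)$ onto the open set $e_1(O)\subset X_1$: surjectivity is immediate, and injectivity is exactly the unequivocal property, since $e_1(z)=e_1(z')$ with $z,z'\in O$ yields $e_2(z)=\varphi(e_1(z))=\varphi(e_1(z'))=e_2(z')$, hence $z\leftrightsquigarrow z'$ (take $U=V=O$). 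Declaring these homeomorphisms to be charts, any transition map is a restriction of the identity of $X_1$ — both charts being restrictions of $\pi_1$ — so the atlas is automatically $C^\infty$, $\pi_1$ becomes a local diffeomorphism, and the pullback through the charts of the Riemannian metric of $X_1$ is a well-defined metric on $X$ (on overlaps it is literally the metric of $X_1$ read in the same chart). This makes $\pi_1$ a local isometry, and since $\pi_2=\varphi\circ\pi_1$ on each $e(O)$, so is $\pi_2$. The identities $e_i=\pi_i\circ e$ hold by construction.

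It then remains to check that $X$ is a genuine manifold, i.e.\ Hausdorff, second countability being inherited from $X_1$ after passing to a countable subcover. Two points of $X$ with distinct $\pi_1$-images are separated using the Hausdorffness of $X_1$, so the delicate case is $e(x)\neq e(x')$ with $e_1(x)=e_1(x')$. I would argue by contradiction: if the images in $X$ of unequivocal neighbourhoods of linked representatives $y$ of $x$ and $y'$ of $x'$ could never be made disjoint, one would obtain sequences $a_n\to y$, $a_n'\to y'$ with $a_n\leftrightsquigarrow a_n'$; then the isometries attached to a fixed pair of unequivocal neighbourhoods of $y$ and $y'$ agree along $e_1(a_n)\to e_1(y)$, hence on a whole neighbourhood of $e_1(y)$ by the rigidity of Riemannian isometries, which forces $y\leftrightsquigarrow y'$ and therefore $x\leftrightsquigarrow x'$, a contradiction.

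I expect the main obstacle to be precisely this interplay between the locally defined isometries $\varphi$ and the point-set topology of the quotient: showing that each $e(O)$ is open and that $\pi_1$ trivialises over it — so that $X$ is locally modelled on open subsets of $X_1$ — and then establishing Hausdorffness, which hinges on the mutual consistency of the $\varphi$'s wherever their domains meet. Everything else — the existence and continuity of the $\pi_i$, the smoothness of the transition maps, the construction and compatibility of the metric — is routine once those two points are in hand, and the abstract statement then feeds directly into the Ambrose argument once one knows (established separately) that $\mathcal I$-points are unequivocal and every $\mathcal J$-point is linked to an $\mathcal I$-point.
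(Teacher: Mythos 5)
Your proposal is correct and follows essentially the same route as the paper: form the quotient $X=A/\!\!\leftrightsquigarrow$, check that the images of unequivocal sets are open and that $\pi_1$ restricts to a homeomorphism over each of them (the paper phrases this as taking the sets $[W]$ as a basis and proving continuity of $e$, which yields the same topology as your quotient topology), and then transport the metric of $X_1$ through these local trivialisations, with $\pi_2$ becoming a local isometry via the isometries $\varphi$ attached to unequivocal sets. The one genuine addition is your Hausdorffness check, which the paper omits entirely; note however that your rigidity step there is too weak as stated, since two Riemannian isometries that agree on a sequence of points converging to $e_1(y)$ need not agree near $e_1(y)$ (e.g.\ the identity and a reflection agree on a hyperplane) --- one needs agreement on a set with nonempty interior, or agreement of the maps together with their differentials at a point.
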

}\diff{
\begin{proof}
  Define $X$ as a quotient by the linked relation:
  \begin{equation*}
    X = A/ \leftrightsquigarrow
  \end{equation*}
  Let $e:A \rightarrow X$ be the projection map. We define maps $\pi_{ i} :X
  \rightarrow X_{i}$ by $\pi_{i} ([x])=e_{i} (x)$. Both maps are clearly well
  defined.
  \begin{itemize}
    \item Topology of $X$: A basis for the topology of $X$ is given by all
    $[W]=\{[x],x \in W\}$, for an unequivocal open set $W$.
    
    \item $e$ is continuous at every point $x\in A$:
    There is an unequivocal point $z\in [x]$, thus $\exists U^z, V^x$: $\forall v \in U^z, w \in V^x,
    e_{1}(v) = e_{1}(w) \Rightarrow e_{2}(v) = e_{2}(w)$.
    
    Let $U=[W]$ be a basis open neighborhood of $[x]$:
    
    $\exists W^z\subset U^z\cap e_1^{-1}(\pi_1(U))$: $ e_1(W^z)$ is an open neighborhood of $e_1(z)$ and there is an isometry $\varphi_z: e_{1}( W^z) \rightarrow e_{2}( W^z)$ such that $\varphi \circ e_1 = e_2$ on $W^z$.
    
    Then $O=U^x\cap e_1^{-1}(e_1(W^z))$ is an open neighborhood of $x$.
    We want to show that $O\subset e^{-1}(U)$.

    We first show $O\subset e^{-1}([W^z]) $: let $v\in O$.
    There is some $w\in W^z$ such that $e_1(w)=e_1(v)$ and this implies also that $e_2(w)=e_2(v)$.

    For the same reason, the sets $O$ and $W^z$ also satisfy the necessary property to show that $v$ is linked to $w$, thus $[v]\in [W^z]$.
    
    It remains to show that $[W^z]\subset [W]$.
    We can assume both $W$ and $W^z$ are connected.
    The unequivocal sets $W^z$ and $W$ have associated isometries $\varphi_z$ and $\varphi$, and they agree on $e_1(U^x)\cap e_1(V^z)\cap e_1(W^z)$, a set with non-empty interior, so they agree on $e_1(W^z)$.
    Finally, for any point $\tilde{z}\in W^z$ there is another $\tilde{x}\in W$ such that $e_1(\tilde{z})=e_1(\tilde{x})$, and $\varphi_z=\varphi$ implies $e_2(\tilde{z})=e_2(\tilde{x})$.
    So we conclude as before that $\tilde{z} \leftrightsquigarrow \tilde{x}$.

    \item For $i=1,2$, $\pi_{i} |_{[W]}$ is injective for any basis open set $[W]$: 
    WLOG, take $i=1$, and let $[x_{1} ],[x_{2} ] \in [W]$ be such that $\pi_{1} ([x_{1} ])= \pi_{1} ([x_{2} ])$. 
    We can assume $x_{1} ,x_{2} \in W$. 
    By the property of $W$, $e_{1} (x_{1} )=e_{1} (x_{2} )$ implies $e_{2}(x_{1} )=e_{2} (x_{2} )$, and taking $U^{x_1}=V^{x_2}=W$ does the rest of the job of proving that $x_{1} \leftrightsquigarrow x_{2}$.

    \item For $i=1,2$, $\pi_{i}$ is continuous. WLOG, take $i=1$. We show that
    $\pi_{1} |_{[W]}$ is continuous, for a basis set $[W]$: let $U$ be an open
    subset of $\pi_{1} ([W])=e_1(W)$. Then $( \pi_{1} |_{[W]} )^{-1} (U)=[W] \cap
    \pi_{1}^{-1} (U)=[W \cap e_{1}^{-1} (U)]$ is open, because $W \cap
    e_{1}^{-1} (U) \subset W$, and $e_{1} (W \cap e_{1}^{-1} (U))=U$, and thus
    $W \cap e_{1}^{-1} (U)$ is also unequivocal.
    
    \item For a basis open set $[W]$, $\pi_{i} ([W])$ is open by definition.
    Hence, $\pi_{i}$ is open for $i=1,2$. Thus, $\pi_{i} |_{[W]}$ is an
    homeomorphism onto its image.
    
    \item Hence, $\pi_{1}$ and $\pi_{2}$ are local homeomorphisms. We can use
    $\pi_{1}$ to give $X$ the structure of a Riemannian manifold, which
    trivially makes $\pi_{1}$ a local isometry. For an unequivocal set $W$,
    with $e_{2} |_{W} = \varphi \circ e_{1} |_{W}$, then $\pi_{2} \circ (
    \pi_{1} |_{[W]} )^{-1} = \varphi$ is an isometry from $\pi_{1} ([W])=e_{1}
    (W)$ to $\pi_{2} ([W])=e_{2} (W)$, so $\pi_{2}$ is also a local isometry.
  \end{itemize}
\end{proof}
}
Let us mention that the synthesis that we constructed satisfies an universal
property, and thus is unique up to global isometry:
\diff{
\begin{lem}\label{universal property of the synthesis}
  Under the same hypothesis of \ref{synthesis: statement}, the synthesis
  manifold $X$ constructed in the proof satisfies the following universal
  property:
  
  For any Riemannian manifold $X'$, continuous surjective map $e' :A
  \rightarrow X'$ and local isometries $\pi'_{1} :X' \rightarrow X_{1}$ and
  $\pi'_{2} :X' \rightarrow X_{2}$, such that $e_{i} = \pi'_{i} \circ e'$, for
  $i=1,2$, there is a local isometry $\pi :X' \rightarrow X$ such that $\pi'_{i}=\pi_i\circ\pi$ and $\pi\circ e' = e$:
   \begin{center}
\begin{tabular}{ll}
  $$
  \xymatrix{
  & A \ar@//[ddl]_{e_1} \ar@//[ddr]^{e_2} \ar@//[d]^{e'} \\
  & X' \ar@//[dl]^{\pi_1'} \ar@//[dr]_{\pi_2'} \\
  X_1 &  & X_2}
  $$
 & 
  $$
  \xymatrix{
  & X' \ar@//[ddl]_{\pi_1'} \ar@//[ddr]^{\pi_2'} \ar@//[d]^{\pi} \\
  & X \ar[dl]^{\pi_1} \ar[dr]_{\pi_2} &  \\ X_1 &  & X_2}
  $$
   \end{tabular}
   \end{center}
  
\end{lem}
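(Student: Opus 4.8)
The plan is to construct $\pi:X'\to X$ directly and show it is a well-defined local isometry making the diagrams commute. Recall from the proof of \ref{synthesis: statement} that $X=A/\!\leftrightsquigarrow$ with projection $e:A\to X$, and that a basis for the topology of $X$ is given by the sets $[W]$ for $W$ an unequivocal open set. The key point is that $e':A\to X'$ is \emph{constant on $\leftrightsquigarrow$-classes}: if $x\leftrightsquigarrow y$ then $e_1(x)=e_1(y)$ and $e_2(x)=e_2(y)$; taking the unequivocal point $z\leftrightsquigarrow x$ and a small unequivocal $W\ni z$ with associated isometry $\varphi$, both $x$ and $y$ eventually lie in $e_1^{-1}(e_1(W))$, and I would use that $\pi_1'$ is a local isometry (hence locally injective) together with $\pi_1'\circ e'=e_1$ to conclude $e'(x)=e'(y)$ — this is exactly the computation that $\pi_1'$ restricted to a suitable neighborhood built from $W$ is injective, so equality of $e_1$-values forces equality of $e'$-values. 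Thus $e'$ factors through $X$: there is a unique set map $\pi:X\to X'$ with $\pi\circ e=e'$.

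Wait — that gives $\pi:X\to X'$, the wrong direction again. Instead I would go the other way: define $\pi:X'\to X$ by $\pi(x')=e(x)$ for any $x\in A$ with $e'(x)=x'$; since $e'$ is surjective such $x$ exists. Well-definedness is the heart of the matter: if $e'(x)=e'(y)$ I must show $x\leftrightsquigarrow y$, i.e.\ $e(x)=e(y)$. From $\pi_i'\circ e'=e_i$ we get $e_i(x)=\pi_i'(e'(x))=\pi_i'(e'(y))=e_i(y)$ for $i=1,2$, so $e_1,e_2$ agree on $x,y$. To upgrade this to $x\leftrightsquigarrow y$ I need the local injectivity built into $X'$: pick a neighborhood $U'$ of $x'=e'(x)=e'(y)$ on which $\pi_1':U'\to X_1$ is injective (it is a local isometry), and set $U=(e'|_{?})^{-1}$-type preimages — more carefully, take $U=e'^{-1}(U')\cap N_x$ and $V=e'^{-1}(U')\cap N_y$ for small enough $N_x,N_y$; then for $z\in U$, $w\in V$ with $e_1(z)=e_1(w)$, injectivity of $\pi_1'$ on $U'$ forces $e'(z)=e'(w)$, whence $e_2(z)=\pi_2'(e'(z))=\pi_2'(e'(w))=e_2(w)$. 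This is precisely the defining property of $x\leftrightsquigarrow y$, so $e(x)=e(y)$ and $\pi$ is well defined.

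Next I would verify the commutativity relations and the local-isometry property. Commutativity: $\pi\circ e'=e$ holds by construction, since for $x\in A$, $\pi(e'(x))=e(x)$. For the triangle $\pi_i'=\pi_i\circ\pi$: given $x'=e'(x)$, $\pi_i(\pi(x'))=\pi_i(e(x))=e_i(x)=\pi_i'(e'(x))=\pi_i'(x')$, using $\pi_i\circ e=e_i$ from \ref{synthesis: statement} and $\pi_i'\circ e'=e_i$. Continuity and the local-isometry claim for $\pi$: I would work on a basis set. Given $x'\in X'$, choose an unequivocal open $W\subset A$ with $z\in W$ for some $z\leftrightsquigarrow x$ (where $e'(x)=x'$); then shrink $W$ so that $e'|_W$ lands in a neighborhood where $\pi_1'$ is an isometry onto its image, and so that $e'(W)$ is open (using that $\pi_1'$ is open and $e_1(W)=\pi_1'(e'(W))$ is open by unequivocality). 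On $e'(W)$ the map $\pi$ equals $e\circ(e'|_W)^{-1}$ in the appropriate sense — more precisely $\pi$ carries $e'(W)$ onto $[W]$, and both $e'|_W$ up-to-$\leftrightsquigarrow$ and $e|_W$ up-to-$\leftrightsquigarrow$ induce homeomorphisms onto $e'(W)$ and $[W]$ compatible via the isometry $\varphi$ of $W$. Since $\pi_1|_{[W]}$ and $\pi_1'|_{e'(W)}$ are both isometries onto the common open set $e_1(W)$, and $\pi_1\circ\pi=\pi_1'$, we get $\pi|_{e'(W)}=(\pi_1|_{[W]})^{-1}\circ\pi_1'|_{e'(W)}$, a composition of isometries, hence $\pi$ is a local isometry. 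Finally uniqueness of $\pi$ with $\pi\circ e'=e$ follows because $e'$ is surjective.

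The main obstacle I anticipate is the well-definedness step: turning the pointwise coincidences $e_1(x)=e_1(y)$, $e_2(x)=e_2(y)$ into the \emph{neighborhood-level} implication required by the definition of $\leftrightsquigarrow$. This needs the local injectivity of $\pi_1'$ packaged correctly with the fact that $X'$ is covered by sets on which $\pi_1'$ is an isometry; one must choose the neighborhoods $U,V$ of $x,y$ carefully (as preimages under $e'$ of a single injectivity neighborhood of $x'$ in $X'$) so that the argument closes. Everything else — commutativity of the two diagrams and the local-isometry conclusion — is a routine unwinding of definitions using $\pi_i\circ e=e_i$, $\pi_i'\circ e'=e_i$, and the basis $\{[W]\}$ of $X$.
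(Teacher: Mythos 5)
Your proof is correct and follows essentially the same route as the paper's: define $\pi(x')=[x]=e(x)$ for any $x$ with $e'(x)=x'$, establish well-definedness by pulling back an injectivity neighborhood of $\pi_1'$ through $e'$ to produce the neighborhoods $U,V$ required by the definition of $\leftrightsquigarrow$, and obtain the local isometry property by writing $\pi$ locally as $\pi_1^{-1}\circ\pi_1'$. The only difference is cosmetic — you phrase the local-isometry step in terms of the basis sets $[W]$ from unequivocal neighborhoods, while the paper uses arbitrary neighborhoods on which $\pi_1$ and $\pi_1'$ are isometries — so no further comment is needed.
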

}
\begin{proof}
\diff{
  Define $\pi (q)=[x]$ for any $x \in A$ such that $e' (x)=q$. 
  For any other $y$ such that $e' (y)=q$, we have $e_{i} (y)= \pi_{i}' (q)=e_{i} (x)$.
  We can also take open neighborhoods $U^x,V^y$ of $x$ and $y$ contained on $e'^{-1}(A)$, for an open neighborhood $A$ of $q$ such that $\pi|_{A}$ is an homeomorphism.
  Then, if $e_1(z)=e_1(w)$ for $z\in U$ and $w\in V$, it follows from $\pi'_1(e'(z))=\pi'_1(e'(w))$ that $e'(z)=e'(w)$ and thus $e_2(z)=e_2(w)$.
  It follows that $x \leftrightsquigarrow y$ and $\pi$ is well defined. 
  
  We also check that $\pi_{i} ( \pi
  (q))=e_{i} (x)= \pi_{i}' (e' (x))= \pi_{i}' (q)$. Any $q \in X'$ has a
  neighborhood $U' \subset X'$ such that $\pi_{i}' |_{U'}$ is an isometry.
  There is also $U \subset X$ \ such that $\pi_{i} |_{U}$ is an isometry. Let
  $V' =( \pi'_{1} )^{-1} ( \pi'_{1} (U' ) \cap \pi_{1} (U))$. Then $\pi |_{V'}
  =( \pi_{i}^{-1} \circ \pi_{i}' )|_{V'}$, and thus $\pi$ is a local isometry.
}
\end{proof}

\begin{remark}
  We have not proved that $\pi_{1}$ and $\pi_{2}$ are coverings maps. It would
  be enough to show that $X$ is complete, but this is not true in such
  generality, as the following example shows:
  
  Let $A \subsetneq M$ be an open subset of a connected Riemannian manifold,
  and let $i:A \rightarrow M$ be the inclussion. Take $X_{1} =X_{2} =M$ and
  $e_{1} =e_{2} =i$. Then $e=i$, $\pi_{1} = \pi_{2} = \tmop{id}_{M}$ and $X=A$
  satisfy the thesis of the theorem, and $e_{1}$ and $e_{2}$ are isometries,
  but not covering maps.
  
  We will prove in section \ref{subsection: from local homeo to covering} that
  $X$ is complete when $M_{1}$ are $M_{2}$ are complete Riemannian manifolds
  with a generic metric, $A$ is $V_{1}$, $e_{1}$ is $\exp_{p_{1}}$ and $e_{2}$
  is $\exp_{p_{2}} \circ L$.
\end{remark}

\subsection{Proof that $A_3(I)$ first conjugate points are
unequivocal}\label{section: A3 are inequivocal}

Consider an $A_3(I)$ point $x$ in the manifold $(M_{1} ,p_{1} )$ that is $L$-related to $(M_{2} ,p_{2} )$, and use adapted coordinates near $x=(0,0,0)$, in an arbitrarily small neighborhood $O$:
\begin{itemizedot}
  \item Define $\gamma (x_{1} ,x_{3} )=x_{1}^{2}$.
  
  \item Let $A$ be the subset of $O$ given by $x_{2} < \gamma (x_{1} ,x_{3}
  )$. $ e_{1}$ maps difeomorphically $A$ onto a big subset of $ e_{1} (O)$. Only
  the points with $x_{1} =0,x_{2} \geqslant 0$ are missing. $x$ is $A_3(I)$, so
  $\bar{A} \subset V_{1}$, and $ e_{1} (O \cap V_{1} )$ is open.
  
  \item For any $(x_{1} ,x_{3} )$, the pair of points $(x_{1} ,x_{1}^{2}
  ,x_{3} )$ and $(-x_{1} ,x_{1}^{2} ,x_{3} )$ map to the same point by
  $ e_{1}$, the curve $t \rightarrow (t,t^{2} ,x_{3} )$, $t \in [-x_{1} ,x_{1}
  ]$ maps to a tree-formed curve. This shows that the two points map to the
  same point by $ e_{2}$ as well. The details go exactly like in two dimensions.
  
  \item Define a map $\varphi :  e_{1} (O) \rightarrow  e_{2} (O)$ by $\varphi
  (p)=  e_{2} (a)$, for any $a \in \bar{A}$ such that $p=  e_{1} (a)$. By the
  above, this is unambigous.
  
  \item The rest of the proof proceeds as in lemma 2.1 in {\cite{Hebda}}: for
  a pair of linked points $x=(x_{1} ,x_{1}^{2} ,x_{3} , \ldots ,x_{n} )$ and
  $\bar{x} =(-x_{1} ,x_{1}^{2} ,x_{3} , \ldots ,x_{n} )$, we have two
  different local isometries from a neighborhood of $p=  e_{1} (x)=  e_{1} (
  \bar{x} )$ into $M_{2}$, given by $ e_{2} \circ (  e_{1} |_{O_{i}} )^{-1}$,
  for neighborhoods $O_{i}$ of $x$ and $\bar{x}$ such that $ e_{1} (O_{1} )=
   e_{1} (O_{2} )$ and we need to show that they agree. They both send $p$ to
  the same point, and we only need to check that their differential is the
  same. These are linear isometries, and they agree on the hyperplane $x_{1}
  =0$ (tangent to the image of $\partial A$: $x_{1} =0,x_{2} \geqslant 0$). It
  is easy to see that they both preserve orientation (for example: there is
  continuous curve of local isometries joining them), so they coincide.
  
  \item We know that $\varphi \circ  e_{1} (x)=  e_{2} (x)$, for $x \in
  \bar{A}$. Let $y \in O \setminus \bar{A}$. There is a unique point $x$
  in the radial line through $y$ in $\partial A$. We know $\varphi \circ  e_{1}
  (x)=  e_{2} (x)$, and the radial segment from $x$ to $y$ map by both $\varphi
  \circ  e_{1}$ and $ e_{2}$ to a geodesic segment with the same length,
  starting point and initial vector. We conclude $\varphi \circ  e_{1} (y)=
   e_{2} (y)$.
\end{itemizedot}
\begin{remark}
  The only place where we used that the point is $A_3(I)$ is when we assumed
  that $A \subset V_{1}$.
\end{remark}

\subsection{Conjugate flow}

We now introduce the main ingredient in the construction of the linking
curves. The idea in the definition of conjugate flow was used in
{\cite{Hebda82}} to prove lemma 2.2, but the idea for that proof is attributed
to an anonymous referee{\footnote{James Hebda said ``I wish to thank the
referee for the simple proof of lemma 2.2''.}}, and we cannot track the origin
of the idea any further.

Near a conjugate point of order 1, the set $C$ of conjugate points is a smooth
hypersurface. Furthermore, we know $\ker  d F$ does not contain $r$ by Gauss'
lemma. Thus we can define a one dimensional distribution $D$ within the set of
points of order $1$ by the rule:
\begin{equation}
  \label{1d distribution at conjugate points} D= ( \ker  d F \oplus <r> ) \cap
  T  C
\end{equation}
\begin{dfn}
  \label{definition: conjugate flow}A {\strong{conjugate descending curve}}
  (CDC) is a smooth curve, consisting only of $A_{2}$ points, except possibly
  at the endpoints, and such that the speed vector to the curve is in $D$ and
  has negative scalar product with the radial vector $r$. Therefore, the
  radius is decreasing along a descending flow line of conjugate points.
  
  The {\strong{canonical parametrization}} of a CDC $\gamma$ is the one that
  makes $\e \circ \gamma'$ a unit vector. By Gauss lemma, it is also the one
  that makes \mbox{$dR( \gamma' )\!=\!1$}.
\end{dfn}

\pagebreak[2]

\begin{dfn}
  \label{definition: retort}Let $\alpha :[0,t_{1} ] \rightarrow T_{p} M$ be a
  smooth curve, and $x \in \T$ be a point such that $\e (x)= \e ( \alpha
  (t_{1} ))$. A curve $\beta :[0,t_{1} ] \rightarrow T_{p} M$ is a
  {\strong{retort}} of $\alpha$ starting at $x$ iff $\beta (t_{1} -t) \neq
  \alpha (t)$ for any $t \in [0,t_{1} )$, but $\e ( \alpha (t))= \e ( \beta
  (t_{1} -t))$ for any $t \in [0,t_{1} ]$, and $\beta (t)$ is NC for any $t
  \in (0,t_{1} )$. Whenever $\beta$ is a retort of $\alpha$, we say that
  $\beta$ {\strong{replies}} to $\alpha$. A {\strong{partial retort}} of
  $\alpha$ is a retort of the restriction of $\alpha$ to a subinterval $[t_{0}
  ,t_{1} ]$, for $0<t_{0} <t_{1}$.
\end{dfn}

We have seen that near an $A_{2}$ point $x$, there are coordinates near $x$
and $\e (x)$ such that $\e$ reads $(x_{1} ,x_{2} , \ldots ,x_{n} ) \rightarrow
(x_{1}^{2} ,x_{2} , \ldots ,x_{n} )$. The $A_{2}$ points are given by $x_{1}
=0$, and no other point $y \neq x$ maps to $\e (x)$. Thus, there is a
neighborhood of any CDC such that any CDC has no retorts.

\begin{lem}
  \label{unbeatable lemma}Let $x$ be an $A_{2}$ point. Then there is a
  $C^{\infty}$ CDC $\alpha :[0,t_{0} ) \rightarrow \T$ with $\alpha (0)=x$.
  The CDC is unique, up to reparametrization. Furthermore:
  \begin{itemizedot}
    \item $| \alpha (0)|-| \alpha (t_{0} )|= \tmop{length} ( \e \circ \alpha
    )$
    
    \item If $\beta$ is a non-trivial retort of $\alpha$, then of course,
    
    $$\tmop{length} ( \e \circ \alpha )= \tmop{length} ( \e \circ \beta ),\,
    \text{but}\, | \beta (t_{0} )|-| \beta (0)|< \tmop{length} ( \e \circ
\beta ).$$ 
We say
    that segments of descending conjugate flow are {\strong{unbeatable}}.
  \end{itemizedot}
\end{lem}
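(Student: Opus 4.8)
The plan is to treat the three claims in turn. First I would establish existence and uniqueness of the conjugate descending curve from its defining first--order ODE; then the length identity by a one--line computation with the Gauss lemma; and finally the strict inequality by analysing when equality in the Gauss estimate can occur, which is where the real work lies. (The idea of comparing the radius drop to the $M$--length via Gauss is the one behind Hebda's lemma 2.2.)

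\emph{Existence and uniqueness.} Near the $A_{2}$ point $x$, Warner's normal form $(x_{1},\dots,x_{n})\mapsto(x_{1}^{2},x_{2},\dots,x_{n})$ shows that the conjugate set $\mathcal{C}$ is a smooth hypersurface of $\T$, that $\ker d\e$ is a smooth line field along it, and --- because $x$ is $A_{2}$ --- that $\ker d\e$ is transverse to $T\mathcal{C}$; since moreover $r\notin\ker d\e$ (Gauss lemma, $|d\e(r)|=1$), the field $D=(\ker d\e\oplus\langle r\rangle)\cap T\mathcal{C}$ has constant rank $1$ and is a smooth line field on the open set of $A_{2}$ points of $\mathcal{C}$. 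Being a smooth line field it is locally integrable, so there is an integral curve through $x$, unique up to reparametrisation, defined on a maximal interval $[0,t_{0})$ on which it stays among $A_{2}$ points. Writing a vector of $D$ as $k+cr$ with $k\in\ker d\e$, transversality gives $c\neq0$ on $D\setminus\{0\}$ (otherwise $k\in\ker d\e\cap T\mathcal{C}=\{0\}$), hence $d_{y}\e(k+cr)=c\,d_{y}\e(r)$ is a non--zero multiple of the unit vector $u(y):=d_{y}\e(r)$, the tangent at $\e(y)$ of the geodesic $\tau\mapsto\e(\tau\,y/|y|)$; thus $\e\circ\gamma$ is an immersion and one may reparametrise so that $|(\e\circ\gamma)'|\equiv1$, fixing the orientation by $\langle\gamma',r\rangle<0$ (that is $c<0$). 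Finally $\ker d\e\subseteq\ker dR$ --- the Gauss lemma gives $\langle k,r\rangle=\langle d\e(k),u\rangle=0$ for $k\in\ker d\e$ --- so $dR(\gamma')=c$, which is also why the canonical parametrisation is the one with $|dR(\gamma')|=1$ and the radius strictly decreasing.

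\emph{The length identity.} For any curve $\delta$ in $\T$ the Gauss lemma yields $\frac{d}{dt}|\delta(t)|=dR_{\delta(t)}(\delta'(t))=\langle\delta'(t),r\rangle=\langle(\e\circ\delta)'(t),u(\delta(t))\rangle$. Applying this to a CDC $\alpha$, with $(\e\circ\alpha)'(t)=c(t)u(\alpha(t))$ and $c(t)<0$, gives $\frac{d}{dt}|\alpha(t)|=c(t)=-|(\e\circ\alpha)'(t)|$, so integration on $[0,t_{0})$ gives $|\alpha(0)|-|\alpha(t_{0})|=\int_{0}^{t_{0}}|(\e\circ\alpha)'|=\operatorname{length}(\e\circ\alpha)$. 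For a retort $\beta$ the relation $\e(\beta(t_{0}-t))=\e(\alpha(t))$ for all $t\in[0,t_{0}]$ says $\e\circ\beta$ is $\e\circ\alpha$ traversed backwards, so $\operatorname{length}(\e\circ\beta)=\operatorname{length}(\e\circ\alpha)$.

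\emph{The strict inequality.} Here I would argue by contradiction: suppose $|\beta(t_{0})|-|\beta(0)|=\operatorname{length}(\e\circ\beta)$. Combining the Gauss identity with Cauchy--Schwarz gives the pointwise bound $\frac{d}{ds}|\beta(s)|=\langle(\e\circ\beta)'(s),u(\beta(s))\rangle\le|(\e\circ\beta)'(s)|$, and equality of the two integrals forces pointwise equality; hence $(\e\circ\beta)'(s)=d_{\beta(s)}\e(\beta'(s))$ is a strictly positive multiple of $u(\beta(s))=d_{\beta(s)}\e(r)$, i.e. $d_{\beta(s)}\e(\beta'(s)-\mu(s)r)=0$ with $\mu(s)>0$. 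Since $\beta(s)\in\NC$ for $s\in(0,t_{0})$, $d_{\beta(s)}\e$ is injective, so $\beta'(s)=\mu(s)r_{\beta(s)}$; solving this ODE, $\beta$ is a reparametrised radial segment $\beta(s)=f(s)\theta_{0}$ with $\theta_{0}\in\T$ fixed and $f$ strictly increasing. Then $\e\circ\alpha(t)=\e(\beta(t_{0}-t))=\gamma_{\theta_{0}}(f(t_{0}-t))$, a subarc of the geodesic ray $\gamma_{\theta_{0}}(\tau)=\e(\tau\theta_{0})$ traversed backwards, so $(\e\circ\alpha)'(t)$ is a negative multiple of $\dot\gamma_{\theta_{0}}(f(t_{0}-t))$; but $\alpha$ is a CDC, so $(\e\circ\alpha)'(t)$ is also a negative multiple of $u(\alpha(t))$. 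Both being unit, $u(\alpha(t))=\dot\gamma_{\theta_{0}}(f(t_{0}-t))$, so the geodesic ray from $p$ through $\alpha(t)$ and $\gamma_{\theta_{0}}$ pass through the common point $\e(\alpha(t))$ with the same velocity; by uniqueness of geodesics (arguing locally, so that no geodesic from $p$ returns to $p$) they coincide, whence $\alpha(t)\in\RR_{>0}\theta_{0}$ for every $t$. Thus $\alpha$ would be an arc of conjugate points contained in a single radial line, contradicting the fact that conjugate points along a ray are isolated (property (3) of Proposition \ref{regular exponential map}). Hence the inequality is strict.

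I expect the main obstacle to be precisely this last rigidity argument --- extracting a genuine contradiction from equality in the length estimate --- together with keeping the sign and direction bookkeeping straight throughout (the canonical parametrisation has the radius \emph{decreasing}, so the displayed ``$dR(\gamma')=1$'' is to be read with the opposite sign), and the routine but necessary checks that $D$ is indeed a smooth line field and that the integral curve through $x$ has positive extent among $A_{2}$ points.
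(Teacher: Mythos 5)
Your proof is correct in substance and rests on the same mechanism as the paper's: the Gauss--lemma splitting of the velocity into a radial part and a part killed by (or orthogonal to) $d\e$, which yields the length identity for the CDC and the inequality for the retort. Where you diverge is in how the \emph{strict} inequality is obtained. The paper argues directly: writing $\beta'=br+v$ with $v$ tangent to the spheres of constant radius, it notes $|d\e(\beta')|=\sqrt{b^{2}+|d\e(v)|^{2}}>b$ wherever $v\neq0$, and disposes of the case $v\equiv0$ with the one-line remark that $\e\circ\beta$ is not a geodesic. You instead analyse the equality case of the integrated estimate and push it to a rigidity statement: equality forces $\beta$ to be a reparametrised radial segment, hence forces $\alpha$ onto a single ray, contradicting the isolation of conjugate points along a ray. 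This is the same obstruction the paper invokes, spelled out; your version is in fact more complete, since the paper's ``$\e\circ\beta$ is not a geodesic'' is precisely the point that needs the argument you supply. One step of yours is softer than it should be: you identify the radial geodesic through $\alpha(t)$ with $\gamma_{\theta_0}$ by ``arguing locally, so that no geodesic from $p$ returns to $p$'', but the radii involved need not be small and geodesic loops at $p$ cannot be excluded. The conclusion survives anyway: the two unit-speed geodesics agree up to a parameter shift $c(t)=f(t_0-t)-|\alpha(t)|$ lying in the discrete set of return times of $\gamma_{\theta_0}$ to $p$, so $c(t)$ is locally constant, hence constant, and $\alpha(t)/|\alpha(t)|=\dot\gamma_{\theta_0}(c)$ is a fixed direction --- which is all your contradiction needs.
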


\begin{proof}
  Both $\mathcal{A}_{2}$ and the distribution $D$ are smooth near $x$, so the
  first part is standard.
  
  We also compute:
  \[ \tmop{length} ( \e \circ \alpha ) = \int |( \e \circ \alpha )' |= \int |d
     \e ( \alpha' )| \]
  By definition of $D$, $\alpha' =ar+v$ is a linear combination of a multiple
  of the radial vector and a vector $v \in \ker (d \e )$. By the Gauss lemma,
  $|d \e ( \alpha' )|=a$. On the other hand, $v$ is tangent to the spheres of
  constant radius, so:
  \[ | \alpha (0)|-| \alpha (t_{0} )|= \int \frac{d}{dt} | \alpha |= \int a=
     \tmop{length} ( \e \circ \alpha ) \]
  For a retort $\beta :[0,t_{1} ] \rightarrow \T$, we also have $\beta' =b r+v$
  for a function $b:[0,t_{1} ] \rightarrow \mathbb{R}$ and a vector $v(t) \in
  T_{\beta (t)} ( \T )$ that is always tangent to the spheres of constant
  radius, and $v(t)$ is not identically zero because $ e_{1} \circ \beta$ is
  not a geodesic. However, $\beta (s)$ is non-conjugate, so $|d \e ( \beta'
  )|= \sqrt{b^{2} +|d \e (v)|^{2}} >b$. The result follows.
\end{proof}

\begin{remark}
 We recall that the plan is to build linking curves, whose composition with the exponential is tree formed.
 If a linking curve contains a CDC, it must also contain a retort for that CDC.
 The ``unbeatable'' property of CDCs is interesting, because the radius decreases along a CDC and along the retort it never increases as much as it decreased in the first place.
 This way, our prospective linking curve will stay within a sphere of finite radius.
\end{remark}

\subsection{CDCs in adapted coordinates near $A_{3}$
points\label{subsection: CDCs in adapted coords near A3}}

As we mentioned in section \ref{section: generic metric}, the radial vector
field, and the spheres of constant radius of $\T$, that have very simple
expressions in standard linear coordinates in $\T$, are distorted in canonical
coordinates. Thus, the distribution $D$ and the CDCs do not always have the
same expression in adapted coordinates. In this section, we see what we can
say about these curves near an $A_{3}$ point. We will use the name $R: \T
\rightarrow \mathbb{R}$ for the radius function, and $r$ for the radial
vector field, and we assume that our conjugate point is a first conjugate
point (it lies in $\partial V_{1}$).

In a neighborhood $O$ of special coordinates of an $A_{3}$ point,
$\mathcal{C}$ is given by $3x_{1}^{2} =x_{2}$. At each $A_{3}$ point, the
kernel is spanned by $\frac{\partial}{\partial x_{1}}$. At points in
$\mathcal{C}$, we can define a 2D distribution $D_{2}$, spanned by $r$ and
$\text{$\frac{\partial}{\partial x_{1}}$}$. We extend this distribution to all
of $O$ in the following way:

\begin{dfn}
For any point $x \in O$, there are $y \in   \mathcal{C}$ and $t_{0}$ such that $x= \phi_{t_{0}} ( y )$, where $\phi_{t}$ is the radial flow, and $y$ and $t$ are unique. Define $D_{2} ( x )$ as $( \phi_{t_{0}} )_{\ast} ( D_{2} ( y ) )$.
\end{dfn}

Let $P$ be the integral manifold of $D_{2}$ through $x_{0} =(0, 0 ,0)$.
The integral curve $C$ of $D$ through $x_{0}$ is contained in $P$, and $C
\setminus \{ x_{0} \}$ consists of two CDCs. We claim that if the point is
$A_3(I)$, the two CDCs descend into $x_{0}$, but if the point is $A_3(II)$, they
start at $x_{0}$ and flow out of $O$.
$P$ is also obtained by flowing the CDC with the radial vector field.

We can assume that $r$ is close to $r(x_{0} )$ in $O$. The tangent $T_{x}$ to
the sphere of constant radius $\{y:R(y)=R(x)\}$ must contain
$\frac{\partial}{\partial x_{1}}$ (the kernel of $d \e$) if $x \in
\mathcal{C}$, by Gauss lemma, and we can assume that the angle between $T_{x}$
and $\frac{\partial}{\partial x_{1}}$ is small if $x \nin \mathcal{C}$.

$\mathcal{A}_{3}$ is transversal to $D_{2}$, so $\{x_{0} \}= \mathcal{A}_{3}
\cap P$. CDCs have non-zero speed, so we only need to show that the two CDCs
have greater radius than $x_{0}$. Otherwise, for some $x \in \mathcal{C} \cap
P$ close to $x_{0}$, the curve $\{y:R(y)=R(x)\} \cap P$ is forced to make a
sharp turn and become ``vertical'' (parallel to $r$), as it cannot intersect
$\{y:R(y)=R(x_{0} )\} \cap P$ (the dashed line in figure \ref{figure: near an
A3 point} below). But its tangent is close to $\frac{\partial}{\partial
x_{2}}$, which is a contradiction.

\begin{figure}[ht]
 \centering
 \includegraphics[width=0.6\textwidth]{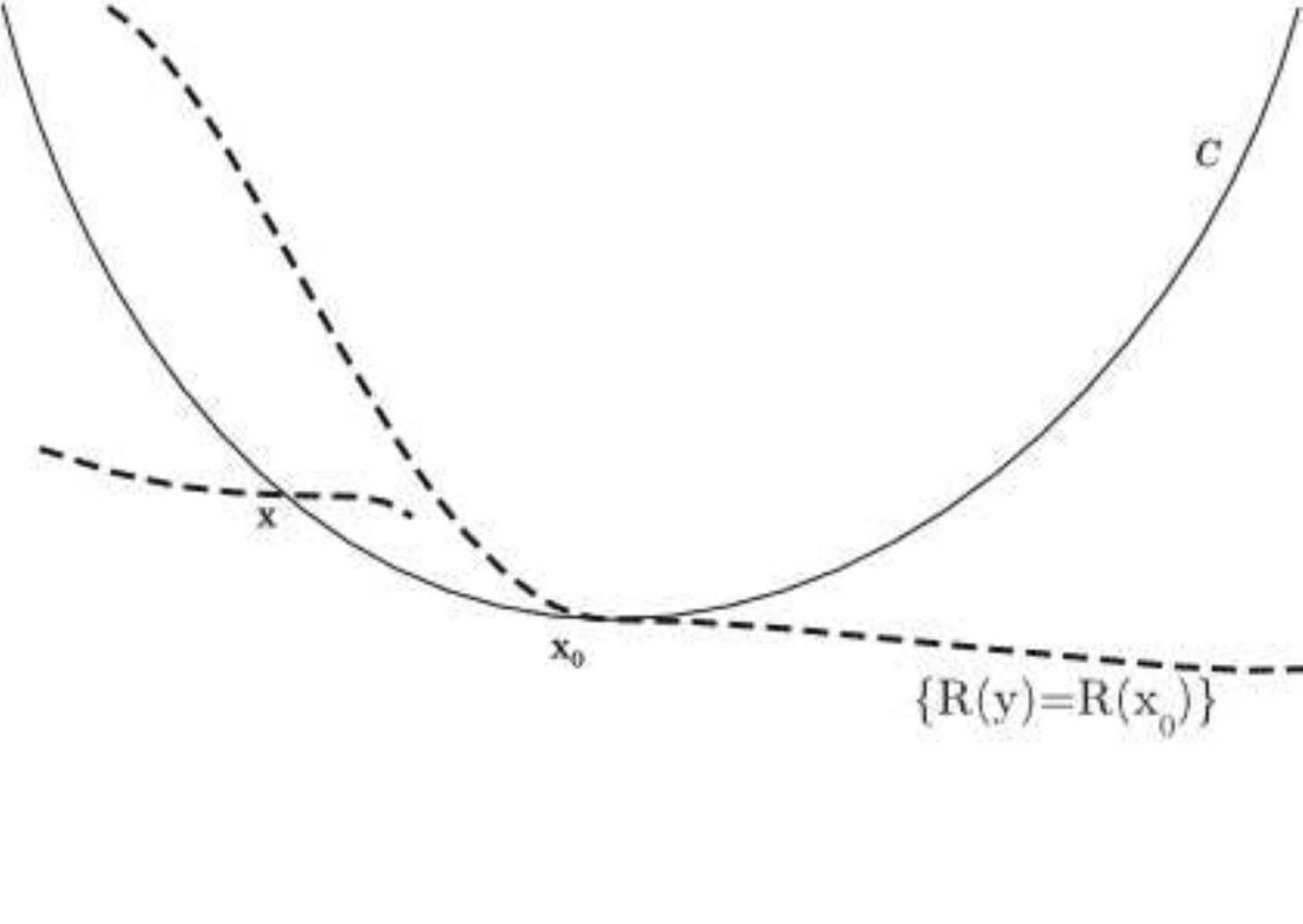}
 \caption{A neighborhood of $(0, \ldots ,0)$ in the plane $P$,
{\emph{if}} the curve $\{y:R(y)=R(x_{0} )\} \cap P$ did not lie below
$\mathcal{C} \cap P$.}
 \label{figure: near an A3 point}
\end{figure}

If $r_{2} >0$ ($A_3(I)$ points), then $R(x)\geq R(x_{0}$) for any $x \in
\mathcal{C}$, while $r_{2} <0$ ($A_3(II)$ points), implies $R(x)\leq R(x_{0}$) for
any $x \in \mathcal{C}$, as required.

Thus, $A_3(I)$ points are {\emph{terminal}} for the conjugate flow, but $A_3(II)$
points are not. This is fortunate, because $A_3(II)$ points are not unequivocal
and thus we hope to link them to an unequivocal point. We have just learned
that we can at least start a CDC at those point.

\subsection{$A_{3}$ joins\label{section: A3 joins}}

We can continue a CDC as long as it stays within a stratum of $A_{2}$ points.
As we have seen, a CDC may enter a different singularity. The most important
situation is when the CDC reaches an $A_{3}$ point, because then we can start
a non-trivial retort right after the CDC. We may not be able to continue the
retort for the whole CDC curve, but we will deal with that problem later.

The set of conjugate points is a graph over the $x_{1} ,x_{3}$ plane: $x_{2} =
\alpha (x_{1} ,x_{3} )=3x_{1}^{2}$. A CDC is written $t \rightarrow (t,3t^{2}
,x_{3} (t))$, for $t \in [t_{0} ,0]$, finishing at an $A_{3}$ point
$(0,0,x_{3} (0))$, but it cannot be continued further. Fortunately, we can
start a retort for this segment of CDC right from the $A_{3}$ point. The
retort for this CDC is given explicitely by $t \rightarrow (-2t,3t^{2} ,x_{3}
(t))$.

In figure \ref{figure: A3 point} below, we can see a CDC (in solid red, coming
from right to left), and reaching the $A_{3}$ point, and a retort for this
curve (in green). On the right hand side, we can see the image by the
exponential of the concatenation of both curves (a tree formed curve: the
image of each curve is the same but run in opposite directions). The picture
is within the plane $P$ of section \ref{subsection: CDCs in adapted coords
near A3} and the blue lines are no more than vertical lines with their respective images. They are not geodesics, but are included to help interpret the picture.

\begin{figure}[ht]
 \centering
 \includegraphics[width=0.8\textwidth]{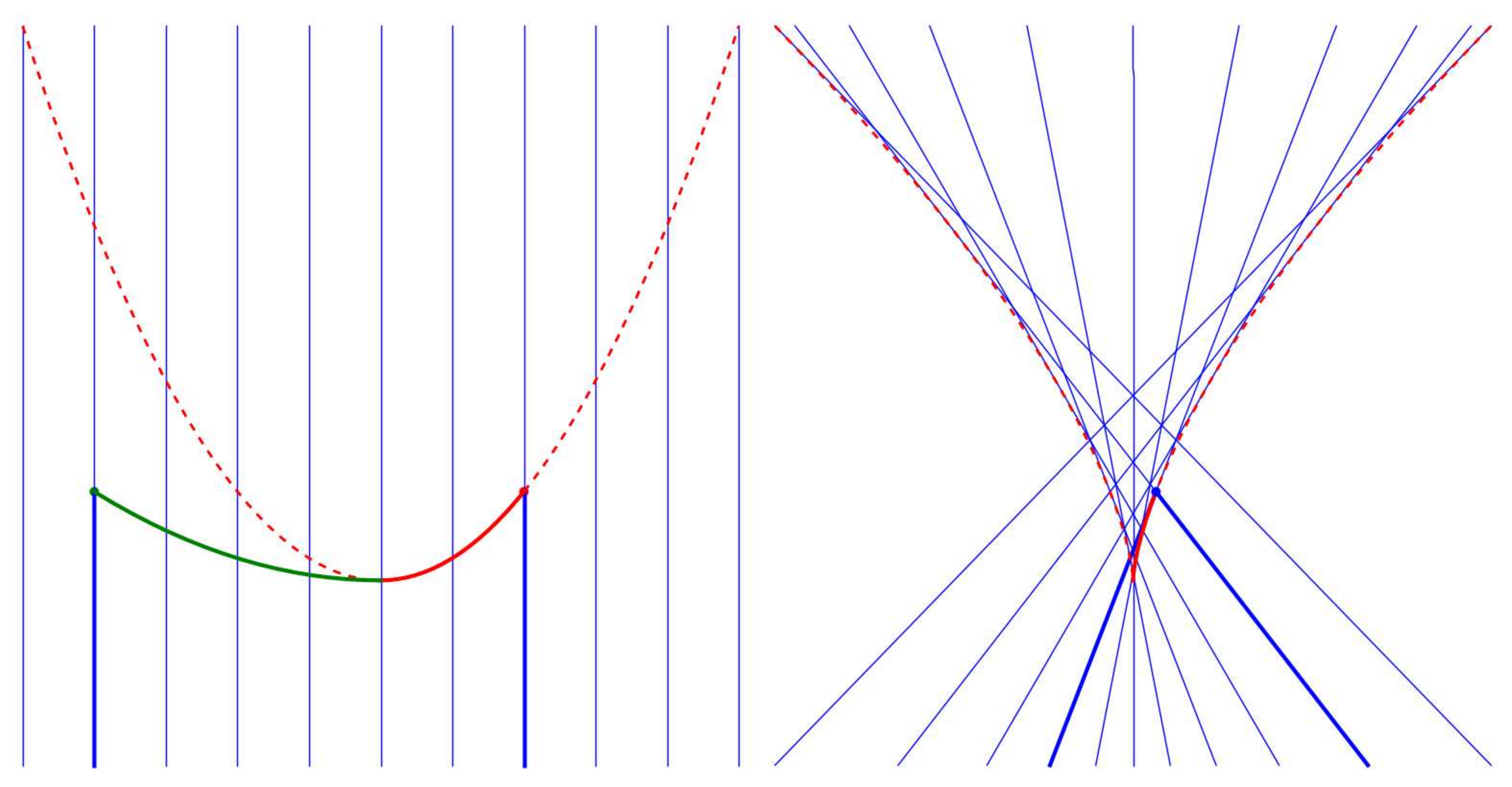}
 \caption{near an $A_3$ point}
 \label{figure:
A3 point}
\end{figure}

These curves, composed of a segment of CDC plus the corresponding retort, map
to a fully tree-formed map that shows that the point $(t,3t^{2} ,x_{3} )$ is
linked to $(-2t,3t^{2} ,x_{3} )$. We say that the CDC and the retort given
above are joined with an {\strong{$A_{3}$ join}}.

\subsection{Avoiding some obstacles}\label{section:avoiding some obstacles}

In order to build linking curves, it is simpler to replace CDCs with curves that are close to CDC curves, but avoid certain ``obstacles''. The following remark helps in that respect:

{\emph{A curve that is sufficiently $C^{1}$-close to a
CDC is also unbeatable}}. Actually, we can say more: the greater the angle
between $r_{x}$ and $\ker d_{x}  e_{1}$, the more we can depart from the CDC.

\begin{dfn}
  The \strong{slack} $A_{x}$ at a first order conjugate point $x$ is the absolute value of the sine of the angle between $D_{x}$ and $\ker  (d_{x}  e_{1} )$.
\end{dfn}

\begin{remark}
  The slack is positive iff the point is $A_{2}$
\end{remark}

\begin{lem}
  \label{slack lemma}For any positive numbers $R>0$ and $a>0$ there are constants $c>0$ and $\varepsilon>0$ depending
  on $M$, $R$ and $a$ such that the following holds:

  Any curve $\alpha :[t_{0}, t_1 ] \rightarrow \Tone$ of $A_{2}$ points such that:
  \begin{enumerate}
   \item $| \alpha (0)| \leqslant R$
   \item $\langle \alpha', r\rangle <0$
   \item $\alpha' (t)$ is within a cone around $D$ of amplitude $cA_{\alpha (t)}^{3}$
   \item the slack at all points of $\alpha$ is bounded below by a constant
$a>0$
  \end{enumerate}
  has the following properties:
  \begin{enumerate}
   \item $\alpha$ is unbeatable: any retort $\beta$ satisfies  $| \beta (t_{1} )|-| \beta (0)|<| \alpha (0)|-| \alpha (t_{1} )|$.
  \item $| \beta (t_{1} )|-| \beta (0)|<| \alpha(0)|-| \alpha (t_{1} )|- \varepsilon$.
   \end{enumerate}
\end{lem}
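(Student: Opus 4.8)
The statement is a quantitative strengthening of Lemma \ref{unbeatable lemma}, and the proof will follow the same scheme as that lemma, but carrying the slack $a$ and the radius bound $R$ explicitly through every estimate so that the constants $c$ and $\varepsilon$ depend only on $M$, $R$ and $a$. The starting point is the decomposition used in Lemma \ref{unbeatable lemma}: along any curve of $A_2$ points whose speed lies near $D$ we can write $\alpha' = a(t)\, r + v(t)$, where $v(t)$ is tangent to the spheres of constant radius. For a genuine CDC, $v(t)\in\ker d\e$ and the Gauss lemma gives $|d\e(\alpha')| = |a(t)|$, so the $\e$-length and the radius drop coincide. For a retort $\beta$ of a CDC, $\beta$ is non-conjugate, so $d\e$ is injective there and $|d\e(\beta')| = \sqrt{b^2 + |d\e(v_\beta)|^2}$ with $v_\beta\not\equiv 0$, which is what makes the CDC unbeatable. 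The new point is that if $\alpha$ is only \emph{close} to a CDC, the same inequality survives with a controlled loss, and the size of the allowed perturbation is governed by the slack.

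First I would fix, for each $R>0$, a compact neighborhood of $\{|x|\le R\}\cap \mathcal{A}_2$ in $V_1$ and work in the ``lousy'' metric of the adapted coordinates together with the honest Finsler data; all the quantities (the angle between $D$ and $\ker d\e$, the norm of $d\e$ restricted to the complement of the kernel, the bound $|d\e|$ itself, curvature of the spheres of constant radius) are continuous and hence uniformly bounded above and below on this compact set in terms of $R$. Next, for a curve $\alpha$ of $A_2$ points satisfying hypotheses (1)--(4), decompose $\alpha' = a(t) r + w(t)$ with $w(t)$ tangent to the sphere of constant radius (this is possible since $\langle\alpha',r\rangle<0$ forces $a(t)<0$, and in the canonical parametrization $|a(t)|$ is comparable to $1$). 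Because $\alpha'$ lies within a cone of amplitude $c A_{\alpha(t)}^3$ around $D$, and $D$ itself makes an angle with $\ker d\e$ whose sine is exactly the slack $A_{\alpha(t)}\ge a$, I would estimate $|d\e(w(t))|$: the component of $w$ \emph{inside} $\ker d\e$ contributes nothing, and the component transverse to $\ker d\e$ has size at most a constant times $c A^3$ (the transverse part of $D$ has size $A$, and we are within a further cone of amplitude $cA^3$), so $|d\e(w(t))| \le C_1 c\, A_{\alpha(t)}^3$. Hence $|d\e(\alpha'(t))|^2 = a(t)^2 + |d\e(w(t))|^2 \le a(t)^2 + C_1^2 c^2 A^6$, and integrating, $\operatorname{length}(\e\circ\alpha) \le (|\alpha(0)|-|\alpha(t_1)|) + C_2 c^2 (\text{total variation of }A^{\le 3})$. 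Choosing $c$ small (depending on $R$ only) makes this excess length at most, say, half of the radius drop that a nearby exact CDC would produce.

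For the retort $\beta:[t_0,t_1]\to\Tone$, which consists of NC points, write $\beta' = b(t) r + v(t)$ with $v$ tangent to the spheres and $v\not\equiv 0$ (since $\e\circ\beta$ is not a geodesic). Injectivity of $d\e$ at NC points, \emph{uniform} on the compact set (this is where a lower bound on the injectivity radius of $d\e$ away from $\mathcal{C}$, obtained from \eqref{(F,dF(r)) is an embedding} and compactness, enters), gives $|d\e(v(t))| \ge \kappa |v(t)|$ for a constant $\kappa = \kappa(R)>0$. Then $|d\e(\beta'(t))| = \sqrt{b(t)^2 + |d\e(v(t))|^2} \ge |b(t)| + \tfrac{1}{2}\min(|b(t)|^{-1}, \kappa^2|v|^2/|b|)\cdot(\dots)$ — more simply, $\sqrt{b^2 + s^2} \ge |b| + s^2/(2\sqrt{b^2+s^2})$, so that $\operatorname{length}(\e\circ\beta) - (|\beta(t_1)|-|\beta(0)|) \ge \tfrac{1}{2}\int |d\e(v)|^2/|d\e(\beta')|\, dt \ge \kappa' \int |v(t)|^2\,dt$ for a constant $\kappa'(R)$. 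Since $\operatorname{length}(\e\circ\alpha) = \operatorname{length}(\e\circ\beta)$, combining the two estimates yields
$$
(|\beta(t_1)|-|\beta(0)|) \le (|\alpha(0)|-|\alpha(t_1)|) + C_2 c^2 V - \kappa' \int_{t_0}^{t_1} |v(t)|^2\,dt,
$$
where $V$ is the (bounded) total variation of $A^{\le 3}$ along $\alpha$. The first property (unbeatability) follows once $c$ is chosen small enough that $C_2 c^2 V$ is strictly less than $\kappa'\int|v|^2$; the genuinely delicate point, and what I expect to be the main obstacle, is to get the \emph{uniform} lower bound $\int_{t_0}^{t_1}|v(t)|^2\,dt \ge \varepsilon/\kappa'$ with $\varepsilon$ independent of the particular curve $\alpha$ (of $\beta$), i.e. to rule out retorts that hug $\alpha$ arbitrarily closely. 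Here I would argue that $\beta(t_1)$ and $\alpha(t_0)$ both map under $\e$ to the same point while $\beta(t_1)\ne\alpha(t_0)$ (a retort is non-trivial), and use the normal form at $A_2$ points plus the uniform embedding estimate \eqref{(F,dF(r)) is an embedding} to bound $|v|$ from below in $L^2$ by the $\e$-distance travelled, which is in turn bounded below by the length of the CDC segment; a compactness/continuity argument on the compact neighborhood fixed at the start then produces the uniform $\varepsilon(M,R,a)$. This last step is where the hypothesis $A_{\alpha(t)}\ge a>0$ is essential: it keeps us a definite distance from the higher strata $\mathcal{A}_3,\mathcal{A}_4,\mathcal{D}_4^\pm$, where $d\e$ degenerates further and the uniform injectivity constant $\kappa$ would break down.
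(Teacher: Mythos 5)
Your overall scheme (radial--tangential decomposition of $\alpha'$ and $\beta'$, equality of the lengths of $\e\circ\alpha$ and $\e\circ\beta$, and a deficit in the radius gain of the retort coming from its nonzero tangential component) is the same skeleton as the paper's argument, and your accounting of the error introduced by the cone condition on $\alpha'$ is fine. But the step you yourself flag as ``the main obstacle'' --- a uniform lower bound on $\int|v|^2$ for the retort --- is exactly the point that is left open, and the route you sketch for it (distinctness of endpoints, an $L^2$ bound ``by the $\e$-distance travelled'', and an unspecified compactness argument) does not close it: an endpoint condition cannot prevent a retort whose tangential component is negligible on most of its length, and compactness alone will not produce a constant depending only on $M$, $R$ and $a$.

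The missing idea is a \emph{pointwise} separation along the whole curve, obtained as follows. Because the slack is a Lipschitz function and is bounded below by $a$ on $\alpha$, every point of $\alpha$ is the center of a ball of radius $r_0=r_0(a,R)$ consisting of $A_2$ points, on which adapted coordinates apply; an $A_2$ point has a unique preimage of its image in such a neighborhood $U$, so the retort $\beta$ is forced to stay \emph{outside} $U$, i.e.\ at distance at least $r_0$ from $\alpha$ in $\Tone$, for all $t$ and not just at the endpoints. The embedding property \eqref{(F,dF(r)) is an embedding} then gives $|d\e(r(\alpha(t)))-d\e(r(\beta(t_1-t)))|>\varepsilon_1(a,R)$ at every common image point, and since $(\e\circ\beta)'$ is forced (up to the cone error on $\alpha'$) to point along $d\e(r(\alpha(t)))$, the radial component of $\beta'$ satisfies $|b(t)|\leq 1-\varepsilon_2$ pointwise. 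Integrating this against $|\alpha(0)|-|\alpha(t_1)|\geq t_1(1-\tfrac{c}{2a})$ and taking $c$ small relative to $\varepsilon_2$ gives both conclusions at once, with no need for your $L^2$ bookkeeping. (Also note that the first conclusion reduces to the second, since a curve of $A_2$ points on a compact interval automatically has slack bounded below; and the role of the hypothesis $A_{\alpha(t)}\geq a$ is precisely to make $r_0$, hence $\varepsilon_1$ and $\varepsilon_2$, uniform --- not, as you suggest, to control a degeneration of $d\e$ on higher strata.)
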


\begin{proof}
  We only need to prove the second statement, as any curve of $A_2$ points defined in a compact interval has slack bounded from below.

  Fix a neighborhood $U$ of adapted $A_2$ coordinates that contains the image of $\alpha$.
  We can assume that one such $U$ contains all of the image of $\alpha$, otherwise we can split $\alpha$ into parts.
  
  Let $v(t)$ be the vector at $\alpha(t)$ such that $d(t) = -r(\alpha(t)) + v(t)$ belongs to $D_x$.
  Then the slack $A_{\alpha (t)}$ is $\frac{|r(\alpha (t))|}{|-r(\alpha (t))+v(t)|} = \frac{1}{|d(t)|}$.
  
  We reparametrize $\alpha$ so that $\alpha'(t)= d(t)+p(t)$, with $p(t)$ is orthogonal to $d$ (this is the canonical parametrization). Then $|p(t)|<c|\alpha'(t)| $, and for $c<<1$, this implies that $|p(t)|<(c/2) |d(t)|=\frac{c}{2\: A_{\alpha(t)}}<\frac{c}{2\: a}$.

   We compute 
  \[ | \alpha (0)|-| \alpha (t_{1} )|= \int_{0}^{t_{1}} \frac{d}{dt} | \alpha
     |=  \int_{0}^{t_{1}} \langle r, \alpha' \rangle  \geq t_1(1-\frac{c}{2a}) \]
  
  An $A_2$ point only has one preimage in $U$, so any retort $\beta$ of $\alpha$ lies outside of $U$. 
  As $\e ( \alpha (t))= \e ( \beta (t_{1} -t))$, we have:
  $$|d \e (r(\alpha(t))) - d\e(r(\beta(t_1-t)))|>\varepsilon_1$$
  for some $\varepsilon_1$ depending on $U$.
  $U$, in turn, contains a ball around $x$ of radius at least $r_0$, a number which depends on $a$ and $R$: the differential of the slack is bounded, so if $A_x=a>0$, it cannot drop to $0$ in a ball of sufficiently small radius.
  Thus, we can switch to a smaller $\varepsilon_1>0$ that depends only on $a$
and~$R$.
  
  Write $\beta'(t)=b(t)r(\beta (t)) +w(t)$, where $w$ is a vector orthogonal to $r(\beta ( t ))$.
  It follows from the above that $|b(t)|<1-\varepsilon_2 $ for some
$\varepsilon_2 $ depending on $M$ and~$\varepsilon_1$.
  
  We compute:
  \[ | \beta (0)|-| \beta (t_{1} )|= \int_{0}^{t_{1}} \frac{d}{dt} | \beta
     |=  \int_{0}^{t_{1}} b(t)  \leq t_{1}(1-\varepsilon_2) \]

  and the result follows.
  
\end{proof}

With this lemma, we can perturb a CDC slightly to avoid
some points:

\begin{dfn}
  An {\emph{approximately conjugate descending curve}} (ACDC) is a $C^1$ curve $\alpha$
  of $A_{2}$ points such that $\alpha' (t$) is within a cone around $D$ of
  amplitude $cA_{\alpha (t)}^{3}$, where $c$ is the constant in the previous
  lemma for $R= \alpha (0$).
\end{dfn}

\subsection{Linking curves}

Using the results so far, it is not hard to prove (see lemma \ref{existence of
GACDC for easy manifolds} and \ref{existence of GACDC}) that there is always
a ACDC $\alpha :[0,t_{0} ] \rightarrow \T$
starting at any point $x \in \mathcal{J}$, whose interior consists only of
$A_{2}$ points and ending up in an $A_{3}$ point. We also know that we can
start a retort $\widetilde{\alpha_{1}}$ at the $A_{3}$ point.

We can continue the retort while it remains in the interior of $V_{1}$, where
$ e_{1}$ is a local diffeomorphism and we can lift any curve. However, we might
be unable to continue the retort up to $x$ if the returning curve hits the set
of conjugate points.

If we hit an $A_{2}$ point $y= \widetilde{\alpha_{1}} (t_{1} )$, we can take
a ACDC $\beta :[0,t_{2} ] \rightarrow V_{1}$ starting at this point and ending
in an $A_{3}$ point. If $\beta$ has a retort $\tilde{\beta} :[0,t_{2} ]
\rightarrow \T$ that ends up in a non-conjugate point $\tilde{\beta} (t_{2}
)$, we can continue with the retort $\widetilde{\alpha_{2}}$ of $\alpha
|_{[0,t_{0} -t_{1} ]}$ starting at $\tilde{\beta} (t_{2} )$. If
$\widetilde{\alpha_{2}}$ can be continued up to $x= \alpha (0)$, the concatenation of $\widetilde{\alpha_{1}}$, $\beta$, $\tilde{\beta}$ and $\widetilde{\alpha_{2}}$ can play the same as the retort of $\alpha$ (see figure \ref{figure: standard T}).

There are a few things that may go wrong with the above argument: the retort
$\widetilde{\alpha_{1}}$ may meet $\mathcal{J} \setminus \mathcal{A}_{2}$, or
$\beta$ may not admit a full retort starting at $\beta (t_{2} )$, or $\alpha
|_{[0,t_{0} -t_{1} ]}$ may not admit a full retort starting at $\tilde{\beta}
(t_{2} )$. The first problem can be avoided if the ACDCs are built to dodge some small sets, as we will see later.
Then, if we assume that a retort never meets $\mathcal{J} \setminus \mathcal{A}_{2}$, we can iterate the above argument whenever a retort is interrupted upon reaching an $\mathcal{A}_{2}$ point.
We will prove later that the argument only needs to be applied a finite number of times.

This is the motivation for the definition of linking curve:

\begin{dfn}\label{definition: linking curve}
  A {\strong{linking curve}} is a continuous curve $\alpha :[0,t_{0} ]
  \rightarrow T_{p} M$ that is the concatenation $\alpha = \alpha_{1} \ast
  \ldots \ast \alpha_{ n}$ of ACDCs and non-trivial retorts of those ACDCs,
  all of them {\emph{of finite length}}, such that:
  \begin{itemizedot}
    \item Starting with the tuple $( \alpha_{1} , \ldots , \alpha_{n} )$
    consisting of the curves that $\alpha$ is made of, in the same order, we
    can reach the empty tuple by iteration of the following rule:
    
    Cancel an ACDC $\alpha_{j}$ together with a retort $\alpha_{j+1}$ of
    $\alpha_{j}$ that follows inmediately:

    $( \alpha_{1} , \ldots , \alpha_{j} , \alpha_{j+1} , \ldots \alpha_{n} )
    \rightarrow ( \alpha_{1} , \ldots , \alpha_{j-1} , \alpha_{j+2} , \ldots
    \alpha_{n} )$, if $\alpha_{j+1}$ is a retort of $\alpha_{j}$.
    
    \item The extremal points of the $\alpha_{i}$ are called the
    {\strong{vertices}} of $\alpha$. The vertices of $\alpha$ fall into one
    of the following categories:
    \begin{itemizeminus}
      \item starting point (first point of $\alpha_{1}$): a point in
      $\mathcal{J}$.
      
      \item end point (last point of $\alpha_{n}$): a point in $\mathcal{I}$.
      
      \item $A_{3}$ join, as explained in section \ref{section: A3 joins}.
      
      \item a {\strong{splitter}}: a vertex that joins two ACDCs whose
      concatenation is also a ACDC.
      
      \item a {\strong{hit}}: a vertex that joins a retort that reaches
      $\mathcal{A}_{3}(I) $ transversally, and an ACDC starting at the intersection point.
      
      \item a {\strong{reprise}}: a vertex that joins a retort that
      completes its task of replying to a ACDC $\alpha_{j}$, and the retort
      for a different ACDC $\alpha_{i}$ (it follows from the first condition
      that $i<j$).
    \end{itemizeminus}
    \item The preimage of a point of $M$ by $ e_{1} \circ \alpha$ falls into one of the following categories:
    \begin{itemizeminus}
      \item it can be empty.
      
      \item it can have one point that is an $A_{3}$ join.
      
      \item it can have two points, one $A_{2}$ point in the interior of an
      ACDC and an NC point in the retort of that ACDC.
      
      \item it can have two points, the first and the last points of
      $\alpha$.
      
      \item it can consist of three vertices: a splitter, a hit and a reprise,
      such that the six curves $\alpha_{i}$ contiguous to any of these three
      points map to a $T$-shaped curve, with two curves mapping into each
      segment of the T. See figure \ref{figure: standard T}. We call this
      combination of three vertices a {\strong{standard T}}.
    \end{itemizeminus}
  \end{itemizedot}
\end{dfn}

\begin{dfn}
  A {\strong{standard T}} consists of three vertices: a splitter, a hit and
  a reprise, such that the six curves $\alpha_{i}$ contiguous to any of these
  three points map to a $T$-shaped curve, with two curves mapping into each
  segment of the T. See figure \ref{figure: standard T}.
\end{dfn}

\begin{figure}[H]
  \includegraphics[width=0.8\textwidth]{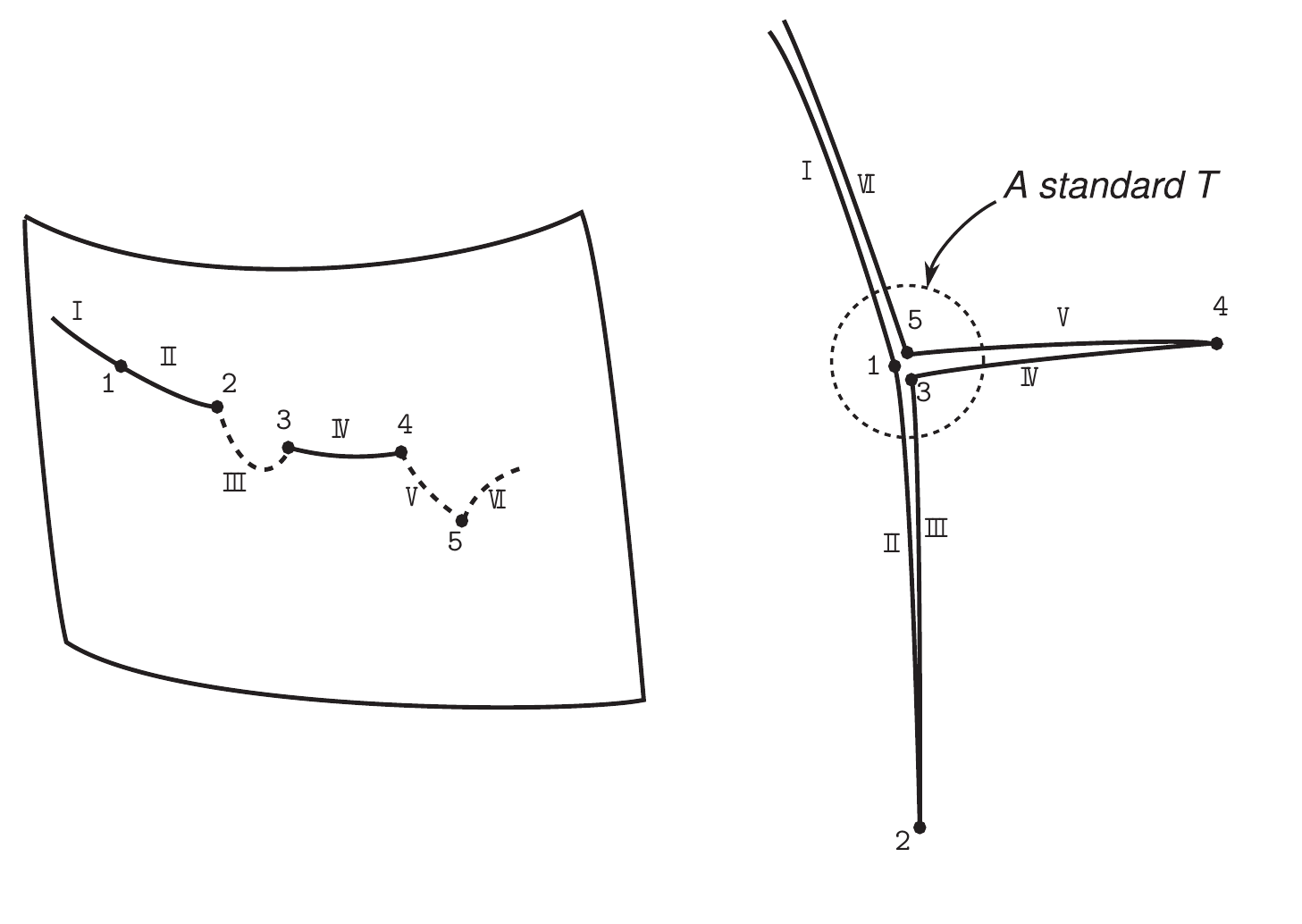}
  \caption{A {\emph{Standard T}}: The left hand side
  displays a curve $\alpha$ in $\T$, while the right hand side displays $\e
  \circ \alpha$.
  I, II and IV are ACDCs, III is the retort of II, V is the retort of IV, and
  VI is the retort of I. Vertices 2 and 4 are $A_{3}$ joins, vertex 1 is a
  splitter, vertex 3 is a hit and vertex 5 is a reprise. There can be more
  than two segments between a splitter and its matching hit, and between a hit
  and its matching reprise.}
  \label{figure: standard T}
\end{figure}

\begin{remark}
  A linking curve is {\strong{non-trivial}} if it contains at least one ACDC.
\end{remark}

\pagebreak[2]

\begin{lem}
 Let $\alpha = \alpha_{1} \ast \ldots \ast \alpha_{ n}$ be a non-trivial linking curve:
 \begin{itemize}
  \item $\alpha_{1}$ is an ACDC and $\alpha_{n}$ is its retort.
  \item Whenever $\alpha_k$ is the retort of $\alpha_j$, for $1<j<k<n$, then $\alpha_{j} \ast \ldots \ast \alpha_{ k}$ is a linking curve.
 \end{itemize}
\end{lem}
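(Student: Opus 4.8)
The final statement to prove is the lemma asserting that if $\alpha = \alpha_1 \ast \cdots \ast \alpha_n$ is a non-trivial linking curve, then $\alpha_1$ is an ACDC with retort $\alpha_n$, and whenever $\alpha_k$ replies to $\alpha_j$ with $1<j<k<n$, the sub-concatenation $\alpha_j \ast \cdots \ast \alpha_k$ is itself a linking curve. The plan is to argue purely combinatorially from Definition \ref{definition: linking curve}, using the cancellation rule on tuples as the central device. The key observation is that the cancellation rule is a matching of ACDCs with immediately-following retorts, and such matchings are automatically \emph{non-crossing} (nested or disjoint), exactly like a balanced system of parentheses; everything else follows from this bracketing structure.

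First I would establish that $\alpha_1$ is an ACDC and $\alpha_n$ is its retort. Since $\alpha$ is non-trivial it contains at least one ACDC, so the tuple is non-empty and the iterated cancellation reaches the empty tuple. Consider the \emph{first} ACDC that gets cancelled; trace back to show that in fact $\alpha_1$ must be an ACDC: a retort can only appear in a tuple immediately after the ACDC it replies to (this is the content of the cancellation rule together with the vertex-type constraints — a retort's initial vertex is either a hit, a reprise, or an $A_3$ join, and in each case it is preceded by a curve), and since $\alpha_1$ has no predecessor, $\alpha_1$ is not a retort, hence an ACDC. Dually, $\alpha_n$ has no successor, so it is not an ACDC followed by its retort inside the tuple; but every ACDC must eventually be cancelled against a following retort, so $\alpha_n$ cannot be an ACDC, hence $\alpha_n$ is a retort. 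To see that $\alpha_n$ replies specifically to $\alpha_1$: perform the cancellations in the order that always removes the \emph{innermost} matched pair first. At the last step of this process the tuple is $(\alpha_1, \alpha_n)$ (all the intervening curves having been cancelled in nested pairs), so $\alpha_n$ is a retort of $\alpha_1$; here one uses that the matching induced by the cancellation sequence is non-crossing, which is forced because after a cancellation the two neighbours become adjacent, and a crossing pair could never both be cancelled.

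Next, for the second assertion, suppose $\alpha_k$ is the retort of $\alpha_j$ with $1<j<k<n$. By the non-crossing property of the matching, the pair $(j,k)$ brackets a balanced sub-tuple: every curve $\alpha_i$ with $j<i<k$ is matched to another curve $\alpha_{i'}$ with $j<i'<k$. Hence restricting the cancellation process to indices in $[j,k]$ yields a valid cancellation sequence reducing $(\alpha_j,\dots,\alpha_k)$ to the empty tuple. It remains to check the other clauses of Definition \ref{definition: linking curve} for $\alpha_j \ast \cdots \ast \alpha_k$: the vertices strictly interior to this sub-concatenation are vertices of $\alpha$ and retain their type; the starting vertex of $\alpha_j$ is the initial vertex of the ACDC $\alpha_j$, and the end vertex of $\alpha_k$ is the final point of a retort that has completed its task — one checks it lies in $\mathcal{I}$ using that retorts end at NC points (Definition \ref{definition: retort}) unless they terminate at a hit, which cannot happen here since $\alpha_k$ is the last curve of the sub-concatenation. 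Finally, the preimage condition under $ e_{1}\circ(\alpha_j\ast\cdots\ast\alpha_k)$ is inherited: preimages of points under the restricted curve are subsets of the preimages under $ e_{1}\circ\alpha$, and the standard-T configurations either lie entirely inside $[j,k]$ (and survive) or lie entirely outside (and are irrelevant), again by non-crossing; one also notes the matched pair $(\alpha_j,\alpha_k)$ itself contributes the ``first and last points'' case for the sub-curve.

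The main obstacle I expect is the bookkeeping in showing that every standard T is either wholly inside or wholly outside the bracket $[j,k]$ — a standard T involves three vertices and six curves, and one must verify that the non-crossing matching cannot split these six indices across the boundary $j$ or $k$. This should follow from the observation that in a standard T the splitter/hit/reprise curves form a connected block of matched pairs (the hit's ACDC is matched to a later retort, the splitter's two ACDCs concatenate and are matched as one unit, etc.), so the whole T is a contiguous balanced sub-block of the tuple, and a bracket cannot cut a balanced sub-block in two. Everything else is routine verification against the clauses of the definition.
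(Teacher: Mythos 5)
The paper gives no argument for this lemma (the proof is ``left to the reader''), so your proposal has to stand on its own. The bracketing picture is the right skeleton, and your arguments that $\alpha_1$ cannot be a retort (it never acquires a predecessor, so it could never be cancelled as the second member of a pair) and that $\alpha_n$ cannot be an ACDC (it never acquires a successor) are sound. But the step ``$\alpha_n$ replies specifically to $\alpha_1$'' has a gap: non-crossingness of the matching does \emph{not} imply that $1$ and $n$ are matched to each other. A tuple whose matching splits into several top-level balanced blocks (the parenthesis pattern $(\,)(\,)$) also reduces to the empty tuple by adjacent cancellations, and there your innermost-first reduction terminates with two disjoint pairs, never with $(\alpha_1,\alpha_n)$. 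To exclude this you must invoke the other clauses of Definition \ref{definition: linking curve}: at the junction between two top-level blocks a completed retort of $\alpha_1$ would be followed by an ACDC, i.e.\ the junction is a hit $v$ with $\e(v)=\e(\alpha(0))$, so the preimage of $\e(\alpha(0))$ under $\e\circ\alpha$ contains both the starting point and $v$ --- a configuration fitting none of the allowed preimage categories, since the starting point is neither an interior $A_2$ point of an ACDC nor a splitter.

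The second, more serious problem is your verification of the end-point clause for $\alpha_j\ast\cdots\ast\alpha_k$. You claim the retort $\alpha_k$ cannot terminate at a hit ``since $\alpha_k$ is the last curve of the sub-concatenation,'' but whether its terminal point is a hit is a property of that point inside the ambient curve $\alpha$, not of where you truncate. In the standard T of figure \ref{figure: standard T}, III is the retort of II and ends precisely at a hit, which is an $A_2$ point, hence in $\mathcal{J}$ and not in $\mathcal{I}$; so $\mathrm{II}\ast\mathrm{III}$ fails the ``end point in $\mathcal{I}$'' clause of Definition \ref{definition: linking curve} read literally. What the second bullet can actually deliver --- and all that is used in the proof of \ref{main properties of linking curves} --- is the combinatorial content: the sub-tuple $(\alpha_j,\dots,\alpha_k)$ is balanced, cancels to the empty tuple, and every curve strictly between $j$ and $k$ is matched inside $(j,k)$. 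Your non-crossing argument does establish that much; you should isolate it as the real statement (or flag the end-point clause as needing a relaxed reading) rather than assert the full linking-curve property for the truncation.
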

\begin{proof}
 The proof is simple and is left to the reader.
\end{proof}

\begin{prop}\label{main properties of linking curves}
  Let $\alpha$ be a linking curve between $x,y \in \Tone$, and $M_{1}$ and
  $M_{2}$ two $L$-related Riemannian manifolds. Then:
  \begin{itemizedot}
    \item $|x|>|y|$
    
    \item $\e \circ \alpha$ is fully tree-formed (in particular, it is continuous)
    
\diff{
    \item $e_1(x)=e_1(y)$ and $e_2(x)=e_2(y)$
}
  \end{itemizedot}
\end{prop}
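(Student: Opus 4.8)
We establish the three assertions in order; the last is the substantial one. Write $\alpha=\alpha_{1}\ast\cdots\ast\alpha_{n}$ as in Definition \ref{definition: linking curve}, and assume $\alpha$ non-trivial (otherwise $x=y$ and only the first and third assertions are at issue, both trivial); by the preceding lemma $\alpha_{1}$ is an ACDC and $\alpha_{n}$ is its retort. The identity $e_{1}(x)=e_{1}(y)$ is then immediate: since $\alpha_{n}$ replies to $\alpha_{1}$, Definition \ref{definition: retort} gives $e_{1}(\alpha_{n}(\mathrm{end}))=e_{1}(\alpha_{1}(0))$, i.e. $e_{1}(y)=e_{1}(x)$.

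For $|x|>|y|$, note that a linking curve is a continuous, piecewise $C^{1}$ curve in $\Tone$, so $t\mapsto|\alpha(t)|$ is continuous and the total radius change telescopes, $|x|-|y|=\sum_{i}\bigl(|\alpha_{i}(0)|-|\alpha_{i}(\mathrm{end})|\bigr)$. The cancellation rule of Definition \ref{definition: linking curve} reduces $(\alpha_{1},\dots,\alpha_{n})$ to the empty tuple by repeatedly deleting an ACDC together with an adjacent retort of it, which exhibits a perfect matching of the $\alpha_{i}$ into ACDC--retort pairs; regrouping the telescoping sum over these pairs, each pair contributes $\bigl(|\alpha_{j}(0)|-|\alpha_{j}(\mathrm{end})|\bigr)+\bigl(|\alpha_{k}(0)|-|\alpha_{k}(\mathrm{end})|\bigr)$, which is strictly positive by the \emph{unbeatable} property of Lemma \ref{unbeatable lemma} (equivalently Lemma \ref{slack lemma}; its hypotheses hold because each ACDC consists of $A_{2}$ points with slack bounded below on its compact image, and all radii along $\alpha$ are bounded). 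Since $\alpha$ is non-trivial there is at least one pair, so $|x|>|y|$.

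That $e_{1}\circ\alpha$ is fully tree-formed is again combinatorial. Whenever $\alpha_{k}$ is a retort of $\alpha_{j}$, the curve $e_{1}\circ\alpha_{k}$ retraces $e_{1}\circ\alpha_{j}$ backwards, so each elementary cancellation deletes an ``out-and-back'' segment from $e_{1}\circ\alpha$; iterating to the empty tuple shows $e_{1}\circ\alpha$ is a tree-like path. Concretely one builds the finite tree $\Gamma$ and the quotient map $T\colon[0,t_{0}]\to\Gamma$ by declaring, recursively over the matched pairs, the time $t$ on $\alpha_{j}$ equivalent to the mirrored time on $\alpha_{k}$, and checks $e_{1}\circ\alpha=\bar u\circ T$. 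Continuity of $e_{1}\circ\alpha$ is free since $\alpha$ is continuous, and absolute continuity follows since the $\alpha_{i}$ are $C^{1}$. Because the reduction terminates at the \emph{empty} tuple and $e_{1}(x)=e_{1}(y)$, the root of $\Gamma$ is hit at both ends, so $T(0)=T(t_{0})$ and $e_{1}\circ\alpha$ is fully tree-formed.

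The remaining claim $e_{2}(x)=e_{2}(y)$ is where I expect the real work. The guiding idea is Hebda's affine-development argument (Section \ref{subsection: Hebda's approach}): prepend to $e_{1}\circ\alpha$ the radial geodesic of $M_{1}$ from $p_{1}$ to $e_{1}(x)$, develop the concatenation from $p_{1}$, apply $L$, and develop back from $p_{2}$; since $e_{1}\circ\alpha$ is fully tree-formed the resulting curve $v$ in $M_{2}$ is tree-formed over the same $\Gamma$, as development, the linear isometry $L$, and inverse development all preserve tree-formedness over a fixed $\Gamma$. The genuinely new difficulty, absent in the surface case, is that $\alpha$ leaves the injectivity set $O_{p_{1}}$ --- it passes through $A_{2}$ and $A_{3}$ points along the ACDCs and makes ``near-loops'' along the retorts --- so there is no single global isometric immersion $e_{2}\circ(e_{1}|_{O_{p_{1}}})^{-1}$ with which to identify the endpoints of $v$ as $e_{2}(x)$ and $e_{2}(y)$. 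My plan is to remove this by induction on $n$ along the cancellation structure, with \emph{base case} $\alpha=\alpha_{1}\ast\alpha_{2}$, $\alpha_{1}$ an ACDC from $x$ to an $A_{3}$ point and $\alpha_{2}$ its full retort to $y$: in adapted coordinates near that $A_{3}$ point (Section \ref{section: A3 joins}) the point of $\alpha_{2}$ over $e_{1}(\alpha_{1}(t))$ is explicitly $(-2t,3t^{2},\dots)$, joined to $\alpha_{1}(t)=(t,3t^{2},\dots)$ by a segment whose $e_{1}$-image is an out-and-back, and the two-dimensional reasoning of Section \ref{section: A3 are inequivocal} --- affine development together with the hypothesis that $M_{1}$ and $M_{2}$ are $L$-related (Definition \ref{L-related}, Theorem \ref{Cartan's theorem}), and an approximation of this segment by curves transverse to the conjugate locus --- yields $e_{2}(\alpha_{1}(t))=e_{2}(\alpha_{2}(\mathrm{end}-t))$ near the $A_{3}$ point, which then propagates along all of $\alpha_{1}$ because $\alpha_{2}$ is an NC curve, so $e_{2}\circ\alpha_{2}$ is recovered from $e_{1}\circ\alpha_{1}$ through the local isometric immersion attached to $\alpha_{2}$, whose continuous extension across the conjugate stratum must coincide with $e_{2}$ on $\alpha_{1}$. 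For the inductive step one locates an innermost adjacent ACDC--retort pair, applies the base case to it, and then notes that a block of pieces whose $e_{1}$-image and $e_{2}$-image are both loops may be excised without affecting the development, so the outermost matching $\alpha_{1}\leftrightarrow\alpha_{n}$ is effectively a base-case configuration; altogether $e_{2}\circ\alpha$ respects every identification $T(s)=T(s')$ of $\Gamma$, and $T(0)=T(t_{0})$ forces $e_{2}(x)=e_{2}(y)$. The points to be pinned down are the continuous extension of the local Cartan isometries across the conjugate strata along the ACDCs, the construction of the transverse approximations in the base case so the limit recovers $e_{2}$ at the endpoints, and the legitimacy of excising an $e_{1}$- and $e_{2}$-loop block inside a development --- this last tree-bookkeeping being, I expect, the main obstacle.
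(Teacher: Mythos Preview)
Your treatment of the first two items is correct and essentially matches the paper. For $|x|>|y|$ the paper also telescopes over ACDC--retort pairs and invokes Lemma~\ref{slack lemma}. For tree-formedness the paper takes $\Gamma=\operatorname{Im}(e_1\circ\alpha)$ with $T=e_1\circ\alpha$ and verifies the integral condition directly by pairing each ACDC with its retort; your abstract construction of $\Gamma$ via the cancellation rule is an equivalent packaging, though the paper's version also handles the standard-$T$ triple points by a continuity argument that you do not spell out.

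For $e_2(x)=e_2(y)$ your inductive strategy is a genuinely different route from the paper's, and the gap you flag at the end is real, not cosmetic. The base case requires $e_2(\alpha_1(s))=e_2(\alpha_2(\mathrm{end}-s))$ for \emph{all} $s$, and your ``propagation'' argument does not establish this: the ACDC $\alpha_1$ consists of conjugate ($A_2$) points, so the Cartan local isometry $e_2\circ(e_1|_U)^{-1}$ is not defined along $\alpha_1$ itself but only on nearby non-conjugate points; meanwhile the retort $\alpha_2$ may lie in a completely different region of $\Tone$, so there is no obvious reason the local isometry attached to a neighbourhood of $\alpha_2(\mathrm{end}-s)$ extends to give $e_2(\alpha_1(s))$. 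The open--closed argument you have in mind would need the two local isometries (one from each side) to agree on the overlap of their $e_1$-images, and agreement at a single point is not enough for isometric immersions without also matching differentials --- which is exactly what is unclear at the conjugate stratum.

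The paper avoids the induction altogether. It constructs \emph{one} open set $O^0\subset V_1$ with $e_1|_{O^0}$ injective and $\operatorname{Im}(\alpha)\subset\overline{O^0}$, by taking tubular neighbourhoods of each ACDC, each retort, and each vertex, and then carving away the overlaps (e.g.\ $W_j^0=W_j\setminus\overline{e_1^{-1}(e_1(U_i))}$ for a retort $\alpha_j$ of an ACDC $\alpha_i$; the $A_3$ joins and the three vertices of a standard $T$ receive explicit treatments). Because $e_1|_{O^0}$ is injective, the $L$-related hypothesis yields a \emph{single} isometric immersion $\varphi=e_2\circ(e_1|_{O^0})^{-1}$ on $e_1(O^0)$. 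The paper then builds approximating curves $\alpha^k\to\alpha$ in $\mathrm{AC}$ with $\operatorname{Im}(\alpha^k)\subset O^0$; prepending the radial geodesic and applying Hebda's Lemma~4.1 argument (affine development commutes with $\varphi$ on $O^0$, and is continuous in $\mathrm{AC}$) gives $v(1/2)=e_2(x)$, $v(1)=e_2(y)$, and tree-formedness of $u$ forces $v(1/2)=v(1)$. The construction of $O^0$ and the $\alpha^k$ --- in particular the vertex neighbourhoods for $A_3$ joins and standard $T$'s --- is the content you are missing; it replaces your propagation and excision bookkeeping with one global injective chart.
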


\begin{proof}
  The first part follows trivially from lemma \ref{unbeatable lemma} and its
  generalization, lemma \ref{slack lemma}. Each pair of a ACDC and its retort
  adds a negative amount to the radius of $x$.
  
  For the second part, we reparametrize $\alpha$ to the unit interval $[0,1]$.
  Let $T:[0,1] \rightarrow \tmop{Im} ( \exp \circ \alpha )$ be the
  identification given by $\exp \circ \alpha$. Let us show that $u= \exp \circ
  \alpha$ is tree-formed with respect to $T$: let $t_{1}$, $t_{2}$ such that
  $u(t_{1} )=u(t_{2} )$, and $\varphi$ a continuous $1$-form along $u$ $(
  \varphi (s) \in T^{\ast}_{u(s)} M)$ that factors through $\Gamma$. Then we
  claim that:
  \begin{equation}
    \int_{t_{1}}^{t_{2}} \varphi (s)(u' (s))ds \label{integral that must be 0
    for the curve to be tree formed}
  \end{equation}
  splits as a sum of integrals over the image by $\exp$ of an ACDC and the
  image of its matching retort. The curves in each such pair have the
  same image, and the integrals cancel out, as the integral of a $1$-form is
  independent of the parametrization, and only differs by sign.
  
  The claim follows if $u^{-1} (u(t_{1} ))$ consists of two points, because
  $t_{1}$ is in the domain of an ACDC $\alpha_{i}$ and $t_{2}$ lies in the
  retort $\alpha_{j}$ of $\alpha_{i}$. We recall it is possible to reach an
  empty tuple by cancelling adjacent pairs of an ACDC and its retort. Thus, in
  order to cancel $\alpha_{i}$ and $\alpha_{j}$, it must be possible to cancel
  all the curves $\alpha_{k}$ with $i<k<j$. These curves can be matched in
  pairs $\{ ( \alpha_{n} , \alpha_{m} ) \}_{( n,m ) \in \mathcal{P}}$ of ACDC
  and retort, with $i<n<m<j$ for each pair $( n,m ) \in \mathcal{P}$. Then we
  have:
\begin{align*}
    \int_{t_{1}}^{t_{2}} \varphi (s)(u' (s))ds  =\, & 
    \int_{t_{1}}^{t_{2}^{i}}
      \varphi (s)(( \exp \circ \alpha_{i} )' (s))ds+
    \\
    & \sum_{( n,m ) \in \mathcal{P}} \Big( \int_{t^{n}_{1}}^{t^{n}_{2}}
    \varphi (s)(( \exp \circ \alpha_{n} )' (s))ds\:+\\
    & \hskip46pt\int_{t^{m}_{1}}^{t^{m}_{2}} \varphi (s)(( \exp \circ \alpha_{m}
)' (s))ds \:
    \Big) +\\
    & \int_{t_{1}^{j}}^{t_{2}} \varphi (s)(( \exp \circ \alpha_{j} )' (s))ds
\end{align*}

  The remaining two integrals also cancel out, proving the claim.
  
  If $t_{1}$ and $t_{2}$ are two of the three points of a standard T, we can
  take points $t^{\ast}_{1}$ and $t^{\ast}_{2}$ as close to $t_{1}$ and
  $t_{2}$ as we want, but in an ACDC and its retort, respectively, and such that $u(t_1^\ast)=u(t_2^\ast)$.
  The result follows because the integral \ref{integral that must be 0 for the curve to
  be tree formed} depends continuously on $t_{1}$ and $t_{2}$.
  
  The last part is similar to lemma 4.1 in {\cite{Hebda}}. In the hypothesis,
  we are assuming that the curve has a specific structure, which makes the proof
  simpler, but we do not ask for the sequence of curves converging to the
  linking curve in the hypothesis of that lemma, so we will have to build it
  ourselves.
  
  Let $\alpha = \alpha_{1} \ast \ldots \ast \alpha_{ n}$ be a linking curve
  between $x \in V_{1} \subset \Tone$ and $y \in V_{1}$, with each
  $\alpha_{i}$ either a ACDC, or the retort of one of the previous ACDCs. We
  write $j= \bar{i}$ whenever $\alpha_{j}$ is a retort of $\alpha_{i}$.
  
  We want to find an open set $O^{0} \subset V_{1}$ such that $\e|_{O^{0}}$ is
  injective, $\tmop{Im} ( \alpha ) \subset \overline{O^{0}}$, and a sequence
  of curves $\alpha^{k}$ converging to $\alpha$ in $\tmop{AC} (M_{1} )$ such
  that $\tmop{Im} ( \alpha^{k} ) \subset O^{0}$.
  
  We first construct a set $O$ as the union of neighborhoods $O_{k}$ of the
  vertices of $\alpha$, neighborhoods $U_{i}$ of the ACDCs in $\alpha$ and
  neighborhoods $W_{j}$ of the retorts of those ACDCs.
  
  First, we take disjoint neighborhoods $O_{k}$ of the vertices. We can assume
  that they are disjoint with the preimages of the images of the other
  $O_{k}$, except for the neighborhoods of the three vertices of the same
  standard T.
  
  Next, we take neighborhoods $U_{i} \subset \Tone$ of (the interior of the
  image of) each ACDC $\alpha_{i}$ in $\alpha$, such that there is no
  non-trivial retort of $\alpha_{i}$ in $U_{i}$.
  By the third property in the definition \ref{definition: linking curve}, we can assume that
  $\overline{U_{i_{1}}} \cap \e^{-1} \left( \e \left( \overline{U_{i_{2}}}
  \right) \right)$ \ is empty unless $\alpha_{i_{1}}$ and $\alpha_{i_{2}}$ are
  consecutive ACDCs joined by a ``splitter'' vertex, in which case the
  intersection is only the splitter. Also, $U_{i}$ should only intersect
  $\e^{-1} \left( \e ( \overline{O_{k}} ) \right)$ when $O_{k}$ is a
  neighborhood of one of the two endpoints of $\alpha_{i}$. It follows that no
  retort of any part of $\alpha_{i_{0}}$ passes through $\cup
  \overline{U_{i}}$.
  
  And last, the neighborhood $W_{j_{0}}$ of (the interior of the image of) the
  retort $\alpha_{j_{0}}$ of $\alpha_{i_{0}}$ has to be chosen so that:
  \begin{itemize}
    \item $\overline{W_{j_{0}}}$ is disjoint with $\cup_{j \neq j_{0}} \e^{-1}
    \left( \e ( \overline{W_{j}} ) \right)$ and $\cup _{i \neq i_{0}} \e^{-1}
    \left( \e ( \overline{U_{i}} ) \right)$, or consists of just one vertex if
    the curves are consecutive.
    
    \item $W_{j_{0}}$ should only intersect $\e^{-1} \left( \e (
    \overline{O_{k}} ) \right)$ when $O_{k}$ is a neighborhood of one of the
    two endpoints of $\alpha_{j_{0}}$.
  \end{itemize}

  We still have to build the set $O^{0}$. The neighborhood $U_{i}$ of an ACDC
  $\alpha_{i}$ maps 2:1 to a half ball by $\e$. Its intersection with $V_{1}$
  is a set $U^{0}_{i}$ that maps 1:1 onto the same image. The neighborhood
  $W_{j}$ of its retort $\alpha_{j}$ maps 1:1 to a tubular neighborhood of
  $\tmop{Im} \left( \e \circ \alpha_{i} \right) = \tmop{Im} \left( \e \circ
  \alpha_{j} \right)$. We take $W_{j}^{0} =W_{j} \setminus
  \overline{ e_{1}^{-1} ( \e (U_{i} ))}$ as the neighborhood of
   $\alpha_{j}$.

  We next describe how to build neighborhoods for each type of vertex, so that
  they are compatible with the neighborhoods $U_{i}^{0}$ and $W_j^0$ for the curves.
  \begin{description}
    \item[An $A_{3}$ join] We take the neighborhood $O^{0}_{k}$ defined (in
    special coordinates) by $\{x_{2} <3x_{1}^{2} ; x_{1} \leq 0\} \cup \{x_{2} <
    \frac{3}{4} x_{1}^{2} ;x_{1} >0\}$ (we assume that the CDC in the join
    comes from the ``left'' side $x_{1} <0$). As shown in section
    \ref{section: A3 joins}, the boundary of $O^{0}_{k}$ consists of a half
    surface foliated by CDCs and another half surface foliated by the retorts
    of those curves.
    
    \item[A splitter (in a standard T)] For this type of point we proceed as
    if the two ACDCs that join at the split point were one only ACDC. So we
    take the neighborhood $O_{k}$ of the splitter point, and intersect it with
    $V_{1}$: $O_{k}^{0} =O_{k} \cap V_{1}$.
    
    \item[A hit (in the same standard T)] For this point (which is $A_{2}$) we
    intersect its neighborhood $O_{l}$ with $V_{1}$, and also remove the
    preimage of the image of the neighborhood $O_{k}$ of the accompanying
    splitter: $O_{l}^{0} =(O_{l} \cap V_{1} ) \setminus \overline{ e_{1}^{-1} (
     e_{1} (O^{0}_{k} ))}$.
    
    \item[A reprise (in the same standard T)] This is a non-conjugate point,
    and we remove from its neighborhood $O_{m}$ the preimage of the images of
    the neighborhoods $O^{0}_{k}$ and $O_{l}^{0}$ of the accompanying splitter
    and hit:
    \[ \text{$O_{m}^{0} =O_{m} \setminus \overline{(  e_{1}^{-1} (  e_{1}
       (O^{0}_{l} )) \cup  e_{1}^{-1} (  e_{1} (O^{0}_{k} )))}$.} \]
  \end{description}

  The reader can check that for $O^{0} = \bigcup_{k} O_{k}^{0} \cup
  \bigcup_{i} O_{i}^{0} \cup \bigcup_{j} O^{0}_{j}$, $\e|_{O^{0}}$ is
  injective and $\tmop{Im} ( \alpha ) \subset \overline{O^{0}}$.
  
  Let us build the $k$-th approximation to $\alpha$. This will be a curve
  $\alpha^{k}$ with $\tmop{Im} ( \alpha^{k} ) \subset O^{0}$, consisting of
  $2n+1$ parts: one for each curve $\alpha_{i}$ in $\alpha$ and one for each
  vertex $v_{k}$. The part corresponding to the curve $\alpha_{i}$ is a curve
  in a $1/k$ neighborhood of $\tmop{Im} ( \alpha_{i} )$. The part
  corresponding to the vertex $v_{k}$ will be a $C^{1}$ curve that has length
  bounded by $C k$ for some universal constant and joins the
  approximations to the curves $\alpha_{i}$ adjacent to $v_{k}$. The argument
  proceeds now as sketched in section \ref{subsection: Hebda's approach}, or
  proved in detail in lemma 4.1 of {\cite{Hebda}}:

  Let $Y$  be the concatenation of the radial line in $\Tone$ that ends up in $\alpha(0)$ with $\alpha$, with both curves rescaled so that $Y(1/2)=\alpha(0)$, and $Y(1)=\alpha(1)$.
  Let $u= e_{1}\circ Y$, and let $v$ be the curve obtained by affine developement of $u$ followed by inverse affine developement onto $\Ttwo$.
  The curve $u$ is tree-formed with respect to an identification $T$ with $T(1/2)=T(1)$, and as we have seen it follows that $v$ also has that property.
  In particular, $v(1/2)=v(1)$

  Let $Y_k$ be the concatenation of the radial line in $\Tone$ that ends up in $\alpha_k(0)$ with $\alpha_k$, with both curves rescaled so that $Y_k(1/2)=\alpha_k(0)$, and $Y_k(1)=\alpha_k(1)$.
  Let $u_k= e_{1}\circ Y_k$, and let $v_k$ be the curve obtained by affine developement of $u_k$ followed by inverse affine developement onto $\Ttwo$.
  The curves $u_k$ are contained in $O^0$, where $ e_{2}\circ e_{1}^{-1}$ is an isometry, so $v_k= e_{2}\circ e_{1}^{-1}\circ u_k$.

  It follows at last that $v(1/2)=\lim_k v_k(1/2)=\lim_k v_k(1/2)= e_{2} \lim_k Y_k(0)= e_{2}(x)$ and $v(1/2)=\lim_k v_k(1)=\lim_k v_k(1)= e_{2} \lim_k Y_k(1)= e_{2}(y)$, and thus $ e_{2}(x)= e_{2}(y)$.

\end{proof}

\diff{
As we promised, the following is also true:

\begin{prop}\label{main properties of linking curves 2}
  Let $\alpha$ be a linking curve between $x,y \in \Tone$, and $M_{1}$ and $M_{2}$ two $L$-related Riemannian manifolds. 
  
  Then $x$ and $y$ are linked.
\end{prop}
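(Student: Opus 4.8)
The plan is to strengthen the pointwise conclusion of Proposition~\ref{main properties of linking curves} --- that $e_{1}(x)=e_{1}(y)$ and $e_{2}(x)=e_{2}(y)$ --- to the local identification demanded by the definition of \emph{linked}. Write $q=e_{1}(x)=e_{1}(y)$. Since $\alpha$ is a non-trivial linking curve its end point $y$ lies in $\mathcal{I}=(\NC\cup\mathcal{A}_{3}(I))\cap V_{1}$, so $y$ is unequivocal by Theorem~\ref{claim: hypothesis of the synthesis theorem}; I would pick from its neighbourhood base an unequivocal open set $V\ni y$ with associated isometry $\varphi:e_{1}(V\cap V_{1})\to e_{2}(V\cap V_{1})$ satisfying $\varphi\circ e_{1}=e_{2}$ on $V\cap V_{1}$. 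Thus $\varphi$ is a local isometry on an open neighbourhood of $q$, with $\varphi(q)=e_{2}(y)=e_{2}(x)$, the last equality coming from Proposition~\ref{main properties of linking curves}.

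Next I would analyse $e_{1}$ and $e_{2}$ near $x$. Because $(M_{1},p_{1})$ and $(M_{2},p_{2})$ are $L$-related, the Jacobi equations along the radial geodesics through $x$ and through $L(x)$ correspond under parallel transport and $L$; hence $e_{1}$ and $e_{2}=\exp_{p_{2}}\circ L$ have the same conjugate locus, with the same kernel of the differential and --- since $M_{1}\in\mathcal{G}_{M}$ --- the same generic type of Lagrangian singularity at $x$, with corresponding generalized phase functions. So there are coordinates around $x$ and around $q$ (and around $e_{2}(x)$) putting both $e_{1}$ and $e_{2}$ into the common canonical form of that singularity, whence $e_{2}=\Psi\circ e_{1}$ on a neighbourhood $U$ of $x$ for a diffeomorphism $\Psi$ of a (one-sided) neighbourhood of $q$. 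Moreover $\Psi$ is a local isometry: on the dense open set of non-conjugate points of $U$ it agrees with $e_{2}\circ(e_{1}|_{\cdot})^{-1}$, which is a local isometry by Cartan's Theorem~\ref{Cartan's theorem}, and a map that is an isometry on a dense set is an isometry.

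It then remains to show $\Psi=\varphi$ on the connected component of $q$ in the common part of their domains; since both are local isometries with $\Psi(q)=e_{2}(x)=e_{2}(y)=\varphi(q)$, it is enough to match their $1$-jets at $q$. This is where the linking curve enters, by running the tree-formed argument of Proposition~\ref{main properties of linking curves} \emph{with moving endpoints}: for $z\in U\cap V_{1}$ near $x$ I would produce, by a small deformation of $\alpha$ preserving its ACDC/retort/standard-$T$ combinatorics, a fully tree-formed curve joining $z$ to a point $w(z)\in V\cap V_{1}$ with $e_{1}(z)=e_{1}(w(z))$; exactly as in that proof this forces $e_{2}(z)=e_{2}(w(z))=\varphi(e_{1}(w(z)))=\varphi(e_{1}(z))$, i.e. $e_{2}=\varphi\circ e_{1}$ on $U\cap V_{1}$, so $\Psi=\varphi$ near $q$ and in particular their $1$-jets agree. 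Granting this, for $z\in U$ and $w\in V$ with $e_{1}(z)=e_{1}(w)$ one gets $e_{2}(z)=\Psi(e_{1}(z))=\varphi(e_{1}(z))=\varphi(e_{1}(w))=e_{2}(w)$ (points $z$ outside $V_{1}$ being dealt with by the continuity argument in the definition of unequivocal, or ignored since the synthesis uses only $V_{1}$), which is precisely the statement that $x$ and $y$ are linked.

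The hard part will be the ``moving endpoints'' step, because a generic $z$ near $x\in\mathcal{J}$ is not a conjugate point and so a deformed curve cannot literally begin with an ACDC at $z$. The clean way around this is to use that we only need the $1$-jet at $q$: it suffices to establish $e_{2}=\varphi\circ e_{1}$ along the conjugate hypersurface through $x$ --- where deformations keeping the starting point on $\mathcal{C}$, built from segments of conjugate descending flow and their retorts as in the construction of $O^{0}$ in Proposition~\ref{main properties of linking curves}, are available --- and then to close the argument as in the ``$A_{3}(I)$ points are unequivocal'' lemma: the linear isometries $d\Psi_{q}$ and $d\varphi_{q}$ then agree on the hyperplane $\tmop{Im}(d_{x}e_{1})$ and both preserve orientation, hence coincide. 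Equivalently, one may formulate Step~3 as the triviality of the monodromy of the germ of $e_{2}\circ e_{1}^{-1}$ around the fully tree-formed loop $e_{1}\circ\alpha$ --- the germ transporting consistently across the two sheets along each ACDC because a conjugate descending curve and its retort map to the same curve under $e_{2}$, just as under $e_{1}$ --- which is nothing but the tree-formed-in/tree-formed-out principle of Proposition~\ref{main properties of linking curves} rephrased in the language of affine developments.
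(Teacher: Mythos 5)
Your reduction to comparing two local isometries $\Psi$ and $\varphi$ near $q$ is an attractive idea, but the step you yourself flag as ``the hard part'' is a genuine gap, and the proposed repair does not close it. First, the deformation of $\alpha$ ``preserving its ACDC/retort/standard-$T$ combinatorics'' with a moving starting point is never constructed, and there is no reason it exists: the combinatorial structure of a linking curve is not stable under perturbation of its initial point (a retort that misses a stratum of $\mathcal{A}_{2}$ for $\alpha$ may hit it for the perturbed curve, inserting new hits and reprises), and nothing forces the perturbed curve to terminate inside the chosen unequivocal set $V$. Second, the fallback via $1$-jets fails exactly at the points where it is most needed: if $x$ is a conjugate point of order $2$ (a $D_{4}^{\pm}$ point), then $\tmop{Im}(d_{x}e_{1})$ has codimension $2$, and two orientation-preserving linear isometries of $\RR^{3}$ agreeing on a line need not coincide; so agreement of $\Psi$ and $\varphi$ on the image of the conjugate set through $x$ plus orientation does not pin down $d\Psi_{q}$. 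Third, your intermediate factorization $e_{2}=\Psi\circ e_{1}$ on a full neighbourhood $U$ of $x$ already presupposes that $e_{1}(z_{1})=e_{1}(z_{2})\Rightarrow e_{2}(z_{1})=e_{2}(z_{2})$ for $z_{1},z_{2}\in U$, which for order-$2$ singularities is not known at this stage (for $A_{2}$ points it follows from the back-and-forth tree-formed segment, but you would need to prove it separately for $D_{4}^{\pm}$), so ``whence'' hides a nontrivial claim.

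The paper takes a more direct route that avoids deforming $\alpha$ altogether: given $z\in U^{x}\cap V_{1}$ and $w\in V^{y}\cap V_{1}$ with $e_{1}(z)=e_{1}(w)$, pick a generic auxiliary path $\rho$ from $z$ to $x$, concatenate $\rho$ with the fixed linking curve from $x$ to $y$, and then append a lift of $e_{1}\circ\rho$ starting at $y$, built with the path-lifting-plus-jumps construction of the covering-map section (inserting a linking curve at each discrete jump). The $e_{1}$-image of the resulting closed curve is fully tree-formed ($e_{1}\circ\rho$ is traversed forward and then backward, and the linking-curve portions are tree-formed), so the affine-development argument of Proposition \ref{main properties of linking curves} applies verbatim and yields $e_{2}(z)=e_{2}(w)$ directly, with no germ comparison and no perturbation of $\alpha$. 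If you want to salvage your approach, you would need either a stability theorem for linking curves under perturbation of the base point or a separate argument identifying the sheets of $e_{1}$ over a neighbourhood of the image of an order-$2$ point; both are substantially harder than the paper's closing-up trick.
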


but we defer the proof until \ref{proof that linking curves produce linked points}.
}

\subsection{Existence of linking curves for easy manifolds from a point}

We now begin the proof of theorem \ref{claim: hypothesis of the synthesis theorem for easy manifolds}. The only places where we assume that $M_1$ is easy from $p$ is in lemma \ref{existence of GACDC for easy manifolds} and theorem \ref{typical sings have transient neighborhoods for easy manifolds}.

The goal of this section is to prove the existence of a linking curve starting at an arbitrary point $x \in \mathcal{J}$.
The set $\{y:|y|<|x|, \e (y)= \e
(x))\}=\{y_{j} \}$ is finite. This follows because $\{y:|y| \leqslant |x|\}$
can be covered with a finite amount of neighborhoods of adapted coordinates,
and in any of them the preimage of any point is a finite set. At least one
$y_{j}$ realizes the minimum distance from $p$ to $q= \e (x)$, and must be
either $A_3(I)$ or NC (in other words, $y \in \mathcal{I}$). We will show that
there is a linking curve joining $x$ and one $y_{j} \in \mathcal{I}$, though
it may not be the one with minimal radius.

\begin{theorem}
  \label{theorem: existence of linking curves for easy manifolds}For any $x \in \mathcal{J}$,
  there is a linking curve that joins $x$ to some $y \in \mathcal{I}$.
\end{theorem}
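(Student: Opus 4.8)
The plan is to construct the required linking curve by a recursive ``descending--replying'' procedure, starting at $x$ and using the conjugate descending flow to lower the radius at every stage.

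First, since $x\in\mathcal{J}=\mathcal{A}_2\cup\mathcal{A}_3(II)$, the analysis of the conjugate flow near $A_2$ and $A_3$ points in section \ref{subsection: CDCs in adapted coords near A3} (recall that $A_3(II)$ points are \emph{not} terminal for the conjugate flow, so a CDC can be started at one) shows that a conjugate descending curve can be started at $x$. Invoking Lemma \ref{existence of GACDC for easy manifolds}, I would pick an ACDC $\alpha_1:[0,t_0]\to\T$ with $\alpha_1(0)=x$, whose interior consists only of $A_2$ points, which reaches an $A_3$ point at $\alpha_1(t_0)$, whose image stays in $V_1$, and which is \emph{generic} in the precise sense of that lemma: in particular its retorts, and the retorts of any of its subarcs, avoid $\mathcal{A}_3(II)=\mathcal{J}\setminus\mathcal{A}_2$ and meet $\partial V_1$ only along $\mathcal{A}_2$, transversally and away from the lower strata. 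At the terminal $A_3$ point I form an $A_3$ join (section \ref{section: A3 joins}) and begin the retort $\widetilde{\alpha_1}$.

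Second, I would continue $\widetilde{\alpha_1}$ forward. While its image stays in $\mathrm{int}(V_1)$ the map $e_1$ is a local diffeomorphism, so $\widetilde{\alpha_1}$ is uniquely determined and defined for as long as it remains there. If it reaches a point $y\in\mathcal{I}$, then $\alpha_1\ast\widetilde{\alpha_1}$ is already a linking curve from $x$ to $y$: its vertices are the starting point $x\in\mathcal{J}$, one $A_3$ join, and the end point $y\in\mathcal{I}$, and the $e_1$-preimage structure is one of the categories allowed in Definition \ref{definition: linking curve}. Otherwise $\widetilde{\alpha_1}$ meets $\partial V_1$ at an $A_2$ point $y_1$ (by genericity, transversally and away from lower strata). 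Then I repeat the first step from $y_1$: an ACDC $\beta$ from $y_1$ to an $A_3$ point, an $A_3$ join, and the retort $\tilde\beta$ of $\beta$; if $\tilde\beta$ ends at a non-conjugate point I resume the retort of $\alpha_1|_{[0,t_0-t_1]}$ from the endpoint of $\tilde\beta$, so that the arcs produced around $y_1$ form a \emph{standard T}. Iterating, each time a retort is interrupted upon reaching an $\mathcal{A}_2$ point I insert such a reply. By construction the cancellation rule of Definition \ref{definition: linking curve} holds (cancel the innermost ACDC--retort pair, then the next, each ACDC matched to its own retort), and the $e_1\circ\alpha$-preimage of any point of $M$ falls into an allowed category; that the resulting curve is fully tree-formed, with $e_1(x)=e_1(y)$ and $e_2(x)=e_2(y)$, then follows from Proposition \ref{main properties of linking curves}, once finiteness is known.

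The main obstacle is to show that this recursion terminates, so that $\alpha$ is a genuine finite concatenation $\alpha_1\ast\dots\ast\alpha_n$. Here I would use the quantitative form of the unbeatable estimate: by Lemma \ref{unbeatable lemma} each matched ACDC--retort pair strictly decreases the radius, and by Lemma \ref{slack lemma}, for the fixed radius bound $R=|x|$ and a lower bound $a$ on the slack, the decrease is at least a definite amount $\varepsilon=\varepsilon(M,R,a)>0$. Since the radius along $\alpha$ stays in $[0,|x|]$ and is non-increasing across complete ACDC--retort pairs, this bounds the number of such pairs, provided the slack does not degenerate; this last point is where the ``transient neighbourhood'' property of the singularities of an easy manifold, theorem \ref{typical sings have transient neighborhoods for easy manifolds}, enters, guaranteeing that a retort spends only controlled time near each singular stratum, so that one can work with a uniform slack bound along all the ACDCs used and control both the depth of nesting of the standard T's and the number of replies at each level. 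Assembling these estimates yields a uniform bound on $n$, hence a finite linking curve from $x$ to the point $y\in\mathcal{I}$ at which the last retort terminates. I expect the delicate part to be precisely this bookkeeping: keeping the reply recursion well-founded while ensuring that the genericity built into the ACDCs is strong enough to keep every retort transverse to $\partial V_1$, disjoint from $\mathcal{A}_3(II)$, and still free to dodge the finitely many bad configurations.
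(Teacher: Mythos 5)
Your construction of the aspirant curve coincides with the paper's algorithm (descent along a GACDC to an $A_3$ point, $A_3$ join, retort, standard T's at interruptions), and that part of the proposal is sound. The gap is in the termination argument. You want each matched ACDC--retort pair to decrease the radius by a fixed $\varepsilon=\varepsilon(M,R,a)>0$ coming from Lemma \ref{slack lemma} and then count pairs. But the constant there depends on a lower bound $a$ for the slack \emph{along the whole ACDC}, and the slack vanishes exactly at $A_3$ points --- which is where every ACDC in the construction terminates. Worse, an ACDC may start at an $A_2$ point arbitrarily close to the $\mathcal{A}_3$ stratum and reach its terminal $A_3$ point after an arbitrarily short run, so the net radius drop of an ACDC--retort pair is \emph{not} bounded below by any positive constant: the drops can form a convergent series and the naive count does not terminate. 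Your appeal to Theorem \ref{typical sings have transient neighborhoods for easy manifolds} as ``guaranteeing a uniform slack bound'' misstates what that theorem provides; it gives no slack bound, but rather a positive \emph{gain} attached to the event of an aspirant curve exiting a fixed neighborhood $O$ of a singular point.

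The paper's termination mechanism is genuinely different and you would need to reproduce it or substitute something equivalent. One sets $R_0$ to be the supremum of radii $R$ such that the algorithm terminates for every starting point in $B_R$, assumes $R_0<\infty$, covers $\overline{B_{R_0}}$ by finitely many positive transient pairs $(S_i,O_i)$, and takes $\varepsilon_0$ to be the minimum of their gains. For $x\in B_{R_0+\varepsilon_0}$ the algorithm either stops inside one transient neighborhood or exits it at a point $y$ with $|y|<R_0$, where termination is already known; one then appends the linking curve from $y$ and replies to the finitely many loose ACDCs one at a time, each interruption landing again in $B_{R_0}$ where the inductive hypothesis applies. This contradicts the definition of $R_0$. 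The essential point missing from your proposal is that the uniformly positive quantity is the gain per \emph{transient neighborhood crossed}, exploited through an induction on the radius --- not a uniform decrement per ACDC--retort pair.
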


We start with a generalization of a linking curve that we can describe
informally as a linking curve {\emph{under construction}}:

\begin{dfn}
  An {\strong{aspirant curve}} is a continuous curve $\alpha :[0,t_{0} ]
  \rightarrow T_{p} M$ that is the concatenation $\alpha = \alpha_{1} \ast
  \ldots \ast \alpha_{ n}$ of ACDCs and non-trivial retorts of those ACDCs,
  such that:
  \begin{itemizedot}
    \item Starting with the tuple $( \alpha_{1} , \ldots , \alpha_{n} )$
    consisting of the curves that $\alpha$ is made of in order, we can reach
    a tuple \strong{with no retorts}, by iteration of the
    following rule:
    
    {\emph{Cancel an ACDC together with a retort of that ACDC that follows
    right after it: $( \alpha_{1} , \ldots , \alpha_{j-1} , \alpha_{j} ,
    \alpha_{j+1} , \alpha_{j+2} , \ldots \alpha_{n} ) \rightarrow ( \alpha_{1}
    , \ldots , \alpha_{j-1} , \alpha_{j+2} , \ldots \alpha_{n} )$, if
    $\alpha_{j+1}$ is a retort of $\alpha_{j}$. }}
    
    \item In all other regards, an aspirant curve satisfies the same
    conditions as a linking curve.
  \end{itemizedot}
  The {\strong{loose}} ACDCs in $\alpha = \alpha_{1} \ast \ldots \ast
  \alpha_{k}$ are the ACDC curves $\alpha_{j}$ for which there is no retort in
  $\alpha$.

  The \strong{tip} of $alpha$ is its endpoint $\alpha(t_0)$.
\end{dfn}

\begin{dfn}
  We define some important sets:
  \[ S_{R} =B_{R} \cap \e^{-1} ( \e ( \mathcal{A}_{2} \cap B_{R} )) \]
  \[ V_{1}^{0} =\{x \in V_{1} : \e^{-1} ( \e (x)) \cap B_{|x|} \subset \NC
     \cup \mathcal{A}_{2} \} \]
  \[ \mathcal{SA}_{2} =\{x \in \mathcal{A}_{2} :    \exists y \in
     \mathcal{A}_{2} ,   \e (y)= \e (x), |y|<|x|\} \]
  
  In other words, $V_{1}^{0}$ consists of those points $x \in V_{1}$ such that
  all preimages of $\e (x$) with radius smaller than $|x|$ are $\NC$ {\emph{or}}
  $\mathcal{A}_{2}$.
\end{dfn}

\begin{dfn}
  Let $F \subset \mathcal{C}$ be a finite set. A GACDC with respect to $F$, or
  GACDC when $F$ is implicit (G is for {\emph{generic}}) is an ACDC $\alpha$
  such that
  \begin{itemizedot}
    \item $\tmop{Im} ( \alpha )$ is contained in $( \mathcal{C} \cap V_{1}^{0}
    ) \setminus F$.
    
    \item for $y \in B_{| \alpha (t_{0} )|} \cap \mathcal{A}_{2}$ such that
    $\e ( \alpha (t_{0} ))= \e (y)$, $\e \circ \alpha$ is transversal to $\e
    (\mathcal{A}_{2} \cap B_{\varepsilon} (y))$ at $t_{0}$, for some
    $\varepsilon >0$.
  \end{itemizedot}
\end{dfn}

The motivation for the definition of GACDC is to find curves starting at points $x \in \mathcal{I}$ so that any possible retort avoids all singularities that are not $A_{2}$.
The GACDC will also ``avoid itself'': this is indeed the finite set F that it must avoid, as we will see later.

\begin{lem}
  \label{existence of GACDC for easy manifolds}For any $R>0$ there is $L>0$ such that any GACDC
  starting at $x \in A_{2} \cap B_{R}$ has length at most $L$, and can be
  extended until it reaches an $A_{3}$ point ($F$ can be any finite set).
\end{lem}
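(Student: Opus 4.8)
\textbf{Proof proposal for Lemma \ref{existence of GACDC for easy manifolds}.}

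The plan is to argue that a GACDC, when run in the canonical parametrization (so that $dR(\alpha')=1$ and the radius decreases at unit rate along the curve), cannot run forever because the radius is bounded below by zero, and it cannot stop prematurely in the interior of a stratum because the distribution $D$ can always be extended, so the only obstruction to prolongation is hitting the boundary of the admissible set $(\mathcal{C}\cap V_1^0)\setminus F$. I would first collect the relevant facts: inside the open stratum $\mathcal{A}_2$, both $\mathcal{A}_2$ itself and the one-dimensional distribution $D=(\ker dF\oplus\langle r\rangle)\cap T\mathcal{C}$ are smooth (see the discussion after \eqref{1d distribution at conjugate points}), so by standard ODE theory the GACDC extends as a smooth integral curve of $D$ with $\langle\alpha',r\rangle<0$ for as long as it remains in $\mathcal{A}_2\cap V_1^0$ and avoids the finite set $F$. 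In the canonical parametrization, $\mathrm{length}(\e\circ\alpha)=|\alpha(0)|-|\alpha(t_0)|$ by Lemma \ref{unbeatable lemma}, so the total length of $\e\circ\alpha$ is bounded by $R$; since $M_1$ is easy from $p$, the only singular stratum other than $\mathcal{A}_2$ is $\mathcal{A}_3$, and I would show that the natural maximal GACDC terminates exactly when it first meets $\partial(\mathcal{A}_2\cap V_1^0)$, which (after excluding the harmless possibilities) means reaching an $\mathcal{A}_3$ point.

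The key steps in order would be: (1) Start the GACDC at $x\in\mathcal{A}_2\cap B_R$; since $\mathcal{A}_2$ is open in $\mathcal{C}$, a unique (up to reparametrization) descending integral curve of $D$ exists locally by Lemma \ref{unbeatable lemma}, and we choose the direction with $\langle\alpha',r\rangle<0$. (2) Extend this curve maximally inside $(\mathcal{C}\cap V_1^0)\setminus F$; parametrize canonically so $|\alpha(t)|=|x|-t$, hence the parameter interval has length at most $|x|\le R$ and the curve has finite length. (3) Argue that the curve cannot escape to the ``boundary'' $R=0$ of $\T$ (radius stays positive, and in fact stays $\le R$), and cannot leave $V_1^0$ transversally: here I would use that $V_1^0$ is characterized by the preimages under $\e$ of radius smaller than $|x|$ being only $\NC$ or $\mathcal{A}_2$, a condition that is open along the descending direction because the radius is strictly decreasing — any other preimage could only appear as a new branch, and the transversality clause in the GACDC definition together with genericity (no $A_3(II)$ below an $A_3(I)$, etc.) prevents this. (4) For an easy manifold, $\mathcal{C}=\mathcal{A}_2\sqcup\mathcal{A}_3$, so the boundary of $\mathcal{A}_2$ inside $\mathcal{C}$ consists of $\mathcal{A}_3$ points; since $\mathcal{A}_3$ is a stratum of codimension $2$ in $\T$ and $\mathcal{A}_2$ has codimension $1$, the accumulation point of a maximal descending curve, if not an interior point (impossible by maximality and smoothness of $D$), must be an $\mathcal{A}_3$ point. (5) The uniform length bound $L=L(R)$ follows because $\mathrm{length}(\alpha)$ in canonical parametrization equals $\mathrm{length}(\e\circ\alpha)$ up to a bounded factor controlled by the slack lower bound on $B_R$ (the slack is bounded below on any compact subset of $\mathcal{A}_2$, and for a GACDC the angle condition keeps $\alpha'$ within a cone of amplitude $cA^3$ around $D$), so $\mathrm{length}(\alpha)\le C\cdot R$ for a constant $C$ depending only on $M_1$ and $R$.

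The main obstacle I anticipate is step (3): ruling out that the descending curve leaves $V_1^0$ through a ``bad'' boundary point, i.e. a point where some new preimage of $\e(\alpha(t))$ of strictly smaller radius crosses over from $\NC$ to a forbidden type, or where the curve exits $V_1$ itself by reaching a second conjugate point. One must use essentially that $M_1$ is easy (only $A_2$ and $A_3$ singularities occur, so there is simply nothing worse to run into) together with the genericity/transversality statements from \ref{the set of generic metrics}, and the fact that $A_3(I)$ points are terminal for the conjugate flow while $A_3(II)$ points are not (section \ref{subsection: CDCs in adapted coords near A3}); combined with the GACDC transversality clause at the tip, this forces the only exit to be at an $\mathcal{A}_3$ point, at which the curve can indeed be extended up to (but not past) that point. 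A secondary technical point is making the length estimate genuinely uniform in the starting point $x\in\mathcal{A}_2\cap B_R$ and independent of $F$: this is where one invokes the fact that the slack cannot drop to zero within a ball of radius bounded below by a number depending only on $a$ and $R$ (the argument already used in the proof of Lemma \ref{slack lemma}), so that finitely many adapted-coordinate charts cover $B_R$ and give a uniform constant.
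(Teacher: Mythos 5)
Your overall architecture (local existence and continuation of the curve, then a compactness/covering argument over $B_R$ for the uniform bound) matches the paper's, but two of the steps you identify as the hard ones are resolved incorrectly, and these are precisely where the content of the lemma lives.

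First, step (3): staying inside $V_1^0$ and avoiding $F$ is not an open condition that decreasing radius preserves. In an easy manifold a point $\alpha(t)$ leaves $V_1^0$ exactly when $\e(\alpha(t))$ enters $\e(\mathcal{A}_3\cap B_{|\alpha(t)|})$, and nothing prevents the integral curve of $D$ from crossing that set; moreover you cannot invoke ``the transversality clause in the GACDC definition'' to rule this out, since that clause is a property the curve must be \emph{constructed} to have, not a hypothesis. The paper's mechanism is different and essential: the CDCs foliate $\mathcal{C}\cap W_1$, the genericity assumption that images of strata intersect transversally makes the set of leaves that sink into $\e^{-1}(\e(\mathcal{A}_3))$ (or into the finite set $F$, or meet $\e^{-1}(\e(\mathcal{A}_2))$ non-transversally) of positive codimension in the leaf space, and the slack gives just enough room (the cone of amplitude $cA^3$) to perturb the CDC to a nearby ACDC that avoids all of these. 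Without this perturbation argument the ``G'' in GACDC is unjustified and the lemma's clause ``$F$ can be any finite set'' has no proof.

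Second, step (5): the length bound cannot come from a lower bound on the slack, because the GACDC terminates at an $\mathcal{A}_3$ point and the slack tends to $0$ there (at an $A_3$ point $\ker d\e\subset T\mathcal{C}$, so $D=\ker d\e$); in the canonical parametrization the speed $|\alpha'|$ is comparable to $1/A_{\alpha(t)}$ and blows up at the endpoint, so ``parameter interval of length $\le R$'' does not bound the length of $\alpha$ in $\T$. The paper instead bounds the length chart by chart: in adapted coordinates near an $A_3$ point the CDC is the explicit curve $t\mapsto(t,3t^2,x_3(t))$, of finite length, and any ACDC is a $C^1$ graph over a CDC inside the plane $\mathcal{A}_2$, hence also of bounded length in that chart; the uniform bound over $B_R$ then comes from covering $\overline{B_R}$ by finitely many \emph{transient} neighborhoods, each contributing bounded length and never being revisited because the radius strictly decreases. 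You need some version of both of these ingredients to close the argument.
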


\begin{proof}
  First, we prove local existence (and thus, continuation) of GACDC. Let $x
  \in \mathcal{A}_{2}$.
  
  Let $y \in \mathcal{C} \cap \e^{-1} (x)$. Let $U \subset M$ be a
  neighborhood of $q= \e (x)= \e (y)$, $W_{1}$ and $W_{2}$ disjoint
  neighborhoods of $x$ and $y$ mapping into $U$. $\mathcal{C} \cap W_{1}$ is a
  smooth hypersurface containing $x$. $\mathcal{C} \cap W_{2}$ may have
  conical singularities, but is a stratified manifold in any case. The
  transversality result mentioned at the end of section \ref{section:
  generic metric} implies that $\e (\mathcal{C} \cap W_{1} )$ is transversal
  to each stratum of $\e (\mathcal{C} \cap W_{2} )$.
  
  The CDCs foliate $\mathcal{C} \cap W_{1}$, so the set of points of
  $\mathcal{C}$ whose CDC sinks into a stratum of $\e^{-1} ( \e (\mathcal{C}
  \cap W_{2} ))$ with singularities other than $A_{2}$ has positive
  codimension in $\mathcal{C} \cap W_{1}$. Replacing one small subcurve of the
  CDC with an ACDC we can move from one CDC to a neighbouring one, thus
  avoiding those singularities. We might not be able to avoid that our ACDC
  meets $\e^{-1} ( \e ( \mathcal{A}_{2} ))$, but we can take our ACDC so that
  it intersects that set transversally.

  We can continue the ACDC within a patch $U$ of adapted coordinates. There
  is some $L>0$ such that any ACDC within $U$ can be extended by a curve of
  length at most $L$ that may end up in an $A_{3}$ point, or reach the
  boundary of $U$.
 
There is a smaller neighborhood $V \subset U$ such that any GACDC starting at
$x \in V$ is continued within $U$ up to an $A_{3}$ point, or up to a point in
$\partial U$ with smaller radius that any point in $V$. Thus $V$ is transient,
in the sense that an ACDC that passes through $V$ will either finish or leave
$U$ and never return to~$V$. It is simple to choose such a set $V$; it will be
clear how to do it after we prove claim 
\ref{typical sings have transient neighborhoods for easy manifolds}.

  We have shown that there is an GACDC with bounded length that exists~$V$, but
indeed, the length of any ACDC in $V$ is also bounded, because in the
plane~$\mathcal{A}_{2}$, any ACDC is a $C^1$ graph over any CDC.
  
  The radius decreases along an ACDC, and thus an ACDC starting at $x$ never
  leaves $\{v \in \T :|v|<|x|\}$. Take a finite cover of this set by
transient~sets.
  An ACDC that starts at $x$ will run through a finite amount of transient sets.
  Each transient set only contributes a finite length to the total
  length of the GACDC that started at $x$.
\end{proof}

Diagram \ref{figure: algorithm for linking curves} shows the algorithm that we follow in order to find the
linking curves, starting with the trivial aspirant curve $\{x\}$.

\begin{figure}[ht]
 \includegraphics[width=0.9\textwidth]{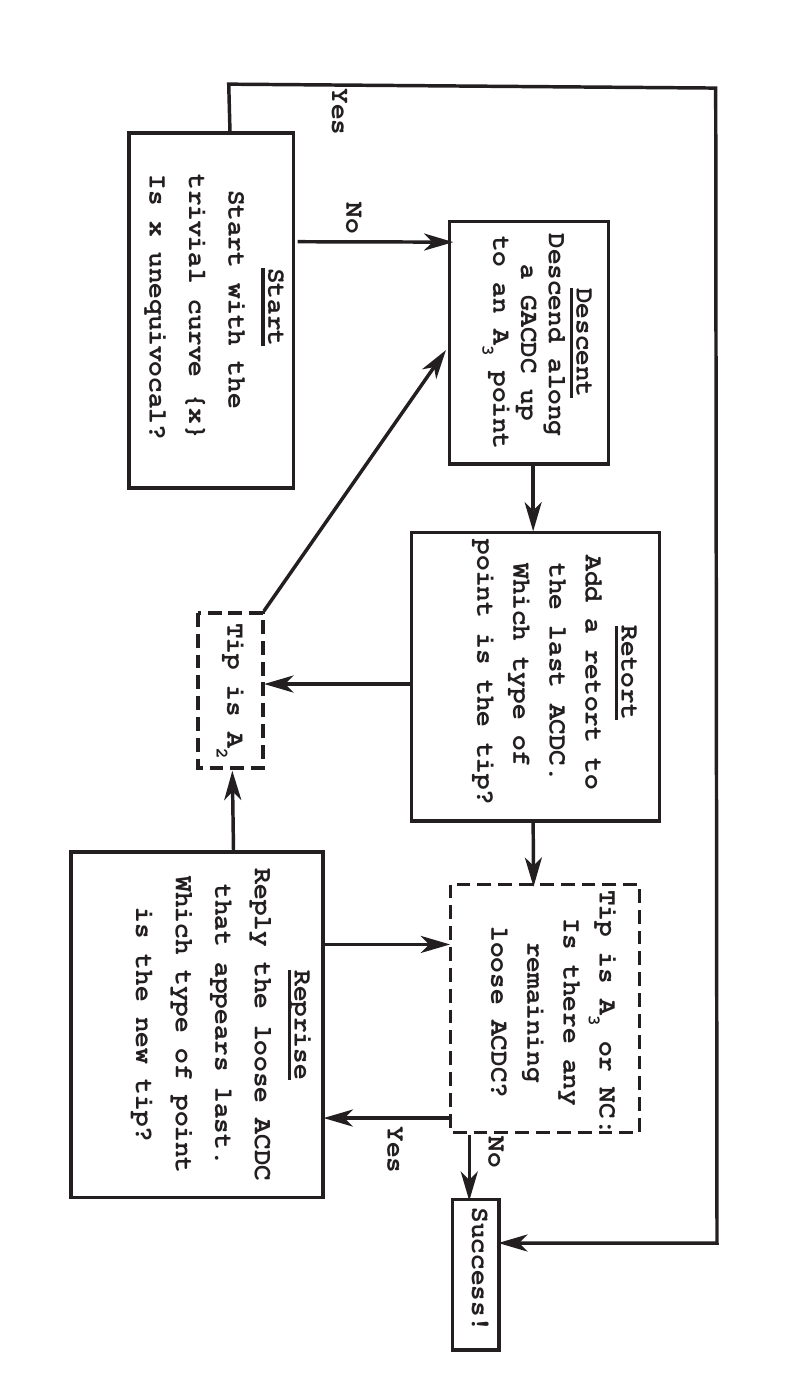}
 \caption{Flow diagram for building linking curves}
 \label{figure: algorithm for linking curves}
\end{figure}

The linking curve is built step by step, starting with the trivial
{\emph{curve}} $\alpha =\{x\}$, and adding segments to the aspirant curve
$\alpha = \alpha_{1} \ast \ldots \ast \alpha_{k}$ following these rules:
\begin{description}
  \item[Descent] If the end of $\alpha_{k}$ is a point in $\mathcal{J}$, let
  $\gamma$ be a GACDC contained in $V_{1}^{0}$ that starts at $x$. 
  The curve $\gamma$ must also avoid the finite set $F= \e^{-1} ( \tmop{Im} ( \e \circ \alpha ) \cap B_{| \alpha ( 0
  ) |} ) \cap \mathcal{C}$.
  We also know that $\gamma$ intersects $\SAtwo$ in a finite set and, for convenience, we split $\gamma$ into $r$ GACDCs $\alpha_{k+1},\dots,\alpha_{k+r}$ such that each of these curves intersects $\SAtwo$ only at its extrema.
  The new curve $\alpha \ast \alpha_{k+1} \ast \dots \ast \alpha_{k+r}$ ends up in an $A_{3}$ point. The next step is a retort.
  
  \item[Retort] 
  If $\alpha_{k} :[0,T] \rightarrow V_{1}$ is a ACDC ending up in an $A_{3}$ point, add the retort $\alpha_{k+1}$ of $\alpha_{k}$ that starts at the $A_{3}$ join.
  This is always possible, since $\alpha_{k}$ does not intersect $\SAtwo$.
  The new tip of $\alpha \ast \alpha_{k+1}$ will be $NC$, $A_2$ or $A_3$, but the latter can only happen if $\alpha \ast \alpha_{k+1}$ is a linking curve.

  \item[Reprise] If the tip of $\alpha$ is $NC$ and $\alpha$ is not a linking curve, let $\alpha_j$ be the latest loose curve in $\alpha$.
  We add the retort $\alpha_{k+1}$ of $\alpha_{j}$ starting at the tip of $\alpha$.
  This is always possible, since $\alpha_{j}$ does not intersect $\SAtwo$.
  The new tip of $\alpha \ast \alpha_{k+1}$ will be $NC$, $A_2$ or $A_3$, but the latter can only happen if $\alpha \ast \alpha_{k+1}$ is a linking curve.

  \item[Success!] If $\alpha$ is a linking curve, we report success and stop the algorithm.
  For completeness, the algorithm also reports success if $\alpha =\{x\}$, for
  $x \in \mathcal{I}$.
\end{description}

The algorithm can also be presented in a \emph{recursive} fashion. We start with some definitions:

\begin{itemize}
 \item $Tip(\alpha)=\alpha(T)$, for any curve $\alpha$ defined in an interval $[0,\alpha]$.
 \item $Ret(\alpha, y)$ is the retort of $\alpha$ starting at $y$, for any curve $\alpha$ contained in $V^1_0\setminus\SAtwo$, and a point $y\in V_1$ such that $\e(y)=\e(Tip(\alpha))$.
\end{itemize}

Then for any $x\in V_1$, we define an aspirant curve $L(x)$ by the following rules:
\begin{itemize}
 \item If $x\in\mathcal{J}$, then $L(x)=\{x\}$
 \item If $x\in \mathcal{I}$, then compute the GACDC curve $\gamma=\gamma_{1} \ast \dots \ast \gamma_{r}$, as above. Then $L(x)=\gamma_1\ast L(Tip(\gamma_1)) \ast Ret(\gamma_1,Tip(L(Tip(\gamma_1))))$
\end{itemize}

\begin{remark}
 The reader have probably noticed that $\gamma_{2}$ to $\gamma_{r}$ are discarded, and only $\gamma_1$ is kept (the ACDC up to the first $A_2$ point).
 This causes a small problem with the recursive definition because of the non-deterministic descent step.
 We have shown that there is a GACDC starting at any point in $\mathcal{I}$, and this curve intersects $\SAtwo$ in finitely may points, but if we only keep the first segment of the GACDC up to the first intersection with $\SAtwo$ and repeat the process, we have not shown that an $A_3$ point will be reached in finitely many steps.
 This can be solved in one blow by an application of the axiom of choice.
 We might also come back to \ref{existence of GACDC for easy manifolds} and refine it as needed.
 But the easiest solution is to use the iterative version of the algorithm.
\end{remark}

  In order to satisfy the last technical condition in the definition of
  linking curve, we have added to the ``Descent'' section the condition that
  $\e \circ \alpha_{k+1}$ does not intersect the image of $\e \circ \alpha$.
  
  We recall that we can ask that $\alpha_{k+1}$ avoids a finite set $F \subset
  \mathcal{C}$. The image of $\e \circ \alpha$ is the same as the image by
  $\e$ of $\tmop{Im} ( \alpha ) \cap \mathcal{A}_{2}$, or in other words, the
  image by $\e$ of only the ACDCs in $\alpha$. Each ACDC $\alpha_{j}$ in
  $\alpha$ was built so that it did not intersect $\e^{-1} ( \e (S))$, for any
  strata $S$ of singularities with smaller radius than $\alpha$, except for
  strata of $A_{2}$ singularities, which it would intersect transversally. The
  ACDC $\alpha_{k+1}$ is contained in a strata $S_{0}$ of $A_{2}$ points, and
  thus $S_{0} \cap \e^{-1} ( \alpha_{j} )$ is a finite set.

  Thus, lemma \ref{existence of GACDC for easy manifolds} guarantees that we can always perform
the ``{\emph{descent}}'' step in the diagram. We have already shown why the other steps can always be performed.

We conclude that it is always possible to perform one more step of the algorithm, if it hasn't reported ``success!'' yet.
However, the algorithm may get hooked up in an infinite sequence of GACDC, retorts and reprises.
We devote the rest of the section to prove that this is not the case, for a generic metric.

\begin{dfn}
  A pair ($S,O$) of open subsets of $\T$ with $\bar{S} \subset O$, is
  {\strong{transient}} iff for any point $x$ in $S \cap \mathcal{J}$, a
  finite number of iterations of the algorithm starting at $\{x\}$ gives an
  aspirant curve that extends outside of $O$ (or reports success!), and then any
  curve obtained by any number of iterations of the algorithm never has its
  endpoint in $S$.
  
  The {\strong{gain}} of a transient pair $(S,O)$ is the infimum of all
  $|x|-|y|$, for all $x \in S$, $y \in V_{1} \setminus O$ such that there is
  an aspirant curve starting at $x$ and ending at $y$.

  A transient pair is {\strong{positive}} if it has positive gain.
\end{dfn}

\begin{theorem}
  \label{typical sings have transient neighborhoods for easy manifolds}For any point $x$ of type
  NC, $A_{2}$ or $A_{3}$ 
  there is a positive transient pair $(S,O)$, with $x \in S$.
\end{theorem}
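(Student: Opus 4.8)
The plan is to argue by cases on the local type of $x$, using the fact that in an easy manifold (Definition \ref{definition: easy manifold}) the only possibilities are NC, $A_{2}$, $A_{3}(I)$ and $A_{3}(II)$, and to produce the pair $(S,O)$ directly in the adapted coordinates of section \ref{section: generic metric}, with $O$ a coordinate neighbourhood and $S$ a much smaller concentric one. The mechanism behind ``transient'' and ``positive'' is always the same: a complete ACDC together with its retort loses a definite amount of radius (Lemma \ref{slack lemma}) on any compact region where the slack is bounded below, and the conjugate--flow geometry of sections \ref{subsection: CDCs in adapted coords near A3}--\ref{section: A3 joins} forces the aspirant curve produced by the algorithm to leave any sufficiently small $O$ after finitely many steps, with the radius at the relevant vertices monotone enough that the endpoint cannot return to $S$.

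The two easy cases are NC and $A_{2}$. If $x$ is NC, take $S\subset O$ small balls inside $\NC$ with $\e|_{O}$ injective; then $S\cap\mathcal{J}=\emptyset$, so the first condition of transience is vacuous and the gain is $+\infty>0$; moreover any retort or reprise segment meeting $O$ is the unique $\e$-lift of a finite--length ACDC image, hence must exit $O$, and once the aspirant curve has left, the radius at its $A_{3}$--join and reprise vertices (which only decreases under the reduction ordering of Definition \ref{definition: linking curve}) keeps the endpoint out of $S$. If $x$ is $A_{2}$, choose adapted coordinates $U$ in which $\e$ reads $(x_{1},\dots,x_{n})\mapsto(x_{1}^{2},x_{2},\dots,x_{n})$, so $\mathcal{C}\cap U=\{x_{1}=0\}$ is foliated by CDCs and, by continuity and positivity of the slack, $\langle\,\cdot\,,r\rangle$ is bounded below by some $a>0$ on $\overline{U}$; set $O=U$ and shrink to $S=V$ so that every ACDC, retort or reprise segment issued from a point of $V$ either terminates at an $A_{3}$ point or exits $U$, and, by Lemma \ref{slack lemma} applied on $\overline{U}$, with net radius drop at least some $\varepsilon(U)>0$. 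Positivity of the gain and the no--return property follow at once; this is exactly the choice of transient set alluded to in the proof of Lemma \ref{existence of GACDC for easy manifolds}.

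The delicate case is an $A_{3}$ point $x_{0}$, where the slack degenerates to $0$ at the singularity and Lemma \ref{slack lemma} no longer applies uniformly on a neighbourhood. Here I would use the refined picture of sections \ref{subsection: CDCs in adapted coords near A3}--\ref{section: A3 joins}: near $x_{0}$ the set $\mathcal{C}$ is a smooth hypersurface containing the curve $\Gamma_{3}$ of $A_{3}$ points (codimension one inside $\mathcal{C}$), $\mathcal{C}$ is foliated by CDCs, and the CDCs that actually sink into $\Gamma_{3}$ sweep out only a codimension--one \emph{separatrix} inside $\mathcal{C}$. Since a GACDC is required to be generic---to miss a prescribed finite set and to meet $\mathcal{SA}_{2}$ transversally (section \ref{section:avoiding some obstacles})---it can always be perturbed off this separatrix, so any GACDC issued from $S\cap\mathcal{J}$ crosses a fixed collar of $\Gamma_{3}$ and leaves $O$, losing a fixed amount of radius across that collar. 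For an $A_{3}(II)$ point (which lies in $\mathcal{J}$) the two $P$--branches of the conjugate flow emanate from $x_{0}$ with decreasing radius, so a GACDC started near $x_{0}$ is pushed out of $O$ along one of them; for an $A_{3}(I)$ point (which lies in $\mathcal{I}$) only the $A_{2}$ points of $S$ can start an aspirant curve, and the same off--separatrix argument forces their GACDCs out of $O$. The retort portions are controlled by the unbeatable inequality of Lemma \ref{unbeatable lemma}, and after a final shrinking of $S$ the collar estimate makes the total drop from $x\in S$ to the first exit point $y\in V_{1}\setminus O$ bounded below by a positive constant; monotonicity of the tip radius at $A_{3}$--joins and reprises, together with the no--return conclusions of the NC and $A_{2}$ cases, then prevents the endpoint from ever re--entering $S$.

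I expect the main obstacle to be precisely this $A_{3}$ analysis: the slack vanishes at the singularity, so one cannot simply quote Lemma \ref{slack lemma}, and one must instead combine the explicit $A_{3}$--join of section \ref{section: A3 joins} with the separatrix/collar argument above and a careful bookkeeping of which vertices of the aspirant curve can recur. Getting a \emph{uniform} positive gain on a neighbourhood containing $x_{0}$---rather than one that degenerates as $x\to x_{0}$---is the crux, and it is what ultimately feeds into the finite--cover argument that terminates the algorithm and proves Theorem \ref{theorem: existence of linking curves for easy manifolds}.
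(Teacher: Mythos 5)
Your NC and $A_2$ cases follow the paper's argument in substance: the NC case is vacuous with infinite gain, and the $A_2$ case combines a uniform lower bound on the length of the GACDC reaching $\partial O$ with the unbeatable inequality to extract a definite gain $\delta$ and then shrinks $S$ to a ball of radius comparable to $\delta$. The genuine gap is in your $A_3$ case, whose central mechanism is wrong. You claim that the CDCs sinking into the $A_3$ stratum sweep out only a codimension-one separatrix inside $\mathcal{C}$, and you propose to perturb every GACDC off that separatrix so that it crosses a collar of the stratum and exits $O$. But the $A_3$ stratum has codimension one in $\mathcal{C}$ and the conjugate distribution $D$ is transversal to it (it is close to the kernel direction $\partial/\partial x_1$, while the stratum is $\{x_1=0,\,x_2=0\}$), so near an $A_3(I)$ point the basin of CDCs terminating on the stratum is an \emph{open} set — indeed all of them; the codimension-one-basin picture you describe is the one for the isolated $A_4$ point in the generic case, which does not occur in an easy manifold. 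Moreover an ACDC consists of $A_2$ points by definition, so it cannot ``cross a collar'' of the $A_3$ stratum at all: it must terminate there. Finally, avoiding the $A_3$ stratum would defeat the algorithm, since the descent step is designed to end at an $A_3$ point precisely because the $A_3$ join is where a non-trivial retort can be started.

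The paper's actual $A_3$ argument is the opposite of yours and much shorter: an ACDC issued from an $A_2$ point of a small enough $V$ either (i) flows transversally (within $\mathcal{C}$) into the $A_3$ stratum inside $O$, in which case the explicit retort of section \ref{section: A3 joins} replies to it in one step and the algorithm reports success — so no gain estimate is needed on this branch, which also disposes of your worry about the slack degenerating at the stratum — or (ii) exits $O$, in which case the $A_2$ argument applies verbatim. Which branch occurs is governed by the $A_3(I)$/$A_3(II)$ dichotomy of section \ref{subsection: CDCs in adapted coords near A3} (CDCs descend into $A_3(I)$ points and emanate from $A_3(II)$ points), not by a genericity perturbation. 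You should replace your separatrix/collar analysis by this dichotomy.
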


It follows from this theorem that there is a linking curve starting at any point.

Define:
\[ R_{0} = \sup \left\{ R: \forall x \in B_{R} ,  \begin{array}{l}
     \text{the algorithm starting at $x$ reports }\\
     \text{sucess! after a finite amount of iterations}
   \end{array} \right\} \]

We will assume that $R_{0}$ is finite and derive a contradiction, thus showing the existence of linking curves for all points in $\mathcal{A}_{2}$.
Take a covering of $\overline{B_{R_{0}}}$ by a finite number of neighborhoods $\{S_{i} \}_{i=1}^{N}$, where $(S_{i} ,O_{i} )$ are transient pairs.
Then $B_{R_{0} + \varepsilon}$ is also covered by $\cup S_{i}$ for some $\varepsilon >0$.
Let $\varepsilon_{0}$ be the minimum of $\varepsilon$, and all the gains of the $N$ pairs.

Take a point $x \in B_{R_{0} + \varepsilon_{0}}$ and assume $x \in S_{1}$.
Iterate the algorithm until it reports success! or builds an aspirant curve $\alpha$
with endpoint $y$ outside of $O_{1}$.

Thanks to the way we have chosen $\varepsilon_0$, we can assume $|y|<R_{0}$, and by hypothesis there is a linking curve that joins $y$ to some point $z$.
Append that linking curve to $\alpha$ to achieve an aspirant curve starting at $x$ and ending at $z$. For this aspirant curve to become a linking curve, it remains to reply to all the loose ACDCs in $\alpha$. 
Each of them, except possibly its endpoint, is contained in $V^1_0\setminus\SAtwo$.
If, after replying to one of them, we hit an $A_2$ point $y_{0}$, then $y_{0} \in B_{R}$, and thus we can append a linking curve that joins $y_{0}$ to some $z_{0} \in \mathcal{N C} \cap B_{|y_{0} |}$. Then we can continue to reply to the remaining loose ACDCs, and the process
finishes in a finite number of steps. This is the desired contradiction. It only remains to prove theorem \ref{typical sings have transient
neighborhoods for easy manifolds}.

\subsection{Existence of positive transient pairs in easy manifolds}
Let $x \in V_{1}$ be a point and $O$ be a cubical neighborhood
of adapted coordinates around it. $S$ will be a ``small enough'' subset of
$O$:
\begin{description}
  \item[$NC$] The algorithm reports success! in one step for any non-conjugate point, so any $S
  \subset O$, such that $O$ has no conjugate points, satisfies the claim.
  The gain is the infimum of the empty set, $+\infty$, so the pair is positive.
  
  \item[$A_{2}$] The CDC $\alpha_{0}$ starting at $x_{0}$ that reaches
  $\partial O$ has a length $\varepsilon >0$. For $x$ in a sufficiently small
  neighborhood $S$ of $x_{0}$, there is a GACDC $\alpha$ that reaches $y \in
  \partial O$ and has length at least $\varepsilon /2$ (for any finite set
  $F$).
  
  If there is an aspirant curve that starts with $\alpha$, and later has a
  retort of $\alpha$, starting at a point $z$, then $|z|<|y|$, because the
  restriction of the curve from $y$ to $z$ is a linking curve.
  
  Further, $\alpha_{0}$ is unbeatable, so that any non-trivial retort of this
  short curve will increase the radius at most $|x|-|y|- \delta$ for some
  $\delta >0$. The inequality still holds with $\delta /2$ if instead of
  $\alpha_{0}$ we have a GACDC starting at some $x$ in a small enough
  neighborhood $V$ of $x_{0}$.
  
  So if we take $S$ as the intersection of $V$ and a ball of radius $\delta
  /2$, then $( S,O )$ is transient, and the gain is at least $\delta/2$.
  
  \item[$A_{3}$] We recall that the set of singular points $\mathcal{C}$ near
  an $A_{3}$ point is an hypersurface, and the stratum of $A_{3}$ points is a
  smooth curve. An ACDC starting at any $A_{2}$ point will flow either into
  the stratum of $A_{3}$ points transversally (within $\mathcal{C}$), or into
  the boundary of $O$.
  
  For points in a smaller neighborhood $V \subset O$, one of the following
  things happen:
  \begin{itemize}
    \item If an ACDC starting at $x \in V \cap \mathcal{A}_{2}$ flows into an
    $A_{3}$ point, then it can be replied in one step, and the algorithm
    stops. The algorithm also stops if $x \in \mathcal{A}_{3}$.
    
    \item If the ACDC starting at $x \in V \cap \mathcal{A}_{2}$ flows into $y
    \in \partial O$, the argument is the same as that for an $A_{2}$ point.
  \end{itemize}
\end{description}
This concludes the proof of claim \ref{claim: hypothesis of the synthesis
theorem}, and thus we can apply proposition \ref{synthesis: statement} to
build the synthesis manifold, for easy manifolds.

We have chosen to defer the proof for the existence of linking curves for generic manifolds to section \ref{section: proof for a 3-manifold with a generic metric}. The next section does not require the easy hypothesis, so the reader is presented with a full argument that works for some manifolds for which the Ambrose conjecture was yet unknown.

\subsection{Proof that $\pi_{1}$ and $\pi_{2}$ are covering maps}\label{subsection: from local homeo to covering}

We still have to prove that the synthesis manifold $M$ given by theorem
\ref{synthesis: statement} is a covering space of $M_{1}$ and $M_{2}$. We
start with a general lemma:

\begin{lem}
  \label{norm dominated by exp}Let $\e : \T \rightarrow M$ be the exponential
  map from a point $p$ in a Riemannian manifold $M$. Then for any absolutely continuous
  path $x:[0,t_0] \rightarrow \T$, the total variation of $t \rightarrow |x(t)|$
  is no longer than the length of $t \rightarrow \e (x(t))$. In particular:
  \[ |x(t_0)|-|x(0)|< \tmop{length} ( \e \circ x) \]
\end{lem}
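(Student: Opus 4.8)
The statement is a purely local comparison between the radial coordinate on $T_pM$ and arc length of the image curve under $\exp_p$. The plan is to integrate the obvious pointwise inequality $\bigl|\tfrac{d}{dt}|x(t)|\bigr| \le |d_{x(t)}\e\,(x'(t))|$, which is exactly the infinitesimal content of the Gauss lemma, and then observe that the right-hand side is the speed of $t\mapsto \e(x(t))$.

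First I would recall the Gauss lemma for the Finsler (or Riemannian) exponential map: at a point $v\in T_pM$, the differential $d_v\e$ sends the radial direction $\partial_r = v/|v|$ to a unit vector, and it sends the subspace tangent to the sphere $\{|w|=|v|\}$ to vectors $g$-orthogonal to that radial image; more precisely, for any $w\in T_v(T_pM)$, writing $w = a\,\partial_r + w^\perp$ with $w^\perp$ tangent to the sphere of radius $|v|$, one has $|d_v\e(w)|^2 = a^2 + |d_v\e(w^\perp)|^2 \ge a^2 = \bigl(dR_v(w)\bigr)^2$. Here $dR_v(w) = a$ is precisely the derivative of the norm function $R(u)=|u|$ in the direction $w$. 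This is the same computation already used in the proof of Lemma \ref{unbeatable lemma} and in the definition of the canonical parametrization of a CDC.

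Next I would apply this along the path. Since $x$ is absolutely continuous, $t\mapsto |x(t)|$ is absolutely continuous and its derivative is $\tfrac{d}{dt}|x(t)| = dR_{x(t)}(x'(t))$ for almost every $t$. By the Gauss lemma inequality above,
$$
\left|\frac{d}{dt}|x(t)|\right| = \bigl|dR_{x(t)}(x'(t))\bigr| \le \bigl|d_{x(t)}\e\,(x'(t))\bigr| = \bigl|(\e\circ x)'(t)\bigr|.
$$
Integrating over $[0,t_0]$ gives that the total variation of $t\mapsto |x(t)|$ is at most $\int_0^{t_0} |(\e\circ x)'(t)|\,dt = \operatorname{length}(\e\circ x)$, and in particular $|x(t_0)| - |x(0)| \le \operatorname{length}(\e\circ x)$. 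A small technical point to address is the behaviour at $v=0$, where $\partial_r$ and $R$ are not smooth; but $R$ is globally $1$-Lipschitz on $T_pM$ and $d_0\e$ is an isometry, so the inequality holds trivially near the origin, and away from the origin the Gauss lemma applies as stated.

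\textbf{Main obstacle.} There is no serious obstacle: the only point requiring a little care is ensuring the Gauss lemma is available in the stated generality (Finsler exponential from a point, or more generally the exponential $F$ from the boundary as set up in Section \ref{section: intro to exponential maps}) and that the decomposition $w = a\,\partial_r + w^\perp$ together with the orthogonality $d_v\e(\partial_r)\perp d_v\e(w^\perp)$ is valid — this is standard for Riemannian manifolds and, in the Finsler case, follows from the properties of the dual one-form recorded in Definition \ref{dual one form}; since the lemma as stated concerns $\e=\exp_p$ on a Riemannian manifold, the classical Gauss lemma suffices. The absolute-continuity bookkeeping (that $|x(t)|$ is AC with the expected a.e. derivative) is routine.
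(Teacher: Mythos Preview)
Your proof is correct and follows essentially the same approach as the paper: decompose $x'(t)=a\,\partial_r + v$ with $v$ tangent to the sphere of constant radius, use the Gauss lemma to get $|d\e(x')|=\sqrt{a^2+|d\e(v)|^2}\ge |a|=\bigl|\tfrac{d}{dt}|x|\bigr|$, and integrate. Your extra remark about the origin $v=0$ is a nicety the paper omits.
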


\begin{proof}
 For an absolutely continuous path $x$:

  \[ \tmop{length} ( \e \circ x ) = \int |( \e \circ x )' |= \int |d \e ( x' )| \]
  The speed vector $x' =ar+v$ is a linear combination of a multiple
  of the radial vector and a vector $v$ perpendicular to the radial direction.
  By the Gauss lemma, $|d \e ( x' )|  = \sqrt{a^{2} +|d \e (v)|^{2}} \geq |a|$.
  On the other hand, $v$ is tangent to the spheres of constant radius, so:
  \[ V_0^{t_{0}} (|x|)= \int \left|\frac{d}{dt} |x|\right|= \int |a| \leq \tmop{length} ( \e \circ x) \]

\end{proof}

Define $d:M \rightarrow \mathbb{R}$ by:
\[ d(q)= \inf_{x \in e^{-1} (q)} \{|x|\} \]
If we could prove that $e$ is the exponential map of the Riemannian manifold
$M$ at the point $p=e ( 0 )$, it would follow that $d$ is the distance to $p$,
and the following proposition would be trivial.

\begin{prop}
  \label{d is distance-decreasing}$d$ is distance-decreasing. In other words:
  \[ | d(q_{2} )-d(q_{1} ) | \leq d_{M} (q_{1} ,q_{2} ) \]
\end{prop}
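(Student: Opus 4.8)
\textbf{Proof plan for Proposition \ref{d is distance-decreasing}.}

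The plan is to show that $d$ cannot drop too fast along any path in $M$, using only the metric structure already available on $M$ (from $\pi_1$) and the fact that $e_1 = \pi_1 \circ e$ together with Lemma \ref{norm dominated by exp}. First I would fix $q_1, q_2 \in M$ and let $\sigma:[0,\ell]\to M$ be any piecewise-smooth path from $q_1$ to $q_2$, parametrized by arc length in the Riemannian metric of $M$, so that $\mathrm{length}(\sigma)=\ell$ and we may take $\ell$ as close to $d_M(q_1,q_2)$ as we wish. The goal is then to prove $d(q_2)\le d(q_1)+\ell$, and by symmetry $|d(q_2)-d(q_1)|\le\ell$, which gives the statement upon taking the infimum over such $\sigma$.

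The key step is a lifting argument. Pick $x_0\in e_1^{-1}(q_1) = (\pi_1\circ e)^{-1}(q_1)$ realizing (up to an $\varepsilon$) the infimum defining $d(q_1)$, i.e. $|x_0| < d(q_1)+\varepsilon$, and let $\tilde x_0\in X=M_s$ be a point of $M$ (the synthesis) with $e(\tilde x_0)$ lying over $x_0$ — more precisely choose $\tilde x_0 \in X$ with $\pi_1(\tilde x_0)=q_1$ coming from the class $[x_0]$. Since $\pi_1:X\to M_1$ is a local isometry, I would lift $\sigma$ through $\pi_1$ starting at $\tilde x_0$ as far as possible; a priori the lift $\tilde\sigma$ might only be defined on a maximal subinterval $[0,a)$, but that is not an obstruction here because we only need the lift to exist on a set of full measure in order to run the length estimate, and in fact continuity plus completeness of the ambient data lets the estimate be applied on each subinterval where the lift is defined. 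Composing with $e^{-1}$ on suitable charts (or rather: using that $e:A\to X$ and $e_1 = \pi_1\circ e$, and that over $\mathrm{int}(V_1)$ the map $e_1$ is a local diffeomorphism), I obtain an absolutely continuous curve $x(t)$ in $\T = T_{p_1}M_1$ with $x(0)=x_0$ and $e_1(x(t)) = \sigma(t)$, hence $\mathrm{length}(e_1\circ x) = \mathrm{length}(\sigma) = \ell$. Applying Lemma \ref{norm dominated by exp} gives $|x(t)| - |x(0)| < \mathrm{length}(e_1\circ x) = \ell$ on every subinterval where the lift is defined, so in particular $|x(\ell)| \le |x_0| + \ell < d(q_1)+\varepsilon+\ell$. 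Since $e_1(x(\ell)) = q_2$, the point $x(\ell)$ is a competitor in the infimum defining $d(q_2)$, so $d(q_2) \le d(q_1) + \ell + \varepsilon$; letting $\varepsilon\to 0$ and $\ell\to d_M(q_1,q_2)$ finishes one inequality, and swapping the roles of $q_1$ and $q_2$ gives the other.

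The main obstacle I anticipate is the lifting step: making sure the path $\sigma$ in $M$ actually lifts (through $\pi_1$, and then pulls back through $e$ to a genuine absolutely continuous curve in $\T$) for long enough to reach $q_2$, rather than running off the boundary of $V_1$ or hitting a point where $\pi_1$ fails to be a local homeomorphism. Two remedies are available and I would use whichever is cleanest: either observe that Lemma \ref{norm dominated by exp} only needs the inequality on subintervals and that the function $t\mapsto d(\sigma(t))$ is itself continuous (being an infimum of the continuous functions $|x(t)|$ over a locally finite family of local lifts), so one can patch finitely many local lifts together with a Lebesgue-number argument; or, invoke that $\pi_1$ is in fact a covering map — but that is proved later using precisely this proposition, so to avoid circularity I would stick to the patching argument, which uses only that $\pi_1$ is a local isometry and that $M$ is connected. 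A minor point to check is that the curve $x(t)$ produced is absolutely continuous, which follows because on each chart it is the composition of the absolutely continuous $\sigma$ with the smooth local inverse of $e_1$.
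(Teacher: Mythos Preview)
Your overall shape is right --- lift the path to $\T$ and apply Lemma \ref{norm dominated by exp} --- but there is a genuine gap at exactly the point you flag as the ``main obstacle'', and neither of your proposed remedies closes it. The patching argument cannot work using only that $\pi_1$ is a local isometry: the problem is not that you might run out of charts for $\pi_1$, but that the lifted curve $x(t)$ in $\T$ may reach $\partial V_1$, i.e.\ a conjugate point of $e_1$. At such a point $e_1$ is not a local diffeomorphism, so there is simply no local inverse with which to continue the lift, and no Lebesgue-number argument saves you. (There is also a type mismatch in your write-up: $e_1(x(t))$ lies in $M_1$, not in the synthesis $M$, so what you actually want is $e_1(x(t)) = \pi_1(\sigma(t))$.)

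The paper's proof handles this obstruction by a genuinely different mechanism: it first perturbs the path to a generic one, so that its image under $\pi_1$ meets $e_1(\mathcal{C}\cap B_{|x|})$ only transversally and only at finitely many $A_2$ points. On each maximal subinterval where the lift stays in $\NC$ one applies Lemma \ref{norm dominated by exp} as you do. At each $A_2$ point $\beta(t_0^-)$, however, one does not try to continue the lift continuously; instead one invokes the linking structure established earlier (Theorem \ref{claim: hypothesis of the synthesis theorem}) to make a \emph{discrete jump} to a linked point $\beta(t_0^+)\in\NC$ with $|\beta(t_0^+)|<|\beta(t_0^-)|$. The resulting lift $\beta$ is discontinuous, but each jump only decreases radius, so the estimate $|\beta(L)|\le|\beta(0)|+L$ survives. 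This use of linking curves is the missing idea in your plan.
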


\begin{proof}
  We can assume that $q_{1}$ and $q_{2}$ both lie in the same basic open set
  $[O]$. Otherwise, take a smooth path joining $q_{1}$ and $q_{2}$ of length
  at most $d_{M} ( q_{1} ,q_{2} ) + \varepsilon$ and place enough intermediate
  points $q_{i}$. If we prove that $| d ( q_{i} ) -d ( q_{i+1} ) | <d_{M} (
  q_{i} ,q_{i+1} )$, it follows that $| d(q_{2} )-d(q_{1} ) | < \sum d_{M} (
  q_{i} ,q_{i+1} )$, a number that we can assume is less than $d_{M} ( q_{1}
  ,q_{2} ) +2 \varepsilon$. Then we repeat the argument for a sequence of
  $\varepsilon \rightarrow 0$.
  
  Fix a smooth generic path $\alpha :[0,L] \rightarrow [ O ]$ of constant unit
  speed connecting $q_{1}$ and $q_{2}$ in $[ O ] \subset M$, of total distance
  $L<d_{M} (q_{1} ,q_{2} )+ \varepsilon$, and let $x \in \mathcal{I}$ such
  that $x \in e^{-1} (q_{1} )$ and $|x|=d(q_{1} )$ (we can assume that $e^{-1}
  (q_{1} ) \cap B_{R}$ is finite for any $R>0$). We can assume also that $x
  \in O$.
  
  The image of $\alpha$ by $\pi_{1}$ is also generic, and we can assume it
  only intersects $e_{1} ( \mathcal{C} \cap B_{|x|} )$ transversally in a
  finite set of $A_{2}$ points, except possibly at the endpoints, which may be
  $A_{3}$ points. We claim that we can lift $\alpha$ to a curve $\beta :[0,L]
  \rightarrow V_{1}$ (not necessarily continuous) with $\beta (0)=x$.
  
  To begin with, we can lift $\alpha$ to a continuous curve $\beta$ in any
  subinterval ($t_{1} ,t_{2} ) \subset [0,L]$ such that $e_{1}^{-1} ( \pi_{1}
  \circ \alpha ((t_{1} ,t_{2} )) \cap B_{| x |} ) \subset \NC$ (which also
  implies $e^{-1} \alpha ((t_{1} ,t_{2} )) \subset \NC$). In each such
  subinterval, we can apply lemma \ref{norm dominated by exp}, and learn that
  $| \beta (t_{2} )|-| \beta (t_{1} )|\leq t_{2} -t_{1}$. At an $A_{2}$ point
  $\beta (t_{0}^{-}$), we can make a discrete jump to a point $\beta (t_{0}^{+} ) \in
  \NC$ that is linked to $\beta (t_{0}^{-})$) and such that $| \beta(t_{0}^{+} )|<| \beta (t_{0}^{-} )|$. 
  Thus, finally, we obtain a point
  $\beta (L) \in e^{-1} ( \alpha ( L ) ) \cap \mathcal{I}=e^{-1} ( q_{2} )
  \cap \mathcal{I}$ such that $| \beta (L)|\leq | \beta (0)|+L< | x | +d_{M} (
  q_{1} ,q_{2} ) + \varepsilon$. This implies $d(q_{2} )<| \beta (L)|<d(q_{1}
  )+d_{M} ( q_{1} ,q_{2} ) + \varepsilon$. As $\varepsilon$ is arbitrary, the
  proof is completed.
\end{proof}

It follows from the above result that $M$ is complete: let $q_{n}$ be a Cauchy
sequence in $M$. Then there is $R>0$ such that $d ( q_{n} ,q_{1} ) <R$. Thanks
to the above result, we can find $x_{n} \in e^{-1} ( q_{n} ) \cap B_{| q_{1} |
+R}$. As $x_{n}$ is bounded, it has a subsequence that converges to some
$x_{0}$, and then $q_{n} \rightarrow e ( x_{0} )$.

\subsection{Proof of \ref{main properties of linking curves 2}}
\label{proof that linking curves produce linked points}

\diff{
  Let $x$ and $y$ be two points joined by a linking curve.
  We already know that $e_1(x)=e_1(y)$ and $e_2(x)=e_2(y)$, and we need to find neighborhoods $U^x$ and $V^y$ as in the definition of linked.
  We take $U^x$ and $V^y$ to be disjoint neighborhoods of adapted coordinates for $x$ and $y$
  
  Assume $e_1(z)=e_1(w)=q$ for $z\in U^x\cap V_1, w\in V^y\cap V_1$.
  We take a generic path $\rho$ joining $z$ to $x$, then append the linking curve between $x$ and $y$ and then append a ``lift'' of $\alpha=e_1\circ\rho$ as in the previous section, including a linking curve between $\beta (t_{0}^{+} )$ and $\beta (t_{0}^{-})$ whenever there is a jump.
  
  This closes up a curve that satisfies all the properties of a linking curve except for the fact that the first segment is not a descent. However, the proof that $e_1(x)=e_1(y)$ and $e_2(x)=e_2(y)$ still applies.
}

\subsection{Proof for a $3$-manifold with a generic metric}
\label{section: proof for a 3-manifold with a generic metric}

Next, we {\emph{assume that the metric of $M_{1}$ is in
$\mathcal{G}_{M_{1}}$}}, and its {\emph{dimension is $3$}}.

This time, there are points of $\Tone$ with singularities for $ e_{1}$ of types $A_{2}$, $A_{3}$, $A_{4}$, $D_{4}^{+}$ and $D_{4}^{-}$, with the $A_{3}$ points further divided into $A_3(I)$ and $A_3(II)$ points.

Define $\mathcal{I} =( \NC \cup \mathcal{A}_{3}(I)  ) \cap V_{1}$ and $\mathcal{J}=(\mathcal{A}_{2} \cup \mathcal{A}_{3}(II)  \cup \mathcal{A}_{4}   \cup \mathcal{D}_{4}^{\pm} ) \cap V_{1}$. We need to prove theorem \ref{claim: hypothesis of the synthesis theorem}, which in turn reduces to proving the following two results:

\begin{lem} \label{existence of GACDC}
  Let $M$ be a manifold with a Riemannian metric in $\mathcal{G}_{M_{1}}$.

  For any $R>0$ there is $L>0$ such that any GACDC starting at $x \in \mathcal{J} \cap B_{R}$ has length at most $L$, and can be extended until it reaches an $A_{3}$ point ($F$ can be any finite set).
\end{lem}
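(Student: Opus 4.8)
The plan is to follow the scheme of Lemma~\ref{existence of GACDC for easy manifolds}, isolating the two places where the \emph{easy} hypothesis was used and replacing them by a local analysis of the new strata $\mathcal{A}_4$ and $\mathcal{D}_4^{\pm}$. Recall that the easy argument rested on three facts: (i) near an $A_2$ point the conjugate set $\mathcal{C}$ is a smooth hypersurface foliated by CDCs, so a CDC can always be continued a little and, by the transversality of the images of the strata, it can be pushed to a nearby ACDC staying inside $V_1^0\setminus F$ and meeting $\mathcal{SA}_2$ transversally; (ii) the radius is strictly decreasing along an ACDC, so the curve never leaves $B_{|x|}$; and (iii) $\overline{B_R}$ is covered by finitely many patches of adapted coordinates, and inside each patch any ACDC, being a $C^1$ graph over a CDC in the $2$-plane $\mathcal{A}_2$, has length bounded by a constant depending only on the patch. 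Facts (ii) and (iii) are purely local and carry over verbatim; only (i), and the claim that the curve terminates at an $A_3$ point rather than running into another singularity, need to be re-examined.

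First I would treat the strata $\mathcal{A}_3$, $\mathcal{A}_4$ and $\mathcal{D}_4^{\pm}$ separately. Near an $A_3$ point the picture is the one already described in Section~\ref{subsection: CDCs in adapted coords near A3}: $\mathcal{C}$ is a smooth hypersurface, $\mathcal{A}_3$ is a curve inside it, and the two CDCs through an $A_3$ point descend into it when the point is $A_3(I)$ and flow out of it when it is $A_3(II)$; hence an ACDC terminates at $A_3(I)$ points and may be started at $A_3(II)$ points. Near an $A_4$ point the conjugate locus is still an order-$1$ smooth hypersurface --- one reads this off the normal form $\e(x)=(x_1^4+x_1^2x_2+x_1x_3,x_2,x_3,\dots)$, whose conjugate set is $\{x_3=-4x_1^3-2x_1x_2\}$ --- the distribution $D$ of \eqref{1d distribution at conjugate points} is defined at the $A_4$ point itself, and $\mathcal{A}_3$ is a curve through it; since in dimension $3$ the $A_4$ point is isolated, a CDC started at it descends into $\mathcal{A}_2$, and a generic perturbation inside the $2$-dimensional surface $\mathcal{C}$ keeps the resulting ACDC away from that single $A_4$ point, so it eventually meets the $\mathcal{A}_3$ curve. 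Near a $D_4^{\pm}$ point, which is an isolated order-$2$ point where $\mathcal{C}$ is a cone times a cube, the same isolatedness shows that away from an arbitrarily small neighbourhood of the vertex we are back in the order-$1$ regime already handled; what remains is to use the canonical forms for $D_4^{\pm}$ to exhibit a descending curve of $A_2$ points emanating from the vertex (the analogue for $D_4$ of the $A_3(I)/A_3(II)$ discussion, and what makes a GACDC based at a $\mathcal{D}_4^{\pm}$ point possible at all), after which the curve behaves as before.

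With the local picture in place I would reassemble the global argument exactly as in the easy case. For a finite set $F$ and $x\in\mathcal{J}\cap B_R$, local existence produces a GACDC started at $x$; by the transversality of $\e(\mathcal{C}\cap W_1)$ with each stratum of $\e(\mathcal{C}\cap W_2)$ (Proposition~1 on page~215 of~\cite{Buchner Stability}, which as noted holds in any dimension for the low-codimension orbits we need) the curve can be perturbed inside $\mathcal{C}$ so as to remain in $V_1^0$, avoid $F$, avoid the finitely many $A_4$ and $D_4^{\pm}$ points of smaller radius, and meet $\mathcal{SA}_2$ only transversally. Since the radius decreases monotonically, the curve stays in $B_R$; covering $\overline{B_R}$ by finitely many transient patches --- a small cube around the vertex intersected with $V_1$ for $D_4^{\pm}$ points, and the transient sets from the proof of Theorem~\ref{typical sings have transient neighborhoods for easy manifolds} for the other types --- each patch contributes at most a fixed finite length and is visited only finitely often, because the radius cannot return to a value already left behind, so the total length is bounded by some $L=L(R)$. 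As the curve always admits a continuation and has bounded length, it must terminate, and by the local analysis the only obstruction to continuing it is reaching an $A_3$ point. The main obstacle is the second step, and within it the $D_4^{\pm}$ analysis: one must check, from the explicit normal forms, both that a descending $A_2$-curve issues from a $D_4^{\pm}$ vertex and that the transient-patch construction survives the conical singularity of $\mathcal{C}$ there; the $A_4$ case is easier, since $\mathcal{C}$ stays smooth, but still needs the genericity input to dodge the isolated $A_4$ point.
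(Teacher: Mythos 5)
Your overall architecture (localize near each stratum, then reassemble the transient-patch argument from the easy case) is the same as the paper's, and your treatment of $A_4$ points matches it: there the basin of the singular point inside $\mathcal{C}$ is a single curve $H$, so an ACDC can be perturbed around it. The gap is in the $D_4^{\pm}$ case, where you defer exactly the computation that carries the proof and, more importantly, misstate what that computation has to deliver. It is not enough to ``exhibit a descending curve of $A_2$ points emanating from the vertex'' so that a GACDC can be \emph{started} at a $\mathcal{D}_4^{\pm}$ point; the lemma also asserts that \emph{every} GACDC can be continued until it reaches an $A_3$ point, so you must rule out a GACDC being forced into a $D_4^{\pm}$ vertex where it cannot be continued. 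Your proposal handles this by claiming that the finitely many $D_4^{\pm}$ points of smaller radius can be ``avoided by perturbation,'' but that is unjustified and in general false: for a $D_4^{+}$ point of type II (with the cubic $p$ having three real roots) the set of conjugate points whose CDC sinks into the vertex has positive $\mathcal{H}^2$ measure in $\mathcal{C}$, and a perturbation constrained to the cone of amplitude $cA_x^3$ allowed for an ACDC cannot escape such a basin.

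What actually saves the argument, and what the root-counting computations of Section \ref{section: CDCs in adapted coords} are designed to prove, is a statement about the \emph{first} conjugate locus: using that the radial vector at the vertex lies inside the solid cone $r_1r_2-r_3^2>0$ (resp.\ $-r_1^2-r_2^2+r_3^2>0$), one locates the finitely many directions in which $D$ is aligned with the generatrices and shows that on the half-cone of first conjugate points the descending flow always moves \emph{away} from the vertex, so that $D_4^{\pm}$ points are never sinks of CDCs issuing from points of $V_1$; the large basins of attraction described above consist entirely of \emph{second} conjugate points, which a GACDC (being contained in $\mathcal{C}\cap V_1^0$) never meets. This sign analysis is the substance of the lemma and cannot be waved through as a routine normal-form check; without it the claim that the only terminal points of a GACDC are $A_3$ points is unsupported.
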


\begin{theorem}
  \label{typical sings have transient neighborhoods}
  For any point $x$ of type NC, $A_{2}$, $A_{3}$, $A_{4}$, $D_{4}^{+}$ or $D_{4}^{-}$,
  there is a positive transient pair $(S,O)$, with $x \in S$.
\end{theorem}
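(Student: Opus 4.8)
The plan is to prove Theorem~\ref{typical sings have transient neighborhoods} by separating the six possible types of $x$ and, for the types NC, $A_2$ and $A_3$, copying verbatim the corresponding parts of the proof of Theorem~\ref{typical sings have transient neighborhoods for easy manifolds}: those arguments used only the local normal form near the singularity together with the transversality of images of strata built into $\mathcal{G}_{M_1}$, and nowhere used the \emph{easy} hypothesis. Since $\dim M_1=3$, the strata $\mathcal{A}_4$ and $\mathcal{D}_4^{\pm}$ are $0$-dimensional, so a small cube $O$ of adapted coordinates around such an $x$ contains no other point of the same type, and $\e(O\cap\mathcal{C})$ meets the image of every other stratum transversally. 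What remains is therefore to produce a positive transient pair $(S,O)$ at each $A_4$ point and each $D_4^{\pm}$ point, and --- for Lemma~\ref{existence of GACDC} --- to establish local existence and continuation of GACDCs through (and out of) neighborhoods of these points; the length bound and the termination of the Descent/Retort/Reprise algorithm then follow exactly as in Lemma~\ref{existence of GACDC for easy manifolds} and in the easy-manifold argument, by covering $\{|v|<|x|\}$ (resp.\ $\overline{B_R}$) by finitely many transient pairs with a common positive gain.

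At an $A_4$ point I would work in the adapted coordinates of section~\ref{section: generic metric}, where $\e$ reads $(x_1,x_2,x_3)\mapsto(x_1^4+x_1^2x_2+x_1x_3,\,x_2,\,x_3)$. In these coordinates one computes that $\mathcal{C}$ is the smooth surface $\{\,4x_1^3+2x_1x_2+x_3=0\,\}$, that $\ker d\e=\langle\partial/\partial x_1\rangle$ along it (so $x$ is order~$1$), and that $r\notin\ker d\e$ by Gauss' lemma, so the distribution $D$ of~\eqref{1d distribution at conjugate points} is defined on all of $\mathcal{C}$ near $x$, including at $x$ itself; the $A_3$ points form a curve inside $\mathcal{C}$ (where $\partial^2F_1/\partial x_1^2$ also vanishes), of which $x$ is the only $A_4$ point. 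Extending $D$ to a neighborhood by the radial flow as in section~\ref{subsection: CDCs in adapted coords near A3}, I would run the same ``the level set $\{R=R(x)\}$ cannot become vertical'' argument (figure~\ref{figure: near an A3 point}) to conclude that the integral curve of $D$ through $x$ leaves $x$ with strictly decreasing radius, i.e.\ is a CDC with endpoint $x$; after perturbing it to a GACDC dodging the prescribed finite set $F$ and the sets $\e^{-1}(\e(S))$ for the finitely many lower strata $S$ (crossing the $\mathcal{A}_2$ strata transversally), the flow either sinks transversally into the $A_3$ curve --- where the algorithm replies in one step --- or leaves $O$ having lost at least a fixed $\delta>0$ of radius. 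Shrinking $O$ to $S$ as in the $A_2$ case of Theorem~\ref{typical sings have transient neighborhoods for easy manifolds} and invoking the unbeatability estimate (Lemma~\ref{slack lemma}) gives a positive transient pair.

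The genuinely new case is $D_4^{\pm}$. From the canonical forms of section~\ref{section: generic metric}, $\ker d\e$ has dimension $2$ exactly at $x$ and $\mathcal{C}$ is a cone (the vertex being the $D_4$ point), so $x$ is an isolated order-$2$ point and $\mathcal{C}\setminus\{x\}$ consists near $x$ of order-$1$ points stratified by $A_2$ and $A_3$ curves; thus $D$ is defined on $\mathcal{C}\setminus\{x\}$. The crux is to show that --- just as an $A_3(II)$ point emits two CDCs into $\mathcal{C}$ --- the $D_4^{\pm}$ point emits a CDC into the order-$1$ part of $\mathcal{C}$ with strictly decreasing radius. I would do this by writing the spheres $\{R=\mathrm{const}\}$ in the adapted coordinates of the $D_4^{\pm}$ normal form, using that $\ker d\e$ (hence $T_y\{R=R(y)\}$ for $y\in\mathcal{C}$) is pinned down by Gauss' lemma, and running a curvature/transversality argument analogous to figure~\ref{figure: near an A3 point} to rule out the conjugate flow being trapped on a sphere of radius $\ge R(x)$; a sub-case split of $D_4^+$ analogous to the $A_3(I)/A_3(II)$ split may be required, but since both of $D_4^{\pm}$ lie in $\mathcal{J}$ and we only need \emph{some} emanating CDC, the conclusion is the same --- a GACDC from $x$ either reaches an $A_3$ point or leaves $O$ with a uniform radius loss --- so $(S,O)$ is a positive transient pair and Lemma~\ref{existence of GACDC} holds. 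With Theorem~\ref{typical sings have transient neighborhoods} and Lemma~\ref{existence of GACDC} in hand, the remainder (every $x\in\mathcal{J}$ is linked to a point of $\mathcal{I}$, hence Theorem~\ref{claim: hypothesis of the synthesis theorem}, hence the synthesis via Proposition~\ref{synthesis: statement} and Theorem~\ref{main theorem ambrose}) is word for word the easy-case argument. The main obstacle I anticipate is precisely this $D_4^{\pm}$ analysis: unlike $A_3$, the order-$2$ point is a genuine singularity of the distribution $D$, and one must control its degeneration in coordinates in which radial geodesics are no longer straight --- exactly the finer classification the text flags as not yet available in closed form.
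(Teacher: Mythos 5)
Your overall architecture coincides with the paper's: NC, $A_2$ and $A_3$ are handled verbatim as in the easy case, and the new content is the local analysis of the conjugate descending flow near $A_4$ and $D_4^{\pm}$ points in adapted coordinates. However, there are two places where your argument does not go through as written. In the $A_4$ case, when a GACDC from the $A_3$-side of the neighborhood sinks into the branch of $A_3(I)$ points it is \emph{not} true that ``the algorithm replies in one step'': the image of the neighborhood is a swallowtail, and the retort started at the $A_3$ join is interrupted when $\e\circ\beta$ reaches the self-intersection stratum of $\e(\mathcal{C})$, i.e.\ it hits an $A_2$ point; one must then append a further CDC from that hit and check that \emph{it} leaves $O$ (this is exactly what figure \ref{figure: linking curves near A4 points} depicts). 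Without this extra step the aspirant curve has not been shown to exit $O$, so transience is not established. This is repairable, but it is a real step you skipped.

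The more serious gap is in $D_4^{\pm}$. For the transient property the decisive question is not whether \emph{some} CDC emanates from the vertex (that only serves the descent step at the vertex itself), but whether CDCs starting at nearby \emph{first-conjugate} $A_2$ points can sink \emph{into} the vertex: if one does, the descent terminates at an order-$2$ point where no $A_3$ join is available and the algorithm stalls inside $O$. Your proposed ``the level set $\{R=R(x)\}$ cannot become vertical'' argument is borrowed from the $A_3$ analysis, where $D$ extends to a smooth line field and the only issue is the direction of flow along one smooth integral curve; at a $D_4^{\pm}$ point the kernel is two-dimensional, $\mathcal{C}$ is a cone, and $D$ has a genuine singularity at the vertex, so that argument says nothing about the phase portrait of $D$ on the punctured cone. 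The paper settles this by an explicit computation (the determinant $d(x)$, the roots of a cubic in each sector between the $A_3$ generatrices, and a case split on the chamber of the radial vector), whose conclusion is that the CDCs sinking into the vertex are either contained in the nappe of \emph{second} conjugate points (all three for $D_4^-$, and the positive-measure family in the three-root subcase of $D_4^+$) or form a single curve that a GACDC can be perturbed to miss. The restriction to $V_1$ --- the fact that the dangerous sinking CDCs live among second conjugate points --- is the key structural point here, and it is absent from your proposal; you correctly flag the $D_4^{\pm}$ analysis as the obstacle, but you do not supply the argument that closes it.
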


The rest of the proof for easy manifolds work verbatim, so we devote the next sections to proving these two results.

\subsection{CDCs in adapted coordinates\label{section: CDCs in adapted coords}}

As we mentioned in section \ref{section: generic metric}, the radial vector
field, and the spheres of constant radius of $\T$, which have very simple
expressions in standard linear coordinates in $\T$, are distorted in canonical
coordinates. Thus, the distribution $D$ and the CDCs do not always have the
same expression in adapted coordinates. In this section, we study them qualitatively.
We will use the name $R: \T \rightarrow \mathbb{R}$ for the
radius function, and $r$ for the radial vector field, and we assume that our
conjugate point is a first conjugate point (it lies in $\partial V_{1}$).

\subsubsection{$A_{4}$ points}

In a neighborhood $O$ of an $A_{4}$ point, $\Tone$ can be stratifed as an
isolated $A_{4}$ point, inside a stratum of dimension $1$ of $A_{3}$ points, inside a smooth surface consisting otherwise on $A_{2}$ points.
The conjugate points are given by $4x_{1}^{3} +2x_{1} x_{2}
+x_{3} =0$, and the $A_{3}$ points are given by the additional equation $12x_{
1}^{2} +2x_{2} =0$. The kernel is generated by the vector
$\frac{\partial}{\partial x_{1}}$ at any conjugate point and we can
assume that $D$ is close to $\frac{\partial}{\partial x_{1}}$ in $O \cap
\mathcal{C}$.

We do not know precisely where the radial vector is, but the distribution $D$
is a smooth line distribution and its integral curves are smooth. Thus, the
$A_{4}$ point belongs to exactly one integral curve of $D$.

\begin{figure}[ht]
 \centering
 \includegraphics[width=0.8\textwidth]{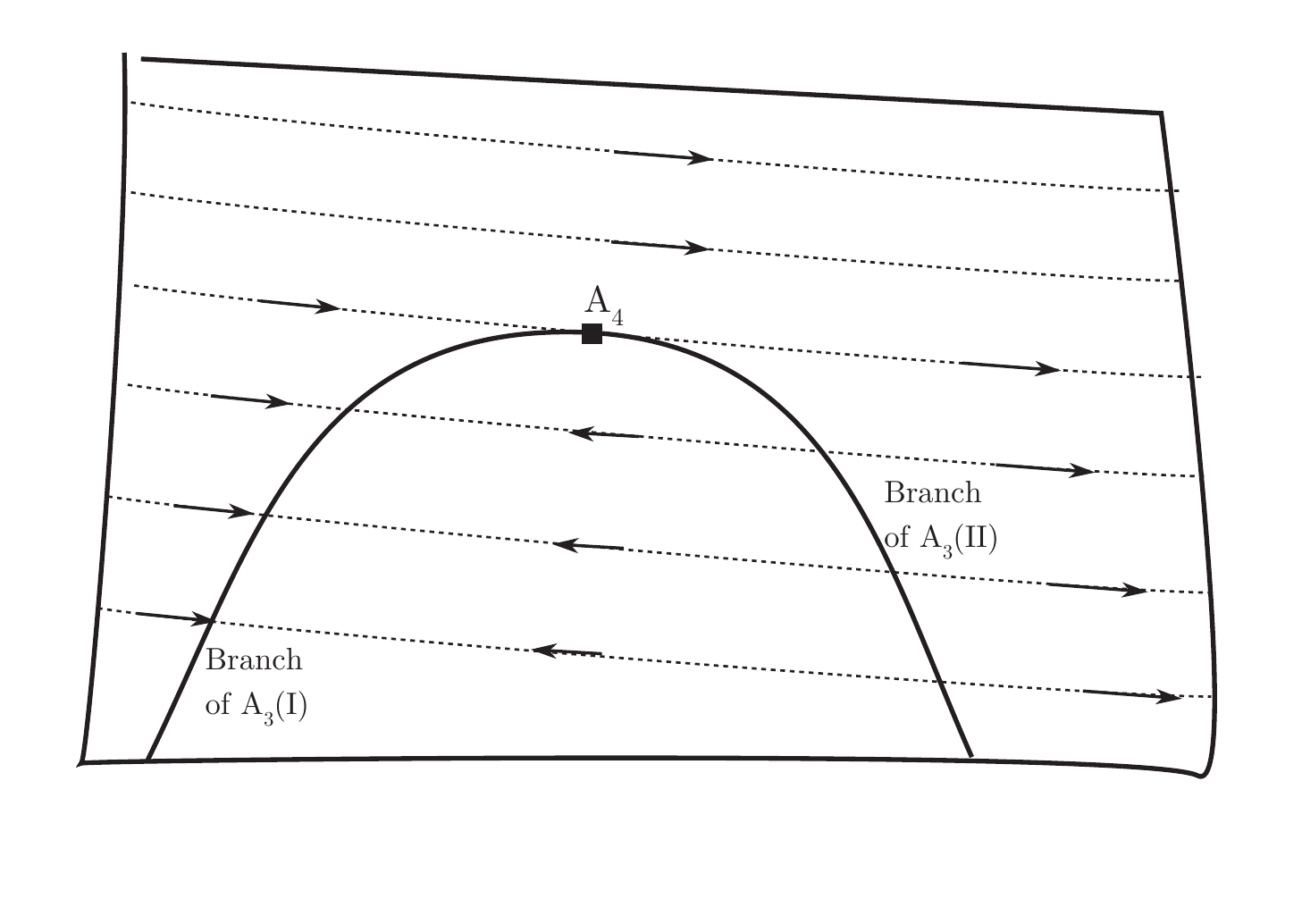}
 \caption{The
distribution $D$ and the CDCs at the conjugate points near an $A_{4}$
point.}
 \label{figure: near an A4 point}
\end{figure}

As we saw, $A_3(I)$ (resp $A_3(II)$) points have neighborhoods without $A_3(II)$
(resp $A_3(I)$) points. The $A_{4}$ point splits $\mathcal{A}_{3}$ into two
branches, and it can be shown easily that they must be of different types.
Composing with the coordinate change ($x_{1} ,x_{2} ,x_{3} ) \rightarrow
(-x_{1} ,x_{2} ,x_{3}$) if necessary, we can assume that the CDCs travel in
the directions shown in figure \ref{figure: near an A4 point}.

\subsubsection{$D_{4}^{-}$ points}

In a neighborhood $O$ of adapted coordinates near a $D_{4}^{-}$ point,
$\mathcal{C}$ is a cone given by the equations $0=-x_{1}^{2} -x_{2}^{2}
+x_{3}^{2}$. The kernel of $d  e_{1}$ at the origin is the plane $x_{3} =0$,
which intersects this cone only at ($0, \ldots ,0$). Three generatrices of the
cone consist of $A_{3}$ points (they are given by the equations $x_{2} =0$,
$x_{1} -x_{3} =0$ and $2x_{1} +x_{3}=0$, plus the equation of the cone), and the
rest of the points are $A_{2}$.

The radial vector field ($r_{1} ,r_{2} ,r_{3}$) at the origin must lie within
the solid cone $-r_{1}^{2} -r_{2}^{2} +r_{3}^{2} >0$, because the number of
conjugate points (counting multiplicities) in a radial line through a point
close to ($0,0,0$), must be $2$. In particular, $|r_{3} |>0$. Composing with
the coordinate change ($x_{1} ,x_{2} ,x_{3} ) \rightarrow (-x_{1} ,-x_{2}
,-x_{3}$) to the left and ($x_{1} ,x_{2} ,x_{3} ) \rightarrow (x_{1} ,x_{2}
,-x_{3}$) to the right, if necessary, we can assume that $r_{3} >0$.

The kernel at the origin is contained in the tangent to the hypersurface
$T_{0} = \{ R ( y ) =R ( 0 ) \}$, and the radius always decreases along a CDC.
Thus a CDC starting at a first conjugate point moves away from the origin and
may either hit an $A_{3}$ point, or leave the neighborhood. Thus these points
are not sinks of CDCs starting at points in $V_{1}$.

We now claim that there are three CDCs that start at any $D_{4}^{-}$ point and
flow out of $O$, and three CDCs that flow into any $D_{4}^{-}$ point, but the
latter ones are contained in the set of second conjugate points.

Recall that the $D_{4}^{-}$ point is the origin. We write the radial vector as
its value at the origin plus a first order perturbation:
\[ r=r^{0} +P(x) \]
with $|P(x)|<C|x|$ for some constant $C$.

We will consider angles and norms in $O$ measured in the adapted coordinates
in order to derive some qualitative behaviour, even though these quantities do
not have any intrinsic meaning.

We can measure the angle between a generatrix $G$ and $D$ by the determinant
of a vector in the direction of $G$, the radial vector $r$ and the kernel $k$
of $ e_{1}$: the determinant is zero if and only if the angle is zero. The
angle between $k$ and $r$ in this coordinate system is bounded from below, and
the norm of $r$ is bounded close to $1$. Thus if we use unit vectors that span
$G$ and $k$, we get a number $d(x)$ that is comparable to the sine of the
angle between $G$ and the plane spanned by $r$ and $k$. Thus $c|d(x)|$ is a
bound from below to $| \sin ( \alpha )|$, where $\alpha$ is the angle between
$G$ and $D$, for some $c>0$.

The kernel is spanned by $(-x_{1} +x_{3} ,x_{2} ,0)$ if $-x_{1} +x_{3} \neq
0$. The generatrix of $C$ at a point $(x_{1} ,x_{2} ,x_{3} ) \in C$ is the
line through $(x_{1} ,x_{2} ,x_{3} )$ and the origin. So $d$ is computed as
follows:
\[ d(x)= \frac{1}{x_{1}^{2} +x_{2}^{2} +x_{3}^{2}} \left|\begin{array}{ccc}
     x_{1} & x_{2} & x_{3}\\
     -x_{1} +x_{3} & x_{2} & 0\\
     r_{1} & r_{2} & r_{3}
   \end{array}\right| \]

Let us look for the roots of the lower order ($0$-th order) approximation:
\[ d_{0} (x)= \frac{1}{x_{1}^{2} +x_{2}^{2} +x_{3}^{2}}
   \left|\begin{array}{ccc}
     x_{1} & x_{2} & x_{3}\\
     -x_{1} +x_{3} & x_{2} & 0\\
     r^{0}_{1} & r^{0}_{2} & r^{0}_{3}
   \end{array}\right| \]

where $(r^{0}_{1} ,r^{0}_{3} ,r^{0}_{3} )$ are the coordinates of $r^{0}$.

The equation $\left|\begin{array}{ccc}
  x_{1} & x_{2} & x_{3}\\
  -x_{1} +x_{3} & x_{2} & 0\\
  r^{0}_{1} & r^{0}_{2} & r^{0}_{3}
\end{array}\right| =0$ is homogeneous in the variables $x_{1}$, $x_{2}$ and
$x_{3}$, so we can make the substitution $-x_{1} +x_{3} =1$ in order to study
its solutions. We only miss the direction $\lambda (1,0,1)$, where $D$ is not
aligned with $G$ because it consists of $A_{3}$ points.

Points in $C$ now satisfy $1+2x_{1} -x_{2}^{2} =0$, and $d_{0} (x)$ becomes
$p(x_{2} )=- \frac{1}{2}   \hspace{0.25em} ( a - 1 )  x_{2}^{3}  + 
\frac{1}{2}   \hspace{0.25em} b x_{2}^{2}  -  \frac{1}{2}   \hspace{0.25em} (
a + 3 )  x_{2}  +  \frac{1}{2}   \hspace{0.25em} b$, for $a=
\frac{r^{0}_{1}}{r^{0}_{3}}$ and $b= \frac{r^{0}_{2}}{r^{0}_{3}}$ (recall
$r^{0}_{3} >0$). The lines of $A_{3}$ points correspond to $x_{2} =
\frac{-1}{\sqrt{3}}$, $x_{2} = \frac{1}{\sqrt{3}}$, and the third line lies at
$\infty$. We prove that $p$ has three different roots, one in each interval:
($- \infty , \frac{-1}{\sqrt{3}}$), ($\frac{-1}{\sqrt{3}} ,
\frac{1}{\sqrt{3}}$), ($\frac{1}{\sqrt{3}} , \infty$). This follows
inmediately if we prove $\lim_{x_{2} \rightarrow - \infty} p(x_{2} )=-
\infty$, $p( \frac{-1}{\sqrt{3}} )>0$, $p( \frac{1}{\sqrt{3}} )<0$ and
$\lim_{x_{2} \rightarrow \infty} p(x_{2} )= \infty$ for all $a$ and $b$ such
that $a^{2} +b^{2} <1$. The first and last one are obvious, so let us look at
the second one. The minimum of
\[ p( \frac{-1}{\sqrt{3}} )= \frac{2 \sqrt{3}}{9} a+ \frac{2}{3} b+ \frac{4
   \sqrt{3}}{9} \]
in the circle $a^{2} +b^{2} \leqslant 1$ can be found using Lagrange
multipliers: it is exactly $0$ and is attained only at the boundary $a^{2}
+b^{2} =1$. The third inequality is analogous.

Thus, there is exactly one direction where $D$ is aligned with $G$ en each
sector between two lines of $A_{3}$ points. Take polar coordinates ($\phi ,r$)
in $C \cap V_{1}$. The roots of $d_{0}$ are transversal, and thus if
$\phi_{0}$ corresponds to a root of $_{} d_{0}$, then at a line in direction
$\phi$ close to $\phi_{0}$, the angle between $D$ and $G$ is at least $c( \phi
- \phi_{0} )+ \eta ( \phi ,r$), for $c>0$ and $\eta ( \phi ,r)=o(r$). If, at a
point in the line with angle $\phi$, and sufficiently small $r>0$, we move
upwards in the direction of $D$ (in the direction of increasing radius), we
hit the line of $A_{3}$ points, not the center. There are two CDCs starting at
each side of every $A_{3}$ point. A continuity argument shows that there must
be one CDC in each sector that starts at the origin (see figure \ref{figure:
near elliptic umbilic}).

\begin{figure} 
\includegraphics[width=0.8\textwidth]{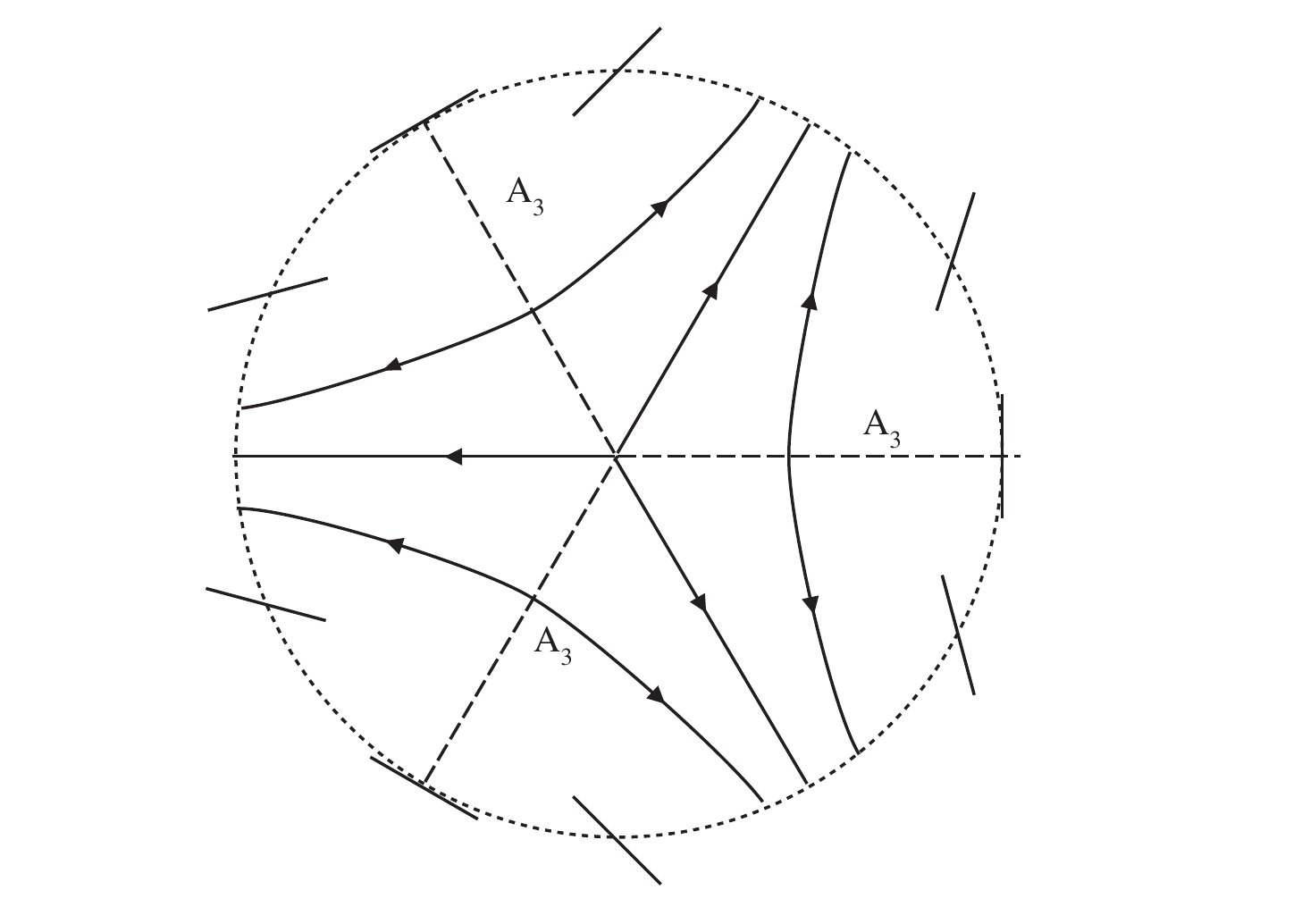}
 \caption{CDCs in the half-cone of first conjugate points near an
elliptic umbilic point, using the chart
$(x_{1} ,x_{2} ) \rightarrow (x_{1} ,x_{2} ,- \sqrt{x_{1}^{2} +x_{2}^{2}} )$,
for $r_{0} =(0,0,1)$.
The distribution $D$ makes half turn as we make a full turn around $x_{1}^{2}
+x_{2}^{2} =1$, spinning in
the opposite direction.}
 \label{figure: near elliptic umbilic}
\end{figure}

Reversing the argument, we see that there are three CDCs that descend into the
elliptic umbilic point, one in each sector, all contained in the the set of
second conjugate points.

\subsubsection{$D_{4}^{+}$ points}

The conjugate points in a neighborhood of adapted coordinates lie in the
cone $C$ given by $0=x_{1} x_{2} -x_{3}^{2} = \frac{1}{4} (x_{1} +x_{2} )^{2}
- \frac{1}{4} (x_{1} -x_{2} )^{2} -x_{3}^{2}$. This time, the kernel of $d \e$
at the origin intersects this cone in two lines through the origin, and the
inside of the cone $x_{1} x_{2} -x_{3}^{2} >0$ is split into two parts. There
is one line of $A_{3}$ points, the generatrix of the cone with parametric
equations: $t \rightarrow (t,t,t)$.

The radial vector at $r=(r_{1} ,r_{2} ,r_{3} )$ must lie within the solid cone
$r_{1} r_{2} -r_{3}^{2} >0$, for the same reason as above. Composing with the
coordinate change $(x_{1} ,x_{2} ,x_{3} ) \rightarrow (-x_{1} ,-x_{2} ,-x_{3}
)$ to the left and $(x_{1} ,x_{2} ,x_{3} ) \rightarrow (x_{1} ,x_{2} ,-x_{3}
)$ to the right, if necessary, we can assume that $r_{1} >0$ and $r_{2} >0$.

We write the radial vector as its value at the origin plus a first order
perturbation:
\[ r=r^{0} +P(x) \]
with $|P(x)|<C|x|$ for some constant $C$.

As before, the radius decreases along a CDC, but this time, a CDC starting at
a first conjugate point might end up at the origin. Let $F$ be the half cone
of first conjugate points (given by the equations $x_{1} x_{2} =x_{3}^2$ and
$\frac{1}{2} (x_{1} +x_{2} ) <0$). Let $F_{+}$ be the points of $F$ with
radius greater than the origin. Its tangent cone at the origin is $F \cap
\{x_{3} <0\}$ or $F \cap \{x_{3} >0\}$, depending on the sign of the third
coordinate of $r_{0}$.

As in the previous case, we can measure the angle between a generatrix $G$ and
$D$ by the determinant of a vector in the direction of $G$, the radial vector
$r$ and the kernel $k$ of $ e_{1}$. This time, the kernel is spanned by
$(-x_{3} ,x_{1} ,0)$ in the chart $x_{1} \neq 0$.
$$ d(x)= \frac{1}{x_{1}^{2} +x_{2}^{2} +x_{3}^{2}} \left|\begin{array}{ccc}
     x_{1} & x_{2} & x_{3}\\
     -x_{3} & x_{1} & 0\\
     r_{1} & r_{2} & r_{3}
   \end{array}\right|
$$

Again, we look for the roots of the lower order ($0$-th order) approximation,
which is equivalent to looking for the zeros of:
\[ \tilde{d} (x)= \left|\begin{array}{ccc}
     x_{1} & x_{2} & x_{3}\\
     -x_{3} & x_{1} & 0\\
     a & b & 1
   \end{array}\right| \]
in the cone $C$, for $a= \frac{r^{0}_{1}}{r^{0}_{3}}   \tmop{and}   b=
\frac{r^{0}_{2}}{r^{0}_{3}}$. We can make the substitution $x_{1} =-1$ in
order to study the zeros of the polynomial (we choose $x_{1} <0$ because we
are interested in the half cone of first conjugate points). This implies
$x_{2} =-x_{3}^{2}$ for a point in $C$, and we are left with $p(x_{3}
)=-x_{3}^{3} -bx_{3}^{2} +ax_{3} +1=0$. If $b^{2} +3a>0$, $p$ has two critical
points $\frac{-b \pm \sqrt{b^{2} +3a}}{3}$, otherwise it is monotone
decreasing. But even when $p$ has two critical points, the local maximum may
be negative, or the local minimum positive, with one real root.

The vector $r^{0}$ must satisfy $r_{3}^{0} \neq 0$ and $x_{1} x_{2} -x_{3}^{2}
>0$, or $ab>1$. There are two {\emph{chambers}} for $r^{0}$: $r_{3}^{0} >0$
and $r_{3}^{0} <0$. We will say that a $D_{4}^{+}$ point such that $r_{3}^{0}
>0$ (resp, $r_{3}^{0} <0$) is of type I (resp, type II).

If $r_{3}^{0} >0$ (or $a,b>0$), then $r^{0}$ and $L \cap F$ lie at opposite
sides of the kernel of $d  e_{1}$ at the origin. The cubic polynomial $p$ has
limit $\mp \infty$ at $\pm \infty$, \ and $p(0)>0$. The line of $A_{3}$ points
intersects $x_{1} =-1$ at $x_{3} =-1$. We check that $p(x_{3} =-1)=2-a-b$ is
always negative in the region $a>0$, $b>0$, $ab>1$. Thus there is exactly one
positive root, and two negative ones, one at each side of the line of $A_{3}$
points. This correponds to the top right picture in figure \ref{figure:
hyperbolic umbilic}, where the $x_{3}$ axis is vertical, and the CDCs descend,
because $r_{3}^{0} >0$.

\begin{figure}
\resizebox{\textwidth}{!}{
  \begin{tabular}{ll}
    \includegraphics{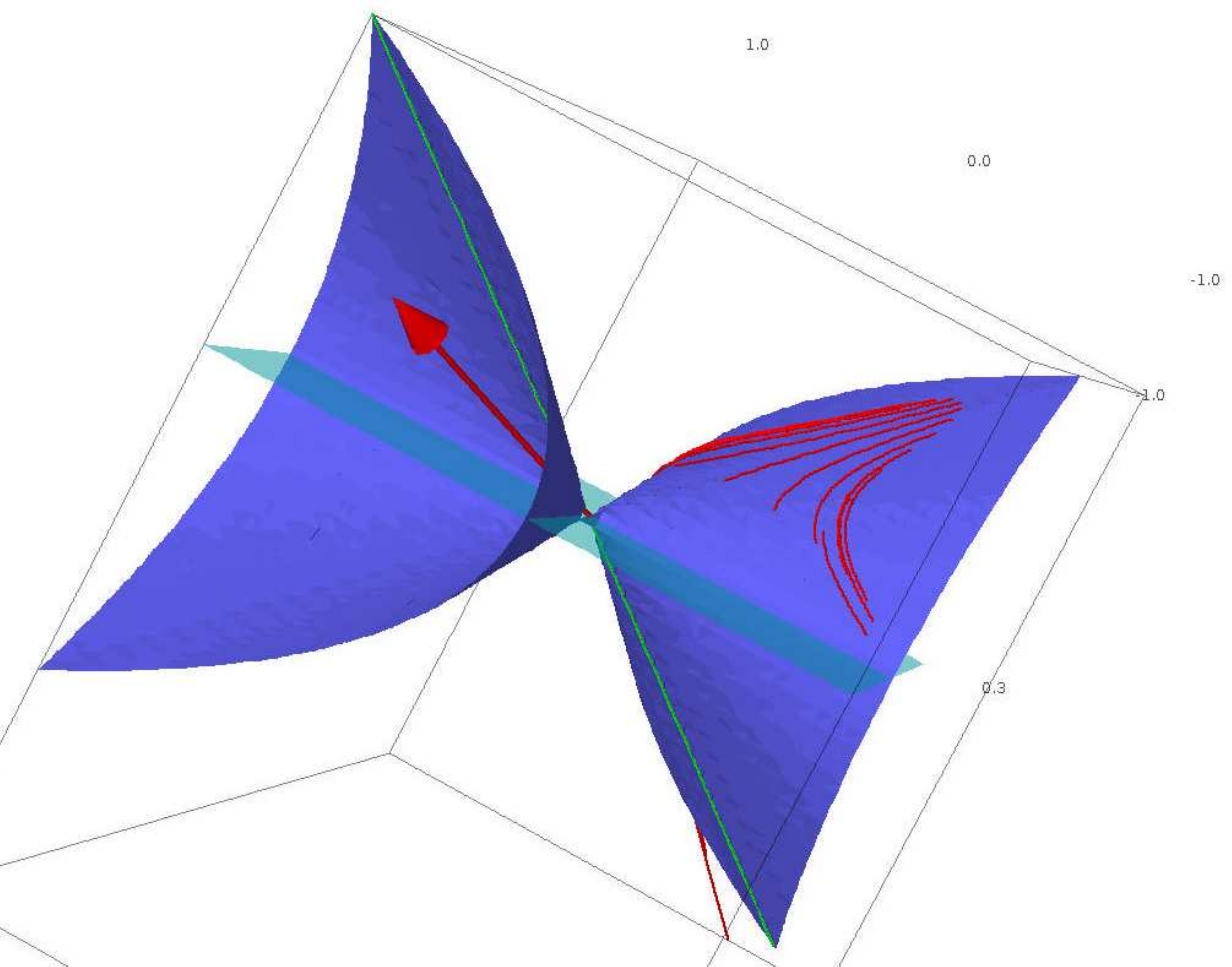} &
    \includegraphics{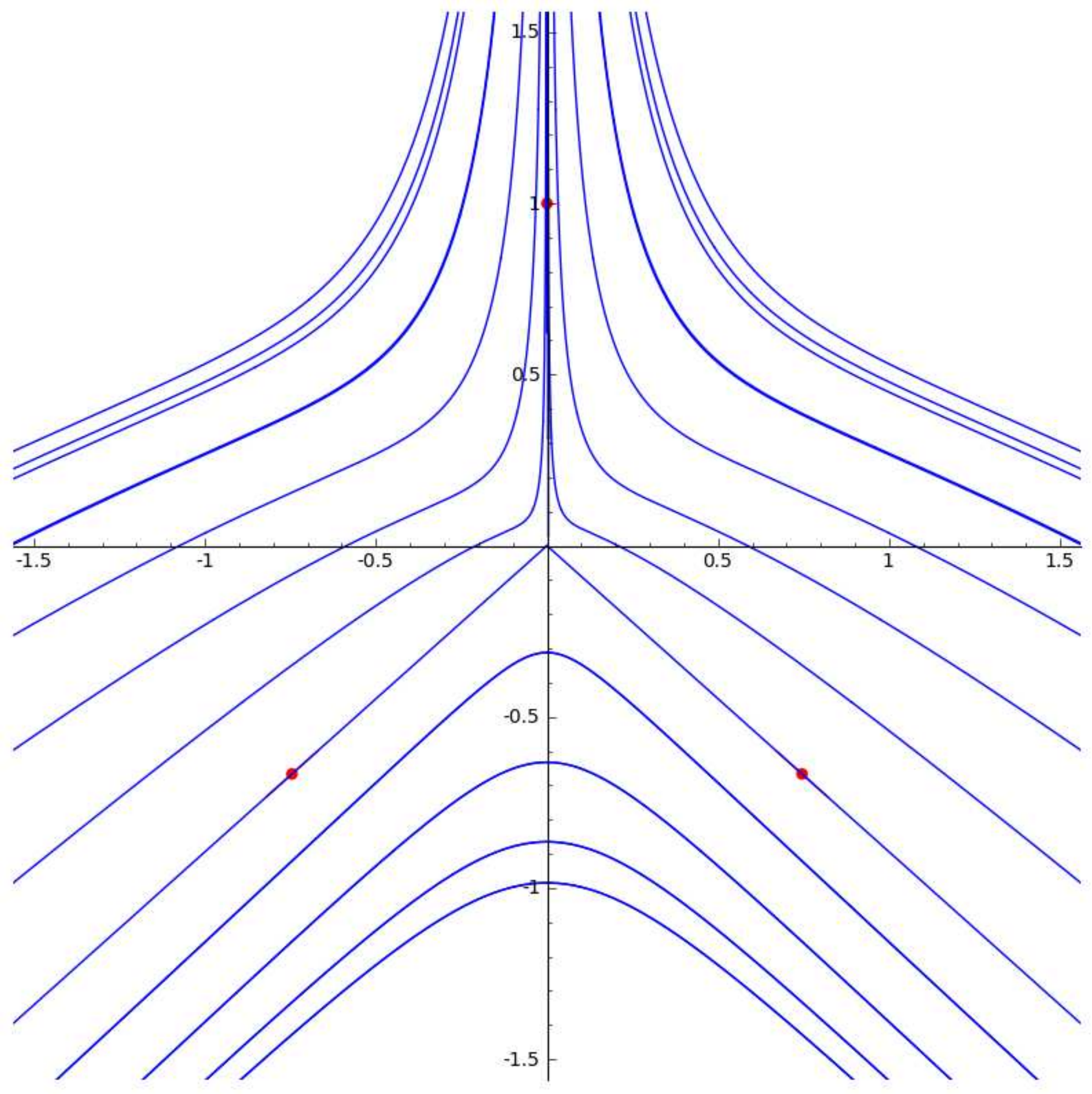}\\
    & \\
    \includegraphics{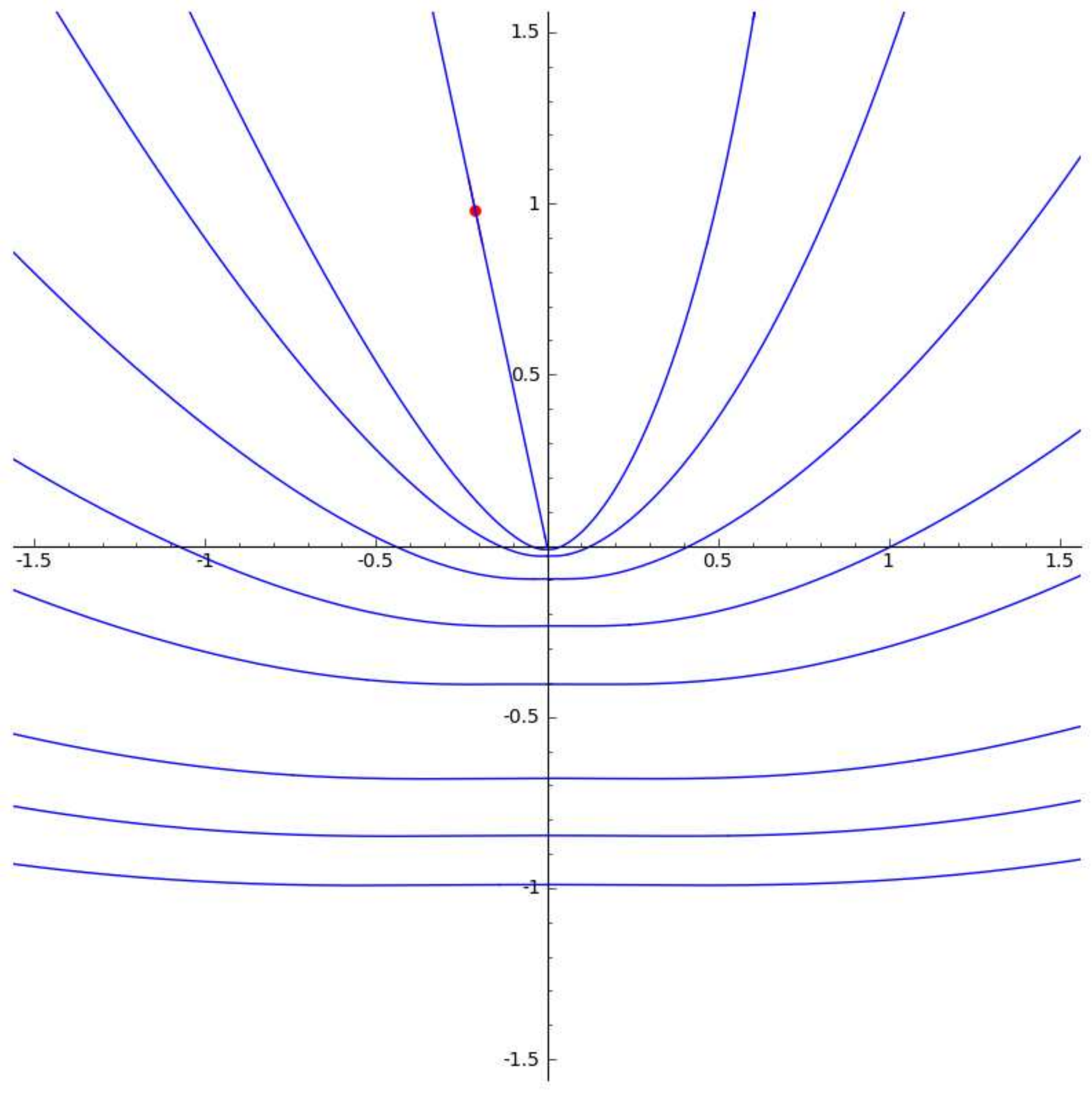} &
    \includegraphics{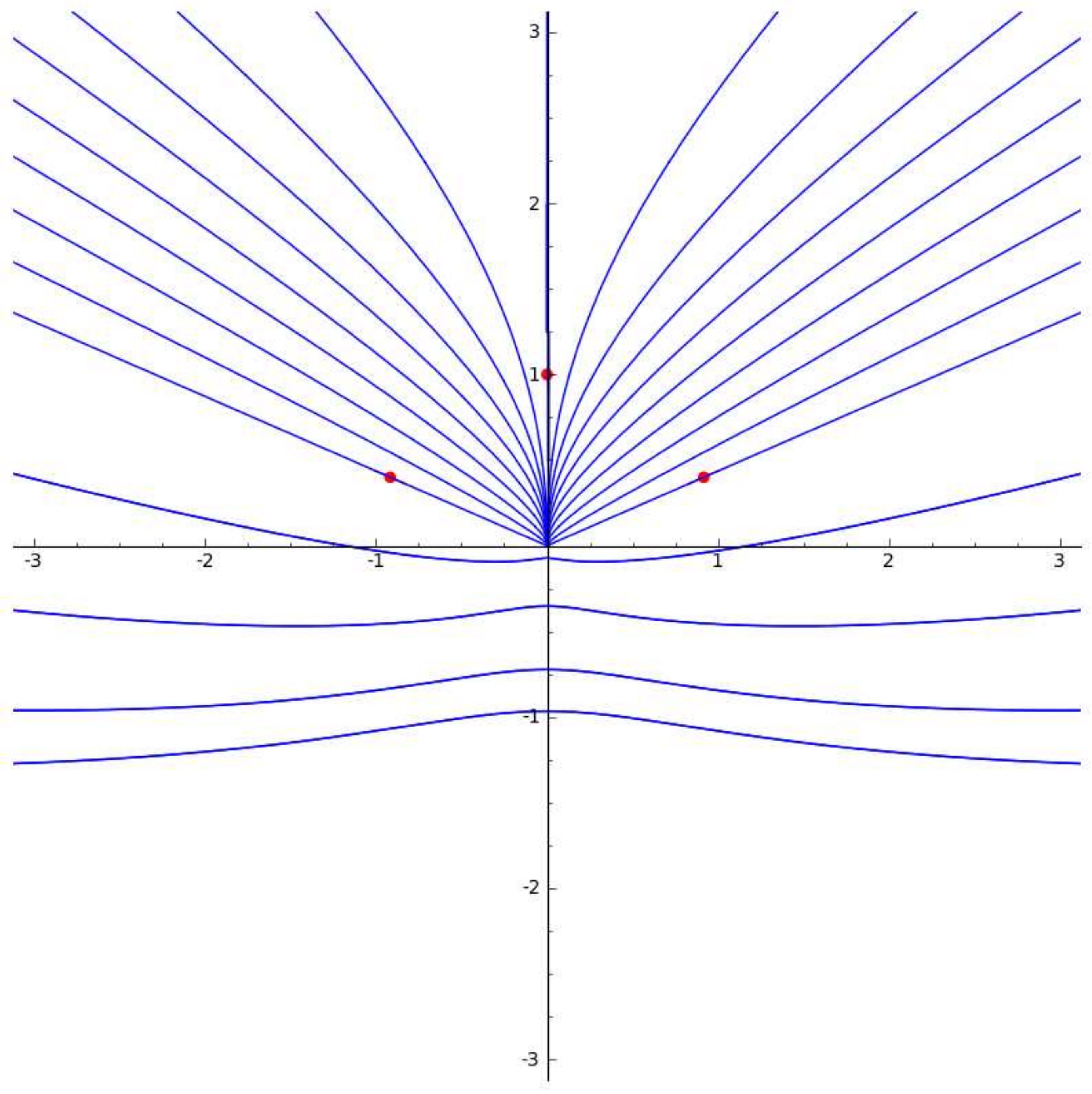}
  \end{tabular}}
  \caption{An hyperbolic umbilic point.}
  \label{figure: hyperbolic umbilic}
\begin{minipage}[b]{0.9\linewidth}  
\mbox{}\\[-.5\baselineskip] 
\paragraph{Explanation of figure \ref{figure: hyperbolic umbilic}}
 In the \strong{TopLeft} corner, the cone $C$ appears in blue, the line of
$A_{3}$ points in green,
  the radial vector at the origin in red, and the CDCs in red.

  The other pictures show the CDCs in the parametrization of the half cone of
  first conjugate points, obtained by projecting onto the plane spanned by
  $(1,-1,0)$ and $(0,0,1)$. The red dots indicate the directions where $D$ is
  parallel to the generatrix of the cone. The $A_{3}$ points lie in the half
  vertical line with $x_{3} <0$.
  \begin{description}
   \item[TopRight] $a>0$, $b>0$.
   \item[BottomLeft]$a<0$, $b<0$, $p$ has only one real root.
   \item[BottomRight]$a<0$, $b<0$, $p$ has three distinct real roots.
  \end{description}
\end{minipage}  
\end{figure}

The positive root gives a direction that is tangent to a CDC that enters into
the $D^{+}_{4}$ point, but moving to a nearby point we find CDCs that miss the
origin, and approach either of the two CDCs that depart from the origin,
corresponding to the negative roots of $p$.

However, if $r_{3}^{0} <0$ (type II), $p$ may have one or three roots.
We revert the direction of the CDC taking $p(z)=p(-x_3)$.
We note that $p(0)=1>0$, and $p' (z)>0$ for $z>0$, $a<0$ and $b<0$,
so there cannot be any positive root. A CDC starting at a point in $F$ flows
away from the stratum of $A_{3}$ points and out of the neighborhood (see the
bottom pictures at figure \ref{figure: hyperbolic umbilic}). It can be checked
by example that both possiblities do occur.

We want to remark that if there are three roots, the $D_{4}^{+}$ point is the
endpoint of the CDCs starting at any point in a set of positive
$\mathcal{\mathcal{H}}^{2}$ measure. Fortunately, all these points are second
conjugate points. This is the main reason why we build the synthesis as a
quotient of $V_{1}$ rather than all of $\T$ but more important: this is a hint
of the kind of complications we might find in arbitrary dimension, or for an
arbitrary metric, where we cannot list the normal forms and study each
possible singularity separately.

\begin{remark}
  In order to find out the number of real roots of $p$, for any value of $a$ and $b$, we used Sturm's method. 
  However, once we found out the results, we found alternative proofs and did not need to mention Sturm's method in the proof. 
  The precise boundary between the sets of $a,b$ such that $p$ has one or three real roots is found by Sturm method. 
  It is given by:
  
  $p_{3} =-9  \hspace{0.25em} a^{2}  b^{2}  - 36  \hspace{0.25em} a^{3}  - 36 
  \hspace{0.25em} b^{3}  - 162  \hspace{0.25em} a b + 243=0$
\end{remark}

\paragraph{\strong{A CDC starting at $\mathcal{J}$ singularities}}
We have shown that there is a CDC starting at an $A_{4}$ point and a $D_{4}^{+}$ of type II, while there are two CDCs starting at a $D_{4}^{+}$ point of type I, three CDCs starting at a $D_{4}^{-}$ point, and at least one starting at a
$D_{4}^{+}$ point of type II.
This will imply, ultimately, that points of those kinds are also linked to a point in $\mathcal{I}$.

\subsection{Proof of theorem \ref{typical sings have transient neighborhoods}}

Let $x \in V_{1}$ be a point and $O$ be a cubical neighborhood
of adapted coordinates around it. $S$ will be a ``small enough'' subset of
$O$:
\begin{description}
  \item[$A_{4}$] Near an $A_{4}$ point, $\mathcal{C}$ is a smooth hypersurface
  and $\mathcal{A}_{3}$ is a smooth curve sitting inside $\mathcal{C}$. The
  $A_{4}$ point is isolated and splits the curve $\mathcal{A}_{3}$ into two
  parts. One of them, which we call Branch I, consists of $A_{3} ( I )$
  points, and the other branch consists of $A_3(II)$ points. The conjugate
  distribution $D$ coincides with the kernel of $\e$ at the $A_{4}$ point, and
  is contained in the tangent to the manifold of $A_{3}$ points.
  
  As we saw before, a CDC that ends up in the $A_{4}$ point can be perturbed
  so that it either hits an $A_{3}$ point, or leaves the neighborhood.
  
  Let $H$ be the set of points such that the CDC starting at that point flows
  into the $A_{4}$ point. $H$ is a smooth curve, and splits $U$ into two
  parts. One of them, $U_{1}$, contains only $A_{2}$ points, while the other,
  $U_{2}$, contains all the $A_{3}$ points.
  
  \begin{figure}[ht]
    \includegraphics[width=0.8\textwidth]{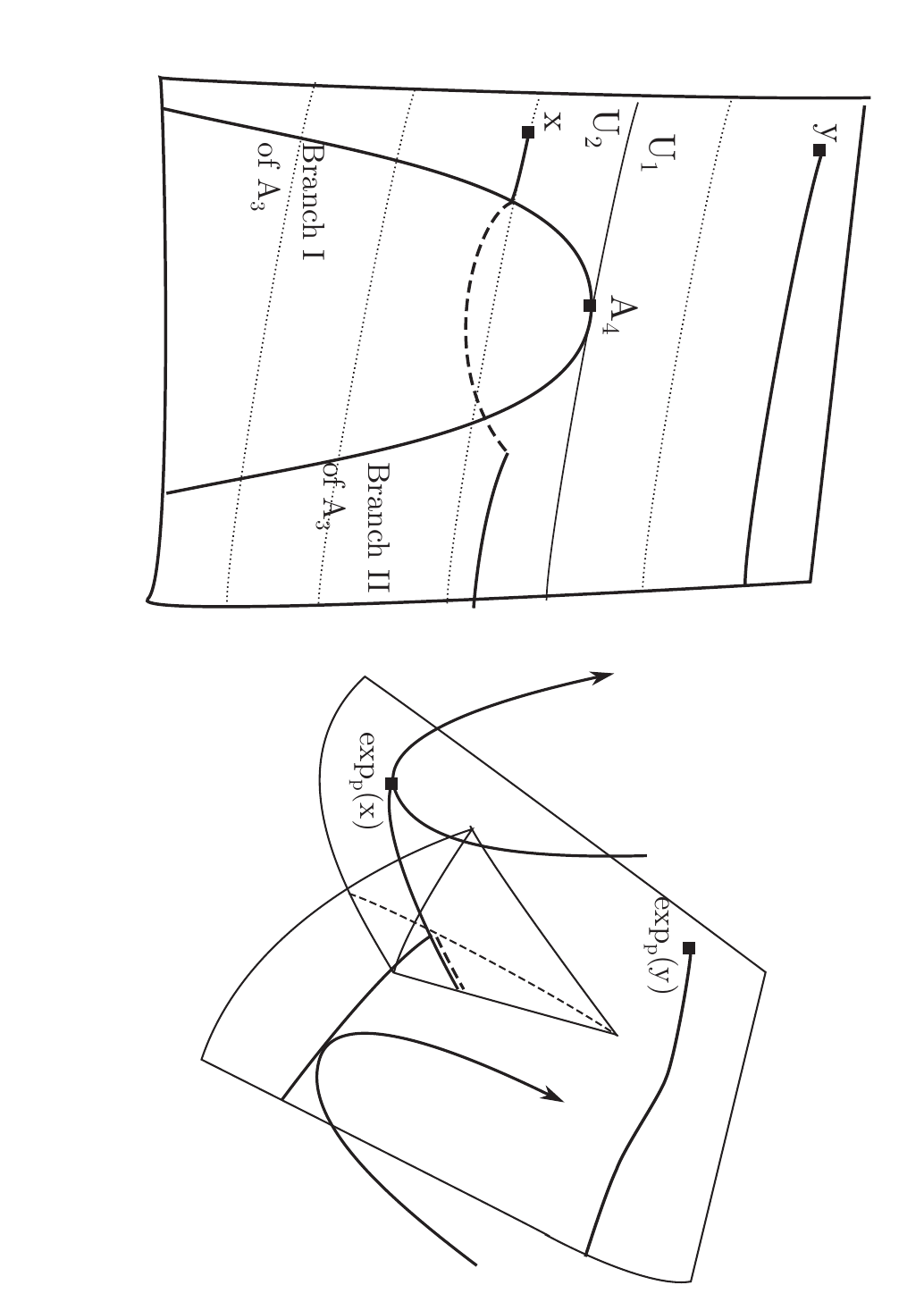}
    \caption{\label{figure: linking curves near A4 points}This picture shows a
    neighborhood of an $A_{4}$ point in $\T$, together with the linking curves
    that start at $x$ and $y$ (to the left) and the image of the whole sketch
    by $\e$ (to the right).}
  \end{figure}
  
  Look at figure \ref{figure: linking curves near A4 points}: a CDC starting
  at a point $y \in U_{1}$ flows into the boundary of $U$ without meeting any
  obstacle. A CDC $\alpha$ starting at a point $x \in U_{2}$, however, flows
  into the branch I of $\mathcal{A}_{3}$. We can start a retort $\beta$ at
  that point, but it will get interrupted when $\exp \circ \beta$ reaches the
  stratum of the queue d'aronde that is the image of two strata of $A_{2}$
  points meeting transversally. The retort cannot go any further because only
  the points ``above $\exp ( \mathcal{C} )$'' (the side of ) have a preimage,
  and points in the main sheet of $\exp ( \mathcal{C} )$ have only one
  preimage, that is $A_{2}$. When he hit the stratum of $A_{2}$ points, we
  follow a CDC to get a curve that leaves the neighborhood in a similar way as
  the curve starting at $y$ did.
  
  \item[$D_{4}$] Any CDC starting at any point in a neighborhood of a
  $D^{-}_{4}$, or $D_{4}^{+}$ of type I point leaves the neighborhood without
  meeting other singularities. A nearby GACDC will also do. We only have to
  worry about the one CDC that flows into the $D_{4}^{+}$ of type II, but we
  always take a nearby GACDC that avoids the center.
\end{description}

This concludes the proof of claim \ref{claim: hypothesis of the synthesis theorem} for generic metric, and thus we can apply proposition \ref{synthesis: statement} to build the synthesis manifold.
The results in section \ref{subsection: from local homeo to covering} can be applied without changes, and thus the theorem is proved for generic manifolds.

\addtocontents{toc}{\protect\mbox{}\protect\newline}
\addtocontents{toc}{\protect\pagebreak[3]}
\chapter{Further questions}\label{chapter: beyond}
In this chapter we collect open questions that are suggested by the previous work.
At some points, we comment on our attempts to prove these conjectures.
The author did spend quite some time working in some of them, specially the last one, but very little time in some of the others, and thinks that some of them are suitable for a student, specially the conjectures about magician's hats.

\section{Order $k$ conjugate cut points}
In the exponential map of a Finsler manifold (recall we only need to consider the exponential from the boundary), the image of the focal points of order $k$ can have Hausdorff dimension $n-k$.

However, theorem \ref{main theorem 3} and the classical result \ref{theorem: conjugate of order 1} suggest the following conjecture: 

\begin{conjecture}\label{conjecture: Hausdorff dimension of the points with a minimizing conjugate geodesic}
Let $M$ be a Finsler manifold with boundary of dimension $n$.
The set of points $p$ in $M$ such that there is a \emph{minimizing} geodesic of order $k$ from $\partial M$ to $p$ has Hausdorff dimension at most $n-k-1$.
\end{conjecture}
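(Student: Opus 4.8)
The plan is to adapt the proof of Theorem~\ref{main theorem 3} to higher order conjugacy, combining the stratification of the conjugate set by corank with the balanced property. Recall that in Theorem~\ref{theorem: conjugate of order 2} the key was a case analysis of the quadratic terms $q,r$ in the special coordinates, separating configurations that are incompatible with the balanced condition (types 1, 2c) from those whose conjugate locus lies in a submanifold of codimension $3$ (types 2a, 3, 4). For order $k$ I would first establish, using the Morse--Sard--Federer theorem applied to the restrictions of $F$ to the smooth strata $Q_j = \{\text{conjugate points of order exactly } j\}$, that the image of $Q_j$ for $j \geq 2$ already contributes only Hausdorff dimension $\leq n-2$, and more generally that one can dispose of $Q_j$ for large $j$ whenever its image is too small --- but this crude bound is not enough to reach $n-k-1$, so the real work is in the order-$k$ analogue of the type analysis.

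The core step: fix a conjugate point $z$ of order $k$ whose radial image lies in some $R_p$, and take special coordinates so that $F(x_1,\dots,x_n) = (x_1,\dots,x_{n-k}, F^{n-k+1}_z(x),\dots,F^n_z(x))$ with $F^{n-k+j}_z(x) = x_1 x_{n-k+j} + q_j(x_{n-k+1},\dots,x_n) + T^{n-k+j}(x)$ for quadratic forms $q_1,\dots,q_k$ in the $k$ kernel variables and higher-order remainders $T$. The balanced condition forces a strong constraint: if the $k$-tuple of quadratic forms $(q_1,\dots,q_k)$ is such that the second-order part of $F$ maps a neighborhood of $z$ into a proper convex cone (equivalently, the image avoids a half-space in some coordinate), then exactly as in types 1 and 2c one produces a sequence $p^k \to p$ approaching from the complement with incoming speed $r$, carrying a competing vector, violating the balanced property. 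So the surviving configurations are those where the image of the quadratic map $(q_1,\dots,q_k)\colon \mathbb{R}^k \to \mathbb{R}^k$ is ``spread out'' in all directions. For such configurations I would argue that the gradients (with respect to $x_1$ and the $k$ kernel coordinates) of the $k^2$ entries of the $k\times k$ Jacobian minor span a space of dimension $\geq k+1$ at generic points, which confines the order-$k$ conjugate locus inside $R_p$ to a submanifold of codimension $k+1$, hence its $F$-image has Hausdorff dimension $\leq n-k-1$; the remaining degenerate stratum (the analogue of type 4, where all $q_j$ vanish, so $d^2 F$ vanishes on the kernel) is handled by restricting $F$ to the smooth hypersurface $\{\partial F^{n-k+1}_z/\partial x_{n-k+1} = 0\}$ and applying Morse--Sard--Federer, losing one more dimension.

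\textbf{Main obstacle.} The hard part will be the linear-algebra bookkeeping for general $k$: for $k=2$ one can enumerate the orbits of pairs of binary quadratic forms under the relevant $GL_2$ action (the $A$-matrix change of coordinates mixing the kernel variables), but for general $k$ the classification of $k$-tuples of quadratic forms in $k$ variables up to this action is genuinely complicated, and it is not obvious a priori that every orbit is either ``balanced-forbidden'' or ``confined to codimension $k+1$.'' I expect that one does \emph{not} need a full classification: it should suffice to show a dichotomy --- either the image of $(q_1,\dots,q_k)$ misses an open half-space (forbidden by balancedness) or the map $(q_1,\dots,q_k)$ is a submersion at some point arbitrarily close to the origin in the kernel directions, which by the implicit function theorem cuts the dimension down enough. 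Proving this dichotomy cleanly, and checking that the higher-order remainders $T$ do not spoil it (as in the $k=2$ case, where one absorbs them because the relevant gradients are already independent at leading order), is where the argument will need the most care. A secondary subtlety is that, unlike Theorem~\ref{main theorem 3} which only needed to avoid order $\geq 2$, here we must track all orders $j$ with $2 \leq j \leq k$ simultaneously and show the ``bad'' set at each order has dimension $\leq n-k-1 \leq n-j-1$, so the induction/stratification has to be set up so that the codimension bound genuinely improves with $j$.
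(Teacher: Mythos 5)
This statement is not proved in the paper at all: it appears in Chapter~\ref{chapter: beyond} precisely as an open conjecture. The paper establishes only the cases $k=1$ (Proposition~\ref{theorem: conjugate of order 1}, dimension at most $n-2$) and $k=2$ (Theorem~\ref{main theorem 3}, dimension at most $n-3$), and the route it sketches toward the general case is different from yours: rather than extending the quadratic-form analysis, it proposes splitting the order-$k$ conjugate points according to whether $\ker dF$ lies in the tangent cone to the conjugate set, and conjecturing that in the transversal case a conjugate descending curve starts at the point, so that (by Lemma 2.2 of \cite{Hebda82}) the geodesic is never minimizing and the point can be discarded outright instead of dimension-counted. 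So there is no paper proof to match yours against, and your proposal should be read as a strategy for an open problem.

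As a strategy it has a genuine gap at its core. Your proposed dichotomy --- either the image of $(q_1,\dots,q_k)$ misses an open half-space (killed by balancedness) or the map is a submersion near the origin (killed by codimension counting) --- is already false at $k=2$ as stated: the paper's type 2b points fall into neither alternative, and are excluded by a third mechanism entirely, namely that nearby rays would carry no conjugate points, contradicting property 3 of Proposition~\ref{regular exponential map} (Warner's condition that the sum of kernel dimensions along nearby rays is locally constant). For general $k$ the space of intermediate degenerations of a $k$-tuple of quadratic forms under the $GL_k$ change of kernel coordinates is much larger, and you give no argument that every orbit is disposed of by one of your two mechanisms plus this third one; nor do you prove the asserted rank bound (that the gradients of the $k\times k$ minor entries span a space of dimension $\geq k+1$) for the surviving configurations. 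The type-4 analogue (all $q_j\equiv 0$) does plausibly yield $n-k-1$ by restricting $F$ to the hypersurface $\{\partial F^{n-k+1}_z/\partial x_{n-k+1}=0\}$ and applying Morse--Sard--Federer, since the full $k$-dimensional kernel is tangent to it, but without a complete treatment of the intermediate cases the argument does not close. You should present this as a programme, not a proof.
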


Inspection of the proofs of those two results actually suggests to divide the set $\mathcal{C}_k$ conjugate points of order $k$ into two subsets:

\begin{description}
 \item[$\mathcal{C}_k^1$] Points such that the kernel of the exponential is contained in the tangent cone to the set of conjugate points.
 \item[$\mathcal{C}_k^2$] The complementary set in $\mathcal{C}_k$.
\end{description}

We can venture a further conjecture that would imply the previous one with little effort. Recall the definition of CDCs in \ref{definition: conjugate flow}.

\begin{conjecture}
 If, at a conjugate point $x\in V$ of order $k$, the kernel of the exponential is \emph{not}
 contained in the tangent cone to the set of conjugate points, then there is a CDC starting at $x$.
\end{conjecture}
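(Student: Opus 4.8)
The plan is to mimic the strategy used to analyze $A_3$ and $A_4$ points in the explicit computations of sections \ref{subsection: CDCs in adapted coords near A3} and \ref{section: CDCs in adapted coords}, but to carry it out \emph{intrinsically}, without relying on normal forms. Let $x\in V$ be a conjugate point of order $k$ whose kernel $K=\ker d_x F$ is not contained in the tangent cone $T(\mathcal C_k,x)$ to the set of conjugate points. First I would set up special coordinates at $x$ as in \ref{subsection: special coordinates}, so that $F(x_1,\dots,x_n)=(x_1,\dots,x_{n-k},F^{n-k+1},\dots,F^n)$ and $K=\langle\partial_{x_{n-k+1}},\dots,\partial_{x_n}\rangle$. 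The hope is that $\mathcal C_k$ is, near $x$, a submanifold (or at worst a stratified set whose top stratum is a submanifold) of codimension something like $k(k+1)/2$ inside $V$, and that the sum $K\oplus\langle r\rangle$ meets $T\mathcal C_k$ transversally enough that the analogue of the one-dimensional distribution $D=(\ker dF\oplus\langle r\rangle)\cap T\mathcal C_k$ from \eqref{1d distribution at conjugate points} makes sense and is genuinely one-dimensional at $x$. The key point to extract is that, because $r\notin K$ by the Gauss lemma (proposition \ref{regular exponential map}) and because $K\not\subset T(\mathcal C_k,x)$, the intersection $D_x$ is spanned by a vector of the form $ar+v$ with $a\neq 0$ and $v$ a nonzero element of $K$; after reorienting we may take $a<0$, so the radius strictly decreases along the corresponding integral curve, which is by definition a CDC (\ref{definition: conjugate flow}).

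The first serious step is therefore a local structure result for $\mathcal C_k$ near such an $x$: I would write the Jacobian of $F$ as in \eqref{minor of vars n-1 and n} and \eqref{jacobian of the interesting part}, expand the relevant $k\times k$ minor block to first order, and show that the vanishing of this block cuts out a smooth submanifold near $x$ precisely when the second-order data (the analogue of the condition $\partial^2_{x_1 x_{n-1}}F^{n-1}_z\neq 0$ used at the end of the proof of theorem \ref{theorem: conjugate of order 2}) is nondegenerate in the radial direction — and this nondegeneracy is exactly a reformulation of $K\not\subset T(\mathcal C_k,x)$. Once $\mathcal C_k$ is a submanifold near $x$ and $D$ is a smooth line field on it in a punctured neighborhood, the existence and uniqueness of the CDC emanating from $x$ follows from the standard ODE argument already invoked in lemma \ref{unbeatable lemma} (both $\mathcal A_2$ and $D$ were smooth there; here the roles are played by the top stratum of $\mathcal C_k$ and the analogous $D$), together with a continuity argument at $x$ itself of the same flavor as the one in section \ref{subsection: CDCs in adapted coords near A3} showing the integral curve of $D$ through $x$ actually descends \emph{into} $x$ rather than away from it.

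I expect the main obstacle to be precisely the claim that $\mathcal C_k$ is well-behaved near $x$ — that is, that the hypothesis ``$K\not\subset T(\mathcal C_k,x)$'' is strong enough to force $\mathcal C_k$ to be a manifold of the expected codimension (or a stratified set on which $D$ extends continuously) rather than something genuinely singular at $x$. In the generic (Klok) case this is handled by the normal forms $D_4^\pm$, $A_4$ etc., where one sees by hand that $\mathcal C$ is a cone or a smooth hypersurface and the CDCs behave as claimed; the $D_4^+$ of type II computation in section \ref{section: CDCs in adapted coords} is a cautionary example showing how delicate the ``descends into'' versus ``flows out of'' dichotomy can be, and that delicacy will reappear here with $a$ possibly changing sign along different branches of $D$ through $x$. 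A secondary difficulty is that when $T(\mathcal C_k,x)$ is a genuine cone (not a subspace), ``$K$ is not contained in it'' must be interpreted carefully, and one may need the sharper statement that $K$ is transversal to \emph{every} tangent direction of $\mathcal C_k$ at $x$, which is what actually makes $D$ one-dimensional; I would phrase the conjecture and its proof so that this is the operative hypothesis. Granting the structure result, the rest — $a\neq 0$, reorient to $a<0$, invoke ODE existence plus the radius-monotonicity computation $|\alpha(0)|-|\alpha(t)|=\int\langle r,\alpha'\rangle=\int(-a)>0$ exactly as in lemma \ref{unbeatable lemma} — is routine.
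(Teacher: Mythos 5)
First, a point of reference: the paper does not prove this statement. It appears in the closing chapter of open problems, stated as a conjecture with no argument attached beyond the evidence supplied by the generic normal forms ($A_2$, $A_4$, $D_4^{\pm}$) worked out in sections \ref{subsection: CDCs in adapted coords near A3} and \ref{section: CDCs in adapted coords}. So there is no proof of the paper to compare yours against; the only question is whether your plan closes the conjecture, and it does not — the obstacle you yourself flag as ``the main obstacle'' is the entire content of the problem, not a technical step to be granted.

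Concretely, two things go wrong. (i) A CDC consists of $A_2$ points (definition \ref{definition: conjugate flow}), so for $k\ge 2$ the curve must leave the order-$k$ stratum instantly; the object you must control is the full conjugate locus $\mathcal{C}$ near $x$ together with the line field $D$ on its order-$1$ part, and you must exhibit an integral curve of $D$ that emanates from $x$ with strictly decreasing radius. Your pointwise computation at $x$ — that $(K\oplus\langle r\rangle)\cap T(\mathcal{C},x)$ contains $ar+v$ with $a\neq 0$ — says nothing about the existence or the limiting behaviour of such integral curves, and ``after reorienting we may take $a<0$'' is not available at a singular point of $\mathcal{C}$: several integral curves of $D$ can limit into $x$ from different sectors, some descending \emph{into} $x$ (making it terminal from that side) and some starting at it. In the paper's own $D_4^{\pm}$ analysis, deciding which occurs requires parametrizing the cone of first conjugate points and root-counting a cubic whose coefficients are the components of the radial vector; the answer depends on which chamber $r^0$ lies in — data strictly finer than the transversality of $K$ to the tangent cone — and the $D_4^{+}$ type-II case with three real roots is the explicit warning. (ii) Your proposed structure result, that the hypothesis forces $\mathcal{C}_k$ (or $\mathcal{C}$) to be a manifold of codimension $k(k+1)/2$ cut out by a nondegenerate minor, is Weinstein's \emph{generic} statement; for an arbitrary metric the paper's own theorem \ref{theorem: conjugate of order 2} shows that already at order $2$ the local model splits into several types, including the fully degenerate type 4 where both quadratic forms vanish and $\mathcal{C}$ is controlled only by Sard-type measure estimates, not by being a manifold. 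That regularity is not known to follow from the tangent-cone hypothesis, and without it neither the definition of $D$ on a set reaching $x$ nor the ODE/continuity argument of lemma \ref{unbeatable lemma} gets off the ground. Closing the gap most plausibly requires the approximation-by-generic-metrics route the author hints at for the neighbouring conjecture, or a genuinely new local analysis of $\mathcal{C}$ near degenerate points.
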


The reason why this conjecture is enough to prove \ref{conjecture: Hausdorff dimension of the points with a minimizing conjugate geodesic} is that the image under the exponential of a CDC is never a minimizing curve, as shown in lemma 2.2 of \cite{Hebda82}.

While thinking about this conjecture, we came out with a similar one, that does not follow from or implies the previous one in a direct way, but we believe is interesting in its own right:

\begin{conjecture}
 Let $\gamma:(-\varepsilon,0]\rightarrow\mathcal{C}$ be a $C^1$ curve of conjugate points.
 Then $\gamma$ can be extended to a $C^1$ curve of conjugate points defined in $(-\varepsilon,\varepsilon_2)$ for some $\varepsilon_2>0$.
\end{conjecture}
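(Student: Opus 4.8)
\smallskip
\noindent\textbf{Proof proposal.} The statement is local around the endpoint $p=\gamma(0)$: it suffices to produce a $C^1$ half--arc of conjugate points issuing from $p$ whose one--sided derivative at $p$ equals $v:=\gamma'(0)$, and then to concatenate it with $\gamma$. When $v=0$ one simply appends the constant arc $\tilde\gamma\equiv p$, which glues to $\gamma$ in a $C^1$ fashion since both one--sided derivatives vanish; so the content of the problem is to understand the germ of $\mathcal{C}$ at $p$ and to find a $C^1$ half--arc inside it tangent to the prescribed direction $v$.

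First I would dispose of the case in which $p$ has order of conjugacy $1$. In the special coordinates of Section~\ref{subsection: special coordinates} around $p$, $F$ has the form \eqref{local form for order 1}, the conjugate set is the zero locus of the single function $\partial F_n/\partial x_n$, and the relations recorded there give $\partial^2 F_n/(\partial x_1\partial x_n)(p)=1\neq 0$; hence by the implicit function theorem $\mathcal{C}$ is a $C^\infty$ hypersurface near $p$. A $C^1$ arc into a smooth manifold always extends past its endpoint (extend it linearly in a chart), so this case is immediate. The same remark settles $A_2$, $A_3$ and $A_4$ points of a generic exponential map, because there $\mathcal{C}$ is cut out by a single equation ($3x_1^2=x_2$, $4x_1^3+2x_1x_2+x_3=0$, and so on) that can be solved for one coordinate.

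The difficulty is the degenerate case, where $p$ has order $k\geq 2$. Here I would again work in the special coordinates and use the Weierstrass/Malgrange factorization already exploited in the proof of Lemma~\ref{landa es Lipschitz}: $q\cdot d = x_1^k + x_1^{k-1}l_1(x') + \dots + l_k(x')$ with $q(p)\neq 0$ and $D^\alpha l_i(p)=0$ for $|\alpha|<i$, $\mathcal{C}=\{d=0\}$. Substituting $\gamma(t)=p+tv+o(t)$ into $q\cdot d=0$ and dividing by the appropriate power of $t$ as $t\to 0^-$ shows that the lowest--order homogeneous part of this polynomial must vanish on $v$; because that part is homogeneous, the linear half--arc $t\mapsto p+tv$, $t\geq 0$, is approximated by $\mathcal{C}$ to first order, and one wants to upgrade this to a genuine $C^1$ half--arc in $\mathcal{C}$ with tangent $v$. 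For a generic metric in dimension at most $3$ (the setting of Chapter~\ref{chapter: ambrose}) this becomes completely explicit: the only order--$2$ singularity is a $D_4^{\pm}$ point, and in adapted coordinates $\mathcal{C}$ is the cone $\{x_1^2+x_2^2=x_3^2\}$ or $\{x_1x_2=x_3^2\}$; a cone is a union of lines through its vertex, so $v$ is forced to lie on one of those lines, and the linear half--arc $t\mapsto p+tv$ already lies in $\mathcal{C}$ and glues to $\gamma$ in a $C^1$ way. Thus the conjecture holds for generic metrics in dimension $\leq 3$.

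The main obstacle is the general form of the last step: passing from ``$v$ satisfies the first--order condition'' to ``$v$ is the tangent of an honest $C^1$ half--arc of $\mathcal{C}$''. When the metric is real analytic I would attack this through the structure theory of subanalytic sets --- $\mathcal{C}$ carries a subanalytic stratification, $p$ lies in the closure of some stratum $\Sigma$ whose tangent cone at $p$ contains $v$ (the curve--selection lemma enters here), and one selects and reparametrises an analytic arc in $\overline{\Sigma}$ with the correct tangent. For merely $C^\infty$ data one needs a substitute, presumably a blow--up of the ideal generated by $d$ at $p$ together with a direct analysis of the tangent cone of $\mathcal{C}$, and I expect that this is precisely where a genuinely new argument (or a counterexample) is hiding --- which is no doubt why the statement is offered only as a conjecture.
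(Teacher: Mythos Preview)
You have correctly recognised the situation: this statement is presented in the paper as an \emph{open conjecture} in the ``Further questions'' chapter, and the paper offers no proof. The paper's entire commentary amounts to two remarks: that the conjecture holds for generic metrics ``as follows trivially from the first structure result for generic manifolds, \cite{Weinstein}'', and that it ``can probably be proved by approximating the metric with generic ones''. There is therefore no proof in the paper to compare your proposal against.

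Your discussion is in fact considerably more detailed than what the paper provides. Your treatment of the order-$1$ case via the implicit function theorem is correct and goes beyond the paper's remarks. Your analysis of the generic $D_4^{\pm}$ case, observing that the cone is ruled by lines through its vertex so that the incoming tangent $v$ already spans a generatrix, is a clean way to see why the generic $3$-dimensional case is easy; the paper merely cites Weinstein without spelling this out. Your identification of the real obstacle --- upgrading a first-order tangency condition to an actual $C^1$ half-arc at an order-$k\ge 2$ point for a general $C^\infty$ metric --- is exactly right, and your own closing sentence shows you understand that this is where the argument stops.

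One comment: the paper's suggested line of attack (approximate by generic metrics) is different from the subanalytic/blow-up approach you sketch for the analytic case. Neither is carried through. Your proposal should be read not as a proof but as a correct delineation of what is easy, what is known, and where the genuine difficulty lies --- which is appropriate for a statement the paper itself leaves open.
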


This conjecture states that the set of conjugate points, which can be a complicated subset of the tangent space (or $V$ in the general setting \ref{section: intro to exponential maps}), does not have edges or pointed tips. 
The conjecture holds for generic manifolds, as follows trivially from the first structure result for generic manifolds, \cite{Weinstein}.
The conjecture can probably be proved by approximating the metric with generic ones.

\section{HJBVP and balanced split loci}
The techniques in chapter \ref{Chapter: balanced} could in principle be applied to other first order PDEs, or systems of PDEs.
The idea of using the balanced property was actually inspired by the Rankine-Hugoniot conditions for the shock in solutions to equations of conservation laws.
We think that the main appeal of the balanced condition is that, unlike the Rankine-Hugoniot conditions, it does not assume any \emph{a priori} regularity on the shock.
So we think it is interesting to study whether the balanced property, and at least some of the structure results, can be carried over to other equations.

In particular, we believe our proofs of \ref{landa es Lipschitz} and \ref{rho is lipschitz} are more easily extensible to other settings than the previous ones in the literature.
This may simplify the task of proving that the singular locus for other PDEs have locally finite $n-1$ Hausdorff measure.

However, this may not be possible for conservation laws, as the Rankine-Hugoniot conditions are really very different to the balanced property, in that they only prescribe one tangent vector that must be contained in the tangent plane to the shock, while the balanced condition prescribes all the tangent directions.
Hamilton-Jacobi Cauchy problems are also different in nature, and the results presented here may not apply.
We would like to mention that L. C. Evans has recently made big improvements in the understanding of the HJ Cauchy problems for non-convex hamiltonian (see \cite{Evans nonconvex 1} and \cite{Evans nonconvex 2}).
It would be very interesting if non-convex hamiltonians were also better understood for BVPs.
HJBVP problems with a Hamiltonian dependent on $u$ seems a more feasible target.

Finally, Philippe Delanoë and others have suggested that it would be interesting to try this approach in sub-riemannian geometry.

\section{Poincaré conjecture}\label{section: proving poincare conjecture, or not}
The Ambrose conjecture is related to the Poincaré conjecture in several ways. We mention one link between them that haunted the author for some time:

The proof of the Ambrose conjecture for surfaces by James Hebda works in two dimensions because the cut locus is a tree. In \cite{Hebda}, he mentions that it works also in those manifolds whose cut locus is triangulable and \emph{descends simplicially to a point}. In a compact, simply connected manifold, the cut locus is homotopic to a point, but even if the cut locus is triangulable, it may not be possible to collapse simplices one by one until the whole cut locus becomes trivial. For example, the $3$-sphere admits a metric such that the cut locus with respect to a certain point is the \emph{house with two rooms} (see figure \ref{fig: house with two rooms}).

\begin{figure}[ht]
 \centering
 \includegraphics[width=.8\textwidth]{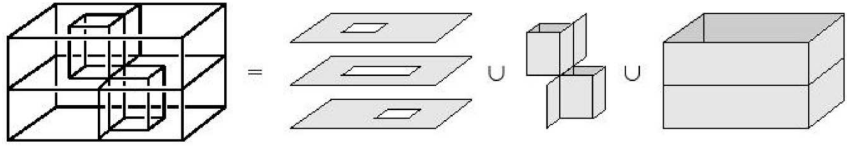}
 \caption{The ``house with two rooms''}
 \label{fig: house with two rooms}
\end{figure}

The first motivation for studying split loci was that they may help overcome this difficulty, if only we could find a split loci that does collapse simplicially to a point. 
But if that strategy worked for any manifold, it would also provide a proof of the Poincaré conjecture.
The reason is that in a compact simply-connected manifold with such a collapsible split locus from a point $p$ we can find a vector field with only one source $p$, and only one sink, using the deformation retract that collapses the split locus onto one point.
It is well known that this proves that the original manifold is homeomorphic to the sphere.
Thus the following conjecture is stronger than the Poincaré conjecture:

\begin{conjecture}\label{existence of collapsible split locus}
 Any compact simply-connected $3$-manifold admits a metric and a split locus that collapses simplicially to a point.
\end{conjecture}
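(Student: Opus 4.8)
As worded, the conjecture asks only for the \emph{existence} of a metric and a split locus with the stated property, and in that form it is now a quick consequence of Perelman's theorem. The plan is as follows. First I would invoke the resolution of the Poincar\'e conjecture: every compact simply connected $3$-manifold is diffeomorphic to $S^3$, so it suffices to produce the required metric and split locus on $S^3$.

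Second, I would take the round metric $g_0$ on $S^3$ and any point $p$. It is classical that every geodesic issuing from $p$ is a great circle, that they all refocus at the antipode $-p$, and hence that $\mathrm{Cut}_p=\{-p\}$. To fit this into the setting of the thesis I would remove a small geodesic ball $B$ around $p$; then $M\setminus B$ is a compact manifold with boundary $\partial B\cong S^2$, the exponential map from $\partial B$ with the inner unit normal as section agrees with $\exp_p$ where it matters, and $\mathrm{Cut}_{\partial B}=\mathrm{Cut}_p=\{-p\}$. By Proposition \ref{cut locus is balanced} this cut locus is a balanced split locus, in particular a split locus in the sense of Definition \ref{split locus}.

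Third, I would observe that a split locus consisting of a single point is already a simplicial complex with one $0$-simplex, so it \emph{is} a point, and it collapses simplicially to a point through the empty sequence of elementary collapses. This completes the verification for $(S^3,g_0)$, and hence for an arbitrary compact simply connected $3$-manifold, so the conjecture follows.

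The point I would want to make explicit, however, is that this is a \emph{degenerate} solution: the conjecture was raised in Section \ref{section: proving poincare conjecture, or not} precisely as a candidate self-contained route to the Poincar\'e conjecture, and in that spirit the genuinely hard problem — the one the structure theory of Chapters \ref{Chapter: structure} and \ref{Chapter: balanced} is designed to attack — is to exhibit a collapsible split locus \emph{without} appealing to Perelman, or, better, to show that for \emph{every} metric on a compact simply connected $3$-manifold there is a point whose associated balanced split locus collapses simplicially to a point. The chief obstacle there is the one flagged in the introduction: a cut locus that is homotopy equivalent to a point (e.g.\ the house with two rooms realized as a cut locus on a metric $3$-sphere) need not be simplicially collapsible, and one cannot escape by switching to a different balanced split locus, since by Theorem \ref{maintheorem0} (applied to $M\setminus B$, which is simply connected with connected boundary) the cut locus is the \emph{only} balanced split locus. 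A genuine proof would therefore have to either deform the metric, or work with objects more general than balanced split loci, and controlling the combinatorics of such a collapse — rather than any analytic difficulty — is where the real work would lie.
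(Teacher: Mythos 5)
You should be aware that the paper contains no proof of this statement: it is posed as an open conjecture in Chapter \ref{chapter: beyond}, and in fact the author, after explaining why the chamber-selection strategy via linking curves breaks down in dimension $3$, explicitly writes that he conjectures the statement is \emph{false} in three dimensions --- a remark that only makes sense for the non-degenerate reading of the statement. So there is no proof in the paper to compare yours against.

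That said, your degenerate resolution is formally valid as a proof of the literal existential statement, and you have correctly identified why it is beside the point. The verification does go through: by Perelman every compact simply connected $3$-manifold is diffeomorphic to $S^3$; for the round metric the cut locus from $p$ is the single antipodal point, which (after removing a small ball around $p$ so as to fit the setting of Definition \ref{definition: the cut locus} and Definition \ref{split locus}) is a balanced split locus by Proposition \ref{cut locus is balanced}, hence in particular a split locus, and a one-vertex complex collapses to a point through the empty sequence of elementary collapses. But this is exactly the kind of argument the conjecture is meant to exclude: the statement is introduced in Section \ref{section: proving poincare conjecture, or not} as being \emph{stronger than} the Poincar\'e conjecture, i.e.\ as a proposed intermediate statement whose direct, Perelman-free proof --- for a manifold whose topology is not yet known --- would yield a new proof of Poincar\'e via the gradient-like vector field obtained from the simplicial collapse; invoking Perelman makes the implication circular in content even though not in logic. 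Your closing diagnosis essentially coincides with the paper's own discussion: the cut locus itself can fail to be collapsible (the house with two rooms), Theorem \ref{maintheorem0} forbids replacing it by a different \emph{balanced} split locus, and the intended escape route --- selecting among the many non-balanced split loci, e.g.\ by chamber selection guided by linking curves --- is precisely what the author reports he could not make work in dimension $3$. In short: your proof settles the letter of the conjecture but not its substance, and the substance remains open (and is suspected false in dimension $3$) in the paper.
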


Let us explain the motivation behind the conjecture:

Let $M$ be a compact simply-connected $3$-manifold.
Any manifold admits a metric whose geodesic flow in $T M$ is ergodic.

If the manifold did not have any pair of conjugate points, the exponential map from any point would be a covering map, but this is incompatible with the hypothesis. So we can assume that there is one point $x\in T M$ such that the geodesic $\gamma\subset T M$ starting at $x$ meets a point $y=\gamma(T)$ such that $y$ is conjugate to $x$ along $\gamma$. The continuity of $\lambda_1$ shows that the same happens for any point in a neighborhood $U\subset T M$ of $x$.

The geodesic flow being ergodic, any geodesic eventually enters $U$, and develops a conjugate point. This means that $\lambda_1$ is finite.

Any metric close to the ergodic one will also have finite $\lambda_1$. We can choose one such metric such that the exponential map is generic in the sense of Klok, Buchner or both.

The compact set $\left\{x:\lambda_1\left(\frac{x}{|x|}\right)\leq |x| \right\}\subset T M$ maps many-to-one to $M$, so that each point has several preimages. The genericity hypothesis allows to decompose $M$ into finitely many chambers. Every point in each chamber has the same number of preimages. That decomposition induces another one in $T M$, in which the chambers map diffeomorphically into chambers of $M$, but different chambers of $T M$ may map into the same chamber of $M$. The goal is then to select only one of the preimages of each chamber in $M$, in such a way that the union of all the chambers in $T M$ is star-shaped with respect to the origin. This is equivalent to selecting a split locus that is composed of images of conjugate points and geodesics that cut the radial geodesics so that each point in $M$ has only one preimage before the split locus is reached.

There is no guarantee that any of these split loci collapses to a point.
In fact, some of these selections of split loci might be quite similar to the cut locus.
Our idea was to use linking curves to make some of the required choices, in order to make a partial selection of chambers in a way such that the corresponding split locus is tree-like.

Faced with the multiplicity of chambers, we can pick up one generic $A_2$ first conjugate point $x$ and find a linking curve starting at that point.
This linking curve can also provide a linking curve for any point in the same $2$-cell to which $x$ belongs.
In fact, the linking curve will pass through several $2$-cells, and this could help us select  the right chambers, whose boundary would map to a $2$-dimensional complex full of tree-formed curves.
The main problem is that in three dimensions, a generic linking curve can intersect twice the same radial line, so this procedure does not help to select chambers.

Thus we conjecture that in three dimensions, the conjecture \ref{existence of collapsible split locus} is false, but in higher dimensions, we do not know.
Indeed, in higher dimensions, even if the conjecture is false, there is still hope that this argument can be useful in some way...

\section{Magician's hat}

We arrived at the following definition when we were working on \ref{subsection: from local homeo to covering}.
We wanted to be able to lift paths in $M_1$ to the synthesis manifold $M$, and a \emph{pre-compactness} result for the exponential map just seemed natural.

\begin{dfn}
 A \strong{magician's hat} with respect to $p\in M$, is an open set $U$ in a Riemannian manifold $M$ such that its preimage by the exponential from $p$ has an unbounded connected component.
\end{dfn}

\begin{conjecture} \label{conjecture on magician's hats} \tmdummy
  \begin{enumerate}
  \item Let $p$ be a point in a Riemannian manifold $M$. Any point $q\in M$ has a ``sufficiently small'' neighborhood that is not a magician's hat with respect to $p$.
  \item For any ``bound on curvature'' $K$, there is a ``diameter'' $d$ such that on any manifold $M$ with curvature bounded by $K$, any set of diameter less than or equal to $d$ is not a magician's hat with respect to any point. We intentionally leave open the question of what are the appropriate notions of ``bound on curvature''.
  \end{enumerate}
\end{conjecture}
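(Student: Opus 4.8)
The plan is to treat the two parts separately, in each case reducing the behaviour of $\e^{-1}$ to the convexity of small metric balls and then to a dynamical statement about the entrance times of geodesics. Throughout we use the following setup, valid for both parts. We may assume $q\neq p$, since near $0\in\T$ the map $\e$ is a diffeomorphism and a small ball about $q=p$ is trivially not a magician's hat. Fix $\delta>0$ smaller than $\operatorname{inj}(q)$ and than $d(p,q)$, and small enough (depending on a curvature bound over the compact set $\overline{B}(q,\operatorname{inj}(q))$) that $\phi:=\tfrac12\,d(q,\cdot)^{2}$ is smooth on $B:=B(q,\delta)$ with $\operatorname{Hess}\phi\geq\lambda\,g$ for some $\lambda>0$. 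Then along any unit–speed geodesic $c$ the function $\phi\circ c$ is strictly convex with second derivative $\geq\lambda$, which gives two facts: (i) any geodesic segment contained in $\overline{B}$ has length at most some $L_{0}=L_{0}(\delta)$, and $L_{0}\to 0$ as $\delta\to 0$; and (ii) for each $v\in S_{p}M$ the set $\{t>0:\e(tv)\in B\}$ is a \emph{single} open interval $(a(v),b(v))$ with $b(v)-a(v)\leq L_{0}$ and $\e(a(v)v)\in\partial B$ (here $p\notin B$ rules out $a(v)=0$). Lemma \ref{norm dominated by exp} serves as an auxiliary estimate.

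The first real step (Part 1) is the reduction: \emph{every connected component $C$ of $\e^{-1}(B)$ is exactly the ``sojourn bundle''} $C=\{\,tv:\ v\in V_{C},\ a(v)<t<b(v)\,\}$, where $V_{C}\subseteq S_{p}M$ is the open, connected projection of $C$ to the sphere of directions. Indeed, for $v\in V_{C}$ the segment $\{tv:a(v)<t<b(v)\}$ is connected, lies in $\e^{-1}(B)$ and meets $C$, hence lies in $C$; conversely every point of $C$ lies on such a segment. Since $b(v)\leq a(v)+L_{0}$ and $S_{p}M$ is compact, this yields: $C$ is bounded if and only if the first–entrance function $a$ is bounded on $V_{C}$. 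So Part 1 is equivalent to the statement that, for $\delta$ small enough, the first–entrance time $a$ is bounded on the $S_{p}M$–projection of every connected component of $\e^{-1}(B(q,\delta))$.

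The hard part is precisely this last statement, and I expect it to be the main obstacle. Unboundedness of $a$ on $V_{C}$ means there are $v_{n}\in V_{C}$ with $v_{n}\to v_{*}\in\overline{V_{C}}$ and $a(v_{n})\to\infty$: the geodesic of $v_{*}$ never meets $B(q,\delta)$, while nearby geodesics do so only after arbitrarily long times. This is a recurrence/shadowing phenomenon of the geodesic flow (e.g.\ geodesics shadowing a hyperbolic closed geodesic that avoids $q$, then drifting into $B(q,\delta)$), and ruling it out — or showing it cannot be glued into a single connected $V_{C}$, or that one can always pass to a smaller sub-neighbourhood of $q$ — is where the difficulty concentrates. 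Here one would have to bring in the structure theory of this thesis: conjugate points of $\e$ lying over $\partial B(q,\delta)$ can glue sojourn sheets sitting at different radii, and controlling how the sets $\{\lambda_{k}=\ \cdot\ \}$ meet $\e^{-1}(\partial B(q,\delta))$ uses the Lipschitz bound of Theorem \ref{landa es Lipschitz} together with the local picture of the conjugate/cut locus in Theorem \ref{complete description}; a natural scheme is an induction on $\dim M$, with the compact hypersurface $\partial B(q,\delta)$ playing the role of the new target and $n=1$ as the base case. I would not be surprised if the literal ``ball'' version fails for some (non-collapsed but dynamically rich) manifolds, which is exactly why the author hedges with the weaker formulation ``sufficiently small \emph{neighbourhood}''; proving the neighbourhood version may require first identifying which sub-neighbourhoods of $q$ to remove.

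For Part 2 the first observation is that the literal statement with only a curvature bound is false: on a flat torus $\RR^{2}/(a\mathbb Z\times\mathbb Z)$ (curvature $\equiv 0$) with $p=0$, the ball $B(q,\delta)$ around $q=(0,\tfrac12)$ has preimage $\bigcup_{m,n}B((ma,\tfrac12+n),\delta)$, and once $\delta>a/2$ the translates overlap horizontally, producing an unbounded connected component; letting $a\to 0$ gives, for any fixed $d$, manifolds of bounded curvature on which $B(q,d)$ is a magician's hat. Hence the correct hypothesis must exclude collapsing, e.g.\ a lower bound $\operatorname{inj}\geq i_{0}$ (equivalently, with a two-sided curvature bound, a lower volume bound, via Cheeger's lemma). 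Granting that, the plan is to run the proof of Part 1 keeping track of constants: the convexity radius, the Hessian bound $\lambda$, hence $L_{0}$ and all auxiliary constants, are controlled by $(\kappa,i_{0})$ alone through the Hessian comparison theorem, so the resulting scale $d$ is uniform; the dynamical obstacle of paragraph three would, of course, also need a uniform treatment, and that remains the crux.
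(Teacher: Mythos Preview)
The statement you are addressing is labelled a \emph{conjecture} in the paper, and the paper offers no proof; it is placed in the final chapter among open questions, with only a brief motivation for the terminology. So there is no ``paper's own proof'' to compare against, and your write-up is, appropriately, not a proof either but an exploratory plan that honestly flags the main obstacle (the unboundedness of the first-entrance time $a$ on a connected component of directions) as unresolved.

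Two substantive remarks. First, your reduction in Part~1 is clean and correct: the strict convexity of $\phi=\tfrac12 d(q,\cdot)^2$ on a small ball forces each radial geodesic to meet $B$ in a single interval of bounded length, so each component of $\e^{-1}(B)$ is a ``sojourn bundle'' over an open set of directions, and boundedness reduces to boundedness of $a$. This is a useful reformulation, and your identification of the dynamical obstruction (geodesics shadowing an orbit that misses $B$ and then drifting in after arbitrarily long times) pinpoints exactly why this is a conjecture and not a lemma. Your suggestion to invoke Theorems~\ref{landa es Lipschitz} and~\ref{complete description} here is optimistic: those control the conjugate locus and the cut locus structure, but the obstruction you describe is about recurrence of the geodesic flow and can occur entirely in the non-conjugate regime, so I would not expect those results to resolve it.

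Second, your flat-torus counterexample to the literal reading of Part~2 is correct and is genuinely new content relative to the paper: on $\RR^{2}/(a\mathbb{Z}\times\mathbb{Z})$ with $a$ small, a ball of any fixed radius $\delta>a/2$ about $q=(0,\tfrac12)$ has preimage containing full horizontal lines, hence is a magician's hat, while the curvature is identically zero. This confirms that a non-collapsing hypothesis (e.g.\ a lower injectivity-radius bound) is necessary, which is consistent with the author's deliberate vagueness about ``bound on curvature''. You might note this explicitly as a contribution rather than burying it in the plan for Part~2.
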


The reason for choosing such a name is that a curve contained in the preimage of a magician's hat $U$ that goes to infinity corresponds to a family of geodesics with starting point $p$ and endpoint in $U$ that get longer and longer.
This, to the author, would be similar to a magician pulling a long handkerchief out of his hat.

\section{Proof for a 3-manifold with an arbitrary metric}\label{section: proof for a 3-manifold with arbitrary metric}

The Ambrose conjecture involves both topological and analytical challenges. 
We already mentioned in section \ref{section: proving poincare conjecture, or not} one topological difficulty.
The Poincaré conjecture was an open, and very hot conjecture, for many years.
Many reknowned topologists and geometers failed at finding a proof using an arsenal of algebraic topology, knot theory, hyperbolic geometry, and what not.
If a mathematician working in the Ambrose conjecture does not feel like giving a topological proof of the Poincaré conjecture in the way, some strategies are not very promising.
However, we thought that the idea of building a synthesis as in \ref{subsection: synthesis} would allow to cast away the topological difficulties, allowing us to prove Ambrose conjecture, but not Poincaré's.
Tree formed curves, now refurbished into linking curves, would help make the necessary identifications.

Using the cut locus for a synthesis would not work, because there is no canonical way to find linking curves.
An arbitrary split locus is no better, of course, if it does not have neither simpler topology, nor a canonical way to find linking curves within it.
The conjugate descending curves seemed like the only sensible choice.

But these curves are very tricky. 
The first obvious problem is that the singularities of the exponential map are complicated.
A first step towards dealing with that problem is to use our theorem \ref{main theorem 3}, building a synthesis of \emph{most} points of the manifold, and then extending the construction to a bona-fide synthesis by completion of the metric.

\subsection{Main idea}

In an arbitrary metric, the possible singularities of the exponential map no longer belong to a finite family of canonical forms. 
In order to prove the Ambrose conjecture, we will need to find a wider category of conjugate points that are unequivocal, and a wider category of conjugate points that are linked to the unequivocal points. 
It is also convenient to work with a {\emph{remainder}} of conjugate points about which we know very little, but such that the set of such points has sufficiently small Hausdorff dimension so that we can ignore them in our arguments.

\begin{dfn}
  \label{younger cousins}The {\emph{cousins}} of $x \in V_1$ are the
  preimages of its image by the exponential map.

  $C ( x ) =e^{-1} ( e(x) )$

  The {\emph{younger cousins}} of $x \in V_1$ are the cousins of smaller radius:

  $\tmop{YC} ( x ) =e^{-1} ( e(x) ) \cap B ( 0, | x | )$
\end{dfn}

We defer the definitions of terminal points of order 1 and the types of conjugate points of order $2$ for later sections.

\begin{dfn}
  We define some categories of points in $T_{p} M$ (recall $\NC$ are the non-conjugate points):
  \begin{itemizeminus}   
   \item $\mathcal{\mathcal{R}} = \mathcal{\NC} \cup \{ \text{terminal points of order $1$}  \}$
   \item $\mathcal{S} =\begin{array}[t]{ll}\left\{ \text{non terminal
points of order $1$} \right\} \\ \quad\cup \left\{ \text{conjugate points of
order $2$ and type I}
  \right\}\end{array}$
   \item $\mathcal{T} = \begin{array}[t]{ll}\left\{ \text{conjugate points of
order $2$ and type II}
  \right\} \\ \quad\cup \left\{ \text{conjugate points of order $3$} \right\}
  \end{array}$
   \item $\mathcal{I}=\mathcal{R}$
   \item $\mathcal{J} = \{ x \in \mathcal{S} : \tmop{YC} ( x ) \subset \mathcal{R}
  \cup \mathcal{S} \}$
   \item $\mathcal{K} = \mathcal{T}\cup \{ x \in \mathcal{S} : \tmop{YC} ( x ) 
   \nsubseteq \mathcal{R}\cup \mathcal{S} \}$
\end{itemizeminus}
\end{dfn}

We call the following the \strong{IJK conjecture}:

\begin{conjecture}\label{conjecture IJK}
  \label{points I, J, K}{\tmdummy}

  \begin{itemize}
    \item Points in $\mathcal{I}$ are unequivocal.

    \item Points in $\mathcal{J}$ are linked to a point in $\mathcal{I}$

    \item $e ( \mathcal{K} )$ has Haussdorf dimension at most $n-3$
  \end{itemize}
 
\end{conjecture}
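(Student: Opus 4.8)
The plan is to mirror the strategy used for the easy and the generic $3$-dimensional cases, replacing each appeal to an explicit normal form by the structural information extracted from Theorem \ref{main theorem 3} and the detailed lemmas of Chapter \ref{Chapter: balanced}. The three bullets are of very different character, so I would attack them separately and in increasing order of difficulty.

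First, the statement that $e(\mathcal{K})$ has Hausdorff dimension at most $n-3$. By Theorem \ref{main theorem 3} (and its proof via Theorem \ref{theorem: conjugate of order 2}), the conjugate points of order $\geq 2$ split into the set $Q_2^1$, whose image under $dF$ never appears in any $R_p$, and $Q_2^2$, whose image under $F$ has Hausdorff dimension at most $n-3$; the conjugate points of order $\geq 3$ are handled by Morse--Sard--Federer applied to the appropriate restriction of $F$, again giving dimension $\leq n-3$. So the ``conjugate of order $2$ and type II'' and ``conjugate of order $3$'' parts of $\mathcal{K}$ will need a type decomposition that refines the $Q_2^1/Q_2^2$ dichotomy: I would \emph{define} ``type I'' to be the order-$2$ points of class $2c$ and ``type II'' to be those of class $2a$ (class $2b$ being empty, and classes $3$ and $4$ being absorbed into the order-$2$-type-II bin together with $Q_2^2$), so that by construction $F$(type II points) $\subset F(Q_2^2)$ has dimension $\leq n-3$. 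The remaining part of $\mathcal{K}$, namely $\{x\in\mathcal{S}:\mathrm{YC}(x)\not\subset\mathcal{R}\cup\mathcal{S}\}$, consists of order-$1$ points one of whose younger cousins is in $\mathcal{K}$; since $e$ restricted to the order-$1$ stratum is a submersion onto a manifold, the preimage of a dimension-$\leq n-3$ set in $M$ is again dimension $\leq n-3$, so this contributes nothing new. This step should be essentially bookkeeping once the type decomposition is pinned down.

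Second, the unequivocality of points in $\mathcal{I}=\mathcal{R}=\mathcal{NC}\cup\{\text{terminal order-}1\}$. For $\mathcal{NC}$ points this is Cartan's lemma (Theorem \ref{Cartan's theorem}) verbatim. For a ``terminal point of order $1$'' — which I would define as a first conjugate point $x\in\partial V_1$ of order $1$ at which all CDCs flow \emph{into} $x$, the direct analogue of an $A_3(I)$ point — the argument of section \ref{section: A3 are inequivocal} should go through with the explicit formula $x_2=x_1^2$ replaced by ``the local model of $F$ near $x$ given by the special coordinates of section \ref{subsection: special coordinates}, in the order-$1$ form $F(x_1,\dots,x_n)=(x_1,\dots,x_{n-1},F_n(x))$''. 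The key points are: (i) the set of cousins of $x$ with radius $\leq|x|$ is finite (finitely many adapted charts cover $\overline{B_{|x|}}$); (ii) the ``terminal'' hypothesis guarantees that the region $A$ on which $F$ is injective has $\overline{A}\subset V_1$, so $e_1(O\cap V_1)$ is open; (iii) the two sheets over $e_1(A)$ are joined by a tree-formed curve (a short CDC together with its retort, by Lemma \ref{unbeatable lemma} and the $A_3$-join construction of section \ref{section: A3 joins}), which forces the two local Cartan isometries to agree, first at the image of $\partial A$ and then everywhere by radial propagation. This is where I expect the non-normal-form setting to bite a little: one must check that a terminal order-$1$ point always admits the required short CDC and its retort \emph{without} the cone-$3x_1^2=x_2$ picture — this should follow from the qualitative analysis of section \ref{section: CDCs in adapted coords} once one knows $D$ is a smooth line field transverse to $r$ and to $\mathcal{C}$, but it is the first genuinely new lemma needed.

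Third, and this is the main obstacle: points in $\mathcal{J}$ must be linked to points in $\mathcal{I}$. Here I would reproduce the whole machine of sections \ref{section: A3 joins}--\ref{subsection: from local homeo to covering}: aspirant curves, GACDCs, the descent/retort/reprise algorithm, positive transient pairs, and the finiteness argument bounding $R_0$. The two inputs that powered the generic case were (a) Lemma \ref{existence of GACDC} — every GACDC from a point of $\mathcal{J}\cap B_R$ has length $\leq L(R)$ and can be continued until it hits an ``$A_3$-type'' terminal order-$1$ point — and (b) Theorem \ref{typical sings have transient neighborhoods} — every relevant singularity type has a positive transient neighborhood. In the arbitrary-metric setting neither the list of singularities nor their transversality properties are available, so the genuinely hard work is: first, to show that a generic CDC/ACDC started in $\mathcal{S}\setminus\mathcal{K}$ can be steered (using the ``slack'' perturbation of Lemma \ref{slack lemma}) to avoid the dimension-$\leq n-3$ bad set $e^{-1}(e(\mathcal{K}))$ and any fixed finite set $F$, and terminates at a terminal order-$1$ point in finite length; and second, to establish a transient-pair statement for an \emph{arbitrary} first conjugate point of order $1$ and for the type-I order-$2$ points, without a normal form — presumably by a covering/compactness argument on the space of adapted charts together with the monotonicity of the radius along descending flow. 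I expect that the order-$2$-type-I points will be the sticking point, since there the conjugate set is (locally) a cone-like stratified object and the descending distribution $D$ is only defined on the top stratum; one would want an analogue of the $D_4^-$ analysis of section \ref{section: CDCs in adapted coords} showing that finitely many CDCs emanate from such a point and flow outward, but proving this in the absence of the $D_4^\pm$ normal forms — perhaps by approximating the metric by generic ones and passing to the limit, exploiting that the statement is about the qualitative flow of a line field constrained by Gauss' lemma — is the crux. If that qualitative-flow statement can be established, the rest of the argument (synthesis via Proposition \ref{synthesis: statement}, completeness via Proposition \ref{d is distance-decreasing}, hence the covering property) carries over with no change, and the IJK conjecture — and with it the Ambrose conjecture for arbitrary $3$-manifolds — would follow.
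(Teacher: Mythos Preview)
The statement you are attempting to prove is labeled in the paper as a \emph{conjecture}, not a theorem, and the paper does not claim to prove it. Immediately after stating it, the paper remarks only that ``the last part follows directly from \ref{main theorem 3}'' --- that is the entirety of what the paper establishes. The subsequent subsection is titled ``Proof of the Ambrose conjecture \emph{modulo} the IJK conjecture,'' and the later subsection ``Terminal points for the conjugate descending flow'' explicitly says ``it remains to prove the IJK conjecture,'' then lists several further sub-conjectures (terminal points are unequivocal, non-terminal points are linked, etc.) and proposes a generalized notion of linking curve allowing countably many segments. So there is no ``paper's own proof'' for the first two bullets to compare against.

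Your proposal is therefore not a proof but a research plan, and to your credit you say so: you flag the terminal-order-$1$ unequivocality as ``the first genuinely new lemma needed'' and the linking of $\mathcal{J}$-points as ``the main obstacle,'' with the order-$2$-type-I transient-pair statement as ``the crux.'' That diagnosis matches the paper's own assessment. For the third bullet, your argument is more elaborate than the paper's one-line remark but essentially correct; note that the part $\{x\in\mathcal{S}:\mathrm{YC}(x)\not\subset\mathcal{R}\cup\mathcal{S}\}$ is even simpler than you make it --- any such $x$ has $e(x)=e(y)$ for some $y\in\mathcal{T}$, so $e(x)\in e(\mathcal{T})$ directly, no submersion argument needed. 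Also, your proposed type-I/type-II dichotomy omits the class-$1$ points of Theorem \ref{theorem: conjugate of order 2}, which belong to $Q_2^1$; the paper in fact never gives explicit definitions of ``type I'' and ``type II'' for order-$2$ points, so this is part of what remains open.
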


\begin{remark}
  The last part follows directly from \ref{main theorem 3}.
\end{remark}

\subsection{Proof of the Ambrose conjecture modulo the IJK conjecture}

Using \ref{conjecture IJK}, we can build a synthesis $\mathcal{U}$ of $\mathcal{U}_{1}
=M_{1} \backslash e_{1} ( \mathcal{K} ) \subset M_{1}$ and $\mathcal{U}_{2}
=M_{2} \backslash  e_{2} ( \mathcal{K} ) \subset M_{2}$ using theorem
\ref{synthesis: statement}.

\begin{prop}
  The maps $\pi_{1}$ and $\pi_{2}$ in the synthesis are covering maps.
\end{prop}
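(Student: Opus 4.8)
The statement to prove is that, once the IJK conjecture provides the synthesis $\mathcal{U}$ of $\mathcal{U}_1 = M_1 \setminus e_1(\mathcal{K})$ and $\mathcal{U}_2 = M_2 \setminus e_2(\mathcal{K})$, the projections $\pi_1 : \mathcal{U} \to \mathcal{U}_1$ and $\pi_2 : \mathcal{U} \to \mathcal{U}_2$ are covering maps. We already know from Proposition \ref{synthesis: statement} that $\pi_1$ and $\pi_2$ are local isometries, so the only thing left is to upgrade ``local isometry'' to ``covering map''. The plan is to imitate the argument of Section \ref{subsection: from local homeo to covering} almost verbatim, since that section was written precisely to handle this kind of situation; the extra subtlety here is that $\mathcal{U}_i$ is not all of $M_i$ but $M_i$ minus the image of a set $\mathcal{K}$ of Hausdorff dimension at most $n-3$, so one must make sure that the completeness argument survives the removal of that low-dimensional set.

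First I would re-run the proof of Proposition \ref{d is distance-decreasing}: define $d : \mathcal{U} \to \mathbb{R}$ by $d(q) = \inf\{|x| : x \in A, \, e(x) = q\}$ where $A = V_1 \setminus e_1^{-1}(e_1(\mathcal{K}))$ is the subset of $V_1$ on which the synthesis is built, and show $d$ is distance-decreasing for the intrinsic metric of $M_1$. The key tool is Lemma \ref{norm dominated by exp} (the norm along a path is dominated by the length of its exponential image), together with the liftability of generic paths: a generic smooth path in $\mathcal{U}_1$ meets $e_1(\mathcal{C} \cap B_R)$ only transversally in finitely many $A_2$ points (plus possibly $A_3$ endpoints), because $\mathcal{K}$ has codimension at least $3$ and a generic curve avoids it entirely, so along the path we only ever encounter points of $\mathcal{R} \cup \mathcal{S}$, and at each $A_2$ point we make a discrete downward jump to a linked point of smaller radius using the IJK conjecture. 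This produces a (possibly discontinuous) lift $\beta$ with $|\beta(L)| \le |\beta(0)| + L$, giving the distance-decreasing inequality for $d$ exactly as in Section \ref{subsection: from local homeo to covering}.

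Next I would deduce that $\mathcal{U}$ is complete \emph{as a metric space} in the relevant sense: given a Cauchy sequence $q_n$ in $\mathcal{U}$, its images $\pi_1(q_n)$ are Cauchy in $M_1$, so by the distance-decreasing property of $d$ one gets representatives $x_n \in A$ with $|x_n|$ bounded, hence a convergent subsequence $x_n \to x_0$; one then checks $x_0 \in A$ (this is where the removal of $e_1^{-1}(e_1(\mathcal{K}))$ could bite — one needs $A$ to contain the limits of the bounded sequences that actually arise, which should follow because the lifts were constructed to stay in $\mathcal{R}\cup\mathcal{S}$, and these are stable under the limits in question, or one argues that a limit landing in $e_1^{-1}(e_1(\mathcal{K}))$ can be perturbed away) and concludes $q_n \to e(x_0)$. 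Once $\mathcal{U}$ is complete and $\pi_1$ is a local isometry, the standard fact that a local isometry from a complete Riemannian manifold is a covering map (e.g.\ via the path-lifting / Hopf--Rinow argument, lifting geodesics) finishes the proof, and symmetrically for $\pi_2$.

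The main obstacle I anticipate is \emph{not} the covering-map step itself, which is routine once completeness is in hand, but rather making the liftability-of-paths argument watertight when $\mathcal{K}$ is removed: one must be sure that a generic path in $\mathcal{U}_1 = M_1 \setminus e_1(\mathcal{K})$ can be lifted through all of its $A_2$-crossings without ever being forced into $e_1(\mathcal{K})$, and that the discrete jumps at $A_2$ points (supplied by the $\mathcal{J}$-points-are-linked part of the IJK conjecture) always land in $\mathcal{R}\cup\mathcal{S}$ rather than in $\mathcal{K}$. This is a transversality/genericity bookkeeping issue of the same flavour as the ``avoiding some obstacles'' arguments of Section \ref{section:avoiding some obstacles}, and I would handle it by the same technique: perturb the path to be generic with respect to all the relevant strata, using that $\dim e_1(\mathcal{K}) \le n-3$ so a curve (dimension $1$) generically misses it. Everything else is a transcription of the already-proved results of Section \ref{subsection: from local homeo to covering}.
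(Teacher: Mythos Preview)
Your first step (reproving the distance-decreasing property of $d$ by lifting generic paths and jumping at $A_2$ points) is fine and matches the paper. The gap is in your second step: you want to conclude that $\mathcal{U}$ is complete and then invoke the standard ``local isometry from a complete manifold is a covering''. But $\mathcal{U}$ is \emph{not} complete. Recall that if $f:M\to N$ is a surjective local isometry and $M$ is complete, then $N$ is geodesically complete as well (every geodesic of $N$ is the $f$-image of a geodesic of $M$, and those are defined for all time). Here $N=\mathcal{U}_1=M_1\setminus e_1(\mathcal{K})$ is \emph{not} complete whenever $e_1(\mathcal{K})\neq\varnothing$: a sequence in $\mathcal{U}_1$ converging in $M_1$ to a point of $e_1(\mathcal{K})$ is Cauchy in $\mathcal{U}_1$ (the intrinsic and ambient metrics agree since the removed set has codimension $\geq 3$) but has no limit in $\mathcal{U}_1$. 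Hence $\mathcal{U}$ cannot be complete either, and your hand-wave ``the limit $x_0$ stays in $A$ because lifts stay in $\mathcal{R}\cup\mathcal{S}$, or can be perturbed away'' cannot be made to work for an arbitrary Cauchy sequence in $\mathcal{U}$. Concretely, take $M_1=M_2$, $L=\mathrm{id}$: then the synthesis is $\mathcal{U}=\mathcal{U}_1$ itself, manifestly incomplete, yet $\pi_1=\mathrm{id}$ is trivially a covering.

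The paper faces this head on: it says explicitly that $\mathcal{U}_1$ and $\mathcal{U}_2$ are not complete, and therefore proves the \emph{path lifting property} for $\pi_1$ directly instead of going through completeness. Given a unit-speed path $p:[0,b]\to\mathcal{U}_1$ and a maximal lift $q:[0,t^\ast)\to\mathcal{U}$, the distance-decreasing property bounds $d(q(t))$, so one extracts representatives $x(t_n)\to x^\ast$ in $\overline{B(0,|x(0)|+t^\ast)}$; the crucial point is that $e_1(x^\ast)=p(t^\ast)\in\mathcal{U}_1$ \emph{by assumption on the path}, hence $x^\ast\in\mathcal{I}\cup\mathcal{J}$ and $q(t^\ast)=e(x^\ast)$ makes sense, extending the lift and contradicting maximality if $t^\ast<b$. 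This is exactly the step where your completeness argument breaks and the path-lifting argument succeeds: for a \emph{path in $\mathcal{U}_1$} the limit is forced to land over $\mathcal{U}_1$, whereas for an arbitrary Cauchy sequence nothing prevents it from running into $e_1(\mathcal{K})$. So the fix is to drop completeness and argue path lifting directly.
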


\begin{proof}
  Proposition \ref{d is distance-decreasing} follows as in section
  \ref{subsection: from local homeo to covering}, once we notice that for any $R$:
  \begin{enumerate}
   \item $X_{R} =  \{x\in  \mathcal{A}_{2} \cap B ( 0,R ), YC(x)\subset \NC\cup \mathcal{A}_2 \}$
  is $( n-1 )$ -rectifiable, with finite $\mathcal{H}^{n-1}$ measure  
   \item $\mathcal{H}^{n-1} \left(  e_{1} \left( V_1 \setminus \left( \NC \cup \mathcal{A}_{2} \right) \right)  \right)=0$
  \end{enumerate}

  It follows that a generic path of finite length intersects $ e_{1} \left( T_{p} M \setminus \NC \right)$ only at a finite number of points in $X_{R}$
{\footnote{Project a set $S \subset \mathbb{R}^{n}$ such that
  $\mathcal{H}^{n-1} ( S ) < \infty$ onto the orthogonal hyperplane to the
  segment $[ q_{1} ,q_{2} ]$. Using the co-area formula, we see that almost
  sure, a line parallel to $[ q_{1} ,q_{2} ]$ intersects $S$ at a finite
  number of points.}}.
  However, $\mathcal{U}_{1}$ and $\mathcal{U}_{2}$ are not
  complete, so we will prove directly that $\pi_{1}$, for example, has the
  path lifting property.

  Let $p: [0,b] \rightarrow \mathcal{U}_{1}$ be a smooth, unit speed path,
  and $q_{0} \in \pi_{1}^{-1} (p(0))$. As $\pi_{1}$ is a local homeomorphism,
  a path can always be lifted for a short time. Let $q: [ 0,t^{\ast} )
  \rightarrow \mathcal{U}$ be a lift of $p$ starting at $q_{0}$ for a maximal time
  $t^{\ast}$. The Lipschitz property of $d$ shows that $d ( q ( t ) ) <d (
  q_{0} ) +t$ for any $t<t^{\ast}$.

  First we prove that $q$ can be extended to the compact interval $[
  0,t^{\ast} ]$: For all $t<t^{\ast}$, $\exists x ( t )$ such that $e ( x ( t
  ) ) =q ( t )$ and $| x ( t ) | \leqslant | x ( 0 ) | +t$. There is a
  sequence $t_{n} \nearrow t^{\ast}$ such that $x (t_{n} )$ converges to some
  $x^{\ast} \in T_{p} M$ such that $| x^{\ast} | \leqslant | x ( 0 ) |
  +t^{\ast}$. Also, $ e_{1} ( x^{\ast} ) =p ( t^{\ast} )$, so $x^{\ast} \in
  \mathcal{I} \cup \mathcal{J}$ because $p ( t^{\ast} ) \in \mathcal{U}_{1}$.
  This way we extend $q$ by setting $q ( t^{\ast} ) =e ( x^{\ast} )$, and it
  holds that $q ( t ) \rightarrow q ( t^{\ast} )$.

  Finally, assume $t^{\ast} <b$. As we have mentioned already, we can extend
  $q$ to a path defined up to time $t^{\ast} + \varepsilon$. This completes
  the proof that $\pi_{1}$ is a covering map.

\end{proof}

The subsets $\mathcal{U}_{1}$ of $M_{1}$ and $\mathcal{U}_{2}$ of $M_{2}$ are
big enough so that the construction extends to provide a synthesis of $M_{1}$
and $M_{2}$:

\begin{prop}
  \label{extending riemannian coverings}Let $\pi :Y \rightarrow X$ be a
  Riemannian covering of Riemannian manifolds. Assume $X \subset \bar{X}$
  where $\bar{X}$ is a complete Riemannian manifold and $\mathcal{H}^{n-2} (
  \bar{X} \setminus X ) =0$.

  Then there is a unique Riemannian covering from the completion $\bar{Y}$ of
  $Y$ into $\bar{X}$ that restricts to $\pi$. In particular, $\bar{Y}$ is a
  Riemannian manifold.
\end{prop}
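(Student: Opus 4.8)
The plan is to produce the extension $\bar\pi:\bar Y\to\bar X$ in three stages: first extend $\pi$ over the complement of $\bar X\setminus X$ as a \emph{metric} map, then verify the extended map is a local isometry, and finally identify its domain with the metric completion $\bar Y$ of $Y$ and check that it is a covering. The starting observation is that, since $\pi$ is a Riemannian covering, it is a $1$-Lipschitz local isometry, so distances in $Y$ dominate distances in $X$ along images of paths; more precisely $d_{\bar X}(\pi(y_1),\pi(y_2))\le d_Y(y_1,y_2)$, and locally this is an equality. Hence $\pi$ is \emph{uniformly continuous} on the preimage of any subset of $X$ on which $d_{\bar X}$ and $d_X$ agree, and in particular $\pi$ extends uniquely to a $1$-Lipschitz map $\bar\pi$ from the completion $\bar Y$ of $(Y,d_Y)$ into the completion $\bar X$ of $(X,d_X)$; but by hypothesis $\bar X$ is already complete and the inclusion $X\subset\bar X$ is an isometric embedding onto a dense subset (density follows from $\mathcal H^{n-2}(\bar X\setminus X)=0$, which forces $\bar X\setminus X$ to have empty interior), so the completion of $(X,d_X)$ is $\bar X$ and we get $\bar\pi:\bar Y\to\bar X$ restricting to $\pi$. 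Uniqueness is automatic from density of $Y$ in $\bar Y$ together with continuity.

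The heart of the argument is to show $\bar\pi$ is a \emph{local isometry} near a point $\bar y$ with $\bar\pi(\bar y)=\bar x\in\bar X\setminus X$. Here I would use the codimension hypothesis $\mathcal H^{n-2}(\bar X\setminus X)=0$ exactly as it is used elsewhere in this thesis (cf. the Stokes/Federer arguments and the path-lifting arguments of the previous section): a set of vanishing $\mathcal H^{n-2}$ measure is \emph{relatively locally connected}, meaning that small balls in $\bar X$ stay connected after removing $\bar X\setminus X$ --- more precisely, for any $\bar x$ and small $\varepsilon$, the set $B_{\bar X}(\bar x,\varepsilon)\cap X$ is connected, and moreover any two points in it can be joined by a path in $X$ of length close to their $\bar X$-distance (generic paths avoid a $\mathcal H^{n-2}$-null set). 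Fix a small geodesically convex ball $B=B_{\bar X}(\bar x,\varepsilon)$ in $\bar X$. Over $B\cap X$ the covering $\pi$ trivializes after possibly passing to connected components, because $B\cap X$ is connected and its fundamental group maps to that of $X$; each sheet is a local isometry onto $B\cap X$, and since $B\cap X$ is dense in $B$ with full measure complement removed, each sheet extends by the completion argument to an isometry of a copy of $B$ onto $B$. The fibre $\bar\pi^{-1}(\bar x)$ is then in bijection with the (at most countable) set of these sheets, giving both the covering structure and the local-isometry / smooth-manifold structure on $\bar Y$ near $\bar y$: pull back the Riemannian metric of $B$ through the local homeomorphism $\bar\pi$.

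Assembling these local pictures, $\bar Y$ becomes a Riemannian manifold (the charts agree on overlaps because $\bar\pi$ is globally defined and a local isometry), $\bar\pi$ is a local isometry, and it is a covering map: it is a local homeomorphism, surjective (its image is open, closed by completeness of $\bar Y$ mapping properly, and $\bar X$ is connected), and it has the path-lifting property, which one checks exactly as in Proposition \ref{d is distance-decreasing} and the proposition preceding this statement --- lift a generic finite-length path, which meets $\bar X\setminus X$ in a set of measure zero and can be pushed to a path in $X$, use the existing lifting over $X$, and close up at the finitely many bad parameters using the local models just constructed. The main obstacle, and the step deserving the most care, is the claim that $B_{\bar X}(\bar x,\varepsilon)\cap X$ is connected and path-connected by short paths inside $X$ --- i.e. that removing a $\mathcal H^{n-2}$-null set does not disconnect small balls and does not significantly increase intrinsic distances; this is where the precise meaning of the (deliberately imprecise) codimension hypothesis must be pinned down, and it is the only place where $n-2$ rather than $n-1$ is essential, since a codimension-one set would separate balls and the statement would be false.
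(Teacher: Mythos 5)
There is a genuine gap at the decisive step, namely the claim that the covering trivializes over $B\cap X$ ``because $B\cap X$ is connected and its fundamental group maps to that of $X$''. Connectedness of $B\cap X$ only tells you that each connected component of $\pi^{-1}(B\cap X)$ is itself a covering space of $B\cap X$; it does not make each component a single sheet mapping homeomorphically onto $B\cap X$. For that you need $B\cap X$ to be \emph{simply connected} (or at least that $\pi_1(B\cap X)$ maps into the subgroup of $\pi_1(X)$ classifying the covering, which for an arbitrary covering forces triviality). This is exactly the content of the hypothesis of Lemma \ref{lem: extension of a covering map}, which is the route the paper takes: it first proves a purely topological extension lemma whose hypothesis is that $V\cap X$ is non-empty and simply connected for every simply connected open $V\subset\bar X$, and then verifies that hypothesis from $\mathcal{H}^{n-2}(\bar X\setminus X)=0$ by dimension theory (generic $2$-discs, not merely generic paths, can be pushed off an $\mathcal{H}^{n-2}$-null set). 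Your closing sentence shows a misreading of where the exponent $n-2$ enters: connectivity of $B\cap X$, and the agreement of intrinsic and ambient distances, already follow from the weaker hypothesis $\mathcal{H}^{n-1}(\bar X\setminus X)=0$; the full strength of $\mathcal{H}^{n-2}$-nullity is needed precisely for simple connectivity.

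To see that the gap is fatal rather than cosmetic, take $\bar X=\mathbb{R}^3$, let $\ell$ be a line, $X=\mathbb{R}^3\setminus\ell$, and let $Y$ be the universal cover of $X$. Here $\bar X\setminus X$ is $\mathcal{H}^{n-1}$-null, every small ball $B$ meeting $\ell$ has $B\cap X$ connected and joinable by short paths inside $X$, and every step of your argument up to the trivialization goes through verbatim; yet $\pi$ does not trivialize over $B\cap X$, the metric completion of $Y$ is not a manifold, and no extension to a Riemannian covering of $\mathbb{R}^3$ exists. So an argument invoking only connectivity of $B\cap X$ cannot be correct. The remainder of your outline (extending $\bar\pi$ to completions via the Lipschitz property, pulling back the metric through the local homeomorphism, path lifting for the covering property) is sound once the trivialization is secured, and would give a reasonable metric-geometry alternative to the paper's general-topology construction, but the simple-connectivity step must be stated and justified.
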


The proposition follows from the following general topology lemma:

\begin{lem}\label{lem: extension of a covering map}
  Let $\pi :Y \rightarrow X$ be a covering map of locally simply-connected spaces. 
  Assume $X \subset \bar{X}$ where $\bar{X}$ is a locally simply-connected space such that the   intersection of any non-empty simply-connected open set $V \subset \bar{X}$ with $X$ is non-empty and simply-connected.

  Then there is a locally simply-connected space $\bar{Y}$ and maps $i:Y
  \rightarrow \bar{Y}$, $\bar{\pi} : \bar{Y} \rightarrow \bar{X}$ such that:
  \begin{itemize}
    \item $\bar{\pi}$ is a covering map

    \item $\bar{\pi} \circ i= \pi$

    \item For any simply-connected non-empty set $O \subset \bar{Y}$, $Y \cap
    i^{-1} ( O )$ is non-empty and simply-connected
  \end{itemize}
  Moreover, the space $\bar{Y}$ that we construct has the following universal property:

  Let $\tilde{Y} \nocomma , \tilde{\pi} , \tilde{i}$ satisfy the above
  properties. Then $\tilde{Y}$ is a covering space of $\bar{Y}$, with a
  covering map $\rho : \tilde{Y} \rightarrow \bar{Y}$ such that $\bar{\pi}
  \circ \rho = \tilde{\pi}$ and $\rho \circ \tilde{i} =i$.

  Thus $\bar{Y}$ is characterized by the above properties up to isomorphism.
\end{lem}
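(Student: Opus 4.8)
The statement is the standard fact that a covering map $\pi:Y\to X$ extends over an ambient space $\bar X$ obtained by adding a ``small'' (in the sense of the simple-connectivity hypothesis) set, together with the universal property of that extension. I would construct $\bar Y$ directly from the fundamental-groupoid / deck-transformation picture, and then verify the three listed properties and the universal property. First I would set up notation: fix a base point $x_0\in X$ (which is also a base point of $\bar X$), and note that by the hypothesis the inclusion $X\hookrightarrow \bar X$ induces a surjection $\pi_1(X,x_0)\twoheadrightarrow\pi_1(\bar X,x_0)$; indeed any loop in $\bar X$ can be homotoped (using a cover of $\bar X$ by simply-connected opens whose intersections with $X$ are simply-connected) into $X$, and any nullhomotopy in $\bar X$ of a loop in $X$ can likewise be pushed into $X$. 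So $\pi_1(\bar X,x_0)$ is a quotient of $\pi_1(X,x_0)$.

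\textbf{Construction of $\bar Y$.} Connected coverings of $X$ correspond to subgroups $H\le \pi_1(X,x_0)$; $Y$ corresponds to some such $H$ (treat each connected component separately, or fix a component and a point $y_0$ over $x_0$). Let $K=\ker(\pi_1(X,x_0)\to\pi_1(\bar X,x_0))$. I would define $\bar Y$ to be the connected covering of $\bar X$ corresponding to the subgroup $\bar H = (\text{image of } H \text{ in } \pi_1(\bar X,x_0)) = HK/K \le \pi_1(\bar X,x_0)$, repeated over the set of components. Concretely, $\bar Y$ can be realised as the quotient of the universal cover $\widetilde{\bar X}$ by $\bar H$. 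There is a natural map $i:Y\to\bar Y$: a point of $Y$ is an endpoint of a lift of a path in $X$ from $x_0$, and the same path viewed in $\bar X$ has a lift to $\bar Y$; the surjection on $\pi_1$ and the inclusion $HK\supseteq H$ make this well defined, continuous, and compatible with $\pi$ and $\bar\pi$, where $\bar\pi:\bar Y\to\bar X$ is the covering projection. The key point to check here is that $i$ is an open embedding onto $\bar\pi^{-1}(X)$: this is precisely where one uses that $X\cap V$ is connected and simply connected for $V$ simply connected, so that $\bar\pi^{-1}(X)\to X$ is again a covering with the \emph{same} monodromy as $\pi$, hence isomorphic to $\pi$ over each component.

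\textbf{Verification of the three properties, and the universal property.} With this construction: (1) $\bar\pi$ is a covering map by construction. (2) $\bar\pi\circ i=\pi$ holds because $i$ is built from path lifting compatibly with the projections. (3) For a simply-connected nonempty $O\subset\bar Y$, its image $\bar\pi(O)$ lies in a simply-connected sheet, so $\bar\pi(O)\cap X$ is nonempty and simply connected by hypothesis; then $Y\cap i^{-1}(O) = i^{-1}(O)$ maps homeomorphically to $\bar\pi(O)\cap X$ via $\bar\pi\circ i=\pi$ restricted to a sheet, hence is nonempty and simply connected. For the universal property: given $(\tilde Y,\tilde\pi,\tilde i)$ with the analogous properties, $\tilde\pi$ is a covering of $\bar X$, so it corresponds to a subgroup $\tilde H\le\pi_1(\bar X,x_0)$; the third property for $\tilde Y$ (nonempty, simply-connected preimages of simply-connected sets) forces $\tilde\pi^{-1}(X)\to X$ to be a covering with the same monodromy as $\pi$, hence $\tilde H$ must contain $\bar H$ — equivalently $\tilde Y$ covers $\bar Y$. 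The covering map $\rho:\tilde Y\to\bar Y$ is the one induced by $\bar H\subseteq\tilde H$ on the corresponding coverings of $\bar X$; it satisfies $\bar\pi\circ\rho=\tilde\pi$ by construction, and $\rho\circ\tilde i=i$ by uniqueness of lifts (both sides are lifts of $\pi:Y\to X\hookrightarrow\bar X$ through $\bar\pi$ agreeing at one point of each component). Uniqueness of $\bar Y$ up to isomorphism then follows formally from the universal property in the usual way.

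\textbf{Main obstacle.} I expect the delicate step to be the claim that $\bar\pi^{-1}(X)\to X$ is isomorphic to the original covering $\pi:Y\to X$ — i.e. that restricting the extended covering back over $X$ recovers exactly what we started from, with no new components appearing and no monodromy lost. This is where the hypothesis ``$V\cap X$ nonempty and simply connected for every simply-connected $V\subset\bar X$'' does all the work: it guarantees both that $X$ is dense-enough to be connected inside each sheet of $\bar Y$ (no spurious components) and that loops in $X$ that become trivial in $\bar X$ were already detected, so that $HK\cap\pi_1(X) $ acts the same way; one must phrase this carefully in terms of the monodromy representation $\pi_1(X,x_0)\to\mathrm{Sym}(\text{fibre})$ factoring through $\pi_1(\bar X,x_0)$. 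Once that lemma about $\bar\pi^{-1}(X)$ is established the rest is bookkeeping. In the intended application (Proposition \ref{extending riemannian coverings}), $\bar X$ is a complete Riemannian manifold, $\bar X\setminus X$ has $\mathcal H^{n-2}$ measure zero, and local simple connectivity of such pairs is classical (a small metric ball minus a set of codimension $\ge 2$ is connected and simply connected), so the topological lemma applies and one transports the Riemannian structure via $\bar\pi$ and completeness via the metric pulled back from $\bar X$.
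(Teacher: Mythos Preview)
Your approach is genuinely different from the paper's. The paper builds $\bar Y$ by hand as a set of equivalence classes of triples $(x,V,A)$ with $x\in\bar X$, $V\subset\bar X$ a simply-connected neighbourhood of $x$, and $A\subset Y$ a sheet of $\pi$ over $V\cap X$; two triples are identified when they share a sheet over a smaller simply-connected set. Basic open sets are $O_{V,A}=\{[(x,V,A)]:x\in V\}$, and $i,\bar\pi$ are the obvious maps. This is an \'etal\'e-space style construction that never mentions $\pi_1$ or base points and works component by component automatically. Your route via the classification of coverings by subgroups of $\pi_1$ is more conceptual and makes the answer transparent (once one knows $\pi_1(X)\cong\pi_1(\bar X)$, the extended cover is simply the cover of $\bar X$ with the same subgroup), at the cost of invoking that classification and having to handle components and base points explicitly. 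Both are valid; the paper's is more self-contained for the intended Riemannian application.

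There is one genuine slip in your universal-property paragraph. You write that the third property forces ``$\tilde H$ must contain $\bar H$ --- equivalently $\tilde Y$ covers $\bar Y$''. That equivalence is backwards: $\bar H\subseteq\tilde H$ gives a covering $\bar Y\to\tilde Y$, not $\tilde Y\to\bar Y$. In fact, your own earlier observation that null-homotopies in $\bar X$ of loops in $X$ can be pushed back into $X$ already says $K=0$, i.e.\ $\pi_1(X)\to\pi_1(\bar X)$ is an isomorphism; and the three properties applied to $\tilde Y$ force $\tilde i$ to identify $Y$ homeomorphically with $\tilde\pi^{-1}(X)$, so $\tilde H=\bar H=H$ under that isomorphism and $\rho$ is an isomorphism, not merely a covering. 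So the conclusion is stronger than you state, and the direction error is harmless once you use $K=0$ consistently --- but as written the argument for the direction of $\rho$ does not parse. Clean that up and your proof goes through.
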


\begin{proof}
  The space is built as equivalence classes of pairs $( x \nocomma ,V,A )$,
  where $x \in \bar{X}$, $V \subset \bar{X}$ is a simply-connected
  neighborhood of $x$, $A \subset Y$ and $\pi |_{A} \nobracket :A \rightarrow
  U=V \cap X$ is a homeomorphism.

  Two pairs $( x \nocomma ,V,A )$ and $( x' \nocomma ,V' ,A' )$ are equivalent
  iff $x=x'$ and there is an open simply connected set $V''\subset V\cap V'$ such that $A \cap A' \cap \pi^{-1}(V'') \neq \varnothing$.

  The basis open sets of $Y$ are the sets $O_{V,A} = \{ [ ( x,V,A ) ] \nocomma
  ,x \in V \}$, for any $V \subset \bar{X}$ open simply-connected and $A
  \subset Y$ open simply-connected such that $\pi ( A ) =V \cap X$. It follows
  that $A$ is one of the connected components of $\pi^{-1} ( V \cap X )$.

  The map $i$ is defined by $i ( y ) = [ ( \pi ( y ) , \pi ( A ) ,A ) ]$,
  where $A \subset Y$ is any simply-connected open neighborhood of $y$.

  The map $\bar{\pi}$ is given by $\bar{\pi} ( [ ( x,V,A ) ] ) =x$.

  Let $V \subset \bar{X}$ be an open simply-connected set. 
  Its preimage by $\pi$ consists of all the classes $[ ( x,V,A_{i} ) ]$, where $x \in V$ and $A_{i}$
  is one of the connected components of $\pi^{-1} ( V \cap X )$ (each of which
  is homeomorphic to $V \cap X$, because it is simply-connected). 
  There are no more classes: let $[ ( x' ,V' ,A' ) ]$ be a class with $x' \in V$. 
  Then $V\cap V'$ is a neighborhood of $x'$ which contains a simply connected neighborhood $V''$ of $x'$.
  
  As $\pi |_{A'} $ is a homeomorphism, there is a point $y \in A' \cap \pi^{-1} ( V'' ) \neq
  \varnothing$. 
  As $\pi ( y ) \in V$, $y$ belongs to one of the $A_{i}$ and thus $[ ( x' ,V' ,A' ) ] = [ ( x' ,V,A_{i} ) ]$.

  The sets $O_{V,A_{i}} = \{ [ ( x,V,A_{i} ) ] \nocomma \nocomma x \in V \}$, where $V$ is fixed and $A_{i}$ are the connected components of $\pi^{-1} ( V )$, therefore they are open and disjoint. 
  The map $\bar{\pi}$ restricts to an homeomorphism from each $O_{V,A \nocomma_{i}}$ onto $V$ (an open set $O$ contained in $O_{V,A \nocomma_{i}}$ is of the form $O_{V' ,A_{i} \cap \pi^{-1} ( V' \cap X )}$
  for $V'=\pi(O) \subset V$).
  In particular, each $O_{V,A \nocomma_{i}}$ is connected, and thus $\pi^{-1} ( V ) = \sqcup O_{V,A_{i}}$ satisfies the stack property.

  The third property follows because $\bar{\pi}$ is an homeomorphism when restricted to a simply-connected set.

  In order to prove the universal property, let $\tilde{Y} \nocomma ,
  \tilde{\pi} , \tilde{i}$ satisfy the stated properties. 
  For a point $y \in \tilde{Y}$, we define $\rho ( y ) = [ ( \tilde{\pi} ( y ) , \tilde{\pi} (\tilde{V}) ,
  \tilde{i}^{-1} ( \tilde{V}) ) ]$ , where $\tilde{V}$ is a simply-connected neighborhood of $y$.

  We check that $\rho \circ \tilde{i} =i$: $i ( y ) = [ ( \pi ( y ) , \pi ( A
  ) ,A ) ]$ for a simply connected neighborhood $A$ of $y$, and $\rho (
  \tilde{i} ( y ) ) = [ ( \tilde{\pi} ( \tilde{i} ( y ) ) , \tilde{\pi}(\tilde{V}) ,
  \tilde{i}^{-1} ( \tilde{V} ) ) ]$ for a simply connected neighborhood
  $\tilde{V}$ of $\tilde{i} ( y )$. 
  The two points are the same because $y \in A \cap \tilde{i}^{-1} ( \tilde{V} )$, which a non-empty open set which contains a simply-connected neighborhood of $y$.
  It is trivial to check that $\bar{\pi} \circ \rho = \tilde{\pi}$.

  Let $\tilde{V} \subset \tilde{Y}$ be simply-connected. 
  It follows from $\bar{\pi} \circ \rho = \tilde{\pi}$ that $\rho$ is an homeomorphism when
  restricted to $\tilde{V}$.
\end{proof}

Proof of \ref{extending riemannian coverings}: The topological spaces $X$ and
$Y$ satisfy the hypothesis of the lemma by standard results of dimension theory (see {\cite{Hurewicz_Wallman_Dimension_Theory}}).

\begin{remark}
 We asked for suggestions about the general topology lemma \ref{lem: extension of a covering map} on the algebraic topology list ALGTOP-L.
 
 Ben Wieland suggested that the lemma is also true, and more natural, if the hypothesis is that $\bar{X}$ admits a basis $\mathcal{B}$ of simply-connected open sets such that the intersection of any basis set with $X$ is non-empty and simply-connected.
\end{remark}

\subsection{Terminal points for the conjugate descending flow}

So it remains to prove the IJK conjecture \ref{points I, J, K}.
The first task, of course, is to define \emph{terminal points} precisely:

\begin{dfn}
 A point $x$ is terminal if there is no CDC starting at that point.
\end{dfn}

We list some conjectures related to conjecture \ref{points I, J, K}:

\begin{conjecture}
 All terminal points of order $1$ are unequivocal.
\end{conjecture}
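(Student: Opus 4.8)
The statement to be proved is that every terminal point $x$ of order $1$ is unequivocal, where ``terminal'' means no conjugate descending curve (CDC) starts at $x$. The plan is to imitate the argument of subsection \ref{section: A3 are inequivocal} that showed $A_3(I)$ first conjugate points are unequivocal, extracting the purely local input that the proof really used. Recall that the $A_3(I)$ argument only needed three ingredients: (a) in a small neighborhood $O$ of $x$ the conjugate set $\mathcal{C}$ is a smooth hypersurface bounding a region $A\subset O$ on whose closure $\e$ is injective and maps $\bar A$ onto a set with $\e(O\cap V_1)$ open; (b) the ``missing'' part $\partial A\setminus\mathcal{C}$ is mapped by $\e$ into the image of $\mathcal{C}$, so that pairs of points of $\partial A$ with the same $\e$-image are joined by a short tree-formed curve inside $\bar A$; and (c) $\bar A\subset V_1$, which is exactly where the hypothesis that the point is ``$A_3(I)$'' rather than ``$A_3(II)$'' entered. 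The terminality hypothesis is the natural replacement for (c): I would show that at a terminal point of order $1$, the radius is non-decreasing along $\mathcal{C}$ away from $x$, so that a neighborhood $O$ can be chosen with $\bar A\subset V_1$.

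\textbf{Key steps, in order.} First, establish the local normal form at a first conjugate point of order $1$: by the special coordinates of subsection \ref{subsection: special coordinates}, $\e$ reads $(x_1,\dots,x_n)\mapsto(x_1,\dots,x_{n-1},F_n(x))$ with $\mathcal{C}$ a smooth hypersurface through the origin, $\ker d_x\e=\langle\partial_{x_n}\rangle$, and $r$ transversal to $\mathcal{C}$ by Gauss' lemma. Second, use the definition of the distribution $D=(\ker dF\oplus\langle r\rangle)\cap T\mathcal{C}$ from \ref{1d distribution at conjugate points} and the definition of terminal point: the integral curve of $D$ through $x$ consists of two arcs, and ``terminal'' forces both arcs to have radius $\geq |x|$ near $x$ (the CDC is the arc along which the radius decreases, so no CDC at $x$ means neither arc descends). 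Arguing exactly as in subsection \ref{subsection: CDCs in adapted coords near A3} — the curve $\{R=R(x)\}\cap P$ inside the $2$-plane $P$ spanned by $D$ and $r$ cannot cross $\{R=R(x_0)\}\cap P$, and its tangent is close to the non-radial coordinate direction — I get $R(y)\geq R(x)$ for all $y\in\mathcal{C}\cap O$ once $O$ is small enough; hence $\bar A\subset V_1$, giving (c). Third, set $A=\{x_2<\gamma\}$ in adapted coordinates exactly as in subsection \ref{section: A3 are inequivocal} (here with a general defining function for $\mathcal{C}$, but one may flatten $\mathcal{C}$ so $A=\{F_n\text{-type inequality}\}$ and the missing set is $\{x_n=0,\ \text{one side}\}$), verify (a) and (b) — (b) holds because the $2\!:\!1$ fold structure of a first-order conjugate point means the two preimages of a point in $\e(O)\setminus\e(\mathcal{C})$ lie symmetrically across $\mathcal{C}$ and are joined by a short arc inside $\bar A$ whose $\e$-image is tree-formed. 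Fourth, conclude as in Hebda's Lemma~2.1 (cited in that subsection) and in the definition of \emph{unequivocal}: the two local isometries $\e_2\circ(\e_1|_{O_i})^{-1}$ from a neighborhood of $\e_1(x)$ into $M_2$ agree on the image of $\partial A$, have equal differential there (they are linear isometries agreeing on a hyperplane and both orientation-preserving), hence coincide; and the radial-segment argument extends the agreement from $\bar A$ to all of $O$. This produces, for every pointed $(M_2,p_2)$ that is $L$-related to $(M_1,p_1)$, an isometry $\varphi\colon\e_1(O\cap V_1)\to\e_2(O\cap V_1)$ with $\varphi\circ\e_1=\e_2$, which is precisely Definition~\ref{definition:inequivocal}.

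\textbf{Main obstacle.} The delicate point is not the topology of the gluing but step two: proving that ``terminal'' (a statement about nonexistence of a CDC) really forces $\bar A\subset V_1$ in an arbitrary metric, without the convenient canonical forms available for generic metrics. In the generic case $\mathcal{C}$ is a smooth hypersurface near an order-$1$ point and the conjugate distribution $D$ is a smooth line field, so the integral-curve-of-$D$ argument is clean; but for a general smooth metric $\mathcal{C}$ near a first conjugate point of order $1$ is still smooth (Warner), and $D$ is still smooth there, so the argument should go through — the subtlety is controlling how far the neighborhood $O$ must shrink, i.e.\ ruling out that $\mathcal{C}$ dips below radius $|x|$ arbitrarily close to $x$ while still admitting no descending $D$-arc at $x$ itself. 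I expect this to follow from a compactness/continuity argument on $\{R=R(x)\}\cap\mathcal{C}$ together with the transversality of $r$ to $\mathcal{C}$, but making it airtight — and handling the boundary case where $x$ lies on the frontier between terminal and non-terminal order-$1$ points — is where the real work lies. A secondary, more bookkeeping obstacle is checking (b) carefully when $\mathcal{C}$ is not a coordinate hyperplane, i.e.\ that the two $\e$-preimages of a near point of $\e(x)$ are genuinely joined by a curve staying inside $\bar A\subset V_1$; this should reduce to the order-$1$ fold normal form after a coordinate change, as in the $A_2$ and $A_3$ analyses.
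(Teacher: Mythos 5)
You should first be aware that the paper does not prove this statement: it appears in the closing chapter of open problems, explicitly labelled a conjecture, and the companion characterization you would need (that a terminal point of order $1$ has a neighborhood $U$ with $e_1(U)$ a neighborhood of $e_1(x)$) is itself listed there as a separate open conjecture (\ref{conjecture: terminal points are open}). So there is no proof in the paper to compare against, and your proposal must be judged on its own.

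Judged that way, it has a genuine gap exactly where you place the ``main obstacle'', but the gap is worse than you estimate. Your step (b) invokes ``the $2\!:\!1$ fold structure of a first-order conjugate point'', but a terminal point of order $1$ is never an $A_2$ point: by Lemma \ref{unbeatable lemma} every $A_2$ point is the starting point of a CDC, so terminality forces $\ker d_xF\subset T_x\mathcal{C}$. At such a point the fold normal form does not apply, $D_x$ collapses to $\ker d_xF$ (which is tangent to the sphere $\{R=R(x)\}$ by the Gauss lemma), and for a non-generic metric there is no control on the number of preimages of points near $e_1(x)$, nor any reason for them to pair off ``symmetrically across $\mathcal{C}$'' and be joined by short tree-formed arcs inside $\bar A$. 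The $A_3(I)$ argument of \ref{section: A3 are inequivocal} that you are imitating gets all of this from the explicit canonical form $(x_1,\dots,x_n)\mapsto(x_1^3\pm x_1x_2,x_2,\dots,x_n)$, which exists only for metrics in $\mathcal{G}_M$; it is not a consequence of ``order $1$ and no descending $D$-arc''. Moreover, ``terminal'' is strictly weaker than ``radius non-decreasing along $\mathcal{C}$ near $x$'': a CDC is required to consist of $A_2$ points, so $x$ can be terminal because every descending direction in $\mathcal{C}$ immediately runs into non-$A_2$ conjugate points (which, for an arbitrary metric, may form an arbitrary closed subset of the hypersurface $\mathcal{C}$, not a curve transversal to $D$). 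In that situation your step two fails and $\bar A\subset V_1$ cannot be concluded. Until one either strengthens the definition of terminal point or proves the open characterization \ref{conjecture: terminal points are open}, the proposal does not close.
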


\begin{conjecture}
 All terminal points are unequivocal.
\end{conjecture}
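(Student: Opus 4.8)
The plan is to mimic the argument of Section~\ref{section: A3 are inequivocal}, where $A_3(I)$ first conjugate points were shown to be unequivocal, but to carry it out with only the abstract hypothesis that \emph{no conjugate descending curve (CDC) starts at $x$}, rather than a specific normal form. Fix a terminal point $x \in V_1$ and a small neighborhood $O$ of $x$. I would first show that $\overline{O} \subset V_1$ after shrinking $O$: by Lemma~\ref{landa es Lipschitz} the functions $\lambda_j$ are Lipschitz, and since $x$ is terminal the conjugate locus near $x$ has no ``outgoing'' branch for the radial flow, so the $A_2$-sheet (and the higher-order strata it bounds) lies entirely on the side of $x$ with radius at least $|x|$. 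This is the exact analogue of the step in Section~\ref{subsection: CDCs in adapted coords near A3} where $A_3(I)$ points were shown to be terminal precisely because $R(y) \geq R(x_0)$ for conjugate $y$ near $x_0$; here I would need to run that implication in reverse. Granting this, $e_1(O \cap V_1)$ and $e_2(O \cap V_1)$ are open.

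Next I would produce the local identification map. Let $A \subset O$ be the open set on which $e_1|_A$ is injective; since $x$ is terminal, $\overline{A} \subset V_1$ and $e_1(A) = e_1(O \cap V_1)$, with only conjugate points of $O$ omitted. For a pair of cousins $z, \bar z \in \overline{A}$ with $e_1(z) = e_1(\bar z)$, the segment of $\mathrm{TCut}$ (or more precisely, of the conjugate locus) joining them maps under $e_1$ to a tree-formed curve exactly as in the two-dimensional case and in Section~\ref{section: A3 are inequivocal}; the development/inverse-development argument of Proposition~\ref{main properties of linking curves} then gives $e_2(z) = e_2(\bar z)$. This defines $\varphi: e_1(O\cap V_1) \to e_2(O\cap V_1)$ by $\varphi(p) = e_2(a)$ for any $a \in \overline{A}$ with $e_1(a)=p$, unambiguously. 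To see $\varphi$ is an isometry one argues as in lemma~2.1 of~\cite{Hebda}: on $A$ it is $e_2 \circ (e_1|_A)^{-1}$, an isometric immersion by Cartan's theorem~\ref{Cartan's theorem}; two such local isometries associated to neighborhoods of a cousin pair agree because they map $p$ to the same point and have the same differential there, the differentials being linear isometries agreeing on the image of $\partial A$ (the tangent to the conjugate locus, which contains $\ker de_1$ by Gauss' lemma) and both orientation-preserving; finally $\varphi\circ e_1 = e_2$ extends from $\overline{A}$ to all of $O \cap V_1$ by following radial geodesics, which both $\varphi\circ e_1$ and $e_2$ send to geodesics with the same length and initial data.

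The main obstacle I anticipate is the reversal step: from ``no CDC starts at $x$'' deduce that the conjugate locus near $x$, together with all the strata it supports (which in a non-generic metric need not belong to the finite list $A_2, A_3, A_4, D_4^\pm$), sits on the far side of $x$ and that $e_1|_A$ has the clean 2-to-1-minus-conjugate-points structure used above. For first-order conjugate points this should be manageable by the slack/unbeatability analysis of Lemma~\ref{slack lemma} and the picture in Figure~\ref{figure: near an A3 point}. For higher-order terminal points it is genuinely delicate, because the conjugate locus can be a complicated stratified set and one must rule out cousins of $x$ that are themselves conjugate of lower radius --- that is, one must also know something like ``$\mathrm{YC}(x) \subset \mathcal R$'' holds automatically for terminal $x$. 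I expect this is why the statement is posed as a conjecture rather than proved: a full argument likely needs either an approximation by generic metrics (invoking~\cite{Weinstein}, \cite{Buchner Stability}, \cite{Klok} and a limiting argument) or a direct structural study of the conjugate locus near an arbitrary terminal singularity of the kind begun in Section~\ref{section: CDCs in adapted coords}.
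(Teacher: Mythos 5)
This statement is posed in the paper as an open conjecture in Chapter~\ref{chapter: beyond}; no proof is given there, so there is nothing to compare your argument against except the special case of Section~\ref{section: A3 are inequivocal}, which establishes the claim only for $A_3(I)$ first conjugate points of a generic metric. Your proposal is a reasonable program for attacking the general case, and you have correctly identified where it breaks down, but as written it is a sketch with unfilled gaps rather than a proof, and the gaps are exactly the ones that make this a conjecture.

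Concretely: the ``reversal step'' is not a technicality but the entire content of the statement. For $A_3(I)$ points the inclusion $\overline{A}\subset V_1$ and the two-to-one-minus-conjugate-points structure of $e_1|_O$ are read off from the explicit normal form; for an arbitrary terminal point you have no normal form, and the hypothesis ``no CDC starts at $x$'' only constrains the distribution $D$ of \eqref{1d distribution at conjugate points} \emph{at points where it is defined}, namely order-$1$ conjugate points. At a terminal point of order $\geq 2$ the conjugate locus need not be a hypersurface, $D$ is undefined, and nothing in the terminal hypothesis prevents cousins of $x$ of smaller radius that are themselves badly conjugate --- which is why the paper separately isolates the condition $\mathrm{YC}(x)\subset\mathcal{R}\cup\mathcal{S}$ in the definition of $\mathcal{J}$ rather than deriving it. Even at order $1$, Warner's result gives you a smooth conjugate hypersurface, but the step from ``$R(y)\geq R(x)$ for conjugate $y$ near $x$'' to ``$e_1(O\cap V_1)$ is open and $\varphi$ is well defined'' uses the count of preimages on each side of $e_1(\mathcal{C})$, which for a non-generic metric is not controlled by any finite list of singularities. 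Finally, the curve you propose to use to identify a cousin pair (``the segment of the conjugate locus joining them'') must be shown to exist, to be absolutely continuous, to stay in $V_1$, and to be a limit of curves lying in a set on which $e_1$ is injective, for \emph{every} $L$-related $(M_2,p_2)$; this is precisely the linking-curve machinery, whose existence for arbitrary metrics is the other open half of the IJK conjecture. So the proposal correctly reproduces the shape of the intended argument but does not close any of the steps that are actually open; a complete proof would need either the structural study of arbitrary terminal singularities begun in Section~\ref{section: CDCs in adapted coords} or a limiting argument from generic metrics, neither of which is carried out here.
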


\begin{conjecture}\label{conjecture: terminal points are open}
A point $x$ of order $1$ is terminal iff it has a neighborhood $U$ of special coordinates such that $ e_{1} ( U )$ is a neighborhood of $ e_{1} ( x )$.
\end{conjecture}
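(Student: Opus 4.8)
\textbf{Proof proposal for Conjecture \ref{conjecture: terminal points are open}.}

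The plan is to argue both implications separately, both of them essentially local statements about a point $x$ of order $1$ in special coordinates. Recall that near a first conjugate point of order $1$ we have the normal form $F(x_1,\dots,x_n)=(x_1,x_2,\dots,x_{n-1},F_n(x))$ as in \eqref{local form for order 1}, that $\mathcal{C}$ is a smooth hypersurface in a neighborhood of $x$, that by the Gauss lemma (or property 1 of \ref{regular exponential map}) $r$ is not contained in $\ker dF$, and that the slack $A_y$ (the sine of the angle between $D_y$ and $\ker d_y e_1$) is defined and nonnegative on the hypersurface of conjugate points, vanishing exactly outside the $A_2$ stratum. Thus the conjugate set decomposes locally into the (open) $A_2$ points, where $D$ is a smooth line field transversal to $\ker de_1$, and the terminal points, where $D\subset \ker de_1$ (equivalently, $\ker de_1$ is tangent to $\mathcal{C}$). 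The key geometric fact I would isolate first is: \emph{a CDC starting at $x$ exists if and only if $x$ lies in the closure of the $A_2$ stratum and $D$ can be continued as a descending curve into it}; more precisely, if $x$ is $A_2$ then \ref{unbeatable lemma} already gives a unique CDC, so the only points in question are the limit points of the $A_2$ stratum.

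For the implication ``terminal $\Rightarrow$ $e_1(U)$ is a neighborhood of $e_1(x)$'': suppose $x$ is terminal, so no CDC starts at $x$. Following the argument in Section \ref{subsection: CDCs in adapted coords near A3} (which is stated for $A_3$ points but whose mechanism is general), I would analyze the $2$-plane $P$ spanned by $r$ and $\ker de_1$ at $x$, transported by the radial flow, and look at the curve $\{y:R(y)=R(x)\}\cap P$. Because $\ker de_1$ is tangent to $\mathcal{C}$ at $x$ (terminality) and the sphere of constant radius is orthogonal to $r$, if the conjugate set locally lay \emph{below} the level sphere $\{R=R(x)\}$ in the $P$-picture, that level curve would be forced to turn vertical, contradicting that its tangent is close to the non-radial coordinate direction — exactly the contradiction drawn in Figure \ref{figure: near an A3 point}. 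Hence all nearby conjugate points have radius $\geq R(x)$, i.e. the fold/branch opens \emph{upward}: on the side of $\mathcal{C}$ containing the origin, $F$ (equivalently $e_1$) is locally onto a full neighborhood of $e_1(x)$, while only points with one-sided "interior" coordinate are missing and those still map below. This is the same computation that showed $\bar{A}\subset V_1$ and $e_1(O\cap V_1)$ open at an $A_3(I)$ point in Section \ref{section: A3 are inequivocal}, carried out without assuming a canonical form.

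For the converse ``$e_1(U)$ is a neighborhood of $e_1(x)$ $\Rightarrow$ $x$ terminal'': suppose, for contradiction, that $x$ is \emph{not} terminal, so there is a CDC $\gamma:[0,t_0)\to\mathcal{C}$ with $\gamma(0)=x$ and $\langle\gamma',r\rangle<0$. By \ref{unbeatable lemma}, $R$ is strictly decreasing along $\gamma$, and $|\gamma(0)|-|\gamma(s)|=\operatorname{length}(e_1\circ\gamma|_{[0,s]})>0$. I would then show that points of $\mathcal{C}$ with radius slightly less than $R(x)$, lying on the CDC, witness that in the $P$-picture near $x$ the conjugate set dips \emph{below} the level sphere $\{R=R(x)\}$; on the side of $\mathcal{C}$ where $e_1$ would have to be onto, one produces a sequence $p^k\to e_1(x)$ with $p^k\notin e_1(U)$ for arbitrarily small $U$ (the radius barrier forces the preimage to lie outside $U$), exactly as in the Type 1 argument of Theorem \ref{theorem: conjugate of order 2} and Proposition \ref{no A2 in S}. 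Hence $e_1(U)$ cannot be a neighborhood of $e_1(x)$.

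The main obstacle I anticipate is the first implication: controlling the shape of $\mathcal{C}$ and of the level spheres near a \emph{general} terminal point, without the luxury of the canonical forms available in the generic ($A_3$, $D_4^\pm$) case. Everything hinges on a clean, coordinate-free version of the ``the level curve would have to turn vertical'' argument — i.e. bounding from below the angle between $\ker de_1$ and the radial-transverse directions, uniformly near $x$, which is the content of the slack estimates but must be run at a terminal point where the slack degenerates to $0$. I would handle this by working with the $2$-dimensional distribution $D_2=\langle r\rangle\oplus\ker de_1$, which remains smooth through $x$ even when $D$ degenerates, restricting to its integral surface $P$, and reducing the whole question to a planar curve-comparison statement; once that reduction is made, the contradiction is the elementary picture of Figure \ref{figure: near an A3 point}, and both directions close up.
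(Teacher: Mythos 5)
First, a point of order: the paper contains no proof of this statement. It sits in the final chapter among a list of open problems ("We list some conjectures related to conjecture \ref{points I, J, K}"), so there is nothing to compare your argument against; what you have written must be judged as a proposed solution to an open problem. Before the mathematics, note a problem with the statement itself that you should have flagged: as literally written, the "only if" direction already fails at an $A_3(II)$ point, because the cusp normal form $(x_1,x_2,\dots)\mapsto(x_1^3\pm x_1x_2,x_2,\dots)$ is locally surjective, so $e_1(U)$ \emph{is} a neighborhood of $e_1(x)$ even though CDCs start at $x$. The conjecture only has a chance if read with $e_1(U\cap V_1)$, exactly as in the characterization of $A_3(I)$ points in section \ref{section: generic metric}; your converse argument implicitly uses this restricted version without saying so.

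The decisive gap is in the direction "terminal $\Rightarrow$ open image". Your argument silently identifies "no CDC starts at $x$" with "the conjugate locus near $x$ lies on or above the level sphere $\{R=R(x)\}$", which is the picture of Figure \ref{figure: near an A3 point}. These are not equivalent. At an order-$1$ point $r$ is transversal to $\mathcal{C}$, so $D$ is a genuine smooth line field on $\mathcal{C}$ near $x$ with a well-defined integral curve $\gamma$ through $x$; by the Gauss lemma the critical points of $R\circ\gamma$ are exactly the non-$A_2$ points of $\gamma$ (those where $D=\ker d e_1$), and a CDC starting at $x$ is precisely an initial arc of $\gamma$ whose interior is free of such critical points and on which $R$ strictly decreases. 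Hence $x$ can be terminal for two quite different reasons: either $R\circ\gamma$ increases away from $x$ on both sides (the $A_3(I)$ picture, which your planar comparison captures), or non-$A_2$ points accumulate at $x$ along $\gamma$ while $R\circ\gamma$ still decreases on one side --- then no initial arc qualifies as a CDC, because the definition \ref{definition: conjugate flow} requires interior points to be $A_2$, yet the conjugate locus dips below $\{R=R(x)\}$ arbitrarily close to $x$, and your own converse reasoning would then show the image is \emph{not} a neighborhood of $e_1(x)$. Your proof has no mechanism to exclude this second case; excluding it (or reformulating "terminal" to absorb it) is the actual content of the conjecture, and it is precisely the accumulation phenomenon that the canonical forms rule out in the generic setting where the statement is already known. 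A secondary, unaddressed step is that even granting "conjugate locus above the level sphere" you still owe a surjectivity argument (degree-theoretic or otherwise) for $e_1$ on $U\cap V_1$; in the paper this is extracted from the explicit cusp form and does not come for free at a general terminal point.
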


\begin{conjecture}
A point $x$ is terminal iff it has a neighborhood $U$ of special coordinates such that $ e_{1} ( U )$ is a neighborhood of $ e_{1} ( x )$.
\end{conjecture}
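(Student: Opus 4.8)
\textbf{Proof proposal for Conjecture: a point $x$ is terminal iff it has a neighborhood $U$ of special coordinates with $e_1(U)$ a neighborhood of $e_1(x)$.}

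The plan is to reduce the general statement to the two already-available ingredients: the local normal forms near conjugate points of order $1$ and $2$ (which come from Warner's regular exponential map theory and the special coordinates of section \ref{subsection: special coordinates}), and the description of the conjugate descending distribution $D$ from \eqref{1d distribution at conjugate points}. First I would treat the easy implication: if there is a CDC $\gamma$ starting at $x$, then $\gamma$ is a curve of $A_2$ points (except possibly at $x$) whose image under $\e$ is, by Gauss' lemma and the computation in lemma \ref{unbeatable lemma}, a genuine curve of positive length with $|\gamma(t)|$ strictly decreasing; the points $\gamma(t)$ for $t>0$ lie in $\partial O_{p}$-type position, and one checks as in the $A_2$ normal form $(x_1,\dots,x_n)\mapsto(x_1^2,\dots,x_n)$ that near $\e(\gamma(t))$ the image of a neighborhood of $x$ omits an open half-space worth of directions — precisely the side into which the CDC flows. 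Hence $e_1(U)$ cannot be a neighborhood of $e_1(x)$ for arbitrarily small $U$, so $x$ is not ``open'' in the sense of the statement.

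For the converse I would argue contrapositively: suppose $x$ is \emph{not} terminal-in-the-open-image sense, i.e. for every small special-coordinate neighborhood $U$, $e_1(U)$ fails to be a neighborhood of $e_1(x)$. I want to produce a CDC starting at $x$. I would stratify a neighborhood of $x$ by order of conjugacy and argue stratum by stratum. At an order-$1$ point, $\Sing$ is a smooth hypersurface and $\ker d_xF$ is transverse to $r$ by Gauss' lemma, so $D=(\ker dF\oplus\langle r\rangle)\cap T\mathcal C$ is a well-defined smooth line field; if $x$ is $A_2$, lemma \ref{unbeatable lemma} already gives the CDC. If $\ker d_xF$ is tangent to $\mathcal C$ (the non-$A_2$, e.g.\ $A_3$-type, case), I would examine, as in section \ref{subsection: CDCs in adapted coords near A3} and section \ref{section: CDCs in adapted coords}, the geometry of the radius function $R$ restricted to the integral $2$-plane $P$ of the $2$-distribution $D_2$; the key point is that $e_1(U)$ being a neighborhood of $e_1(x)$ is equivalent to the level set $\{R=R(x)\}\cap P$ lying on the ``outer'' side of $\mathcal C\cap P$, which is exactly the $A_3(I)$/terminal configuration, while the failure of this forces the two CDC branches in $P$ to descend into $x$, giving the desired curve (in fact giving one starting at $x$). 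For order $2$ and higher I would invoke the finer analysis of $D_4^{\pm}$ points from section \ref{section: CDCs in adapted coords} together with the genericity-free parts of those arguments, reducing to a transversality statement about the zero set of the determinant measuring the angle between $D$ and the cone generatrices.

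The main obstacle is precisely the higher-order case without the genericity hypothesis: the whole of chapter \ref{chapter: ambrose} leans on Klok/Buchner normal forms to enumerate $A_4$, $D_4^{\pm}$ and to do the explicit root-counting for the cubic $p(x_3)$, and for an arbitrary metric there is no such finite list. So the honest status of this conjecture is that the order-$1$ part should be provable now by the $D$-distribution argument sketched above (this is essentially conjecture \ref{conjecture: terminal points are open} for order $1$, which the text already isolates), while the full statement would require either a metric-independent structural result on the zero locus of the ``$D$ meets the conjugate cone'' condition, or an approximation argument passing from generic metrics — where the equivalence is verified above — to arbitrary ones, controlling the CDCs uniformly in the $C^1$ topology via lemma \ref{slack lemma}. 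I would first write up the order-$1$ equivalence cleanly, then state the higher-order case as contingent on the companion conjectures on conjugate points of order $k$ in chapter \ref{chapter: beyond}.
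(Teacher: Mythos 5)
The first thing to say is that the paper does not prove this statement: it appears in chapter \ref{chapter: beyond} as one of several open conjectures about terminal points, with no argument attached, so there is no proof of record to compare yours against. What you have written is accordingly a research plan rather than a proof, and you say so yourself for the converse direction; but the plan also has a concrete soft spot in the direction you present as easy. If a CDC $\gamma$ starts at $x$, the fold in the $A_2$ normal form tells you that the image of a small neighborhood of $\gamma(t)$, for $t>0$, is only a half-space around $e_1(\gamma(t))$ --- it does not by itself say anything about whether $e_1(U)$ is a neighborhood of $e_1(x)$, since the apparently missing side near the curve points could be covered by other parts of $U$. Indeed, for an $A_3(II)$ point (non-terminal: a CDC starts there) the cusp normal form $(x_1,x_2,\dots)\mapsto(x_1^3\pm x_1x_2,x_2,\dots)$ is surjective onto a full neighborhood of the image point, so $e_1(U)$ \emph{is} a neighborhood of $e_1(x)$; the statement only has a chance of being true if one reads $e_1(U)$ as $e_1(U\cap V_1)$, as in the discussion of $A_3(I)$ points in section \ref{section: A3 are inequivocal}. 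Your sketch silently switches between these two readings, so any write-up would first have to fix the statement and then prove the ``only if'' direction by an argument that genuinely uses the restriction to first conjugate vectors, not just the local fold along the CDC.

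For the converse your diagnosis matches the situation in the text: at order $1$ the terminal/non-terminal dichotomy is governed by the position of the level set of $R$ relative to $\mathcal{C}$ inside the integral surface of $D_2$, and that case is plausibly within reach (it is conjecture \ref{conjecture: terminal points are open}); but for order $\geq 2$ and an arbitrary metric there is no finite list of normal forms, the explicit root-counting of section \ref{section: CDCs in adapted coords} is unavailable, and you supply neither a metric-independent transversality statement nor a controlled approximation by generic metrics. So the proposal does not close the conjecture; it reproduces the reduction already implicit in the chapter, with one additional unaddressed issue in the direction you call easy.
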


\begin{conjecture}
 The image by the exponential of all the conjugate terminal points of order $k$ has Hausdorff dimension at most $n-k-1$
\end{conjecture}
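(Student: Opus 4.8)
The plan is to deduce the statement from the two conjectures displayed immediately above it, together with a Morse--Sard--Federer argument on the conjugate locus, in the spirit of \ref{theorem: conjugate of order 1} and \ref{main theorem 3}. Write $\mathcal{C}\subset V$ for the conjugate locus and, for a conjugate point $x$ of order $k$, recall the distribution $D=(\ker d_xF\oplus\langle r_x\rangle)\cap T_x\mathcal{C}$ generalising \eqref{1d distribution at conjugate points}. The first (and crucial) step is to establish the implication: \emph{if $x$ is a conjugate point of order $k$ whose kernel $\ker d_xF$ is not contained in the tangent cone to $\mathcal{C}$ at $x$, then a CDC starts at $x$} --- which is exactly one of the conjectures above. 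Its contrapositive gives that every \emph{terminal} conjugate point $x$ of order $k$ has $\ker d_xF$ contained in the tangent cone to $\mathcal{C}$ at $x$ (which is $T_x\mathcal{C}$ whenever $\mathcal{C}$ is smooth there).

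Granting this, the remainder is a rectifiability/Sard argument. By Lemma \ref{landa es Lipschitz} the conjugate locus $\mathcal{C}$ is the countable union of the Lipschitz graphs $\{(\lambda_j(z),z):\lambda_j(z)<\infty\}$, hence $(n-1)$-rectifiable, and its order-$\geq k$ part $\mathcal{C}_{\geq k}$ is the countable union of the Lipschitz graphs over the closed sets $\{z:\lambda_j(z)=\lambda_{j+k-1}(z)<\infty\}$. At a terminal point $x$ of order $k$ lying in the smooth part of $\mathcal{C}$ (which includes every order-$1$ point, since the special coordinates of \ref{subsection: special coordinates} make $\partial(\det dF)/\partial x_1=1\neq 0$ there), the inclusion $\ker d_xF\subset T_x\mathcal{C}$ forces $F|_{\mathcal{C}}$ to have corank at least $k$ at $x$, so the image under $F$ of these points has Hausdorff dimension at most $(n-1)-k=n-k-1$ by Morse--Sard--Federer applied to $F|_{\mathcal{C}}$ --- or, since $\mathcal{C}$ is only rectifiable, by the tangent-cone criterion \cite[Theorem 3.1]{Alberti Ambrosio Cannarsa} together with the area formula, exactly as in the proof of \ref{main theorem 4}. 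The terminal points of order $k$ near which $\mathcal{C}$ is not a smooth hypersurface all lie in $\mathcal{C}_{\geq 2}$; here one argues directly with the $(n-1)$-rectifiability of $\mathcal{C}_{\geq k}$, checking that at $\mathcal{H}^{n-1}$-almost every such point the approximate tangent plane to $\mathcal{C}_{\geq k}$ contains $\ker d_xF$, so that the approximate differential of $F|_{\mathcal{C}_{\geq k}}$ has rank at most $n-1-k$ there, and concluding again with the area formula and the criterion of \cite{Alberti Ambrosio Cannarsa}. In the generic case of \ref{the set of generic metrics} this is transparent, the relevant pieces being the swallowtail and umbilic strata already described in Chapter \ref{chapter: ambrose}.

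The main obstacle is the first step, and with it the very definition of ``terminal conjugate point of order $k$'': the notion of CDC in \ref{definition: conjugate flow} is formulated only at order-$1$ ($A_2$) points, so one must first set up a robust higher-order analogue --- a curve of order-exactly-$k$ conjugate points whose velocity lies in $D$ and has negative scalar product with $r$, together with the ``unbeatable'' estimate generalising Lemma \ref{slack lemma} --- and then prove that such a curve exists whenever $\ker d_xF$ fails to lie in the tangent cone to $\mathcal{C}$. The linear-algebra heuristic is encouraging: since $r_x\notin\ker d_xF$ by the Gauss lemma (first item of \ref{regular exponential map}), if $\ker d_xF$ is not contained in $T_x\mathcal{C}$ then $\dim D>\dim(\ker d_xF\cap T_x\mathcal{C})$, so $D$ contains a vector outside $\ker d_xF$, i.e. one with non-zero --- and, after a sign change, negative --- radial component, which is exactly the initial velocity of a prospective CDC; but promoting this to an honest integral curve that stays in the order-$k$ stratum, and controlling the genuinely singular conjugate loci permitted by a merely $C^{\infty}$ metric, is the technical heart of the matter. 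A pleasant by-product, once the first step is in place, is a proof of Conjecture \ref{conjecture: Hausdorff dimension of the points with a minimizing conjugate geodesic}, since the image under the exponential of a CDC is never minimizing (lemma 2.2 of \cite{Hebda82}), so every minimizing conjugate point of order $k$ is terminal of order $k$.
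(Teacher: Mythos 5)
The statement you are proving is not proved in the paper at all: it appears in Chapter \ref{chapter: beyond} as one of several open conjectures, offered without proof, so there is no argument of the author's to compare yours against. What you have written is in fact the strategy the thesis itself gestures at (reduce to the conjecture that a CDC starts at every conjugate point whose kernel is not contained in the tangent cone to $\mathcal{C}$, then kill the remaining points by a Morse--Sard--Federer argument on $F|_{\mathcal{C}}$, exactly as in \ref{theorem: conjugate of order 1}; and the by-product via lemma 2.2 of \cite{Hebda82} is precisely how the author motivates these conjectures). But as a proof it does not close. Your first step is itself one of the unproven conjectures of the chapter, and you say so; moreover the notion of ``terminal point of order $k$'' presupposes a CDC theory at order $k\geq 2$ (a curve staying in the order-$k$ stratum with velocity in $D$ and negative radial component, with an unbeatability estimate) that is defined in the paper only at $A_2$ points and whose construction you defer entirely. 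A conditional reduction of one conjecture to another is not a proof of either.

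Even granting the first step, the Sard argument has a concrete gap for $k\geq 2$. For $k=1$ the order-one stratum of $\mathcal{C}$ is a smooth hypersurface (Warner), so tangency of the kernel gives corank $\geq 1$ for $F|_{\mathcal{C}}$ and Morse--Sard--Federer applies; this is the paper's own proof of \ref{theorem: conjugate of order 1}. For $k\geq 2$ the set $\mathcal{C}_{\geq k}$ is only a countable union of Lipschitz graphs, and your proposed fix --- that at $\mathcal{H}^{n-1}$-almost every point the approximate tangent plane contains $\ker d_xF$, then conclude by the area formula --- does not yield a Hausdorff \emph{dimension} bound: the exceptional set of $\mathcal{H}^{n-1}$-measure zero can still have Hausdorff dimension up to $n-1$, and so can its image under the Lipschitz map $F$, which is far larger than the target $n-k-1$. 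You would need to show that \emph{every} terminal point of order $k$, not almost every one, lies where the (suitably defined) tangent object to $\mathcal{C}_{\geq k}$ exists, is at most $(n-1)$-dimensional, and contains the $k$-dimensional kernel, and then invoke a version of Sard--Federer valid for such rectifiable sets with a uniform rank bound --- none of which is supplied. The assertion that the kernel being contained in the \emph{tangent cone} (which may be all of $T_xV$ at a bad point) forces rank degeneration of $F$ restricted to $\mathcal{C}$ is also unjustified at non-smooth points. These are exactly the difficulties the author flags as the reason the statement is left as a conjecture.
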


\begin{conjecture}
 All non-terminal points of order $1$ are linked to a point of smaller radius.
\end{conjecture}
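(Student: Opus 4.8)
The plan is to establish that a non-terminal conjugate point $x$ of order $1$ admits a conjugate descending curve (CDC) starting at $x$, and then to ride this CDC down towards smaller radius and use the ``unbeatable'' property of CDCs, together with a retort, to produce a linking-type identification --- exactly as in the generic case treated in section~\ref{chapter: ambrose}, but without the benefit of canonical normal forms. The first step is to pin down the local geometry of the set $\mathcal{C}$ of conjugate points near $x$. Since $x$ has order $1$, $\mathcal{C}$ is (by Proposition~\ref{regular exponential map}, property~2, and the usual normal-form arguments) a smooth hypersurface in $V$ near $x$, and by Gauss' lemma $\ker d_xF$ is transverse to the radial direction. So the one-dimensional distribution $D=(\ker dF\oplus\langle r\rangle)\cap T\mathcal{C}$ from \eqref{1d distribution at conjugate points} is smooth near $x$, and its integral curves are well defined. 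The definition I would adopt is: $x$ is \emph{terminal} iff neither of the two half-integral-curves of $D$ through $x$ is a CDC, i.e. neither has $\langle\cdot,r\rangle<0$; equivalently (this is the content of conjecture~\ref{conjecture: terminal points are open}, which I would also want to prove here) the radius is non-decreasing along $\mathcal{C}$ near $x$ in both directions. Thus a non-terminal point has, by definition, a genuine CDC $\alpha:[0,t_0]\to V$ emanating from it.

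\textbf{Key steps.} First I would run the CDC $\alpha$ forward: following the argument in section~\ref{section: A3 joins} and the continuation lemma (the analogue of \ref{existence of GACDC for easy manifolds}), one extends $\alpha$ through the stratum of $A_2$-type points until it either reaches a point where a non-trivial retort can be launched (e.g. an $A_3$-join, or, in the generic setting, any point of $\SAtwo$) or leaves a fixed compact neighborhood with strictly smaller radius. The key quantitative input is Lemma~\ref{slack lemma}: as long as the slack is bounded below, the CDC is \emph{strictly} unbeatable, so any retort $\beta$ satisfies $|\beta(t_1)|-|\beta(0)|<|\alpha(0)|-|\alpha(t_1)|-\varepsilon$. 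Second, I would build a retort $\widetilde\alpha$ of $\alpha$ starting at the terminal/$A_3$ point reached, continuing it while it stays in the interior of $V_1$ (where $e_1$ is a local diffeomorphism and curves lift), and iterating the construction of a linking curve as in Definition~\ref{definition: linking curve} and the algorithm of section~\ref{section: generic metric} (descent, retort, reprise) whenever the retort is interrupted by hitting another $A_2$ point. The point $y=\alpha$-concatenated-retort-endpoint then has $|y|<|x|$ by Proposition~\ref{main properties of linking curves}, and $e_1(x)=e_1(y)$, $e_2(x)=e_2(y)$ with $e_1\circ(\text{the curve})$ fully tree-formed; by Proposition~\ref{main properties of linking curves 2} (proved in \ref{proof that linking curves produce linked points}), $x$ and $y$ are linked, which is exactly the assertion that $x$ is linked to a point of smaller radius.

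\textbf{Main obstacle.} The serious difficulty --- the reason this is stated as a conjecture and not a theorem --- is the \emph{termination} of the linking-curve algorithm for an \emph{arbitrary} metric. In the generic case (Theorem~\ref{typical sings have transient neighborhoods}) one proves the existence of \emph{positive transient pairs} around every singularity by inspecting the finitely many canonical forms $A_2,A_3,A_4,D_4^\pm$; for a general metric there is no such finite list, the set $\mathcal{C}$ can have edges and pointed tips, and a retort can repeatedly be interrupted. One needs a compactness/finiteness argument --- along the lines of the $R_0$ argument in section~\ref{section: generic metric} --- showing that each ``transient'' region contributes a definite positive drop in radius and that only finitely many are visited; making this work requires either the uniform slack bound from Lemma~\ref{slack lemma} to survive near \emph{all} order-$1$ singularities, or a replacement for the transversality statement of \cite{Buchner Stability} valid without genericity. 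A secondary obstacle is that a CDC, or a retort, might run into conjugate points of order $\geq 2$ (the set $\mathcal{J}\setminus\mathcal{A}_2$ analogue); one wants to dodge these, which is possible only if their image has Hausdorff codimension $\geq 2$ --- and this is precisely where Theorem~\ref{main theorem 3} is essential, since it guarantees that the bad set of order-$\geq 2$ minimizing-type conjugate points is small enough to be avoided by a generic choice of the ACDC. I expect that with these two ingredients --- a slack bound near order-$1$ points and the codimension-$2$ control from \ref{main theorem 3} --- the argument of section~\ref{chapter: ambrose} goes through, but the uniform transience estimate is the genuinely open point.
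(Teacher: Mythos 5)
This statement is one of the open conjectures collected in chapter~\ref{chapter: beyond}; the paper offers no proof of it, so there is nothing to compare your argument against except the author's own sketch of a strategy. Your proposal reproduces that intended strategy faithfully (CDC from a non-terminal point, strict unbeatability via the slack bound, retort/reprise iteration, then Propositions~\ref{main properties of linking curves} and~\ref{main properties of linking curves 2} to conclude linkage to a point of strictly smaller radius), and — to your credit — you explicitly flag that the termination of the algorithm is the genuinely open point. But that means what you have written is not a proof: the step you defer is precisely the content of the conjecture. Two further cautions. First, you lean on conjecture~\ref{conjecture: terminal points are open} as something you ``would also want to prove here''; it is itself unproven, and your working definition of terminal (neither half-integral-curve of $D$ descends) silently assumes $D$ is a well-defined smooth line field near $x$, which fails exactly where $\ker d_xF$ is tangent to $\mathcal{C}$ — i.e.\ at the non-$A_2$ order-$1$ points that are the whole difficulty. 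The distribution $D=(\ker dF\oplus\langle r\rangle)\cap T\mathcal{C}$ can jump in dimension there, and without a normal form you cannot assert that a CDC emanating from a non-terminal point can be continued, let alone continued into a point admitting a non-trivial retort.

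Second, the finite-concatenation notion of linking curve you invoke is explicitly disavowed by the paper for arbitrary metrics: the last definition of chapter~\ref{chapter: beyond} replaces it with a curve whose good set is a countable union of ACDCs and retorts plus a residual set $B$ with $\dim_{\mathcal H}e(B)=0$, and the author notes that even granting such a curve, proving its endpoints are linked ``would be more technically challenging\ldots without using new ideas.'' So the gap is not only the uniform transience estimate you name; it is also that Proposition~\ref{main properties of linking curves 2}, as proved, applies only to finite linking curves, and the cancellation argument in the tree-formed computation (pairing each ACDC with its retort and invoking absolute convergence) has no written analogue for the countable case. Your identification of where Theorem~\ref{main theorem 3} enters (dodging order-$\geq 2$ conjugate points, whose image has Hausdorff codimension $\geq 3$) is correct and matches the role the author assigns it in the IJK conjecture, but it does not close either gap.
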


\begin{conjecture}
 All non-terminal points are linked to a point of smaller radius.
\end{conjecture}

If we plan to use linking curves, its definition should be appropriately generalized, otherwise it is clear that linking curves, with finitely many segments, will not exist in arbitrary Riemannian manifolds.
We propose the following definition:
\begin{dfn}
 A \strong{linking curve} between $x$ and $y$ is a curve $\alpha:[0,T]\rightarrow T_p M$ such that $[0,T]$ is split into two subsets $A$ and $B$, so that:
 \begin{itemize}
  \item $B$ is closed, and the Hausdorff dimension of $\e(B)$ is $0$.
  \item $A$ is open, a countable union of open intervals $I_n, n\in \mathcal{I}$ and $J_m, m\in \mathcal{I}$, so that $I_n$ is an ACDC curve and $J_n$ is a retort.
 \end{itemize}
\end{dfn}

The image by the exponential of such a curve would be fully tree-formed, and the same proof we used in \ref{main properties of linking curves} would do, but it would be more technically challenging to prove that its extremae are linked without using new ideas.

\backmatter

\chapter{Conclusiones}

En el capítulo \ref{Chapter: IntroBalanced} introdujimos algunos resultados útiles.
El teorema \ref{reduce HJ to boundary value 0}, por ejemplo, da mucha
más potencia a los resultados de estructura de \cite{Li Nirenberg}, pues permite
eliminar la restricción, importante para las aplicaciones a problemas de
frontera, de que el dato de frontera sea nulo.

Los resultados del capítulo \ref{Chapter: structure} hacen parecer razonable la conjetura \ref{conjecture: Hausdorff dimension of the points with a minimizing conjugate geodesic}.
Además, hemos mostrado aplicaciones concretas para el resultado de estructura \ref{main theorem 3}, y sugerido otras, como las posibles aplicaciones al movimiento browniano en variedades al final de la sección \ref{section: introduction about structure of the cut locus}.

Los resultados del capítulo \ref{Chapter: balanced} son a nuestro entender bastante completos: trabajamos con hipótesis bastante habituales, como la convexidad del Hamiltoniano, sin las cuales la misma definición de solución de viscosidad no están claras.
Ni siquiera es habitual rebajar las condiciones de regularidad: en el paper \cite{Li Nirenberg} se trabaja con abiertos $C^{2,1}$, pero el resto de datos son $C^{\infty}$.
No pensamos que rebajar la regularidad hubiera producido resultados cualitativamente distintos, en este contexto.
Sin embargo, nos parece muy interesante haber evitado limitarnos a abiertos del plano simplemente conexos, pues ésto nos hubiera cerrado los ojos al bello resultado \ref{maintheorem2}, que dice mucho sobre la naturaleza de los balanced split loci.

Respecto a la conjetura de Ambrose, hemos dado una demostración para métricas genéricas susceptible de ser generalizada a métricas arbitrarias. Este último paso es técnicamente muy complicado, ya que supone un mejor entendimiento de las singularidades de la aplicación exponencial que permitan seguir un campo de vectores sobre una superficie singular que luego debe
ser ``respondido" con curvas que deben en lo posible mantenerse alejadas de las singularidades.
Sin embargo, creemos que nuestro enfoque es original, que introduce
algunas ideas nuevas e interesantes, como el enunciado, a nuestro entender muy
natural, del lema \ref{norm dominated by exp}, o las conjeturas \ref{conjecture on magician's hats}, y que usa de formas nuevas ideas poco conocidas, como las curvas de flujo conjugado descendiente, implícitas en el trabajo \cite{Hebda82}, o la síntesis de dos variedades, que aparece ya en \cite{O'Neill68}.

En definitiva, además, creemos que el problema no era fácil, como comentamos en
las secciones \ref{section: proving poincare conjecture, or not} o 
\ref{section: proof for a 3-manifold with arbitrary metric}.

\end{document}